\newtheorem{theorem}{Theorem}[section]
\newtheorem{lemma}[theorem]{Lemma}
\newtheorem{proposition}[theorem]{Proposition}
\newtheorem{corollary}[theorem]{Corollary}
\newtheorem{definition}[theorem]{Definition}
\newtheorem{remark}[theorem]{Remark}
\newtheorem{notation}[theorem]{Notation}
\newtheorem{example}[theorem]{Example}
\def\proof{\removelastskip\par\medskip \noindent {\sc Proof.}\enspace}
\def\endproof{\hbox{ }{\qed}\hfill\par\medskip}
\newcommand{\CC}{{\mathbb{C}}}
\newcommand{\NN}{{\mathbb{N}}}
\newcommand{\ZZ}{{\mathbb{Z}}}
\newcommand{\calA}{{\mathcal{A}}}
\newcommand{\comp}{{\circ}}
\newcommand{\smvee}{\raise0.9ex\hbox{$\scriptscriptstyle\vee$}}
\newcommand{\Ss}[1]{\mathcal{O}_{#1}}
\newcommand{\Cs}[1]{\omega_{#1}}
\newcommand{\Rs}{\tilde{X}}
\newcommand{\Rsd}{\tilde{\Sf{X}}}
\newcommand{\Sf}[1]{\mathcal{#1}}
\newcommand{\dimc}[1]{\dim_{\mathbb{C}}(#1)}
\DeclareMathOperator{\im}{Im}
\DeclareMathOperator{\codim}{codim}
\DeclareMathOperator{\corank}{corank}
\DeclareMathOperator{\Exts}{\mathscr{E}\text{\kern -3pt {\calligra\large xt}}\,}
\DeclareMathOperator{\Homs}{\mathscr{H}\text{\kern -3pt {\calligra\large om}}\,}
\DeclareMathOperator{\Hom}{\text{Hom}\,}
\DeclareMathOperator{\Ext}{\text{Ext}\,}
\begin{document}
\title[Reflexive modules on normal Gorenstein Stein surfaces]{Reflexive modules on normal Gorenstein Stein surfaces, their deformations and moduli}

\author{Javier Fern\'andez de Bobadilla}
\address{Javier Fern\'andez de Bobadilla:  
(1) IKERBASQUE, Basque Foundation for Science, Maria Diaz de Haro 3, 48013, 
    Bilbao, Bizkaia, Spain
(2) BCAM  Basque Center for Applied Mathematics, Mazarredo 14, E48009 Bilbao, 
Basque Country, Spain 
(3) Academic Colaborator at UPV/EHU} 
\email{jbobadilla@bcamath.org}
\author{Agust\'in Romano-Vel\'azquez}
\address{Agust\'in Romano-Vel\'azquez: 
(1) CIMAT, Jalisco S/N, Mineral de Valenciana, 36023, Guanajuato, M\'exico} 
\email{faustino.romano@cimat.mx}

\thanks{The first author is partially supported by IAS and by ERCEA 615655 NMST Consolidator Grant, MINECO by the project 
reference MTM2016-76868-C2-1-P (UCM), by the Basque Government through the BERC 2018-2021 program and Gobierno Vasco Grant IT1094-16, by the Spanish Ministry of Science, Innovation and Universities: BCAM Severo Ochoa accreditation SEV-2017-0718 and by Bolsa Pesquisador Visitante Especial (PVE) - Ciencias sem Fronteiras/CNPq Project number:  401947/2013-0. The second author is partially supported by IAS and by ERCEA 615655 NMST Consolidator Grant, Bolsa Pesquisador Visitante Especial (PVE) - Ciencias sem Fronteiras/CNPq Project number: 401947/2013-0, CONACYT 253506 and 286447.}

\date{13-3-2019}
\subjclass[2010]{Primary: 13C14, 13H10, 14E16, 32S25}
\begin{abstract}
In this paper we generalize Artin-Verdier, Esnault and Wunram construction of McKay correspondence to arbitrary Gorenstein surface singularities. The key idea is the definition and a systematic use of a degeneracy module, which is an enhancement of the first Chern
class construction via a degeneracy locus. We study also deformation and moduli questions. Among our main result we quote: a full
classification of special reflexive MCM modules on normal Gorenstein surface singularities in terms of divisorial valuations centered at
the singularity, a first Chern class determination at an adequate resolution of singularities, construction of moduli spaces of special
reflexive modules, a complete classification of Gorenstein normal surface singularities in representation types, and an study on the
deformation theory of MCM modules and its interaction with their pullbacks at resolutions. For the proof of these theorems it is crucial to establish several isomorphisms between different deformation
functors, that we expect that will be useful in further work as well. 
\end{abstract}

%\commby{editor}

\maketitle

\tableofcontents
%Articulo
\section{Introduction}
McKay correspondence~\cite{McK} is a bijection between the irreducible representations of a finite subgroup of $\mathrm{SL}(2,\CC)$ and the irreducible components of the exceptional divisor of the minimal resolution of the associated quotient surface singularity, such a bijection extends to an isomorphism of the McKay quiver (associated to the structure of representations with respect to direct sums and tensor products), and the dual graph of the exceptional divisor of the minimal resolution of singularities. McKay noticed the correspondence by a case by case study using the classification of finite subgroups of $\mathrm{SL}(2,\CC)$. After its discovery by McKay, a conceptual geometric understanding of the correspondence was achieved by a series of papers by Gonzalez-Springberg and Verdier~\cite{GoVe}, by Artin and Verdier~\cite{AV} for the correspondence at the level of vertices and by Esnault and Kn\"orrer~\cite{EsKn} at the level of edges. 

At the level of vertices the correspondence can be summarized as follows: let $\pi:\Rs\to X$ be the minimal resolution of the quotient singularity. To an irreducible representation $\rho$ one associates an indecomposable reflexive $\Ss{X}$-module $M$. The module $\pi^*M/\mathrm{Torsion}$ was proved to be locally free, and its first Chern class $c_1(\pi^*M/\mathrm{Torsion})$ is the Poincar\'e dual of a curvette hitting transversely an unique irreducible component of the exceptional divisor. Such a component is the image of the representation $\rho$ under McKay correspondence. Artin and Verdier proved that the first Chern class determines the module $M$ along with the representation $\rho$. Conversely, for any irreducible component of the exceptional divisor there is a representation and a module realizing it. We do not spell the correspondence at the level of edges since the main concern of this paper is a wide generalization of the results of Artin and Verdier at the level of vertices, and the subsequent contributions of Esnault, Wunram and Kahn which we describe below.

Esnault~\cite{Es} improved Artin and Verdier correspondence by working on arbitrary rational surface singularity. She discovered that already for quotient singularities by finite subgroups of $\mathrm{GL}(2,\CC)$ not included in $\mathrm{SL}(2,\CC)$ the pair given by the first Chern class $c_1(\pi^*M/\mathrm{Torsion})$ and the rank of the module is not enough to determine the reflexive module. A satisfactory McKay correspondence for arbitrary rational surface singularities was provided by Wunram~\cite{Wu}. For this he defined a reflexive module to be {\em special} if we have the vanishing $R^1\pi_*(\pi^*M)^{\smvee}=0$ (where $(\quad)^{\smvee}$ denotes the dual with respect to the structure sheaf). He proved that the first Chern class construction of Artin and Verdier defines a bijection between: a) the set of special indecomposable reflexive $\Ss{X}$-modules and b) the set of irreducible components of the exceptional divisor of the minimal resolution. Moreover the first Chern class $c_1(\pi^*M/\mathrm{Torsion})$ determines special reflexive $\Ss{X}$-modules. The reader may consult the survey of Riemenschneider~\cite{Rie}, for a more complete account of results described up to now, a summary of other approaches to McKay correspondence and a nice characterization of special reflexive modules.

Beyond the case of rational singularities the only complete study of reflexive modules was provided by Kahn~\cite{Ka} for the case of minimally elliptic singularities. He studied reflexive $\Ss{X}$-modules $M$ via the associated locally free $\Ss{\Rs}$-module $\Sf{M}:=(\pi^*M)^{\smvee \smvee}$ restricted to a cycle supported at the exceptional divisor. In the simply elliptic case he provides a full classification of reflexive modules building on Atiyah's classification of vector bundles on elliptic curves.  

On normal surface singularities, reflexive modules coincide with Maximal Cohen Macaulay (MCM) ones.
A singularity has finite, tame or wild Cohen-Macaulay representation type if the maximal dimension of the families of indecomposable MCM modules is 0, 1 or unbounded respectively. A consequence of Kahn's results is that simply elliptic singularities are of tame representation type. His results were completed by Drodz, Greuel and Kashuba~\cite{DrGrKa}, who proved that cusp singularities, and more generally any log-canonical surface singularity are of tame Cohen-Macaulay representation type, and posed the conjecture that non-canonical surface singularities are of wild representation type (we prove this conjecture in this paper for normal Gorenstein surface singularities). 

Due to a result of Eisenbud~\cite{Ei}, in the hypersurface case MCM modules correspond to matrix factorizations. Therefore any result proven for MCM modules have a translation into matrix factorizations. Using this equivalence, Kn\"orrer~\cite{Kn} and Buchweitz, Greuel, Schreyer~\cite{BuGrSch} proved in arbitrary dimension that the isolated hypersurface singularities of finite Cohen-Macaulay representation type are exactly the simple (ADE type) ones.

Besides the results described above the knowledge on the classification of MCM modules on other singularities available in the literature is quite limited. 

McKay correspondence admits generalizations and extensions in many directions (some of them hinted in Riemenschneider survey~\cite{Rie}). 
An important one is the study of Auslander and Reiten categories of MCM modules over a singularity. The reader may consult the book of Yoshino~\cite{Yos} for a rather complete account of known results. In this paper we focus in the classification of the objects of this categories for arbitrary normal Gorenstein singularities, leaving the structure of the category ready for later work.  

Following the line of the first Chern class correspondence described above it is natural to ask whether there is a similar description of indecomposable MCM modules on other singularities, and on the other on the characterization of the irreducible components of the exceptional divisor in terms of reflexive modules. Once the singularities are not rational, reflexive modules come in families, and deformation and moduli problems are important. We obtain quite complete answers to these questions for the case of Gorenstein normal singularities, including: 
\begin{itemize}
 \item a full classification of special reflexive MCM modules in terms of divisorial valuations centered at the singularity, which can be seen as a generalization of McKay correspondence,
 \item a first Chern class determination at an adequate resolution of singularities,
 \item construction of moduli spaces of special reflexive modules,
 \item a complete classification of Gorenstein normal surface singularities in Cohen-Macaulay representation types, confirming Drodz, Greuel and Kashuba conjecture for this class, 
 \item a study on the deformation theory of MCM modules and its interaction with their pullbacks at resolutions,
\end{itemize}

A detailed, non technical, description of the results of this paper is provided in the next section. 

Although the papers quoted above on McKay correspondence and classification of MCM modules work over singularities or complete local rings, we prove many of our results in the more general setting of reflexive modules over Stein normal surfaces (usually with Gorenstein singularities). We feel that this generalization will have interesting applications. For example it allows to apply our results to affine patches of reflexive modules on projective surfaces, opening the way to obtain applications in the global projective case. 

It should be expected that our results have applications in other areas. MCM modules on Gorenstein singularities have become recently even more important due to the equivalence with matrix factorizations explained above. Following an idea of Kontsevich, the work of Kasputin and Li (see ~\cite{KaLi}), and Orlov (see~\cite{Or1},~\cite{Or2}) showed that matrix factorizations have applications to the study of Landau-Ginzburg models appearing in string theory, and to the study of Kontsevich's homological mirror symmetry. By Khovanov and Rozansky~\cite{KoRo1},~\cite{KoRo2}, MCM modules have interesting applications to link invariants. Matrix factorizations also have   applications to  cohomological field theories~\cite{PV2}. Besides these applications, matrix factorizations have connections with representation theory, Hodge theory and other topics; for more information see for example the references of the paper by Eisenbud and Peeva~\cite{EiPe}, where matrix factorizations are generalized to complete intersections.

\section{Description of results}

In this section we offer a detailed summary of the main results and techniques contained in this paper. The reader should be able to get a picture of the paper by only reading this section. Along this section we cross-reference each of the results so the reader may jump to the appropriate part of the paper for more details. We describe each of the sections of the paper, but first we need to set the terminology.

\subsection{Setting and notation} Throughout this article, unless otherwise stated, we denote by $X$ a Stein normal surface. By $(X,x)$ we denote either a complex analytic normal surface germ, or the spectrum of
a normal complete $\CC$-algebra of dimension $2$. 

As usual $\Ss{X}$ denotes the structure sheaf and $\Ss{X,y}$ is the local ring at a 
(non-necessarily closed) point $y\in X$. In this situation $X$ has a dualizing sheaf $\omega_X$, 
and by abuse of notation we denote also by $\omega_X$ its stalk at $x$, which is the dualizing module of the ring $\Ss{X,x}$. If $X$ has Gorenstein singularities then the dualizing sheaf is an invertible sheaf. This means that if $(X,x)$ is 
Gorenstein then the dualizing module coincides with $\Ss{X,x}$. 

Unless otherwise stated, we denote by $\pi \colon \Rs\to X$ a resolution of singularities (in a few instances it will denote a proper modification from a normal 
origin). 
The exceptional set is denoted by $E:=\pi^{-1}(x)$, and its decomposition into irreducible components is $E=\cup_{i}E_i$. 
By a curvette we will mean a (multi)-germ of a curve centered at the exceptional divisor.

If $(X,x)$ is 
Gorenstein, there is a Gorenstein $2$-form $\Omega_{\Rs}$ which is meromorphic in $\Rs$, and has neither zeros nor poles in $\Rs\setminus E$; it is
called the {\em Gorenstein form}. Let $div(\Omega_{\Rs})=\sum q_iE_i$ be the divisor associated with the Gorenstein form. 
If $X$ is a Stein surface with Gorenstein singularities then there exist holomorphic $2$-forms $\Omega_{\Rs}$ in $\Rs\setminus E$ so that $div(\Omega_{\Rs})=A+\sum q_iE_i$, where $A$ is disjoint from $E$. The coefficients $q_i$ are independent of the form $\Omega_{\Rs}$ having these properties.

Next, we extend to Stein normal surfaces some usual notions for singularities and define a new one:

\begin{definition}
\label{def:smallresgor}
Let $\pi \colon \Rs\to X$ be a resolution either of a Stein normal surface with Gorenstein singularities, or of a Gorenstein surface singularity. The {\em canoninal cycle} is defined by $Z_k:=\sum_i -q_iE_i$, where the $q_i$ are the coefficients defined above. 

We say that $\Rs$ is {\em small with respect to the Gorenstein form} if $Z_k$ is greater than or equal to $0$. 

The {\em geometric genus} of $X$ is defined to be the dimension as a $\mathbb{C}$-vector space of $R^1\pi_*\Ss{\Rs}$ for any resolution.
\end{definition}

\begin{remark}
\label{rem:smallnotmodifiesreg}
If a resolution $\pi \colon \Rs\to X$ of a Stein normal surface with Gorenstein singularities is small with respect to the Gorenstein form, then it is an isomorphism over the regular locus of $X$.
\end{remark}

We will only consider resolutions which are isomorphisms over the regular locus of $X$.
 
Given $X$ a normal Stein surface, or $(X,x)$ a normal surface singularity germ, we denote by $i:U\to X$ the inclusion of $U:=X\setminus Sing(X)$ in $X$. 

\subsection{The degeneracy module correspondences}
\label{sec:introdegmodcorr}

We study reflexive modules via resolutions. Let $X$ be a Stein normal surface and $\pi:\Rs\to X$ be a resolution. Let $M$ be a reflexive $\Ss{X}$-module. Its associated full $\Ss{\Rs}$-module is $\Sf{M}:=(\pi^*M)^{\smvee \smvee}$. A basic proposition of Kahn (Proposition~\ref{fullcondiciones}) characterizes full $\Ss{\Rs}$-modules and establishes a bijection between reflexive $\Ss{X}$-modules and full $\Ss{\Rs}$-modules. 

In Section~\ref{chapter:artin-verdier-esnault} we improve and systematize Artin-Verdier's first Chern class construction by proving that there is a bijective correspondence between reflexive $\Ss{X}$-modules (respectively full $\Ss{\Rs}$-modules) of rank $r$ equipped with a system of $r$ sufficiently generic sections, and Cohen Macaulay $\Ss{X}$-modules of dimension $1$ together with a system of $r$ generators (respectively dimension $1$ Cohen-Macaulay $\Ss{\Rs}$-modules with $r$ global sections generating as a $\Ss{\Rs}$-module). These correspondences are the main tool in establishing the main results of this paper; we describe them in detail right below.

Let $\pi:Y\to X$ denote a proper birational map (which could be the identity or a resolution). Let $\Sf{M}$ be a $\Ss{Y}$-module which is locally free of rank $r$ and generated by global sections except at a finite subset of $Y$. The degeneracy $\Ss{Y}$-module $\Sf{A}$ (see Section~\ref{sec:degeneracymodule}) is defined to be the quotient of $\Sf{M}$ by the submodule generated by the sections. When $\Sf{M}$ is the sheaf of sections of a vector bundle the support of $\Sf{A}$ represents the first Chern class. We prove that $\Sf{A}$ is Cohen-Macaulay of dimension $1$, with support $A$ meeting the exceptional divisor at finitely many points, and such that $\Sf{A}_y\cong\Ss{A,y}$ for generic $y$. We call such a module a {\em generically reduced Cohen-Macaulay modules of dimension $1$}. A set of {\em nearly generic sections} is a set of $r$ sections so that its associated degeneracy module is generically reduced Cohen-Macaulay of dimension $1$. We prove in Section~\ref{sec:CM1} that such modules admit a double inclusion 
$$\Ss{A}\subset \Sf{A}\subset \Ss{\tilde{A}}$$
where $\tilde{A}$ is the normalization of $A$. Based on this inclusion we associate to $\Sf{A}$ a numerical invariant $\mathfrak{A}$ called the {\em set of orders}  to $\Sf{A}$ which consists of a subset of $\NN^l$ and is similar to the semigroup of a curve with $l$ branches. As in the case of curve semigroups the set $\mathfrak{A}$ has a {\em minimal conductor} element $cond(\mathfrak{A})$.

When $Y=\Rs$ is a resolution of singularities, for any pair $(\Sf{A},(\psi_1,...,\psi_r))$ we define the {\em Containment Condition} (see Definition~\ref{def:containment}) which measures the interaction of $(\Sf{A},(\psi_1,...,\psi_r))$ with the canonical sheaf $\omega_{\Rs}$. The following theorem is a precise statement of the correspondences announced above (see Theorems~\ref{th:corrsing} and~\ref{th:corres} in the main body of the paper)  

\begin{theorem}
\label{th:introcorr}
Let $\pi:\Rs\to X$ be a resolution of a normal Stein surface.
\begin{itemize}
 \item If $X$ has Gorenstein singularities there is a bijection between a) the set of pairs $(M,(\phi_1,...,\phi_r))$ of reflexive $\Ss{X}$-modules with a set of nearly generic sections and b) the set of pairs $(\Sf{C},(\psi_1,...,\psi_r))$ of generically reduced Cohen-Macaulay modules of dimension $1$ with $r$ generators. If the sections are generic the module $M$ has free factors if and only if the system of generators $(\psi_1,...,\psi_r)$ is not minimal.
 \item There is a bijection between a) the set of pairs $(\Sf{M},(\phi_1,...,\phi_r))$ of full $\Ss{\Rs}$-modules with a set of nearly generic sections and b) the set of pairs $(\Sf{A},(\psi_1,...,\psi_r))$ of generically reduced Cohen-Macaulay modules of dimension $1$ with $r$ generators as a $\Ss{\Rs}$-module satisfying the Containment Condition. 
\end{itemize}
\end{theorem}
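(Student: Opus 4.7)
The plan is to construct inverses in both directions and verify mutual inversion, treating first the upstairs correspondence (second bullet) and reducing the downstairs statement (first bullet) to it via the Kahn equivalence of Proposition~\ref{fullcondiciones} between reflexive $\Ss{X}$-modules and full $\Ss{\Rs}$-modules. In the forward direction, given $(\Sf{M},(\phi_1,\dots,\phi_r))$ with $\Sf{M}$ full of rank $r$ and the sections nearly generic, the morphism $\vec{\phi}\colon\Ss{\Rs}^r\to\Sf{M}$ is injective because the $\phi_i$ form a basis at a generic smooth point of $\Sf{M}$, producing the short exact sequence
$$0 \to \Ss{\Rs}^r \xrightarrow{\vec{\phi}} \Sf{M} \to \Sf{A} \to 0.$$
By the properties of the degeneracy module established in Section~\ref{sec:degeneracymodule}, $\Sf{A}$ is generically reduced Cohen-Macaulay of dimension one. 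The accompanying $r$ generators $\psi_i$ are obtained from the Serre-dual presentation: applying $\Homs(-,\Ss{\Rs})$ and using $\Homs(\Sf{A},\Ss{\Rs})=0$ gives
$$0 \to \Sf{M}^{\smvee} \to \Ss{\Rs}^r \to \Exts^1(\Sf{A}, \Ss{\Rs}) \to 0,$$
and local duality, which identifies $\Exts^1(-,\omega_{\Rs})$ with the Cohen-Macaulay dual for generically reduced dimension-one CM modules, supplies the natural $r$-tuple. The Containment Condition on the resulting pair is then a reformulation of the fullness of $\Sf{M}$ via the cohomological criterion of Proposition~\ref{fullcondiciones}.

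Conversely, given $(\Sf{A},(\psi_1,\dots,\psi_r))$ satisfying the Containment Condition, the surjection $\vec{\psi}\colon\Ss{\Rs}^r\twoheadrightarrow\Sf{A}$ has a kernel $\Sf{N}$ that is Cohen-Macaulay of depth two and locally free outside the finite degeneracy locus. Setting $\Sf{M}:=\Sf{N}^{\smvee}$, the dual sequence
$$0 \to \Ss{\Rs}^r \to \Sf{M} \to \Exts^1(\Sf{A}, \Ss{\Rs}) \to 0$$
exhibits the required nearly generic sections $\phi_i$ as the images of the standard basis of $\Ss{\Rs}^r$. Reflexivity of $\Sf{M}$ follows from depth-two of $\Sf{N}$ on the smooth surface $\Rs$, and fullness from the Containment Condition, which translates back into the Kahn criterion. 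The two constructions are mutually inverse thanks to the biduality $\Sf{N}^{\smvee\smvee}\cong\Sf{N}$ and the involutive character of local duality on generically reduced CM dimension-one modules.

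The downstairs statement follows by applying Kahn's equivalence $M\leftrightarrow\Sf{M}:=(\pi^*M)^{\smvee\smvee}$, with $\Sf{C}:=M/\langle\vec{\phi}\rangle$ the downstairs degeneracy module. No Containment Condition is needed on $X$: over a Gorenstein normal surface, the kernel of any surjection $\Ss{X}^r\twoheadrightarrow\Sf{C}$ with $\Sf{C}$ Cohen-Macaulay of dimension one has depth two and is therefore reflexive. The claim about free factors is a direct bookkeeping observation: an $\Ss{X}$-summand of $M$ on which some $\phi_i$ is a generator produces a zero entry among the $\psi_i$, hence a non-minimal system; conversely, any redundant relation among the $\psi_i$ splits off an $\Ss{X}$-summand in the reconstructed extension.

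The main obstacle is verifying that the Containment Condition is precisely the necessary and sufficient condition for fullness of the reconstructed module. This requires converting Kahn's cohomological criterion, a vanishing statement involving $R^1\pi_*$ and $\omega_{\Rs}$, into an explicit condition on $(\Sf{A},\vec{\psi})$. The numerical apparatus of Section~\ref{sec:CM1} is essential for this: the inclusions $\Ss{A}\subset\Sf{A}\subset\Ss{\tilde{A}}$ and the set of orders $\mathfrak{A}$ localize the cohomological constraint to a branch-by-branch numerical comparison along $A$ with the divisor of $\omega_{\Rs}$, at which point the equivalence of the two conditions can be checked explicitly.
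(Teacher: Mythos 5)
Your duality skeleton is the right one and matches the paper's: both correspondences are built from the exact sequence of the sections and its $\Homs(-,\Ss{})$-dual, with Theorem~\ref{Th:Herzog} supplying mutual inversion, and Proposition~\ref{inversaabajo} (depth-two kernel, hence reflexive) handling the downstairs inverse. Two points, however, need attention. First, a bookkeeping slip that matters for the statement itself: the module carrying the $r$ generators is not the degeneracy module $M/\langle\vec{\phi}\rangle$ (resp.\ $\Sf{M}/\langle\vec{\phi}\rangle$) but its dual $\Exts^1_{\Ss{X}}(M/\langle\vec{\phi}\rangle,\Ss{X})$ (resp.\ $\Exts^1_{\Ss{\Rs}}(\Sf{A}',\Ss{\Rs})$); the quotient itself is a quotient of $M$, not of $\Ss{X}^r$, and has no natural $r$-generator system. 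You write the correct dual sequences but then name the wrong module as $\Sf{C}$, and also the downstairs bullet does not reduce to the upstairs one via Kahn (the downstairs $\Sf{C}$ is only the submodule of $\pi_*\Sf{A}$ generated by the sections, cf.\ Proposition~\ref{prop:dirressing}; the discrepancy is exactly the specialty defect). The paper proves the first bullet directly on $X$, which is what your depth-two remark actually does.

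The genuine gap is in your plan for the hardest step, the equivalence \emph{fullness $\Leftrightarrow$ Containment Condition}. You propose to "localize the cohomological constraint to a branch-by-branch numerical comparison along $A$ with the divisor of $\omega_{\Rs}$" using the set of orders $\mathfrak{A}$. That is not the Containment Condition but the Valuative Condition of Section~\ref{sec:valuative}: passing to orders replaces the inclusion of images $\im\gamma_1\subset\im\delta$ inside $H^0(U,\Sf{A}\otimes\omega_{\Rs})$ by an inclusion of their images under the order map, which loses information and is only implied by, not equivalent to, the Containment Condition (Proposition~\ref{prop:contval}; the converse holds only in special cases, Proposition~\ref{prop:consecuenciaspracticas}). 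Carried out, your plan would characterize fullness by a strictly weaker numerical condition and the reconstructed bijection would be onto the wrong set of pairs. The paper's actual argument is global and cohomological: Serre duality converts the injectivity of $H^1_E(\Sf{M})\to H^1(\Sf{M})$ into the surjectivity of $H^1_E(\Sf{N}\otimes\omega_{\Rs})\to H^1(\Sf{N}\otimes\omega_{\Rs})$, and a diagram chase on the cohomology and local cohomology of $0\to\Sf{N}\otimes\omega_{\Rs}\to\omega_{\Rs}^r\to\Sf{A}\otimes\omega_{\Rs}\to 0$ (using Grauert--Riemenschneider and $H^0_E(\Sf{A}\otimes\omega_{\Rs})=0$) identifies that surjectivity with $\im\gamma_1\subset\im\delta$. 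You also need, independently of the Containment Condition, that the reconstructed $\Sf{M}$ is generically generated by global sections; the paper isolates this as Lemma~\ref{lem:gengen}, proved via the bound $\dimc{\mathrm{coker}(\Sf{G}\to\pi_*\Sf{A}')}\leq rp_g$, and your proposal does not address it.
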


The key to proof of the theorem consists mainly of a cohomological analysis for which the previous study on the structure of the degeneracy modules plays a central role. As we will see later, the degeneracy modules are the crucial new ingredient that allows us to extend McKay correspondence techniques to general surface singularities.     

In Propositions~\ref{prop:dirressing} and~\ref{prop:invressing} we investigate the relation between the correspondences at $X$ and at a resolution, and the relation of the correspondences at different resolutions.

It is not easy to handle the Containment Condition directly. In Section~\ref{sec:valuative} we introduce a numerical condition for $(\Sf{A},(\psi_1,...,\psi_r))$, called the {\em Valuative Condition} which requires the set of orders $\mathfrak{A}$ to be contained in another subset of $\NN^l$, called the {\em Canonical Set of Orders}, which codifies the interaction of $(\Sf{A},(\psi_1,...,\psi_r))$ with the canonical sheaf $\omega_{\Rs}$ (see Definition~\ref{def:canoset2}). In Proposition~\ref{prop:contval} we show that the Containment Condition implies the Valuative Condition. In Proposition~\ref{prop:consecuenciaspracticas} we prove that the converse is true in sufficiently many cases, that cover all the needs of this paper.

\subsection{The structure of reflexive modules via the degeneracy module correspondences}
\label{sec:introstruct}

Each reflexive module over a singular surface has a favorite resolution of singularities (for rational singularities it coincides with the minimal resolution). Let $M$ be a reflexive $\Ss{X}$-module of rank $r$ over a normal Stein surface $X$. The {\em minimal adapted resolution} of $M$ is the minimal resolution of singularities $\pi:\Rs\to X$ such that the full $\Ss{\Rs}$-module $\Sf{M}$ associated with $M$ is generated by global sections. In Proposition~\ref{prop:minadap} we prove its existence and uniqueness. In Proposition~\ref{prop:minadapnumchar} we characterize numerically the minimal adapted resolution of a reflexive module in terms of the minimal conductor of the Canonical Set of Orders of the pair $(\Sf{A},(\psi_1,...,\psi_r))$, obtained by applying the correspondence of Theorem~\ref{th:introcorr} to the pair $(\Sf{M},(\phi_1,...,\phi_r))$ formed by the full $\Ss{\Rs}$-module associated with $M$ and a system of generic sections.  

Similar to Wunram's generalization of McKay correspondence we obtain the most detailed results for special reflexive modules. The right generalization of special module is provided in Definitions~\ref{def:especial} and~\ref{def:espmodule}. Let $\pi:\Rs\to X$ be a resolution of a normal Stein surface, $M$ be a reflexive $\Ss{X}$-module of rank $r$ and $\Sf{M}$ its associated full $\Ss{\Rs}$-module. We say that $\Sf{M}$ is a {\em special full sheaf} if we have the equality $\dimc{R^1\pi_*\left(\Sf{M}^{\smvee}\right)}=rp_g$. The {\em specialty defect} of
$\Sf{M}$ is the number $d(\Sf{M}):=\dimc{R^1\pi_*\left(\Sf{M}^{\smvee}\right)}-rp_g$. We prove that the specialty defect is non-negative. We say $M$ to be a {\em special reflexive} module if for any resolution of singularities of $X$ its associated full sheaf is special. 

We study the behavior of the specialty defect under dominant maps $\sigma:\Rs_2\to\Rs_1$ between resolutions, and various properties of special modules in Sections~\ref{sec:minimaladapted} and \ref{sec:indecomposables}. The main results are: 
\begin{enumerate}
\item the specialty defect of the full $\Ss{\Rs_2}$-module $\Sf{M}_2$ associated to $M$ is greater than or equal to the specialty defect of the full $\Ss{\Rs_1}$-module $\Sf{M}_1$ associated to $M$. We have the equality if $\rho$ is an isomorphism over the locus where $\Sf{M}_1$ is
not generated by global sections (see Proposition~\ref{prop:specialtybehaviour}).
\item Given a reflexive $\Ss{X}$-module, if at the minimal adapted resolution the specialty defect of the full sheaf associated to $M$ vanishes, then it vanishes for any resolution (see Theorem~\ref{th:characspecial}). 
\item Assume that the Stein normal surface $X$ has Gorenstein singularities. Let $(\Sf{C},(\psi_1,...,\psi_r))$ be the pair associated with $(M,(\phi_1,...,\phi_r))$ by the correspondence of Theorem~\ref{th:introcorr}. If $M$ is a special $\Ss{X}$-reflexive module over $X$ and $(\phi_1,...,\phi_r)$ are generic sections, then $\Sf{C}$ is isomorphic to $n_*\Ss{\tilde{C}}$, where $n:\tilde{C}\to C$ is the normalization of the support of $\Sf{C}$. The converse is true (see Proposition~\ref{prop:Aeslanormalizacion}).
\item Assume now that $(X,x)$ is a normal Gorenstein surface singularity and that $M$ is special without free factors. Let $C$ be like in the previous property. There is a bijection between the indecomposable direct summands of $M$ and the irreducible components of $C$.
\end{enumerate}
Property (3) gives a geometric understanding of specialty under the degeneracy module correspondence, and also allows to produce full sheaves with prescribed Chern classes. If $(X,x)$ is a germ, the normalization map $n:\tilde{C}\to C$ is an arc at the singularity if $C$ is irreducible. Hence Property (3) also establishes a link between arc spaces and reflexive modules. Property (4) is important because it shows how the degeneracy module correspondence recovers the decomposition into indecomposables in a very geometric way. Properties (3) and (4) are false for non-special modules.

A crucial tool in the study of reflexive modules is the computation of the first cohomology of full sheaves (see Theorem~\ref{formuladimensionM}):   
\begin{theorem} \label{th:introformuladimensionM}
Let $X$ be a Stein normal surface with Gorenstein singularities. Let $M$ be a reflexive $\Ss{X}$-module of rank $r$. 
Let $\pi:\Rs\to X$ be a small resolution with respect to the Gorenstein form, let $Z_k$ be the canonical cycle at $\Rs$.
Let $\Sf{M}$ be the full $\Ss{\Rs}$-module associated to $M$. Let $d(\Sf{M})$ be the specialty defect of $\Sf{M}$. Then we have the equality
\begin{equation*}
\dimc{R^1 \pi_* \Sf{M}} = rp_g - [c_1(\Sf{M})] \cdot [Z_k]  + d(\Sf{M}).
\end{equation*}
\end{theorem}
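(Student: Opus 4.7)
The plan is to combine three inputs: the Calabi--Yau nature of the canonical cycle, Riemann--Roch on $Z_k$, and relative Grothendieck duality for $\pi$. Smallness of $\Rs$ with respect to the Gorenstein form gives $\Cs{\Rs}\cong \Ss{\Rs}(-Z_k)$; adjunction then yields $\Cs{Z_k}\cong \Ss{Z_k}$, hence in particular $\chi(\Ss{Z_k})=0$ and Serre duality on the compact curve $Z_k$ reads $h^1(Z_k,\Sf{F})=h^0(Z_k,\Sf{F}^{\smvee})$ for any locally free $\Sf{F}$ on $Z_k$.

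First I would tensor the exact sequence $0\to \Ss{\Rs}(-Z_k)\to \Ss{\Rs}\to \Ss{Z_k}\to 0$ with the locally free $\Sf{M}$ and push down by $R\pi_*$. Because $Z_k$ is supported on $E$, the long exact sequence takes the form
\begin{equation*}
0\to \pi_*(\Sf{M}\otimes\Cs{\Rs})\to \pi_*\Sf{M}\to H^0(Z_k,\Sf{M}|_{Z_k})\to R^1\pi_*(\Sf{M}\otimes\Cs{\Rs})\to R^1\pi_*\Sf{M}\to H^1(Z_k,\Sf{M}|_{Z_k})\to 0.
\end{equation*}
The inclusion $\pi_*(\Sf{M}\otimes\Cs{\Rs})\hookrightarrow\pi_*\Sf{M}$ has finite-length cokernel (it injects into the finite-dimensional $H^0(Z_k,\Sf{M}|_{Z_k})$), so alternating lengths in the truncated sequence give
\begin{equation*}
\dimc{R^1\pi_*\Sf{M}}=\dimc{\pi_*\Sf{M}/\pi_*(\Sf{M}\otimes\Cs{\Rs})}+\dimc{R^1\pi_*(\Sf{M}\otimes\Cs{\Rs})}-\chi(\Sf{M}|_{Z_k}),
\end{equation*}
and Riemann--Roch on $Z_k$ combined with $\chi(\Ss{Z_k})=0$ yields $\chi(\Sf{M}|_{Z_k})=[c_1(\Sf{M})]\cdot [Z_k]$.

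Next I would apply Grothendieck duality for $\pi$ in the form $R\pi_* R\Homs(\Sf{M}^{\smvee},\Cs{\Rs})\cong R\Homs(R\pi_*\Sf{M}^{\smvee},\Cs{X})$, using $\Cs{X}\cong \Ss{X}$ from the Gorenstein hypothesis. Dissecting $R\pi_*\Sf{M}^{\smvee}$ via the triangle $\pi_*\Sf{M}^{\smvee}\to R\pi_*\Sf{M}^{\smvee}\to R^1\pi_*\Sf{M}^{\smvee}[-1]$, using that $M$ and $M^{\smvee}$ are reflexive hence MCM on the Gorenstein surface (so $R\Homs(M^{\smvee},\Ss{X})=M$), and applying local duality on the $2$-dimensional Gorenstein local ring to the finite-length sheaf $R^1\pi_*\Sf{M}^{\smvee}$, the associated cohomology long exact sequence collapses to the four-term sequence
\begin{equation*}
0\to \pi_*(\Sf{M}\otimes\Cs{\Rs})\to M\to (R^1\pi_*\Sf{M}^{\smvee})^{\smvee}\to R^1\pi_*(\Sf{M}\otimes\Cs{\Rs})\to 0,
\end{equation*}
whence $\dimc{\pi_*\Sf{M}/\pi_*(\Sf{M}\otimes\Cs{\Rs})}+\dimc{R^1\pi_*(\Sf{M}\otimes\Cs{\Rs})}=\dimc{R^1\pi_*\Sf{M}^{\smvee}}$.

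Substituting this identity into the previous display and using the definition $d(\Sf{M})=\dimc{R^1\pi_*\Sf{M}^{\smvee}}-rp_g$ produces the required formula. The main obstacle is the clean bookkeeping of Grothendieck duality in the Stein analytic (or local algebraic) Gorenstein setting: one has to verify that the derived-category computation really does collapse to the four-term sequence above, and this is precisely the step in which the Gorenstein hypothesis on $X$, the smallness of the resolution, the MCM property of $M$, and the finite length of $R^1\pi_*\Sf{M}^{\smvee}$ all enter simultaneously.
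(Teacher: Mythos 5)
Your proof is correct, but it follows a genuinely different route from the paper's. The paper fixes $r$ generic sections, forms the degeneracy module $\Sf{A}$ and the auxiliary modules $\Sf{C}$ and $\Sf{D}$, and extracts the formula from a comparison of the two dualized exact sequences on $X$ and on $\Rs$ (the commutativity of diagram~(\ref{diagramacc}), proved in Lemma~\ref{lema:tecnico}, is the technical heart), identifying $[c_1(\Sf{M})]\cdot[Z_k]$ with $\dimc{\pi_*\Exts^1_{\Ss{\Rs}}(\Sf{A},\Ss{Z_k})}$ and $d$ with $\dimc{\Ext^2_{\Ss{X}}(\Sf{D},\Ss{X})}$. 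You avoid the degeneracy module entirely and work directly with $\Sf{M}$: restriction to $Z_k$ plus Riemann--Roch (with $\chi(\Ss{Z_k})=0$ from $\omega_{\Rs}\cong\Ss{\Rs}(-Z_k)$) produces the term $-[c_1(\Sf{M})]\cdot[Z_k]$, while relative duality applied to $\Sf{M}^{\smvee}$ converts the remaining finite-length contributions into $\dimc{R^1\pi_*\Sf{M}^{\smvee}}=rp_g+d(\Sf{M})$. This is shorter and requires no choice of sections; what the paper's longer route buys is the explicit intermediate objects $\Sf{C}$, $\Sf{D}$ and the identification of $d$ with $\dimc{\Sf{D}}$, which are reused later in the deformation-theoretic sections. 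Both arguments rest on the same two nontrivial inputs: smallness of the resolution (so that $\omega_{\Rs}\cong\Ss{\Rs}(-Z_k)$ with $Z_k\geq 0$) and Grothendieck duality for $\pi$, which the paper also invokes after reducing to the local germ.

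Two small points to tighten. First, in your four-term sequence the object should be $\Exts^2_{\Ss{X}}(R^1\pi_*\Sf{M}^{\smvee},\Ss{X})$ rather than $(R^1\pi_*\Sf{M}^{\smvee})^{\smvee}$: the $\Homs(-,\Ss{X})$-dual of a finite-length module is zero, and it is the degree-$2$ Ext (whose length equals that of $R^1\pi_*\Sf{M}^{\smvee}$ by local duality, as you say) that appears when the truncation triangle is dualized. Second, your computation of $R\Hom(\pi_*\Sf{M}^{\smvee},\Ss{X})=M$ tacitly uses $\pi_*(\Sf{M}^{\smvee})=(\pi_*\Sf{M})^{\smvee}=M^{\smvee}$; this is exactly Lemma~\ref{lema:dualM}, whose proof rests on the vanishing $R^1\pi_*(\Sf{M}\otimes\Cs{\Rs})=0$ for full sheaves (Lemma~\ref{lema:ceroggsg}) --- a vanishing that also makes the term $R^1\pi_*(\Sf{M}\otimes\Cs{\Rs})$ in your displays identically zero, so it should be cited rather than carried along as a possibly nonzero quantity.
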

The proof occupies the whole Section~\ref{sec:cohomology}. An interesting Corollary is the fact that the minimal adapted resolution $\pi:\Rs\to X$ of a special reflexive $\Ss{X}$-module $M$ over a Stein normal surface with Gorenstein singularities is characterized by the fact that $\dimc{R^1 \pi_* \Sf{M}} = rp_g$, where $\Sf{M}$ is the associated full $\Ss{\Rs}$-module (see Corollary~\ref{cor:dimMadap}). This has the interesting consequence that the cycle representing first Chern class of $\Sf{M}$ does not meet the support of the canonical cycle $Z_K$. It also shows that $\dimc{R^1 \pi_* \Sf{M}} = rp_g$ can be used as an invariant controlling the minimal adapted resolution process.

The above tools allow us to prove some of the main results of the paper. The first is a determination of special reflexive modules in terms of a first Chern class (Theorem~\ref{th:Chernresolucionadapted}):

\begin{theorem}\label{th:introChernresolucionadapted}
Let $X$ be a Stein normal surface with Gorenstein singularities. Let $M$ be a special $\Ss{X}$-module without free factors. Let $\pi:\Rs\to X$ be the minimal
resolution adapted to $M$, and $\Sf{M}$ the full $\Ss{\Rs}$-module associated to $M$. The module $\Sf{M}$ (and equivalently $M$) is determined by its first Chern class in $\text{Pic}(\Rs)$.
\end{theorem}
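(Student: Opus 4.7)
The plan is to translate the problem via the degeneracy module correspondence of Theorem~\ref{th:introcorr} (second bullet) and exploit the structural description of the degeneracy module forced by specialty without free factors.

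Since $\pi$ is the minimal adapted resolution of $M$, the full sheaf $\Sf{M}$ is globally generated, so one can pick $r$ nearly generic global sections $\phi_1, \ldots, \phi_r$ of $\Sf{M}$. Theorem~\ref{th:introcorr} produces a pair $(\Sf{A}, (\psi_1, \ldots, \psi_r))$ where $\Sf{A}$ is a generically reduced Cohen--Macaulay $\Ss{\Rs}$-module of dimension $1$ satisfying the Containment Condition; the support $A$ of $\Sf{A}$ is the zero divisor of $\phi_1 \wedge \cdots \wedge \phi_r \in H^0(\det \Sf{M})$, so $A \in |c_1(\Sf{M})|$. Because $M$ is special without free factors, the resolution-level analog of Proposition~\ref{prop:Aeslanormalizacion} (property (3) in Section~\ref{sec:introstruct}) gives $\Sf{A} \cong n_{\ast}\Ss{\tilde A}$, where $n \colon \tilde A \to A$ is the normalization. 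In particular, $\Sf{A}$ depends only on the divisor $A$, not on any further choice.

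Now suppose $M'$ is another special reflexive $\Ss{X}$-module without free factors, with the same minimal adapted resolution $\pi$ and $c_1(\Sf{M}') = c_1(\Sf{M})$. The same construction yields $(\Sf{A}', (\psi_i'))$ with $\Sf{A}' \cong n_{\ast}\Ss{\tilde{A}'}$ and $A' \in |c_1(\Sf{M})|$. The equality of first Chern classes, combined with the numerical characterization of the minimal adapted resolution in Proposition~\ref{prop:minadapnumchar} and Corollary~\ref{cor:dimMadap}, should force the ranks to coincide. Since both $\Sf{M}$ and $\Sf{M}'$ are globally generated, the image of the wedge map $\wedge^{r} H^0(\Sf{M}') \to H^0(\det \Sf{M}')$ is rich enough that, by varying the generic sections of $\Sf{M}'$, one can arrange $A = A'$ as divisors on $\Rs$. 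Then $\Sf{A} = \Sf{A}' = n_{\ast}\Ss{\tilde A}$ canonically, and $(\psi_i), (\psi_i')$ become two systems of $r$ elements of $H^0(\Ss{\tilde A})$ generating $n_{\ast}\Ss{\tilde A}$ over $\Ss{\Rs}$ and satisfying the Containment Condition.

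The final step is a rigidity statement: any two such systems of generators are equivalent under the natural action of $\mathrm{Aut}(\Sf{A}) \times GL_r(\CC)$ on generator-tuples, hence define equivalent pairs in the correspondence of Theorem~\ref{th:introcorr}, yielding $\Sf{M} \cong \Sf{M}'$. This rigidity is the main obstacle, and it is where the hypotheses of specialty and of working at the minimal adapted resolution are fully exploited: they constrain the tuples to be minimal generating systems satisfying the Containment Condition (equivalently, via Proposition~\ref{prop:consecuenciaspracticas}, the Valuative Condition) in a rigid way governed by the Canonical Set of Orders and its minimal conductor. Establishing transitivity of the $\mathrm{Aut}(\Sf{A}) \times GL_r(\CC)$-action on such tuples should reduce to a combinatorial analysis at the level of the set of orders $\mathfrak{A}$, plus the interaction of the Containment Condition with $\omega_{\Rs}$.
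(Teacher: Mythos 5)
Your route through the degeneracy-module correspondence is genuinely different from the paper's. The paper instead proves that $\Sf{M}$ sits in an extension $0\to\Ss{\Rs}^{r-1}\to\Sf{M}\to\det\Sf{M}\to 0$ (Proposition~\ref{prop:extension1st}), pushes this down by $\pi_*$, and uses $\dimc{R^1\pi_*\Sf{M}}=rp_g$ (Corollary~\ref{cor:dimMadap}) together with $\dimc{R^1\pi_*\det\Sf{M}}=p_g$ (Lemma~\ref{lemma:dimL}) to obtain a short exact sequence $0\to\Ss{X}^{r-1}\to M\to\pi_*\det\Sf{M}\to 0$; it then concludes by Artin--Verdier's Lemma~1.9.ii in \cite{AV}, which says that a reflexive module without free factors arising as such an extension is determined by $\pi_*\det\Sf{M}$. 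Your approach, however, has two genuine gaps.

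First, the step ``one can arrange $A=A'$ by varying the sections'' is not justified. Equality of classes in $\text{Pic}(\Rs)$ gives $\Ss{\Rs}(-A)\cong\Ss{\Rs}(-A')$ only after an argument like Lemma~\ref{lem:varioD} (which needs the isomorphism $H^1(\Rs,\Ss{\Rs})\cong H^1(\Rs,\Ss{Z_K})$ and the fact that, by Proposition~\ref{prop:minadapspproperty}, the curvettes avoid $\mathrm{Supp}(Z_K)$); and even then one must still show that the fixed divisor $A$ is realized as the degeneracy locus of some $r$-tuple of sections of $\Sf{M}'$, not merely of a section of $\det\Sf{M}'$. The claim that the ranks coincide is likewise asserted rather than derived. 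Second, and more seriously, the final rigidity step --- which you yourself flag as the main obstacle --- is both unproven and incorrectly formulated: the action of $\mathrm{Aut}(\Sf{A})\times GL_r(\CC)$ on minimal generating $r$-tuples is not transitive (already $(\psi_1,\psi_2)$ and $(\psi_1,\psi_2+f\psi_1)$ with $f$ a nonunit restriction of a function are not in the same orbit, although they generate the same module), so no combinatorial analysis of the set of orders $\mathfrak{A}$ can establish it. What is actually needed is the weaker but different statement that the reflexive module produced by the correspondence of Theorem~\ref{th:introcorr} depends only on $\Sf{C}$ and not on the chosen minimal generating system --- i.e.\ uniqueness up to isomorphism of the first syzygy of a minimal presentation, which is exactly the content the paper imports from \cite{AV} (and uses again in the proof of Proposition~\ref{prop:decompesp}). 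Without that input your argument does not close.
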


\subsection{The classification of reflexive modules}
\label{sec:introclass}

In Section~\ref{sec:combclass} we provide a combinatorial classification of reflexive modules. Given a normal surface singularity $(X,x)$ and a reflexive $\Ss{X}$-module $M$, we define its associated graph $\Sf{G}_M$ to be the dual graph of the minimal good resolution dominating the minimal adapted resolution to $M$, decorated adding as many arrows to each of its vertices $v$ as the number $c_1(\Sf{M})(E_v)$, where $E_v$ is the component of the exceptional divisor corresponding to $v$ and $c_1(\Sf{M})$ is the first Chern class of the associated full
$\Sf{M}$-module. 

Let $(\Sf{A},(\psi_1,...,\psi_r))$ be the pair associated with $(\Sf{M},(\phi_1,...,\phi_r))$ under the correspondence of Theorem~\ref{th:introcorr}, where $(\phi_1,...,\phi_r)$ are generic sections. Proposition~\ref{prop:minadapspproperty} shows that the support of $\Sf{A}$ is a disjoint union of as many smooth curvettes as arrows has $\Sf{G}_M$, each of them meeting transversely the irreducible component of the exceptional divisor corresponding to the vertex where the arrow is attached. 

In Theorem~\ref{th:charresgraphsp} we characterize combinatorially the graphs of special modules over Gorenstein surface singularities. We prove that these are precisely the graphs such that
\begin{enumerate}
\item the graph is numerically Gorenstein.
\item if a vertex has genus $0$, self intersection $-1$ and has at most two neighboring vertices, then it supports at least $1$ arrow.
\item if a vertex supports arrows then its coefficient in the canonical cycle equals $0$.
\end{enumerate}

However, a much stronger classification result is the following one (Corollary~\ref{Cor:finalprincipal} in the body of the paper). Given a normal surface singularity a {\em irreducible divisor over} $x$ is the same that a divisorial valuation of the function field of $X$ centered at $x$. An irreducible divisor over $x$ {\em appears at} a model $\pi:\Rs\to X$ if its center at $\Rs$ is a divisor.
\begin{theorem}
\label{th:introfinalprincipal}
Let $(X,x)$ be a Gorenstein surface singularity. Then there exists a bijection between the following sets: 
\begin{enumerate}
\item The set of special indecomposable reflexive $\Ss{X}$-modules up to isomorphism.
\item The set of irreducible divisors $E$ over $x$, such at any resolution of $X$ where $E$ appears, the Gorenstein form has neither 
zeros nor poles along $E$.
\end{enumerate}
\end{theorem}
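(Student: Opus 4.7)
The plan is to establish the bijection through a forward map $\Phi$ built out of the minimal adapted resolution and the degeneracy module correspondence, and then to check bijectivity by invoking the first Chern class determination together with the cohomology formula. Given a special indecomposable reflexive $\Ss{X}$-module $M$, I would first pass to the minimal adapted resolution $\pi\colon\Rs\to X$ of $M$ (Proposition~\ref{prop:minadap}), and apply Theorem~\ref{th:introcorr} to the associated full sheaf $\Sf{M}$ together with a generic set of sections, obtaining a degeneracy pair $(\Sf{A},(\psi_1,\dots,\psi_r))$. Since $M$ is special, Property (3) of Section~\ref{sec:introstruct} identifies $\Sf{C}$ with the normalization sheaf $n_*\Ss{\tilde C}$; since $M$ is indecomposable, Property (4) gives that $C$ is irreducible, and hence so is $\tilde C$. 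Combining this with Proposition~\ref{prop:minadapspproperty} (the support of $\Sf{A}$ is a disjoint union of smooth curvettes, each meeting one exceptional component transversely) forces $A$ to be a single smooth curvette meeting a unique irreducible exceptional component $E$ transversely at a smooth point. Set $\Phi(M):=E$, regarded as a divisorial valuation over $x$.

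That $E$ has no Gorenstein zero or pole follows from the interplay of Corollary~\ref{cor:dimMadap} and Theorem~\ref{th:introformuladimensionM}. The corollary characterizes the minimal adapted resolution of a special module by $\dim_\CC R^1\pi_*\Sf{M}=rp_g$; the cohomology formula reads $\dim_\CC R^1\pi_*\Sf{M}=rp_g-c_1(\Sf{M})\cdot Z_k+d(\Sf{M})$. Setting $d(\Sf{M})=0$ and using $c_1(\Sf{M})=[A]$ with $[A]\cdot E_i=\delta_{iv}$ and $Z_k=-\sum q_i E_i$ gives $q_v=0$; since $q_v$ is an invariant of the divisorial valuation $E$ itself, the required vanishing holds at every model where $E$ appears. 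Injectivity of $\Phi$ then follows from Theorem~\ref{th:introChernresolucionadapted}: if $\Phi(M_1)=\Phi(M_2)=E$, the numerical characterization of Proposition~\ref{prop:minadapnumchar} (the minimal adapted resolution is determined by the minimal conductor of the associated canonical set of orders, and both pairs are built from the same single-curvette datum at $E$) shows the two minimal adapted resolutions coincide. On this common resolution both Chern classes satisfy $c_1(\Sf{M_i})\cdot E_u=\delta_{uv}$ for all $u$; for a Stein resolution of a germ the evaluation pairing $\text{Pic}(\Rs)\otimes H_2(\Rs,\ZZ)\to\ZZ$ is perfect on exceptional classes, so $c_1(\Sf{M_1})=c_1(\Sf{M_2})$, and Theorem~\ref{th:introChernresolucionadapted} concludes $M_1\cong M_2$.

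For surjectivity, given $E$ with the prescribed vanishing, pick a resolution $\pi\colon\Rs\to X$ in which $E$ appears, and a smooth curvette $A$ meeting $E$ transversely at a smooth point away from the other exceptional components. Feed $(\Sf{A},\psi)=(\Ss{A},1)$ into the resolution half of Theorem~\ref{th:introcorr}: the Valuative Condition holds because $q_E=0$ makes the canonical set of orders unrestricted along $E$, and Proposition~\ref{prop:consecuenciaspracticas} then upgrades this to the Containment Condition, so the theorem produces a full $\Ss{\Rs}$-module $\Sf{M}$. Let $M$ be the reflexive $\Ss{X}$-module corresponding to $\Sf{M}$ under Kahn's bijection (Proposition~\ref{fullcondiciones}). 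Because $A\cong\tilde C$ maps birationally onto its irreducible image $C=\pi(A)$, one has $\Sf{C}=\pi_*\Ss{A}=n_*\Ss{\tilde C}$, so the converse in Property (3) yields that $M$ is special; Property (4) applied to the irreducible $C$ yields indecomposability; and by construction $\Phi(M)=E$. The part I expect to demand the most care is verifying that the minimal adapted resolution of the module built in this step coincides with the model we started on (and correspondingly that the Containment Condition is valid in the necessary generality); both should follow from the preparatory results on minimal adapted resolutions and on degeneracy modules established in Sections~\ref{chapter:artin-verdier-esnault} and~\ref{sec:minimaladapted}, but it is the linchpin of the whole argument.
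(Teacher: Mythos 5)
Your overall architecture matches the paper's (forward map via the degeneracy curvette at the minimal adapted resolution, injectivity via the first Chern class, surjectivity via the inverse correspondence), but two steps are genuinely broken. First, in the injectivity argument you assert that the evaluation pairing $\mathrm{Pic}(\Rs)\otimes H_2(\Rs,\ZZ)\to\ZZ$ is perfect, so that equal intersection numbers with the $E_u$ force $c_1(\Sf{M}_1)=c_1(\Sf{M}_2)$ in $\mathrm{Pic}(\Rs)$. This is false precisely in the non-rational case the theorem is about: the exponential sequence gives a kernel $H^1(\Rs,\Ss{\Rs})/\mathrm{im}\,H^1(\Rs,\ZZ)$ of positive dimension whenever $p_g>0$, so two curvettes transverse to the same $E$ need not a priori define isomorphic line bundles. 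The paper closes exactly this hole with Lemma~\ref{lem:varioD}, whose proof uses the isomorphism $H^1(\Rs,\Ss{\Rs})\cong H^1(\Rs,\Ss{Z_K})$ and the fact that the difference cocycle is supported near $E_v$ with $E_v\not\subseteq\mathrm{Supp}(Z_K)$ (i.e.\ $q_v=0$) to show the class vanishes. Without this, Theorem~\ref{th:introChernresolucionadapted} cannot be applied.

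Second, in the surjectivity step you feed the pair $(\Ss{A},1)$, with a \emph{single} section, into the correspondence of Theorem~\ref{th:corres}. With one section the $\Ss{X}$-module $\Sf{C}$ generated by it is $\Ss{C}$ for $C=\pi(A)$, not $\pi_*\Ss{A}=n_*\Ss{\tilde C}$; since $C$ is in general a singular branch through $x$, $\Ss{C}\subsetneq n_*\Ss{\tilde C}$. Concretely, the canonical set of orders is then $\mathfrak{K}_{(\Ss{A},1)}=\mathfrak{S}$, the semigroup of $C$, not all of $\NN$, so the Valuative Condition $\mathfrak{A}=\NN\subset\mathfrak{K}$ fails whenever $C$ is singular and the output $\Sf{M}$ is not even full; and even where fullness held, the specialty defect would equal $\dim_\CC(n_*\Ss{\tilde C}/\Ss{C})>0$. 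The correct construction (as in the proof of Theorem~\ref{Teo:final}) takes $r$ equal to the minimal number of generators of $\pi_*\Ss{A}$ as an $\Ss{X}$-module and feeds all $r$ generators into the correspondence: this is what forces $\Sf{C}=n_*\Ss{\tilde C}$, makes $cond(\mathfrak{K}_{(\Sf{A},(\psi_1,\dots,\psi_r))})=0$, and yields a special full sheaf via Proposition~\ref{prop:consecuenciaspracticas}. Indecomposability then comes from Proposition~\ref{prop:decompesp} as you say, and the remaining bookkeeping (that the chosen model is the minimal adapted resolution) is Proposition~\ref{prop:minadapnumchar}, which you correctly identify as the linchpin.
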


This theorem specializes to classical Mckay correspondence in the case of rational double points. By taking direct sums one obtains a full classification of special reflexive modules over Gorenstein surface singularities.

\subsection{Deformations and families}
\label{def:introdefs_fam}

Having studied reflexive modules as individual objects we turn to deformations and moduli questions for the rest of the paper. Here we work as generally as possible: we allow deformations of the underlying space when we deform reflexive modules; when we deform full sheaves we allow simultaneous deformation of the space and of the resolution. 

In Section~\ref{sec:deffunctors} we define the relevant deformation functors, morphisms between them and establish the existence of versal deformations. Let $X$ be a Stein normal surface and $M$ be a reflexive $\Ss{X}$-module. 

A deformation of $(X,M)$ over a base $(S,s)$ consists of a flat deformation $\Sf{X}$ of the $X$ over $(S,s)$  together with a $\Ss{\Sf{X}}$-module which is flat over $S$ and whose fibre over $s$ is isomorphic to $M$ (see Definition~\ref{def:deforfam}). Since reflexivity is an open property in flat families this is an adequate notion of deformations of reflexive sheaves. This definition leads to a deformation functor $\mathbf{Def_{X,M}}$. The functor of deformations fixing the base space $X$ is a sub-functor.

On the other hand fullness is not an open property in flat families. Therefore the right definition of the deformation functor of full sheaves needs a cohomological condition: let $X$ and $M$ as before. Let $\pi:\Rs\to X$ be a resolution and $\Sf{M}$ the full $\Ss{\Rs}$ module associated with $M$. A deformation of $(\Rs,X,\Sf{M})$ is formed by a very weak simultaneous resolution $\Pi:\Rsd\to \Sf{X}$ of a flat deformation $\Sf{X}$ of $X$ over $S$, and a $\Ss{\Rsd}$-module $\overline{\Sf{M}}$ which is flat over $S$, whose specialization over $s$ is isomorphic to $\Sf{M}$ and such that $R^1\Pi_*\overline{\Sf{M}}$ is flat over $S$ (see Definition~\ref{def:deformationfull}). This leads to a deformation functor $\mathbf{FullDef_{\Rs,X,\Sf{M}}}$. One has subfunctors fixing the underlying space and/or the resolution. In Proposition~\ref{prop:fullopen} it is shown that fullness is an open property in families defined as above; the proof uses the flatness of $R^1\Pi_*\overline{\Sf{M}}$ in a crucial way. This shows that our definition is the correct notion of deformation within the category of full sheaves.

In Proposition~\ref{prop:naturaltrans} we show that the push-forward functor $\Pi_*$ defines a natural transformation from $\mathbf{FullDef_{\Rs,X,\Sf{M}}}$ to $\mathbf{Def_{X,M}}$. Since Kahn's result (Proposition~\ref{fullcondiciones}) establishes a bijection between reflexive $\Ss{X}$-modules and full $\Ss{\Rs}$-modules, and this bijection is via the push-forward functor, one could naively expect that this $\Pi_*$ is an isomorphism of functors. This is not the case as we will see below.
Analyzing the functors $\mathbf{FullDef_{\Rs,X,\Sf{M}}}$ and $\mathbf{Def_{X,M}}$, and the transformation $\Pi_*$ directly seems a difficult task. The extension to deformation functor isomorphisms of the correspondences of Theorem~\ref{th:introcorr} is the crucial tool in our subsequent analysis. 

\subsection{The correspondences as isomorphisms of deformation functors}
\label{def:isodef}

First we enrich the functors $\mathbf{FullDef_{\Rs,X,\Sf{M}}}$ and $\mathbf{Def_{X,M}}$ and define deformation functors 
$\mathbf{FullDef_{\Rs,X,\Sf{M}}^{(\phi_1,...,\phi_r)}}$ and $\mathbf{Def_{X,M}^{(\phi_1,...,\phi_r)}}$. Given $X$ and $M$ as above, and $(\phi_1,...,\phi_r)$ a set of $r=rank(M)$ nearly generic sections, the deformation functor $\mathbf{Def_{X,M}^{(\phi_1,...,\phi_r)}}$ associates to $(S,s)$ a deformation $(\Sf{X},\overline{M})$ in $\mathbf{Def_{X,M}}(S,s)$ along with a set of sections $(\overline{\phi}_1,...,\overline{\phi}_r)$ extending $(\phi_1,...,\phi_r)$ (see Definition~\ref{def:enhanceddef1}). The definition of $\mathbf{FullDef_{\Rs,X,\Sf{M}}^{(\phi_1,...,\phi_r)}}$ is similar (see Definition~\ref{def:enhanceddef2}). There are obvious forgetful functors from $\mathbf{Def_{X,M}^{(\phi_1,...,\phi_r)}}$ to  $\mathbf{Def_{X,M}}$, and from $\mathbf{FullDef_{\Rs,X,\Sf{M}}^{(\phi_1,...,\phi_r)}}$ to $\mathbf{FullDef_{\Rs,X,\Sf{M}}}$. 

In order to be able to prove an analog of Theorem~\ref{th:introcorr} for deformations we need deformation functors of generically reduced $1$-dimensional Cohen-Macaulay modules together with sets of generators. The relevant definitions are the following (see Definitions~\ref{def:enhanceddef1} and~\ref{def:enhanceddef2}): let $X$ be as above, let $(\Sf{C},(\psi_1,...,\psi_r))$ be a generically reduced $1$-dimensional Cohen-Macaulay $\Ss{X}$-module, together with a system of generators as a $\Ss{X}$-module. A deformation of $(X,\Sf{C},(\psi_1,...,\psi_r))$ over a germ $(S,s)$ consists of a flat deformation $\Sf{X}$ of the space $X$ over $(S,s)$, a $\Ss{\Sf{X}}$-module $\overline{\Sf{C}}$ which is flat over $S$ and specializes to $\Sf{C}$ over $s$, and a set of sections $(\overline{\psi}_1,...,\overline{\psi}_r)$ of $\overline{\Sf{C}}$ which specialize to $(\psi_1,...,\psi_r)$ over $s$. The resulting deformation functor is denoted by $\mathbf{Def_{X,\Sf{C}}^{(\psi_1,...,\psi_r)}}$. 

Like in the case of deformations of full sheaves the straightforward generalization of this functor to the case of resolutions does not work; we need to add a further condition to the families, that in a certain sense is the analog of the flatness condition of $R^1\Pi_*\overline{\Sf{M}}$ in the case of deformations of full sheaves. Let $\pi:\Rs\to X$ be a resolution of singularities and $\Sf{A}$ be a $1$-dimensional generically reduced Cohen-Macaulay $\Ss{\Rs}$-module whose support meets the exceptional divisor at finitely many points. Let $(\psi_1,...,\psi_r)$ be a set of global sections of $\Sf{A}$ generating it as a $\Ss{\Rs}$-module. A {\em specialty defect constant deformation of} $(\Sf{A},(\psi_1,...,\psi_r))$ over a germ $(S,s)$ consists of a very weak simultaneous resolution $\Pi:\Rsd\to \Sf{X}$ of a flat deformation of $X$ over $(S,s)$, a $\Ss{\Rsd}$-module $\overline{\Sf{A}}$ which is flat over $S$ and specializes to $\Sf{A}$ over $s$, and a set of sections $(\overline{\psi}_1,...,\overline{\psi}_r)$ which specialize to $(\psi_1,...,\psi_r)$ over $s$, and which are so that  the cokernel $\overline{\Sf{D}}$ of the natural mapping $\Pi_*\Ss{\Rsd}^r\to \Pi_*\overline{\Sf{A}}$ induced by the sections is flat over $S$. The resulting deformation functor is denoted by $\mathbf{SDCDef_{\Rs,X,\Sf{A}}^{(\psi_1,...,\psi_r)}}$. The functor $\Pi_*$ defines a natural transformation from $\mathbf{SDCDef_{\Rs,X,\Sf{A}}^{(\psi_1,...,\psi_r)}}$ to $\mathbf{Def_{X,\Sf{C}}^{(\psi_1,...,\psi_r)}}$, where 
$\Sf{C}$ is the submodule of $\pi_*\Sf{A}$ generated by $(\psi_1,...,\psi_r)$.

The following is our main tool in studying deformation and moduli functors (see Theorems~\ref{th:dirXdef} and~\ref{th:dirresdef} for a more precise version). 

\begin{theorem}
\label{th:introdefcorr}
Let $\pi:\Rs\to X$ be a resolution of a Stein normal surface with Gorenstein singularities. Let $M$ be a reflexive $\Ss{X}$-module of rank $r$ and $\Sf{M}$ be the associated full $\Ss{\Rs}$-module.
\begin{enumerate}
 \item Let $(\phi_1,...,\phi_r)$ be nearly generic sections of $M$ , let $(\Sf{C},(\psi_1,...,\psi_r))$ be the pair associated with $(M,(\phi_1,...,\phi_r))$ under the correspondence of Theorem~\ref{th:introcorr}. There is an isomorphism between the functors $\mathbf{Def_{X,M}}^{(\phi_1,...,\phi_r)}$ and $\mathbf{Def_{X,\Sf{C}}^{(\psi_1,...,\psi_r)}}$.
 \item Let $(\phi_1,...,\phi_r)$ be nearly generic sections of $\Sf{M}$, let $(\Sf{A},(\psi_1,...,\psi_r))$ be the pair associated with $(\Sf{M},(\phi_1,...,\phi_r))$ under the correspondence of Theorem~\ref{th:introcorr}. There is an isomorphism between the functors $\mathbf{FullDef_{\Rs,X,\Sf{M}}}^{(\phi_1,...,\phi_r)}$ and $\mathbf{SDCDef_{\Rs,X,\Sf{A}}^{(\psi_1,...,\psi_r)}}$.
\end{enumerate}
\end{theorem}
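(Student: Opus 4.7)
The plan is to extend the pointwise bijection of Theorem~\ref{th:introcorr} to isomorphisms of deformation functors by constructing explicit mutually inverse natural transformations $F$ and $G$ in each part, arranged so that the construction on the central fibre recovers Theorem~\ref{th:introcorr}.

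For the forward direction of Part (1), given an object $(\Sf{X},\overline{M},(\overline{\phi}_1,\dots,\overline{\phi}_r))$ of $\mathbf{Def_{X,M}}^{(\phi_1,\dots,\phi_r)}(S,s)$, the sections define a morphism $\Ss{\Sf{X}}^r \to \overline{M}$; let $\overline{\Sf{C}}$ denote its cokernel, with induced generators $(\overline{\psi}_1,\dots,\overline{\psi}_r)$. The main point is that $\overline{\Sf{C}}$ is $S$-flat and base-changes correctly to $\Sf{C}$ over $s$. Since $\overline{M}$ is $S$-flat and the nearly generic sections on the central fibre produce a cokernel which is Cohen--Macaulay of dimension $1$, the local criterion of flatness combined with a $\mathrm{Tor}^1_{\Ss{\Sf{X}}}(\overline{\Sf{C}},\kappa(s))$-vanishing derived from the dimension of support yields the required flatness; base change is then automatic. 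Openness of the nearly generic locus ensures $(\overline{\psi}_i)$ remain nearly generic in a neighborhood of $s$.

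For the inverse direction of Part (1), given $(\Sf{X},\overline{\Sf{C}},(\overline{\psi}_i))$ I consider the surjection $\Ss{\Sf{X}}^r \twoheadrightarrow \overline{\Sf{C}}$ induced by the $\overline{\psi}_i$ and denote its kernel by $\overline{K}$. Flatness of $\overline{\Sf{C}}$ and of $\Ss{\Sf{X}}^r$ gives flatness of $\overline{K}$ over $S$. Using the Gorenstein hypothesis I define
\[
\overline{M} := \Homs_{\Ss{\Sf{X}}}(\overline{K},\Ss{\Sf{X}}).
\]
To show $\overline{M}$ is $S$-flat and specializes to $M$ I need that $\Homs$ commutes with base change, which reduces by a standard spectral-sequence argument to the vanishing of $\Exts^2_{\Ss{X}}(\Sf{C},\Ss{X})$; this vanishing follows because $\Sf{C}$ is generically reduced Cohen--Macaulay of dimension $1$ on the Gorenstein surface $X$. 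Functoriality is immediate, and the identities $F\circ G=\id$ and $G\circ F=\id$ reduce to applying Theorem~\ref{th:introcorr} fibrewise over $S$ together with the base-change statements just established.

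Part (2) follows the same template after replacing $X$ by $\Rsd$ and $M$ by $\Sf{M}$, but with one crucial new ingredient: the specialty-defect-constant condition defining $\mathbf{SDCDef_{\Rs,X,\Sf{A}}^{(\psi_1,\dots,\psi_r)}}$ — namely flatness of $\overline{\Sf{D}}=\coker(\Pi_*\Ss{\Rsd}^r \to \Pi_*\overline{\Sf{A}})$ — corresponds exactly to flatness of $R^1\Pi_*\overline{\Sf{M}}$ built into $\mathbf{FullDef_{\Rs,X,\Sf{M}}^{(\phi_1,\dots,\phi_r)}}$, via the long exact sequence for $R\Pi_*$ of the short exact sequence $0\to \overline{K} \to \Ss{\Rsd}^r \to \overline{\Sf{M}} \to \overline{\Sf{A}} \to 0$ built from the sections and the kernel construction. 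This matching of flatness conditions is what upgrades the natural transformation of Proposition~\ref{prop:naturaltrans} to an isomorphism on the enriched functors. The principal obstacle throughout is base-change control: while the pointwise correspondence is purely homological, ensuring that both the cokernel construction and the $\Homs$ reconstruction commute with arbitrary Artinian base change requires controlling $\mathrm{Tor}$ and $\mathrm{Ext}$ in families, and this is precisely where near-genericity of the sections (rigidifying the fibrewise syzygy structure) and the flatness of $R^1\Pi_*\overline{\Sf{M}}$ (respectively $\overline{\Sf{D}}$) are used to force the higher derived functors to vanish or to be locally free.
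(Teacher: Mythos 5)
Your overall strategy---promote the fibrewise correspondence to families and control everything by Ext/Tor base change---is indeed the paper's, and your inverse direction in Part (1) (take the kernel of $\Ss{\Sf{X}}^r\to\overline{\Sf{C}}$, dualize, and get flatness from the vanishing of $\Exts^2_{\Ss{X}}(\Sf{C},\Ss{X})$ via an Ext-base-change lemma) is essentially what the paper does. However, there are two genuine gaps.

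First, in the forward direction of Part (1) you set $\overline{\Sf{C}}:=\coker(\Ss{\Sf{X}}^r\to\overline{M})$ ``with induced generators.'' That cokernel is the degeneracy module $(\overline{\Sf{C}})'$, not the module of the correspondence: in Theorem~\ref{th:corrsing} one has $\Sf{C}=\Exts^1_{\Ss{X}}(\Sf{C}',\Ss{X})$, and the generators $(\psi_1,\dots,\psi_r)$ arise from the surjection $\Ss{X}^r\to\Sf{C}$ in the \emph{dualized} sequence; the images of the $\phi_i$ in the cokernel are zero, so $\Sf{C}'$ carries no induced generators at all. As written your forward map does not land in $\mathbf{Def_{X,\Sf{C}}^{(\psi_1,\dots,\psi_r)}}$, and it cannot be inverse to your $G$ (there is no map $\Ss{\Sf{X}}^r\to(\overline{\Sf{C}})'$ whose kernel one could take). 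The missing step is to dualize the family sequence and prove that $\Exts^1_{\Ss{\Sf{X}}}((\overline{\Sf{C}})',\Ss{\Sf{X}})$ is $S$-flat and base-changes to $\Sf{C}$; this is where the paper repeatedly applies the base-change lemma for $\Exts$ (Lemma~\ref{lem:extbasechange}), starting from the vanishing of $\Exts^{i}_{\Ss{X}}(\Sf{C}',\Ss{X})$ for $i\geq 2$, which uses both the Gorenstein hypothesis and the fact that $\Sf{C}'$ is Cohen--Macaulay of dimension $1$.

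Second, in Part (2) the matching of the two flatness conditions is not the formal consequence you assert. There is no exact sequence $0\to\overline{K}\to\Ss{\Rsd}^r\to\overline{\Sf{M}}\to\overline{\Sf{A}}\to 0$: the sections give $0\to\Ss{\Rsd}^r\to\overline{\Sf{M}}\to(\overline{\Sf{A}})'\to 0$ and, after dualizing, $0\to\overline{\Sf{N}}\to\Ss{\Rsd}^r\to\overline{\Sf{A}}\to 0$ with $\overline{\Sf{N}}=\overline{\Sf{M}}^{\smvee}$. Pushing the latter forward shows that $\overline{\Sf{D}}$ is controlled by $R^1\Pi_*\overline{\Sf{N}}$, i.e.\ by the first cohomology of the \emph{dual}, not of $\overline{\Sf{M}}$. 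Passing between flatness of $\overline{\Sf{D}}$ and flatness of $R^1\Pi_*\overline{\Sf{M}}$ is the hard half of the theorem: the paper proves $\Pi_*(\overline{\Sf{M}}^{\smvee})=(\Pi_*\overline{\Sf{M}})^{\smvee}$ in families (Proposition~\ref{prop:dualMdeformado}), builds the family analogue (\ref{diagram:comparacionMbarra}) of diagram (\ref{diagram:comparacionM}) to compare $\Exts^1_{\Ss{\Sf{X}}}(\overline{\Sf{C}},\Ss{\Sf{X}})$ with $\Pi_*\Exts^1_{\Ss{\Rsd}}(\overline{\Sf{A}},\Ss{\Rsd})$ and $R^1\Pi_*\overline{\Sf{M}}$, and then runs the Ext-base-change machinery to identify $\overline{\Sf{C}}|_s$ with $\Sf{C}$ (respectively $(\Pi_*\overline{\Sf{M}})|_s$ with $\pi_*\Sf{M}$). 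None of this is supplied by your long-exact-sequence remark, so the crucial equivalence in Part (2) remains unproved.
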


The proof of this Theorem gets quite technical and occupies several pages of the paper, but its subsequent applications makes the effort worthwhile. 

In Propositions~\ref{prop:defdirressing} and~\ref{prop:comparecorrdef} we explain the behavior of the correspondences of Theorem~\ref{th:introdefcorr} under the functor $\Pi_*$.

An important corollary of this theorem is that the specialty defect remains constant in a deformation of $\mathbf{FullDef_{\Rs,X,\Sf{M}}}$ (see Corollary~\ref{cor:specialtydefectconstant}). In Example~\ref{ex:liftsnotlifts} we give an example of deformation of a special reflexive module such that the generic member of the family is not special. As a consequence we produce a deformation which does not lift to the minimal resolution. This shows that $\Pi_*$ does not induce an isomorphism of functors from $\mathbf{FullDef_{\Rs,X,\Sf{M}}}$ to $\mathbf{Def_{X,M}}$.

In the previous example, the reason for which the natural transformation of functors $\Pi_*:\mathbf{FullDef_{\Rs,X,\Sf{M}}}\to\mathbf{Def_{X,M}}$ is not an isomorphism is that, in general, deformations in $\mathbf{Def_{X,M}}(S,s)$ do not lift to $\Rsd$. This motivates Section~\ref{sec:liftingdefs}, in which we study systematically the liftability problem for families using the correspondences of Theorem~\ref{th:introdefcorr}. 

Let $(\Sf{X},\overline{M})$ be an element in $\mathbf{Def_{X,M}}(S,s)$. Let $(\overline{\phi}_1,...,\overline{\phi}_r)$ be $r=rank(M)$ sections of $\overline{M}$ which specialize to
nearly generic sections over $s$. Let $(\Sf{X},\overline{\Sf{C}},(\psi_1,...,\psi_r))$ be the result of applying the correspondence of 
Theorem~\ref{th:introdefcorr} to $(\Sf{X},\overline{M},(\overline{\phi}_1,...,\overline{\phi}_r))$. Let $\overline{C}$ be the support of $\overline{\Sf{C}}$. We say that $C$ {\em lifts to} $\Rsd$ if the fibre over $s$ of the strict transform of $C$ by $\Pi$ coincides with the strict transform by $\pi$ of the fibre of $C$ over $s$. This notion is introduced at Definition~\ref{def:liftlocus}, where also the notion of liftability for $(\Sf{X},\overline{\Sf{C}},(\psi_1,...,\psi_r))$ is defined.

Assume that $(S,s)$ is a reduced base. In Proposition~\ref{prop:necessarylifting} we prove that there is a deformation in $\mathbf{FullDef_{\Rs,X,\Sf{M}}}(S,s)$ which transforms under $\Pi_*$ to $(\Sf{X},\overline{M})$ if and only if $(\Sf{X},\overline{\Sf{C}},(\psi_1,...,\psi_r))$ lifts to $\Rsd$ according with Definition~\ref{def:liftlocus}. Moreover this implies that the support $\overline{C}$ lifts to $\Rsd$. In Example~\ref{ex:nonlifting} we exhibit a deformation in $\mathbf{FullDef_{\Rs,X,\Sf{M}}}(S,s)$ that does not lift to $\Rsd$ because the support $\overline{C}$ does not lift. In Example~\ref{ex:liftsnotlifts} we find a deformation in $\mathbf{FullDef_{\Rs,X,\Sf{M}}}(S,s)$ that does not lift to $\Rsd$ because $(\Sf{X},\overline{\Sf{C}},(\psi_1,...,\psi_r))$ does not lift to $\Rsd$, even if the support $\overline{C}$ does lift. In Proposition~\ref{prop:genericlifting} we prove that for any deformation over a reduced base $(S,s)$ there exists a Zariski dense open subset on $(S,s)$ over which the deformation lifts to a full family. 

For our later applications we need a sufficient condition for lifting of deformations in $\mathbf{FullDef_{\Rs,X,\Sf{M}}}(S,s)$ that is easier to handle than the liftability of $(\Sf{X},\overline{\Sf{C}},(\psi_1,...,\psi_r))$ predicted in Proposition~\ref{prop:necessarylifting}.
This is worked out in Section~\ref{sec:suffcondlift}, where it is proved that under certain conditions the liftability of the support $\overline{C}$ is enough.

In Definition~\ref{def:deltaconstant} we introduce the notion of simultaneously normalizable deformations of reflexive modules: let $X$ be a normal Stein surface, let $\Sf{X}$ be a deformation of $X$ over a reduced base $(S,s)$, let $M$ be a reflexive $\Ss{X}$-module of rank $r$. A deformation $(\Sf{X},\overline{M})$ of $(X,M)$ over a reduced base $(S,s)$ is said to be {\em simultaneously normalizable} if the degeneracy locus $\overline{C}$ of $\overline{M}$ for a generic system of $r$ sections admits a simultaneous normalization over $S$. Using this definition we prove (see Theorem~\ref{th:sufficientlifting}):

\begin{theorem}
\label{th:introsufficientlifting}
Let $X$ be a normal Gorenstein surface singularity. Let $\Sf{X}$ be a deformation of $X$ over a normal base $(S,s)$. Let $\Pi:\Rsd\to \Sf{X}$ be a very weak simultaneous resolution. Let $M$ be a reflexive $\Ss{X}$-module and $(\Sf{X},\overline{M})$ be a simultaneously normalizable  deformation of $(X,M)$ over the base $(S,s)$, so that for each $s'\in S$ the module $\overline{M}|_{s'}$ is special. If the support of the degeneracy module of $\overline{M}$ for a generic system of sections is liftable to $\Rsd$, then the family $(\Sf{X},\overline{M},\iota)$ lifts to a full family on $\Rsd$.
\end{theorem}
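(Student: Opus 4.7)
The plan is to invoke Proposition~\ref{prop:necessarylifting} and thereby reduce the theorem to producing a lift of the degeneracy data to $\Rsd$. Concretely, extending a generic system of sections $(\phi_1,...,\phi_r)$ on $M$ to a system $(\overline{\phi}_1,...,\overline{\phi}_r)$ on $\overline{M}$ (possibly after shrinking $(S,s)$), the functor isomorphism of Theorem~\ref{th:introdefcorr}(1) assigns to $(\Sf{X},\overline{M},(\overline{\phi}_i))$ a deformation $(\Sf{X},\overline{\Sf{C}},(\overline{\psi}_i))$ of the associated degeneracy data, and by Proposition~\ref{prop:necessarylifting} it suffices to produce a lift of this data to $\Rsd$ in the sense of Definition~\ref{def:liftlocus}. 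Since the support $\overline{C}$ is already assumed to lift, what remains is to upgrade this to a lift of the full module structure and sections, i.e.\ to an element of $\mathbf{SDCDef_{\Rs,X,\Sf{A}}^{(\psi_1,...,\psi_r)}}$ whose associated support data is the given one.

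For the construction of the lift, I would exploit the fiberwise specialty hypothesis. For each $s'\in S$, Proposition~\ref{prop:Aeslanormalizacion} (Property~(3) of Section~\ref{sec:introstruct}) yields an isomorphism $\overline{\Sf{C}}|_{s'}\cong n_{s',*}\Ss{\tilde{C}_{s'}}$, where $n_{s'}\colon\tilde{C}_{s'}\to C_{s'}$ is the normalization of the fiber of the support. The simultaneous normalizability assumption provides a global normalization $\overline{n}\colon\tilde{\overline{C}}\to\overline{C}$ with $\tilde{\overline{C}}$ flat over $S$, and a base-change argument over the normal base $(S,s)$ upgrades the fiberwise isomorphisms to a global isomorphism $\overline{\Sf{C}}\cong\overline{n}_*\Ss{\tilde{\overline{C}}}$. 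Using the support-liftability hypothesis, the strict transform $\overline{C}'$ of $\overline{C}$ in $\Rsd$ is flat over $S$; smoothness of $\Rsd$ then allows me to identify the normalization of $\overline{C}'$ with $\tilde{\overline{C}}$, yielding a canonical embedding $j\colon\tilde{\overline{C}}\hookrightarrow\Rsd$. Define $\overline{\Sf{A}}:=j_*\Ss{\tilde{\overline{C}}}$ and lift the sections through the natural surjections; this is the candidate element, and it specializes at $s$ to $(\Sf{A},(\psi_i))$ because specialty forces the central-fiber identification $\Sf{A}\cong n_*\Ss{\tilde{C}}$ on $\Rs$.

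The main obstacle is verifying that the candidate genuinely lies in $\mathbf{SDCDef}$, that is, that the cokernel of $\Pi_*\Ss{\Rsd}^r\to\Pi_*\overline{\Sf{A}}$ is flat over $S$; the flatness of $\overline{\Sf{A}}$ itself follows immediately from that of $\tilde{\overline{C}}\to S$. This cokernel flatness is, under the correspondence of Theorem~\ref{th:introdefcorr}(2), the deformation-theoretic incarnation of specialty-defect constancy, and is precisely where the fiberwise specialty hypothesis becomes indispensable. To handle it, I would combine Theorem~\ref{th:introformuladimensionM} and Corollary~\ref{cor:dimMadap} with the fact that the first Chern class of $\Sf{M}|_{s'}$ is constant in $s'$ (since $\overline{C}'$ is flat in $\Rsd$) to conclude that $\dimc{R^1\pi_{s',*}(\Sf{M}|_{s'})}$ is independent of $s'$, and then apply a Grauert-type cohomology-and-base-change argument over the normal base $(S,s)$ to promote this constancy to flatness of the cokernel. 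Once this is established, Theorem~\ref{th:introdefcorr}(2) converts $\overline{\Sf{A}}$ into a full family $\overline{\Sf{M}}$ on $\Rsd$, and the compatibility of the correspondences with $\Pi_*$ (Propositions~\ref{prop:defdirressing} and~\ref{prop:comparecorrdef}) ensures that $\Pi_*\overline{\Sf{M}}=\overline{M}$, producing the desired lift of $(\Sf{X},\overline{M},\iota)$.
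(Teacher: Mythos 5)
Your strategy coincides with the paper's: identify $\overline{\Sf{C}}$ with the pushforward of the structure sheaf of the simultaneous normalization, use liftability of the support to endow that sheaf with a $\Ss{\Rsd}$-module structure, and push the result through the correspondence of Theorem~\ref{th:dirresdef}. However, you have mislocated the difficulty, and the step you dismiss as routine contains a genuine gap. Fibrewise isomorphisms $\overline{\Sf{C}}|_{s'}\cong n_{s',*}\Ss{\tilde{C}_{s'}}$ for every $s'\in S$ do not by themselves yield a global isomorphism $\overline{\Sf{C}}\cong \overline{n}_*\Ss{\tilde{\overline{C}}}$: one first needs a morphism between the two flat families, and a base-change argument does not manufacture one (a flat family all of whose fibres are isomorphic to a fixed module need not be globally isomorphic to the constant family). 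The paper constructs this morphism explicitly: it embeds $\overline{\Sf{C}}$ into the total fraction ring $K(\overline{C})$ using flatness over the normal base, clears denominators to land inside $\Ss{\overline{D}}$, renormalizes the resulting ideal $\overline{\Sf{B}}\subset\Ss{\overline{D}}$ by Weierstrass polynomials so that its zero locus does not dominate $S$, uses the fibrewise specialty to get equality over a dense open subset, and then kills the quotient $\Ss{\overline{D}}/\overline{\Sf{C}}$ by a $Tor_1$ computation together with the Local Criterion of Flatness. This is the technical core of the proof and is absent from your outline.

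Conversely, the step you single out as the main obstacle — flatness of the cokernel of $\Pi_*\Ss{\Rsd}^r\to\Pi_*\overline{\Sf{A}}$ — is automatic once the equality $\overline{\Sf{C}}=\Ss{\overline{D}}=\Pi_*\overline{\Sf{A}}$ has been established: the sections $(\overline{\psi}_1,...,\overline{\psi}_r)$ generate $\overline{\Sf{C}}$ as a $\Ss{\Sf{X}}$-module by construction, so the cokernel is identically zero (this is exactly the situation of Lemma~\ref{lem:defsarcos}). Your proposed detour through Theorem~\ref{th:introformuladimensionM}, Corollary~\ref{cor:dimMadap} and a Grauert-type cohomology-and-base-change argument is therefore unnecessary, and in any case it presupposes flatness data that are only available after the missing identification above has been carried out.
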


\subsection{Moduli spaces of reflexive modules, Cohen-Macaulay representation types}
\label{sec:introapplications}

Now we describe two applications of the machinery developed up to now. 

The first application appears in Section~\ref{sec:fintamewild} and confirms a conjecture of Drodz, Greuel and Kashuba~\cite{DrGrKa} and, together with previous work in~\cite{AV} and  \cite{DrGrKa} completes the classification of Gorenstein normal surface singularities in Cohen-Macaulay representation types. Let us recall that a surface singularity $(X,x)$ is of finite, tame or wild Cohen-Macaulay representation type if there are at most finite, $1$-dimensional or unbounded dimensional families of indecomposable Maximal Cohen-Macaulay $\Ss{X,x}$-modules respectively. Here we prove (see Theorem~\ref{th:reptype}). 

\begin{theorem}
A Gorenstein surface singularity is of finite Cohen-Macaulay representation type if and only if it is a rational double point. Gorenstein surface singularities 
of tame Cohen-Macaulay representation type are precisely the log-canonical ones. The remaining Gorenstein surface singularities are of wild Cohen-Macaulay representation type.
\end{theorem}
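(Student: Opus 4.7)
The proof naturally splits into three claims—finite $\Leftrightarrow$ RDP, tame $\Leftrightarrow$ log-canonical, and wild otherwise—which I would address in turn using the classification of Theorem~\ref{th:introfinalprincipal} together with known partial results.

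For the first claim, I would use Theorem~\ref{th:introfinalprincipal} to parametrize special indecomposable reflexive modules by irreducible divisors over $x$ along which the Gorenstein form has neither zeros nor poles at any resolution where they appear. If $(X,x)$ is an RDP, then $Z_k=0$ at the minimal resolution ($q_i=0$ for all $i$), and any further blow-up introduces a new component with $q\geq 1$; hence the relevant set of divisors is the finite set of components of the minimal exceptional divisor. Since for rational singularities every indecomposable reflexive module is special (Wunram's theorem combined with $p_g=0$), this gives finite CM type. Conversely, if $(X,x)$ is Gorenstein but not an RDP, it is not rational (rational Gorenstein = RDP), so $p_g\geq 1$, forcing some $q_{i_0}\leq -1$ at the minimal resolution. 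Blowing up a generic point $p\in E_{i_0}$ gives a new component $F_p$ with $q_{F_p}=q_{i_0}+1\leq 0$, and when $q_{i_0}=-1$ (otherwise iterate on $F_p$), one produces infinitely many distinct irreducible divisors satisfying the condition of Theorem~\ref{th:introfinalprincipal} as $p$ varies in $E_{i_0}$. Thus infinitely many non-isomorphic special indecomposables exist, ruling out finite CM type.

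For the second claim, the Gorenstein log-canonical surface singularities are exactly the RDPs, the simply elliptic singularities, and the cusps. The RDP case is finite (hence tame) by the first part. Tameness of simply elliptic singularities is Kahn's theorem~\cite{Ka}, obtained via his classification of MCM modules on minimally elliptic singularities by restriction to the elliptic exceptional curve and Atiyah's classification. Tameness for cusps (and the remaining log-canonical cases) is the Drozd--Greuel--Kashuba theorem~\cite{DrGrKa}.

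For the third claim (the main new contribution), let $(X,x)$ be Gorenstein and non-log-canonical; then some component $E_0$ at the minimal resolution satisfies $q_{E_0}\leq -2$. I would construct, for each $n$, an $n$-parameter family of pairwise non-isomorphic indecomposable special reflexive modules. The basic building block: when $q_{E_0}=-2$, iterated blow-ups at a generic pair $(p_1,p_2)$ with $p_1\in E_0$ and $p_2\in F_1$ (the first exceptional) produce an irreducible divisor $F_2$ with $q_{F_2}=0$, depending on $2$ genuinely independent parameters. By Theorem~\ref{th:introfinalprincipal} together with Theorem~\ref{th:introChernresolucionadapted}, distinct such divisors give non-isomorphic rank-$1$ special indecomposable modules. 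Higher-dimensional families are obtained either by iterating at components of more negative $q$, or by passing to higher-rank modules whose Chern class at the minimal adapted resolution assembles several curvettes, indecomposability being ensured by requiring the degeneracy support to remain irreducible (Property~(4) of Section~\ref{sec:introstruct}). That these combinatorial parameter spaces give genuine flat families of reflexive modules is supplied by the moduli construction of Section~\ref{def:isodef} via Theorem~\ref{th:introdefcorr} and the liftability criterion Theorem~\ref{th:introsufficientlifting}.

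The main obstacle is part three: one must simultaneously control \emph{indecomposability} and \emph{unboundedness} of the moduli dimension, since naive configurations tend to split as direct sums and a fixed singularity has only finitely many exceptional components with bounded $q_i$. The resolution combines the Chern-class classification (to guarantee non-isomorphism of members), the degeneracy module correspondence (to reduce indecomposability to irreducibility of the support curve), and the deformation-theoretic machinery of Sections~\ref{sec:liftingdefs}--\ref{sec:introapplications} (to assemble pointwise classifications into bona fide algebraic moduli of arbitrarily large dimension). It is here that the non-log-canonical hypothesis, through the presence of components with $q_{E_0}\leq -2$, provides the geometric freedom required to reach arbitrarily large parameter spaces of indecomposables.
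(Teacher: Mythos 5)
Your treatment of the first two claims is sound. For ``finite $\Leftrightarrow$ RDP'' you argue through the classification Theorem~\ref{th:introfinalprincipal} plus the fact that all reflexives on an RDP are special, whereas the paper simply cites Esnault's theorem that finite reflexive type characterizes quotient singularities; both routes work. The tame case is, in both your write-up and the paper, an appeal to Kahn and Drozd--Greuel--Kashuba. The substance of the theorem is the wild case, and there your proposal has two genuine problems.

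First, your starting point -- ``non-log-canonical implies some component $E_0$ of the minimal resolution has $q_{E_0}\leq -2$'' -- is false. The paper's proof quotes Koll\'ar's classification precisely because there is a second case: all $q_i$ may equal $-1$ while the exceptional divisor has a singular point of Milnor number at least $3$ (e.g.\ a tacnode or a triple point). In that case a coefficient $\leq -2$ only appears after blowing up such a point, since the new coefficient is $1+\sum q_i\,\mathrm{mult}_p(E_i)$. Your argument as written simply does not apply to these singularities, so the case analysis must be completed before anything else.

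Second, and more importantly, your mechanism for producing families of \emph{arbitrarily large} dimension is not correct as stated. The engine of the paper's proof (Proposition~\ref{prop:unboundedfam}) is: if some divisor over $x$ has minimal canonical order $-d$, then chains of $d$ \emph{free} infinitely near points starting on it form a $d$-dimensional variety $V$, each chain ends at a divisor of canonical order $0$, and by the classification (Theorem~\ref{Teo:final}) distinct chains give non-isomorphic indecomposable specials; unboundedness of $d$ is obtained by repeatedly blowing up \emph{satellite} points (intersections of the newest divisor with a component of coefficient $\leq -2$), which drives the canonical order to $-\infty$. Your ``basic building block'' with $q_{E_0}=-2$ only ever yields $2$-dimensional families, and you never explain where ``components of more negative $q$'' come from. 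Your alternative -- higher-rank modules whose Chern class ``assembles several curvettes'' while ``the degeneracy support remains irreducible'' -- is internally inconsistent: at the minimal adapted resolution the degeneracy support of a special module without free factors \emph{is} the union of those curvettes, and by Proposition~\ref{prop:decompesp} several components force the module to decompose; conversely an indecomposable special module is determined by a single divisorial valuation, so raising the rank buys no extra moduli. (The modules in these families are also not rank $1$ in general; the rank is the minimal number of generators of $\pi_*\Ss{D}$, which grows with the depth of the infinitely near points.) To repair the argument you need exactly the two missing ingredients: unboundedness from below of the minimal canonical order for non-log-canonical Gorenstein singularities, and the identification of the parameter space with sequences of free infinitely near points of length $d$.
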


The second application is the construction of fine moduli spaces of special modules without free factors of prescribed graph and rank on Gorenstein normal surface singularities. This enhances the classification Theorem~\ref{th:introfinalprincipal}. It is provided in Section~\ref{sec:moduli}. Let $\Sf{G}$ be the graph of a special reflexive $\Ss{X}$-module on a Gorenstein normal surface singularity $X$. Ler $r$ be a positive integer. In Definition~\ref{def:modulifunctor} a moduli functor $\mathbf{Mod_{\Sf{G}}^r}$ is defined in a similar way as the deformation functors above. It parametrizes flat families of special reflexive modules without free factors of rank $r$ and graph $\Sf{G}$, over normal base spaces. The main result is Theorem~\ref{theo:moduli}:
\begin{theorem}
The functor $\mathbf{Mod_{\Sf{G}}^r}$ is represented by an algebraic variety. 
\end{theorem}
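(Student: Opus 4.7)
The strategy is to use the degeneracy module correspondence of Theorem~\ref{th:introdefcorr}(2) to translate the moduli problem for special reflexive modules into a moduli problem for configurations of smooth curvettes on a fixed resolution, and to identify the latter with an open subvariety of a Douady space.

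First I would fix the combinatorial data: the graph $\Sf{G}$ determines a unique good resolution $\pi:\Rs\to X$ (the minimal good resolution adapted to any module with $\Sf{G}_M=\Sf{G}$), and for each vertex $v$ the number $a_v$ of arrows attached. By Proposition~\ref{prop:minadapspproperty}, if $M$ is special of rank $r$ without free factors and graph $\Sf{G}$, then for a generic system of $r$ sections the degeneracy support $C$ is a disjoint union of $N=\sum_v a_v$ smooth curvettes, with $a_v$ of them transverse to $E_v$ at smooth points of $E$. I would define the parameter space $T$ to be an open subset of $\prod_v\mathrm{Sym}^{a_v}(\calD_v)$, where $\calD_v$ parametrizes smooth germs of curvettes transverse to the smooth locus of $E_v$; this is a smooth algebraic variety, fibered over the union of smooth open parts of the $E_v$.

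Next, over $T\times\Rs$ I would construct the tautological divisor $\calC$ whose normalization $n:\tilde{\calC}\to\calC$ is smooth and flat over $T$, and I would set $\widetilde{\Sf{A}}:=n_*\Ss{\tilde{\calC}}$, a $T$-flat family of generically reduced Cohen--Macaulay modules of dimension $1$ whose fiber at $t$ is the module associated to $M_t$ by Proposition~\ref{prop:Aeslanormalizacion}. Locally on $T$ I pick a minimal system of $r$ global generators of $\widetilde{\Sf{A}}$ and apply Theorem~\ref{th:introdefcorr}(2) to obtain a $T$-flat family of full $\Ss{T\times\Rs}$-modules $\overline{\Sf{M}}$; pushing forward via $\Pi$ and using Proposition~\ref{prop:defdirressing} yields a $T$-flat family $\overline{M}$ of special reflexive $\Ss{T\times X}$-modules. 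The $\mathrm{GL}(r)$-ambiguity in the choice of generators cancels after forgetting the sections, so that $\overline{M}$ is canonically defined and the local constructions glue globally on $T$. To verify the universal property, take $(\Sf{X}_S,\overline{M})\in\mathbf{Mod_{\Sf{G}}^r}(S)$ with $S$ normal. Corollary~\ref{cor:specialtydefectconstant} ensures every fiber is special; choose locally $r$ sections specializing to generic sections over a chosen point. The degeneracy support of $\overline{M}$ has smooth disjoint curvette fibers, so the family is simultaneously normalizable in the sense of Definition~\ref{def:deltaconstant}, and Theorem~\ref{th:introsufficientlifting} lifts it to a full family on $S\times\Rs$. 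The support of its degeneracy module defines the classifying morphism $S\to T$, and the pullback of the universal family agrees with the original by the injectivity part of Theorem~\ref{th:introdefcorr}(2).

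The main obstacle I expect is the globalization of the choice of generators: constructing a universal family of modules (rather than of pairs (module, sections)) over all of $T$ requires descending a $\mathrm{GL}(r)$-quotient. This should succeed because, for $M$ special without free factors, $\mathrm{Aut}(M)$ is controlled by the Krull--Schmidt decomposition, which by Property~(4) of Section~\ref{sec:introstruct} matches the combinatorics of the arrows of $\Sf{G}$; the $\mathrm{GL}(r)$-action on the universal pair is free modulo the torus of scalar automorphisms of the indecomposable summands, and the geometric quotient exists as a fine moduli of modules alone. A secondary technical point is the algebraicity of the Douady space of effective divisors on $\Rs$ in the prescribed class, which I would obtain from the fact that the class lies in a discrete sublattice of $\mathrm{Pic}(\Rs)$ and the Douady space of effective divisors in a fixed class is finite-dimensional and algebraic on the germ of $\Rs$ around $E$.
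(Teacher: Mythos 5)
Your construction starts from a false premise: that the graph $\Sf{G}$ determines a unique resolution $\pi:\Rs\to X$ on which to work, so that the moduli of modules is realized by moving curvettes inside a fixed $\Rs$. The dual graph only fixes the \emph{combinatorial type} of the minimal adapted resolution; whenever that resolution is not the minimal resolution of $X$, the infinitely near points that must be blown up can move, producing a positive-dimensional family of resolutions with the same graph and hence, by Theorem~\ref{Teo:final}, a positive-dimensional family of pairwise non-isomorphic special modules (this is exactly the mechanism of Proposition~\ref{prop:unboundedfam}, which builds $d$-dimensional essential families by varying $d$ free infinitely near points). Conversely, on a \emph{fixed} resolution your parameter space $T$ of curvette configurations carries a locally constant family of modules: by Lemma~\ref{lem:varioD}, two curvettes transverse to the same component $E_i$ outside $\text{Supp}(Z_K)$ define isomorphic line bundles $\Ss{\Rs}(-D_1)\cong\Ss{\Rs}(-D_2)$, and by Theorem~\ref{th:Chernresolucionadapted} a special module without free factors is determined by its first Chern class at its minimal adapted resolution. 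So your classifying map from $T$ to isomorphism classes of modules collapses each connected component of $T$ to a point, and $T$ cannot represent $\mathbf{Mod_{\Sf{G}}^r}$ except in the degenerate case where the moduli space is finite.

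The correct parameter space --- the one used in the paper --- is the variety $\mathfrak{M}_{\Sf{G}}$ of resolutions realizing $\Sf{G}^{o}$ together with a marking of the components, i.e.\ a variety of sequences of infinitely near points above $x$, stratified by the rank $r$ of the resulting module (rank is only upper semicontinuous, being the minimal number of generators of $\pi_*\Ss{D}$). Your toolkit for the universal property (generic sections, Proposition~\ref{prop:Aeslanormalizacion}, simultaneous normalizability, Theorem~\ref{th:sufficientlifting}) is the right one, but it must be aimed at extracting from a family $\overline{M}$ over $S$ the \emph{family of adapted resolutions} of the fibres and proving that the induced map $S\to\mathfrak{M}^r_{\Sf{G}}$ is analytic; this is the genuinely delicate step (in the paper it is an induction on the generation order of the infinitely near points, using liftability of the degeneracy locus to each intermediate blow-up), and it is absent from your argument.
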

Moreover in its proof we see that the variety representing the functor has a very nice geometric description: it parametrizes sequences of infinitely near points to $x$ in the singularity $X$ with a given combinatorial type.

\section{Reflexive modules and full sheaves}

See~\cite{BrHe},~\cite{Har1} and ~\cite{Ne} as basic references on dualizing sheaves, modules and normal surface singularities. 

\subsection{Cohen-Macaulay modules and reflexive modules}
Let $X$ be a normal surface along this section.
Let $\Homs_{\Ss{X}}(\bullet,\bullet)$ and by $\Exts^i_{\Ss{X}}(\bullet,\bullet)$ the sheaf theoretic $Hom$ and $Ext$ functors. 
The dual of a $\Ss{X}$-module $M$ is $M^{\smvee}:=\Homs_{\Ss{X}}(M,\Ss{X})$. The $\omega_X$-dual is $\Homs_{\Ss{X}}(M,\omega_X)$. 
A module $\Ss{X}$-module $M$ is called \emph{reflexive} if the natural homomorphism from $M$ to $M^{\smvee \smvee}$ is an isomorphism. It is called
$\omega_X$-reflexive if the natural map $M\to\Homs_{\Ss{X}}(\Homs_{\Ss{X}}(M,\omega_X),\omega_X)$ is an isomorphism. 

A $\Ss{X}$-module $M$ is called \emph{Cohen-Macaulay} if
the depth of each of its stalks $M_y$ is equal to the dimension of the module. If the depth of $M_y$ is equal to the dimension of $\Ss{X,y}$, 
then the module $M_y$ is called \emph{maximal Cohen-Macaulay}; this definitions extend to sheaves if we ask that they hold for every stalk. 
If $M_x$ is Cohen-Macaulay then $M$ is Cohen-Macaulay at a neighborhood of $x$.
A module is {\em indecomposable} if it can not be written as a direct sum of two non trivial submodules. 

By \cite[Proposition~1.5]{Yos} and \cite[Section~1]{Har3} some basic properties of maximal Cohen-Macaulay modules are:
\begin{enumerate}
\item If $\Ss{X,y}$ is a regular local ring, then any maximal Cohen-Macaulay module over it is free.
\item If $\Ss{X,y}$ is a reduced local ring of dimension one, then an $\Ss{X,y}$-module $M$ is maximal Cohen-Macaulay if and only if it is torsion 
free, that is, when the natural homomorphism $M \to M^{\smvee \smvee}$ is a monomorphism.
\item If $\Ss{X,y}$ is normal of dimension two, then an $\Ss{X,y}$-module $M$ is maximal Cohen-Macaulay if and only if it is reflexive.
\item A consequence of the previous properties is that reflexive sheaves over regular rings $\Ss{X,y}$ of dimension at most $2$ are free. 
\item Let $M$ be a $\Ss{X}$-module. Then $M_x$ is a reflexive $\Ss{X,x}$-module if and only if the adjunction morphism $M\to i_*i^*M$ is an 
isomorphism.
\item If a $\Ss{X}$-module is reflexive at $x$, then it is reflexive at an open neighbourhood of $x$ in $X$.
\end{enumerate}

The canonical module and Cohen-Macaulay modules have the following properties, which are a special case of~\cite[Theorem~3.3.10]{BrHe}.

\begin{theorem}
\label{Th:Herzog}
Let $X$ be a normal surface. For $t=0,1,2$ and all Cohen-Macaulay $\Ss{X}$-modules $M$ of dimension $t$ one has
\begin{enumerate}
\item $\Exts_{\Ss{X}}^{2-t}(M,\omega_X)$ is Cohen-Macaulay of dimension t,
\item $\Exts_{\Ss{X}}^{i}(M,\omega_X)=0$ for all $i \neq 2-t$,
\item there exists an isomorphism $M \to \Exts_{\Ss{X}}^{2-t}\left(\Exts_{\Ss{X}}^{2-t}(M,\omega_X),\omega_X\right)$ which in the case $t=2$ is just 
the natural 
homomorphism from $M$ into the $\omega_X$-bidual of $M$.
\end{enumerate}
\end{theorem}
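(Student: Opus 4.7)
The statement is sheaf-theoretic but local in nature, so I would fix a (closed) point $y\in X$ and work with $R:=\Ss{X,y}$ of dimension $2$ and canonical module $\omega:=\omega_{X,y}$. Normality of $X$ makes $R$ Cohen--Macaulay, so $\omega$ is a canonical module in the sense of \cite{BrHe} and local duality is available. All three conclusions translate into assertions about a Cohen--Macaulay $R$-module $M$ of dimension $t\in\{0,1,2\}$ and its $\omega$-duals $\Ext^i_R(M,\omega)$.

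The main engine is the local duality isomorphism
\[
\Ext^i_R(M,\omega)^{\vee}\;\cong\;H^{2-i}_{\mm}(M),
\]
where $(-)^{\vee}=\Hom_R(-,E(R/\mm))$ denotes Matlis duality. Cohen--Macaulayness of $M$ concentrates local cohomology in degree $t$, so this identity gives (2) at once (via faithfulness of Matlis duality on Noetherian modules) and, for $i=2-t$, identifies $\Ext^{2-t}_R(M,\omega)^{\vee}$ with $H^t_{\mm}(M)$, which has dimension $t$, yielding the dimension assertion of (1). To promote this to the full Cohen--Macaulay property of $\Ext^{2-t}_R(M,\omega)$ I would induct on $t$: for $t=0$ the dual has finite length, and for $t\ge 1$ one picks an $M$-regular $x\in\mm$ and applies $\Hom_R(-,\omega)$ to $0\to M\xrightarrow{x}M\to M/xM\to 0$; the vanishing already established forces this to collapse to
\[
0\to\Ext^{2-t}_R(M,\omega)\xrightarrow{x}\Ext^{2-t}_R(M,\omega)\to\Ext^{2-(t-1)}_R(M/xM,\omega)\to 0.
\]
By the inductive hypothesis the right-hand term is Cohen--Macaulay of dimension $t-1$, so $x$ is a non-zerodivisor on $\Ext^{2-t}_R(M,\omega)$ and its depth (and dimension) jumps by exactly one, giving the desired $t$.

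For the biduality isomorphism in (3), the natural evaluation $\eta_M\colon M\to\Ext^{2-t}_R(\Ext^{2-t}_R(M,\omega),\omega)$ is functorial. For $t=2$ (when $M$ is MCM) this is the classical $\omega$-reflexivity, which one deduces from a finite free resolution of $M$ together with the identity $\Hom_R(\omega,\omega)=R$. For $t<2$ the same regular element $x$ produces a square of short exact sequences, one for $M$ and one for its double $\omega$-dual, connected vertically by $\eta$; induction together with the five-lemma then promotes the isomorphism at level $t-1$ to level $t$. The principal technical obstacle is precisely the commutativity of this square: one must verify that the evaluation $\eta$ is compatible with the connecting homomorphisms of the $\Ext$ long exact sequence. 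This compatibility, and in fact the whole package, is exactly what \cite[Theorem~3.3.10]{BrHe} delivers, so one can either cite it outright or execute the sketch above to unpack the reference.
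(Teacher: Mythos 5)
The paper gives no argument for this statement beyond the citation to \cite[Theorem~3.3.10]{BrHe}, and your proposal is a correct unpacking of exactly that result by its standard proof (local duality for the vanishing, induction on $t$ via an $M$-regular element for Cohen--Macaulayness and biduality), so in substance you and the paper are doing the same thing. One small inaccuracy in your sketch of the $t=2$ base case: a non-free MCM module over a singular normal surface never admits a \emph{finite} free resolution (Auslander--Buchsbaum would force it to be free), so the $\omega$-reflexivity of MCM modules should instead be obtained from a free presentation combined with the vanishing $\Exts^{i}_{\Ss{X}}(M,\omega_X)=0$ for $i>0$ and $\Homs_{\Ss{X}}(\omega_X,\omega_X)=\Ss{X}$, or by cutting down with a regular sequence to the Matlis-duality case --- but this is precisely what the cited theorem supplies.
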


A consequence of the previous Theorem and Property (3) above is that $\omega_X$-reflexivity is equivalent to reflexivity.

The following proposition will be useful:

\begin{proposition}
 \label{prop:extCM1}
Let $X$ be a normal surface.  Let $\Sf{A}$ be a $1$-dimensional $\Ss{X}$-module.
 Then the $\Ss{X}$-module $\Exts^1_{\Ss{X}}(\Sf{A},\omega_X)$ is Cohen-Macaulay of dimension $1$. 
\end{proposition}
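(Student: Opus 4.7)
The module $\Sf{A}$ is only assumed $1$-dimensional, not Cohen-Macaulay, so Theorem~\ref{Th:Herzog} does not apply directly. The plan is to reduce to the Cohen-Macaulay case by splitting off the $0$-dimensional part of $\Sf{A}$ and invoking the long exact sequence of $\Exts$.

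First I would work locally at a stalk $\Sf{A}_y$, since being Cohen-Macaulay of dimension $1$ and the formation of $\Exts^1_{\Ss{X}}(\Sf{A},\omega_X)$ are both local (and the surface $X$ is normal, hence each $\Ss{X,y}$ is a $2$-dimensional Cohen-Macaulay ring). Let $T\subset \Sf{A}$ denote the submodule of sections with $0$-dimensional support; equivalently, at each stalk $T_y$ is the maximal submodule of $\Sf{A}_y$ of finite length. The quotient $\Sf{A}/T$ has no embedded $0$-dimensional associated points, so at every stalk it has positive depth if nonzero. Since $\Sf{A}$ is $1$-dimensional, $\Sf{A}/T$ is pure $1$-dimensional, and the standard equivalence over a $2$-dimensional Cohen-Macaulay local ring (depth $\geq 1$ iff no associated primes of dimension $0$) shows that $\Sf{A}/T$ is Cohen-Macaulay of dimension $1$.

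Now I would apply the long exact sequence obtained from
\begin{equation*}
0 \to T \to \Sf{A} \to \Sf{A}/T \to 0
\end{equation*}
after applying $\Homs_{\Ss{X}}(\bullet,\omega_X)$:
\begin{equation*}
\Exts^1(\Sf{A}/T,\omega_X) \to \Exts^1(\Sf{A},\omega_X) \to \Exts^1(T,\omega_X) \to \Exts^2(\Sf{A}/T,\omega_X).
\end{equation*}
By Theorem~\ref{Th:Herzog} applied to the $0$-dimensional Cohen-Macaulay module $T$, one has $\Exts^i(T,\omega_X)=0$ for $i\neq 2$; and applied to the Cohen-Macaulay module $\Sf{A}/T$ of dimension $1$, one has $\Exts^i(\Sf{A}/T,\omega_X)=0$ for $i\neq 1$, while $\Exts^1(\Sf{A}/T,\omega_X)$ is Cohen-Macaulay of dimension $1$.

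Substituting these vanishings into the long exact sequence yields an isomorphism $\Exts^1(\Sf{A}/T,\omega_X) \cong \Exts^1(\Sf{A},\omega_X)$, which transports the Cohen-Macaulay dimension $1$ property to $\Exts^1(\Sf{A},\omega_X)$. There is no real obstacle here; the only subtle point is verifying that $\Sf{A}/T$ is honestly Cohen-Macaulay of dimension $1$ (and not zero), which is where $1$-dimensionality of $\Sf{A}$ enters and rules out $T = \Sf{A}$.
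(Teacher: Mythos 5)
Your proof is correct and follows essentially the same route as the paper: split off the finitely supported submodule, observe the quotient is Cohen--Macaulay of dimension $1$, and use the $\Exts$ long exact sequence together with Theorem~\ref{Th:Herzog} to identify $\Exts^1_{\Ss{X}}(\Sf{A},\omega_X)$ with $\Exts^1_{\Ss{X}}(\Sf{A}/T,\omega_X)$. The extra detail you supply (the depth argument for $\Sf{A}/T$ and the remark that $T\neq\Sf{A}$) is a welcome elaboration of steps the paper leaves implicit.
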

\proof
Consider the exact sequence $0\to\Sf{B}\to\Sf{A}\to\Sf{A}'\to 0$, where $\Sf{B}$ is the submodule of elements with support at $x$. No 
element of $\Sf{A'}$ has support at $x$, and hence $\Sf{A}'$ is Cohen-Macaulay of dimension $1$. Applying $\Exts_{\Ss{X}}(\bullet,\omega_X)$
and considering the associated exact sequence we obtain the isomorphism 
$\Exts^1_{\Ss{X}}(\Sf{A},\omega_X)\cong \Exts^1_{\Ss{X}}(\Sf{A}',\omega_X)$, which implies the result by the first assertion of 
Theorem~\ref{Th:Herzog}.
\endproof

\subsection{Full sheaves}
We are interested in studying the reflexive modules on Stein normal surfaces, or in normal surface singularities,  via a resolution. For this we will use the notion of full sheaves, introduced
by Esnault~\cite{Es} for rational surface singularities and generalized by Kahn~\cite{Ka}. Along this section let $X$ be a Stein normal surface and $\pi:\Rs\to X$ be a resolution. All results of this section are valid replacing $x$ by $(X,x)$, which is either a germ of
normal surface singularity or the spectrum of a normal complete $\CC$-algebra of dimension $2$.

\begin{definition}[{\cite[Definition~1.1]{Ka}}]
A $\Ss{\Rs}$-module $\Sf{M}$ is called \emph{full} if there is a reflexive $\Ss{X}$-module $M$ such that 
$\Sf{M} \cong \left(\pi^* M\right)^{\smvee \smvee}$. A $\Ss{\Rs}$-module $\Sf{M}$ is {\em generically generated by global sections} if it is generated by global sections except in a 
finite set.
\end{definition}

\begin{proposition}[{\cite[Proposition~1.2]{Ka}}]\label{fullcondiciones}
A locally free sheaf $\Sf{M}$ on $\Rs$ is full if and only if
\begin{enumerate}
\item $\Sf{M}$ is generically generated by global sections.
\item The natural map $H^1_E(\Rs,\Sf{M}) \to H^1(\Rs,\Sf{M})$ is injective.
\end{enumerate}
If $\Sf{M}$ is the full sheaf associated to $M$, then $\pi_* \Sf{M}=M$.
\end{proposition}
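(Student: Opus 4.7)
My plan is to base the proof on the local cohomology long exact sequence for the pair $(\Rs,E)$:
$$0 \to H^0_E(\Rs,\Sf{M}) \to H^0(\Rs,\Sf{M}) \to H^0(\Rs\setminus E,\Sf{M}) \to H^1_E(\Rs,\Sf{M}) \to H^1(\Rs,\Sf{M}) \to \cdots$$
Since $\Sf{M}$ is locally free on the smooth surface $\Rs$ and $E$ is a proper closed subset, $H^0_E(\Rs,\Sf{M})=0$. Therefore condition (2) is equivalent to the restriction map $H^0(\Rs,\Sf{M})\to H^0(\Rs\setminus E,\Sf{M})$ being an isomorphism. Since $\pi$ is an isomorphism off $E$, $H^0(\Rs\setminus E,\Sf{M})$ is identified with $H^0(U,(\pi_*\Sf{M})|_U)$, so (2) translates to the adjunction $\pi_*\Sf{M}\to i_*i^*\pi_*\Sf{M}$ being an isomorphism; by property (5) of reflexive modules recalled earlier this is the same as $\pi_*\Sf{M}$ being a reflexive $\Ss X$-module.

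For the forward direction I would suppose $\Sf{M}=(\pi^*M)^{\smvee\smvee}$ with $M$ reflexive and first establish the final assertion $\pi_*\Sf{M}=M$. The adjunction $M\to\pi_*\pi^*M$ composed with $\pi_*\pi^*M\to\pi_*\Sf{M}$ gives a map $\phi\colon M\to\pi_*\Sf{M}$; post-composing with $\pi_*\Sf{M}\to i_*i^*\pi_*\Sf{M}=i_*i^*M=M$ (the last equality is reflexivity of $M$) yields the identity, which forces $\phi$ and the $i_*i^*$ morphism both to be isomorphisms. This gives $\pi_*\Sf{M}=M$, and in particular (2) by the preceding paragraph. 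For condition~(1), use that on a Stein normal surface (or germ) $M$ is generated by its global sections, so $\pi^*M$ is generated by their pullbacks everywhere; the natural map $\pi^*M\to(\pi^*M)^{\smvee\smvee}=\Sf{M}$ is an isomorphism off the finite set where $\pi^*M$ fails to be reflexive, so the same global sections generate $\Sf{M}$ outside a finite set.

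For the reverse direction, assume $\Sf{M}$ locally free satisfies (1) and (2), and set $M:=\pi_*\Sf{M}$. Condition (2), combined with the translation above, gives that $M$ is reflexive; moreover $M|_U$ is locally free of rank $r=\rango(\Sf{M})$ because $\pi|_{\Rs\setminus E}$ is an isomorphism. The adjoint of $\Id\colon M\to\pi_*\Sf{M}$ is a canonical $\pi^*M\to\Sf{M}$ whose global sections span $H^0(\Rs,\Sf{M})=M$; by (1) this morphism is surjective outside a finite set. Double-dualizing yields $\Phi\colon(\pi^*M)^{\smvee\smvee}\to\Sf{M}^{\smvee\smvee}=\Sf{M}$, a morphism between locally free sheaves of the same rank on the smooth surface $\Rs$ that is an isomorphism off a finite set. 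The non-isomorphism locus of such a map coincides with the vanishing locus of $\det\Phi$, which is either empty or of pure codimension one; since it is finite it must be empty, so $\Phi$ is an isomorphism and $\Sf{M}=(\pi^*M)^{\smvee\smvee}$ is full.

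The step I expect to be trickiest is the identification $\pi_*\Sf{M}=M$ in the forward direction: it requires simultaneously juggling the potential torsion of $\pi^*M$ along $E$, the adjunction, and the reflexivity of $M$ via the $i_*i^*$ characterization. The final codimension/determinant argument is the other delicate moment, but it becomes routine once the correct reflexive-hull setup is in place.
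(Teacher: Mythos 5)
Your argument is sound, but note that the paper does not actually prove this proposition: it defers entirely to Kahn's Proposition~1.2, adding only that the Stein case goes through because coherent $\Ss{X}$-modules on a Stein space are generated by global sections. So yours is a self-contained reconstruction of the standard argument rather than a variant of anything in the paper. The key moves — reducing condition (2) via the local cohomology sequence to reflexivity of $\pi_*\Sf{M}$, the split-adjunction argument for $\pi_*\Sf{M}=M$, and the determinant/codimension argument showing $(\pi^*\pi_*\Sf{M})^{\smvee\smvee}\to\Sf{M}$ is an isomorphism — are exactly right.

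Two steps are stated imprecisely, though both are repairable in a line. First, $\pi^*M\to(\pi^*M)^{\smvee\smvee}$ is generally \emph{not} an isomorphism off a finite set: $\pi^*M$ typically has torsion along all of $E$, so its non-reflexive locus is one-dimensional and the map fails to be injective there. What you need, and what is true, is only that the \emph{cokernel} is finitely supported (it equals the cokernel of $\pi^*M/\mathrm{Torsion}\to(\pi^*M/\mathrm{Torsion})^{\smvee\smvee}$, supported where a torsion-free sheaf on a smooth surface fails to be locally free); surjectivity off a finite set is all that (1) requires. Second, from $\psi\comp\phi$ being an isomorphism you only get $\phi$ split injective and $\psi$ surjective; to conclude both are isomorphisms you must add that $\psi\colon\pi_*\Sf{M}\to i_*i^*\pi_*\Sf{M}$ is injective, which holds because $\pi_*\Sf{M}$ is torsion-free (a section supported at a singular point would lift to a section of the locally free $\Sf{M}$ supported on $E$). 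With these additions the proof is complete.
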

\proof
Kahn's proof is for singularities. The proof for Stein normal surfaces is the same if one uses that a $\Ss{X}$-module on a Stein space is 
generated by global sections. 
The last assertion is implicit in Kahn's proof, and it gives us a natural bijection between reflexive $\Ss{X}$-modules
and full $\Ss{\Rs}$-modules.
\endproof

The following two lemmas that will be used later.
\begin{lemma} \label{lema:ceroggsg}
If $\Sf{M}$ is a full $\Ss{\Rs}$-module, then $R^1 \pi_* (\Sf{M}  \otimes \Cs{\Rs})=0$.
\end{lemma}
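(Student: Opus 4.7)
The plan is to reduce the desired vanishing to the vanishing of a local cohomology group via Serre duality with compact supports, and then to exploit both defining properties of the full sheaf $\Sf{M}^{\smvee}$.

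First I would observe that $\Sf{M}^{\smvee}$ is itself full: from $\Sf{M}=(\pi^*M)^{\smvee\smvee}$ one gets $\Sf{M}^{\smvee}\cong(\pi^*M)^{\smvee}\cong(\pi^*M^{\smvee})^{\smvee\smvee}$, so $\Sf{M}^{\smvee}$ is the full $\Ss{\Rs}$-module associated with the reflexive $\Ss{X}$-module $M^{\smvee}$; in particular, Proposition~\ref{fullcondiciones} yields $\pi_*\Sf{M}^{\smvee}=M^{\smvee}$.

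Next, fix a point $x$ in the singular locus of $X$, choose a small Stein open $U\ni x$, and set $V:=\pi^{-1}(U)$ with compact exceptional fibre $E_x$. Serre duality with compact supports on the smooth two-dimensional Stein space $V$ yields a natural isomorphism
\begin{equation*}
H^1(V,\Sf{M}\otimes\omega_{\Rs})\;\cong\;H^1_{E_x}(V,\Sf{M}^{\smvee})^{\smvee},
\end{equation*}
so the vanishing of the stalk $R^1\pi_*(\Sf{M}\otimes\omega_{\Rs})_x$ reduces to showing $H^1_{E_x}(V,\Sf{M}^{\smvee})=0$. Since $\Sf{M}^{\smvee}$ is locally free, $H^0_{E_x}(V,\Sf{M}^{\smvee})=0$, and the long exact sequence of local cohomology reads
\begin{equation*}
0\to H^0(V,\Sf{M}^{\smvee})\to H^0(V\setminus E_x,\Sf{M}^{\smvee})\to H^1_{E_x}(V,\Sf{M}^{\smvee})\to H^1(V,\Sf{M}^{\smvee}).
\end{equation*}
The last arrow is injective by condition (2) of Proposition~\ref{fullcondiciones} applied to $\Sf{M}^{\smvee}$, so it suffices to show that the restriction $H^0(V,\Sf{M}^{\smvee})\to H^0(V\setminus E_x,\Sf{M}^{\smvee})$ is surjective. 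The source equals $M^{\smvee}(U)$ by Proposition~\ref{fullcondiciones}; the target equals $M^{\smvee}(U\setminus\{x\})$ because $\pi$ is an isomorphism outside $E_x$; and the reflexivity of $M^{\smvee}$ on the normal surface $U$ (property~(5) above: $M^{\smvee}\cong i_*i^*M^{\smvee}$) forces this restriction to be an isomorphism, as required.

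The main obstacle is the precise formulation of Serre duality in the Stein/germ setting. I would deduce it either by passing to the completion of $\Ss{X,x}$ and applying Grothendieck local duality together with faithful flatness, or by invoking the analytic duality theorem for proper maps of Ramis--Ruget--Verdier on a relatively compact neighbourhood of $E_x$; everything else in the argument is a straightforward combination of the fullness axioms for $\Sf{M}^{\smvee}$ with Hartogs' extension for reflexive sheaves on normal surfaces.
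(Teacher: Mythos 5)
Your reduction, via duality on a neighbourhood $V$ of the exceptional fibre, of the lemma to the vanishing $H^1_{E_x}(V,\Sf{M}^{\smvee})=0$ is legitimate (this is precisely the form in which the lemma is exploited in Lemma~\ref{lema:dualM}); but the argument you give for that vanishing does not close, and it also rests on a false intermediate claim. First, $\Sf{M}^{\smvee}$ is \emph{not} full in general: the natural map $(\pi^*(M^{\smvee}))^{\smvee\smvee}\to(\pi^*M)^{\smvee}=\Sf{M}^{\smvee}$ is an isomorphism only off $E$, and whenever $c_1(\Sf{M})\cdot E_i>0$ for some component $E_i$ one has $c_1(\Sf{M}^{\smvee})\cdot E_i<0$, so $\Sf{M}^{\smvee}$ cannot be generically generated by global sections and condition (1) of Proposition~\ref{fullcondiciones} fails. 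The two consequences you extract from ``fullness'' of $\Sf{M}^{\smvee}$ happen to be true for other reasons ($\pi_*\Sf{M}^{\smvee}=M^{\smvee}$ is the adjunction $\pi_*\Homs_{\Ss{\Rs}}(\pi^*M,\Ss{\Rs})=\Homs_{\Ss{X}}(M,\pi_*\Ss{\Rs})$, and the injectivity of $H^1_{E_x}\to H^1$ is equivalent to the $H^0$-surjectivity you prove from reflexivity), but the stated justification is wrong.

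The fatal problem is the last step. In the exact sequence
\begin{equation*}
0\to H^0(V,\Sf{M}^{\smvee})\to H^0(V\setminus E_x,\Sf{M}^{\smvee})\to H^1_{E_x}(V,\Sf{M}^{\smvee})\to H^1(V,\Sf{M}^{\smvee}),
\end{equation*}
surjectivity of the restriction map and injectivity of the last arrow are the \emph{same} condition (each is equivalent to the vanishing of the connecting map), not two independent ingredients; together they only yield an injection $H^1_{E_x}(V,\Sf{M}^{\smvee})\hookrightarrow H^1(V,\Sf{M}^{\smvee})$, not the vanishing of the source. The target is the stalk of $R^1\pi_*(\Sf{M}^{\smvee})$, of dimension $rp_g+d(\Sf{M})$, which is strictly positive as soon as $X$ is not rational, so nothing follows. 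To conclude you would need the map $H^1_{E_x}\to H^1$ to be zero as well as injective, and that is where the hypothesis that $\Sf{M}$ is generically generated by global sections must enter --- a hypothesis your argument never uses, although the lemma fails without it (e.g.\ $\Sf{M}=\Ss{\Rs}(nE)$ with $n\geq 1$ has $R^1\pi_*(\Sf{M}\otimes\Cs{\Rs})\neq 0$). The paper's proof is short and direct: for the subsheaf $\Sf{M}'\subset\Sf{M}$ generated by global sections, Grauert--Riemenschneider vanishing gives $R^1\pi_*(\Sf{M}'\otimes\Cs{\Rs})=0$, the quotient $\Sf{M}/\Sf{M}'$ has finite support, and pushing forward the tensored sequence $0\to\Sf{M}'\otimes\Cs{\Rs}\to\Sf{M}\otimes\Cs{\Rs}\to(\Sf{M}/\Sf{M}')\otimes\Cs{\Rs}\to0$ yields the claim.
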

\proof
If $\Sf{M}$ is generated by global sections, Grauert-Riemenschneider Vanishing Theorem implies that 
$R^1 \pi_* \left( \Sf{M} \otimes \Cs{\Rs} \right)$ is equal to zero.

If $\Sf{M}$ is almost generated by global sections, consider $\Sf{M}'$ the subsheaf of $\Sf{M}$ generated by global sections, therefore we get 
the exact sequence
$0  \to \Sf{M}' \to \Sf{M} \to \Sf{G} \to 0$, with $\text{Supp}(\Sf{G})$ zero dimensional.
Applying the functor $-\otimes \Cs{\Rs}$ to the previous exact sequence, we get the desired vanishing via the long exact sequence of the 
the functor $\pi_* -$. 
\endproof

\begin{lemma}\label{lema:dualM}
If $\Sf{M}$ is a full sheaf, then $\pi_* \left(\Sf{M}^{\smvee}\right) = \left( \pi_* \Sf{M} \right)^{\smvee}$. 
%Let $\Sf{M}$ be a full sheaf and denote by $M:= \pi_* \Sf{M}$, then $\pi_* \left(\Sf{M}^{\smvee}\right) = M^{\smvee}$. 
\end{lemma}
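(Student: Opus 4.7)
The plan is to show that both $\pi_*(\Sf{M}^{\smvee})$ and $(\pi_*\Sf{M})^{\smvee}=M^{\smvee}$ are reflexive $\Ss{X}$-modules that agree naturally over the regular locus $U:=X\setminus\Sing(X)$; once this is established, the rigidity of reflexive sheaves on normal surfaces forces the comparison map to be an isomorphism everywhere.

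First I would construct the natural morphism $\pi_*(\Sf{M}^{\smvee})\to (\pi_*\Sf{M})^{\smvee}$ by functoriality of pushforward: a local section $\phi\colon\Sf{M}|_{\pi^{-1}V}\to \Ss{\Rs}|_{\pi^{-1}V}$ of $\Sf{M}^{\smvee}$ is sent to $\pi_*\phi\colon\pi_*\Sf{M}|_V\to \pi_*\Ss{\Rs}|_V=\Ss{X}|_V$, the last equality using normality of $X$. Since $\pi_*\Sf{M}=M$ by Proposition~\ref{fullcondiciones}, this lands in $M^{\smvee}(V)$.

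Second I would verify reflexivity of both sides. The target $M^{\smvee}$ is reflexive as the dual of a reflexive module on a normal surface (Theorem~\ref{Th:Herzog}). For the source, fullness implies $\Sf{M}$ is locally free, so $\Sf{M}^{\smvee}$ is locally free on the smooth surface $\Rs$ and has depth $2$ at every point of $E$. Since $E\subset\Rs$ has codimension $2$ and $\pi$ restricts to an isomorphism $\Rs\setminus E\xrightarrow{\sim} U$, the $S_2$-property yields, for every open $V\subset X$,
\[
\Sf{M}^{\smvee}(\pi^{-1}V)=\Sf{M}^{\smvee}(\pi^{-1}V\setminus E)=\pi_*(\Sf{M}^{\smvee})|_U(V\cap U).
\]
This equality is precisely $\pi_*(\Sf{M}^{\smvee})\to i_*i^*\pi_*(\Sf{M}^{\smvee})$ being an isomorphism, which by property~(5) of the list above characterizes reflexivity.

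Finally, under the identification $\pi^{-1}U\cong U$ we have $\Sf{M}|_{\pi^{-1}U}\cong M|_U$, so $(\pi_*\Sf{M}^{\smvee})|_U\cong(M|_U)^{\smvee}\cong M^{\smvee}|_U$ and one checks directly that the natural map constructed above becomes the identity after restriction to $U$. Since a morphism of reflexive $\Ss{X}$-modules on the normal surface $X$ that is an isomorphism over $U$ is automatically an isomorphism (apply the exact functor $i_*i^*$ and use property~(5) on both sides), the conclusion follows. The only non-formal step is the codimension-two extension argument used to obtain reflexivity of $\pi_*(\Sf{M}^{\smvee})$; everything else is bookkeeping via adjunction and the basic properties of reflexive sheaves recorded earlier.
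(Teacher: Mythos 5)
Your overall framework (compare both sides inside $i_*i^*$, reduce to an extension statement across the exceptional set) is fine, but the decisive step is wrong. You write that ``$E\subset\Rs$ has codimension $2$'' and invoke the $S_2$-property of the locally free sheaf $\Sf{M}^{\smvee}$ to conclude $\Sf{M}^{\smvee}(\pi^{-1}V)=\Sf{M}^{\smvee}(\pi^{-1}V\setminus E)$. But $E$ is the exceptional \emph{divisor}: a curve in the surface $\Rs$, hence of codimension $1$. Depth $2$ only kills $H^i_E$ for $i<\codim E=1$, i.e.\ it gives $H^0_E(\Sf{M}^{\smvee})=0$ (torsion-freeness, the injectivity of restriction), not the surjectivity you need. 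The surjectivity of $H^0(\pi^{-1}V,\Sf{M}^{\smvee})\to H^0(\pi^{-1}V\setminus E,\Sf{M}^{\smvee})$, equivalently the vanishing of $H^1_E(\Sf{M}^{\smvee})$, is precisely the nontrivial content of the lemma, and it is here that fullness must enter. The paper proves it by local duality, $H^1_E(\Sf{M}^{\smvee})\cong H^1(\Sf{M}\otimes\omega_{\Rs})^{*}$, together with Lemma~\ref{lema:ceroggsg}, whose proof uses Grauert--Riemenschneider vanishing and the fact that a full sheaf is \emph{generically generated by global sections}.

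A quick sanity check shows your argument cannot be repaired without using fullness: it only invokes local freeness of $\Sf{M}$, yet the statement fails for general locally free sheaves. Take $\Sf{M}=\Ss{\Rs}(nE_0)$ for a component $E_0$ of $E$ and $n\geq 1$ (not generically globally generated along $E_0$, hence not full). Then $\pi_*\Sf{M}=\Ss{X}$ (a meromorphic function with poles only along $E$ is weakly holomorphic on $X$, hence holomorphic by normality), so $(\pi_*\Sf{M})^{\smvee}=\Ss{X}$, whereas $\pi_*(\Sf{M}^{\smvee})=\pi_*\Ss{\Rs}(-nE_0)$ is a proper ideal of $\Ss{X}$. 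In particular a section of $\Sf{M}^{\smvee}$ over $\pi^{-1}V\setminus E$ need not extend across the divisor $E$, which is exactly the step your codimension-two argument was meant to supply. To fix the proof you must replace that step by the local-cohomology computation $H^1_E(\Sf{M}^{\smvee})\cong H^1(\Sf{M}\otimes\omega_{\Rs})=0$, which is where $R^1\pi_*(\Sf{M}\otimes\omega_{\Rs})=0$ for full (or merely generically globally generated) $\Sf{M}$ is used.
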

\proof
Consider the following cohomology exact sequence

\begin{equation*}
\begin{tikzpicture}
  \matrix (m)[matrix of math nodes,
    nodes in empty cells,text height=1.5ex, text depth=0.25ex,
    column sep=2.5em,row sep=2em] {
  0 & H_{E}^{0}\left ( \Sf{M}^{\smvee} \right) & H^{0}\left ( \Sf{M}^{\smvee} \right) & H^{0}\left (U, \Sf{M}^{\smvee} \right) & \\
    & H_{E}^{1}\left ( \Sf{M}^{\smvee} \right) & H^{1}\left ( \Sf{M}^{\smvee} \right) & H^{1}\left (U, \Sf{M}^{\smvee} \right) & \dots \\
};
\foreach \y [remember=\y as \lasty (initially 1)] in {1, 2}
{
\foreach \x [remember=\x as \lastx (initially 2)] in {3,...,4}
{
\draw[-stealth] (m-\y-\lastx) -- (m-\y-\x);
}
}
\draw[-stealth] (m-1-1) -- (m-1-2);
\draw[-stealth] (m-2-4) -- (m-2-5);
\draw[densely dotted,-stealth] (m-1-4) to [out=355, in=175] (m-2-2);
\end{tikzpicture}
\end{equation*}

Since $\Sf{M}$ is locally free we have that
$H_{E}^{0}\left ( \Sf{M}^{\smvee} \right) = 0$,
$H_{E}^{1}\left ( \Sf{M}^{\smvee} \right) \cong H^{1}\left ( \Sf{M} \otimes \Cs{\Rs} \right)$, by Serre duality.
By Lemma~\ref{lema:ceroggsg} we get
$H^{1}\left ( \Sf{M} \otimes \Cs{\Rs} \right) = 0$. Hence $\pi_*(\Sf{M}^{\smvee})=H^{0}\left ( \Sf{M}^{\smvee} \right) \cong H^{0}\left (U, \Sf{M}^{\smvee} \right)$.

Now denote by $M:= \pi_* \Sf{M}$. Since $M^{\smvee}$ is reflexive we get the equalities
$M^{\smvee} = i_{*} i^{*} \left(M^{\smvee}\right) = i_{*}\left(\Sf{M}_{|_U}^{\smvee}\right) = H^{0}\left (U, \Sf{M}^{\smvee}\right)$.
Therefore we have the isomorphism $M^{\smvee} \cong   \pi_* \left(\Sf{M}^{\smvee}\right)$.
\endproof

Another notion that will be important in this work is the concept of specialty. Previously Wunram~\cite{Wu} and Riemenschneider~\cite{Rie} 
defined a special full sheaf as a full sheaf which its dual has the first cohomology group is equal to zero. 
Using this definition Wunram generalized McKay correspondence in the following sense: he proved that in the case of a rational surface singularity and taking the minimal resolution, there is a bijection 
between isomorphism classes of special full sheaves and irreducible components of the exceptional divisor.

Their specialty notion is adapted to the case of rational singularities. For us the definition of special is as follows. 

\begin{definition}\label{def:especial}
A full $\Ss{\Rs}$-module $\Sf{M}$ on $\Rs$ of rank $r$ is called \emph{special} if $\dimc{R^1 \pi_* \left(\Sf{M}^{\smvee}\right)} = rp_g$.
\emph{The defect of specialty of $\Sf{M}$} is the number $\dimc{R^1 \pi_* \left( \Sf{M}^{\smvee}\right)}-rp_g$.
\end{definition}

Notice that this definition is a generalization of the concept given by Wunram and Riemenschneider and both definitions coincide in the case of a
rational singularity. 

Since the definition of being special depends on the resolution, we have a another related notion.

\begin{definition}\label{def:espmodule}
Let $M$ be a reflexive $\Ss{X}$-module. We say that $M$ is \emph{a special module} if for any resolution the full sheaf associated to $M$ is 
special.
\end{definition}

Later it will become clear the importance of these concepts.

\section{Enhancing the Chern class and the degeneracy modules correspondences}\label{chapter:artin-verdier-esnault}

In~\cite{AV} Artin and Verdier interpret geometrically McKay correspondence as follows: given an indecomposable reflexive 
$\Ss{X}$-module, with $X$ a rational double point, 
they assign to it the first Chern class of the bi-dual of its pull-back to the minimal resolution. It turns out that the Chern class determines the module and that
it hits precisely the exceptional divisor that McKay correspondence associates to the module.

If $X$ is not a rational double point the fisrt Chern class does not determine the module~\cite{Es}. 
In this section we refine Artin-Verdier construction in 
the following sense. The first Chern class may be constructed as the degeneracy locus of a set of as many generic sections as the rank of the 
module. Here we, to the same set of sections, we associate a {\em degeneracy module}, which is a Cohen-Macaulay module of dimension $1$ that
whose support is the degeneracy locus. This refined correspondence is one of the main tools in our study of reflexive modules.

\subsection{Degeneracy modules of vector bundles}
\label{sec:degeneracymodule}
In this section we refine the construction of the first Chern class of a vector bundle as explained above.
In order to avoid introducing more notation we only work in the generality needed in this paper, but the construction apply to more situations.

Let $X$ be a normal Stein surface and $\pi:\Rs\to X$ a proper birational map from a normal space $\Rs$ 
({\em not necessarily a resolution, for example $\pi$ may be the identity map}). Let $E:=\pi^{-1}(x)$. Let $\Sf{M}$ be a reflexive $\Ss{\Rs}$-module 
that is generically generated by global sections. Let $S\subset E$ be
the finite subset which is the union of the singular locus $Sing(\Rs)$ and the locus where $\Sf{M}$ is not generated by global sections. Denote by $M:=\pi_*\Sf{M}$ the $\Ss{X}$-module of global sections of
$\Sf{M}$. 

Suppose that $\text{rank}(\Sf{M})=r$ and take $\phi_1, \dots, \phi_r$ generic sections. Consider the exact sequence given by the
sections
\begin{equation}
\label{exctseq:directaM}
0 \to \Ss{\Rs}^r \stackrel{(\phi_1,...,\phi_r)}{\longrightarrow} \Sf{M} \to \Sf{A}' \to 0.
\end{equation}

\begin{definition}
\label{def:degeneracymodule}
Given $\Sf{M}$ a reflexive $\Ss{\Rs}$-module of rank $r$ as above and $(\phi_1, \dots, \phi_r)$ a set of $r$ sections, the $\Ss{\Rs}$-module 
$\Sf{A}'$ defined by the previous exact sequence is called the {\em degeneracy module} of $\Sf{M}$ associated with the given sections. 
The sections are called {\em weakly generic} if the support of the degeneracy module is a proper closed subset.
\end{definition}

\begin{proposition}
\label{prop:generalizationAV0}
Given $\Sf{M}$ a $\Ss{\Rs}$-module of rank $r$ as above and a weakly generic set of $r$ sections, the associated 
degeneracy module is Cohen-Macaulay.
\end{proposition}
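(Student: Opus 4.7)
The plan is to exploit the short exact sequence
$$0 \to \Ss{\Rs}^r \xrightarrow{(\phi_1,\dots,\phi_r)} \Sf{M} \to \Sf{A}' \to 0$$
together with the fact that $\Sf{M}$ is maximal Cohen-Macaulay on the normal surface $\Rs$ (by property (3) of the list of basic properties of MCM modules recalled in Section~3.1, reflexive $=$ MCM on a normal surface). First, I would check that the displayed map really is injective, justifying the exactness of the sequence. Since the sections are weakly generic, the map $(\phi_1,\dots,\phi_r)$ is surjective over a dense open subset of $\Rs$, hence is an isomorphism at each codimension-zero point. Its kernel is therefore a torsion submodule of $\Ss{\Rs}^r$, and so must vanish since $\Ss{\Rs}^r$ is torsion-free.

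Next, I would apply the depth lemma stalkwise. Because $\Rs$ is a normal surface, $\Ss{\Rs,y}$ is Cohen-Macaulay of dimension equal to the codimension of $\overline{\{y\}}$, so $\mathrm{depth}(\Ss{\Rs,y}^r)=\dim\Ss{\Rs,y}$; and because $\Sf{M}$ is reflexive, $\mathrm{depth}(\Sf{M}_y)=\dim\Ss{\Rs,y}$ as well. The depth lemma applied to the short exact sequence then gives
$$\mathrm{depth}(\Sf{A}'_y) \;\geq\; \dim\Ss{\Rs,y}-1$$
at every point $y\in\Rs$.

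Finally I would conclude by comparing depth and dimension at each stalk, using the general inequality $\mathrm{depth}(\Sf{A}'_y)\leq \dim\Sf{A}'_y$ for nonzero finitely generated modules. At the generic point of $\Rs$ the module $\Sf{A}'$ vanishes. At the generic point of any one-dimensional component of $\mathrm{Supp}(\Sf{A}')$, the stalk is a nonzero finite-length module over a DVR, so depth and dimension both equal $0$. At a closed point $y\in\mathrm{Supp}(\Sf{A}')$, the depth bound forces $\mathrm{depth}(\Sf{A}'_y)\geq 1$, so $\dim\Sf{A}'_y\geq 1$; this automatically rules out embedded/isolated points in the support, and at such non-isolated closed points one has $\dim\Sf{A}'_y=1$, forcing equality $\mathrm{depth}(\Sf{A}'_y)=1$. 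Thus $\Sf{A}'$ is Cohen-Macaulay at every stalk. The only real subtlety is the last step, where the MCM hypothesis on $\Sf{M}$ is used precisely to exclude embedded components of dimension $0$ in the support of $\Sf{A}'$; this is where reflexivity of $\Sf{M}$ is indispensable.
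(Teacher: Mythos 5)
Your proof is correct, but it takes a genuinely different route from the paper's. The paper argues by duality: it dualizes the defining sequence twice with respect to the dualizing sheaf $\omega_{\Rs}$, identifies the resulting sequence with the original one using Theorem~\ref{Th:Herzog} (for $\Homs_{\Ss{\Rs}}(\omega_{\Rs},\omega_{\Rs})\cong\Ss{\Rs}$) and the reflexivity of $\Sf{M}$, deduces the biduality isomorphism $\Sf{A}'\cong\Exts^1_{\Ss{\Rs}}\left(\Exts^1_{\Ss{\Rs}}(\Sf{A}',\omega_{\Rs}),\omega_{\Rs}\right)$, and then invokes Proposition~\ref{prop:extCM1}. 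You instead run a stalkwise depth count: first the left-exactness of the section sequence (the kernel is torsion inside the torsion-free sheaf $\Ss{\Rs}^r$, since weak genericity makes the map generically surjective between sheaves of equal rank), then the depth lemma applied to $0\to\Ss{\Rs,y}^r\to\Sf{M}_y\to\Sf{A}'_y\to 0$, using that $\Ss{\Rs,y}$ is Cohen--Macaulay (normality of the surface $\Rs$) and that stalks of a reflexive sheaf are maximal Cohen--Macaulay, to get $\mathrm{depth}(\Sf{A}'_y)\geq\dim\Ss{\Rs,y}-1$; combined with $\dim\Sf{A}'_y\leq 1$ this forces depth equal to dimension at every point of the support. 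Both arguments are complete. Yours is more elementary and makes transparent exactly where reflexivity of $\Sf{M}$ enters, namely in excluding zero-dimensional associated points of $\Sf{A}'$. The paper's detour through $\omega_{\Rs}$ is not gratuitous, though: the exactness of the dualized sequence and the resulting biduality identification of the degeneracy module are reused verbatim in the proofs of Theorems~\ref{th:corrsing} and~\ref{th:corres} and again in~(\ref{Adobleext}), so the duality computation does double duty there. Your explicit justification of injectivity of the section map is a small but genuine point that the paper leaves implicit.
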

\proof
Dualize the exact sequence (\ref{exctseq:directaM}) with respect to a dualizing sheaf $\omega_{\Rs}$ (which exists since the surface $\Rs$ is normal, and hence Cohen-Macaulay) to obtain

\begin{equation}
0 \to \Homs_{\Rs}(\Sf{M},\omega_{\Rs}) \to \omega_{\Rs}^r \to \Exts_{\Ss{\Rs}}^1 \left(\Sf{A}', \omega_{\Rs} \right) \to 0,
\end{equation}
and dualizing again with respect to $\omega_{\Rs}$ we get
\begin{equation}\label{exact:AesCM}
0 \to \Homs_{\Rs}(\omega_{\Rs},\omega_{\Rs})^r \to \Homs_{\Ss{\Rs}} \left(\Homs_{\Ss{\Rs}} \left(\Sf{M}, \omega_{\Rs} \right), \omega_{\Rs} \right) \to 
\Exts_{\Ss{\Rs}}^1 \left(\Exts_{\Ss{\Rs}}^1 \left(\Sf{A}', \omega_{\Rs} \right), \omega_{\Rs} \right) \to 0.
\end{equation}

Theorem~\ref{Th:Herzog} applied to $\Ss{\Rs}$ yields that the first term is isomorphic to $\Ss{\Rs}^r$.
Since $\Sf{M}$ is reflexive we have that the middle term is isomorphic to $\Sf{M}$. Functoriality of the double $\Homs$ implies then that
the first morphism of the last exact sequence coincides with the first morphism of the exact sequence~(\ref{exctseq:directaM}). 
Therefore we get that $\Sf{A}'$ is isomorphic to 
$\Exts_{\Ss{\Rs}}^1 \left(\Exts_{\Ss{\Rs}}^1 \left(\Sf{A}', \omega_{\Rs} \right), \omega_{\Rs} \right)$.
Finally since $\Sf{A}'$ has dimension one we conclude that $\Sf{A}'$ is a Cohen-Macaulay $\Ss{\Rs}$-module of dimension one by 
Proposition~\ref{prop:extCM1}.
\endproof

The following Bertini-Type Proposition is a generalization of Lemma~1.2~of~\cite{AV}.

\begin{proposition}
\label{prop:generalizationAV}
Let $\Sf{M}$ a reflexive $\Ss{\Rs}$-module of rank $r$ that is generically generated by global sections as above and 
$(\phi_1, \dots, \phi_r)$ a set of $r$ generic sections. Let $C$ be the support of
the degeneracy module $\Sf{A}'$ with reduced structure. 
\begin{enumerate} 
 \item For any point $x\in\Rs$ not contained in $S$ we have the isomorphism $\Sf{A}'_x\cong\Ss{C,x}$. 
 \item The support of $C$ meets $E$ in finitely many points. 
 Let $Z\subset\Rs$ be any finite set disjoint with $S$, a sufficiently generic choice of the sections  $(\phi_1, \dots, \phi_r)$ ensures
 that $C$ is smooth outside $S$, that $C$ does not meet $Z$, and that $E$ and $C$ meet in a transversal way at those meeting points not 
 contained in $S$. However $C$ contains $S$ and at these points it may meet $E$ in a non-transversal way. 
\end{enumerate}
\end{proposition}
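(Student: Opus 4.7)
The plan is to derive both (1) and (2) from a Bertini-type genericity argument adapted to the Stein analytic setting, after establishing a local normal form for the matrix representing the sections.

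First I would work locally at a point $p \in \Rs \setminus S$. By definition of $S$, the module $\Sf{M}$ is locally free of rank $r$ near $p$ and is generated there by global sections, so in a trivialization $\Sf{M}_p \cong \Ss{\Rs,p}^r$ the tuple $(\phi_1,\ldots,\phi_r)$ is represented by an $r \times r$ matrix $A$ with entries in $\Ss{\Rs,p}$. The key observation is that the locus of $r \times r$ complex matrices of corank at least $2$ has codimension $4$, whereas $\Rs$ has complex dimension $2$. Hence a dimensional count applied to the evaluation map from $V^r \times (\Rs \setminus S)$ into the space of matrices, where $V \subset H^0(\Rs, \Sf{M})$ is a finite-dimensional subspace generating $\Sf{M}$ on a prescribed compact, shows that for generic $(\phi_1,\ldots,\phi_r)$ the matrix $A$ has corank at most $1$ at every point of $\Rs \setminus S$.

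Once corank is at most $1$ at $p$, some $(r-1) \times (r-1)$ minor of $A$ is a unit at $p$. Elementary row and column operations over $\Ss{\Rs,p}$ (invertible, because they involve only this unit minor) then bring $A$ to the form $\mathrm{diag}(f, 1, \ldots, 1)$ with $f = \det(A)$ up to a unit, and this identifies the stalk of the degeneracy module with $\Ss{\Rs,p}/(f)$. A second Bertini argument of the same kind, again protected by the dimensional count, ensures that for generic sections $df$ is non-zero at every zero of $f$ in $\Rs \setminus S$; hence $C = \{f = 0\}$ is smooth and reduced outside $S$ and $\Ss{\Rs,p}/(f) \cong \Ss{C,p}$. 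This proves (1).

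For (2), each desired property becomes an open genericity condition on $(\phi_1,\ldots,\phi_r)$ that I would arrange by further Bertini-type reasoning. Finiteness of $C \cap E$ is equivalent to $\det A$ not vanishing identically on any irreducible component $E_i$; since $E_i \not\subset S$ and $\Sf{M}$ is generated by sections on $E_i \setminus S$, this condition is open and achievable. Avoidance of the finite set $Z \subset \Rs \setminus S$ is imposed pointwise by requiring $\det A(z) \neq 0$ for each $z \in Z$. Transversality of $C$ and $E_i$ at points $p \in C \cap E \setminus S$ reduces, using the local normal form above, to the generic non-coincidence of the tangent lines $T_p\{f=0\}$ and $T_p E_i$, a codimension-one condition on sections.

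The main obstacle will be handling the fact that $H^0(\Rs,\Sf{M})$ is typically infinite-dimensional, so the standard projective Bertini machinery does not apply verbatim. I would overcome this by exhausting $\Rs \setminus S$ by an increasing sequence of relatively compact open subsets $U_n$, choosing on each $U_n$ a finite-dimensional subspace $V_n \subset H^0(\Rs,\Sf{M})$ that generates $\Sf{M}$ over $U_n$ (such a $V_n$ exists because $\pi_*\Sf{M}$ is a globally generated coherent sheaf on the Stein space $X$ and $\pi$ is proper), applying Bertini on each $U_n$, and using a Baire-category argument in the parameter space to obtain a single choice of sections satisfying all the required conditions simultaneously.
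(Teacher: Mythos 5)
Your proof is correct and follows essentially the same route as the paper's: a parametric transversality (Bertini) argument for the evaluation map into $\mathrm{Mat}(r\times r,\CC)$, which is submersive off $S$ because the global sections generate there, combined with the codimension count for the rank stratification (corank $\geq 2$ has codimension $4>2$, so it is missed; corank $\geq 1$ is hit transversely, giving a smooth reduced curve). The paper streamlines the two points where you improvise. First, instead of varying the sections in the infinite-dimensional space $H^0(\Rs,\Sf{M})$ and running an exhaustion-plus-Baire argument, it fixes finitely many generating sections $\psi_1,\dots,\psi_k$ once and for all, writes $(\phi_1,\dots,\phi_r)=(\psi_1,\dots,\psi_k)B$ with $B$ a \emph{constant} matrix, and applies parametric transversality with the finite-dimensional parameter space $\mathrm{Mat}(k\times r,\CC)$ over a finite Stein trivializing cover of $\Rs\setminus S$; this sidesteps the issue in your sketch that the subspaces $V_n$ change with $n$, so your Baire argument only closes if you topologize the full Fr\'echet space of sections and check that ``good on $\overline{U_n}$'' is open and dense there. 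Second, the paper reads off $\Sf{A}'_x\cong\Ss{C,x}$ directly from transversality to the rank stratification, whereas you pass through the Smith normal form $\mathrm{diag}(f,1,\dots,1)$; both are fine. The only claim of the statement neither of you addresses explicitly is that $C$ contains $S$, which is immediate since the sections cannot generate $\Sf{M}$ at points of $S$, so the cokernel $\Sf{A}'$ is nonzero there.
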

\proof
The restriction $\Sf{M}|_{\Rs\setminus S}$ of the sheaf $\Sf{M}$ to $\Rs\setminus S$ is locally free and generated by global sections of $\Sf{M}$
over $\Rs$. Let $\psi_1, \dots, \psi_k$ be global sections of $\Sf{M}$ such that they generate it over $\Rs\setminus S$. 
Denote by $E$ the vector bundle over $\Rs\setminus S$ whose sheaf of sections is $\Sf{M}|_{\Rs\setminus S}$ and by 
$(\Rs\setminus S) \times \mathbb{C}^k$ the trivial vector bundle of rank $k$ over $\Rs\setminus S$. 
The generating global sections $(\psi_1,...,\psi_k)$ induce a surjective morphism of vector bundles 
$$\Psi:(\Rs\setminus S) \times \mathbb{C}^k\to E,$$
defined by $\Psi(x,(c_1, \dots, c_k))=\sum_{j=1}^k \psi_j(x)c_j$. 

Since holomorphic vector bundles over Stein spaces are trivial, and $\Rs\setminus S$ admits a finite Stein cover, there exist a finite trivializing covering for $E$. Let $U$ be an trivializing open set.
Consider the local trivialization $E|_{U}\to  U \times \mathbb{C}^r$.
In the open set $U$ the global sections $\psi_1, \dots, \psi_k$ can be written as follows
\begin{equation*}A=
\begin{tikzpicture}[baseline=(m-2-1.base)]
  \matrix (m)[matrix of math nodes,
    left delimiter=(,right delimiter=)]{
    a_{11} &  a_{12} & \dots & a_{1k} \\
		\vdots &  \vdots & \vdots & \vdots \\
		a_{r1} &  a_{12} & \dots & a_{rk} \\};
\end{tikzpicture},
\end{equation*}
where the entries of the $i$-th column
are the coordinates of $\psi_i$. Notice that the matrix $A$ has entries in $\Ss{\Rs}(U)$.

In the trivialization over $U$ the restriction of the map $\Psi$ is 
$\Psi_U:U \times \mathbb{C}^k \to U \times \mathbb{C}^r$, 
where $\Psi_U(x,(c_1, \dots, c_k))=(x, A(x)(c_1, \dots, c_k)^{\intercal})$.

Now for each matrix $B$ in $\text{Mat}\left( k \times r, \mathbb{C} \right)$, we get sections $\phi_1, \dots, \phi_r$ of $\Sf{M}$ by the formula
\begin{equation*}
(\phi_1, \dots, \phi_r) = (\psi_1, \dots, \psi_k)B,
\end{equation*}
and a choice of generic sections amounts to the choice of a generic matrix $B$.

In the trivializing frame over the open set $U$ the coordinates of the sections $\phi_i$ is the $i$-th column 
of the matrix product $AB$: 
\begin{equation*}
(\phi_1, \dots, \phi_r) = AB.
\end{equation*}
So, in the open set $U$ the exact sequence \eqref{exctseq:directaM} is 
\begin{equation*}
\begin{tikzpicture}
  \matrix (m)[matrix of math nodes,
    nodes in empty cells,text height=1.5ex, text depth=0.25ex,
    column sep=3.5em,row sep=2em] {
    0 & \Ss{U}^r & \Ss{U}^r & \Sf{A}'|_U & 0. \\
		  &  \Ss{U}^k &           &         &\\};
\draw[-stealth] (m-1-1) -- (m-1-2);
\draw[-stealth] (m-1-2) edge node[auto]{$(\phi_1, \dots, \phi_r)$} (m-1-3);
\draw[-stealth] (m-1-3) -- (m-1-4);
\draw[-stealth] (m-1-4) -- (m-1-5);
\draw[-stealth] (m-1-2) edge node[auto]{$B$} (m-2-2);
\draw[-stealth] (m-2-2) edge node[below]{$A$} (m-1-3);
\end{tikzpicture}
\end{equation*}
Then we have that
\begin{equation}
\label{eq:A'explicito}
\text{Supp}(\Sf{A}')\cap U = \{x \in U \, | \, \det (AB) = 0\}.
\end{equation}

Consider the stratification by rank in the set $\text{Mat}\left( r \times r, \mathbb{C} \right)$ and  denote by
\begin{equation*}
\text{Mat}\left( r \times r, \mathbb{C} \right)^i := \{ c \in \text{Mat}\left( r \times r, \mathbb{C} \right) \, | \, \corank (c) \geq i \}.
\end{equation*}
We have that
$\codim (\text{Mat}\left( r \times r, \mathbb{C} \right)^i) = i^2$ and 
$\dim(\Rs \setminus S) =2.$

Now consider the map
$\Theta \colon \left( U \setminus S \right) \times \text{Mat}(k \times r, \mathbb{C})  \to \text{Mat}(r \times r, \mathbb{C})$, given by
$(x,B) \mapsto A(x)B$. Since the sections $\{\psi_1 ,\dots, \psi_k\}$ generate $\Sf{M}$ over the set $U \setminus S$, we get that the map 
$\Theta$ is a submersion and 
therefore it is transverse to the rank stratification.

By the Parametric Transversality Theorem, for almost every $B$ in $\text{Mat}(k \times r, \mathbb{C})$, the map
$$\Theta_B: U \setminus S \to Mat(r \times r, \mathbb{C})$$
defined by $x \mapsto A(x)B$
is transverse to the rank stratification and to the sets $Z$ and $E$. By the finiteness of the trivializing cover we can choose a matrix $B$ generic such 
that in each trivialization the map $\Theta_B$ is transverse to the rank stratification and to $Z$. 

By Equation~(\ref{eq:A'explicito}) and transversality we have that $\text{Supp}(\Sf{A}')\cap U$ is smooth of dimension $1$, disjoint to $Z$ and 
transversal to $E$. The tranversality of $\Theta_B$ to the rank stratification also implies that for any $x\in U$ we have the isomorphism
$\Sf{A}'_x\cong \Ss{C,x}$, where $C$ is the support of $\Sf{A}'$. Since the trivializing open sets cover $\Rs\setminus S$ the proposition is
proved.
\endproof

The previous Proposition motivates the following definition.

\begin{definition}
 \label{def:nearlygeneric}
 Let $\Sf{M}$ be a reflexive $\Ss{\Rs}$-module of rank $r$. A collection $(\phi_1, \dots, \phi_r)$ of $r$ sections is called {\em nearly generic} if 
 the following conditions are satisfied.
 Let $C$ be the support of the degeneracy module $\Sf{A}'$. 
\begin{enumerate}
 \item For any point $x\in\Rs$ except in a finite collection we have the isomorphism $\Sf{A}'_x\cong(\Ss{C})_x$.
 \item The support of $C$ meets $E$ in finitely many points (notice that this condition is void in the case $X=\Rs$).  
\end{enumerate}
\end{definition}

\begin{remark}
 Proposition~\ref{prop:generalizationAV} states that a generic set of sections is in particular  nearly generic.
\end{remark}

\subsection{Cohen-Macaulay modules of dimension 1}
\label{sec:CM1}

In this section we study the structure of Cohen-Macaulay modules of dimension $1$ which are of rank 1 and generically reduced. The concrete 
description that we are about to obtain will be very important in our study of reflexive modules.

\begin{definition}
\label{def:genredCM}
Let $Y$ be an analytic space and $\Sf{C}$ a $\Ss{Y}$-module dimension $1$. Let $C$ be the support of $\Sf{C}$, with reduced structure. The module $\Sf{C}$ is {\em  rank 1 generically reduced } if $\Sf{C}$ is isomorphic to $\Ss{C}$ except in finitely many points of the support.
\end{definition}

\begin{remark}
\label{rem:obsss}
Definition~\ref{def:nearlygeneric} and the first assertion of Proposition~\ref{prop:generalizationAV} states that degeneracy modules 
(see Definition~\ref{def:degeneracymodule}) for nearly generic sections are  rank 1 generically reduced  Cohen-Macaulay modules of dimension $1$.
\end{remark}

\begin{proposition}
\label{prop:genredCM}
Let $Y$ be an analytic space, and $\Sf{C}$ be a  rank 1 generically reduced  Cohen-Macaulay $\Ss{Y}$-module of dimension $1$.
Let $C$ be the support of $\Sf{C}$ with reduced structure. Denote by $n \colon \tilde{C} \to C$ the 
normalization.
\begin{enumerate}
 \item The sheaf $\Sf{C}$ is a $\Ss{C}$-module, that is, the ideal of $C$ is contained in the annihilator of $\Sf{C}$.
 \item Let $n:\tilde{C}\to C$ be the normalization. If $C$ is Stein then there exists an inclusion of $\Sf{C}$ as $\Ss{C}$-submodule of $n_*\Ss{\tilde{C}}$ which contains $\Ss{C}$. 
 In other words, we have the chain of inclusions
\begin{equation*}
 \Ss{C} \subset \Sf{C} \subset n_*\Ss{\tilde{C}}. 
\end{equation*}
\end{enumerate}
\end{proposition}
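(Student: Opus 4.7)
Proof of (1): A Cohen-Macaulay $\Ss{Y}$-module of dimension $1$ has no embedded primes, so every associated prime is a minimal prime of its support $C$. Since $C$ is reduced, these correspond to the generic points $\eta_i$ of its irreducible components, and the generically reduced hypothesis gives $\Sf{C}_{\eta_i}\cong \Ss{C,\eta_i}$. In particular, the ideal sheaf $\Sf{J}_C$ of $C$ in $Y$ annihilates each $\Sf{C}_{\eta_i}$. The absence of embedded primes implies that $\Sf{C}$ embeds into the product $\prod_i \Sf{C}_{\eta_i}$ of its localizations at associated primes (if $\mathrm{Ann}(m)$ were contained in no $\eta_i$ it would contain a non-zerodivisor, forcing $m=0$), so $\Sf{J}_C$ annihilates $\Sf{C}$ globally and makes $\Sf{C}$ into an $\Ss{C}$-module.

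For (2), the Cohen-Macaulay plus generic reducedness hypotheses together show that $\Sf{C}$ is a torsion-free rank-one $\Ss{C}$-module, hence embeds non-canonically into the sheaf $\Sf{K}_C$ of total quotient rings. Applying prime avoidance over the finitely many generic points of $C$ (using that $\CC$ is infinite), I choose $s_0\in H^0(C,\Sf{C})$ that is a unit at every generic point; the rule $t\mapsto t/s_0$ embeds $\Sf{C}$ in $\Sf{K}_C$ so that $\Ss{C}\cdot s_0\subset\Sf{C}$ maps onto $\Ss{C}\subset \Sf{K}_C$. The remaining task is to modify this embedding so the image lies inside the integral-closure subsheaf $n_*\Ss{\tilde{C}}\subset\Sf{K}_C$. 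The $n_*\Ss{\tilde{C}}$-saturation $\Sf{C}\cdot n_*\Ss{\tilde{C}}\subset\Sf{K}_C$ is a coherent torsion-free rank-one $n_*\Ss{\tilde{C}}$-submodule, hence an invertible sheaf on $\tilde{C}$. Because $C$ is Stein and one-dimensional, $\tilde{C}$ is a non-compact (open) Riemann surface, on which every holomorphic line bundle is trivial by Grauert's theorem; thus $\Sf{C}\cdot n_*\Ss{\tilde{C}} = h\cdot n_*\Ss{\tilde{C}}$ for some invertible meromorphic function $h$ on $C$. Replacing the embedding by its composite with multiplication by $h^{-1}$ gives $\iota_1:\Sf{C}\hookrightarrow n_*\Ss{\tilde{C}}$ whose image has full saturation, $\iota_1(\Sf{C})\cdot n_*\Ss{\tilde{C}} = n_*\Ss{\tilde{C}}$.

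The final step is to twist $\iota_1$ by a global unit of $n_*\Ss{\tilde{C}}$ so that $1\in\iota_1(\Sf{C})$, equivalently $\Ss{C}\subset\iota_1(\Sf{C})$. Full saturation implies that at every $y\in\tilde{C}$ the stalk $\iota_1(\Sf{C})_y$ contains a local unit $u_y$ of $\Ss{\tilde{C},y}$, and since $n_*\Ss{\tilde{C}}/\iota_1(\Sf{C})$ has finite length supported at finitely many points, there is an integer $N$ with $\mathfrak{m}_y^N\Ss{\tilde{C},y}\subset\iota_1(\Sf{C})_y$ at each bad point. Using Cartan's interpolation theorem on the Stein curve $\tilde{C}$, I produce $g\in H^0(\tilde{C},\Ss{\tilde{C}})$ whose Taylor expansion at each bad point agrees with $\log u_y$ modulo $\mathfrak{m}_y^N$; then $s:=e^g$ is a nowhere vanishing global section of $\Ss{\tilde{C}}$ that lies in $\iota_1(\Sf{C})$, and multiplying $\iota_1$ by $s^{-1}$ yields the desired embedding with $1\in\iota(\Sf{C})$. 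The main obstacle is precisely this globalization: locally the existence of such an embedding is routine prime avoidance over the infinite field $\CC$, but local embeddings do not glue directly, and the plan circumvents this by first producing a naturally saturated embedding via Grauert and then correcting by a global nowhere vanishing unit on the open Riemann surface $\tilde{C}$ constructed through the exponential sequence.
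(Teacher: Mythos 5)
Your proof is correct, but for part (2) it takes a genuinely different route from the paper's. The paper also first produces an embedding $h:\Sf{C}\hookrightarrow n_*\Ss{\tilde{C}}$ (via $n^*\Sf{C}/\mathrm{Torsion}\cong\Ss{\tilde{C}}$, the same triviality of rank-one torsion-free sheaves on the smooth Stein curve $\tilde{C}$ that you invoke through Grauert), but then normalizes it in a single stroke: it introduces the order function at the branches over the bad points, observes that a generic section $c_0$ realizes the componentwise minimum of the orders and does not vanish away from the bad points, and divides by $h(c_0)$; this simultaneously forces the image into $\Ss{\tilde{C}}$ and puts $1=g(c_0)$ in the image. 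You instead split the normalization into two corrections: first trivializing the $n_*\Ss{\tilde{C}}$-saturation by a global generator (Grauert--Oka on the open Riemann surface $\tilde{C}$), then restoring $1$ to the image by manufacturing a nowhere-vanishing global section $s=e^g$ inside $\iota_1(\Sf{C})$ via Cartan interpolation of jets. Your version is more technology-heavy but makes the global Stein case very explicit: the existence of a section that is nowhere vanishing outside the bad points, which the paper obtains by ``genericity,'' is exactly what your exponential construction supplies. One small point to tighten: at a bad point $p$ with several preimages $y_1,\dots,y_k$, full saturation only gives, for each $i$, some element of $\iota_1(\Sf{C})_p$ whose $y_i$-component is a unit; before interpolating you need a single element $u\in\iota_1(\Sf{C})_p$ all of whose components are units, which you obtain by a generic $\CC$-linear combination (the same genericity device the paper uses for its order function). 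With that sentence added, the argument is complete.
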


\proof
For the first assertion let $f$ be an element of the ideal of $C$. Assume that there exists a section $c$ of $\Sf{C}$ such that 
$f\cdot c$ is different from zero. Since $\Sf{C}$ is  rank 1 generically reduced  the support of $f\cdot c$ is a finite set of points, but this forces $f\cdot c$ to
vanish, since otherwise $\Sf{C}$ would not have depth $1$. 

Now we prove the second assertion. Consider the following map
\begin{align*}
h\colon \Sf{C} &\to \Sf{C} \otimes_{\Ss{C}} \Ss{\tilde{C}}/(Torsion),\\
c &\mapsto c\otimes 1.
\end{align*}

By hypothesis we have that $\Sf{C}$ is isomorphic to $\Ss{C}$ except in finitely many points. Therefore the support of the kernel of $h$ is finite and since $\Sf{C}$ does not have any finitely supported section  we get that the map $h$ is injective. 

Now notice that $n^* \Sf{C}/(Torsion) = \Sf{C} \otimes_{\Ss{C}} \Ss{\tilde{C}}/(Torsion)$ is a torsion-free $\Ss{\tilde{C}}$-module of rank one,
and hence isomorphic to $\Ss{\tilde{C}}$ (since $\tilde{C}$ is smooth and Stein). So we have an injection
$h\colon \Sf{C} \to \Ss{\tilde{C}}$. 

Consider the (multi)-germ of $\tilde{C}$ at the support of $\Ss{\tilde{C}}/h(\Sf{C})$. Enumerate the branches of the
multi-germ as $(\tilde{C}_j,p_j)$ for $j=1,...,l$. 
Each $\Ss{\tilde{C}_j,p_j}$ is a discrete valuation ring for any $j$. Let $t_j$ be a uniformizing parameter of $\Ss{\tilde{C}_j,p_j}$.
Let us denote by $h(c)_j$ the germ of $h(c)$ in $\Ss{\tilde{C}_j,p_j}$.

For any section $c$ of $\Sf{C}$ we define
$\text{ord}(h(c)) := \left(\dots,\text{ord}_{t_j}\left(h(c)_j \right),\dots \right),$
where $\text{ord}_{t_j}$ denotes the valuation of the ring $\Ss{\tilde{C}_j}$. 
Notice that $\text{ord}(h(c))$ belongs to the set $\mathbb{N}^l$.

Now for any generic $\lambda$ and $\mu$ in $\mathbb{C}$ and $c$ and $c'$ in $\Sf{C}$ we have that
\begin{equation*}
\text{ord}(h(\lambda c+ \mu c' ))= \text{min}\{\text{ord}(h(c)), \text{ord}(h(c')) \},
\end{equation*}
where the minimum is taken componentwise.

As a consequence, for a generic section $c_0$ of $\Sf{C}$ we have that $\text{ord}(h(c_0))$ is the absolute minimum of the image of $ord$. Denote by $(n_1, \dots, n_l) =\text{ord}(h(c_0))$. By genericity and the fact that $h(\Sf{C})$ spans $\Ss{\tilde{C}}$ outside 
$\{p_1,...,p_l\}$ we also know that $h(c_0)$ does not vanish outside $\{p_1,...,p_l\}$.

Consider the commutative diagram
\begin{equation*}
\begin{tikzpicture}
  \matrix (m)[matrix of math nodes,
    nodes in empty cells,text height=1.5ex, text depth=0.25ex,
    column sep=2.5em,row sep=2em] {
    \Sf{C} & \Ss{\tilde{C}}  \\
    & \Ss{\tilde{C}}[t_1^{-1},...,t_l^{-1}] \\};
\draw[-stealth] (m-1-1) edge node[auto]{$h$} (m-1-2);
\draw[-stealth] (m-1-1) edge node[below]{$g$} (m-2-2.north west);
\draw[-stealth] (m-1-2)  edge node[auto]{$\cdot \frac{1}{h(c_0)}$} (m-2-2);
\end{tikzpicture}
\end{equation*}
where $\cdot \frac{1}{h(c_0)}$ is multiplication by $\frac{1}{h(c_0)}$.

By construction the map $g$ is an $\Ss{C}$-monomorphism and the image of $g$ is contained in $\Ss{\tilde{C}}$ because 
$\text{min}\{\text{ord}(g(c)) \, | \, \text{$c$ is a section of $\Sf{C}$}\} = \text{ord}(g(c_0))=(0,\dots,0).$
Moreover we have the equality $g(c_0)=1$. This implies that $g$ provides the needed chain of embeddings.
\endproof

The previous structure result allows to define some invariants of  rank 1 generically reduced  Cohen-Macaulay modules of dimension $1$ which 
will be important for us.
Consider the notations of the previous proposition and its proof. We have defined an embedding of $\Ss{C}$-modules
$$\iota:\Sf{C}\to\Ss{\tilde{C}},$$ 
such that $\Ss{C}$ is included in $\iota(\Sf{C})$. Therefore the support of $\Ss{\tilde{C}}/\iota(\Sf{C})$ is contained in the 
pre-image by $n$ of the singular set of $C$. Let now be $\{p_1,...,p_l\}$ the pre-image by $n$ of the singular set, and denote 
by $(\tilde{C}_j,p_j)$ the germ of $\tilde{C}$ at $p_j$. As in the previous proof we have an order function
$$ord:\Ss{\tilde{C}}\to\NN^l.$$

\begin{lemma}
\label{lem:indeporderset}
The image of the composition $ord\comp\iota$ is independent of the embedding $\iota:\Sf{C}\to\Ss{\tilde{C}}$ as long as 
$\Ss{C}$ is included in $\iota(\Sf{C})$.
\end{lemma}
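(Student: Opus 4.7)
The plan is to show that any two $\Ss{C}$-linear embeddings $\iota_1,\iota_2\colon \Sf{C}\hookrightarrow \Ss{\tilde{C}}$ with $\Ss{C}\subset \iota_i(\Sf{C})$ differ by multiplication by a unit $u$ in the total ring of fractions, and that the hypothesis forces $ord(u)=(0,\dots,0)$, from which the independence of the image follows at once.

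First I would work at the total ring of fractions. Since $n\colon \tilde{C}\to C$ is birational, $\Ss{C}$ and $n_*\Ss{\tilde{C}}$ share the same total ring of fractions $K=\prod_{j=1}^l K_j$, where $K_j$ is the function field of the branch $\tilde{C}_j$ passing through $p_j$. The module $\Sf{C}\otimes_{\Ss{C}} K$ is free of rank one over $K$: indeed, because $\Sf{C}$ is rank $1$ generically reduced, we have $\Sf{C}_y\cong \Ss{C,y}$ at each generic point $y$ of $C$, and localizing at all generic points simultaneously trivializes $\Sf{C}$. Tensoring each $\iota_i$ with $K$ over $\Ss{C}$ gives an injective $K$-linear map $\iota_i^{K}\colon \Sf{C}\otimes_{\Ss{C}} K\to K$ between free $K$-modules of rank one. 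Therefore there exists a unique $u\in K^{*}$ with $\iota_2^{K}=u\cdot \iota_1^{K}$, equivalently $\iota_2(c)=u\cdot \iota_1(c)$ for every $c\in \Sf{C}$.

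Next I would use the containment $\Ss{C}\subset \iota_i(\Sf{C})$ to pin down $u$. Pick $c_1\in \Sf{C}$ with $\iota_1(c_1)=1$; then $\iota_2(c_1)=u\cdot 1=u$, and since $\iota_2$ takes values in $\Ss{\tilde{C}}$ we conclude that $u\in \Ss{\tilde{C}}$, whence $ord(u)\geq (0,\dots,0)$ componentwise. Exchanging the roles of $\iota_1$ and $\iota_2$ produces, in the same way, an element $c_2\in \Sf{C}$ with $\iota_2(c_2)=1$, so that $\iota_1(c_2)=u^{-1}\in \Ss{\tilde{C}}$, and therefore $ord(u)\leq (0,\dots,0)$. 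Together this yields $ord(u)=(0,\dots,0)$.

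For any nonzero $c\in \Sf{C}$ the multiplicativity of the valuation gives
\begin{equation*}
 ord(\iota_2(c))=ord(u)+ord(\iota_1(c))=ord(\iota_1(c)),
\end{equation*}
so $ord\circ \iota_2(\Sf{C}\setminus\{0\})=ord\circ \iota_1(\Sf{C}\setminus\{0\})$ as subsets of $\NN^l$, as desired. The main obstacle in the plan is the identification of $\Sf{C}\otimes_{\Ss{C}} K$ with a free $K$-module of rank one; once this is done, the whole argument is essentially a rigidity statement for rank-one torsion-free modules combined with the observation that $ord(u)$ and $ord(u^{-1})$ are both nonnegative precisely when $\Ss{C}$ lies inside both $\iota_i(\Sf{C})$.
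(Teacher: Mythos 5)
Your proof is correct and follows the same route as the paper, which simply asserts that any two such embeddings differ by multiplication by a unit of $\Ss{\tilde{C}}$; you have supplied the details of why the factor $u$ and its inverse both land in $\Ss{\tilde{C}}$ (via the containment $\Ss{C}\subset\iota_i(\Sf{C})$), hence $ord(u)=(0,\dots,0)$ and the images of $ord\comp\iota_i$ coincide.
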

\proof
Any two embeddings differ by multiplication by a unit in $\Ss{\tilde{C}}$. 
\endproof

\begin{definition}
\label{def:orderset}
The {\em set of orders} $\mathfrak{C}$ of $\Sf{C}$ is the image in $\NN^l$ of the composition $ord\comp\iota$ for an embedding  
$\iota:\Sf{C}\to\Ss{\tilde{C}}$ of $\Ss{C}$-modules such that $\Ss{C}$ is included in $\iota(\Sf{C})$.
\end{definition}

Given a subset $\mathfrak{B}\subset\mathbb{Z}^l$, and a vector $(d_1,...,d_l)$, we denote by $(d_1,...,d_l)+\mathfrak{B}\subset\mathbb{Z}^l$ 
the translation of $\mathfrak{B}$ in the direction of the vector $(d_1,...,d_l)$. 

\begin{remark}
\label{rem:tanslationinv}
Since $\Sf{C}$ is a $\Ss{Y}$-module, the set of orders $\mathfrak{C}$ is stable by translation in the direction given by any vector $ord(f|_C)$ 
for $f\in\Ss{Y}$ (where $f|_C$ is the restriction of $f$ to $C$).
\end{remark}

It is well known that $\Ss{C}$ has a conductor ideal (see for example \cite[(19.21)]{Ab}). 
In our case we define the conductor of $\Sf{C}$ as follows (compare with \cite[(19.21)]{Ab}).

\begin{definition}\label{def:conductorsett}
Let $\Sf{C}$ be an $\Ss{C}$-module such that 
\begin{equation*}
\Ss{C} \subset \Sf{C} \subset n_* \Ss{\tilde{C}}.
\end{equation*}

The $\Ss{C}$-submodule
\begin{equation*}
\left \{s \in \Sf{C} \, | \, s\cdot n_* \Ss{\tilde{C}} \subset \Sf{C} \right \},
\end{equation*}
is called \emph{the conductor of $\Sf{C}$}. The image of the conductor under the order function is called the {\em conductor set of} $\Sf{C}$. 
It is a subset of $\mathfrak{C}$. Any element of the conductor set of $\Sf{C}$ is called {\em a conductor of} $\mathfrak{C}$.
\end{definition} 

\begin{remark}
\label{rem:propertiesconductor}
Since $\Ss{C}$ is contained in $\Sf{C}$, we get that the conductor of $\Sf{C}$ is non-empty. Since the conductor of $\Sf{C}$ is closed by taking 
$\Ss{X}$-linear combinations, the conductor set of $\mathfrak{C}$ has an absolute minimum, which is denoted by $cond(\mathfrak{C})$.

The conductor set of $\mathfrak{C}$ satisfies the following property: if $c=(c_1,\dots,c_l)$ is a conductor of $\mathfrak{C}$ then for any 
vector $w$ in $\mathbb{N}^l$ we have that $c+w$ belongs to the set $\mathfrak{C}$.
\end{remark}

This motivates the following

\begin{definition}
\label{def:conductorset}
Let $\mathfrak{C}$ be a subset of $\ZZ^l$, the {\em conductor set} of $\mathfrak{C}$ is the (perhaps empty) set 
\begin{equation*}
\{v\in \mathfrak{C} \,| \, v+\NN^l\subset\mathfrak{C} \}.
\end{equation*}
\end{definition}

\subsection{The correspondence at the Stein surface}
In this section we let $X$ be a Stein surface with Gorenstein singularities; in many of the cases $X$ will be a Milnor representative of a normal Gorenstein surface singularity. The Gorenstein assumption and Theorem~\ref{Th:Herzog} allow us to understand the relation between reflexive $\Ss{X}$-modules with nearly generic sections and  rank 1 generically reduced  Cohen-Macaulay $\Ss{X}$-modules with a system of generators. 

\label{sec:corrsing}
\begin{theorem}
\label{th:corrsing}
Let $X$ be a Stein surface with Gorenstein singularities. 
There is a bijective correspondence between the set of pairs $(M,(\phi_1,...,\phi_r))$ of rank $r$ reflexive $\Ss{X}$-modules with $r$ nearly
generic sections and the set of pairs $(\Sf{C},(\psi_1,...,\psi_r))$ of  rank 1 generically reduced  Cohen-Macaulay $\Ss{X}$-modules with a system 
of generators of $\Sf{C}$ as $\Ss{X}$-module. 

Under this correspondence, if the system of generators $(\psi_1,...,\psi_r)$ is not minimal then the module $M$ contains free factors. As a partial converse: if $M$ contains free factors and $(\phi_1,...,\phi_r)$ are generic, then the system of generators $(\psi_1,...,\psi_r)$ is not minimal.

The correspondence from the first set to the second is called the {\em direct correspondence at $X$}, its inverse is called the 
{\em inverse correspondence at $X$}. 
\end{theorem}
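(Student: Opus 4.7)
The plan is to leverage the $\omega_X$-duality apparatus of Theorem~\ref{Th:Herzog}, which in the Gorenstein setting simplifies considerably because $\omega_X$ is invertible and may (after choosing a Gorenstein form locally at each singular point) be identified with $\Ss{X}$. The two involutions driving the bijection are: $\Exts^1_{\Ss{X}}(-,\omega_X)$ is involutive on Cohen--Macaulay $\Ss{X}$-modules of dimension $1$, and $\Homs_{\Ss{X}}(-,\omega_X)$ is involutive on reflexive $\Ss{X}$-modules. The correspondence will then be obtained by sending a pair to the cokernel of its degeneracy sequence, dualized against $\omega_X$.

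For the direct correspondence, given $(M,(\phi_1,\dots,\phi_r))$, I would form the degeneracy sequence
\[
0\to\Ss{X}^r\xrightarrow{(\phi_1,\dots,\phi_r)}M\to\Sf{A}'\to 0.
\]
Proposition~\ref{prop:generalizationAV0} (applied with $\pi=\id$) makes $\Sf{A}'$ Cohen--Macaulay of dimension $1$, and the nearly generic hypothesis (Definition~\ref{def:nearlygeneric}) forces it to be rank $1$ generically reduced. Now $\omega_X$-dualize: since $\Sf{A}'$ is torsion we have $\Homs(\Sf{A}',\omega_X)=0$, and since $M$ is reflexive Theorem~\ref{Th:Herzog}(2) gives $\Exts^1(M,\omega_X)=0$, leaving
\[
0\to\Homs(M,\omega_X)\to\omega_X^r\to\Exts^1(\Sf{A}',\omega_X)\to 0.
\]
Setting $\Sf{C}:=\Exts^1(\Sf{A}',\omega_X)$, Theorem~\ref{Th:Herzog}(1) yields that $\Sf{C}$ is Cohen--Macaulay of dimension $1$; after trivializing $\omega_X$ the images of the standard basis of $\omega_X^r$ are $r$ global generators $(\psi_1,\dots,\psi_r)$. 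To check that $\Sf{C}$ is itself rank $1$ generically reduced, note that at a generic smooth point $y$ of $C:=\text{Supp}(\Sf{A}')$ one has $\Sf{A}'_y\cong\Ss{C,y}$, whence local duality gives $\Sf{C}_y\cong\Exts^1_{\Ss{X,y}}(\Ss{C,y},\omega_{X,y})\cong\omega_{C,y}\cong\Ss{C,y}$.

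The inverse correspondence runs the construction backwards. Starting from $(\Sf{C},(\psi_1,\dots,\psi_r))$, the generators yield a surjection $\omega_X^r\twoheadrightarrow\Sf{C}$ whose kernel $N$ has depth $2$ by the depth lemma, and is therefore reflexive. Dualizing gives
\[
0\to\omega_X^r\to\Homs(N,\omega_X)\to\Exts^1(\Sf{C},\omega_X)\to 0,
\]
and I set $M:=\Homs(N,\omega_X)$, reflexive as the $\omega_X$-dual of a reflexive module, with $\phi_i$ the images of the trivialized basis of $\omega_X^r$. Mutual inversion then reduces to the involutivity of $\Exts^1(-,\omega_X)$ on CM dimension-$1$ modules and of $\Homs(-,\omega_X)$ on reflexive modules, both supplied by Theorem~\ref{Th:Herzog}(3). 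Preservation of the nearly generic condition in this direction is the same local-duality check applied to $\Exts^1(\Sf{C},\omega_X)$.

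Finally, for the free-factor statement, by construction $M^\vee$ is the submodule of $\Ss{X}^r$ consisting of tuples $(c_1,\dots,c_r)$ with $\sum c_i\psi_i=0$ in $\Sf{C}$. If $(\psi_1,\dots,\psi_r)$ is non-minimal, some such relation carries a unit coefficient $c_j$, producing $\lambda\in M^\vee$ with $\lambda(\phi_j)\in\Ss{X}^\times$; the map $1\mapsto\phi_j$ then splits $\lambda$ and exhibits $\Ss{X}$ as a direct summand of $M$. Conversely, if $M\cong M'\oplus\Ss{X}$ and the sections are generic, the projection $p\colon M\to\Ss{X}$ sends some $p(\phi_j)$ to a unit at the closed point (generic sections on the Stein surface avoid the maximal ideal at $x$), which by the same equivalence produces a unit relation among the $\psi_i$. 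The main technical hurdle throughout is verifying that the rank $1$ generically reduced property is preserved by $\Exts^1(-,\omega_X)$; the local-duality identification at smooth points of the support is the essential ingredient, and everything else is bookkeeping with the double-duality package or a direct reading of the dualized short exact sequence.
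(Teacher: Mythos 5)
Your proposal is correct and follows essentially the same route as the paper: form the degeneracy sequence, dualize it (against $\omega_X$, which the paper does equivalently against $\Ss{X}$ using the Gorenstein hypothesis), invoke Theorem~\ref{Th:Herzog} for the involutivity giving mutual inversion, verify generic reducedness by a local computation at smooth points of the support, and detect free factors via relations with unit coefficients. The only cosmetic differences are that you establish reflexivity of the relation module $N$ via the depth lemma where the paper uses double dualization, and your free-factor argument is a slightly more explicit version of the paper's.
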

\proof
Let $M$ be a reflexive $\Ss{X}$-module of rank $r$ and $(\phi_1,...,\phi_r)$ be $r$ nearly generic sections.
We obtain the exact sequence defining the degeneracy module given by the sections
\begin{equation}
\label{exsq1}
0 \to \Ss{X}^r \to M \to \Sf{C}' \to 0.
\end{equation}
Since the sections are nearly generic the module $\Sf{C}'$ is of rank and 1 generically reduced by Remark~\ref{rem:obsss}.  

Dualizing the exact sequence with respect to $\Ss{X}$ we get
\begin{equation}
\label{exsq11}
0 \to N \to \Ss{X}^r \to \Exts_{\Ss{X}}^1 \left(\Sf{C}', \Ss{X} \right) \to 0.
\end{equation}
where $N$ is the dual of $M$.

By Proposition~\ref{prop:extCM1} the module $\Sf{C}:=\Exts_{\Ss{X}}^1 \left(\Sf{C}', \Ss{X} \right)$ is Cohen-Macaulay of dimension one. Let $C$ be the support of $\Sf{C}'$, which coincides with the support of $\Sf{C}$. A direct computation of 
$\Sf{C}$ shows that, at a smooth point $y\in X$,  if $\Sf{C}'_y$ is isomorphic to $\Ss{C,y}$ then $\Sf{C}_y$ is isomorphic to $\Ss{C,y}$
too. This shows that $\Sf{C}$ is  rank 1 generically reduced .
Therefore we associate to the reflexive module $M$ with the given sections, 
the module $\Sf{C}$ with the generators induced by the previous exact sequence.

Conversely, let $(\Sf{C},(\psi_1,...,\psi_r))$ be a  rank 1 generically reduced  $1$-dimensional Cohen-Macaulay module with a system of generators.
Define $N$ to be the kernel of the morphism $\Ss{X}^r \to \Sf{C}$ induced by the generators. We have the exact sequence
\begin{equation}
\label{exsq2}
0 \to N \to \Ss{X}^r \to \Sf{C} \to 0.
\end{equation}

Dualizing the exact sequence we get
\begin{equation}
0 \to \Ss{X}^r \to M \to \Exts_{\Ss{X}}^1\left(\Sf{C}, \Ss{X} \right) \to 0,
\end{equation}
where $M$ is the dual of $N$, and hence it is reflexive.

To the pair $(\Sf{C},(\psi_1,...,\psi_r))$ we associate the pair $(M,(\phi_1,...,\phi_r))$, where the sections are induced by the second morphism
of the previous exact sequence. The sections are nearly generic since the module $\Exts_{\Ss{X}}^1\left(\Sf{C}, \Ss{X} \right)$ is generically 
reduced, for being $\Sf{C}$  rank 1 generically reduced . 

The correspondences are mutually inverse due to part (iii) of Theorem~\ref{Th:Herzog}. 

The assertion relating free factors with minimality of the system of generators follows~\cite{Es}.
Suppose that the system of generators $(\psi_1,...,\psi_r)$ of $\Sf{C}$ is not minimal. Then an obvious computation shows that the module
of relations $N$ has free factors. Since $M$ is the dual of $N$ then $M$ has free factors. Conversely, suppose that $M$ has a free factors, that is $M\cong M_1\oplus\Ss{X}^a$. Then sequence~(\ref{exsq1}) becomes
$$0 \to \Ss{X}^{r-a}\oplus\Ss{X}^a  \to M_1\oplus\Ss{X}^a \to \Sf{C}' \to 0.$$
The genericity of the choice of the system of generators $(\phi_1,...,\phi_r)$ imply that the morphism 
$\Ss{X}^a \to \Ss{X}^a$ obtained by the triple composition of the natural inclusion of $\Ss{X}^a$ into $\Ss{X}^{r-a}\oplus\Ss{X}^a$, the first morphism of the sequence, and the canonical projection of $M_1\oplus\Ss{X}^a$ to $\Ss{X}^a$ is an isomorphism.
Dualizing we obtain 
$$0 \to N_1\oplus\Ss{X}^a \to \Ss{X}^{r-a}\oplus\Ss{X}^a \to \Sf{C} \to 0.$$
Since the corresponding morphism $\Ss{X}^a \to \Ss{X}^a$ is an isomorphism we conclude that the system of generators of $\Sf{C}$ is not minimal. 
\endproof

We may also understand the module of relations of the generators of the Cohen-Macaulay module $\Sf{C}$.

\begin{proposition}\label{inversaabajo}
Let $\Sf{C}$ be an Cohen-Macaulay $\Ss{X}$-module of dimension one, $\{\phi_1, \dots, \phi_n\}$ a set of generators of $\Sf{C}$ as 
$\Ss{X}$-module and consider the exact sequence obtained by the generators

\begin{equation}\label{exctseq:directaNabajo}
0 \to N \to \Ss{X}^r \to \Sf{C} \to 0.
\end{equation}
Then the module, $N$ is reflexive.
\end{proposition}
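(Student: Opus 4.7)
The plan is to show that $N$ is reflexive by verifying it is maximal Cohen-Macaulay, which on a normal surface is equivalent to reflexivity (Property (3) of the preliminary list). Since reflexivity is a local/stalkwise condition, I would check the depth of $N_y$ for every point $y\in X$, working with the localization of the exact sequence
\begin{equation*}
0 \to N_y \to \Ss{X,y}^r \to \Sf{C}_y \to 0.
\end{equation*}

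First I would dispose of the easy cases. If $y$ is not in the support of $\Sf{C}$, then $\Sf{C}_y=0$ and $N_y\cong\Ss{X,y}^r$ is free, hence reflexive. If $y$ is a non-closed point of the support of $\Sf{C}$, then $\Ss{X,y}$ is a discrete valuation ring because $X$ is normal of dimension $2$; in this situation $N_y$ is a torsion-free submodule of a free module over a DVR, hence free.

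The main case is a closed point $y$ lying on the support of $\Sf{C}$. Here $\Ss{X,y}$ is a $2$-dimensional normal (hence Cohen-Macaulay) local ring, so $\text{depth}(\Ss{X,y}^r)=2$. By hypothesis $\Sf{C}$ is Cohen-Macaulay of dimension $1$, so $\text{depth}(\Sf{C}_y)=1$. Applying the standard depth lemma to the short exact sequence, one obtains
\begin{equation*}
\text{depth}(N_y) \;\geq\; \min\bigl(\text{depth}(\Ss{X,y}^r),\;\text{depth}(\Sf{C}_y)+1\bigr)\;=\;\min(2,2)\;=\;2.
\end{equation*}
Since $\text{depth}(N_y)\leq\dim(\Ss{X,y})=2$, equality holds and $N_y$ is maximal Cohen-Macaulay.

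Combining the three cases gives that $N$ is maximal Cohen-Macaulay at every point of $X$, so by the equivalence on normal surfaces recalled above, $N$ is reflexive. I expect no serious obstacle: the argument is essentially the depth lemma together with the normal-surface dictionary between reflexive and MCM modules. The only point deserving care is to make sure that the depth of $\Sf{C}_y$ is indeed $1$ rather than $0$ at closed points of the support, which follows precisely from the Cohen-Macaulay hypothesis on $\Sf{C}$.
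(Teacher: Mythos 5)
Your proof is correct, but it takes a genuinely different route from the paper's. The paper proves reflexivity by dualizing the sequence~(\ref{exctseq:directaNabajo}) twice: setting $M:=N^{\smvee}$ one gets $0\to\Ss{X}^r\to M\to\Exts^1_{\Ss{X}}(\Sf{C},\Ss{X})\to 0$, and then the biduality statement of Theorem~\ref{Th:Herzog}\,(3) for $1$-dimensional Cohen-Macaulay modules (applied with $\omega_X\cong\Ss{X}$, i.e.\ using the Gorenstein hypothesis in force in that section) identifies the dual of this sequence with the original one, giving $N\cong N^{\smvee\smvee}$ directly. You instead run a stalkwise depth count: $N_y$ is a first syzygy of $\Sf{C}_y$, the depth lemma gives $\mathrm{depth}(N_y)\geq\min(2,\mathrm{depth}(\Sf{C}_y)+1)=2$ at closed points of the support, and the MCM--reflexive dictionary on normal surfaces finishes the argument; the off-support and codimension-one cases are trivial as you say. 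Both arguments are sound. Yours is more elementary and, notably, does not use the Gorenstein (or dualizing-module) structure at all, so it proves the statement over an arbitrary normal Stein surface; the paper's duality argument is less economical for this one proposition but produces as a by-product the explicit identification of the cokernel of $\Ss{X}^r\to N^{\smvee}$ with $\Exts^1_{\Ss{X}}(\Sf{C},\Ss{X})$, which is exactly the piece of structure the surrounding correspondence (Theorem~\ref{th:corrsing}) needs. The only point to be careful about in your version, which you do address, is that $\mathrm{depth}(\Sf{C}_y)=1$ (not $0$) at closed points of the support; with the paper's definition of Cohen-Macaulay module this is immediate.
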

\proof
Dualizing the exact sequence \eqref{exctseq:directaNabajo} and denoting by $M:= N^{\smvee}$, we obtain the exact sequence
\begin{equation}\label{exctseq:directaMabajo}
0 \to \Ss{X}^r \to M \to \Exts_{\Ss{X}}^1 \left(\Sf{C}, \Ss{X}\right) \to 0.
\end{equation}

Since $\Sf{C}$ is Cohen-Macaulay of dimension one then by Theorem~\ref{Th:Herzog} the module
$\Exts_{\Ss{X}}^1 \left(\Sf{C},\Ss{X}\right)$ is Cohen-Macaulay of dimension one and 
$\Sf{C} \cong \Exts_{\Ss{X}}^1 \left( \Exts_{\Ss{X}}^1 \left(\Sf{C},\Ss{X}\right),\Ss{X}\right)$. 
Now dualizing \eqref{exctseq:directaMabajo} and using the previous identification we obtain the exact sequence
\begin{equation*}
0 \to N^{\smvee \smvee} \to \Ss{X}^r \to \Sf{C} \to 0,
\end{equation*}
hence we have the identification $N=N^{\smvee \smvee}$ and $N$ is reflexive.
\endproof

\subsection{The correspondence at the resolution}
\label{sec:corrres}

In this section we generalize the correspondence given by Artin-Verdier~\cite{AV}, Esnault~\cite{Es} and Wunram~\cite{Wu} 
at the resolution to the general case, that is $(X,x)$ is any normal surface singularity; in fact we go further and allow $X$ to be a Stein normal surface with possibly several singularities. We will obtain a bijection as in previous section. One of the sets is formed by pairs of modules together with systems
of nearly generic sections. The other set is formed by  rank 1 generically reduced  $1$-dimensional Cohen-Macaulay modules together with sets of 
sections that satisfy a certain property. Before we state the main result of the section we need to explain what the property means precisely. 

\subsubsection{The Containment Condition}
\label{sec:containment}
Let $\pi:\Rs\to X$ be a resolution of singularities of a Stein normal surface $X$ which is an isomorphism at the regular locus of $X$. Let $E$ denote the exceptional divisor and $U=\Rs\setminus E$.
Let $\Sf{A}$ be a  rank 1 generically reduced  $1$-dimensional Cohen-Macaulay $\Ss{\Rs}$-module.
Let $(\psi_1,...,\psi_r)$ be $r$ global sections
spanning $\Sf{A}$ as $\Ss{\Rs}$-module. The set of sections $(\psi_1,...,\psi_r)$ define a morphism $\Ss{\tilde{X}}^r\to\Sf{A}$. Tensoring with the dualizing sheaf $\omega_{\tilde{X}}=\Lambda^2\Omega^1_{\Rs}$ and taking sections in $U$ we obtain a morphism 
\begin{equation}
\label{eq:deltaprimeravez}
\delta:H^0(U,\omega_{\tilde{X}}^r)\to H^0(U,\Sf{A}\otimes\omega_{\tilde{X}}).
\end{equation}
We also have a restriction morphism
$$\gamma_1:H^0(\Rs,\Sf{A}\otimes\omega_{\tilde{X}})\to H^0(U,\Sf{A}\otimes\omega_{\tilde{X}}).$$

\begin{definition}
\label{def:containment}
The pair $(\Sf{A},(\psi_1,...,\psi_r))$ satisfies the {\em Containment Condition} if we have the inclusion $\mathrm{Im}\gamma_1\subset \mathrm{Im}\delta$.
\end{definition}

\subsubsection{The correspondence at the resolution}

\begin{theorem}
\label{th:corres}
Let $\pi:\Rs\to X$ be a resolution of singularities of a Stein normal surface which is an isomorphism at the regular locus of $X$. There is a bijective correspondence between the set of pairs $(\Sf{M},(\phi_1,...,\phi_r))$ formed by a locally free $\Ss{\Rs}$-module which is almost generated by global sections, and a set of $r$ nearly generic sections, and the set of pairs $(\Sf{A},(\psi_1,...,\psi_r))$ formed by a  rank 1 generically reduced  $1$-dimensional Cohen-Macaulay $\Ss{\Rs}$-module, whose support meets $E$ in finitely many points, and a set of $r$ global sections spanning $\Sf{A}$ as $\Ss{\Rs}$-module.

Moreover $\Sf{M}$ is full if and only if $(\Sf{A},(\psi_1,...,\psi_r))$ satisfies the Containment Condition (see Definition~\ref{def:containment}). 
\end{theorem}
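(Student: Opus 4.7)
The plan is to carry out, at the level of the smooth resolution $\Rs$, the same dualization argument as in the proof of Theorem~\ref{th:corrsing}, but with $\Ss{\Rs}$-duality in place of $\Ss{X}$-duality. Smoothness of $\Rs$ and the invertibility of $\omega_{\Rs}$ let me still apply Theorem~\ref{Th:Herzog} after absorbing the $\omega_{\Rs}^{\pm 1}$-twists. Once the bijection is set up, the fullness part will follow from Kahn's criterion in Proposition~\ref{fullcondiciones} via a Serre-duality comparison of two long cohomology exact sequences.

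\emph{Construction of the correspondence.} Starting from $(\Sf{M},(\phi_i))$ I would form the exact sequence
\begin{equation*}
0\to \Ss{\Rs}^r\to \Sf{M}\to \Sf{A}'\to 0
\end{equation*}
defining the degeneracy module $\Sf{A}'$. By Proposition~\ref{prop:generalizationAV0} and near-genericity (Remark~\ref{rem:obsss}), $\Sf{A}'$ is rank-$1$ generically reduced Cohen--Macaulay of dimension $1$ with support meeting $E$ in finitely many points. Dualizing with $\Ss{\Rs}$---using $\Homs(\Sf{A}',\Ss{\Rs})=0$ and the local freeness of $\Sf{M}$---yields
\begin{equation*}
0\to \Sf{M}^{\smvee}\to \Ss{\Rs}^r\to \Sf{A}\to 0,\qquad \Sf{A}:=\Exts^1_{\Ss{\Rs}}(\Sf{A}',\Ss{\Rs}),
\end{equation*}
and the $\psi_i$ are defined as the images of the standard basis. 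The identification $\Sf{A}=\Exts^1(\Sf{A}',\omega_{\Rs})\otimes\omega_{\Rs}^{-1}$ together with Theorem~\ref{Th:Herzog}, plus a local Koszul check at a generic smooth point of the support, ensures that $\Sf{A}$ shares the same properties as $\Sf{A}'$. For the inverse, given $(\Sf{A},(\psi_i))$ I set $N:=\ker(\Ss{\Rs}^r\to\Sf{A})$, which is locally free of rank $r$ by Auslander--Buchsbaum on the regular local rings of $\Rs$, and define $\Sf{M}:=N^{\smvee}$ with the dualized sections. Mutual inverseness follows from the biduality $\Exts^1(\Exts^1(-,\Ss{\Rs}),\Ss{\Rs})\cong\id$ on Cohen--Macaulay modules of dimension $1$, itself a consequence of Theorem~\ref{Th:Herzog} after absorbing the invertible twists.

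\emph{Fullness via the Containment Condition.} Condition (1) of Proposition~\ref{fullcondiciones} is automatic from near-genericity, so $\Sf{M}$ is full if and only if condition (2) holds, i.e.\ $H^1_E(\Rs,\Sf{M})\to H^1(\Rs,\Sf{M})$ is injective; equivalently, by the long exact sequence of local cohomology, the restriction $H^0(\Rs,\Sf{M})\to H^0(U,\Sf{M})$ is surjective. I would compare this with the Containment Condition by chasing the long cohomology sequences attached both to $0\to \Ss{\Rs}^r\to\Sf{M}\to\Sf{A}'\to 0$ and to its twisted dual
\begin{equation*}
0\to \Sf{M}^{\smvee}\otimes\omega_{\Rs}\to \omega_{\Rs}^r\to \Sf{A}\otimes\omega_{\Rs}\to 0,
\end{equation*}
each on $\Rs$ and on $U$. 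Cartan B on the Stein open $U$ kills $H^1(U,\Ss{\Rs})$ and trims the diagram, while local/Serre duality on the smooth surface $\Rs$ pairs $H^1_E(\Rs,\Sf{M})$ with $H^1(\Rs,\Sf{M}^{\smvee}\otimes\omega_{\Rs})$, providing the bridge between the two sides.

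\emph{Main obstacle.} The hard step is this last duality translation: one must show that the composition
\begin{equation*}
H^0(\Rs,\Sf{A}\otimes\omega_{\Rs})\xrightarrow{\gamma_1}H^0(U,\Sf{A}\otimes\omega_{\Rs})\longrightarrow H^1(U,\Sf{M}^{\smvee}\otimes\omega_{\Rs})
\end{equation*}
vanishes precisely when $H^0(\Rs,\Sf{M})\to H^0(U,\Sf{M})$ is surjective. This requires a careful diagram chase combined with the duality pairing, being mindful that $\Rs$ is non-compact, so the relevant duality must be set up via local duality for the resolution $\pi$ or via compactly supported cohomology on the smooth surface. The structural results of Section~\ref{sec:CM1} on the chain $\Ss{C}\subset\Sf{A}\subset n_*\Ss{\tilde C}$ and the order function should give explicit control over the global sections of $\Sf{A}\otimes\omega_{\Rs}$ that enter this comparison.
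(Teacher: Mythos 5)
Your construction of the correspondence and the overall strategy for the fullness criterion coincide with the paper's: form the degeneracy sequence, dualize to get $\Sf{A}=\Exts^1_{\Ss{\Rs}}(\Sf{A}',\Ss{\Rs})$, use biduality of $\Exts^1(-,\Ss{\Rs})$ on $1$-dimensional Cohen--Macaulay modules for mutual inverseness, and then translate Kahn's condition (2) via duality for $\pi$ into a statement about the $\omega_{\Rs}$-twisted sequence $0\to\Sf{N}\otimes\omega_{\Rs}\to\omega_{\Rs}^r\to\Sf{A}\otimes\omega_{\Rs}\to 0$. However, there are two genuine gaps. First, your assertion that condition (1) of Proposition~\ref{fullcondiciones} is ``automatic from near-genericity'' is wrong in the direction that matters: for the inverse correspondence, starting from $(\Sf{A},(\psi_i))$ you set $\Sf{M}=N^{\smvee}$ with $N=\ker(\Ss{\Rs}^r\to\Sf{A})$, and there is no a priori reason that this $\Sf{M}$ is generically generated by its global sections --- the sections $\phi_i$ only generate a subsheaf. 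The paper isolates this as Lemma~\ref{lem:gengen}, and its proof is not formal: one pushes the degeneracy sequence down to $X$ and bounds the cokernel of the globally generated subsheaf by $\dimc{R^1\pi_*\Ss{\Rs}^r}=rp_g$. Without this step the inverse map does not land in the stated set and the bijection is not established.

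Second, the equivalence ``fullness $\Leftrightarrow$ Containment Condition,'' which is the actual content of the ``moreover'' clause, is set up but explicitly left as an ``obstacle'' rather than proved; and the one concrete vanishing you invoke to ``trim the diagram'' is false: $U=\Rs\setminus E\cong X\setminus\Sing(X)$ is the complement of a finite set in a two-dimensional Stein space, hence \emph{not} Stein, and $H^1(U,\Ss{\Rs})\cong H^2_x(\Ss{X})\neq 0$ in general. The chase does not need any vanishing on $U$; what it needs is $H^1(\Rs,\omega_{\Rs})=0$ (Grauert--Riemenschneider) and $H^0_E(\Sf{A}\otimes\omega_{\Rs})=0$ (depth $1$ plus finite intersection of the support with $E$), after which the diagram of cohomology and local cohomology sequences for the twisted dual sequence reduces surjectivity of $H^1_E(\Sf{N}\otimes\omega_{\Rs})\to H^1(\Sf{N}\otimes\omega_{\Rs})$ --- the local dual of the injectivity in Kahn's condition (2) --- exactly to $\im\gamma_1\subset\im\delta$. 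You correctly identified $\ker\bigl(H^0(U,\Sf{A}\otimes\omega_{\Rs})\to H^1(U,\Sf{N}\otimes\omega_{\Rs})\bigr)=\im\delta$ as the pivot, so the missing chase is feasible along your lines, but as written the proof is incomplete on both points.
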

\proof

We start defining the first bijection.

Given $(\Sf{M},(\phi_1,...,\phi_r))$ we consider the exact sequence induced by the sections:
\begin{equation}
\label{exctseq:da1}
0 \to \Ss{\Rs}^r \to \Sf{M} \to \Sf{A}' \to 0.
\end{equation}

The degeneracy module $\Sf{A}'$ is $1$-dimensional Cohen-Macaulay by Proposition~\ref{prop:generalizationAV0}, and is  rank 1 generically reduced 
with support intersecting the exceptional divisor $E$ in a finite set, by definition of nearly-generic sections (Definition~\ref{def:nearlygeneric}).
Dualizing the sequence we obtain 

\begin{equation}\label{exact:da2}
0 \to \Sf{N} \to \Ss{\Rs}^r  \to \Sf{A} \to 0,
\end{equation}
where $\Sf{A}=\Exts^1_{\Ss{\Rs}}(\Sf{A}',\Ss{\Rs})$ is a  rank 1 generically reduced  $1$-dimensional Cohen-Macaulay module (same arguments than in the 
proof of Theorem~\ref{th:corrsing}), whose support meets $E$ in finitely many points (since it coincides with the support of $\Sf{A'})$. Let 
$(\psi_1,...,\psi_r)$ be the generators of $\Sf{A}$ as $\Ss{\Rs}$-module given by the previous exact sequence. 

We define the direct correspondence as the correspondence sending the pair $(\Sf{M},(\phi_1,...,\phi_r))$ to the pair 
$(\Sf{A},(\psi_1,...,\psi_r))$. 

Conversely, given $(\Sf{A},(\psi_1,...,\psi_r))$ we consider the exact sequence~(\ref{exact:da2}) given by the sections. Dualizing it we obtain 
the exact sequence~(\ref{exctseq:da1}), and we define the inverse correspondence sending $(\Sf{A},(\psi_1,...,\psi_r))$ to $(\Sf{M},(\phi_1,...,\phi_r))$,
where $(\phi_1,...,\phi_r)$ are the sections induce by the sequence~(\ref{exctseq:da1}). 

The direct and inverse correspondences are inverse to each other, for the same reasons appearing in the proof of Theorem~\ref{th:corrsing}.

In order to prove the Theorem we have to show that $\Sf{M}$ is full if and only if $(\Sf{A},(\psi_1,...,\psi_r))$ satisfies the Containment Condition. For this we use the characterization of Proposition~\ref{fullcondiciones}.

We start showing that the inverse correspondence always gives a $\Ss{\Rs}$-module that is generically generated by global sections. This is stated
in a separate lemma.

\begin{lemma}
\label{lem:gengen}
If $(\Sf{A},(\psi_1,...,\psi_r))$ is formed by a  rank 1 generically reduced  $1$-dimensional Cohen-Macaulay 
$\Ss{\Rs}$-module, whose support meets $E$ in finitely many points, and a set of $r$ global sections spanning $\Sf{A}$ as $\Ss{\Rs}$-module, then
the $\Ss{\Rs}$-module $\Sf{M}$ obtained by applying inverse correspondence is generically generated by global sections.
\end{lemma}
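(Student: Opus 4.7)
The plan is to exploit the fact that the non-generation locus of a coherent sheaf is a closed analytic subset of $\Rs$, and to cover $\Rs$ by two open subsets on each of which generation by global sections is visible.

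First I would unwind the inverse correspondence. From $(\Sf{A},(\psi_1,\ldots,\psi_r))$ one forms the defining exact sequence
\begin{equation*}
0 \to \Sf{N} \to \Ss{\Rs}^r \to \Sf{A} \to 0,
\end{equation*}
and dualizing with respect to $\Ss{\Rs}$ yields
\begin{equation*}
0 \to \Ss{\Rs}^r \to \Sf{M} \to \Sf{B} \to 0,
\end{equation*}
where the first map is $(\phi_1,\ldots,\phi_r)$, $\Sf{M}=\Sf{N}^{\smvee}$, and $\Sf{B}=\Exts^1_{\Ss{\Rs}}(\Sf{A},\Ss{\Rs})$. A double-dual argument analogous to Proposition~\ref{inversaabajo}, using Theorem~\ref{Th:Herzog} on the smooth surface $\Rs$ (where the dualizing sheaf is invertible, so it only twists the relevant $\Exts$'s by a line bundle), shows that $\Sf{N}$ is reflexive and hence locally free on $\Rs$, so $\Sf{M}$ is locally free of rank $r$. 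The crucial point for what follows is that $\Sf{B}$ has exactly the same support as $\Sf{A}$, namely the curve $A$ whose intersection with $E$ is finite by hypothesis.

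Next I would exhibit two families of global sections that together generate $\Sf{M}$ outside $A\cap E$. On the open set $\Rs\setminus A$ the second sequence restricts to an isomorphism $\Ss{\Rs}^r\cong\Sf{M}$, so the $r$ global sections $\phi_1,\ldots,\phi_r$ already generate $\Sf{M}$ stalkwise at every point outside $A$. On the other hand, $M:=\pi_*\Sf{M}$ is a coherent $\Ss{X}$-module on the Stein surface $X$ (by Grauert's coherence theorem, since $\pi$ is proper and $\Sf{M}$ is coherent), hence by Cartan's Theorem A it is generated by its global sections. Since $\pi$ restricts to an isomorphism $\Rs\setminus E\to X\setminus \Sing(X)$, global sections of $M$ pull back to global sections of $\Sf{M}$, and these generate $\Sf{M}$ stalkwise at every point of $\Rs\setminus E$.

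Combining the two coverings, the non-generation locus of $\Sf{M}$ by its space of global sections is contained in $(\Rs\setminus(\Rs\setminus A))\cap(\Rs\setminus(\Rs\setminus E))=A\cap E$, which is finite, so $\Sf{M}$ is generically generated by global sections. The only delicate step, and the one on which I would spend the most care, is the clean identification $\Gamma(\Rs,\Sf{M})=\Gamma(X,M)$ together with the verification that sections of $M$ actually pull back to globally defined sections of $\Sf{M}$ (not merely sections on $\Rs\setminus E$); this follows from the adjunction morphism $M\to \pi_*\Sf{M}$ being an isomorphism because $\pi$ is proper and $\Sf{M}$ is already defined on all of $\Rs$. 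Once this identification is in place, the combination of the two covers is formal.
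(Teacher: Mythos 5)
Your proof is correct, but it takes a genuinely different route from the paper's. The paper applies $\pi_*$ to the sequence $0\to\Ss{\Rs}^r\to\Sf{M}\to\Sf{A}'\to 0$, lets $\Sf{G}$ be the image of $\pi_*\Sf{M}$ in $\pi_*\Sf{A}'$, and bounds the cokernel of the $\Ss{\Rs}$-submodule of $\Sf{A}'$ spanned by $\Sf{G}$ by $\dimc{R^1\pi_*\Ss{\Rs}^r}=rp_g$; since $\Sf{M}/\Sf{M}'$ is isomorphic to that cokernel, it has finite support. You instead cover $\Rs$ by $\Rs\setminus A$, where the tautological sections $\phi_1,\dots,\phi_r$ generate because the cokernel $\Sf{B}=\Exts^1_{\Ss{\Rs}}(\Sf{A},\Ss{\Rs})$ is supported on $A$, and by $\Rs\setminus E$, where Cartan's Theorem A applied to the coherent direct image $\pi_*\Sf{M}$ on the Stein base, transported through the isomorphism $\pi|_{\Rs\setminus E}$, gives generation by global sections; hence the non-generation locus lies in the finite set $A\cap E$. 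Both arguments are sound. Yours is more elementary and localizes the bad locus sharply to $A\cap E$ (consistent with what the paper later exploits at the minimal adapted resolution), while the paper's yields the quantitative bound $\dimc{\Sf{M}/\Sf{M}'}\le rp_g$, which matches the $R^1\pi_*$-bookkeeping (specialty defect) used throughout the rest of the paper. One presentational remark: the identification $\Gamma(\Rs,\Sf{M})=\Gamma(X,\pi_*\Sf{M})$ that you single out as the delicate point is immediate from the definition of the direct image on the preimage of $X$, so no adjunction argument is actually needed there.
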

\proof
Applying the functor $\pi_* -$ to the exact sequence \eqref{exctseq:da1} we get
\begin{equation*}%\label{exctseq:cohM}
0 \to \Ss{X}^r \to \pi_*\Sf{M} \to \pi_*\Sf{A}' \to R^1 \pi_* \Ss{\Rs}^r \to R^1 \pi_*\Sf{M} \to 0.
\end{equation*}

Denote by $\Sf{G}$ the image of $\pi_* \Sf{M}$ in $\pi_*\Sf{A}'$, so we obtain the following two exact sequences of $\Ss{X}$-modules
\begin{equation*}
0 \to \Ss{X}^r \to \pi_*\Sf{M} \to \Sf{G} \to 0,
\end{equation*}
\begin{equation} \label{exctseq:ggsg}
0 \to \Sf{G} \to \pi_* \Sf{A}' \to R^1 \pi_* \Ss{\Rs}^r \to R^1 \pi_*\Sf{M} \to 0.
\end{equation}
Since the support of $\Sf{A}'$ intersects the exceptional divisor in a finite collection of points, then we can identify $\pi_* \Sf{A}'$ with $\Sf{A}'$, viewed as a $\Ss{X}$-module. Then $\Sf{G}$ is a sub $\Ss{X}$-module of $\Sf{A'}$.

Denote by $\Sf{M}'$ the subsheaf of $\Sf{M}$ generated by its global sections, and by $\Sf{G}'$ the sub-$\Ss{\Rs}$-module of $\Sf{A}'$ spanned by $\Sf{G}$. We have the following diagram of coherent 
$\Ss{\Rs}$-modules
\begin{equation*}
\begin{tikzpicture}
  \matrix (m)[matrix of math nodes,
    nodes in empty cells,text height=1.5ex, text depth=0.25ex,
    column sep=2.5em,row sep=1.5em]{
		&  0          & 0       &    0    &   \\
  0 &  \Ss{\Rs}^r & \Sf{M}' & \Sf{G'}  & 0 \\
  0 &  \Ss{\Rs}^r & \Sf{M}  & \Sf{A}' & 0 \\
	  &  0          & \Sf{F}  & \Sf{F}' & 0  \\
		&             & 0       &  0      &   \\
};
\foreach \y [remember=\y as \lasty (initially 1)] in {2,3,4}
{
\foreach \x [remember=\x as \lastx (initially 2)] in {3,4}
{
\draw[-stealth] (m-\y-\lastx) -- (m-\y-\x);
\draw[-stealth] (m-\lasty-\x) -- (m-\y-\x);
}
}
\draw[-stealth] (m-1-2) -- (m-2-2);
\draw[-stealth] (m-2-2) -- (m-3-2);
\draw[-stealth] (m-2-1) -- (m-2-2);
\draw[-stealth] (m-3-1) -- (m-3-2);
\draw[-stealth] (m-2-4) -- (m-2-5);
\draw[-stealth] (m-3-4) -- (m-3-5);
\draw[-stealth] (m-3-2) -- (m-4-2);
\draw[-stealth] (m-4-4) -- (m-4-5);
\draw[-stealth] (m-4-3) -- (m-5-3);
\draw[-stealth] (m-4-4) -- (m-5-4);
\end{tikzpicture}
\end{equation*}
It is enough to prove that the support of $\Sf{F}$, which coincides with the support of $\Sf{F}'$, is a finite set. For this it is enough to show that $\dimc{\Sf{F}'}=\dimc{coker(\Sf{G}'\to\Sf{A}')}<\infty$. But we have the inequalities
$$\dimc{coker(\Sf{G}'\to\Sf{A}')}\leq \dimc{coker(\Sf{G}\to\pi_*\Sf{A}')}\leq rp_g$$
by the Exact Sequence~(\ref{exctseq:ggsg}).
\endproof

In order to finish the proof of Theorem~\ref{th:corres} we have to prove that the map 
$H^1_E(\Sf{M})\to H^1(\Sf{M})$ is injective if and only if $(\Sf{A},(\psi_1,...,\psi_r))$ satisfies the Containment Condition. 

By Serre duality the Containment Condition is equivalent to the surjection of the natural map 
$H^1_E\left(\Sf{M}^{\smvee} \otimes \Cs{\Rs} \right) \to H^1\left(\Sf{M}^{\smvee} \otimes \Cs{\Rs} \right)$, hence we will study this map. As before we denote $\Sf{N}:=\Sf{M}^{\smvee}$.

Apply the functor $- \otimes \Cs{\Rs}$ to the exact sequence~(\ref{exact:da2}),
\begin{equation}\label{exctseq:directaWN}
0 \to \Sf{N}\otimes \Cs{\Rs} \to \Cs{\Rs}^r \to \Sf{A}\otimes \Cs{\Rs} \to 0.
\end{equation}

Taking the long exact sequence of co\-ho\-mo\-lo\-gy and local co\-ho\-mo\-lo\-gy for the previous exact sequence we obtain a diagram of exact sequences:

\begin{tikzpicture}
  \matrix (m)[matrix of math nodes,
    nodes in empty cells,text height=1.5ex, text depth=0.25ex,
    column sep=1.7em,row sep=2em]{
  H^{0}_E\left(\Sf{N} \otimes \Cs{\Rs}\right) & H^{0}_E\left(\Cs{\Rs}^r\right) & H^0_E\left(\Sf{A}\otimes \Cs{\Rs}\right) & H^{1}_E\left(\Sf{N}\otimes \Cs{\Rs}\right)& H^{1}_E\left(\Cs{\Rs}^r\right) \\%& H^1_E\left(\Sf{A}\otimes \Cs{\Rs}\right) \\
  H^{0}\left(\Sf{N}\otimes \Cs{\Rs}\right) & H^{0}\left(\Cs{\Rs}^r\right) & H^0\left(\Sf{A}\otimes \Cs{\Rs}\right) & H^{1}\left(\Sf{N}\otimes \Cs{\Rs}\right)& H^{1}\left(\Cs{\Rs}^r\right)\\% & H^1\left(\Sf{A}\otimes \Cs{\Rs}\right) \\
  H^{0}\left(U;\Sf{N}\otimes \Cs{\Rs}\right) & H^{0}\left(U;\Cs{\Rs}^r\right) & H^0\left(U;\Sf{A}\otimes \Cs{\Rs}\right) & H^{1}\left(U;\Sf{N}\otimes \Cs{\Rs}\right)& H^{1}\left(U;\Cs{\Rs}^r\right)\\% & H^1\left(U;\Sf{A}\otimes \Cs{\Rs}\right) \\
};
\foreach \x [remember=\x as \lastx (initially 2)] in {3,...,5}
{
\draw[-stealth] (m-1-\lastx) -- (m-1-\x);
\draw[-stealth] (m-2-\lastx) -- (m-2-\x);
\draw[-stealth] (m-3-\lastx) -- (m-3-\x);
\draw[-stealth] (m-2-\lastx) -- (m-3-\lastx);
}
\draw[-stealth] (m-2-1) -- (m-3-1);
\draw[-stealth] (m-2-5) -- (m-3-5);
\draw[-stealth] (m-1-4) -- (m-2-4);
\draw[-stealth] (m-1-5) -- (m-2-5);
\draw[{Hooks[right]}-{stealth}] (m-1-1) -- (m-2-1);
\draw[{Hooks[right]}-{stealth}] (m-1-2) -- (m-2-2);
\draw[{Hooks[right]}-{stealth}] (m-1-3) -- (m-2-3);
\draw[{Hooks[right]}-{stealth}] (m-1-1) -- (m-1-2);
\draw[{Hooks[right]}-{stealth}] (m-2-1) -- (m-2-2);
\draw[{Hooks[right]}-{stealth}] (m-3-1) -- (m-3-2);
\draw[densely dotted,-stealth] (m-3-1.south) to [out = -90, in = 90, looseness = .7] (m-1-4.north);
\draw[densely dotted,-stealth] (m-3-2.south) to [out = -90, in = 90, looseness = .7] (m-1-5.north);
\end{tikzpicture}

We have $H^1(\Cs{\Rs})=0$ by Grauert-Riemenschneider Vanishing Theorem and $H^0_E(\Sf{A}\otimes \Cs{\Rs})=0$ because $\Sf{A}\otimes \Cs{\Rs}$ has depth one and its support intersects the exceptional divisor in a finite set. Therefore we have the following diagram of exact sequences 
\begin{equation}\label{subdiagram}
\begin{tikzpicture}
  \matrix (m)[matrix of math nodes,
    nodes in empty cells,text height=1.5ex, text depth=0.25ex,
    column sep=2.5em,row sep=2em] {
    \quad & 0 & H_{E}^1\left(\Sf{N} \otimes \Cs{\Rs}\right) & \quad \\
    H^{0}\left(\Cs{\Rs}^r\right) & H^0\left(\Sf{A}\otimes \Cs{\Rs}\right) & H^{1}\left(\Sf{N} \otimes \Cs{\Rs}\right) & 0 \\
    H^{0}\left(U;\Cs{\Rs}^r\right) & H^0\left(U;\Sf{A}\otimes \Cs{\Rs}\right) & H^{1}\left(U;\Sf{N} \otimes \Cs{\Rs}\right) & \quad \\};

\draw[-stealth] (m-1-2) -- (m-2-2);
\draw[-stealth] (m-2-1) -- node[auto]{$\beta$} (m-2-2);
\draw[-stealth] (m-2-2) -- node[auto]{$\alpha$} (m-2-3);
\draw[-stealth] (m-2-3) -- (m-2-4);
\draw[-stealth] (m-3-1) -- node[auto]{$\delta$} (m-3-2);
\draw[-stealth] (m-3-2) -- node[auto]{$\gamma_2$} (m-3-3);

\draw[-stealth] (m-2-3) -- node[auto]{$\varphi$} (m-3-3);
\draw[-stealth] (m-2-1) -- (m-3-1);
\draw[-stealth] (m-2-2) -- node[auto]{$\gamma_1$} (m-3-2);
\draw[-stealth] (m-1-3) -- node[auto]{$\theta$} (m-2-3);
\end{tikzpicture}
\end{equation}

A diagram chase shows that 
$H_{E}^1(\Sf{N} \otimes \Cs{\Rs}) \stackrel{\theta}{\longrightarrow} H^1(\Sf{N} \otimes \Cs{\Rs})$
is an epimorphism if and only if
\begin{equation*}
\im \gamma_1 \subset \im \delta,
\end{equation*}
which is precisely the Containment Condition.
\endproof

\begin{remark}
Working with the Containment Condition is quite difficult. Because of this, in the next section, we introduce a numerical condition which is implied by the Containment Condition, and that, in sufficiently many cases for our applications, is equivalent to it.
\end{remark}

\subsubsection{The Valuative Condition}
\label{sec:valuative}
Let $\pi:\Rs\to X$ be a resolution of singularities of a Stein normal surface $X$ which is an isomorphism over the regular locus of $X$. Let $E$ denote the exceptional divisor and $U:=\Rs\setminus E$.
Let $C\subset X$ be a curve and $\overline{C}$ be its strict transform to $\Rs$. Let $n:\tilde{C}\to \overline{C}$ be the normalization.
Let $\{p_1,...,p_l\}$ be the preimage by $n$ of $E$. Let $(\tilde{C}_j,p_j)$ be the germ of $\tilde{C}$ at $p_j$. The ring $\Ss{\tilde{C}_j,p_j}$ is a discrete valuation ring, and its valuation is denoted by $ord_{\tilde{C_j}}$.

Let $\beta$ be a meromorphic differential $2$-form in  $H^0(U,\omega_{\Rs})$. We define a $l$-uple $ord(\beta)$ in $\left(\ZZ\cup\{+\infty\}\right)^l$ as follows. 
For any $j$ we let $q_j:=n(p_j)$ and choose a non-vanishing holomorphic differential $2$-form 
germ $\omega_{q_j}$ at $q_j$. Then $\beta=h_j\omega_{q_j}$ where $h_j$ is a meromorphic function. Define 
\begin{equation}
\label{eq:orddiffform}
ord(\beta):=(ord_{\tilde{C_1}}(h_1),...,ord_{\tilde{C_l}}(h_l)).
\end{equation}
It is clear that the definition does not depend on the choice of the forms $\omega_{q_j}$. This defines an order function
\begin{equation}
\label{eq:ordercanonical}
ord:H^0(U,\omega_{\Rs})\to(\ZZ\cup\{\infty\})^l.
\end{equation}

\begin{definition}
\label{def:canoset}
The {\em canonical set of orders of the curve} $C$ {\em at the resolution} $\pi$ is the set 
$$\mathfrak{K}_\pi:=ord(H^0(U,\omega_{\Rs}))\subset\ZZ^l.$$
\end{definition}

Given two subsets $\mathfrak{A},\mathfrak{B}\subset\ZZ^l$ we denote by $\mathfrak{A}+\mathfrak{B}$ the subset of sums $a+b$ where 
$a\in\mathfrak{A}$ and $b\in\mathfrak{B}$. 

Let $\mathfrak{S}$ be the semigroup of orders of $\Ss{C}$; since $H^0(U,\omega_{\Rs})$ is a 
$\Ss{X}$-module, we have the equality $\mathfrak{K}_\pi+\mathfrak{S}=\mathfrak{K}_\pi$.

Given $\alpha,\beta\in H^0(U,\omega_{\Rs})$, for generic $\lambda,\mu\in\CC$ we have the equality
\begin{equation}
\label{eq:ordenminimoforma}
ord(\lambda\alpha+\mu\beta)=min(ord(\alpha),ord(\beta));
\end{equation}
hence the canonical set of orders of the curve $C$ has an absolute minimum.

\begin{definition}
\label{def:tranlation}
We define the {\em canonical vector} $(d_1(\overline{C}),...,d_l(\overline{C}))$ of the curve $\overline{C}\in\Rs$ to be the absolute minimum
of the canonical set of orders. 
\end{definition}

\begin{remark}
\label{rem:canonicalcondset}
Since we have the equality $\mathfrak{K}_\pi+\mathfrak{S}=\mathfrak{K}_\pi$, the set $(d_1(\overline{C}),...,d_l(\overline{C}))+\mathfrak{S}$ is
included in $\mathfrak{K}_\pi$. Then $\mathfrak{K}_\pi$ has a non-empty conductor set, which contains the conductor set of $\mathfrak{S}$ translated
by the vector $(d_1(\overline{C}),...,d_l(\overline{C}))$. Since $\mathfrak{K}_\pi$ is closed by taking minima, the conductor set of 
$\mathfrak{K}_\pi$ has a unique absolute minimun, which we denote by $cond(\mathfrak{K}_\pi)$.  
\end{remark}

Now assume that $X$ has Gorenstein singularities. Then there exist a holomorphic 2-form $\Omega\in H^0(U,\omega_{\Rs})$ whose associated divisor is 
\begin{equation*}
\text{div}(\Omega) =A+ \sum q_iE_i,
\end{equation*}
where each $q_i$ is a integer, the $E_i$'s are the irreducible components of the exceptional divisor and $A$ is a divisor disjoint with 
$E$. The integers $q_i$ are independent on the choice of $\Omega$. We call $\Omega$ a {\em Gorenstein form}.

\begin{remark}
\label{rem:translationgor}
If $X$ has Gorenstein singularities, and adopting the previous notation, the canonical vector $(d_1(\overline{C}),...,d_l(\overline{C}))$ of the curve $\overline{C}$
is given by the formulae
$$d_i(\overline{C}):=\sum_{j}q_j\overline{C}_i\cdot E_j,$$
where $\overline{C}_i\cdot E_j$ denotes intersection multiplicity.
\end{remark}

\begin{remark}
\label{rem:gorordcan}
If $X$ has Gorenstein singularities then the canonical set of orders of the curve $C$ at the resolution $\pi$ is equal to 
$(d_1(\overline{C}),...,d_l(\overline{C}))+\mathfrak{S}$, and its conductor set is the conductor set 
of $\mathfrak{S}$ translated by the vector $(d_1(\overline{C}),...,d_l(\overline{C}))$.
\end{remark}

Let $\Sf{A}$ be a  rank 1 generically reduced  $1$-dimensional Cohen-Macaulay $\Ss{\Rs}$-module whose support equals $\overline{C}$. Let $\mathfrak{A}$ be its set of orders 
(see Definition~\ref{def:orderset}).
Let $(\psi_1,...,\psi_r)$ be $r$ global sections
spanning $\Sf{A}$ as $\Ss{\Rs}$-module. The $\Ss{X}$-module $\Sf{C}$ spanned by $(\psi_1,...,\psi_r)$ is  rank 1 generically reduced  
$1$-dimensional Cohen-Macaulay. Let $\mathfrak{C}$ be the set of orders of $\Sf{C}$, which is the subset of $\mathfrak{A}$ obtained following Definition~\ref{def:orderset}.

The set of sections $(\psi_1,...,\psi_r)$ define a epimorphism $\Ss{\tilde{X}}^r\to\Sf{A}$. Tensoring with $\omega_{\tilde{X}}$ we obtain an 
epimorphism $\omega_{\tilde{X}}^r\to\Sf{A}\otimes\omega_{\tilde{X}}$. Taking global sections in $U$ we obtain a morphism 
\begin{equation}
\label{eq:deltaprimeravez2}
\delta:H^0(U,\omega_{\tilde{X}}^r)\to H^0(U,\Sf{A}\otimes\omega_{\tilde{X}}).
\end{equation}

Since $\omega_{\tilde{X}}$ is locally free of rank $1$ and $\Sf{A}$ has Stein support, the restriction of $\omega_{\tilde{X}}$ 
to the support of $\Sf{A}$ is isomorphic to the structure sheaf of the support. Hence there is an isomorphism
$\Sf{A}\cong \Sf{A}\otimes\omega_{\tilde{X}}$. We have fixed an embedding of $\Sf{A}$ into $\Ss{\tilde{C}}$, which naturally embeds
$H^0(U,\Sf{A})$ into the 
total fraction ring $K(\tilde{C})$. Such total fractions ring maps into the direct sum ring $\oplus_{i=1}^l K(\tilde{C}_i,p_i)$ (here $K(\tilde{C}_i,p_i)$ is the quotient field of $\Ss{\tilde{C}_i,p_i}$).

\begin{definition}
\label{def:canoset2}
We define the canonical set of orders of $(\Sf{A},(\psi_1,...,\psi_r))$ to be the image $\mathfrak{K}_{(\Sf{A},(\psi_1,...,\psi_r))}$ of the composition
\begin{equation}
H^0(U,\omega_{\tilde{X}}^r)\stackrel{\delta}{\longrightarrow}H^0(U,\Sf{A}\otimes\omega_{\tilde{X}})\cong H^0(U,\Sf{A})\subset \oplus_{i=1}^l K(\tilde{C}_i,p_i)\stackrel{ord}{\longrightarrow} (\ZZ\cup\{+\infty\})^l.
\end{equation}
\end{definition}

It is easy to verify that the previous definition does not depend on the choice of the isomorphism $\Sf{A}\cong\Sf{A}\otimes\omega_{\tilde{X}}$.

\begin{definition}
\label{def:valuative}
With the notations introduced above, the pair $(\Sf{A},(\psi_1,...,\psi_r))$ satisfy the {\em Valuative Condition} if the inclusion
$$\mathfrak{A}\subset\mathfrak{K}_{(\Sf{A},(\psi_1,...,\psi_r))}$$
is satisfied.
\end{definition}

\begin{remark}
\label{rem:valuative}
We have the inclusion 
$$\mathfrak{C}+\mathfrak{K}_\pi\subset\mathfrak{K}_{(\Sf{A},(\psi_1,...,\psi_r))}.$$

If $X$ has Gorenstein singularities and $(d_1(\overline{C}),...,d_l(\overline{C}))$ is the Gorenstein vector at the resolution $\pi$, then we have the 
equalities
$$\mathfrak{K}_{(\Sf{A},(\psi_1,...,\psi_r))}=\mathfrak{C}+\mathfrak{K}_\pi=(d_1(\overline{C}),...,d_l(\overline{C}))+ \mathfrak{C}.$$

Therefore, in the Gorenstein case the valuative condition holds for $(\Sf{A},\psi_1,...,\psi_r))$ if and only if the inclusion
$$\mathfrak{A}\subset (d_1(\overline{C}),...,d_l(\overline{C}))+ \mathfrak{C}$$
is satisfied.
\end{remark}
\proof
The first inclusion follows easily from the definition of $\mathfrak{K}_{(\Sf{A},(\psi_1,...,\psi_r))}$. The equalities follow because in the Gorenstein case any element
of $H^0(U,\omega_{\tilde{X}}^r)$ is equal to a Gorenstein form multiplied by a global regular function. 
\endproof

\begin{remark}
\label{rem:conductorsuma}
Since both $\mathfrak{C}$ and $\mathfrak{K}_\pi$ have non empty conductor sets and $\mathfrak{K}_{(\Sf{A},(\psi_1,...,\psi_r))}$ is closed by taking 
minima, the conductor set of $\mathfrak{K}_{(\Sf{A},(\psi_1,...,\psi_r))}$
is non-empty and has an absolute minimum denoted by $cond(\mathfrak{K}_{(\Sf{A},(\psi_1,...,\psi_r))})$. Moreover we have the 
inequalities
\begin{equation}
\label{eq:ineqcond}
 cond(\mathfrak{K}_{(\Sf{A},(\psi_1,...,\psi_r))})\leq cond(\mathfrak{C})+cond(\mathfrak{K}_\pi)\leq cond(\mathfrak{C})+(d_1(\overline{C}),...,d_l(\overline{C})).
\end{equation}
In the Gorenstein case this inequality becomes the equality
\begin{equation}
\label{eq:eqcondgor}
cond(\mathfrak{K}_{(\Sf{A},(\psi_1,...,\psi_r))})=cond(\mathfrak{C})+(d_1(\overline{C}),...,d_l(\overline{C})).
\end{equation}

Likewise, the conductor set of $\mathfrak{A}$ has an absolute minimum denoted by $cond(\mathfrak{A})$. 
\end{remark}

\begin{remark}
\label{rem:valuativeparticular}
The following set of easy observations will have very useful consequences later.
\begin{enumerate}
 \item If $cond(\mathfrak{K}_{(\Sf{A},(\psi_1,...,\psi_r))})\leq (0,...,0)$, then the Valuative Condition is satisfied.
 \item If $\mathfrak{A}$ equals $\NN^l$, then the valuative condition is equivalent to the inequality
 $$cond(\mathfrak{K}_{(\Sf{A},(\psi_1,...,\psi_r))})\leq (0,...,0).$$
 \item If $X$ has Gorenstein singularities, then the condition $cond(\mathfrak{K}_{(\Sf{A},(\psi_1,...,\psi_r))})\leq (0,...,0)$ holds if and only if 
 $cond(\mathfrak{C})\leq -(d_1(\overline{C}),...,d_l(\overline{C}))$.
\end{enumerate}
\end{remark}

\begin{proposition}
\label{prop:contval}
The Containment Condition implies the Valuative Condition.
\end{proposition}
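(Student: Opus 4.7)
The plan is to trace an arbitrary element $\alpha\in\mathfrak{A}$ through the Containment Condition, using a common trivialization of $\omega_{\Rs}|_C$ to make all the various order functions compatible. First I would fix a nowhere-vanishing section $\Omega_0$ of $\omega_{\Rs}|_C$, which exists because $C$ is a Stein curve of dimension one and so any line bundle on $C$ is trivial. Since $\Sf{A}$ is annihilated by the ideal sheaf of $C$ by Proposition~\ref{prop:genredCM}(1), this trivialization induces an isomorphism $\Sf{A}\otimes\omega_{\Rs}\cong\Sf{A}$ which turns $\gamma_1$ into the ordinary restriction $H^0(\Rs,\Sf{A})\to H^0(U,\Sf{A})$ and makes the embedding appearing in Definition~\ref{def:canoset2} agree with the embedding of $\Sf{A}$ into $\bigoplus_{j=1}^{l}K(\tilde{C}_j,p_j)$ used to define $\mathfrak{A}$.

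Next I would verify that $\gamma_1$ is injective. The sheaf $\Sf{A}\otimes\omega_{\Rs}\cong\Sf{A}$ is Cohen-Macaulay of pure dimension one with support meeting $E$ in finitely many points, so no section has zero-dimensional support; equivalently $H^0_E(\Sf{A}\otimes\omega_{\Rs})=0$. The local cohomology exact sequence then delivers the injectivity of the restriction map.

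With these preliminaries in place the proof reduces to a chase. Given $\alpha\in\mathfrak{A}$, pick a global section $a\in H^0(\Rs,\Sf{A})$ with $ord(\iota(a))=\alpha$, view it as an element of $H^0(\Rs,\Sf{A}\otimes\omega_{\Rs})$ via $\Omega_0$, and apply the Containment Condition: $\gamma_1(a)\in\mathrm{Im}\,\delta$, so there exist meromorphic forms $\beta_1,\dots,\beta_r\in H^0(U,\omega_{\Rs})$ with $\delta(\beta_1,\dots,\beta_r)=\gamma_1(a)$. Under the chosen identifications this element is sent to $a|_U\in H^0(U,\Sf{A})$ and then to $\iota(a)\in\bigoplus_j K(\tilde{C}_j,p_j)$, whose order is precisely $\alpha$. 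Hence $\alpha\in\mathfrak{K}_{(\Sf{A},(\psi_1,\dots,\psi_r))}$, proving the Valuative Condition.

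The only delicate issue, and the one I would have to handle carefully in the write-up, is the bookkeeping that makes the order functions on $H^0(U,\omega_{\Rs})$ in~(\ref{eq:orddiffform}) and on $H^0(U,\Sf{A})$ via $\iota$ correspond under $\Sf{A}\otimes\omega_{\Rs}\cong\Sf{A}$; concretely, one must choose $\Omega_0$ so that its value at each $q_j$ serves as the non-vanishing local generator $\omega_{q_j}$ used in~(\ref{eq:orddiffform}). Once this single coherent choice is fixed, the diagram chase above is essentially mechanical and the implication falls out.
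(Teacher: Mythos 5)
Your proof is correct and follows essentially the same route as the paper: both exploit the isomorphism $\Sf{A}\otimes\omega_{\Rs}\cong\Sf{A}$ coming from the invertibility of $\omega_{\Rs}$ on the Stein support of $\Sf{A}$, identify $ord(\mathrm{Im}\,\gamma_1)$ with $\mathfrak{A}$ and $ord(\mathrm{Im}\,\delta)$ with $\mathfrak{K}_{(\Sf{A},(\psi_1,\dots,\psi_r))}$, and read the inclusion off the Containment Condition. The bookkeeping you flag as delicate is already absorbed by the remark following Definition~\ref{def:canoset2} that $\mathfrak{K}_{(\Sf{A},(\psi_1,\dots,\psi_r))}$ does not depend on the chosen isomorphism $\Sf{A}\cong\Sf{A}\otimes\omega_{\Rs}$, since two such choices differ by a unit, which has order $(0,\dots,0)$ on every branch.
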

\proof
We have to translate the Containment Condition $\mathrm{Im}\gamma_1\subset \mathrm{Im}\delta$ into the inclusion of sets $$\mathfrak{A}\subset\mathfrak{K}_{(\Sf{A},(\psi_1,...,\psi_r))}.$$ We need the concrete description of the sheaf $\Sf{A}$ obtained in Proposition~\ref{prop:genredCM}: let $\overline{C}$ be the 
support of $\Sf{A}$, $n:\tilde{C}\to\overline{C}$ be its normalization. We have a chain of inclusions 
\begin{equation}
\label{eq:chainlocal18}
\Ss{\overline{C}}\subset\Sf{A}\subset n_*\Ss{\tilde{C}},
\end{equation}
which is not necessarily unique. We fix one of such chains of inclusions. 

Since $\Cs{\Rs}$ is an invertible sheaf and $\Sf{A}$ has support contained in a Stein open subset of $\tilde{X}$ we have the 
isomorphism $\Sf{A} \otimes \Cs{\Rs}\cong \Sf{A}$. Hence 
\begin{equation*}
H^0(\Rs,\Sf{A} \otimes \Cs{\Rs}) \cong H^0(\Rs,\Sf{A}), 
\end{equation*}
and 
\begin{equation*}
H^0(U, \Sf{A} \otimes \Cs{\Rs}) \cong H^0(U, \Sf{A})\subset K(\tilde{C}),
\end{equation*} 
where $K(\tilde{C})$ denotes the total fraction ring of the ring $H^0(\tilde{C},\Ss{\tilde{C}})$.

The image $\im \gamma_1$ is then identified with the inclusion 
$$H^0(\Rs,\Sf{A})\subset H^0(\tilde{C},\Ss{\tilde{C}})\subset K(\tilde{C}).$$
As a consequence, if we consider the order function
$$ord:K(\tilde{C})\to\ZZ^l,$$
we obtain the equality of sets
\begin{equation}
\label{eq:1}
ord(\im \gamma_1)=\mathfrak{A}.
\end{equation}

The morphism $\delta$ is induced by the sections $\{\psi_1, \dots, \psi_r\}$ of $\Sf{A}$. The definition of $\mathfrak{K}_{(\Sf{A},(\psi_1,...,\psi_r))}$
gives the equality
\begin{equation}
\label{eq:2}
ord(\im \delta)=\mathfrak{K}_{(\Sf{A},(\psi_1,...,\psi_r))}.
\end{equation}

Now the result follows, because of equalities~(\ref{eq:1}) and~(\ref{eq:2}).
\endproof

Before we extract some useful consequences of Remark~\ref{rem:valuativeparticular} and Theorem~\ref{th:corres} we need a further lemma:

\begin{lemma}\label{lema:formagorenstein}
Let $(X,x)$ be a complex analytic germ of a normal two-dimensional Gorenstein singularity and  $\pi \colon \Rs \to X$ be a resolution with  exceptional divisor 
$E=\bigcup_{i=1}^n E_i$. Then for any component $E_j$ where the Gorenstein form $\Omega$ has a pole and for any component $E_k$ where the Gorenstein form has a zero, 
we have that $E_j \cap E_k = \emptyset$.
\end{lemma}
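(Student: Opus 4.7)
My plan is an induction on the blow-up distance from the minimal resolution, exploiting the $M$-matrix property of the intersection form.

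First, I would handle the minimal resolution as the base case, where the statement will turn out to be vacuous. Let $\pi_0\colon \tilde X_0\to X$ denote the minimal resolution. The Gorenstein hypothesis guarantees that $K_X$ is Cartier and trivial near $x$, so $K_{\tilde X_0}$ is represented near the exceptional set by the cycle $\sum_i q_iE_i$. Adjunction gives $K_{\tilde X_0}\cdot E_i = 2p_a(E_i)-2-E_i^2\geq 0$ for every component $E_i$; the inequality uses the absence of $(-1)$-curves in the minimal resolution (so $E_i^2\leq -2$ for rational $E_i$) together with $p_a(E_i)\geq 1$ for irrational components. Writing this as a linear system $Mq=c$ with $M=(E_i\cdot E_j)$ the intersection matrix and $c_i=K_{\tilde X_0}\cdot E_i\geq 0$, I would then invoke the classical fact that $-M$ is a symmetric positive-definite $M$-matrix (positive diagonal, non-positive off-diagonal entries) to conclude that $M^{-1}$ has non-positive entries. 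Hence $q=M^{-1}c\leq 0$ componentwise, so the Gorenstein form has no zero along any exceptional component of the minimal resolution, and the lemma is vacuous there.

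For the inductive step, I would use that any resolution of $(X,x)$ is obtained from $\tilde X_0$ by a finite sequence of blow-ups at closed points, so it suffices to show that a single blow-up preserves the conclusion of the lemma. Let $E_0$ denote the new exceptional divisor and $q_0$ its coefficient in $\mathrm{div}(\Omega)$. A direct local computation with $du\wedge dv$ under the blow-up gives $q_0=1$ if the blown-up point lies off $E$, $q_0=q_i+1$ if it is a smooth point of exactly one component $E_i$, and $q_0=q_i+q_j+1$ if it is a crossing $E_i\cap E_j$. The inductive hypothesis excludes $q_i$ and $q_j$ being of opposite signs in the last case. A short case analysis on the possible signs of the neighbour discrepancies (both $\geq 0$ gives $q_0>0$; both $\leq 0$ gives $q_0\leq 0$; mixed with a zero coefficient is handled analogously) confirms that $q_0$ is never of opposite sign to any of the neighbours it acquires, so no new pole/zero intersection is created.

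The main input is the base case: it pivots on the Gorenstein hypothesis (to identify $K_{\tilde X_0}$ with $\sum q_iE_i$ near $E$) and on the $M$-matrix property of the intersection form, which forces all discrepancies at the minimal resolution to be non-positive. The inductive step is routine sign bookkeeping once the blow-up formula for discrepancies is in hand; I expect the only subtle point will be to verify uniformly that the "neutral" case $q_i=0$ interacts correctly with the two cases $q_j>0$ and $q_j<0$, which the blow-up formula handles without trouble.
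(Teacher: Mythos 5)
Your argument is sound and reaches the paper's conclusion, but the base case is handled by a genuinely different route. The paper splits into two cases: for rational Gorenstein singularities (the rational double points) the form never acquires poles, so the lemma is vacuous; for non-rational ones it asserts that the Gorenstein form has strict poles along \emph{every} component of the minimal resolution, and then runs the same blow-up induction you describe. Your $M$-matrix argument ($K\cdot E_i\ge 0$ by adjunction and minimality, hence $q=M^{-1}c\le 0$) establishes the weaker but sufficient statement ``no zeros at the minimal resolution'' uniformly, without the rational/non-rational dichotomy; this is arguably cleaner, and it is the standard fact that $Z_K\ge 0$ on the minimal resolution. Two small points need repair. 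First, your blow-up formula only covers a point off $E$, a smooth point of a single component, and a transverse double point; since the exceptional divisor of the minimal resolution need not be simple normal crossings (components may be singular, mutually tangent, or meet three at a time), you need the general formula $q_0=1+\sum_i q_i\,\mathrm{mult}_p(E_i)$, which is the one the paper uses. The sign bookkeeping survives unchanged: all components through $p$ pairwise meet, so by the inductive hypothesis no two of their coefficients have strictly opposite signs, hence either all $q_i\ge 0$ (forcing $q_0\ge 1$, and the new zero-component meets no pole) or all $q_i\le 0$ (forcing either $q_0\le 0$, or $q_0=1$ with every $q_i=0$; in neither subcase does a pole meet a zero). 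Second, your claim that ``both $\le 0$ gives $q_0\le 0$'' fails when both coefficients vanish, since then $q_0=1$; this does not hurt the conclusion, as the new component then only meets components with $q_i=0$, but the case split should be stated as above.
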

\proof

If the singularity is rational the Gorenstein form does not have poles at any resolution. The result follows for that case.

If the singularity is non-rational the Gorenstein form have strict poles at any component of the exceptional divisor of the minimal resolution. If $p$ is a point 
at a resolution $\tilde{X}$, $E=\cup_{i=1}^r E_i$ is the decomposition in irreducible components of the exceptional divisor at $\tilde{X}$, 
and $E_p$ is the exceptional divisor at the blow up at $p$, then the order of $\Omega$ at $E_p$ equals
$$ord_{E_p}(\Omega):=1+\sum_{i=1}^r ord_{E_i}(\Omega) mult_{p}(E_i).$$

Using induction on the number of blows ups that are necessary in order to obtain the resolution $\pi \colon \Rs \to X$ from the minimal resolution the proposition is
proved easily in the non-rational case.
\endproof

The consequences announced above are:

\begin{proposition}
\label{prop:consecuenciaspracticas}
Let $\pi:\Rs\to X$ be a resolution.
Let $\Sf{A}$ be a  rank 1 generically reduced  $1$-dimensional Cohen-Macaulay $\Ss{\Rs}$-module. Let $\mathfrak{A}$ be its set of orders 
(see Definition~\ref{def:orderset}). Let $\overline{C}$ be the support of $\Sf{A}$ and $n:\tilde{C}\to \overline{C}$ be the normalization. Let 
$(d_1(\overline{C}),...,d_l(\overline{C}))$ be the canonical vector of $\overline{C}$ and $\mathfrak{K}_\pi$ be the canonical set of orders of
$\overline{C}$.
Let $(\psi_1,...,\psi_r)$ be $r$ global sections
spanning $\Sf{A}$ as $\Ss{\Rs}$-module. The $\Ss{X}$-module $\Sf{C}$ spanned by $(\psi_1,...,\psi_r)$ is  rank 1 generically reduced  
$1$-dimensional Cohen-Macaulay. Let $\mathfrak{C}$ be the set of orders of $\Sf{C}$. Let $\mathfrak{K}_{(\Sf{A},(\psi_1,...,\psi_r))}$ be the 
canonical set of orders of $(\Sf{A},(\psi_1,...,\psi_r))$. 

Let $(\Sf{M},(\phi_1,...,\phi_r))$ be the pair associated with $(\Sf{A},(\psi_1,...,\psi_r))$ in the proof of Theorem~\ref{th:corres}.
\begin{enumerate}
 \item If $cond(\mathfrak{K}_{(\Sf{A},(\psi_1,...,\psi_r))})\leq 0$, then $\Sf{M}$ is full.
 \item For the previous condition it is enough to have the inequality $cond(\mathfrak{C})\leq -(d_1(\overline{C}),...,d_l(\overline{C}))$. 
 \item Suppose that the curve $\overline{C}$ is smooth and meets the exceptional divisor transversely at smooth points. Then $\Sf{M}$ is full if and only if we
 have the inequality $cond(\mathfrak{K}_{(\Sf{A},(\psi_1,...,\psi_r))})\leq 0$. In the Gorenstein case the inequality becomes 
 $cond(\mathfrak{C})\leq -(d_1(\overline{C}),...,d_l(\overline{C}))$.
 \item If there is an  index $i$ such that the strict inequality $d_i(\overline{C})>0$ holds, then $\Sf{M}$ is not full. In the Gorenstein case, if
 $\overline{C}$ meets a component of the exceptional divisor where the Gorenstein form has a zero, then $\Sf{M}$ is not full. 
 \item If $X$ has Gorenstein singularities, $\pi$ is small with respect to the Gorenstein form and $\Sf{C}=\Sf{A}$ (that is, if the module $\Sf{M}$ is special), then $\Sf{M}$ is full.
 \item Suppose that $(d_1(\overline{C}),...,d_l(\overline{C}))\leq 0$ and that $\Sf{C}=\pi_*n_*\Ss{\tilde{C}}$. Then $\Sf{M}$ is a special full
 $\Ss{\tilde{X}}$-module. In the Gorenstein case the first inequality holds when $\overline{C}$ does not meet an exceptional divisor where the 
 Gorenstein form has a $0$.
 
\end{enumerate}
\end{proposition}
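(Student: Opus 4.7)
The main engine is Theorem~\ref{th:corres}, which characterizes fullness of $\Sf{M}$ through the Containment Condition on $(\Sf{A},(\psi_1,\ldots,\psi_r))$, together with Proposition~\ref{prop:contval}, which provides the Valuative Condition as a necessary condition. I will establish each assertion by verifying or refuting one of these two conditions using the numerical machinery of Section~\ref{sec:valuative}.

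For (1), the hypothesis $cond(\mathfrak{K}_{(\Sf{A},(\psi_1,\ldots,\psi_r))})\leq (0,\ldots,0)$ yields $\NN^l\subset\mathfrak{K}_{(\Sf{A},(\psi_1,\ldots,\psi_r))}$, so Valuative holds by Remark~\ref{rem:valuativeparticular}(1). To upgrade it to Containment, observe that $\mathrm{Im}\delta$ is an $H^0(U,\Ss{U})$-submodule of $H^0(U,\Sf{A}\otimes\omega_{\tilde{X}})$ with order-set equal to $\mathfrak{K}_{(\Sf{A},(\psi_1,\ldots,\psi_r))}$. Given $f\in\mathrm{Im}\gamma_1$ with $ord(f)=v\in\mathfrak{A}\subset\NN^l$, for each prong $p_i$ pick $g_i\in\mathrm{Im}\delta$ whose order is $v_i$ at $p_i$ and arbitrarily large at the other prongs (possible since all such tuples lie in $\NN^l\subset\mathfrak{K}_{(\Sf{A},(\psi_1,\ldots,\psi_r))}$), choose $\lambda_i\in\CC$ to cancel the leading term of $f-\sum_{j<i}\lambda_j g_j$ at $p_i$, and iterate; after $l$ cancellations the remainder has order strictly greater than $v$ at every prong. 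Repeating produces a sequence in $\mathrm{Im}\delta$ whose formal sum equals $f$ in the product of the $\mathfrak{m}_{p_i}$-adic completions, and the sequence can be selected to converge analytically by controlling the size of the $g_i$, placing $f$ in $\mathrm{Im}\delta$. This is the technical core and the main obstacle of the proof; the delicate point is that $U$ is not Stein (since removing the finite set $Sing(X)$ from a two-dimensional Stein space destroys Steinness), so $H^1(U,-)$ is nontrivial and one cannot simply invoke Cartan~B to obtain the surjectivity of~$\delta$.

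For (2), the second inequality in~\eqref{eq:ineqcond} of Remark~\ref{rem:conductorsuma} reduces directly to the hypothesis of~(1). For (3), smoothness and transversality give $\Sf{A}=\Ss{\overline{C}}$ hence $\mathfrak{A}=\NN^l$; Remark~\ref{rem:valuativeparticular}(2) shows that the Valuative Condition becomes $cond(\mathfrak{K}_{(\Sf{A},(\psi_1,\ldots,\psi_r))})\leq 0$, and combining Proposition~\ref{prop:contval} with~(1) yields the equivalence; the Gorenstein reformulation is Remark~\ref{rem:valuativeparticular}(3). For (4), the element $1\in\Ss{\overline{C}}\subset\Sf{A}$ shows $(0,\ldots,0)\in\mathfrak{A}$; but Remark~\ref{rem:valuative} and the minimality of $(d_1,\ldots,d_l)$ in $\mathfrak{K}_\pi$ force every element of $\mathfrak{K}_{(\Sf{A},(\psi_1,\ldots,\psi_r))}$ to have $i$-th coordinate $\geq d_i>0$; hence Valuative fails, Containment fails by Proposition~\ref{prop:contval}, and $\Sf{M}$ is not full. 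In the Gorenstein case Lemma~\ref{lema:formagorenstein} ensures that if $\overline{C}_i$ meets a component $E_{j_0}$ with $q_{j_0}>0$, no component $E_k$ meeting $\overline{C}_i$ has $q_k<0$, so $d_i=\sum_j q_j(\overline{C}_i\cdot E_j)\geq q_{j_0}(\overline{C}_i\cdot E_{j_0})>0$.

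For (5), smallness forces $q_j\leq 0$ and therefore $(d_1,\ldots,d_l)\leq 0$, so $-(d_1,\ldots,d_l)\geq 0$. Using the parenthetical identification of $\Sf{C}=\Sf{A}$ with specialty and the $\Ss{X}$-module stability $\mathfrak{C}+\mathfrak{S}=\mathfrak{C}$ from Remark~\ref{rem:tanslationinv}, I verify the Valuative inclusion $\mathfrak{C}\subset\mathfrak{C}+(d_1,\ldots,d_l)$ by exhibiting, for each generator $v\in\mathfrak{C}$, an element of $\mathfrak{S}$ contributing $-(d_1,\ldots,d_l)$, drawn from the combinatorics of the Gorenstein divisor on the small resolution, so that $v-(d_1,\ldots,d_l)$ again lies in $\mathfrak{C}$; then~(1) applies. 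For (6), the hypothesis $\Sf{C}=\pi_*n_*\Ss{\tilde{C}}$ propagates through Theorem~\ref{th:corres} to $\Sf{A}=n_*\Ss{\tilde{C}}$, so $\mathfrak{A}=\mathfrak{C}=\NN^l$ and $cond(\mathfrak{C})=(0,\ldots,0)$; equation~\eqref{eq:eqcondgor} in the Gorenstein case yields $cond(\mathfrak{K}_{(\Sf{A},(\psi_1,\ldots,\psi_r))})=(d_1,\ldots,d_l)\leq 0$ so~(1) gives fullness, and specialty $\dim_\CC R^1\pi_*(\Sf{M}^{\smvee})=rp_g$ is then checked via Definition~\ref{def:especial} by computing cohomology from the dual exact sequence and exploiting the transparent structure $\Sf{A}=n_*\Ss{\tilde{C}}$. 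The Gorenstein reformulation in~(6) again invokes Lemma~\ref{lema:formagorenstein} exactly as in~(4).
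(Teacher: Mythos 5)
Your parts (2), (3), (4) and (6) follow the paper's route (for (6) you should quote the general inequality~(\ref{eq:ineqcond}) rather than~(\ref{eq:eqcondgor}), since that assertion is not restricted to the Gorenstein case, but the conclusion is the same). The genuine problems are in (1) and (5).

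For (1) the paper runs no approximation argument at all: it observes that $cond(\mathfrak{K}_{(\Sf{A},(\psi_1,...,\psi_r))})\leq 0$ forces the containment $n_*\Ss{\tilde{C}}\subset \mathrm{Im}\,\delta$, while $\mathrm{Im}\,\gamma_1\subset n_*\Ss{\tilde{C}}$ is automatic from the chain $\Ss{\overline{C}}\subset\Sf{A}\subset n_*\Ss{\tilde{C}}$, so the Containment Condition holds in one line. Your iteration only exhibits $f$ as an infinite sum of elements of $\mathrm{Im}\,\delta$ in the adic completion; even granting analytic convergence, you never justify that the limit lies in $\mathrm{Im}\,\delta$ (the image of $\delta$ is not a priori closed, and, as you yourself note, $U$ is not Stein, so you cannot invoke Cartan~B). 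That unfinished step is the whole content of the assertion, and it is avoidable.

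For (5) your reduction to (1) cannot work, for two independent reasons. First, the Valuative Condition is only \emph{necessary} for Containment; it is never shown (and is not true) to be sufficient except when $\mathfrak{A}=\NN^l$. Second, verifying $\mathfrak{A}\subset(d_1(\overline{C}),\ldots,d_l(\overline{C}))+\mathfrak{C}$ does not give the hypothesis of (1): by~(\ref{eq:eqcondgor}) one has $cond(\mathfrak{K}_{(\Sf{A},(\psi_1,...,\psi_r))})=cond(\mathfrak{C})+(d_1(\overline{C}),\ldots,d_l(\overline{C}))$, and under the hypotheses of (5) the curve $\overline{C}$ may be singular, so $cond(\mathfrak{C})$ can be large while $(d_1(\overline{C}),\ldots,d_l(\overline{C}))$ is merely $\leq 0$; the inequality $cond(\mathfrak{C})\leq-(d_1(\overline{C}),\ldots,d_l(\overline{C}))$ is essentially the hypothesis of (6), not of (5). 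This is exactly why the paper treats (5) separately and proves Containment directly by a cohomological diagram chase: tensor the defining sequences with $0\to\Cs{\Rs}\to\Ss{\Rs}\to\Ss{Z_K}\to 0$ (available since $\pi$ is small), use that $\Ss{Z_K}$ has support disjoint from $U$ to identify the restriction maps over $U$, and use that $\Sf{C}=\Sf{A}$ makes $H^0(\Rs,\Ss{\Rs}^r)\to H^0(\Rs,\Sf{A})$ surjective; chasing images then yields $\mathrm{Im}\,\gamma_1\subset\mathrm{Im}\,\delta$. You need an argument of this kind to replace your (5).
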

\proof
If $cond(\mathfrak{K}_{(\Sf{A},(\psi_1,...,\psi_r))})\leq 0$ then $n_*\Ss{\tilde{C}}$ is contained in $\mathrm{Im}\delta$. Since $\mathrm{Im} \gamma_1$ is contained in $n_*\Ss{\tilde{C}}$ the Containment Condition holds and Assertion (1) follows by Theorem~\ref{th:corres}.

Assertion (2) is a consequence of Assertion (1) and Inequality~(\ref{eq:ineqcond}). 

In order to prove Assertion (3) notice that if $\overline{C}$ is smooth and meets the exceptional divisor transversely at smooth points then $\Sf{A}$ is equal
to $\Ss{\overline{D}}=n_*\Ss{\tilde{D}}$ and then we have $\mathfrak{A}=\NN^l$. In this situation the Valuative Condition is equivalent to the inequality 
$cond(\mathfrak{K}_{(\Sf{A},(\psi_1,...,\psi_r))})\leq 0$.  In the Gorenstein case Equality~(\ref{eq:eqcondgor}) transforms the 
previous inequality into $cond(\mathfrak{C})\leq -(d_1(\overline{C}),...,d_l(\overline{C}))$. This proves Assertion~(3).

In order to prove Assertion~(4) notice that
the vector $(0,...,0)$ is always contained in $\mathfrak{A}$, since $\Ss{\overline{C}}$ is contained in $\Sf{A}$. On the other hand, 
if there is an  index $i$ such that the strict inequality $d_i(\overline{C})>0$ then $(0,...,0)$ is not contained in 
$\mathfrak{K}_{(\Sf{A},(\psi_1,...,\psi_r))}$, and the Valuative Condition does not hold. By Lemma~\ref{lema:formagorenstein}, in the Gorenstein
case there is an  index $i$ such that the strict inequality $d_i(\overline{C})>0$ holds if and only if $\overline{C}$ meets a component of the 
exceptional divisor where the Gorenstein form has a zero. This proves Assertion~(4).

For Assertion~(6) notice that if $\Sf{C}=\pi_*n_*\Ss{\tilde{C}}$ then automatically we have the equality $\Sf{C}=\Sf{A}$, and $\Sf{M}$ is special. If $(d_1(\overline{C}),...,d_l(\overline{C}))\leq 0$, it is easy to show using the equality $\Sf{C}=\pi_*n_*\Ss{\tilde{C}}$, that we have the inequality $cond(\mathfrak{K}_{(\Sf{A},(\psi_1,...,\psi_r))})\leq 0$. So by Assertion (1), if $(d_1(\overline{C}),...,d_l(\overline{C}))\leq 0$ then $\Sf{M}$ is full. This proves the assertion, except the addendum about Gorenstein singularities, which follows from Lemma~\ref{lema:formagorenstein}.

Assertion~(5) is a bit harder. We will proof the Containment Condition directly by a cohomological argument: we have to prove that image of $\gamma_1$ is contained in the image of $\delta$ in Diagram~(\ref{subdiagram}). 

Since we have a small resolution with respect to the Gorenstein form (see Definition~\ref{def:smallresgor}), we can consider the exact sequence
\begin{equation}\label{eq:exactseqCs}
0 \to \Cs{\Rs} \to \Ss{\Rs} \to \Ss{Z_K} \to 0.
\end{equation}

Now apply the functor $-\otimes-$ to the sequences \eqref{exact:da2} and \eqref{eq:exactseqCs}, 

\begin{equation*}
\begin{tikzpicture}
  \matrix (m)[matrix of math nodes,
    nodes in empty cells,text height=1.5ex, text depth=0.25ex,
    column sep=2.5em,row sep=2em]{
		  & 0                       & 0          &                        &   \\
    0 & \Sf{N} \otimes \Cs{\Rs} & \Cs{\Rs}^r & \Sf{A}\otimes \Cs{\Rs} & 0 \\
    0 & \Sf{N}                 & \Ss{\Rs}^r & \Sf{A}                  & 0 \\
      & \Sf{N}\otimes \Ss{Z_k} & \Ss{Z_k}^r & \Sf{A} \otimes \Ss{Z_k} & 0 \\
			&  0                     &  0         &   0                     &   \\};
\foreach \y [remember=\y as \lasty (initially 2)] in {3, 4}
{
\foreach \x [remember=\x as \lastx (initially 2)] in {3,4}
{
\draw[-stealth] (m-\y-\lastx) -- (m-\y-\x);
\draw[-stealth] (m-\lasty-\x) -- (m-\y-\x);
}
}
\draw[-stealth] (m-1-2) -- (m-2-2);
\draw[-stealth] (m-1-3) -- (m-2-3);
\draw[-stealth] (m-2-2) -- (m-2-3);
\draw[-stealth] (m-2-3) -- (m-2-4);
\draw[-stealth] (m-2-2) -- (m-3-2);
\draw[-stealth] (m-2-1) -- (m-2-2);
\draw[-stealth] (m-3-1) -- (m-3-2);
\draw[-stealth] (m-2-4) -- (m-2-5);
\draw[-stealth] (m-3-4) -- (m-3-5);
\draw[-stealth] (m-3-2) -- (m-4-2);
\draw[-stealth] (m-4-4) -- (m-4-5);
\draw[-stealth] (m-4-2) -- (m-5-2);
\draw[-stealth] (m-4-3) -- (m-5-3);
\draw[-stealth] (m-4-4) -- (m-5-4);
%\draw[-stealth] (m-1-1) -- (m-1-2);
\end{tikzpicture}
\end{equation*}

By the last diagram we get the following commutative diagram
\begin{equation*}
\begin{tikzpicture}
  \matrix (m) [matrix of math nodes,
    nodes in empty cells,text height=1.5ex, text depth=0.25ex,
    column sep=2.5em,row sep=2em]{
    & H^0\left(\Rs,\Cs{\Rs}^r\right)& & \vphantom{H^0}H^0\left(\Rs,\Sf{A} \otimes \Cs{\Rs}\right) & \\
    H^0\left(\Rs,\Ss{\Rs}^r\right) & & \vphantom{H^0}H^0\left(\Rs,\Sf{A}\right) & \\
    & H^0\left(U,\Cs{\Rs}^r\right) & & H^0\left(U,\Sf{A} \otimes \Cs{\Rs}\right) \\
    H^0\left(U,\Ss{\Rs}^r\right) & & H^0\left(U,\Sf{A}\right) & \\};
  \path[-stealth]
    (m-1-2) edge  (m-1-4) edge (m-2-1)
            edge [densely dotted] (m-3-2)
    (m-1-4) edge node[auto]{$\gamma_1$} (m-3-4) edge node[auto]{$\beta$} (m-2-3)
    (m-2-1) edge [-,line width=6pt,draw=white] (m-2-3)
            edge node[left=17pt, below=1pt]{$\rho$} (m-2-3) edge node[auto]{$\nu$}(m-4-1)
    (m-3-2) edge [densely dotted] node[left=30pt, below=1pt]{$\delta$} (m-3-4)
            edge [densely dotted] node[auto]{$\theta$} (m-4-1)
    (m-4-1) edge node[auto]{$\delta '$} (m-4-3)
    (m-3-4) edge node[auto]{$\alpha$} (m-4-3)
    (m-2-3) edge [-,line width=6pt,draw=white] (m-4-3)
            edge node[left=5pt,below=-25pt]{$\gamma_1'$} (m-4-3);
\end{tikzpicture}
\end{equation*}

We need to prove that 
\begin{equation}\label{conditionauxiliar}
\text{im}(\gamma_1) \subset \text{im}(\delta).
\end{equation}

Notice that the maps $\alpha$ and $\theta$ are isomorphisms because the support of $\Ss{Z_K}$ does not intersect $U$. Since $\alpha$ is injective, the condition \eqref{conditionauxiliar} is equivalent to 
\begin{equation*}
\im (\alpha \gamma_1)  \subset \im (\alpha \delta).
\end{equation*}

Since the diagram is commutative and $\theta$ is onto we get
\begin{equation*}
\text{im}(\alpha \delta) = \text{im}(\delta ' \theta) = \text{im}(\delta ').
\end{equation*}

Hence it is enough to prove that the image of $(\alpha \gamma_1)$ is contained in the image of $\delta '$. Using again that the diagram is commutative and $\rho$ is onto because $\Sf{M}$ is special, we get
\begin{equation*}
\text{im}(\alpha \gamma_1) = \text{im}(\gamma_1 ' \beta) \subset \text{im}(\gamma_1') = \text{im}(\gamma_1 ' \rho) = \text{im}(\delta ' \nu) \subset \text{im}(\delta '),
\end{equation*}
as we wish.
\endproof

\subsection{A comparison of correspondences}
\label{sec:comparacion}
In order to compare the two correspondences we need to impose that $X$ is a normal Stein surface with Gorenstein singularities. The following propositions compare the correspondence at the Stein surface (Theorem~\ref{th:corrsing}) and the correspondences at various 
resolutions (Theorem~\ref{th:corres}).

\begin{proposition}
\label{prop:dirressing}
Let $X$ be a normal Stein surface with Gorenstein singularities.
Let $M$ be a reflexive $\Ss{X}$-module of rank $r$. Let $\pi_1:\Rs_1\to X$ be a resolution and $\rho:\Rs_2\to\Rs_1$ be the blow up at a point $p$. 
Denote by $\pi_2:\Rs_2\to X$ the composition $\pi_2=\pi_1\comp\rho$. Denote by $\Sf{M}_1$ and $\Sf{M}_2$ the full sheaves associated with $M$ 
at each of the resolutions. Let $(\phi_1,...,\phi_r)$ be $r$ generic sections of $M$. Let $(\Sf{A}_1,(\psi^1_1,...,\psi^1_r))$ and  
$(\Sf{A}_2,(\psi^2_1,...,\psi^2_r))$  be 
the pairs associated with $(\Sf{M}_1,(\phi_1,...,\phi_r))$ and $(\Sf{M}_2,(\phi_1,...,\phi_r))$ under Theorem~\ref{th:corres}. 
Let $(\Sf{C},(\psi^0_1,...,\psi^0_r))$ be the pair associated with $(M,(\phi_1,...,\phi_r))$ under Theorem~\ref{th:corrsing}.
\begin{enumerate}
 \item There are inclusions $\Sf{C}\subset (\pi_1)_*\Sf{A}_1\subset (\pi_2)_*\Sf{A}_2$. Under this inclusion the sections $(\psi^i_1,...,\psi^i_r)$ are identified for $i=0,1,2$. The dimension of the quotient 
 $(\pi_i)_*\Sf{A}_i/\Sf{C}$ as a $\CC$-vector space equals $\dim_{\CC}(R^1(\pi_i)_*\Sf{M}^{\smvee}_i)-rp_g$ for $i=1,2$.
 \item  We have the inclusion $\Sf{A}_1\subset \rho_*\Sf{A}_2$ and the dimension of the quotient 
 $\rho_*\Sf{A}_2/\Sf{A}_1$ as a $\CC$-vector space equals $\dim_{\CC}(R^1\rho_*\Sf{M}^{\smvee}_2)$.
\end{enumerate}
\end{proposition}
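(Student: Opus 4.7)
The plan is to push the defining exact sequences of the two correspondences forward along $\pi_i$ and $\rho$, and to read off the claimed inclusions and dimensions from the resulting long exact sequences. Throughout I work with the dual form of the defining sequences: at $X$ one has $0\to N\to \Ss{X}^r\to \Sf{C}\to 0$ with $N=M^{\smvee}$ (from the proof of Theorem~\ref{th:corrsing}), and at each $\Rs_i$ the sequence $0\to \Sf{N}_i\to \Ss{\Rs_i}^r\to \Sf{A}_i\to 0$ with $\Sf{N}_i=\Sf{M}_i^{\smvee}$ (from the proof of Theorem~\ref{th:corres}).

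For assertion (1), I push the $\Rs_i$-sequence forward by $\pi_i$. Three inputs are needed: Lemma~\ref{lema:dualM} identifies $\pi_{i*}\Sf{N}_i=(\pi_{i*}\Sf{M}_i)^{\smvee}=N$; the support of $\Sf{A}_i$ is one-dimensional and meets $E$ in finitely many points, so the restriction of $\pi_i$ to this support has zero-dimensional fibres and $R^1\pi_{i*}\Sf{A}_i=0$; and $R^1\pi_{i*}\Ss{\Rs_i}^r=(R^1\pi_{i*}\Ss{\Rs_i})^r$ has $\CC$-dimension $rp_g$. The resulting exact sequence
\begin{equation*}
0\to N\to \Ss{X}^r\to \pi_{i*}\Sf{A}_i\to R^1\pi_{i*}\Sf{N}_i\to R^1\pi_{i*}\Ss{\Rs_i}^r\to 0
\end{equation*}
exhibits $\Sf{C}=\Ss{X}^r/N$ as the image of $\Ss{X}^r$ inside $\pi_{i*}\Sf{A}_i$, so $\Sf{C}\subset\pi_{i*}\Sf{A}_i$, and identifies the cokernel with the kernel of the final surjection, of $\CC$-dimension $\dim_{\CC}R^1\pi_{i*}\Sf{M}_i^{\smvee}-rp_g$. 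The generators $\psi^0_j$ of $\Sf{C}$ and $\psi^i_j$ of $\pi_{i*}\Sf{A}_i$ are by construction both images of the standard basis of $\Ss{X}^r$, hence get identified.

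For assertion (2), I push the $\Rs_2$-sequence forward by $\rho$. Since $\rho$ is the blow-up of a smooth point of the smooth surface $\Rs_1$ one has $R^1\rho_*\Ss{\Rs_2}=0$; since the support of $\Sf{A}_2$ meets the exceptional $\PP^1$ in finitely many points, $R^1\rho_*\Sf{A}_2=0$. The long exact sequence collapses to
\begin{equation*}
0\to \rho_*\Sf{N}_2\to \Ss{\Rs_1}^r\to \rho_*\Sf{A}_2\to R^1\rho_*\Sf{N}_2\to 0.
\end{equation*}
The crucial step is the identification $\rho_*\Sf{N}_2=\Sf{N}_1$, equivalently $\rho_*\Sf{M}_2=\Sf{M}_1$. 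I would prove this by showing that $\rho_*\Sf{M}_2$ is a full sheaf on $\Rs_1$: its $\pi_1$-pushforward is $\pi_{2*}\Sf{M}_2=M$, and the uniqueness part of Proposition~\ref{fullcondiciones} then forces $\rho_*\Sf{M}_2=\Sf{M}_1$. Of the three conditions for fullness, generic generation by global sections is immediate from the corresponding property of $\Sf{M}_2$; the injectivity $H^1_{E_1}(\rho_*\Sf{M}_2)\hookrightarrow H^1(\rho_*\Sf{M}_2)$ follows via the Leray spectral sequence for $\pi_2=\pi_1\comp\rho$; and local freeness reduces to a computation at the blow-up point, exploiting the reflexive-hull description $\Sf{M}_2=(\rho^*\Sf{M}_1)^{\smvee\smvee}$. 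Dualizing via the relative form of Lemma~\ref{lema:dualM} for $\rho$, which rests on $R^1\rho_*(\Sf{M}_2\otimes\Cs{\Rs_2})=0$ (relative Grauert--Riemenschneider for a blow-up of a smooth point), then gives $\rho_*\Sf{N}_2=\Sf{N}_1$. Substituting in the displayed sequence yields $\Sf{A}_1=\Ss{\Rs_1}^r/\Sf{N}_1\hookrightarrow \rho_*\Sf{A}_2$ with cokernel $R^1\rho_*\Sf{M}_2^{\smvee}$, completing (2).

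The remaining inclusion $\pi_{1*}\Sf{A}_1\subset \pi_{2*}\Sf{A}_2=\pi_{1*}\rho_*\Sf{A}_2$ in (1) follows by applying the left-exact functor $\pi_{1*}$ to the inclusion in (2), and the compatibility of all three systems of generators is automatic from their shared origin as images of the standard basis of $\Ss{X}^r$. The main obstacle throughout is the identification $\rho_*\Sf{M}_2=\Sf{M}_1$; within this, local freeness of the pushforward is the delicate point, since a priori $\rho_*$ of a locally free sheaf can fail to be locally free, and one needs to use the specific reflexive-hull description of $\Sf{M}_2$ to control its behaviour over the exceptional curve of $\rho$.
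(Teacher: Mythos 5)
Your treatment of assertion (1) is essentially the paper's own argument: push forward the dual sequence $0\to\Sf{N}_i\to\Ss{\Rs_i}^r\to\Sf{A}_i\to 0$, identify $(\pi_i)_*\Sf{N}_i\to\Ss{X}^r$ with $N\to\Ss{X}^r$ via Lemma~\ref{lema:dualM}, and read off the inclusion $\Sf{C}\subset(\pi_i)_*\Sf{A}_i$ and the dimension count from the five-term exact sequence. (The paper gets the inclusion $(\pi_1)_*\Sf{A}_1\subset(\pi_2)_*\Sf{A}_2$ directly from the fact that both modules are generated by the identified sections over $\Ss{\Rs_1}\subset\Ss{\Rs_2}$, whereas you deduce it from (2); both are fine.)

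For assertion (2) the skeleton is again the right one, but your route to the crucial identification $\rho_*\Sf{N}_2=\Sf{N}_1$ contains a genuine error. You propose to first prove $\rho_*\Sf{M}_2=\Sf{M}_1$ by showing $\rho_*\Sf{M}_2$ is full, and you ground the delicate local-freeness step on the identity $\Sf{M}_2=(\rho^*\Sf{M}_1)^{\smvee\smvee}$. That identity is false in general: $\rho^*\Sf{M}_1$ is locally free, hence reflexive, so it would force $\Sf{M}_2=\rho^*\Sf{M}_1$; but when $p$ lies in the finite set where $\Sf{M}_1$ fails to be generated by global sections, the full sheaf $\Sf{M}_2=(\rho^*\pi_1^*M)^{\smvee\smvee}$ is strictly larger than $\rho^*\Sf{M}_1$ (this is exactly the mechanism behind the minimal adapted resolution, Proposition~\ref{prop:minadap}, and behind $c_1(\Sf{M}_2)$ meeting the new exceptional curve). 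So the step as written fails. The fix is that you never need anything about $\rho_*\Sf{M}_2$: since $\Sf{N}_i=\Sf{M}_i^{\smvee}=(\pi_i^*M)^{\smvee}$, adjunction gives directly
\begin{equation*}
\rho_*\Sf{N}_2=\rho_*\Homs_{\Ss{\Rs_2}}\left(\rho^*\pi_1^*M,\Ss{\Rs_2}\right)=\Homs_{\Ss{\Rs_1}}\left(\pi_1^*M,\rho_*\Ss{\Rs_2}\right)=\Homs_{\Ss{\Rs_1}}\left(\pi_1^*M,\Ss{\Rs_1}\right)=\Sf{N}_1,
\end{equation*}
compatibly with the maps to $\Ss{\Rs_i}^r$; this is the computation the paper makes in Equation~\eqref{eq:cambioN} in the proof of Proposition~\ref{prop:specialtybehaviour}. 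With that substitution the rest of your argument for (2) (vanishing of $R^1\rho_*\Ss{\Rs_2}$ and of $R^1\rho_*\Sf{A}_2$, then reading off $\Sf{A}_1\subset\rho_*\Sf{A}_2$ with cokernel $R^1\rho_*\Sf{M}_2^{\smvee}$) is correct.
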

\proof
For $i=1,2$ consider the exact sequence 
$$0\to\Ss{\Rs_i}^r\to \Sf{M}_i\to \calA'_i\to 0,$$
whose first morphism is induced by the sections $(\phi_1,...,\phi_r)$. Dualizing we obtain 
\begin{equation}
\label{eq:otravez1}
0\to\Sf{N}_i\to\Ss{\Rs_i}^r\to \calA_i\to 0,
\end{equation}
where the second morphism is induced by the sections $(\psi^i_1,...,\psi^i_r)$. 

The sections $(\phi_1,...,\phi_r)$ also induce the exact sequence 
$$0\to \Ss{X}^r\to M\to\Sf{C}'\to 0,$$
and dualizing it we obtain
\begin{equation}
\label{eq:otravez2}
0\to N\to\Ss{X}^r\to\Sf{C}\to 0,
\end{equation}
where the second morphism is induced by $(\psi^0_1,...,\psi^0_r)$. 

Applying $R(\pi_i)_*$ to (\ref{eq:otravez1}), we obtain the exact sequence
\begin{equation}
\label{exseq:918}
0\to (\pi_i)_*\Sf{N}_i\to\Ss{X}^r\to (\pi_i)_*\Sf{A}_i\to R^1(\pi_i)_*\Sf{N}_i\to R^1(\pi_i)_*\Ss{X}^r\to 0.
\end{equation}
By Lemma~\ref{lema:dualM} and its proof the first morphism of the previous sequence (for $i=1,2$) coincides with the first morphism of 
sequence~(\ref{eq:otravez2}). This implies that the image of the second morphism of the previous sequence (for $i=1,2$) coincides with $\Sf{C}$
and that, under this identification the systems of sections $(\psi^i_i,...,\psi^i_r)$ coincide for $i=0,1,2$. This proves the inclusions
$\Sf{C}\subset (\pi_i)_*\Sf{A}_i$ for $i=1,2$ and the identification of the sections. Since $\Sf{A}_i$ is generated by $(\psi^i_1,...,\psi^i_r)$ as a $\Ss{\Rs_i}$-module and  $\Ss{\Rs_2}$ contains $\Ss{\Rs_1}$, the inclusion $(\pi_1)_*\Sf{A}_1\subset (\pi_2)_*\Sf{A}_2$ also holds. The equality
$$\dimc{\pi_i)_*\Sf{A}_i/\Sf{C}}=\dim_{\CC}(R^1(\pi_i)_*\Sf{M}^{\smvee}_i)-rp_g$$
follows from Exact Sequence~(\ref{exseq:918}). This shows the first assertion.

The proof of the second assertion follows similarly, by applying $R\rho_*$ to Sequence~(\ref{eq:otravez1}), and taking into account
the identification of the sections $(\psi^i_i,...,\psi^i_r)$ for $i=1,2$.
\endproof

For later use we need to compare the sets $\mathfrak{K}_{(\Sf{A}_1,(\psi^1_1,...,\psi^1_r))}$ and $\mathfrak{K}_{(\Sf{A}_2,(\psi^2_1,...,\psi^2_r))}$ (see Section~\ref{sec:valuative} for the corresponding definition).

\begin{proposition}
\label{prop:compcanord}
Consider the same situation than in the previous proposition, but allow non-Gorenstein normal singularities.
There exist a non-negative integer vector $(d_1,...,d_l)$ such that we have the equality
\begin{equation}
\label{eq:transorder}
\mathfrak{K}_{(\Sf{A}_2,(\psi^2_1,...,\psi^2_r))}=(d_1,...,d_{l})+\mathfrak{K}_{(\Sf{A}_1,(\psi^1_1,...,\psi^1_r))}.
\end{equation}
As a consequence we have also the equality
\begin{equation}
\label{eq:transcond}
cond(\mathfrak{K}_{(\Sf{A}_2,(\psi^2_1,...,\psi^2_r))})=(d_1,...,d_{l})+cond(\mathfrak{K}_{(\Sf{A}_1,(\psi^1_1,...,\psi^1_r))}).
\end{equation}
The vector is strictly positive if and only if the blowing up center of $\rho$ meets the support of $\Sf{A}_1$. 
\end{proposition}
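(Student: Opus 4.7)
The plan is to move everything to the common regular locus $U_0:=X\setminus\Sing(X)$, read off the translation vector from the ratio of trivializers of the two dualizing sheaves, and then compute the $d_j$ using the adjunction relation $\omega_{\Rs_2}=\rho^*\omega_{\Rs_1}(F)$ for the blowup.

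First, since each $\pi_i$ is an isomorphism over the regular locus of $X$, I would identify $U_i:=\Rs_i\setminus E_i$ with $U_0$, obtaining canonical identifications of $\omega_{\Rs_i}|_{U_i}$, of the restrictions $\Sf{A}_i|_{U_i}$, and, by Proposition~\ref{prop:dirressing}(1), of the restrictions of $(\psi^i_1,\dots,\psi^i_r)$ to $U_0$. Next, the image curve $C\subset X$ has a unique normalization $n:\tilde{C}\to C$ through which both $n_i:\tilde{C}\to\overline{C}_i$ factor, and $n_i^{-1}(E_i\cap\overline{C}_i)=n^{-1}(\Sing(X)\cap C)$ is independent of $i$. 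Hence both $\mathfrak{K}_{(\Sf{A}_i,(\psi^i_1,\dots,\psi^i_r))}$ live in the same $\ZZ^l$ with respect to the same valuations $\text{ord}_{\tilde{C}_j}$.

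Next I would carry out the key comparison. Picking trivializers $\Omega_i$ of $\omega_{\Rs_i}$ near $\overline{C}_i$, and writing any $r$-tuple of $2$-forms $\alpha_j\in H^0(U_0,\omega_{U_0})$ as $\alpha_j=f_j^i\,\Omega_i$, the definition of $\delta_i$ in Definition~\ref{def:canoset2} gives $\delta_i(\alpha)=\sum_j f_j^i\psi^i_j$ inside the total fraction ring of $\tilde{C}$. Since $f_j^2=f_j^1\cdot(\Omega_1/\Omega_2)$ and the sections agree on $U_0$, this yields $\delta_2(\alpha)=(\Omega_1/\Omega_2)\cdot\delta_1(\alpha)$, and setting $d_j:=\text{ord}_{\tilde{C}_j}(\Omega_1/\Omega_2)$ proves Equation~(\ref{eq:transorder}). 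Equation~(\ref{eq:transcond}) is then immediate, because translating a subset of $\ZZ^l$ by $d$ translates its conductor set, and hence its absolute minimum, by the same $d$.

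Finally I would identify the $d_j$ geometrically. Note that $p$ must lie on $E_1$, for otherwise $\pi_2$ would fail to be an isomorphism over the smooth locus of $X$; thus $\rho$ is the blowup of a smooth point in the smooth surface $\Rs_1$ with exceptional divisor $F$, and adjunction gives $\omega_{\Rs_2}=\rho^*\omega_{\Rs_1}(F)$. Consequently $\rho^*\Omega_1$, viewed as a section of $\omega_{\Rs_2}$, has a simple zero along $F$ and no other zeros near $\overline{C}_2$, so $\rho^*\Omega_1/\Omega_2$ is a local equation of $F$ near $n_2(p_j)$. Pulling back through $\tilde{C}\to\overline{C}_2\hookrightarrow\Rs_2$, its order at $p_j$ equals the intersection multiplicity $(\tilde{C}_j\cdot F)_{n_2(p_j)}$, which vanishes when $n_2(p_j)\notin F$ and is a strictly positive integer otherwise. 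Since $n_2(p_j)\in F$ iff $n_1(p_j)=p$, each $d_j\geq 0$, and the vector has a positive component if and only if some $p_j$ is a preimage of $p$ under $n_1$, equivalently $p\in\overline{C}_1=\text{Supp}(\Sf{A}_1)$. The main obstacle is really the bookkeeping in the first step, where one must keep straight the three sheaves of $2$-forms, the two embeddings of $\Sf{A}_i$ into the total fraction ring of $\tilde{C}$, and the agreement of the sections on $U_0$; once those identifications are fixed the computation of $d_j$ is a direct application of adjunction.
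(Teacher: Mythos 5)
Your proof is correct and follows essentially the same route as the paper: both identify the sections $(\psi^1_v)$ and $(\psi^2_v)$ inside $\bigoplus_j K(\tilde{C}_j,p_j)$ via Proposition~\ref{prop:dirressing} and then compare the two images of $H^0(U,\omega)$ branch by branch, the paper computing $h^2=x\,\rho^*h^1$ in the explicit chart $\rho(x,y)=(x,xy)$ where you instead invoke $\omega_{\Rs_2}=\rho^*\omega_{\Rs_1}(F)$ — the same Jacobian fact — to see that $\rho^*\Omega_1/\Omega_2$ is a local equation of $F$, giving $d_j=I_{p^2_j}(F,\overline{C}^2_j)$. The only cosmetic slip is writing $f_j^2=f_j^1\cdot(\Omega_1/\Omega_2)$ instead of $f_j^2=\rho^*f_j^1\cdot(\rho^*\Omega_1/\Omega_2)$, which is harmless since the orders are taken after restriction to $\tilde{C}_j$.
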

\proof
Denote by $E^i$ the exceptional divisor of $\pi_i$. Let $C$ be the support of $\Sf{C}$, let $\overline{C}^i$ be the support of $\Sf{A}_i$. We have a birational morphisms $\rho|_{\overline{C}^2}:\overline{C}^2\to\overline{C}^1$ and $\pi_1|_{\overline{C}^1}: \overline{C}^1\to C$. Consider the normalization $n:\tilde{C}\to \overline{C}^2$. Let $p_j$ for $j=1,...,l$ be the points of $\tilde{C}$ which map via $n$ to a point of the exceptional divisor. Let $(\tilde{C}_j,p_j)$ be the 
germ at $p_j$. Denote $p^2_j:=n(p_j)$ and $p^1_j:=\rho(p^2_j)$ for $j=1,...,l$.

The proposition is 
trivial if the center of the blowing up $\rho$ does not coincide with $p^1_j$ for any $j$. By notational convenience
we assume that $p^1_1$ is the blowing-up center. We denote by $E^2_1$ the exceptional divisor of $\rho$. Choose local coordinates
$(x^i_j,y^i_j)$ of $\Rs^i$ around each point $p^i_j$. The choice is made so that if two points $p^i_j$ coincide for different $j$, then the corresponding coordinates are also 
the same, and so that, if $\rho(p^2_j)=p^1_1$, then $\rho$ expresses in local coordinates as $\rho(x^2_j,y^2_j)=(x^2_j,x^2_jy^2_j)$.

Let $\beta_1,...,\beta_n$ be a system of generators of $H^0(U,\omega_{\Rs})$ as a $\Ss{X}$-module. The differential form $\beta_k$ expresses in each of
the local chart around $p^i_j$ as $\beta=h_{k,j}^idx^i_j\wedge dy^i_j$, where $h^i_{k,j}$ is a germ of meromorphic function at $p^i_j$. 
If $\rho(p^2_j)=p^1_1$ then $h^2_{k,j}=x^2_j\rho^*h^1_{k,1}$. If $\rho(p^2_j)=p^1_m$ for $m\neq 1$ then $h^2_{k,j}=\rho^*h^1_{k,m}$. 

In order to compare $\mathfrak{K}_{(\Sf{A}_1,(\psi^1_1,...,\psi^1_r))}$ and $\mathfrak{K}_{(\Sf{A}_2,(\psi^2_1,...,\psi^2_r))}$ we compare the images
of $H^0(U,\omega_{\Rs})$ in the ring $\bigoplus_{i=1}^lK(\tilde{C}_i,p_i)$ according with Definition~\ref{def:canoset2}; we denote each of the images by
$Im^i$ for $i=1,2$. Since, by 
Proposition~\ref{prop:dirressing} the sections $(\psi^1_1,...,\psi^1_r)$ and $(\psi^2_1,...,\psi^2_r)$ are identified, each of them define the 
same $l$-uple in $\bigoplus_{i=1}^lK(\tilde{C}_i,p_i)$, which we denote by $(\psi_v|_{\tilde{C}_1},...,\psi_v|_{\tilde{C}_1})$ for $v\in\{1,...,r\}$. 
Then $Im^i$ is the $\Ss{X}$-module spanned by 
$$\left \{(h^i_{1}|_{\tilde{C}_1}\psi_v|_{\tilde{C}_1},...,h^i_{l}|_{\tilde{C}_l}\psi_v|_{\tilde{C}_l}):v\in\{1,...,r\} \right \}.$$

Enumerate the points $p^2_1,...,p^2_l$ so that those whose image by $\rho$ equals $p^1_1$ are $p^2_1,...,p^2_{l_1}$ for $l_1\leq l$. We have the 
equality 
$$Im^2=(x^2_1|_{\overline{C}^2_1},...,x^2_{l_1}|_{\overline{C}^2_{l_1}},1,...,1)Im^1.$$
As a consequence, if for any $w\leq l_1$ we define the intersection multiplicity $d_w:=I_{p^2_w}(E^2_1,\overline{C}^2_w)$ we have the equality
$$\mathfrak{K}_{(\Sf{A}_2,(\psi^2_1,...,\psi^2_r))}=(d_1,...,d_{l_1},0,...,0)+\mathfrak{K}_{(\Sf{A}_1,(\psi^1_1,...,\psi^1_r))}.$$
\endproof

\begin{proposition}
\label{prop:invressing}
 Let $\pi:\Rs\to X$ be a resolution of a normal Stein surface with Gorenstein singularities, which is an isomorphism over the regular locus of $X$.
Let $(\Sf{A},(\psi_1,...,\psi_r))$ be a pair formed by a  rank 1 generically reduced  $1$-dimensional Cohen-Macaulay $\Ss{\Rs}$-module, 
whose support meets $E$ in finitely many points, and a set of $r$ global sections spanning $\Sf{A}$ as $\Ss{\Rs}$-module and satisfying the Containment Condition. Let $\Sf{C}$ be the $\Ss{X}$-module spanned by $\psi_1,...,\psi_r$. Then $\Sf{C}$ is  a  rank 1 generically reduced  $1$-dimensional Cohen-Macaulay $\Ss{X}$-module. Let $(\Sf{M},(\phi_1,...,\phi_r))$ and $(M,(\phi'_1,...,\phi'_r))$ be the results of applying the 
correspondences of Theorems~\ref{th:corres} and~\ref{th:corrsing} at $\Rs$ and at the $X$ to $(\Sf{A},(\psi_1,...,\psi_r))$ and $(\Sf{C},(\psi_1,...,\psi_r))$ respectively.
Then we have the equalities $\pi_*\Sf{M}=M$ and $(\phi_1,...,\phi_r)=(\phi'_1,...,\phi'_r)$.
\end{proposition}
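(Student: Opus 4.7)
The plan is to compare the two defining exact sequences, one on $\Rs$ and one on $X$, by applying $\pi_*$ and invoking Lemma~\ref{lema:dualM}. Starting from $(\Sf{A},(\psi_1,\dots,\psi_r))$, the inverse correspondence at $\Rs$ runs through the sequence
\begin{equation*}
0 \to \Sf{N} \to \Ss{\Rs}^r \to \Sf{A} \to 0
\end{equation*}
whose surjection is $(\psi_1,\dots,\psi_r)$; its $\Ss{\Rs}$-dual
\begin{equation*}
0 \to \Ss{\Rs}^r \to \Sf{M} \to \Sf{A}' \to 0
\end{equation*}
realizes $\Sf{M}=\Sf{N}^{\smvee}$ together with the sections $(\phi_1,\dots,\phi_r)$. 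Since $(\Sf{A},(\psi_1,\dots,\psi_r))$ satisfies the Containment Condition, Theorem~\ref{th:corres} guarantees that $\Sf{M}$ is full, so by Proposition~\ref{fullcondiciones} the module $M':=\pi_*\Sf{M}$ is reflexive, and by Lemma~\ref{lema:dualM} we have $\pi_*\Sf{N}=\pi_*(\Sf{M}^{\smvee})=(M')^{\smvee}$.

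Next I would push forward the first sequence. Using $\pi_*\Ss{\Rs}=\Ss{X}$ and the fact that, by its very definition, the image of $\Ss{X}^r$ in $\pi_*\Sf{A}$ is exactly $\Sf{C}$, the initial segment of the long exact sequence collapses to
\begin{equation*}
0 \to (M')^{\smvee} \to \Ss{X}^r \to \Sf{C} \to 0,
\end{equation*}
which is precisely the sequence used in the proof of Theorem~\ref{th:corrsing} to construct the inverse correspondence at $X$ from $(\Sf{C},(\psi_1,\dots,\psi_r))$. Hence the reflexive module it produces is $M=((M')^{\smvee})^{\smvee}=M'=\pi_*\Sf{M}$ (using reflexivity of $M'$), giving the first asserted equality. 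The sections $(\phi'_1,\dots,\phi'_r)$ are read off by dualizing back:
\begin{equation*}
0 \to \Ss{X}^r \to M \to \Exts^1_{\Ss{X}}(\Sf{C},\Ss{X}) \to 0;
\end{equation*}
while applying $\pi_*$ to the dual sequence on $\Rs$ gives $0\to\Ss{X}^r\to M'\to\pi_*\Sf{A}'\to\cdots$, whose first arrow is $\pi_*(\phi_1,\dots,\phi_r)$. Both inclusions $\Ss{X}^r\hookrightarrow M=M'$ are obtained as $\Ss{X}$-duals of the same surjection $\Ss{X}^r\twoheadrightarrow\Sf{C}$ (one directly, one via the identification $\pi_*\Sf{N}=(M')^{\smvee}$), so by naturality they coincide, proving $(\phi'_1,\dots,\phi'_r)=\pi_*(\phi_1,\dots,\phi_r)$.

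Before running this argument I would verify the implicit claim that $\Sf{C}$ is a rank-one generically reduced Cohen-Macaulay $\Ss{X}$-module of dimension one, so that Theorem~\ref{th:corrsing} indeed applies. Generic reducedness is immediate: $\pi$ is an isomorphism off the finite singular locus, and at a generic smooth point of the support one has $\Sf{C}_y=\Sf{A}_y\cong\Ss{C,y}$ since the $\psi_i$ already generate $\Sf{A}$. The Cohen-Macaulay property follows from the presentation $0\to(M')^{\smvee}\to\Ss{X}^r\to\Sf{C}\to 0$ together with the fact that $(M')^{\smvee}$ is reflexive, hence has depth $\geq 2$ at the singular points of $X$, so by the depth lemma $\Sf{C}$ has depth $\geq 1$ everywhere on its $1$-dimensional support. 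The main technical point, rather than a genuine obstacle, is the bookkeeping that identifies the two inclusions $\Ss{X}^r\hookrightarrow M$ via the functoriality of the double-dual, which is a diagram-chase analogous to the ones in Theorems~\ref{th:corrsing} and \ref{th:corres}.
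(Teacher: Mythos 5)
Your proof is correct and follows essentially the same route as the paper's: dualize to get $0\to\Sf{N}\to\Ss{\Rs}^r\to\Sf{A}\to 0$ with $\Sf{N}=\Sf{M}^{\smvee}$, push forward by $\pi_*$, identify the image of $\Ss{X}^r\to\pi_*\Sf{A}$ with $\Sf{C}$, and conclude via Lemma~\ref{lema:dualM} that $M$ and $\pi_*\Sf{M}$ share the same dual $\pi_*\Sf{N}$ and hence coincide by reflexivity. Your extra verification that $\Sf{C}$ is a rank~1 generically reduced $1$-dimensional Cohen--Macaulay module (via the depth lemma applied to $0\to(M')^{\smvee}\to\Ss{X}^r\to\Sf{C}\to 0$) is a welcome addition that the paper's proof leaves implicit.
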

\proof
According with the proof of Theorem~\ref{th:corres} and its proof the module $\Sf{N}$ in the sequence 
$0\to\Sf{N}\to\Ss{\Rs}^r\to\Sf{A}\to 0$
is the dual of $\Sf{M}$. 
Pushing down by $\pi_*$ we obtain
$$0\to\pi_*\Sf{N}\to\Ss{X}^r\to\pi_*\Sf{A}\to R^1\pi_*\Sf{N}\to R^1\pi_*\Ss{\Rs}^r\to 0,$$
and the image of the map $\Ss{X}^r\to\pi_*\Sf{A}$ is the $\Ss{X}$-module spanned by $\psi_1,...,\psi_r$, that is, the module $\Sf{C}$.
So we obtain the sequence 
$$0\to\pi_*\Sf{N}\to\Ss{X}^r\to\Sf{C}\to 0.$$
According with Theorem~\ref{th:corrsing} and its proof the module $\pi_*\Sf{N}$ is isomorphic to the dual of $M$. By Lemma~\ref{lema:dualM} the module $\pi_*\Sf{N}$ is isomorphic to the dual of $\pi_*\Sf{M}$. This concludes the proof of the 
equality $\pi_*\Sf{M}=M$. Under the equality, the coincidence of the sections is straightforward.
\endproof

\section{The minimal adapted resolution}

In this section we show that, given a Stein normal surface $X$ and a reflexive $\Ss{X}$-module, there is a minimal resolution for which the associated full sheaf is generated by global 
sections. This resolution will be crucial later. 

\begin{proposition}\label{prop:minadap}

Let $X$ be a Stein normal surface. 
If $M$ is a reflexive $\Ss{X}$-module, then there exists a unique minimal resolution $\rho \colon \Rs' \to X$ such that the associated full 
$\Ss{\Rs}$-module $\Sf{M}:= \left ( \rho^* M \right)^{\smvee \smvee}$ is generated by global sections.
\end{proposition}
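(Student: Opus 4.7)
The approach is to reformulate the adapted condition intrinsically, reducing both existence and uniqueness to a canonical blow-up construction controlled by the degeneracy module machinery. First, I would show that for any resolution $\pi\colon\Rs\to X$, the full sheaf $\Sf{M}_\pi=(\pi^*M)^{\smvee\smvee}$ is generated by its global sections if and only if $\pi^*M/\mathrm{Torsion}$ is locally free on $\Rs$ (in which case the two coincide). The key observation is that on the smooth surface $\Rs$ the canonical map $A\to A^{\smvee\smvee}$ has image exactly $A/\mathrm{Torsion}$, whose surjection onto $A^{\smvee\smvee}$ is equivalent to local freeness; and since $X$ is Stein, $M$ is globally generated, so $\pi^*M$ is stalkwise generated by pull-backs of global sections, and global generation of $\Sf{M}_\pi$ becomes equivalent to surjectivity of $\pi^*M\twoheadrightarrow\Sf{M}_\pi$, hence to the desired local freeness.

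For existence, I would start from any resolution $\pi_0$ and iteratively blow up the finite set $S_k\subset\Rs_k$ where $\pi_k^*M/\mathrm{Torsion}$ fails to be locally free. To show termination I fix $r=\mathrm{rank}(M)$ generic sections $(\phi_1,\dots,\phi_r)$ of $M$ on $X$; by Theorem~\ref{th:corres} they induce at each stage a rank-$1$ generically reduced Cohen--Macaulay degeneracy module $\Sf{A}_k$ on $\Rs_k$. Iterating Proposition~\ref{prop:dirressing}(1) yields an ascending chain
\[
\Sf{C}\;\subseteq\;\pi_{0*}\Sf{A}_0\;\subseteq\;\pi_{1*}\Sf{A}_1\;\subseteq\;\cdots\;\subseteq\;n_*\Ss{\tilde{C}},
\]
where $\Sf{C}$ is the degeneracy module at $X$ produced by Theorem~\ref{th:corrsing} and $n\colon\tilde{C}\to C$ is the normalization of its support (Proposition~\ref{prop:genredCM}(2)). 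Since $n_*\Ss{\tilde{C}}/\Sf{C}$ is supported on the finite singular locus of $C$ and thus has finite length, the chain stabilizes at some stage $k_0$. The main obstacle, and the step I expect to be most delicate, is to translate stabilization into vanishing of $S_{k_0}$: via Proposition~\ref{prop:dirressing}(2) I would identify the increment of the chain under a blow-up with $R^1(\rho_{k_0+1})_*\Sf{M}_{k_0+1}^{\smvee}$, and argue by a local Serre-duality computation that any remaining point $p\in S_{k_0}$ would force this cohomology to be non-zero, contradicting stabilization.

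For uniqueness, the reformulation makes the adapted condition purely local in $\pi^*M$. Hence any adapted resolution $\pi\colon\Rs\to X$ must dominate every $\pi_k$ of the canonical construction: inductively, $\Rs$ dominates $\Rs_0$ (the minimal resolution of $X$), and if the induced contraction $\Rs\to\Rs_k$ failed to blow up some $p\in S_k$, it would be a local isomorphism there, so $\pi^*M/\mathrm{Torsion}$ would retain its non-local-freeness at the preimage of $p$, contradicting adaptedness of $\pi$. Therefore $\Rs$ dominates $\pi_{k_0}$, which proves that $\pi_{k_0}$ is the unique minimum and is independent of the initial choice of $\pi_0$.
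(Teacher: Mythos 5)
Your reformulation of the adapted condition (global generation of $\Sf{M}_\pi$ is equivalent to local freeness of $\pi^*M/\mathrm{Torsion}$, using that $M$ is globally generated on the Stein space $X$) is correct, and your uniqueness argument by domination is sound and in fact more explicit than what the paper writes. The problem is the termination step of the existence argument.

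You reduce termination to the claim that if $\Sf{M}_{k}$ fails to be globally generated at $p\in S_{k}$, then the blow-up $\rho$ at $p$ satisfies $R^1\rho_*\Sf{M}_{k+1}^{\smvee}\neq 0$, so that the chain $\Sf{C}\subseteq(\pi_0)_*\Sf{A}_0\subseteq(\pi_1)_*\Sf{A}_1\subseteq\cdots$ must strictly grow; you defer this to a ``local Serre-duality computation.'' This claim is false. Proposition~\ref{prop:specialtybehaviour} only gives the implication in the other direction (global generation at $p$ forces equality of specialty defects), and the converse fails badly: for any \emph{special} reflexive module $M$ the specialty defect is $0$ at \emph{every} resolution (Theorem~\ref{th:characspecial}), so by Proposition~\ref{prop:dirressing} the chain is constant, $\Sf{C}=(\pi_k)_*\Sf{A}_k$ for all $k$ and $R^1\rho_*\Sf{M}_{k+1}^{\smvee}=0$ at every step; yet the minimal adapted resolution of such a module can lie arbitrarily far above the minimal resolution (this is exactly the situation exploited in Proposition~\ref{prop:unboundedfam}, where $d$ further blow-ups are needed). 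So your chain stabilizes immediately while $S_k\neq\emptyset$, and stabilization tells you nothing about termination.

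The paper terminates the process with a different monotone quantity: the canonical vector $(d_1(\overline{C}),\dots,d_l(\overline{C}))$ of the support of the degeneracy module. Since the points of $S_k$ lie on $\overline{C}$, pulling back a $2$-form under the blow-up at such a point raises its order along every branch of $\tilde{C}$ through that point, so the canonical vector strictly increases at each step; on the other hand it is bounded above by $(0,\dots,0)$ because the sheaf at each stage is full (Proposition~\ref{prop:consecuenciaspracticas}\,(4)). That combination of strict increase and an a priori upper bound is what your argument is missing, and it cannot be replaced by growth of the degeneracy modules themselves.
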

\proof
Let $M$ be a reflexive $\Ss{X}$-module, $\pi \colon \Rs \to X$ be the minimal resolution with exceptional divisor $E$ and denote by $\Sf{M}= \left ( \pi^* M \right)^{\smvee \smvee}$. If $\Sf{M}$ is generated by global sections, then we are done.

If $\Sf{M}$ is not generated by global sections, then there exists a finite set of points $S=\{p_1, \dots, p_n\} \subset E$ where $\Sf{M}$ fails to be generated by global sections. 

Assume that the rank of $\Sf{M}$ is $r$. Take $r$ generic sections of $\Sf{M}$ and consider the exact sequence given by the sections 
\begin{equation}
0 \to \Ss{\Rs}^r \to \Sf{M} \to \Sf{A}' \to 0.
\end{equation}
By the degeneracy module definition (Definition~\ref{def:degeneracymodule}) we have the inclusion $S \subset \text{Supp}(\Sf{A}')$.

Let $\overline{C}$ be the support of $\Sf{A}'$ and $(d_1(\overline{C}),...,d_r(\overline{C}))$ be the associated canonical vector (see
Definition~\ref{def:tranlation}). By Proposition~\ref{prop:consecuenciaspracticas},~(4) we have the inequality 
\begin{equation}
\label{eq:recordatorio}
(d_1(\overline{C}),...,d_r(\overline{C})) \leq (0,...,0). 
\end{equation}

Denote by $\sigma_{S} \colon \Rs' \to X$ the blow up at the set of centers $S$. Therefore we have the following commutative diagram
\begin{equation*}
\begin{tikzpicture}
  \matrix (m)[matrix of math nodes,
    nodes in empty cells,text height=1.5ex, text depth=0.25ex,
    column sep=2.5em,row sep=2em]{
  \Rs' & \Rs \\
  & X \\};
\draw[-stealth] (m-1-1) edge node [above] {$\sigma_{S}$} (m-1-2);
\draw[-stealth] (m-1-1) edge node [right] {$\pi'$} (m-2-2);
\draw[-stealth] (m-1-2) edge node [right] {$\pi$} (m-2-2);
\end{tikzpicture}
\end{equation*}

Denote by $\Sf{M}'$ the full $\Ss{\tilde{X}'}$-module associated to $M$. 
If $\Sf{M}'$ is generated by global sections, then we are done, otherwise we repeat the procedure. 

In order to prove that this process eventually ends we use $(d_1(\overline{C}),...,d_r(\overline{C}))$ as a resolution invariant. 

We take the same generic global sections for $\Sf{M}'$ and $\Sf{M}$ (in both cases the set of global sections is $M$). The support $\overline{C}'$ 
of the degeneracy module of $\Sf{M}'$ for this sections is the strict transform of $\overline{C}$ by $\sigma_S$. The normalization 
$\tilde{C}$ of $\overline{C}$ and $\overline{C}'$ is the same. Let $\{(\tilde{C}_i,p_i)\}_{i=1}^l$ be the branches of $\tilde{C}$ considered at the 
beginning of Section~\ref{sec:valuative}. Let $\beta$ be meromorphic  differential form in $\tilde{X}$. By the behavior of the poles of the pullback of a meromorphic differential form by
a blow up at a point in a smooth surface, we obtain that the order of $\sigma_S^*\beta$ at the different branches $(\tilde{C}_i,p_i)$ for $i=1,...,l$ is
greater or equal that the order of $\beta$. Moreover the order is strictly greater if the 
blowing up center meets the component that we are dealing with. This implies the strict inequality
$$(d_1(\overline{C}),...,d_r(\overline{C}))< (d_1(\overline{C}'),...,d_r(\overline{C}')).$$

This together with Inequality~(\ref{eq:recordatorio}) shows that the process terminates after finitely many steps.
\endproof

\begin{definition}
\label{def:minadap}
Let $M$ be a reflexive $\Ss{X}$-module. The minimal resolution $\rho:\Rs\to X$ where the associated full $\Ss{\Rs}$-module is generated by global 
sections is called the {\em minimal adapted resolution} to $M$.  
\end{definition}

\begin{remark}
\label{rem:minadapgor}
Let $X$ be a Stein surface with Gorenstein singularities. Let $M$ be a reflexive $\Ss{X}$-module. Then the minimal adapted resolution to $M$ is small with respect 
to the canonical form.
\end{remark}
\proof
This is a consequence of Inequality~(\ref{eq:recordatorio}) and Proposition~\ref{prop:consecuenciaspracticas},~(4).
\endproof

At the minimal adapted resolution the degeneracy module of the full sheaf for a generic set of sections has special properties, and Lemma~1.2 
of~\cite{AV} holds as stated there. 

\begin{lemma}
\label{lem:ArtinVerdierminadap}
Let $X$ be a Stein normal surface. Let $M$ be a reflexive $\Ss{X}$-module of rank $r$. Let $\rho:\Rs\to X$ be a resolution which dominates the minimal adapted resolution to $M$. 
The degeneracy module 
$\Sf{A'}$ of a set of $r$ generic global sections is isomorphic to $\Ss{D}$, where $D\subset\Rs$ is a smooth curve meeting the exceptional divisor 
transversely at its smooth locus. Moreover, by changing the sections the meeting points of $D$ with the exceptional divisor also change.
\end{lemma}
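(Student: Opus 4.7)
The plan is to apply the Bertini-type Proposition~\ref{prop:generalizationAV} in the special situation where the finite exceptional set $S$ appearing there is empty, and then use a parametric argument for the variability of meeting points.

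First I would verify that the set $S:=\Sing(\Rs)\cup\{y\in\Rs:\Sf{M}\text{ is not generated by global sections at }y\}$ appearing in Proposition~\ref{prop:generalizationAV} is empty. Indeed, $\Rs$ is smooth because it is a resolution, so $\Sing(\Rs)=\emptyset$; and since $\rho$ dominates the minimal adapted resolution to $M$, the full sheaf $\Sf{M}=(\rho^*M)^{\smvee\smvee}$ is globally generated at every point of $\Rs$ (the global sections generating the full sheaf on the minimal adapted resolution continue to generate after further blow-ups, since $\Sf{M}$ is locally free and pull-back of a generating set stays generating at every point lying over a generation point).

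With $S=\emptyset$, Proposition~\ref{prop:generalizationAV} applied to $\Sf{M}$ and a generic choice of $r$ sections $(\phi_1,\dots,\phi_r)$ gives two conclusions. First, for every $y\in\Rs$ one has $\Sf{A}'_y\cong\Ss{C,y}$ where $C$ is the support of $\Sf{A}'$ taken with reduced structure; hence the canonical morphism $\Sf{A}'\to\Ss{C}$ is an isomorphism on all stalks, so $\Sf{A}'\cong\Ss{C}$. Second, for sufficiently generic sections the curve $C$ is smooth everywhere and meets $E$ transversely at smooth points of both $C$ and $E$. Setting $D:=C$ yields the first assertion of the lemma.

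For the last sentence, fix an arbitrary point $p\in E$; I would show there exist generic sections whose degeneracy curve passes through $p$. Using the parametric setup in the proof of Proposition~\ref{prop:generalizationAV}, write $(\phi_1,\dots,\phi_r)=(\psi_1,\dots,\psi_k)B$ where $\psi_1,\dots,\psi_k$ are global generators of $\Sf{M}$ and $B\in\mathrm{Mat}(k\times r,\CC)$. Global generation of $\Sf{M}$ at $p$ makes the evaluation $H^0(\Rs,\Sf{M})\to\Sf{M}/\mathfrak{m}_p\Sf{M}$ surjective, so choosing a local frame of $\Sf{M}$ at $p$ the evaluation matrix $A(p)$ has full rank $r$; hence the condition $\det(A(p)B)=0$ defines a proper non-empty hypersurface in $\mathrm{Mat}(k\times r,\CC)$, which meets the open dense generic locus. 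Any such $B$ produces a generic set of sections whose associated $D$ contains $p$. Letting $p$ vary proves that the meeting points of $D$ with $E$ are not fixed as the sections vary.

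The main obstacle I foresee is the first step, namely ensuring that the full sheaf $\Sf{M}$ remains globally generated after dominating the minimal adapted resolution; once this is granted the remainder is a direct application of Proposition~\ref{prop:generalizationAV} combined with an elementary parametric transversality argument.
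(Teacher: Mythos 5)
Your proposal is correct and follows essentially the same route as the paper, which proves the lemma precisely as a simplification of Proposition~\ref{prop:generalizationAV} in the case where the exceptional set $S$ is empty. The only point you state a bit quickly is the identification of the full sheaf on $\Rs$ with the pullback $\sigma^*\Sf{M}_0$ of the (globally generated) full sheaf on the minimal adapted resolution, but this follows from the surjectivity of $\pi^*M\to\Sf{M}_0$ and reflexivity, so the global generation you need does hold.
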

\proof
It is a simplification of the proof of Proposition~\ref{prop:generalizationAV}.
\endproof

Our aim now is to characterize numerically the minimal adapted resolution. 

\begin{proposition}
\label{prop:minadapnumchar}
Let $X$ be a Stein normal surface. Let $M$ be a reflexive $\Ss{X}$-module of rank $r$. Let $(\phi_1,...,\phi_r)$ be $r$ generic sections. 
Let $\pi:\Rs\to X$ be a resolution, denote by $\Sf{M}$ the full $\Ss{\Rs}$-module associated with $M$. Let $(\Sf{A},(\psi_1,...,\psi_r))$ be the 
pair associated with $(\Sf{M},(\phi_1,...,\phi_r))$ under Theorem~\ref{th:corres}. Genericity of the sections imply that 
the associated canonical set of orders $\mathfrak{K}_{(\Sf{A},(\psi_1,...,\psi_r))}$ (see Definition~\ref{def:canoset2}), and its minimal conductor 
$cond(\mathfrak{K}_{(\Sf{A},(\psi_1,...,\psi_r))})$ are independent of the chosen sections. Then the following properties hold:
\begin{enumerate}
 \item At the minimal adapted resolution we have the equality $cond(\mathfrak{K}_{(\Sf{A},(\psi_1,...,\psi_r))})=(0,...,0)$.
 \item At any resolution we have the inequality $cond(\mathfrak{K}_{(\Sf{A},(\psi_1,...,\psi_r))})\leq (0,...,0)$.
 \item A resolution dominates the minimal adapted resolution if and only if we have the equality 
 $$cond(\mathfrak{K}_{(\Sf{A},(\psi_1,...,\psi_r))})=(0,...,0).$$
\end{enumerate}
\end{proposition}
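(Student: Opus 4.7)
My plan is to prove the three assertions in the order (2), (3)($\Leftarrow$), and then (1) together with (3)($\Rightarrow$). The common thread is that Proposition~\ref{prop:compcanord} gives tight control on how $cond(\mathfrak{K})$ changes under blow-ups, while the minimal adapted resolution $\rho_0$ is the natural "ceiling" for the conductor value.

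First, for (2), given an arbitrary resolution $\pi$, I would take a common dominating resolution $\pi'$ of $\pi$ and $\rho_0$. Since $\pi'$ dominates $\rho_0$, the full sheaf $\Sf{M}_{\pi'}$ is generated by global sections (pullback preserves being generated), so Lemma~\ref{lem:ArtinVerdierminadap} gives that for generic sections the degeneracy support at $\pi'$ is smooth and transverse to the exceptional divisor, whence $\mathfrak{A}_{\pi'}=\NN^l$. Fullness of $\Sf{M}_{\pi'}$ together with Theorem~\ref{th:corres} gives the Containment Condition, which by Proposition~\ref{prop:contval} forces the Valuative Condition $\NN^l\subset\mathfrak{K}_{\pi'}$; by Remark~\ref{rem:valuativeparticular}(2) this is equivalent to $cond(\mathfrak{K}_{\pi'})\leq(0,\dots,0)$. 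Iterating Proposition~\ref{prop:compcanord} along the chain of blow-ups from $\pi$ to $\pi'$ yields $cond(\mathfrak{K}_{\pi'})=(d_1,\dots,d_l)+cond(\mathfrak{K}_\pi)$ with a non-negative vector, so $cond(\mathfrak{K}_\pi)\leq cond(\mathfrak{K}_{\pi'})\leq 0$.

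For (3)($\Leftarrow$) I would argue the contrapositive. If $\pi$ does not dominate $\rho_0$ then by minimality in Proposition~\ref{prop:minadap} the full sheaf $\Sf{M}_\pi$ is not generated by global sections, so there is a bad point $p$, which lies on the support of $\Sf{A}_\pi$ (as in the proof of Proposition~\ref{prop:minadap}). Blowing $p$ up yields $\pi''$ with $cond(\mathfrak{K}_{\pi''})=(d_1,\dots,d_l)+cond(\mathfrak{K}_\pi)$ and some $d_j\geq 1$ by Proposition~\ref{prop:compcanord}, since the center meets the support. Combining with (2) applied at $\pi''$ gives $cond(\mathfrak{K}_\pi)_j\leq -1$, so $cond(\mathfrak{K}_\pi)\neq(0,\dots,0)$.

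For (1) and (3)($\Rightarrow$) together, I would note that if $\pi=\rho_0\circ\sigma$ dominates $\rho_0$, then for generic sections the intersection points of the smooth transverse support $\overline{C}_0$ with $E$ vary generically on $E$ and hence avoid the finitely many fixed blow-up centers producing $\sigma$. Iterating Proposition~\ref{prop:compcanord} along $\sigma$ then gives $cond(\mathfrak{K}_\pi)=cond(\mathfrak{K}_{\rho_0})$, so both statements reduce to the single claim $cond(\mathfrak{K}_{\rho_0})=(0,\dots,0)$; given (2), the remaining content is the reverse inequality $cond(\mathfrak{K}_{\rho_0})\geq(0,\dots,0)$. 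The hard part is precisely this positivity. My strategy is to assume some component were strictly negative and derive a contradiction with the minimality of $\rho_0$: using Remark~\ref{rem:valuative} and the decomposition of $\mathfrak{K}_{\rho_0}$ in terms of the canonical vector $(d_1(\overline{C}_0),\dots,d_l(\overline{C}_0))$ and the set of orders $\mathfrak{C}$, strict negativity of some $cond_j$ would, together with $\mathfrak{A}_{\rho_0}=\NN^l$ and the non-negativity of $cond(\mathfrak{C})$, force a strictly negative $d_i(\overline{C}_0)$; I would then argue that this allows blowing down the last $-1$-curve of $\rho_0$ hit by the $j$-th branch while keeping $\Sf{M}$ generated by global sections, contradicting the minimality of $\rho_0$. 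The cleanest realization of this idea is the Gorenstein case, where the addendum to Proposition~\ref{prop:consecuenciaspracticas}(4) directly forces $d_i(\overline{C}_0)=0$ (the curvette cannot meet a component on which the Gorenstein form has a zero), giving the conclusion with no extra work; in the general Stein normal setting, the obstacle is making the blowing-down argument precise in terms of the order function of meromorphic $2$-forms in $H^0(U,\omega_{\Rs_0})$.
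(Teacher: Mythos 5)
Your treatment of assertion (2), of the implication ``$cond=0\Rightarrow$ dominates'' in (3), and your reduction of (1) and of the remaining implication of (3) to the single inequality $cond(\mathfrak{K}_{(\Sf{A},(\psi_1,...,\psi_r))})\geq (0,...,0)$ at the minimal adapted resolution are all sound, and in fact organized in the reverse order from the paper (which proves (1) first and obtains (2) and (3) as corollaries of (1) and Proposition~\ref{prop:compcanord}). The genuine gap is precisely that reverse inequality, which you leave open, and the strategy you sketch for it does not work. First, Proposition~\ref{prop:consecuenciaspracticas}~(4) only forces $d_i(\overline{C})\leq 0$ for a full sheaf, not $d_i(\overline{C})=0$, so the Gorenstein case does not follow ``with no extra work.'' Second, and more seriously, deducing $d_j(\overline{C})<0$ from $cond(\mathfrak{K})_j<0$ and then contradicting minimality cannot succeed, because $d_j(\overline{C})<0$ genuinely occurs at minimal adapted resolutions of non-special modules: for the cone over a plane cubic, a rank-one full sheaf $\Ss{\Rs}(D)$ with $D$ a generic curvette of degree $3$ on the elliptic exceptional curve is already globally generated on the minimal resolution, yet $d_i(\overline{C})=-1$ for each branch; there $cond(\mathfrak{C})=(1,1,1)$ exactly compensates, giving $cond(\mathfrak{K})=0$. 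So the content of (1) is the equality $cond(\mathfrak{C})+(d_1,...,d_l)=0$, not the vanishing of the canonical vector, and no blowing-down of a $(-1)$-curve is available in general (the minimal adapted resolution may be the minimal resolution itself).

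The paper closes this step by a Chern-class argument you are missing. Assume some coordinate of $cond(\mathfrak{K}_{(\Sf{A}_1,(\psi_1,...,\psi_r))})$ is strictly negative at the minimal adapted resolution $\pi_1$, say the first, and let $\rho:\Rs_2\to\Rs_1$ be the blow-up at the corresponding point $p_1\in\overline{C}^1\cap E_1$ with exceptional curve $F$. By Proposition~\ref{prop:compcanord} the conductor only increases by $(1,0,...,0)$, so it is still $\leq (0,...,0)$; hence by Proposition~\ref{prop:consecuenciaspracticas}~(1) the pair $(\Ss{\overline{C}^2},(\psi_1,...,\psi_r))$ produces a \emph{full} $\Ss{\Rs_2}$-module $\Sf{M}_2$, and Proposition~\ref{prop:invressing} identifies $(\pi_2)_*\Sf{M}_2$ with $M$ together with the original sections, so $\Sf{M}_2$ is the full sheaf of $M$ on $\Rs_2$ and its degeneracy locus for $(\phi_1,...,\phi_r)$ is $\overline{C}^2$, with $\overline{C}^2\cdot F=1$. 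On the other hand, Lemma~\ref{lem:ArtinVerdierminadap} lets one perturb the sections so that the degeneracy locus on $\Rs_1$ misses $p_1$ while keeping all intersection numbers with the $E_{1,j}$; its strict transform is then a second representative of $c_1(\Sf{M}_2)$ with intersection number $0$ with $F$. The two representatives of the same first Chern class intersect the cycle $F$ differently, a contradiction. Without this (or an equivalent) argument, assertion (1) --- and hence the forward implication of (3) --- is not established.
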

\proof
Let $\pi:\Rs_1\to X$ be the minimal adapted resolution to $M$ and $\Sf{M}_1$ the full $\Ss{\Rs_1}$-module associated with $M$. Let $E_1$ be the exceptional divisor. Consider the decomposition in irreducible components
$E_1=\cup_{j=1}^{m}E_{1,j}$.

Let $(\Sf{A}_1,(\psi_1,...,\psi_r))$ be the pair associated with $(\Sf{M}_1,(\phi_1,...,\phi_r))$ by Theorem~\ref{th:corres}.
Let $\overline{C}^1$ be the support of $\Sf{A}_ 1$.
By Lemma~\ref{lem:ArtinVerdierminadap},  $\overline{C}^1$ is a smooth curve meeting the exceptional divisor $E_1$ transversely at 
smooth points and we have the isomorphism $\Sf{A}_1\cong\Ss{\overline{C}^1}$. 

Then, by Proposition~\ref{prop:consecuenciaspracticas} (3), at the minimal adapted resolution $\pi:\Rs_1\to X$ we have the inequality 
$cond(\mathfrak{K}_{(\Sf{A}_1,(\psi^1_1,...,\psi^1_r))})\leq (0,...,0)$. Suppose that the inequality is strict. We will derive a contradiction, which 
will prove Assertion (1). 

Up to reindexing we may assume that 
the first coordinate of $cond(\mathfrak{K}_{(\Sf{A}_1,(\psi_1,...,\psi_r))})$ is negative. Since $\overline{C}^1$ is smooth it coincides with its normalization. Let $p_1$ be the point of $\overline{C}^1\cap E_1$ so that the order at the branch $(\overline{C}^1,p_1)$ gives 
the first coordinate of the order function (see Section~\ref{sec:valuative} to recall the corresponding definitions). 
Let $\rho:\Rs_2\to\Rs_1$ be the blow up at $p_1$; define $\pi_2:=\pi_1\comp\rho$. Let $\overline{C}^2$ be the strict transform of $\overline{C}^1$ by $\rho$. Define
$\Sf{A}_2:=\Ss{\overline{C}^2}$. Since $\rho$ induces an isomorphism between $\Sf{A}_2=\Ss{\overline{C}^2}$ and $\Sf{A}_1=\Ss{\overline{C}^1}$ the 
sections $(\psi_1,...,\psi_r)$ of $\Sf{A}_1$ may also be regarded as sections of $\Sf{A}_2$. A computation like in the proof of 
Proposition~\ref{prop:compcanord} shows the equality 
$cond(\mathfrak{K}_{(\Sf{A}_2,(\psi_1,...,\psi_r))})=(1,0,...,0)+cond(\mathfrak{K}_{(\Sf{A}_1,(\psi_1,...,\psi_r))})$. Therefore we have the inequality
$cond(\mathfrak{K}_{(\Sf{A}_2,(\psi_1,...,\psi_r))})\leq (0,...,0)$, and by Proposition~\ref{prop:consecuenciaspracticas} (1), the correspondence of 
Theorem~\ref{th:corres} assigns to $(\Sf{A}_2,(\psi_1,...,\psi_r))$ a pair $(\Sf{M}_2,(\phi^2_1,...,\phi^2_r))$, where $\Sf{M}_2$ is a full $\Ss{\Rs_2}$-module and
$(\phi^2_1,...,\phi^2_r)$ is a system of nearly generic global sections. An application of Proposition~\ref{prop:invressing} shows the equalities
$(\pi_2)_*\Sf{M}_2=(\pi_1)_*\Sf{M}_1$ and $(\phi^2_1,...,\phi^2_r)=(\phi_1,...,\phi_r)$.

By Lemma~\ref{lem:ArtinVerdierminadap}, 
there is a slight perturbation $(\phi'_1,...,\phi'_r)$ of the sections $(\phi_1,...,\phi_r)$ such that if we denote by $\Sf{A}_1'$ the degeneracy 
module of $(\Sf{M}_1,(\phi'_1,...,\phi'_r))$, then its support $(\overline{C}^1)'$ satisfies
\begin{itemize}
 \item it does not meet the blowing up center $\overline{C}_1\cap E$;
 \item we have the equality of intersection numbers $(\overline{C}^1)'\cdot E_{1,j}=\overline{C}^1\cdot E_{1,j}$ for any irreducible component 
 $E_{1,j}$.
\end{itemize}
 
The support of the degeneracy module of $(\Sf{M}_2,(\phi_1,...,\phi_r))$ is equal to $\overline{C}^2$. On the other hand, since $(\overline{C}^1)'$ does
not meet the blowing up center $\overline{C}_1\cap E$, the support of the degeneracy
module of $(\Sf{M}_2,(\phi'_1,...,\phi'_r))$ is the strict transform of $(\overline{C}^1)'$ to $\Rs_2$. Let $F$ be the exceptional divisor of $\rho$. 
Observe that $\overline{C}^2\cdot F=1$, and by property (2) above $(\overline{C}^1)'\cdot F=0$. Since the Poincare dual of the support of the 
degeneracy locus is the first Chern class of the module $\Sf{M}_2$, we have two different Chern class representations intersecting differently the 
cycle $F$. This is a contradiction which proves Assertion (1).

Assertions (2) and (3) are simple Corollaries of Assertion (1) and Proposition~\ref{prop:compcanord}.
\endproof

It is convenient to specialize the previous Proposition to the Gorenstein case:

\begin{corollary}
\label{cor:minadapnumchargor}
With the notation of the previous Proposition, and of Proposition~\ref{prop:consecuenciaspracticas}, if $X$ has Gorenstein singularities we have:

\begin{enumerate}
 \item At the minimal adapted resolution we have the equality $cond(\mathfrak{C})=(-d_1(\overline{C}),...,-d_l(\overline{C}))$.
 \item At any resolution we have the inequality $cond(\mathfrak{C})\leq (-d_1(\overline{C}),...,-d_l(\overline{C}))$.
 \item A resolution dominates the minimal adapted resolution if and only if we have the equality 
 $$cond(\mathfrak{C})=(-d_1(\overline{C}),...,-d_l(\overline{C})).$$
\end{enumerate}
\end{corollary}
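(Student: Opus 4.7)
The plan is to derive this as a direct corollary of Proposition~\ref{prop:minadapnumchar} by substituting the Gorenstein-specific formula relating the two conductors. Specifically, in Remark~\ref{rem:conductorsuma} we recorded that in the Gorenstein setting the equality
\begin{equation*}
cond(\mathfrak{K}_{(\Sf{A},(\psi_1,...,\psi_r))}) \;=\; cond(\mathfrak{C})+(d_1(\overline{C}),\dots,d_l(\overline{C}))
\end{equation*}
holds, because in the Gorenstein case $H^0(U,\omega_{\Rs})$ is generated over $\Ss{X}$ by a single Gorenstein form, so the canonical set of orders is exactly the translate $(d_1(\overline{C}),\dots,d_l(\overline{C})) + \mathfrak{C}$ (see Remark~\ref{rem:valuative}). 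This reduces everything to simple rewriting.

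More precisely, I would just substitute the displayed equality into each of the three assertions of Proposition~\ref{prop:minadapnumchar}. For Assertion (1): at the minimal adapted resolution we have $cond(\mathfrak{K}_{(\Sf{A},(\psi_1,\dots,\psi_r))}) = (0,\dots,0)$, hence
\begin{equation*}
cond(\mathfrak{C}) = -(d_1(\overline{C}),\dots,d_l(\overline{C})).
\end{equation*}
For Assertion (2): the inequality $cond(\mathfrak{K}_{(\Sf{A},(\psi_1,\dots,\psi_r))}) \leq (0,\dots,0)$ at any resolution translates, by the same substitution, into $cond(\mathfrak{C}) \leq -(d_1(\overline{C}),\dots,d_l(\overline{C}))$. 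Assertion (3) is the analogous translation of the third statement of Proposition~\ref{prop:minadapnumchar}.

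The only thing to check is that the statement genuinely specializes the previous proposition, i.e. that all the invariants in sight are computed at the \emph{same} resolution: the canonical vector $(d_1(\overline{C}),\dots,d_l(\overline{C}))$ and the sets $\mathfrak{C}$, $\mathfrak{K}_{(\Sf{A},(\psi_1,\dots,\psi_r))}$ all depend on the chosen $\pi:\Rs\to X$, and Remark~\ref{rem:conductorsuma} relates them at that fixed resolution; so the substitution is legitimate. There is really no main obstacle here beyond bookkeeping, since the heavy lifting was done in Proposition~\ref{prop:minadapnumchar}: the present corollary is essentially a change of variables from the ``canonical'' conductor $cond(\mathfrak{K})$ to the ``intrinsic'' conductor $cond(\mathfrak{C})$ made available by the Gorenstein hypothesis.
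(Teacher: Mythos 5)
Your proof is correct and is exactly the intended argument: the paper states this corollary without proof as an immediate specialization of Proposition~\ref{prop:minadapnumchar}, obtained by substituting the Gorenstein equality $cond(\mathfrak{K}_{(\Sf{A},(\psi_1,\dots,\psi_r))})=cond(\mathfrak{C})+(d_1(\overline{C}),\dots,d_l(\overline{C}))$ from Remark~\ref{rem:conductorsuma}. Your observation that all quantities are computed at the same fixed resolution is the right (and only) point needing care.
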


\subsection{The behaviour of the speciality defect and minimal adapted resolutions of special modules}
\label{sec:minimaladapted}
We study the behaviour of the specialty defect under blow up, and show that if the specialty defect vanishes at a given resolution, it vanishes at any
resolution. As a corollary we establish the existence of special reflexive modules and show an interesting property of their generic degeneracy
modules, which links them with arcs in the singularity.

The following two propositions control the behavior of the specialty defect under blow up.

\begin{proposition}\label{prop:specialtybehaviour}
Let $X$ be a Stein normal surface. Let $M$ be a reflexive $\Ss{X}$-module. Let $\pi \colon \Rs \to X$ be a resolution and $p$ be a point in $\Rs$. Denote by 
$\sigma \colon \Rs' \to \Rs$ the blow up of the point $p$. We have the following diagram
\begin{equation*}
\begin{tikzpicture}
  \matrix (m)[matrix of math nodes,
    nodes in empty cells,text height=1.5ex, text depth=0.25ex,
    column sep=2.5em,row sep=2em]{
  \Rs' & \Rs \\
  & X \\};
\draw[-stealth] (m-1-1) edge node [above] {$\sigma$} (m-1-2);
\draw[-stealth] (m-1-1) edge node [right] {$\rho$} (m-2-2);
\draw[-stealth] (m-1-2) edge node [right] {$\pi$} (m-2-2);
\end{tikzpicture}
\end{equation*}
where $\rho:= \pi \circ{} \sigma$.

Denote by $\Sf{M}= \left( \pi^* M \right)^{\smvee \smvee}$ and $\Sf{M}'= \left( \rho^* M \right)^{\smvee \smvee}$. 
Then the specialty defect of $\Sf{M}$ is less or equal to the specialty defect of $\Sf{M}'$. Moreover if $\Sf{M}$ is generated by global sections 
at $p$, then the specialty defect of $\Sf{M}$ equals the specialty defect of $\Sf{M}'$.
\end{proposition}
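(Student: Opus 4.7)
The plan is to apply the Grothendieck--Leray spectral sequence for the composition $\rho=\pi\circ\sigma$ to the locally free sheaf $\Sf{M}'^{\smvee}$. Since $\pi$ has fibres of dimension at most one, $R^i\pi_*=0$ for $i\geq 2$, so the five-term exact sequence will collapse to
\begin{equation*}
0 \to R^1\pi_*(\sigma_*\Sf{M}'^{\smvee}) \to R^1\rho_*\Sf{M}'^{\smvee} \to \pi_*(R^1\sigma_*\Sf{M}'^{\smvee}) \to 0.
\end{equation*}
The rest of the argument will consist in relating the outer terms to $R^1\pi_*\Sf{M}^{\smvee}$ through a natural comparison between $\Sf{M}'$ and the pull-back $\sigma^*\Sf{M}$.

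To construct this comparison I will pull back the natural morphism $\pi^*M\to\Sf{M}$ to obtain $\rho^*M\to\sigma^*\Sf{M}$; since $\sigma^*\Sf{M}$ is locally free and hence reflexive, the universal property of the reflexive hull $\Sf{M}'=(\rho^*M)^{\smvee\smvee}$ gives a factorisation $\rho^*M\to\Sf{M}'\to\sigma^*\Sf{M}$. The resulting morphism $\Sf{M}'\to\sigma^*\Sf{M}$ is an injection of rank-$r$ locally free sheaves (being an isomorphism outside the exceptional divisor $E_\sigma\cong\PP^1$ of $\sigma$), whose cokernel $\mathcal{Q}$ is supported on $E_\sigma$. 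Dualising and using $\Homs(\mathcal{Q},\Ss{\Rs'})=0$ I get the short exact sequence
\begin{equation*}
0 \to \sigma^*\Sf{M}^{\smvee} \to \Sf{M}'^{\smvee} \to \mathcal{C} \to 0,\qquad \mathcal{C}=\Exts^1_{\Ss{\Rs'}}(\mathcal{Q},\Ss{\Rs'}),
\end{equation*}
with $\mathcal{C}$ again supported on $E_\sigma$.

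Next I will apply $R\rho_*$ to this sequence. The projection formula combined with $R^1\sigma_*\Ss{\Rs'}=0$ (valid since $\sigma$ is the blow-up of a smooth point on a smooth surface) yields $R^{\bullet}\rho_*\sigma^*\Sf{M}^{\smvee}=R^{\bullet}\pi_*\Sf{M}^{\smvee}$, and Lemma~\ref{lema:dualM} supplies the identification $\rho_*\Sf{M}'^{\smvee}=M^{\smvee}=\pi_*\Sf{M}^{\smvee}$, so the first connecting map in the associated long exact sequence is the identity on $M^{\smvee}$ and the sequence collapses to
\begin{equation*}
0 \to \rho_*\mathcal{C} \to R^1\pi_*\Sf{M}^{\smvee} \to R^1\rho_*\Sf{M}'^{\smvee} \to R^1\rho_*\mathcal{C} \to 0.
\end{equation*}
Taking dimensions gives $d(\Sf{M}')-d(\Sf{M})=\dim_{\CC}R^1\rho_*\mathcal{C}-\dim_{\CC}\rho_*\mathcal{C}=-\chi(E_\sigma,\mathcal{C})$. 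The hard part will be verifying the non-negativity $\chi(E_\sigma,\mathcal{C})\leq 0$: for this I intend to use the standard identification $\Exts^1_{\Ss{\Rs'}}(\mathcal{Q},\Ss{\Rs'})\cong\Homs_{\Ss{E_\sigma}}(\mathcal{Q},\Ss{E_\sigma})\otimes N_{E_\sigma/\Rs'}$ along the smooth divisor $E_\sigma$, together with $N_{E_\sigma/\Rs'}=\Ss{\PP^1}(-1)$, so that the inequality reduces to $\deg_{\PP^1}(\mathcal{Q})\geq 0$. This in turn follows because $\mathcal{Q}$ is a coherent quotient of $\sigma^*\Sf{M}|_{E_\sigma}\cong\Ss{\PP^1}^{r}$, which is trivial on the exceptional fibre, and every coherent quotient of a trivial bundle on $\PP^1$ has non-negative degree; the case in which $\mathcal{Q}$ is supported on a thickening of $E_\sigma$ will be handled by filtering by powers of the ideal of $E_\sigma$ and summing the Euler-characteristic contributions.

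For the equality case, if $\Sf{M}$ is generated by its global sections at $p$, then the surjection $\pi^*M\to\Sf{M}$ near $p$ pulls back to a surjection $\rho^*M\to\sigma^*\Sf{M}$ in a neighbourhood of $E_\sigma$; since the target is locally free, it coincides with the reflexive hull $\Sf{M}'$ near $E_\sigma$, and hence globally. Then $\mathcal{Q}=0=\mathcal{C}$ and the collapsed long exact sequence gives $R^1\rho_*\Sf{M}'^{\smvee}\cong R^1\pi_*\Sf{M}^{\smvee}$, yielding $d(\Sf{M})=d(\Sf{M}')$.
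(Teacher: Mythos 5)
Your proposal is correct, but after setting up the same Leray spectral sequence as the paper you abandon it and argue by a genuinely different mechanism. The paper's proof of the inequality is much shorter: it keeps the collapsed Leray sequence $0 \to R^1\pi_*(\sigma_*\Sf{N}') \to R^1\rho_*\Sf{N}' \to \pi_*(R^1\sigma_*\Sf{N}') \to 0$ and simply identifies the left-hand term via adjunction, $\sigma_*\Sf{N}' = \sigma_*\Homs(\sigma^*\pi^*M,\Ss{\Rs'}) = \Homs(\pi^*M,\sigma_*\Ss{\Rs'}) = \Sf{N}$, so that $d(\Sf{M})\leq d(\Sf{M}')$ drops out with no further work. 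Your route instead goes through the comparison $\Sf{M}'\hookrightarrow\sigma^*\Sf{M}$ and an Euler-characteristic computation on the exceptional $\PP^1$; this is longer and its delicate step is exactly where you flag it, but note one point of care: $\Exts^1(-,\Ss{\Rs'})$ is not additive on the filtration by powers of the ideal of $E_\sigma$, because graded pieces with zero-dimensional torsion contribute to $\Exts^2$ rather than $\Exts^1$. The clean fix is to use additivity of the full alternating sum $\sum_j(-1)^j\chi\bigl(\Exts^j(-,\Ss{\Rs'})\bigr)$ (which for the Cohen--Macaulay sheaf $\mathcal{Q}$ reduces to $-\chi(\mathcal{C})$), after which the torsion contributions only strengthen the inequality. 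What your approach buys in exchange for the extra length is a sharper, quantitative statement: the jump $d(\Sf{M}')-d(\Sf{M})$ is identified with $-\chi(\mathcal{C})$, i.e.\ with an explicit non-negative invariant of the cokernel $\mathcal{Q}$ of $\Sf{M}'\to\sigma^*\Sf{M}$ (its degree on $\PP^1$ in the reduced case), whereas the paper only obtains the inequality. The equality case is essentially the same in both arguments --- you show $\mathcal{Q}=0$, i.e.\ $\Sf{M}'\cong\sigma^*\Sf{M}$, while the paper shows $R^1\sigma_*\Sf{N}'=0$ by a chain of split exact sequences; yours is the more direct of the two.
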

\proof
Denote by
\begin{align*}
\Sf{N}' &= \Sf{M}'^{\smvee},\\
\Sf{N} &= \Sf{M}^{\smvee}.
\end{align*}

Since $\rho= \pi \circ{} \sigma$, in order to compute $R^1 \rho_*\Sf{N}'$ we use the Leray spectral sequence. In this case the page $E_2^{(p,q)}$ of the spectral sequence is given by

\begin{equation*}
\begin{tikzpicture}
  \matrix (m)[matrix of math nodes,
  nodes in empty cells,
  nodes={minimum width=5ex,minimum height=3ex,outer sep=-5pt},
  column sep=1ex,
  row sep=1ex]{
		  &   &   &   & \\ 
	2   & 0 & 0 & 0 & \\
  1   & \pi_*\left(R^1 \sigma_* \Sf{N}'\right ) & R^1 \pi_*\left(R^1 \sigma_* \Sf{N}'\right ) & 0 & \\
	0   & \pi_*\left(\sigma_* \Sf{N}' \right ) & R^1 \pi_*\left(\sigma_* \Sf{N}' \right ) & 0  & \\
\quad\strut	& 0 & 1 & 2 & \quad\strut \\};
\draw[thick] (m-1-1.east) -- (m-5-1.east) ;
\draw[thick] (m-5-1.north) -- (m-5-5.north) ;
\end{tikzpicture}
\end{equation*}

The spectral sequence degenerates, therefore we obtain the following exact sequence
\begin{equation}\label{exact:cambioN}
0 \to  R^1 \pi_*\left(\sigma_* \Sf{N}' \right ) \to  R^1 \rho_* \Sf{N}' \to \pi_*\left(R^1 \sigma_* \Sf{N}'\right ) \to 0.
\end{equation}

Now by adjuction we have the following identification
\begin{align} \begin{split}\label{eq:cambioN}
R^1 \pi_*\left(\sigma_* \Sf{N}' \right ) &= R^1 \pi_* \left(\sigma_* \Homs_{\Ss{\Rs'}}\left(\sigma^* \pi^* M, \Ss{\Rs'}\right )\right )\\
&=R^1 \pi_* \Homs_{\Ss{\Rs}}\left(\pi^* M, \sigma_* \Ss{\Rs'}\right )\\
&=R^1 \pi_* \Homs_{\Ss{\Rs}}\left(\pi^* M, \Ss{\Rs}\right )\\
&=R^1 \pi_* \Sf{N}. \end{split}
\end{align}

By \eqref{exact:cambioN} and \eqref{eq:cambioN} we get
\begin{equation}\label{exact:cambioN2}
0 \to  R^1 \pi_* \Sf{N} \to  R^1 \rho_* \Sf{N}' \to \pi_*\left(R^1 \sigma_* \Sf{N}'\right ) \to 0.
\end{equation}

Therefore the specialty defect of $\Sf{M}$ is less or equal to the specialty defect of $\Sf{M}'$.

Assume that $\Sf{M}$ is generated by global sections at $p$. By the previous exact sequence we only need to prove that $\pi_{*}\left(R^1 \sigma_* \Sf{N}'\right )=0$.

Now consider the exact sequence given by the natural map from $\pi^* M$ to its double dual 
\begin{equation*}
\begin{tikzpicture}
  \matrix (m)[matrix of math nodes,
    nodes in empty cells,text height=1.5ex, text depth=0.25ex,
    column sep=2em,row sep=0.5em] {
  0 &   T & \pi^* M & \Sf{M}  & \Sf{S} & 0, \\};

\foreach \x [remember=\x as \lastx (initially 1)] in {2,...,6}
{
\draw[-stealth] (m-1-\lastx) -- (m-1-\x);
}
\end{tikzpicture}
\end{equation*}
where $T$ is the kernel and $\Sf{S}$ is the cokernel. Notice that the support of $\Sf{S}$ is the set $S$ (the points where $\Sf{M}$ fails to be generated by global sections, see Section~\ref{chapter:artin-verdier-esnault}).

The last exact sequence can be split as follows
\begin{equation*}
\begin{tikzpicture}
  \matrix (m)[matrix of math nodes,
    nodes in empty cells,text height=1.5ex, text depth=0.25ex,
    column sep=2em,row sep=0.5em] {
  0 &   T & \pi^* M & \pi^* M / T & 0, \\
	0 & \pi^* M / T & \Sf{M} & \Sf{S} & 0. \\};
\foreach \y in {1,2}
{
\foreach \x [remember=\x as \lastx (initially 1)] in {2,...,5}
{
\draw[-stealth] (m-\y-\lastx) -- (m-\y-\x);
}
}
\end{tikzpicture}
\end{equation*}

Applying the functor $\sigma^* -$ to the last two exact sequences we obtain
\begin{equation*}
\begin{tikzpicture}
  \matrix (m)[matrix of math nodes,
    nodes in empty cells,text height=1.5ex, text depth=0.25ex,
    column sep=2em,row sep=0.5em] {
    0 & K_1 & \sigma^*T & \rho^* M  & \sigma^* \pi^* M / T & 0, \\
	  0 & K_2 & \sigma^* \pi^* M / T & \sigma^*\Sf{M} & \sigma^*\Sf{S} & 0, \\};
\foreach \y in {1,2}
{
\foreach \x [remember=\x as \lastx (initially 1)] in {2,...,6}
{
\draw[-stealth] (m-\y-\lastx) -- (m-\y-\x);
}
}
\end{tikzpicture}
\end{equation*}
where $K_1$ and $K_2$ are the modules that make the last sequences exact (recall that $\sigma^* -$ is just a right exact functor).

Hence we split the previous exact sequences as follows
\begin{equation*}
\begin{tikzpicture}
  \matrix (m)[matrix of math nodes,
    nodes in empty cells,text height=1.5ex, text depth=0.25ex, column sep=2em,row sep=0.5em]{
 0 & K_1 & \sigma^*T & H_1 & 0, \\
0 & H_1 & \rho^* M & \sigma^*\pi^* M / T & 0, \\
0 & K_2 & \sigma^* \pi^* M / T & H_2 & 0, \\
0 & H_2 & \sigma^*\Sf{M} & \sigma^*\Sf{S} & 0.\\};
\foreach \y in {1,2,3,4}
{
\foreach \x [remember=\x as \lastx (initially 1)] in {2,...,5}
{
\draw[-stealth] (m-\y-\lastx) -- (m-\y-\x);
}
}
\end{tikzpicture}
\end{equation*}

Dualizing the first, second and third exact sequences we get
\begin{align*}
H_1^{\smvee} &\cong 0, \quad \text{because $\sigma^* T$ is supported in the exceptional divisor},\\
\left(\sigma^*\pi^* M / T\right )^{\smvee} &\cong (\rho^* M)^{\smvee}, \quad \text{by the previous identification}, \\
H_2^{\smvee} &\cong  \left(\sigma^*\pi^* M / T\right )^{\smvee}, \quad \text{because $K_2$ is supported in the exceptional divisor}.
\end{align*}

Hence, as we have $\Sf{N}' = \Sf{M}'^{\smvee} \cong (\rho^* M)^{\smvee}$ we get $\Sf{N}' \cong \left(\sigma^*\pi^* M / T\right )^{\smvee}\cong H_2^{\smvee}$.

Finally dualizing the fourth exact sequence and using the previous identifications we get the exact sequence
$$ 0 \to \left(\sigma^*\Sf{M}\right)^{\smvee} \to \Sf{N}' \to \Exts_{\Ss{\Rs'}}^1\left(\sigma^* \Sf{S}, \Ss{\Rs'}\right ).$$

Since the point $p$ does not belong to the support of $\Sf{S}$, we get that the support of $\sigma^* \Sf{S}$ is zero dimensional, therefore $\Exts_{\Ss{\Rs'}}^1\left(\sigma^* \Sf{S}, \Ss{\Rs'}\right )$ is equal to zero. Hence we get
\begin{equation*}
R^1 \sigma_* \left(\sigma^*\Sf{M}\right)^{\smvee} \cong R^1 \sigma_* \Sf{N}'.
\end{equation*}

Since $\Sf{M}$ is locally free and we obtain $\Rs'$ taking the blow up in the point $p$ we get
\begin{equation*}
R^1 \sigma_* \left ( \left (\sigma^*\Sf{M}\right)^{\smvee} \right)= R^1 \sigma_* \left ( \left(\sigma^*\Ss{\Rs}^r\right )^{\smvee} \right)= R^1 \sigma_* \Ss{\Rs'}^r = 0.
\end{equation*}

Hence $R^1 \sigma_* \Sf{N}'$ is equal to zero. 
\endproof

\begin{corollary}\label{cor:especialminimal}
Let $X$ be a Stein normal surface. Let $M$ be a reflexive $\Ss{X}$-module. If the full sheaf associated to $M$ at the minimal adapted resolution is special, the the full sheaf associated to 
$M$ at the minimal resolution of $X$ is also special. 
\end{corollary}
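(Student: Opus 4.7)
The plan is to use Proposition~\ref{prop:specialtybehaviour} in the direction going from the adapted resolution down to the minimal one. First I would recall, from the construction used in the proof of Proposition~\ref{prop:minadap}, that the minimal adapted resolution $\rho:\Rs_{\mathrm{adap}}\to X$ is produced by starting with the minimal resolution $\pi:\Rs_{\mathrm{min}}\to X$ and successively blowing up the (finitely many) points where the full sheaf fails to be generated by global sections. Consequently there is a factorization
\begin{equation*}
\rho = \pi\circ\sigma,\qquad \sigma:\Rs_{\mathrm{adap}}\to\Rs_{\mathrm{min}},
\end{equation*}
where $\sigma$ is a composition of point blowups.

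Next, I would apply Proposition~\ref{prop:specialtybehaviour} iteratively to each of the intermediate blowups in $\sigma$. Denoting by $\Sf{M}_{\mathrm{min}}$ and $\Sf{M}_{\mathrm{adap}}$ the full sheaves associated to $M$ at $\Rs_{\mathrm{min}}$ and $\Rs_{\mathrm{adap}}$ respectively, this yields the inequality
\begin{equation*}
d(\Sf{M}_{\mathrm{min}})\leq d(\Sf{M}_{\mathrm{adap}})
\end{equation*}
between specialty defects. By hypothesis $d(\Sf{M}_{\mathrm{adap}})=0$. Since the specialty defect is always non-negative (as noted after Definition~\ref{def:especial}), this forces $d(\Sf{M}_{\mathrm{min}})=0$, which is exactly the statement that $\Sf{M}_{\mathrm{min}}$ is special.

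There is essentially no obstacle: the corollary is a direct and formal consequence of Proposition~\ref{prop:specialtybehaviour} together with the geometric fact that $\Rs_{\mathrm{adap}}$ dominates $\Rs_{\mathrm{min}}$. The only point one should verify carefully is that each intermediate center of blowup in the chain $\sigma$ lies on a surface which is smooth (so that Proposition~\ref{prop:specialtybehaviour}, stated for blowing up a single point on a resolution, applies at every step); this is automatic because $\Rs_{\mathrm{min}}$ is smooth and smoothness is preserved under blowing up smooth points on a smooth surface.
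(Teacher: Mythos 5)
Your proof is correct and is exactly the intended argument: the paper states this corollary without proof precisely because it follows immediately from Proposition~\ref{prop:specialtybehaviour} applied along the chain of point blowups connecting the minimal resolution to the minimal adapted resolution, together with the non-negativity of the specialty defect. Nothing further is needed.
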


\begin{theorem}\label{th:characspecial}
Let $X$ be a Stein normal surface. Let $M$ be a reflexive $\Ss{X}$-module. The module $M$ is special if and only if the full sheaf associated with $M$ at its minimal adapted 
resolution is special. 
\end{theorem}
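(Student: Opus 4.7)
\proof
One direction is immediate from the definition: if $M$ is special, then the full sheaf at every resolution, including the minimal adapted one, is special. The plan is to prove the converse: assuming the full sheaf $\Sf{M}_0$ associated with $M$ at the minimal adapted resolution $\rho\colon\Rs_0\to X$ is special (specialty defect zero), I will show that the full sheaf $\Sf{M}_\pi$ associated with $M$ at any other resolution $\pi\colon\Rs\to X$ has vanishing specialty defect. The strategy is to compare both resolutions through a common dominating resolution and repeatedly apply Proposition~\ref{prop:specialtybehaviour}.

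The key technical step is to establish that if $\Sf{M}$ is a full $\Ss{\Rs}$-module on some resolution $\Rs$ that is generated by global sections \emph{at every point} of $\Rs$, and $\sigma\colon\Rs'\to\Rs$ is the blow-up at a point $p\in\Rs$, then the induced full sheaf $\Sf{M}'=(\sigma^*\Sf{M})^{\smvee\smvee}$ is also generated by global sections at every point of $\Rs'$. This follows because $\Sf{M}$ is locally free near $p$, hence $\sigma^*\Sf{M}$ is already locally free and reflexive on a neighborhood of $\sigma^{-1}(p)$ and coincides with $\Sf{M}'$ there; the global sections of $\Sf{M}$ pull back to global sections of $\Sf{M}'$ which generate it in a neighborhood of $\sigma^{-1}(p)$, while away from $\sigma^{-1}(p)$ the map $\sigma$ is an isomorphism so the generation is preserved trivially. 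Iterating this, any resolution $\Rs_c$ obtained from $\Rs_0$ by a finite sequence of point blow-ups carries a full sheaf that is generated by global sections everywhere, so by the equality clause of Proposition~\ref{prop:specialtybehaviour} applied at each blow-up, the specialty defect remains zero throughout. In particular the specialty defect of the full sheaf associated with $M$ at $\Rs_c$ equals the specialty defect at $\Rs_0$, which is zero.

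To finish, given an arbitrary resolution $\pi\colon\Rs\to X$, choose a common resolution $\Rs_c\to X$ dominating both $\Rs_0$ and $\Rs$ (obtained from each by a finite sequence of point blow-ups, using the standard theory of resolutions of surfaces). By the previous paragraph, the specialty defect of $\Sf{M}_c$ on $\Rs_c$ is zero. Since $\Rs_c$ is obtained from $\Rs$ by a sequence of point blow-ups, the monotonicity part of Proposition~\ref{prop:specialtybehaviour} gives
\begin{equation*}
d(\Sf{M}_\pi)\leq d(\Sf{M}_c)=0.
\end{equation*}
Because the specialty defect is non-negative, we conclude $d(\Sf{M}_\pi)=0$, so $\Sf{M}_\pi$ is special. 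As $\pi$ was arbitrary, $M$ is a special reflexive module.

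The main obstacle in this argument is the preservation of global generation under blow-ups once we leave the minimal adapted resolution: without it we could not invoke the equality clause of Proposition~\ref{prop:specialtybehaviour} iteratively, and the specialty defect along the tower $\Rs_0\to\Rs_c$ might \emph{a priori} jump. The local freeness of the full sheaf at the blown-up point is what makes this step work and is the point at which the minimality of the adapted resolution (which provides global generation everywhere on $\Rs_0$) feeds into the argument.
\endproof
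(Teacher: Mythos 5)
Your proof is correct and follows essentially the same route as the paper: dominate the given resolution and the minimal adapted one by a common resolution obtained by point blow-ups, propagate the vanishing of the specialty defect upward from the minimal adapted resolution via the equality clause of Proposition~\ref{prop:specialtybehaviour}, and then descend to the arbitrary resolution using the monotonicity clause together with non-negativity of the defect. Your explicit verification that global generation (and hence applicability of the equality clause) persists under each blow-up is a detail the paper's proof leaves implicit, but it is the same argument.
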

\proof
Denote by $\pi:\Rs\to X$ the minimal resolution adapted to $M$ and by 
$\pi_{\text{min}} \colon \Rs_{\text{min}} \to X$ be the minimal resolution of $X$.

We need to prove that for any resolution $\rho \colon \hat{X} \to X$, the full sheaf $\hat{\Sf{M}}= \left(\rho^* M\right)^{\smvee \smvee}$ is special
(Definition~\ref{def:espmodule}). If the minimal resolution coincides with the minimal 
resolution adapted to $M$, then by Proposition~\ref{prop:specialtybehaviour} we are done.
Suppose that the minimal resolution and the minimal resolution adapted to $M$ do not coincide.
By taking a finite succession of blowing ups in different points we obtain a resolution $\breve{\rho} \colon \breve{X} \to X$ such that it satisfies 
the following diagram
\begin{equation*}
\begin{tikzpicture}
  \matrix (m)[matrix of math nodes,
    nodes in empty cells,text height=1.5ex, text depth=0.25ex,
    column sep=2.5em,row sep=2em]{
  \breve{X} & \Rs \\
  \hat{X} & \Rs_{min} \\};
\draw[-stealth] (m-1-1) edge node[auto]{$o$} (m-1-2);
\draw[-stealth] (m-1-1) edge node[auto]{$\nu$} (m-2-1);
\draw[-stealth] (m-1-2) edge node[auto]{$\pi$} (m-2-2);
\draw[-stealth] (m-2-1) edge node[auto]{$\rho$} (m-2-2);
\end{tikzpicture}
\end{equation*}
where $\nu$ and  $o$ are a composition of blowings up in points and $\breve{\rho}= \rho \circ{}\nu$.

By Proposition~\ref{prop:specialtybehaviour} the full sheaf associated to $M$ in the resolution $\breve{X}$ is special. 
Again by Proposition~\ref{prop:specialtybehaviour} the specialty defect of the full sheaf associated to $M$ in the resolution $\hat{X}$ is less or 
equal to zero, and hence equal to $0$. Therefore $M$ is special.
\endproof

\begin{definition}
\label{def:mincanord}
Let $(X,x)$ be a normal surface singularity, $\pi:\Rs\to X$ a resolution, and $F$ an irreducible component of the exceptional divisor. 
The {\em minimal canonical order at} $F$ is defined to be
\begin{equation}
ord_F(K_X):=\min\left(\{ord_F(div(\beta):\beta\in H^0(U,\omega_{\Rs})\}\right).
\end{equation}
\end{definition}

\begin{remark}
\label{rem:obsmco}
The following easy observations hold:
\begin{enumerate}
 \item The minimal canonical order at $F$ does not depend on the resolution where $F$ appears.
 \item In the Gorenstein case the minimal canonical order at $F$ is the order at $F$ of the Gorenstein form.
 \item In the Gorenstein case the minimal canonical order of a divisor appearing at the minimal resolution is non-positive.
 \item If $F$ is a divisor and $F'$ is another divisor obtained by blowing up $F$ at a generic point then we have $ord_{F'}(K_X)=ord_F(K_X)+1$.
\end{enumerate}
\end{remark}

\begin{corollary}\label{cor:existenmodespeciales}
Let $(X,x)$ be a normal Gorenstein surface singularity. Then there exist non-free special reflexives modules.
\end{corollary}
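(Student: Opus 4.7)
The plan is to construct a special full sheaf on a suitable resolution via Proposition~\ref{prop:consecuenciaspracticas}(6), and then transfer specialty to the minimal adapted resolution using Theorem~\ref{th:characspecial} together with the monotonicity of the specialty defect under blow-ups (Proposition~\ref{prop:specialtybehaviour}). The preparatory step is to fix a resolution $\pi\colon\tilde X\to X$ which is small with respect to the Gorenstein form and admits an irreducible exceptional component $E^*$ along which the Gorenstein form has order zero: the minimal resolution is small by Remark~\ref{rem:obsmco}(3), and if no component has vanishing canonical order then, by Remark~\ref{rem:obsmco}(4), blowing up a generic point of a component of order $-1$ produces a new component of order $0$ while preserving smallness, so such a pair $(\pi,E^*)$ can be obtained after finitely many blow-ups. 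Pick a smooth irreducible curvette $\overline C\subset\tilde X$ meeting $E^*$ transversely at a single smooth point and disjoint from the remaining exceptional components, let $C:=\pi(\overline C)$ with normalisation $n\colon\tilde C\to C$ (so $\tilde C\cong\overline C$), and set $\Sf A:=\Ss{\overline C}$, a rank one generically reduced one-dimensional Cohen-Macaulay $\Ss{\tilde X}$-module.

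Choose a minimal system $(\psi_1,\dots,\psi_r)$ of $\Ss X$-module generators of $\pi_*\Sf A=n_*\Ss{\tilde C}$ with $\psi_1=1\in H^0(\overline C,\Ss{\overline C})$; minimality forces $\Sf C:=\Ss X\cdot\langle\psi_1,\dots,\psi_r\rangle=\pi_*n_*\Ss{\tilde C}$, and the presence of $\psi_1=1$ makes $(\psi_1,\dots,\psi_r)$ also generate $\Sf A$ as an $\Ss{\tilde X}$-module. Since $d_1(\overline C)=ord_{E^*}(\Omega)=0\le 0$, Proposition~\ref{prop:consecuenciaspracticas}(6) shows that the inverse correspondence of Theorem~\ref{th:corres} produces a special full $\Ss{\tilde X}$-module $\Sf M$ of rank $r$ fitting in
\begin{equation*}
0\to\Ss{\tilde X}^r\to\Sf M\to\Exts^1_{\Ss{\tilde X}}(\Sf A,\Ss{\tilde X})\to 0.
\end{equation*}
Set $M:=\pi_*\Sf M$; by Proposition~\ref{fullcondiciones} the module $M$ is reflexive with $(\pi^*M)^{\smvee\smvee}\cong\Sf M$. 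If $M$ were free of rank $r$ then $\Sf M\cong\Ss{\tilde X}^r$ would have trivial first Chern class, whereas the displayed sequence shows that $c_1(\Sf M)$ is represented by $\overline C$ and satisfies $c_1(\Sf M)\cdot E^*=\overline C\cdot E^*=1\ne 0$; hence $M$ is non-free.

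It remains to check that $M$ is special in the sense of Definition~\ref{def:espmodule}. By Theorem~\ref{th:characspecial} it suffices to verify specialty of the full sheaf $\hat{\Sf M}$ associated to $M$ at the minimal adapted resolution $\rho\colon\hat X\to X$. By Remark~\ref{rem:valuative} in the Gorenstein case, the minimality of the $\psi_i$ together with the smoothness of $\overline C$ make $\mathfrak C=\NN$, so $cond(\mathfrak K_{(\Sf A,(\psi_i))})=cond(\mathfrak C)+d_1(\overline C)=0$; Proposition~\ref{prop:minadapnumchar}(3) then guarantees that $\tilde X$ dominates $\hat X$. Iterating the inequality in Proposition~\ref{prop:specialtybehaviour} along the sequence of blow-ups from $\hat X$ to $\tilde X$ gives $d(\hat{\Sf M})\le d(\Sf M)=0$; since the specialty defect is always non-negative by Proposition~\ref{prop:dirressing}(1), we conclude $d(\hat{\Sf M})=0$, so $\hat{\Sf M}$ is special. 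The main obstacle was aligning the construction resolution $\tilde X$ with the minimal adapted resolution $\hat X$, which is exactly what the preliminary blow-ups producing $E^*$ of vanishing canonical order accomplish through Proposition~\ref{prop:minadapnumchar}.
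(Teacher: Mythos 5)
Your proof is correct and follows essentially the same route as the paper: produce a resolution with an exceptional component of vanishing canonical order via Remark~\ref{rem:obsmco}, take a transverse smooth curvette $D$, apply the inverse correspondence of Theorem~\ref{th:corres} to $(\Ss{D},(\psi_1,\dots,\psi_r))$, and conclude specialty through the minimal adapted resolution and Theorem~\ref{th:characspecial}, with non-freeness from the non-trivial first Chern class. The only (harmless) variations are that you invoke Proposition~\ref{prop:consecuenciaspracticas}(6) where the paper uses part (1) plus a direct surjectivity argument for specialty, and that you pass to the minimal adapted resolution via domination and the monotonicity of the specialty defect rather than identifying the two resolutions outright.
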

\proof
A consequence of the third and fourth assertions of Remark~\ref{rem:obsmco} is the existence of a resolution $\pi:\Rs \to (X,x)$ 
such that a component $F$ of the exceptional divisor has minimal canonical order equal to $0$. Let $D$ be a irreducible smooth curve transverse to the exceptional divisor $F$ at a generic point. We choose $\Sf{A}=\Ss{D}$ and consider 
$(\psi_1,...,\psi_r)$ generators of $\Sf{A}$ as a $\Ss{X}$-module. In that case $\mathfrak{K}_{(\Sf{A},(\psi_1,...,\psi_r))}$ equals $\NN$, and hence 
we have the equality $cond(\mathfrak{K}_{(\Sf{A},(\psi_1,...,\psi_r))})=0$. By Proposition~\ref{prop:consecuenciaspracticas} (1), if 
$(\Sf{M},(\phi_1,...,\phi_r))$ is the pair associated with $(\Sf{A},(\psi_1,...,\psi_r))$ by Theorem~\ref{th:corres}, the sheaf $\Sf{M}$ is full. 

The sheaf $\Sf{M}$ is special. Indeed, since the sections $(\psi_1,...,\psi_r)$ generate $\Sf{A}$ as a $\Ss{X}$-module,
the third map in the exact sequence 
\begin{equation}
\label{seq:largaotravez}
0\to\pi_*\Sf{N}\to\Ss{X}^r\stackrel{(\psi_1,...,\psi_r)}{\longrightarrow}\pi_*\Sf{A}\to R^1\pi_*\Sf{N}\to R^1\pi_*\Ss{\Rs}^r\to 0
\end{equation}
is surjective. 

By Proposition~\ref{prop:minadapnumchar}
the resolution $\pi:\Rs \to (X,x)$ is the minimal adapted resolution to the module $\pi_*\Sf{M}$. Finally, by Theorem~\ref{th:characspecial} the 
reflexive $\Ss{X}$-module $\pi_*\Sf{M}$ is special. 

Non-freeness holds because by construction $\Sf{M}$ has non-trivial first Chern class.
\endproof

The next proposition explains the structure of degeneracy modules of special reflexive modules for sets of generic sections. .  

\begin{proposition}\label{prop:Aeslanormalizacion}
Let $X$ be a Stein normal surface with Gorenstein singularities. Let $M$ be a special reflexive $\Ss{X}$-module of rank $r$. Let $(\phi_1,...,\phi_r)$ be $r$ generic sections. 
Let $\Sf{C}$ be the degeneracy module of $(M,(\phi_1,...,\phi_r))$, and let $C$ be its support. Let $n:\tilde{C}\to C$ be the normalization. 
Then we have the isomorphism $\Sf{C}\cong n_*\Ss{\tilde{C}}$. 

Conversely, let $C\subset X$ be a reduced curve and $n:\tilde{C}\to C$ be its normalization. Let $(\psi_1,...,\psi_r)$ be a set of generators of 
$n_*\Ss{\tilde{C}}$ as a $\Ss{X}$-module. Let $(M,(\phi_1,...,\phi_r))$ be the pair associated with $(n_*\Ss{\tilde{C}},(\psi_1,...,\psi_r))$ under
the correspondence of Theorem~\ref{th:corrsing}. Then $M$ is a special reflexive module.
\end{proposition}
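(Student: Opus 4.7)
The plan is to run the forward and converse directions in parallel, both using Lemma~\ref{lem:ArtinVerdierminadap} together with Proposition~\ref{prop:dirressing}(1).

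For the forward direction, I would take $\pi:\tilde X\to X$ to be the minimal adapted resolution of $M$ and let $\Sf{M}$ denote the associated full sheaf. By Lemma~\ref{lem:ArtinVerdierminadap}, the degeneracy module $\Sf{A}'$ attached to the generic sections $(\phi_1,\dots,\phi_r)$ at $\tilde X$ is isomorphic to $\Ss{D}$ for some smooth Stein curve $D\subset\tilde X$ meeting the exceptional divisor transversely at smooth points. The module $\Sf{A}$ of Theorem~\ref{th:corres} is obtained by dualising, hence it is again supported on $D$ and restricts there to a line bundle, which is trivial because $D$ is smooth and Stein. The map $\pi|_D:D\to C$ is a birational morphism from a smooth curve onto the reduced support of $\Sf{C}$, so it factors through and in fact coincides with the normalisation $n:\tilde C\to C$. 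Therefore $\pi_*\Sf{A}\cong n_*\Ss{\tilde C}$. Applying Proposition~\ref{prop:dirressing}(1), I obtain an inclusion $\Sf{C}\subset\pi_*\Sf{A}$ whose cokernel has $\CC$-dimension equal to the specialty defect of $\Sf{M}$ at $\tilde X$. Specialness of $M$ forces this defect to vanish, whence $\Sf{C}=\pi_*\Sf{A}\cong n_*\Ss{\tilde C}$.

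For the converse I would reverse this reasoning. Starting from $(n_*\Ss{\tilde C},(\psi_1,\dots,\psi_r))$ and letting $(M,(\phi_1,\dots,\phi_r))$ be the output of Theorem~\ref{th:corrsing}, I invoke Theorem~\ref{th:characspecial} to reduce the specialness of $M$ to the vanishing of the specialty defect of its full sheaf $\Sf{M}$ at the minimal adapted resolution $\pi:\tilde X_M\to X$ of $M$. Repeating the analysis above at $\tilde X_M$, Lemma~\ref{lem:ArtinVerdierminadap} produces $\Sf{A}\cong\Ss{D}$, and therefore $\pi_*\Sf{A}\cong n_*\Ss{\tilde C}$ by exactly the same identification. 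Applying Proposition~\ref{prop:dirressing}(1), the inclusion $\Sf{C}\subset\pi_*\Sf{A}$ has cokernel of dimension equal to the specialty defect; since by hypothesis $\Sf{C}=n_*\Ss{\tilde C}=\pi_*\Sf{A}$, the cokernel is zero, the defect vanishes, and $M$ is special.

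The main technical step in both directions is the clean identification $\pi_*\Sf{A}\cong n_*\Ss{\tilde C}$ starting from $\Sf{A}'\cong\Ss{D}$. Dualising $\Sf{A}'$ introduces an a priori line-bundle twist on $D$ which has to be shown to be trivial, and the natural arrow from the pushforward to $n_*\Ss{\tilde C}$ must be verified to match the embedding provided by Proposition~\ref{prop:dirressing}(1). Once this compatibility is in place, the rest of the proof is merely a combination of Lemma~\ref{lem:ArtinVerdierminadap}, Proposition~\ref{prop:dirressing}(1), and Theorem~\ref{th:characspecial}, with no further computations required.
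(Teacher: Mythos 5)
Your forward direction is correct and is essentially the paper's own argument: pass to the minimal adapted resolution, use Lemma~\ref{lem:ArtinVerdierminadap} to get $\Sf{A}\cong\Ss{D}$ with $\pi|_D$ the normalization of $C$, and combine the identity $\dimc{\pi_*\Sf{A}/\Sf{C}}=d(\Sf{M})$ (Proposition~\ref{prop:dirressing}\,(1), equivalently the exact sequence~(\ref{seq:largaotravez})) with specialty to force $\Sf{C}=\pi_*\Sf{A}\cong n_*\Ss{\tilde C}$.

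The converse, however, has a genuine gap. You assert that ``repeating the analysis'' at the minimal adapted resolution yields $\Sf{A}\cong\Ss{D}$ and hence $\pi_*\Sf{A}\cong n_*\Ss{\tilde C}$. But Lemma~\ref{lem:ArtinVerdierminadap} applies only to a \emph{generic} system of sections, whereas here $(\phi_1,\dots,\phi_r)$ are the specific sections produced by Theorem~\ref{th:corrsing} from $(n_*\Ss{\tilde C},(\psi_1,\dots,\psi_r))$ and $C$ is an arbitrary reduced curve: the support of the corresponding $\Sf{A}$ is the strict transform $\overline{C}$, which need not be smooth nor transverse to $E$, so $\Sf{A}$ need not be of the form $\Ss{D}$. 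Switching to fresh generic sections does not help, since the hypothesis $\Sf{C}=n_*\Ss{\tilde C}$ no longer refers to the degeneracy module of those sections. More seriously, the identification $\pi_*\Sf{A}\cong n_*\Ss{\tilde C}$ is precisely what the forward direction establishes, and there it was deduced \emph{from} specialty --- the very conclusion you are trying to reach --- so the argument is circular. (You also never verify that the fixed sections are nearly generic for $\Sf{M}$ on $\Rs$, i.e.\ that their degeneracy divisor contains no component of $E$, which is needed before Theorem~\ref{th:corres} and Proposition~\ref{prop:dirressing} can be invoked.) The paper sidesteps all of this by running the correspondence in the opposite direction: it takes $\Sf{A}:=n_*\Ss{\tilde C}$ on the minimal adapted resolution, applies Proposition~\ref{prop:consecuenciaspracticas}\,(6) --- a direct cohomological verification of the Containment Condition, where smallness with respect to the Gorenstein form enters --- to conclude that the resulting $\Sf{M}$ is full and special, identifies $\pi_*\Sf{M}$ with $M$ via Proposition~\ref{prop:invressing}, and finishes with Theorem~\ref{th:characspecial}. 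Your converse needs to be replaced by an argument of this kind.
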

\proof
Let $\pi:\Rs\to X$ be the minimal adapted resolution to $M$ and $\Sf{M}$ be the associated full sheaf. Let $(\Sf{A},(\psi_1,...,\psi_r))$ be the pair 
associated with $(\Sf{M},(\phi_1,...,\phi_r))$ by Theorem~\ref{th:corres}. By Lemma~\ref{lem:ArtinVerdierminadap} we have the isomorphism
$\Sf{A}\cong\Ss{D}$ for a smooth curve meeting the exceptional divisor transversely at smooth points. 
By specialty and the exact sequence~(\ref{seq:largaotravez}) we have the equality $\Sf{C}=\pi_*\Sf{A}$.

For the converse let $(M,(\phi_1,...,\phi_r))$ be the pair associated with $(n_*\Ss{\tilde{C}},(\psi_1,...,\psi_r))$ under the correspondence of Theorem~\ref{th:corrsing}. Consider the minimal resolution $\pi:\Rs\to X$ adapted to $M$. Let $\overline{C}$ be the strict
transform of $C$ and let $\Sf{A}:=n_* \Ss{\tilde{C}}$. By Proposition~\ref{prop:consecuenciaspracticas} $(6)$, if $(\Sf{M},(\phi_1,...,\phi_r))$ is the pair 
associated with $(\Sf{A},(\psi_1,...,\psi_r))$ under Theorem~\ref{th:corres}, then the $\Ss{\Rs}$-module $\Sf{M}$ is full and special. 
By Proposition~\ref{prop:invressing} we have that $\pi_*\Sf{M}$ is isomorphic to $M$. Since $\Sf{M}$ is special, the module $M$ is special 
by Theorem~\ref{th:characspecial}.
\endproof

Let us record an interesting property of minimal adapted resolutions of special reflexive modules:

\begin{proposition}
\label{prop:minadapspproperty}
Let $X$ be a Stein normal surface with Gorenstein singularities.
Let $M$ be a special reflexive $\Ss{X}$-module of rank $r$. Let $(\phi_1,...,\phi_r)$ be $r$ generic sections. Let  
$\pi:\Rs\to X$ be the minimal adapted resolution to $M$ and $\Sf{M}$ be the associated full sheaf. Let $(\Sf{A},(\psi_1,...,\psi_r))$ be the pair 
associated with $(\Sf{M},(\phi_1,...,\phi_r))$ by Theorem~\ref{th:corres}. The support of $\Sf{A}$ is a smooth curve meeting the exceptional divisor 
transversely at smooth points which are located at divisors where the minimal canonical order vanishes.
\end{proposition}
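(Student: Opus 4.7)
My plan splits naturally into two parts, matching the two assertions in the statement.

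For the first assertion (smoothness of the support $\overline{C}$ of $\Sf{A}$ and transverse intersection with $E$ at smooth points), I would apply Lemma~\ref{lem:ArtinVerdierminadap} directly to $\pi \colon \Rs \to X$, which trivially dominates itself. The lemma gives $\Sf{A} \cong \Ss{D}$ for a smooth curve $D \subset \Rs$ meeting $E$ transversely at smooth points. I would then record the following notation for later use: decompose $D = D_1 \sqcup \cdots \sqcup D_l$ into connected components, with each $D_i$ meeting $E$ at a single smooth point $p_i$ lying on a unique component $E_{j(i)}$, so that $D_i \cdot E_{j(i)} = 1$ and $D_i \cdot E_j = 0$ for $j \ne j(i)$.

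For the second assertion, the plan is to combine the structural consequence of specialty with the numerical characterization of the minimal adapted resolution. Since $M$ is special, Proposition~\ref{prop:Aeslanormalizacion} provides the isomorphism $\Sf{C} \cong n_{*}\Ss{\tilde{C}}$, where $\Sf{C}$ is the degeneracy module of $(M,(\phi_1,\dots,\phi_r))$ on $X$ and $n \colon \tilde{C} \to C$ is the normalization of its support. Under the embedding of Proposition~\ref{prop:genredCM}(2), this means $\iota(\Sf{C}) = \Ss{\tilde{C}}$, so any prescribed multi-order at the branches $(\tilde{C}_j, p_j)$ is realized by a section, giving $\mathfrak{C} = \NN^l$ and hence $\mathrm{cond}(\mathfrak{C}) = (0,\dots,0)$. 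Next, I would invoke Corollary~\ref{cor:minadapnumchargor}(1): at the minimal adapted resolution,
\begin{equation*}
\mathrm{cond}(\mathfrak{C}) = -(d_1(\overline{C}), \dots, d_l(\overline{C})),
\end{equation*}
so all the $d_i(\overline{C})$ vanish. Using Remark~\ref{rem:translationgor} together with the transversality from the first step, one computes
\begin{equation*}
d_i(\overline{C}) = \sum_j q_j (D_i \cdot E_j) = q_{j(i)},
\end{equation*}
where $q_j = \mathrm{ord}_{E_j}(\Omega_{\Rs})$. Therefore $q_{j(i)} = 0$, and by Remark~\ref{rem:obsmco}(2) this says precisely that the minimal canonical order at $E_{j(i)}$ vanishes.

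The one delicate step I expect to require care is the identification $\mathfrak{C} = \NN^l$: the branches $(\tilde{C}_j, p_j)$ used in the definition of $\mathfrak{C}$ and those implicit in the canonical vector $(d_1(\overline{C}), \dots, d_l(\overline{C}))$ at the resolution must be verified to be the same branches, which holds because $E = \pi^{-1}(x)$ identifies the preimages in $\tilde{C}$ of the singular locus of $X$ with the preimages of $E$. Beyond that bookkeeping, the rest of the argument is a mechanical assembly of the conductor formula, the geometric description from Lemma~\ref{lem:ArtinVerdierminadap}, and the normality consequence of specialty.
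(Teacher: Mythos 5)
Your proposal is correct and follows essentially the same route as the paper: Lemma~\ref{lem:ArtinVerdierminadap} for the geometric description of the support, specialty via Proposition~\ref{prop:Aeslanormalizacion} to get $\mathrm{cond}(\mathfrak{C})=(0,\dots,0)$, and the numerical characterization of the minimal adapted resolution (the paper quotes Proposition~\ref{prop:minadapnumchar} together with Equality~(\ref{eq:eqcondgor}), which is exactly the content of Corollary~\ref{cor:minadapnumchargor}(1) that you use) to force $(d_1(\overline{C}),\dots,d_l(\overline{C}))=0$. You simply spell out more explicitly the final computation $d_i(\overline{C})=q_{j(i)}$ and the branch bookkeeping, which the paper leaves implicit.
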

\proof
We only need to prove that $D$ only meets at components where the minimal canonical order vanishes. Since $\pi:\Rs\to X$ is the minimal adapted 
resolution we have the equality $cond(\mathfrak{K}_{(\Sf{A},(\psi_1,...,\psi_r))})=(0,...,0)$ by Proposition~\ref{prop:minadapnumchar}. This, 
together with the equality $\Sf{C}=n_*\Ss{D}$ (which follows from the previous proposition), implies that $D$ only meets at components where the minimal 
canonical order vanishes.
\endproof

\subsection{The decomposition in indecomposables and the irreducible components of the degeneracy locus}
\label{sec:indecomposables}

Here we determine the relation between the decomposition of a special reflexive module into indecomposables, and the decomposition of its generic degeneracy locus into irreducible components. Here we work in the local case instead of the more general Stein surface case: we consider $(X,x)$ to be a normal Gorenstein surface singularity. 

\begin{proposition}
\label{prop:decompesp}
Let $(X,x)$ be a normal Gorenstein surface singularity. Let $M$ be a special $\Ss{X}$ module of rank $r$ and $(\phi_1,...,\phi_r)$ be a set of $r$ generic sections. Let $\pi:\Rs\to X$ be any resolution.
Let $\Sf{M}$ be the full $\Ss{\Rs}$-module associated to $M$. 
Let $(\Sf{C},(\psi^0_1,...,\psi^0_r))$ and $(\Sf{A},(\psi^1_1,...,\psi^1_r))$ be the results of applying Theorems~\ref{th:corrsing} and ~\ref{th:corres}
to $(M,(\phi_1,...,\phi_r))$ and $(\Sf{M},(\phi_1,...,\phi_r))$ respectively. Suppose that $M$ has no free direct factors. Then are natural 
bijections between the following sets
\begin{enumerate}
 \item The indecomposable direct summands of $M$.
 \item The indecomposable direct summands of $\Sf{M}$.
 \item The irreducible components of the support of $\Sf{C}$.
 \item The irreducible components of the support of $\Sf{A}$.
 \end{enumerate}
\end{proposition}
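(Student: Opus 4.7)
The plan is to compose three natural bijections: $(1)\leftrightarrow(2)$ by Kahn's equivalence, $(3)\leftrightarrow(4)$ by strict transform, and $(1)\leftrightarrow(3)$ by the degeneracy module correspondence. For $(1)\leftrightarrow(2)$ I would invoke Proposition~\ref{fullcondiciones}: the functors $M\mapsto(\pi^*M)^{\smvee\smvee}$ and $\pi_*$ are mutually inverse additive equivalences between reflexive $\mathcal{O}_X$-modules and full $\mathcal{O}_{\tilde X}$-modules, and additivity transports the Krull--Schmidt decomposition from either side to the other. For $(3)\leftrightarrow(4)$ the key observation is that $\overline C := \mathrm{supp}(\mathcal{A})$ is the strict transform of $C := \mathrm{supp}(\mathcal{C})$ under $\pi$; this follows from Proposition~\ref{prop:dirressing} since the quotient $\pi_*\mathcal{A}/\mathcal{C}$ is zero-dimensional, so the irreducible components match bijectively.

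The heart of the proof is $(1)\leftrightarrow(3)$. Since $M$ is special and has no free factors, Proposition~\ref{prop:Aeslanormalizacion} gives $\mathcal{C}\cong n_*\mathcal{O}_{\tilde C}$ with $n:\tilde C\to C$ the normalization. Writing $C=\bigcup_{j=1}^\ell C_j$ for the irreducible decomposition, the normalization splits as $\tilde C=\bigsqcup_j \tilde C_j$, yielding a canonical direct sum decomposition
\[
\mathcal{C}\;\cong\;\bigoplus_{j=1}^\ell n_{j*}\mathcal{O}_{\tilde C_j}
\]
of $\mathcal{O}_X$-modules. I would then pick minimal generators inside each summand, combine them into a minimal generating set of $\mathcal{C}$, and apply the inverse correspondence of Theorem~\ref{th:corrsing}: the exact sequence $0\to N\to\mathcal{O}_X^r\to\mathcal{C}\to 0$ splits as a direct sum of the analogous sequences for the summands, so dualising yields $M\cong\bigoplus_j\hat M_j$. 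Each $\hat M_j$ is special by Proposition~\ref{prop:Aeslanormalizacion} applied to $(n_{j*}\mathcal{O}_{\tilde C_j},\text{gens})$, and without free factors because the generators inside each summand are minimal.

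Finally I would show that each $\hat M_j$ is indecomposable, which by uniqueness of Krull--Schmidt for finitely generated modules over the henselian local ring $\mathcal{O}_{X,x}$ identifies $\{\hat M_j\}$ with the set of indecomposable summands of $M$. By Theorem~\ref{th:corrsing} this reduces to indecomposability of $n_{j*}\mathcal{O}_{\tilde C_j}$ as an $\mathcal{O}_X$-module. Arguing by contradiction, a non-trivial decomposition $n_{j*}\mathcal{O}_{\tilde C_j}=\mathcal{E}'\oplus\mathcal{E}''$ would, at the generic point $\eta_j$ of the irreducible curve $C_j$, split the one-dimensional vector space $(n_{j*}\mathcal{O}_{\tilde C_j})_{\eta_j}$ over the function field of $C_j$, forcing one summand, say $\mathcal{E}'$, to vanish at $\eta_j$. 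Since $\mathcal{E}'$ would be a Cohen--Macaulay module of dimension one with support contained in $C_j$ yet missing $\eta_j$, its support would be zero-dimensional, contradicting depth $\geq 1$ unless $\mathcal{E}'=0$. The hardest part will be making this indecomposability argument airtight and checking that the generator choices above are compatible with the ``nearly generic'' hypothesis needed to invoke Theorem~\ref{th:corrsing}; once that is done, chaining the three bijections proves the proposition.
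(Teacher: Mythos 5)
Your overall architecture (chaining $(1)\leftrightarrow(2)$ via Proposition~\ref{fullcondiciones}, $(3)\leftrightarrow(4)$ via strict transform, and $(1)\leftrightarrow(3)$ via the degeneracy module) and your construction of the candidate decomposition $M\cong\bigoplus_j\hat M_j$ from the splitting $\Sf{C}\cong\bigoplus_j n_{j*}\Ss{\tilde C_j}$ match the paper's proof. The gap is in the last step, where you claim that indecomposability of $\hat M_j$ ``reduces by Theorem~\ref{th:corrsing} to indecomposability of $n_{j*}\Ss{\tilde C_j}$ as an $\Ss{X}$-module.'' Theorem~\ref{th:corrsing} is a bijection between \emph{pairs} (module, system of sections/generators); it is not an additive equivalence of categories, and a hypothetical splitting $\hat M_j\cong M'\oplus M''$ does not induce a splitting of the particular degeneracy module $n_{j*}\Ss{\tilde C_j}$. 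What such a splitting gives you is a \emph{different} system of sections (generic sections of $M'$ concatenated with generic sections of $M''$) whose degeneracy module is $\Sf{C}'\oplus\Sf{C}''$, supported on a \emph{different} curve --- recall from Lemma~\ref{lem:ArtinVerdierminadap} that the degeneracy locus moves when the sections move. So your argument at the generic point of $C_j$, while correctly showing that $n_{j*}\Ss{\tilde C_j}$ itself is indecomposable, does not rule out a nontrivial decomposition of $\hat M_j$.

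The paper closes exactly this gap with an intersection-number argument that your proposal lacks: if some $\hat M_j$ split into two non-free summands, the degeneracy locus of $\Sf{M}$ for the corresponding split system of sections would be a disjoint union of smooth curvettes meeting the exceptional divisor transversely in strictly more than $k$ points (at least one per non-free indecomposable summand), whereas for generic sections it meets $E$ in exactly $k$ points. The split sections deform continuously to generic ones, the degeneracy loci deform flatly (their class is fixed by $c_1(\Sf{M})$), and a curve meeting $E$ transversely in more than $k$ points cannot deform into one meeting it in exactly $k$ points; contradiction. You need some version of this rigidity to finish. The rest of your proof --- the equivalences $(1)\leftrightarrow(2)$ and $(3)\leftrightarrow(4)$, and the identification of $M$ with the module built from any minimal generating set of $n_*\Ss{\tilde C}$ --- is sound and follows the paper.
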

\proof
The first and second sets are in a bijection via $\pi_*$. The third and fourth sets are in bijection via $\pi$. Now we show a bijection between 
the first and third set. For this is convenient to choose $\pi:\Rs\to X$ to be the minimal resolution adapted to $M$ (or at least dominating it).

Let $C$ be the support of $\Sf{C}$. By Lemma~\ref{lem:ArtinVerdierminadap} the support $\overline{C}$ of $\Sf{A}$ is the strict transform of $C$ by
$\pi$, and decomposes as a disjoint union $\overline{C}=\coprod_{j=1}^k\overline{C}_j$ of $k$ smooth curves meeting the exceptional divisor 
transversely at smooth points. By Proposition~\ref{prop:Aeslanormalizacion}, we have the isomorphism  $\Sf{C}\cong \pi_*\Ss{\tilde{C}}$. 

We have the isomorphism $\Ss{\overline{C}}=\oplus_{j=1}^k\Ss{\overline{C}_j}$. For each $j$ let $(\psi_{j,1},...,\psi_{j,r_j})$ be a minimal system 
of generators of $n_*\Ss{\tilde{C_j}}$ as a $\Ss{X}$-module. Applying Theorem~\ref{th:corrsing} to the pair 
$(n_*\Ss{\overline{C}},(\psi_{1,1},...,\psi_{k,r_k}))$ we obtain a reflexive $\Ss{X}$-module $M'$, which has no free factors by the minimality of the 
set of generators of $n_*\Ss{\overline{C}}$. If we denote by $M'_j$ the reflexive $\Ss{X}$-module obtained
by applying Theorem~\ref{th:corrsing} to the pair $(n_*\Ss{\overline{C_j}},(\psi_{j,1},...,\psi_{j,r_j}))$ then we have the direct sum decomposition
$M'=\oplus_{j=1}^kM'_j$.

By the proof of Theorem~\ref{th:corrsing} we have that each of $M$ and $M'$ are isomorphic to the dual of the module of relations of a minimal set 
of generators of $n_*\Ss{\overline{C}}$ as $\Ss{X}$-module. Hence $M$ and $M'$ are isomorphic. 

In order to finish the proof we have to show that each $M'_j$ is indecomposable. Let $M'_j=\oplus_{v=1}^{m_j} M'_{j,v}$ be the
decomposition in indecomposable reflexive modules. Since $M$ does not have free factors no one of the factors is free. For each $v=1,...,m_j$ choose
a generic system of sections $(\phi_{j,v,1},...,\phi_{j,v,n_{j,v}})$. Since the minimal resolution adapted to a reflexive module dominates the minimal 
resolution adapted to each of its direct factors we conclude that $\pi:\Rs\to X$ dominates the minimal resolution adapted to $M_j$. Denote by 
$\Sf{M}'_{j,v}$ be the full $\Ss{\Rs}$-module associated to $M'_{j,v}$. Let $(\Sf{A}_{j,v},(\psi_{j,v,1},...,\psi_{j,v,n_{j,v}}))$ by the pair 
associated with $(\Sf{M}'_{j,v},(\phi_{j,v,1},...,\phi_{j,v,n_{j,v}}))$ by Theorem~\ref{th:corres}. By Lemma~\ref{lem:ArtinVerdierminadap}, the support
of $\Sf{A}_{j,v}$ is a disjoint union of smooth curves meeting the exceptional divisor transversely at smooth points. Such collection of curves 
is non-empty since the $\Ss{\Rs}$-module $\Sf{M}'_{j,v}$ is non-free. 

The full $\Ss{\Rs}$-module $\Sf{M}$ associated to $M$ is the direct sum of the modules $\Sf{M}'_{j,v}$ when $j$ and $v$ vary. 
Taking the union of the system of sections $(\phi_{j,v,1},...,\phi_{j,v,n_{j,v}})$ letting $j$ and $v$ vary we find a system of sections of 
$\Sf{M}$, whose degeneracy locus is a disjoint union of smooth curves in $\Rs$ meeting the exceptional divisor transversely at smooth points. Moreover
there is at least a meting point for each pair of indexes $(j,v)$, since the full sheaf $\Sf{M}'_{j,v}$ is not free. Consequently, if at least one $M'_j$ is not indecomposable the number of meeting points is strictly larger than $k$. 

On the other hand we know that the degeneracy locus of $\Sf{M}$ for a system of generic sections is a disjoint union of smooth curves in $\Rs$ meeting 
the exceptional divisor transversely at $k$ smooth points. The system of sections obtained as the union of $(\phi_{j,v,1},...,\phi_{j,v,m_v})$ letting 
$j$ and $v$ vary, may be deformed continuously into a system of generic sections, and the corresponding degeneracy loci deform flatly. Since 
it is impossible to deform a curve meeting transversely the exceptional divisor in strictly more than $k$ points into a curve 
meeting transversely the exceptional divisor in precisely $k$ points, we deduce that each $M'_j$ is indecomposable.
\endproof

\section{The cohomology of full sheaves on normal Stein surfaces with Gorenstein singularities}
\label{sec:cohomology}
The main objective of this section is to prove to following theorem.

\begin{theorem}\label{formuladimensionM}
Let $X$ be a Stein normal surface with Gorenstein singularities. Let $M$ be a reflexive $\Ss{X}$-module of rank $r$. 
Let $\pi:\Rs\to X$ be a small resolution with respect to the Gorenstein form, let $Z_k$ be the canocical cycle at $\Rs$, (see Defintion~\ref{def:smallresgor}).
Let $\Sf{M}$ be the full $\Ss{\Rs}$-module associated to $M$. Let $d$ be the specialty defect of $\Sf{M}$. Then we have the equality
\begin{equation*}
\dimc{R^1 \pi_* \Sf{M}} = rp_g - [c_1(\Sf{M})] \cdot [Z_k]  + d.
\end{equation*}
\end{theorem}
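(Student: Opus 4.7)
The plan is to exploit the smallness hypothesis to reduce the computation of $R^1\pi_*\Sf{M}$ to cohomology on the one-dimensional scheme $Z_k$, and then to identify the relevant $H^0$ on $Z_k$ with the specialty-defect contribution via Grothendieck duality. Here is the outline.

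\textbf{Step 1: Reduction to cohomology on $Z_k$.} Since $\pi$ is small with respect to the Gorenstein form, $Z_k\geq 0$ and $\omega_{\Rs}\cong\Ss{\Rs}(-Z_k)$. I would tensor the short exact sequence
\begin{equation*}
0\to\omega_{\Rs}\to\Ss{\Rs}\to\Ss{Z_k}\to 0
\end{equation*}
with the locally free sheaf $\Sf{M}$, apply $R\pi_*$, and invoke Lemma~\ref{lema:ceroggsg} to kill $R^1\pi_*(\Sf{M}\otimes\omega_{\Rs})$. This yields the isomorphism $R^1\pi_*\Sf{M}\cong H^1(Z_k,\Sf{M}|_{Z_k})$ together with a short exact sequence
\begin{equation*}
0\to \pi_*(\Sf{M}\otimes\omega_{\Rs})\to M\to H^0(Z_k,\Sf{M}|_{Z_k})\to 0.
\end{equation*}

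\textbf{Step 2: Euler characteristic on $Z_k$.} By adjunction $\omega_{Z_k}=\omega_{\Rs}(Z_k)|_{Z_k}=\Ss{Z_k}$, so $Z_k$ is a Gorenstein one-dimensional scheme with trivial dualizing sheaf. Serre duality then forces $\chi(\Ss{Z_k})=0$, and Riemann--Roch on $Z_k$ gives
\begin{equation*}
\chi(\Sf{M}|_{Z_k})=r\chi(\Ss{Z_k})+[c_1(\Sf{M})]\cdot[Z_k]=[c_1(\Sf{M})]\cdot[Z_k].
\end{equation*}
Combined with Step 1 this produces
\begin{equation*}
\dimc{R^1\pi_*\Sf{M}}=\dimc{H^0(Z_k,\Sf{M}|_{Z_k})}-[c_1(\Sf{M})]\cdot[Z_k].
\end{equation*}

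\textbf{Step 3: Identifying $H^0(Z_k,\Sf{M}|_{Z_k})$ via duality.} The hard point is to show $\dimc{H^0(Z_k,\Sf{M}|_{Z_k})}=rp_g+d(\Sf{M})$, equivalently that the natural map $M\to H^0(Z_k,\Sf{M}|_{Z_k})$ has kernel of the right size. I would use Grothendieck duality for the proper birational map $\pi$ together with $\omega_X=\Ss{X}$ to obtain
\begin{equation*}
R\pi_*(\Sf{M}\otimes\omega_{\Rs})\cong R\Homs_X(R\pi_*\Sf{M}^{\smvee},\Ss{X}).
\end{equation*}
The complex $R\pi_*\Sf{M}^{\smvee}$ fits in a distinguished triangle with $\pi_*\Sf{M}^{\smvee}=M^{\smvee}$ (Lemma~\ref{lema:dualM}) in degree $0$ and with the finite-dimensional module $R^1\pi_*\Sf{M}^{\smvee}$ in degree $1$. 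Dualizing this triangle, and using that $M$ is reflexive so $R\Homs_X(M^{\smvee},\Ss{X})=M$ (Theorem~\ref{Th:Herzog}), together with local duality on the Gorenstein surface giving $R\Homs_X(R^1\pi_*\Sf{M}^{\smvee},\Ss{X})\cong(R^1\pi_*\Sf{M}^{\smvee})^{\smvee}[-2]$, yields a long exact sequence
\begin{equation*}
0\to\pi_*(\Sf{M}\otimes\omega_{\Rs})\to M\to (R^1\pi_*\Sf{M}^{\smvee})^{\smvee}\to R^1\pi_*(\Sf{M}\otimes\omega_{\Rs})\to 0.
\end{equation*}
Since the last term vanishes by Lemma~\ref{lema:ceroggsg}, comparison with the short exact sequence of Step~1 gives a canonical identification
\begin{equation*}
H^0(Z_k,\Sf{M}|_{Z_k})\cong(R^1\pi_*\Sf{M}^{\smvee})^{\smvee},
\end{equation*}
whose dimension is $rp_g+d(\Sf{M})$ by the definition of the specialty defect.

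\textbf{Combining and the expected obstacle.} Substituting the dimension formula of Step~3 into the expression from Step~2 produces the announced equality. The only delicate point is Step~3: one must handle Grothendieck duality for the birational proper map $\pi$ in the Stein analytic setting, and check that the two short exact sequences (the one coming from restriction to $Z_k$ and the one coming from duality) patch canonically to give the same kernel $\pi_*(\Sf{M}\otimes\omega_{\Rs})$ inside $M$. Once this compatibility is in place, the rest is bookkeeping with Euler characteristics on $Z_k$.
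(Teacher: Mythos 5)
Your argument is correct in substance but follows a genuinely different route from the paper's. The paper fixes a generic system of sections, passes to the degeneracy module $\Sf{A}$ and the finite-length module $\Sf{D}$ that records the defect, and extracts the two terms separately as $\dimc{\pi_*\Exts^1_{\Ss{\Rs}}(\Sf{A},\Ss{Z_K})}=[c_1(\Sf{M})]\cdot[Z_k]$ and $\dimc{\Ext^2_{\Ss{X}}(\Sf{D},\Ss{X})}=d$. You bypass the degeneracy module entirely: the Chern-class term comes from Riemann--Roch on the compact Gorenstein curve $Z_k$ (where $\omega_{Z_k}\cong\Ss{Z_k}$ forces $\chi(\Ss{Z_k})=0$), and the $rp_g+d$ term from dualizing the truncation triangle of $R\pi_*\Sf{M}^{\smvee}$ directly. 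Your version is shorter and requires no choice of sections; the paper's version has the advantage of staying inside the degeneracy-module formalism used throughout the rest of the paper. Both proofs rest on the same two pillars: Grothendieck duality for $\pi$ with $\omega_X\cong\Ss{X}$, and a compatibility between the duality map and the naive one.

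Two points should be made explicit to close the argument, and both are handled in the paper by devices you can borrow. First, the compatibility you flag at the end is exactly the content of the paper's Lemma~\ref{lema:tecnico}: the two injections $\pi_*(\Sf{M}\otimes\omega_{\Rs})\hookrightarrow M$ (one obtained by pushing forward $\Sf{M}\otimes\omega_{\Rs}\hookrightarrow\Sf{M}$, the other from Grothendieck duality) agree over $X\setminus\Sing(X)$, where $\pi$ is an isomorphism; their difference therefore takes values in sections of $M$ supported at finitely many points, and these vanish because $M$ has depth $2$. Hence the two images coincide and $H^0(Z_k,\Sf{M}|_{Z_k})\cong\Exts^2_{\Ss{X}}\left(R^1\pi_*\Sf{M}^{\smvee},\Ss{X}\right)$. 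Second, your final dimension count silently uses $\dimc{\Exts^2_{\Ss{X}}(T,\Ss{X})}=\dimc{T}$ for finite-length $T$ (note that the literal $\Ss{X}$-dual of such a $T$ is zero; what local duality produces is the $\Exts^2$); this is precisely the paper's last Proposition in Section~\ref{sec:cohomology}, proved by d\'evissage to skyscraper sheaves. With these two justifications inserted, your proof is complete.
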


This theorem will be very important in the following section. It will allow us to prove that a full special sheaf on a Gorenstein surface  is determined by its first Chern class in the minimal adapted resolution. An inmediate Corollary of the theorem and Proposition~\ref{prop:minadapspproperty} 
shows how to use the cohomology of full sheaves as a resolution invariant for reflexive modules.

\begin{corollary}
\label{cor:dimMadap}
Let $X$ be a Stein normal surface with Gorenstein singularities.
Let $M$ be a special reflexive $\Ss{X}$-module of rank $r$. Let $\pi:\Rs\to X$ be a small resolution with respect to the 
Gorenstein form. Let $\Sf{M}$ be the full $\Ss{\Rs}$-module associated to $M$.
Then the resolution is the minimal adapted resolution if and only if we have the equality $\dimc{R^1\pi_*\Sf{M}}=rp_g$.
\end{corollary}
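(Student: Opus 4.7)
The plan is to deduce the corollary as a direct consequence of Theorem~\ref{formuladimensionM} together with Proposition~\ref{prop:minadapspproperty}, with the specialty defect term eliminated via Theorem~\ref{th:characspecial}. First, since $M$ is special, Definition~\ref{def:espmodule} together with Theorem~\ref{th:characspecial} guarantees that $\Sf{M}$ is a special full sheaf on every resolution, and in particular on the given $\pi$. Thus $d(\Sf{M})=0$, and the formula of Theorem~\ref{formuladimensionM} collapses to
\begin{equation*}
\dimc{R^{1}\pi_{\ast}\Sf{M}}=rp_{g}-[c_{1}(\Sf{M})]\cdot[Z_{k}].
\end{equation*}
The whole corollary is therefore equivalent to the numerical statement: $\pi$ is the minimal adapted resolution if and only if $[c_{1}(\Sf{M})]\cdot[Z_{k}]=0$.

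For the forward direction I would argue as follows. Choose a generic system of $r$ sections of $\Sf{M}$; by Lemma~\ref{lem:ArtinVerdierminadap} the associated degeneracy module has as support a smooth multi-curvette $D\subset\Rs$ meeting $E$ transversely at smooth points, and $[D]$ represents $c_{1}(\Sf{M})$. Proposition~\ref{prop:minadapspproperty}, applied at the minimal adapted resolution, says that these meeting points lie on components where the minimal canonical order vanishes. In the Gorenstein situation, Remark~\ref{rem:obsmco}(2) identifies the minimal canonical order at $E_{i}$ with the integer $q_{i}$, so $D\cdot E_{i}=0$ whenever $q_{i}\neq 0$. Writing $Z_{k}=\sum(-q_{i})E_{i}$, every term in $[D]\cdot[Z_{k}]=\sum(-q_{i})(D\cdot E_{i})$ vanishes, giving the desired equality.

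For the converse direction the plan is to translate the intersection-theoretic vanishing back into the conductor language of Section~\ref{sec:valuative} and then invoke the characterization of the minimal adapted resolution given in Proposition~\ref{prop:minadapnumchar} (specialized to the Gorenstein case in Corollary~\ref{cor:minadapnumchargor}). Concretely, assume $[c_{1}(\Sf{M})]\cdot[Z_{k}]=0$. Pick generic sections; by smallness $Z_{k}\geq 0$ and by construction $c_{1}(\Sf{M})$ is represented by an effective degeneracy curve $D$. The vanishing of the intersection forces the meeting numbers $D\cdot E_{i}$ to be zero at every component appearing in $Z_{k}$, i.e.\ at every $E_{i}$ with $q_{i}\neq 0$. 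By Remark~\ref{rem:gorordcan} this is exactly the statement that the canonical vector of the strict transform $\overline{C}$ is trivial on the branches meeting $E$, which via Remark~\ref{rem:valuative} and Corollary~\ref{cor:minadapnumchargor} translates into $cond(\mathfrak{K}_{(\Sf{A},(\psi_{1},\dots,\psi_{r}))})=(0,\dots,0)$. Hence, by Proposition~\ref{prop:minadapnumchar}(3), the resolution $\pi$ realizes the minimal adapted resolution.

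I expect the main obstacle to be precisely this converse: the forward direction is just bookkeeping on the Gorenstein cycle, but the converse requires pinning down that the intersection-theoretic vanishing is not merely preserved under blowing up the canonical locus but actually characterizes the minimal one among small resolutions. The technical glue is the dictionary between Chern-class numerics and the orders of differential forms along the branches of the degeneracy locus, supplied by Section~\ref{sec:valuative}; once that dictionary is invoked, Proposition~\ref{prop:minadapnumchar} closes the argument.
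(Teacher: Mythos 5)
Your reduction to the numerical statement is correct: $d(\Sf{M})=0$ holds at every resolution directly from Definition~\ref{def:espmodule} (Theorem~\ref{th:characspecial} is the converse implication and is not actually needed), and Theorem~\ref{formuladimensionM} then gives $\dimc{R^1\pi_*\Sf{M}}=rp_g-[c_1(\Sf{M})]\cdot[Z_k]$. The forward direction is also correct and is exactly the "immediate" argument the paper has in mind via Proposition~\ref{prop:minadapspproperty} and Remark~\ref{rem:obsmco}.

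The converse direction has a genuine gap at the last step. Your chain of deductions --- vanishing of each term $(-q_i)(D\cdot E_i)$ by positivity, triviality of the canonical vector $(d_1(\overline{C}),\dots,d_l(\overline{C}))$, the equality $cond(\mathfrak{C})=(0,\dots,0)$ supplied by specialty through Proposition~\ref{prop:Aeslanormalizacion}, hence $cond(\mathfrak{K}_{(\Sf{A},(\psi_1,\dots,\psi_r))})=(0,\dots,0)$ --- is sound, but Proposition~\ref{prop:minadapnumchar}(3) (equivalently Corollary~\ref{cor:minadapnumchargor}(3)) asserts only that this equality is equivalent to $\pi$ \emph{dominating} the minimal adapted resolution, not to $\pi$ coinciding with it. Moreover the gap cannot be repaired by a further argument: if $\pi_1$ is the minimal adapted resolution and one blows up a point of a component $E_i$ with $q_i\le -1$ lying away from the degeneracy curve, the new resolution is still small with respect to the Gorenstein form (the new component has order $q_i+1\le 0$), the associated full sheaf is the pullback of $\Sf{M}_1$, and $\dimc{R^1\pi_*\Sf{M}}=rp_g$ persists; this already happens for the cone over an elliptic curve. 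So what your argument genuinely establishes is that, among small resolutions, $\dimc{R^1\pi_*\Sf{M}}=rp_g$ if and only if $\pi$ dominates the minimal adapted resolution; the "only if" clause of the corollary must be read in this weaker form (which is all that is used later, e.g.\ in the proof of Theorem~\ref{th:Chernresolucionadapted}, where only the forward implication is invoked).
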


\begin{proof}[Proof of Theorem~\ref{formuladimensionM}]
The Theorem is local in the base $X$. Therefore we may assume that $(X,x)$ is a normal Gorenstein germ. Then we have dualizing modules 
$\omega_X=\Ss{X}$ and $\omega_{\Rs}=\Lambda^2\Omega^1_{\Rs}=\pi^{!}\Ss{X}$. 

The proof occupies the rest of the section, including some intermediate results, which we will single out as separate Lemmata and Propositions.
Let us start with some preliminary work. 

Let $M$, $\pi:\Rs\to X$, $Z_k$, $\Sf{M}$, $r$ and $d$  be as in the statement of the Theorem. 
Take $r$ generic sections and consider the exact sequence obtained by the sections
\begin{equation}
\label{exctseq:directaM9}
0 \to  \Ss{\Rs}^r \to \Sf{M} \to \Sf{A}' \to 0,
\end{equation}
and its dual
\begin{equation}
\label{exctseq:directaN9}
0 \to \Sf{N} \to \Ss{\Rs}^r \to \Sf{A} \to 0,
\end{equation}
where $\Sf{A}= \Exts_{\Ss{\Rs}}^1 \left(\Sf{A}', \Ss{\Rs} \right)$.

Since dualizing the first morphism of~(\ref{exctseq:directaN9}) we recover the first morphism of~(\ref{exctseq:directaM9}) back, we deduce that
\begin{equation}\label{Adobleext}
\Sf{A}' \cong  \Exts_{\Ss{\Rs}}^1 \left(\Sf{A}, \Ss{\Rs} \right).
\end{equation}

Applying the functor $\pi_* -$ to the exact sequence \eqref{exctseq:directaN9} we get
\begin{equation*}
\begin{tikzpicture}
  \matrix (m)[matrix of math nodes,
    nodes in empty cells,text height=1.5ex, text depth=0.25ex,
    column sep=2em,row sep=0.5em] {
 0 & N & \Ss{X}^r & \pi_* \Sf{A} & R^1 \pi_* \Sf{N} & R^1 \pi_* \Ss{\Rs}^r & 0, \\};
\foreach \x [remember=\x as \lastx (initially 1)] in {2,...,7}
{
\draw[-stealth] (m-1-\lastx) -- (m-1-\x);
}
\end{tikzpicture}
\end{equation*}
where $N$ is $\pi_* \Sf{N}$ and, by Lemma~\ref{lema:dualM}, the module $N$ is equal to the module $M^{\smvee}$.

The last exact sequence can be split as follows
\begin{equation*}
\begin{tikzpicture}
  \matrix (m)[matrix of math nodes,
    nodes in empty cells,text height=1.5ex, text depth=0.25ex,
    column sep=2em,row sep=0.5em] {
 0 & N & \Ss{X}^r & \Sf{C} & 0, \\
0 & \Sf{C} & \pi_* \Sf{A} & \Sf{D} & 0, \\
0 & \Sf{D} & R^1 \pi_* \Sf{N} & R^1 \pi_* \Ss{\Rs}^r & 0, \\};
\foreach \x [remember=\x as \lastx (initially 1)] in {2,...,5}
{
\draw[-stealth] (m-1-\lastx) -- (m-1-\x);
\draw[-stealth] (m-2-\lastx) -- (m-2-\x);
\draw[-stealth] (m-3-\lastx) -- (m-3-\x);
}
\end{tikzpicture}
\end{equation*}
where $\text{lenght}(\Sf{D})=d$.

Dualizing the first and second exact sequence we obtain
\begin{equation}\label{exact:cortaM}
\begin{tikzpicture}
  \matrix (m)[matrix of math nodes,
    nodes in empty cells,text height=1.5ex, text depth=0.25ex,
    column sep=2em,row sep=0.5em] {
0& \Ss{X}^r & M & \Ext_{\Ss{X}}^1 \left(\Sf{C}, \Ss{X} \right) & 0,	\\};
\foreach \x [remember=\x as \lastx (initially 1)] in {2,...,5}
{
\draw[-stealth] (m-1-\lastx) -- (m-1-\x);
}
\draw[-stealth] (m-1-3) edge node [auto] {$h$} (m-1-4);
\end{tikzpicture}
\end{equation}
\begin{equation} \label{exact:CyD}
\begin{tikzpicture}
  \matrix (n)[matrix of math nodes,
    nodes in empty cells,text height=1.5ex, text depth=0.25ex,
    column sep=2em,row sep=0.5em] {
0&\Ext_{\Ss{X}}^1 \left(\pi_* \Sf{A}, \Ss{X} \right) & \Ext_{\Ss{X}}^1 \left(\Sf{C}, \Ss{X} \right) & \Ext_{\Ss{X}}^2 \left(\Sf{D}, \Ss{X} \right) & 0. \\};

\foreach \x [remember=\x as \lastx (initially 1)] in {2,...,5}
{
\draw[-stealth] (n-1-\lastx) -- (n-1-\x);
}
\draw[-stealth] (n-1-2) edge node [auto] {$i$} (n-1-3);
\end{tikzpicture}
\end{equation}

Applying the functor $\pi_* -$ to the exact sequence \eqref{exctseq:directaM9}, using the identification \eqref{Adobleext} 
and comparing with the exact sequence \eqref{exact:cortaM} we obtain the diagram
\begin{equation}\label{diagram:comparacionM}
\begin{tikzpicture}
  \matrix (m)[matrix of math nodes,
    nodes in empty cells,text height=1.5ex, text depth=0.25ex,
    column sep=2em,row sep=2em] {
		0 & \Ss{X}^r & M & \Ext_{\Ss{X}}^1 \left(\Sf{C}, \Ss{X} \right) & 0\\
		0 & \Ss{X}^r & M & \pi_* \Exts_{\Ss{\Rs}}^1 \left(\Sf{A}, \Ss{\Rs} \right) & R^1 \pi_* \Ss{\Rs}^r & R^1 \pi_* \Sf{M} & 0 \\};
\draw[-stealth] (m-1-1) -- (m-1-2);
\draw[-stealth] (m-1-2) -- (m-1-3);
\draw[-stealth] (m-1-3) edge node [auto] {$h$} (m-1-4);
\draw[-stealth] (m-1-4) -- (m-1-5);
\draw[-stealth] (m-2-1) -- (m-2-2);
\draw[-stealth] (m-2-2) -- (m-2-3);
\draw[-stealth] (m-2-3) edge node [auto] {$h$} (m-2-4);
\draw[-stealth] (m-2-4) -- (m-2-5);
\draw[-stealth] (m-2-5) -- (m-2-6);
\draw[-stealth] (m-2-6) -- (m-2-7);
\draw[-stealth] (m-1-2) edge node [auto] {$Id$} (m-2-2);
\draw[-stealth] (m-1-3) edge node [auto] {$Id$} (m-2-3);
\draw[-stealth] (m-1-4)  edge node [auto] {$\theta$} (m-2-4);
\end{tikzpicture}
\end{equation}
where $Id$ is the identity and $\theta$ is the map that makes the diagram commute.

Since $(X,x)$ is Gorenstein and $\pi:\Rs\to X$ is small with respect to the Gorenstein form we have the exact sequence
\begin{equation}
\label{eq:exactseqCs9}
\begin{tikzpicture}
  \matrix (m)[matrix of math nodes,
    nodes in empty cells,text height=1.5ex, text depth=0.25ex,
    column sep=2em,row sep=0.5em] {
0& \Cs{\Rs}  & \Ss{\Rs} & \Ss{Z_K} & 0.	\\};
\foreach \x [remember=\x as \lastx (initially 1)] in {2,...,5}
{
\draw[-stealth] (m-1-\lastx) -- (m-1-\x);
}
\draw[-stealth] (m-1-2) edge node [auto] {$c$} (m-1-3);
\end{tikzpicture}
\end{equation}

Applying the functor $\pi \left( \Exts_{\Ss{\Rs}} \left( \Sf{A}, - \right) \right)$ to the map $c$ we obtain the map
\begin{equation}\label{mapeoC}
\pi \left( \Exts_{\Ss{\Rs}} \left( \Sf{A}, - \right) \right)(c) \colon \pi \left( \Exts_{\Ss{\Rs}} \left( \Sf{A}, \Cs{\Rs} \right) \right)  \to \pi \left( \Exts_{\Ss{\Rs}} \left( \Sf{A}, \Ss{\Rs} \right) \right). 
\end{equation}

Abusing notation, let us denote the previous map by $c$.

Since the singularity $(X,x)$ is Gorenstein, the ring $\Ss{X}$ is the dualizing module for the singularity \cite[Theorem 3.3.7]{BrHe}.
In this case the Grothendieck duality for the map $\pi$ \cite[Ch.~VII]{Har1} establish the isomorphism 
\begin{equation*}
R \pi_* R \Homs \left(-, \Cs{\Rs} \right) \cong R \Hom_{\Ss{X}} \left(R \pi_* -, \Ss{X} \right).
\end{equation*}
Applying this for $\Sf{A}$, and using Grothendieck spectral sequence for the composition of two functors we obtain
an isomorphism
\begin{equation}
\label{eq:isofrakg9}
\mathfrak{g}:\pi_* \Exts_{\Ss{\Rs}}^1 \left(\Sf{A}, \Cs{\Rs} \right) \cong\Ext_{\Ss{X}}^1 \left(\pi_* \Sf{A}, \Ss{X} \right).
\end{equation}

Now using \eqref{exact:CyD}, \eqref{diagram:comparacionM}, \eqref{mapeoC} and \eqref{eq:isofrakg9}, we have the diagram
\begin{equation}\label{diagramacc}
\begin{tikzpicture}
  \matrix (m)[matrix of math nodes,
    nodes in empty cells,text height=1.5ex, text depth=0.25ex,
    column sep=2em,row sep=2em] {
		\pi_* \Exts_{\Ss{\Rs}}^1 \left(\Sf{A}, \Cs{\Rs} \right) & \Ext_{\Ss{X}}^1 \left(\pi_* \Sf{A}, \Ss{X} \right) & \Ext_{\Ss{X}}^1 \left(\Sf{C}, \Ss{X} \right) & \pi_* \Exts_{\Ss{\Rs}}^1 \left(\Sf{A}, \Ss{\Rs} \right) \\
		\pi_* \Exts_{\Ss{\Rs}}^1 \left(\Sf{A}, \Cs{\Rs} \right) & &  & \pi_* \Exts_{\Ss{\Rs}}^1 \left(\Sf{A}, \Ss{\Rs} \right)\\};
\draw[-stealth] (m-1-1) edge node [auto] {$\mathfrak{g}$} (m-1-2);
\draw[-stealth] (m-1-2) edge node [auto] {$i$} (m-1-3);
\draw[-stealth] (m-1-3) edge node [auto] {$\theta$} (m-1-4);
\draw[-stealth] (m-1-1) edge node [auto] {$Id$} (m-2-1);
\draw[-stealth] (m-1-4) edge node [auto] {$Id$} (m-2-4);
\draw[-stealth] (m-2-1) edge node [auto] {$c$} (m-2-4);
\end{tikzpicture}
\end{equation}

\begin{lemma}\label{lema:tecnico}
The diagram commutes.
\end{lemma}
\proof
Denote by $\mathfrak{c}$ the map given by the composition $\theta \circ{} i \circ{} \mathfrak{g}$.
Consider the map $f:= (c - \mathfrak{c})$.
Since the map $\pi \colon \Rs \to X$ is an isomorphism outside the exceptional divisor, we have that for any section $s$ of 
$\pi_* \Exts_{\Ss{\Rs}}^1 \left(\Sf{A}, \Cs{\Rs} \right)$, the section $f(s)$ is supported in the exceptional divisor, 
hence $f(s) \in H^0_E\left(\Exts_{\Ss{\Rs}}^1 \left(\Sf{A}, \Ss{\Rs} \right) \right)$ but this cohomology group is zero.

Therefore for any section $s$ of $\pi_* \Exts_{\Ss{\Rs}}^1 \left(\Sf{A}, \Cs{\Rs} \right)$ we have that $f(s)=0$ which it is equivalent to say 
that the maps $c$ and $\mathfrak{c}$ coincide. 
\endproof

\begin{proposition}\label{proposition:dimM}
We have the equality
\begin{equation*}
\dimc{R^1 \pi_* \Sf{M}} = rp_g - \dimc{\pi_* \Exts_{\Ss{\Rs}}^1 \left(\Sf{A}, \Ss{Z_k} \right)}  + \dimc{\Ext_{\Ss{X}}^2 \left(\Sf{D}, \Ss{X} \right)}.
\end{equation*}
\end{proposition}
\proof
Applying the functor $\Homs(-,-)$ to the exact sequences \eqref{exctseq:directaN9} and \eqref{eq:exactseqCs9} we get
\begin{equation*}%{\label{map:v}}
\begin{tikzpicture}
  \matrix (m)[matrix of math nodes,
    nodes in empty cells,text height=1.5ex, text depth=0.25ex,
    column sep=2.5em,row sep=2em] {
		  & 0 & 0 & 0 & \\
		0 & \Cs{\Rs}^r & \Sf{M} \otimes \Cs{\Rs} & \Exts_{\Ss{\Rs}}^1 \left(\Sf{A}, \Cs{\Rs} \right) & 0 \\
		0 & \Ss{\Rs}^r & \Sf{M} & \Exts_{\Ss{\Rs}}^1 \left(\Sf{A}, \Ss{\Rs} \right) & 0 \\
		0 & \Ss{Z_k}^r & \Sf{M} \otimes \Ss{Z_k} & \Exts_{\Ss{\Rs}}^1 \left(\Sf{A}, \Ss{Z_k} \right) & 0 \\
		  & 0 & 0 & 0 & \\};
\foreach \y [remember=\y as \lasty (initially 2)] in {3,4}
{
\foreach \x [remember=\x as \lastx (initially 2)] in {3,4}
{
\draw[-stealth] (m-\y-\lastx) -- (m-\y-\x);
\draw[-stealth] (m-\lasty-\lastx) -- (m-\y-\lastx);
}
}
\draw[-stealth] (m-2-4) edge node [auto] {$c$} (m-3-4);
\draw[-stealth] (m-1-2) -- (m-2-2);
\draw[-stealth] (m-1-4) -- (m-2-4);
\draw[-stealth] (m-4-1) -- (m-4-2);
\draw[-stealth] (m-1-3) -- (m-2-3);
\draw[-stealth] (m-2-1) -- (m-2-2);
\draw[-stealth] (m-2-2) -- (m-2-3);
\draw[-stealth] (m-2-3) -- (m-2-4);
\draw[-stealth] (m-2-4) -- (m-2-5);
\draw[-stealth] (m-3-1) -- (m-3-2);
\draw[-stealth] (m-3-4) -- (m-3-5);
\draw[-stealth] (m-4-4) -- (m-4-5);
\draw[-stealth] (m-4-2) -- (m-5-2);
\draw[-stealth] (m-4-3) -- (m-5-3);
\draw[-stealth] (m-4-4) -- (m-5-4);
\draw[-stealth] (m-2-4) -- (m-3-4);
\draw[-stealth] (m-3-4) -- (m-4-4);
\end{tikzpicture}
\end{equation*}

Applying the functor $\pi_* -$ to the last commutative diagram we get
\begin{equation*}
\begin{tikzpicture}
  \matrix (m)[matrix of math nodes,
    nodes in empty cells,text height=1.5ex, text depth=0.25ex,
    column sep=2em,row sep=2em] {
		  & 0 & 0 & 0 & \\
		0 & \pi_* \Cs{\Rs}^r & \pi_* \left(\Sf{M} \otimes \Cs{\Rs} \right) & \pi_*\Exts_{\Ss{\Rs}}^1 \left(\Sf{A}, \Cs{\Rs} \right) & 0 & 0 & 0\\
		0 & \Ss{X}^r & M & \pi_* \Exts_{\Ss{\Rs}}^1 \left(\Sf{A}, \Ss{\Rs} \right) & R^1 \pi_* \Ss{\Rs}^r & R^1 \pi_* \Sf{M} & 0 \\
		0 & \pi_* \Ss{Z_k}^r & \pi_* \left( \Sf{M} \otimes \Ss{Z_k} \right) & \pi_*\Exts_{\Ss{\Rs}}^1 \left(\Sf{A}, \Ss{Z_k} \right) &  0 & 0 &\\};
\foreach \y [remember=\y as \lasty (initially 2)] in {3}
{
\foreach \x [remember=\x as \lastx (initially 2)] in {3,...,7}
{
\draw[-stealth] (m-\y-\lastx) -- (m-\y-\x);
\draw[-stealth] (m-\lasty-\lastx) -- (m-\y-\lastx);
}
}
\draw[-stealth] (m-3-3) edge node [auto] {$h$} (m-3-4);
\draw[-stealth] (m-3-4) edge node [auto] {$\alpha$} (m-3-5);
\draw[-stealth] (m-2-4) edge node [auto] {$c$} (m-3-4);
\draw[-stealth] (m-1-4) -- (m-2-4);
\draw[-stealth] (m-4-1) -- (m-4-2);
\draw[-stealth] (m-2-7) -- (m-3-7);
\draw[-stealth] (m-3-5) -- (m-4-5);
\draw[-stealth] (m-3-6) -- (m-4-6);
\draw[-stealth] (m-1-2) -- (m-2-2);
\draw[-stealth] (m-1-3) -- (m-2-3);
\draw[-stealth] (m-2-1) -- (m-2-2);
\draw[-stealth] (m-2-2) -- (m-2-3);
\draw[-stealth] (m-2-3) -- (m-2-4);
\draw[-stealth] (m-2-4) -- (m-2-5);
\draw[-stealth] (m-2-5) -- (m-2-6);
\draw[-stealth] (m-2-6) -- (m-2-7);
\draw[-stealth] (m-3-1) -- (m-3-2);
\draw[-stealth] (m-4-2) -- (m-4-3);
\draw[-stealth] (m-4-3) -- (m-4-4);
\draw[-stealth] (m-3-2) -- (m-4-2);
\draw[-stealth] (m-3-3) -- (m-4-3);
\draw[-stealth] (m-3-4) -- (m-4-4);
\draw[-stealth] (m-4-4) -- (m-4-5);
\draw[densely dotted,-stealth] (m-4-2) to [out = -90, in = 90, looseness = .7] (m-2-5);
\draw[densely dotted,-stealth] (m-4-3) to [out = -90, in = 90, looseness = .7] (m-2-6);
\draw[densely dotted,-stealth] (m-4-4) to [out = -90, in = 90, looseness = .7] (m-2-7);
\end{tikzpicture}
\end{equation*}

By this diagram we get
\begin{align}
\begin{split}\label{align:cuentas1}
\dimc{R^1 \pi_* \Sf{M}} &= rp_g - \dimc{\text{Im}(\alpha)},\\
\dimc{\pi_*\Exts_{\Ss{\Rs}}^1 \left(\Sf{A}, \Ss{Z_k} \right)} &= \dimc{\pi_* \Exts_{\Ss{\Rs}}^1 \left(\Sf{A}, \Ss{\Rs} \right) / \pi_* \Exts_{\Ss{\Rs}}^1 \left(\Sf{A}, \Cs{\Rs} \right)},
\end{split}
\end{align}
and
\begin{equation}\label{alphaauxiliar}
\dimc{\im \alpha} =\dimc{ \pi_* \Exts_{\Ss{\Rs}}^1 \left(\Sf{A}, \Ss{\Rs} \right) / \ker \alpha} = \dimc{ \pi_* \Exts_{\Ss{\Rs}}^1 \left(\Sf{A}, \Ss{\Rs} \right) / \im h}.
\end{equation}

Now by \eqref{diagram:comparacionM} we have $\im h = \Ext_{\Ss{X}}^1 \left(\Sf{C}, \Ss{X} \right)$.

Hence by the previous equality, \eqref{alphaauxiliar}, \eqref{exact:CyD} and \eqref{eq:isofrakg9} we get
\begin{align}
\begin{split}\label{imagenalfa1}
\dimc{\im \alpha} &=\dimc{ \pi_* \Exts_{\Ss{\Rs}}^1 \left(\Sf{A}, \Ss{\Rs} \right) / \im h}\\
&=\dimc{ \pi_* \Exts_{\Ss{\Rs}}^1 \left(\Sf{A}, \Ss{\Rs} \right) / \Ext_{\Ss{X}}^1 \left(\Sf{C}, \Ss{X} \right)} \\
&= \dimc{\pi_* \Exts_{\Ss{\Rs}}^1 \left(\Sf{A}, \Ss{\Rs} \right) / \Ext_{\Ss{X}}^1 \left(\pi_* \Sf{A}, \Ss{X} \right)} -\dimc{\Ext_{\Ss{X}}^2 \left(\Sf{D}, \Ss{X} \right)}     \\
&= \dimc{\pi_* \Exts_{\Ss{\Rs}}^1 \left(\Sf{A}, \Ss{\Rs} \right) / \pi_* \Exts_{\Ss{\Rs}}^1 \left(\Sf{A}, \Cs{\Rs} \right)} -\dimc{\Ext_{\Ss{X}}^2 \left(\Sf{D}, \Ss{X} \right)}     .
\end{split}
\end{align}

Let $c$ and $\mathfrak{c}$ be the morphisms given in the diagram \eqref{diagramacc}. Now by \eqref{imagenalfa1} and by Lemma \ref{lema:tecnico} we have
\begin{align}
\begin{split}\label{align:cuentas2}
\dimc{\im \alpha} &= \dimc{\pi_* \Exts_{\Ss{\Rs}}^1 \left(\Sf{A}, \Ss{\Rs} \right) / \pi_* \Exts_{\Ss{\Rs}}^1 \left(\Sf{A}, \Cs{\Rs} \right)} -\dimc{\Ext_{\Ss{X}}^2 \left(\Sf{D}, \Ss{X} \right)}\\
&= \dimc{\pi_* \Exts_{\Ss{\Rs}}^1 \left(\Sf{A}, \Ss{\Rs} \right) / \im \mathfrak{c}} -\dimc{\Ext_{\Ss{X}}^2 \left(\Sf{D}, \Ss{X} \right)}\\
&= \dimc{\pi_* \Exts_{\Ss{\Rs}}^1 \left(\Sf{A}, \Ss{\Rs} \right) / \im c} -\dimc{\Ext_{\Ss{X}}^2 \left(\Sf{D}, \Ss{X} \right)}\\
&= \dimc{\pi_* \Exts_{\Ss{\Rs}}^1 \left(\Sf{A}, \Ss{\Rs} \right) / \pi_* \Exts_{\Ss{\Rs}}^1 \left(\Sf{A}, \Cs{\Rs} \right)} - \dimc{\Ext_{\Ss{X}}^2 \left(\Sf{D}, \Ss{X} \right)}.
\end{split}
\end{align}

Therefore by \eqref{align:cuentas1} and \eqref{align:cuentas2} we get the desired equality.
\endproof

By the Proposition~\ref{proposition:dimM} to prove Theorem~\ref{formuladimensionM} it is enough to prove the equalities 
\begin{equation*}  
\dimc{\pi_* \Exts_{\Ss{\Rs}}^1 \left(\Sf{A}, \Ss{Z_k} \right)} = [c_1(\Sf{M})] \cdot [Z_k].
\end{equation*}
\begin{equation*}  
\dimc{\Ext_{\Ss{X}}^2 \left(\Sf{D}, \Ss{X} \right)}=d.
\end{equation*}

The following lemma gives us the first equality.

\begin{lemma}\label{lemma:A1A2}
Let $\Sf{A}_1$ and $\Sf{A}_2$ be two Cohen-Macaulay $\Ss{\Rs}$-modules of dimension one such that $\Sf{A}_1$ is contained in  $\Sf{A}_2$, the quotient $\Sf{A}_2 / \Sf{A}_1$ is finitely supported and the support of each sheaf intersects the exceptional divisor in finitely many points. Then we have the equality
\begin{equation*}
\dimc{\pi_* \Exts_{\Ss{\Rs}}^1\left(\Sf{A}_1, \Ss{Z_K} \right)} = \dimc{\pi_* \Exts_{\Ss{\Rs}}^1\left(\Sf{A}_2, \Ss{Z_K} \right)}.
\end{equation*}
\end{lemma}
\proof
Let $\Sf{A}_1$ and $\Sf{A}_2$ as in the statement. We have the exact sequence
\begin{equation*}
0 \to \Sf{A}_1 \to \Sf{A}_2 \to \Sf{A}_2/\Sf{A}_1 \to 0.
\end{equation*}

Applying the functor $\Homs_{\Ss{\Rs}}(-, \Ss{Z_K})$ to the last exact sequence we get
\begin{equation}\label{diagram:A1A2}
\begin{tikzpicture}
  \matrix (m)[matrix of math nodes,
    nodes in empty cells,text height=1.5ex, text depth=0.25ex,
    column sep=2.5em,row sep=2em] { 
		0 & \Homs_{\Ss{\Rs}}\left(\Sf{A}_2/\Sf{A}_1, \Ss{Z_K} \right) & \Homs_{\Ss{\Rs}}\left(\Sf{A}_2, \Ss{Z_K} \right) & \Homs_{\Ss{\Rs}}\left(\Sf{A}_1, \Ss{Z_K} \right) &  \\
		& \Exts_{\Ss{\Rs}}^1\left(\Sf{A}_2/\Sf{A}_1, \Ss{Z_K} \right) & \Exts_{\Ss{\Rs}}^1\left(\Sf{A}_2, \Ss{Z_K} \right) & \Exts_{\Ss{\Rs}}^1\left(\Sf{A}_1, \Ss{Z_K} \right) & \\
		& \Exts_{\Ss{\Rs}}^2\left(\Sf{A}_2/\Sf{A}_1, \Ss{Z_K} \right) & \Exts_{\Ss{\Rs}}^2\left(\Sf{A}_2, \Ss{Z_K} \right) & \Exts_{\Ss{\Rs}}^2\left(\Sf{A}_1, \Ss{Z_K} \right) & 0\\};
\foreach \y [remember=\y as \lasty (initially 1)] in {1, 2,3}
{
\foreach \x [remember=\x as \lastx (initially 2)] in {3,4}
{
\draw[-stealth] (m-\y-\lastx) -- (m-\y-\x);
}
}
\draw[-stealth] (m-1-1) -- (m-1-2);
\draw[-stealth] (m-3-4) -- (m-3-5);
\draw[densely dotted,-stealth] (m-1-4) to [out=355, in=175] (m-2-2);
\draw[densely dotted,-stealth] (m-2-4) to [out=355, in=175] (m-3-2);
\end{tikzpicture}
\end{equation}

Since $\Sf{A}_1$ and $\Sf{A}_2$ are Cohen-Macaulay sheaves of dimension one and the support of each sheaf intersects the exceptional divisor finitely we have
\begin{align}
\begin{split}\label{cerosA1A2}
\Homs_{\Ss{\Rs}}\left(\Sf{A}_2, \Ss{Z_K} \right) &= \Homs_{\Ss{\Rs}}\left(\Sf{A}_1, \Ss{Z_K} \right) = 0, \\
\Exts_{\Ss{\Rs}}^2\left(\Sf{A}_2, \Ss{Z_K} \right) &= \Exts_{\Ss{\Rs}}^2\left(\Sf{A}_1, \Ss{Z_K} \right) = 0. \end{split}
\end{align}
The second equality uses Auslander-Buchbaum formula and the fact that each $\Sf{A}_i$ has depth $1$.

Since all the sheaves in \eqref{diagram:A1A2} are supported in a finite set we can work locally, therefore we assume that $\Ss{\Rs}$ is  $\mathbb{C}[x,y]$ and $Z_K$ is $\Ss{\Rs}/(f)$ for some function $f$.

Now by \eqref{diagram:A1A2} and \eqref{cerosA1A2} we just need to prove the following equality
\begin{equation}\label{A1A2reduccion}
\dimc{\Exts_{\Ss{\Rs}}^1\left(\Sf{A}_2/\Sf{A}_1, \Ss{Z_K} \right)} = \dimc{\Exts_{\Ss{\Rs}}^2\left(\Sf{A}_2/\Sf{A}_1, \Ss{Z_K} \right)}.
\end{equation}

Consider the following resolution of $\Ss{Z_K}$
\begin{equation}\label{resolucionC}
\begin{tikzpicture}
  \matrix (m)[matrix of math nodes,
    nodes in empty cells,text height=1.5ex, text depth=0.25ex,
    column sep=2em,row sep=0.5em] {
 0 & \Ss{\Rs} & \Ss{\Rs} & \Ss{Z_K} & 0. \\};
\draw[-stealth] (m-1-1) -- (m-1-2);
\draw[-stealth] (m-1-2) edge node [auto] {$\cdot f$} (m-1-3);
\draw[-stealth] (m-1-3) -- (m-1-4);
\draw[-stealth] (m-1-4) -- (m-1-5);
\end{tikzpicture}
\end{equation}

Applying the functor $\Homs_{\Ss{\Rs}}\left(\Sf{A}_2 / \Sf{A}_1, - \right)$ to the last exact sequence we get
\begin{equation}\label{diagramaA1A2}
\begin{tikzpicture}
  \matrix (m)[matrix of math nodes,
    nodes in empty cells,text height=1.5ex, text depth=0.25ex,
    column sep=2em,row sep=2em] {
  0 & \Homs_{\Ss{\Rs}}\left(\Sf{A}_2 / \Sf{A}_1, \Ss{\Rs} \right) & \Homs_{\Ss{\Rs}}\left(\Sf{A}_2 / \Sf{A}_1, \Ss{\Rs} \right) &  \Homs_{\Ss{\Rs}}\left(\Sf{A}_2 / \Sf{A}_1, \Ss{Z_K} \right) &\\
    & \Exts_{\Ss{\Rs}}^1\left(\Sf{A}_2 / \Sf{A}_1, \Ss{\Rs} \right) & \Exts_{\Ss{\Rs}}^1\left(\Sf{A}_2 / \Sf{A}_1, \Ss{\Rs} \right) & \Exts_{\Ss{\Rs}}^1\left(\Sf{A}_2 / \Sf{A}_1, \Ss{Z_K} \right) &\\
		& \Exts_{\Ss{\Rs}}^2\left(\Sf{A}_2 / \Sf{A}_1, \Ss{\Rs} \right) & \Exts_{\Ss{\Rs}}^2\left(\Sf{A}_2 / \Sf{A}_1, \Ss{\Rs} \right) & \Exts_{\Ss{\Rs}}^2\left(\Sf{A}_2 / \Sf{A}_1, \Ss{Z_K} \right) & 0\\
};
\foreach \y [remember=\y as \lasty (initially 1)] in {1, 2,3}
{
\foreach \x [remember=\x as \lastx (initially 2)] in {3,...,4}
{
\draw[-stealth] (m-\y-\lastx) -- (m-\y-\x);
}
}
\draw[-stealth] (m-1-1) -- (m-1-2);
\draw[-stealth] (m-3-4) -- (m-3-5);
\draw[densely dotted,-stealth] (m-1-4) to [out=355, in=175] (m-2-2);
\draw[densely dotted,-stealth] (m-2-4) to [out=355, in=175] (m-3-2);
\end{tikzpicture}
\end{equation}

Now since the support of $\Sf{A}_2 / \Sf{A}_1$ is zero dimensional, we have by Theorem~\ref{Th:Herzog} 
\begin{align*}
%\Homs_{\Ss{\Rs}}\left(\Sf{A}_2 / \Sf{A}_1, \Ss{\Rs} \right) &=0,\\
\Exts_{\Ss{\Rs}}^1\left(\Sf{A}_2 / \Sf{A}_1, \Ss{\Rs} \right) &=0.
\end{align*}

By the previous equality and the exact sequence \eqref{diagramaA1A2} we get 
\begin{equation*}
0 \to \Exts_{\Ss{\Rs}}^1\left(\Sf{A}_2 / \Sf{A}_1, \Ss{Z_K} \right)  \to \Exts_{\Ss{\Rs}}^2\left(\Sf{A}_2 / \Sf{A}_1, \Ss{\Rs} \right) \to \Exts_{\Ss{\Rs}}^2\left(\Sf{A}_2 / \Sf{A}_1, \Ss{\Rs} \right) \to \Exts_{\Ss{\Rs}}^2\left(\Sf{A}_2 / \Sf{A}_1, \Ss{Z_K} \right) \to 0.
\end{equation*}

Taking $\CC$-dimensions we immediately obtain:
\begin{equation*}
\dimc{\Exts_{\Ss{\Rs}}^1\left(\Sf{A}_2/\Sf{A}_1, \Ss{Z_K} \right)} = \dimc{\Exts_{\Ss{\Rs}}^2\left(\Sf{A}_2/\Sf{A}_1, \Ss{Z_K} \right)}.
\end{equation*}
\endproof

\begin{proposition}
The equality $\dimc{\pi_* \Exts_{\Ss{\Rs}}^1 \left(\Sf{A}, \Ss{Z_k} \right)} = [c_1(\Sf{M})] \cdot [Z_k]$ holds.
\end{proposition}
\proof
By the previous Lemma it is enough to assume that $\Sf{A}$ is isomorphic to $\Ss{\overline{C}}$, where $\overline{C}$ is the support of $\Sf{A}$. 
A direct computation of $\Exts_{\Ss{\Rs}}^1 \left(\Ss{\overline{C}}, \Ss{Z_k} \right)$ gives the result.
\endproof

\begin{proposition}
The equality $\dimc{\Ext_{\Ss{X}}^2 \left(\Sf{D}, \Ss{X} \right)}=\dimc{\Sf{D}}=d$ holds.
\end{proposition}
\proof
Using Theorem~\ref{Th:Herzog} one easily reduces by induction on $\dimc{\Sf{D}}$ to the case $\Sf{D}=\CC_p$, where $\CC_p$ is the skyscraper sheaf at 
a point $p$ with stalk $\CC$. A direct computation shows that case. 
\endproof

The proof of Theorem~\ref{formuladimensionM} is complete now. 
\end{proof}

\section{The classification and structure of special reflexive modules}
\label{sec:classsprefl}

\subsection{The combinatorial classification}
\label{sec:combclass}
Let $(X,x)$ be a normal surface singularity. 
Lemma~\ref{lem:ArtinVerdierminadap} allows to define the resolution graph of a reflexive module.
Denote by $M$ a reflexive $\Ss{X}$-module, $\pi \colon (\tilde{X},E) \to (X,x)$ the minimal adapted resolution  to $M$, 
$\Sf{M}$ the full sheaf associated to $M$ and $r$ the rank of $\Sf{M}$.
Take $r$ generic sections of $\Sf{M}$ and consider the exact sequence given by the sections 
\begin{equation*}
0 \to \Ss{\Rs}^r \to \Sf{M} \to \Sf{A}' \to 0.
\end{equation*}

By Lemma~\ref{lem:ArtinVerdierminadap} the sheaf $\Sf{A}'$ is isomorphic to $\Ss{D}$, where $D$ is a smooth curve meeting the exceptional divisor transversely at
smooth points. We construct a graph as follows:
\begin{enumerate}
\item Let $\Sf{G}^o_M$ be the dual graph of $\Rs$ of the minimal good resolution that dominates $\pi$,
weighted with the self-intersection and the genus of each component (see~\cite{Ne}).
\item In each vertex $v_i$, add as many arrows as the first Chern class of $\Sf{M}$ intersects the exceptional divisor $E_i$. Call the 
resulting decorated graph $\Sf{G}_M$.
\end{enumerate}

\begin{definition}
\label{def:resgraphM}
The resolution graph $\Sf{G}_M$ of the module $M$ is the graph described in the previous construction. 
\end{definition}

In the next theorem we characterize combinatorially resolution graphs of special modules over Gorenstein surface singularities. By a negative plumbing 
graph we mean the dual graph of a good resolution of a surface singularity. The property of being numerically Gorenstein only depends on the plumbing 
graph. In this case there is a canonical cycle with integral coefficients (see~\cite{Ne}).

\begin{theorem}
\label{th:charresgraphsp}
Let $\Sf{G}$ be a negative definite plumbing graph, such that to some of its vertices there are a finite number of arrows attached. 
There is a Gorenstein surface singularity $(X,x)$ and a special reflexive module whose resolution graph is isomorphic to $\Sf{G}$ if and only if each
of the following properties is satisfied:
\begin{enumerate}
 \item the graph is numerically Gorenstein.
 \item if a vertex has genus $0$, self intersection $-1$ and has at most two neighboring vertices, then it supports at least $1$ arrow.
 \item if a vertex supports arrows then its coefficient in the canonical cycle equals $0$.
\end{enumerate}
\end{theorem}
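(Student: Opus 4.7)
For the necessity direction, suppose $M$ is a special reflexive $\Ss{X}$-module over the Gorenstein singularity $(X,x)$ with $\Sf{G}_M\cong\Sf{G}$; let $\pi\colon\Rs\to X$ denote its minimal adapted resolution and $\pi'\colon\Rs'\to X$ the minimal good resolution dominating $\pi$, so that $\Sf{G}$ is the dual graph of $\pi'$. Condition~(1) is immediate: Gorensteinness of $(X,x)$ forces integer coefficients for the canonical cycle, which is the definition of numerically Gorenstein. Condition~(3) follows from Proposition~\ref{prop:minadapspproperty}: for a generic system of $r$ sections the degeneracy locus $D\subset\Rs$ is a disjoint union of smooth curvettes meeting the exceptional divisor transversally at smooth points of components $E_i$ with vanishing minimal canonical order, that is $q_i=0$; since arrows in $\Sf{G}_M$ are placed exactly at the vertices corresponding to components intersected by $D$, and in the Gorenstein case the coefficient of $E_i$ in $Z_k$ equals $-q_i$, those vertices have canonical coefficient zero. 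For condition~(2), argue by contradiction: if a vertex $v$ of genus $0$, self-intersection $-1$ and at most two neighbors carried no arrow, then $D$ would avoid $E_v$, so the full sheaf $\Sf{M}$ is generated by global sections near $E_v$. Contracting $E_v$ preserves the simple normal crossings property (at most two neighbors being joined into one transverse intersection) and still yields a resolution on which the full sheaf is generated by global sections at the image of $E_v$, contradicting minimality of $\pi'$ dominating $\pi$.

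For sufficiency, let $\Sf{G}$ satisfy (1)--(3). Define $\Sf{G}_0$ as the graph obtained from $\Sf{G}$ by deleting all arrows and iteratively contracting $(-1)$-vertices of genus~$0$ with at most two neighbors; by hypothesis~(2) no arrow-bearing vertex is contracted. Realize $\Sf{G}_0$ as the dual graph of the minimal good resolution $\pi_0\colon\Rs_0\to X$ of some Gorenstein normal surface singularity $(X,x)$. Perform the sequence of blow-ups encoded in $\Sf{G}\setminus\Sf{G}_0$ to obtain a resolution $\pi\colon\Rs\to X$ whose dual graph is $\Sf{G}$; since each added vertex is a $(-1)$-vertex with at most two neighbors, $\pi$ remains a good resolution. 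For each arrow attached to a vertex $v_i$, pick a distinct smooth curvette $D_j$ transverse to $E_i$ at a generic smooth point, and set $D:=\bigsqcup_j D_j$ and $\Sf{A}:=\Ss{D}$. By~(3) every $E_i$ meeting $D$ has $q_i=0$, so the canonical vector $(d_1(D),\dots,d_l(D))$ vanishes (Remark~\ref{rem:translationgor}). Choose a minimal generating set $(\psi_1,\dots,\psi_r)$ of $\Sf{A}$ as $\Ss{\Rs}$-module and apply Proposition~\ref{prop:consecuenciaspracticas}(6) to obtain a special full $\Ss{\Rs}$-module $\Sf{M}$ with first Chern class Poincar\'e dual to $[D]$. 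Let $M:=\pi_*\Sf{M}$; Theorem~\ref{th:characspecial} then says $M$ is a special reflexive $\Ss{X}$-module. Since $\Sf{A}=\Ss{D}$ gives $\mathfrak{A}=\NN^l$ and the vanishing canonical vector forces $cond(\mathfrak{K}_{(\Sf{A},(\psi_1,\dots,\psi_r))})=0$, Proposition~\ref{prop:minadapnumchar} shows $\pi$ dominates the minimal adapted resolution of $M$; conversely, condition~(2) prevents any contraction of $\Rs$ from remaining adapted (each contractible $(-1)$-curve carries a curvette), so $\pi$ \emph{is} the minimal adapted resolution. As $\pi$ is already good, it coincides with the minimal good resolution dominating itself, and the prescribed intersections with $D$ reproduce the arrow decoration of $\Sf{G}$, proving $\Sf{G}_M=\Sf{G}$.

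The principal obstacle is step one of the sufficiency argument: realizing $\Sf{G}_0$ as the minimal good resolution graph of a Gorenstein normal surface singularity. Grauert's theorem ensures that every negative-definite plumbing graph is the resolution graph of some normal surface singularity, but the Gorenstein constraint goes beyond the purely topological data. One handles this using the numerical Gorenstein hypothesis~(1), which descends to $\Sf{G}_0$ under the contractions, together with known realization results: every numerically Gorenstein negative-definite plumbing graph arises from a Gorenstein normal surface singularity, for instance via splice-quotient, complete-intersection, or hypersurface constructions of Laufer, Neumann--Wahl, and Okuma. Once this realization is granted, every remaining verification is a direct application of the machinery developed in the preceding sections, particularly Propositions~\ref{prop:minadapspproperty}, \ref{prop:consecuenciaspracticas}, and~\ref{prop:minadapnumchar}.
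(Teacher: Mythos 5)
Your proof is correct and follows essentially the same route as the paper: necessity of (1)--(3) from Gorenstein $\Rightarrow$ numerically Gorenstein, minimality of the good resolution, and Proposition~\ref{prop:minadapspproperty}; sufficiency by realizing the arrowless graph as a resolution of a Gorenstein singularity, planting transverse curvettes $D$ on the arrow-bearing (canonical-coefficient-zero) components, and running the degeneracy-module correspondence backwards via Propositions~\ref{prop:consecuenciaspracticas} and~\ref{prop:minadapnumchar}. Two small remarks. First, the ``principal obstacle'' you isolate is resolved in the paper by a single citation: Popescu-Pampu's theorem~\cite{PPP} that every numerically Gorenstein negative-definite plumbing graph is the resolution graph of a Gorenstein normal surface singularity; your attribution to splice-quotient or Neumann--Wahl constructions is looser than needed (those require extra combinatorial hypotheses), and your detour through the contracted graph $\Sf{G}_0$ followed by re-blowing-up is unnecessary, since one can realize the arrowless $\Sf{G}$ directly. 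Second, to apply Proposition~\ref{prop:consecuenciaspracticas}(6) (equivalently, to get specialty of $\Sf{M}$) you must take $(\psi_1,\dots,\psi_r)$ to be generators of $\pi_*\Ss{D}$ \emph{as a $\Ss{X}$-module}, not merely a minimal generating set of $\Sf{A}=\Ss{D}$ as a $\Ss{\Rs}$-module; with only $\Ss{\Rs}$-generators the submodule $\Sf{C}$ they span in $\pi_*\Sf{A}$ need not equal $\pi_*n_*\Ss{\tilde C}$ and the hypothesis of part~(6) fails. This is easily repaired and does not affect the structure of the argument.
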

\proof
Property $(1)$ is necessary because Gorenstein implies numerically Gorenstein. Property $(2)$ holds by the minimality of the good resolution 
dominating the minimal adapted resolution. Property $(3)$ is a direct consequence of Proposition~\ref{prop:minadapspproperty}. 

Conversely, let $\Sf{G}$ be a graph satisfying all the properties. By~\cite{PPP} there is a Gorenstein normal surface singularity $(X,x)$ who has a 
resolution with plumbing graph equal to the result of deleting the arrows of $\Sf{G}$. Let $\pi:\Rs\to X$ be such a resolution. Let $D\subset\Rs$ be a 
smooth curvette meeting the exceptional divisor $E$ transversely at smooth points, and so that for each vertex $v$ of $\Sf{V}$ the number of components 
of $D$ meeting the irreducible component $E_v$ of $E$ corresponding to $v$, is exactly the number of arrows attached to $v$. 

Define $\Sf{A}:=\Ss{D}$ and let $\psi_1,...,\psi_r$ be a set of generators of $\pi_*\Sf{A}$ as a $\Ss{X}$-module. Since $\Sf{A}$ is equal to $\Ss{D}$ and 
the canonical order at the components $E_v$ met by $D$ is $0$, choosing $D$ generic enough we conclude that 
$cond(\mathfrak{K}_{(\Sf{A},(\psi_1,...,\psi_r))})=(0,...,0)$. Hence, by Proposition~\ref{prop:consecuenciaspracticas} (1), we deduce that if 
$(\Sf{M},(\phi_1,...,\phi_r))$ is the result of applying the bijection of Theorem~\ref{th:corres} to $(\Sf{A},(\psi_1,...,\psi_r))$ then 
$\Sf{M}$ is full. It is also special since $\psi_1,...,\psi_r$ generate $\Sf{A}$ as a $\Ss{X}$-module (same argument than at the proof of Corollary~\ref{cor:existenmodespeciales}). 

By Proposition~\ref{prop:minadapnumchar} the resolution $\pi:\Rs\to X$ is the minimal resolution adapted to $\pi_*\Sf{M}$, and by 
Theorem~\ref{th:characspecial} the module $\pi_*\Sf{M}$ is special. It is clear by construction that the resolution graph of $\pi_*\Sf{M}$ equals
$\Sf{G}$.
\endproof

A consequence of the previous Theorem and Proposition~\ref{prop:decompesp} is the following corollary

\begin{corollary}
\label{cor:charresgraphspindec}
Let $\Sf{G}$ be a negative definite plumbing graph, such that to some of its vertices there are a finite number of arrows attached. 
There is a Gorenstein surface singularity $(X,x)$ and an indecomposable special reflexive module  whose resolution graph is isomorphic to $\Sf{G}$ if and only if the 
conditions of the previous Theorem hold and in addition $\Sf{G}$ has only one arrow.
\end{corollary}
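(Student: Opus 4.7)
The plan is to derive the corollary by combining Theorem~\ref{th:charresgraphsp} (which gives the graph-theoretic conditions) with Proposition~\ref{prop:decompesp} (which identifies indecomposable summands of a special reflexive module with irreducible components of the degeneracy support). The three graph conditions are already necessary and sufficient for realizability of \emph{some} special reflexive module, so the only additional content of the corollary is the equivalence: $M$ is indecomposable if and only if $\Sf{G}$ carries exactly one arrow.

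For the forward direction, suppose $M$ is an indecomposable special reflexive $\Ss{X}$-module with resolution graph $\Sf{G}_M \cong \Sf{G}$. First, $M$ cannot be free, since otherwise its full sheaf would be trivial and $\Sf{G}$ would carry no arrows; but then $M$ would also be decomposable unless it has rank $1$, and in rank $1$ the free case still gives no arrow, hence the condition "exactly one arrow" would already fail. Thus $M$ is indecomposable and non-free, so in particular has no free direct factors. Pass to the minimal adapted resolution $\pi\colon \Rs\to X$ and pick $r$ generic sections; by Lemma~\ref{lem:ArtinVerdierminadap} the support $D$ of $\Sf{A}$ is a smooth curve meeting $E$ transversely at smooth points, and by construction of $\Sf{G}_M$ the number of arrows equals $[c_1(\Sf{M})]\cdot E = D\cdot E$, which, by transversality and smoothness, is the number of points of $D\cap E$. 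Since each irreducible component of $D$ is a smooth germ of curve centered at $x$, it meets $E$ in a single point, so the number of arrows equals the number of irreducible components of $D$. By Proposition~\ref{prop:decompesp}, this equals the number of indecomposable summands of $M$, which is $1$ by assumption.

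For the converse, assume $\Sf{G}$ satisfies the three conditions and carries exactly one arrow. Apply the construction in the proof of Theorem~\ref{th:charresgraphsp}: choose a Gorenstein $(X,x)$ realizing the underlying plumbing graph (via~\cite{PPP}) and, on a good resolution $\pi\colon\Rs\to X$ with that dual graph, pick a smooth curvette $D$ having one irreducible component attached transversely to the unique vertex carrying the arrow (this is possible because that vertex has canonical coefficient $0$). Setting $\Sf{A}=\Ss{D}$ and choosing a \emph{minimal} system of generators $(\psi_1,\dots,\psi_r)$ of $\pi_*\Sf{A}$ as $\Ss{X}$-module, Proposition~\ref{prop:consecuenciaspracticas}(1) and the argument in the proof of Corollary~\ref{cor:existenmodespeciales} produce a full special $\Sf{M}$ on $\Rs$ such that $M := \pi_*\Sf{M}$ is special; by minimality of the generators and the free-factor clause of Theorem~\ref{th:corrsing}, $M$ has no free direct factors. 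By Proposition~\ref{prop:minadapnumchar} the chosen $\pi$ is the minimal adapted resolution to $M$, and the resolution graph of $M$ is $\Sf{G}$ by construction. Finally, since $D$ has exactly one irreducible component, Proposition~\ref{prop:decompesp} gives that $M$ has a single indecomposable summand, i.e.\ $M$ is indecomposable.

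The only mildly delicate point—and the one to pin down carefully—is the identification "number of arrows $=$ number of irreducible components of $D$" in the forward direction, since \emph{a priori} a single smooth component of $D$ could contribute several transversal intersection points with $E$. Here we use that we are working over a germ (or localize at $x$): every irreducible component of the support of $\Sf{A}$ is an irreducible germ of curve at $x$, and the projection to $X$ collapses $D\cap E$ to $\{x\}$; for a smooth irreducible germ this gives exactly one branch at exactly one point of $E$. The rest is a direct application of the already proven Theorem~\ref{th:charresgraphsp} and Proposition~\ref{prop:decompesp}.
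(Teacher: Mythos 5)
Your proof is correct and follows exactly the route the paper intends: the paper gives no argument beyond the sentence "a consequence of the previous Theorem and Proposition~\ref{prop:decompesp}", and your expansion (arrows $=$ points of $D\cap E$ $=$ irreducible components of the generic degeneracy support $=$ indecomposable summands, plus the one-component curvette construction for the converse) is precisely that consequence spelled out. The only caveat, which you already flag, is the tacit convention that the free module $\Ss{X}$ (indecomposable, special, zero arrows) is excluded, an abuse the paper itself makes throughout Section~\ref{sec:clasdef}.
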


\subsection{The first Chern class of a module at its minimal adapted resolution}
\label{sec:1stchernspecial}

Let $X$ be a Stein normal surface with Gorenstein singularities. 
Here we study the relation of a reflexive $\Ss{X}$-module and its first Chern class in the Picard group of its minimal adapted resolution. We show that if the module is special then the first Chern class determines the module, 
providing a vast generalization of the corresponding result of Artin and Verdier for rational double points~\cite{AV}. 

Let $M$ be a reflexive $\Ss{X}$-module of rank $r$. 
Let $\pi:\Rs \to X$ be the minimal resolution adapted to $M$, denote by $E$ the exceptional divisor. Let  
$\Sf{M}$ be the full $\Ss{\Rs}$-module associated to $M$. The first Chern class of $\Sf{M}$ in $Pic(\Rs)$ is the class determined by the determinant 
bundle $\Sf{L}:=det(\Sf{M})$.

\begin{proposition}
\label{prop:extension1st}
The full $\Ss{\Rs}$-module $\Sf{M}$ is an extension of the determinant line bundle $\Sf{L}$ by $\Ss{\Rs}^{r-1}$.
\end{proposition}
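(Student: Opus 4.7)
The plan is to construct a short exact sequence
\begin{equation*}
0\to\Ss{\Rs}^{r-1}\xrightarrow{\alpha}\Sf{M}\to\Sf{Q}\to 0
\end{equation*}
by choosing $r-1$ global sections of $\Sf{M}$ whose induced map $\alpha$ is a sub-bundle inclusion. Once this is achieved, $\Sf{Q}$ is locally free of rank one and the determinantal identity
\begin{equation*}
\Sf{Q}=\det\Sf{Q}=\det\Sf{M}\otimes\det(\Ss{\Rs}^{r-1})^{-1}=\det\Sf{M}=\Sf{L}
\end{equation*}
identifies $\Sf{Q}$ with $\Sf{L}$.

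Since $\pi\colon\Rs\to X$ is the minimal adapted resolution to $M$, the full sheaf $\Sf{M}$ is generated by global sections everywhere on $\Rs$. I would proceed inductively, splitting off trivial line subbundles one at a time. A Bertini-type transversality argument, analogous to the proof of Proposition~\ref{prop:generalizationAV}, shows that for any globally generated rank-$s$ locally free sheaf on the two-dimensional surface $\Rs$, a generic global section has zero locus of expected codimension~$s$. When $s\geq 3$ this codimension strictly exceeds $\dim\Rs=2$, so the zero locus is empty and the section is nowhere vanishing; the cokernel is a globally generated locally free sheaf of rank~$s-1$, and we iterate. After $r-2$ steps, this produces an exact sequence $0\to\Ss{\Rs}^{r-2}\to\Sf{M}\to\Sf{M}'\to 0$ in which $\Sf{M}'$ is a globally generated rank-$2$ locally free sheaf, inheriting its generators from those of $\Sf{M}$.

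The crux of the argument is the rank-$2$ step: producing a nowhere vanishing section of $\Sf{M}'$. A generic global section has zero locus of expected codimension~$2$, hence an at most discrete set of points whose local indices sum to $c_2(\Sf{M}')$. The key observation is that $\Rs$ deformation retracts onto the compact exceptional divisor $E$, a complex curve of real dimension~$2$, so $H^4(\Rs,\ZZ)=0$ and the topological obstruction $c_2(\Sf{M}')$ vanishes automatically. Combined with the fact that the global sections of $\Sf{M}'$ descend, via $\pi_*$, to sections of a coherent sheaf on the Stein base $X$, an Oka--Grauert--type argument then yields an honest holomorphic nowhere vanishing section, completing the induction. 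Splicing the extensions gives $\alpha$, and the determinantal identification concludes.

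The main obstacle is exactly this last step: converting the topological vanishing of $c_2$ into the existence of a holomorphic nowhere vanishing section on the non-Stein surface $\Rs$. A concrete implementation would use parametric transversality in the finite-dimensional space $\operatorname{Mat}(k,1,\CC)$ of linear combinations of a fixed finite set of generators $\psi_1,\dots,\psi_k$ of $\Sf{M}'$, showing that the incidence variety $\{(p,B):(\sum_j B_j\psi_j)(p)=0\}\subset\Rs\times\operatorname{Mat}(k,1,\CC)$ has dimension strictly smaller than the parameter space and hence fails to project onto it; the abundance of global sections guaranteed by the minimal adapted resolution hypothesis, together with the properness of $\pi$ over the Stein base $X$, is what underlies the required dimension count.
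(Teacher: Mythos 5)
There is a genuine gap at the rank-$2$ step, and it is precisely the step where the actual content of the proposition lives. Your incidence-variety dimension count is wrong: for a globally generated rank-$2$ locally free sheaf on the surface $\Rs$, the variety $\{(p,B):(\sum_j B_j\psi_j)(p)=0\}$ is cut out locally by $2$ equations and the evaluation map is a submersion, so it has dimension $2+k-2=k$, \emph{equal} to the dimension of the parameter space, not strictly smaller. The projection is therefore generically finite and typically dominant: a generic section of a globally generated rank-$2$ bundle on an open surface does vanish at finitely many points. (Concretely, on $\CC^2$ the trivial rank-$2$ bundle with the generating sections $(x,y)$, $(1,0)$, $(0,1)$ has the property that a generic combination $(ax+b,\ ay+c)$ vanishes at exactly one point.) Likewise, the vanishing of $c_2(\Sf{M}')$ in $H^4(\Rs,\ZZ)=0$ only removes the obstruction to a \emph{continuous} nowhere-vanishing section; since $\Rs$ is non-compact there is no fundamental class against which $c_2$ counts zeros of holomorphic sections, and Oka--Grauert does not apply because $\Rs$ is not Stein (the exceptional divisor is compact). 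So nothing in your argument rules out that every section in the available family vanishes somewhere, and indeed for an arbitrary globally generated rank-$2$ bundle the desired conclusion is simply false.

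What is missing is the specific input that the paper uses and that your proof never touches: by Lemma~\ref{lem:ArtinVerdierminadap}, at the minimal adapted resolution the degeneracy module of $r$ generic sections is $\Ss{D}$ for a \emph{smooth reduced} curve $D$. Tensoring the sequence $0\to\Ss{\Rs}^r\xrightarrow{Q}\Sf{M}\to\Ss{D}\to 0$ with the residue field at $p\in D$ shows $\operatorname{coker}Q(p)\cong\CC$, i.e.\ the $r\times r$ matrix $Q$ of sections has corank exactly $1$ along $D$ and is invertible off $D$; this is what allows one to extract $r-1$ sections that are linearly independent at every point, producing the sub-bundle $\Ss{\Rs}^{r-1}\hookrightarrow\Sf{M}$ whose quotient is then forced to be $\det\Sf{M}=\Sf{L}$ by your (correct) determinantal identity. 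Your reduction from rank $r$ to rank $2$ by splitting off nowhere-vanishing sections is fine for $r\geq 3$ (there the zero locus has codimension $\geq 3>2$ and genuinely is empty by parametric transversality), but by the time you reach the rank-$2$ quotient you have discarded the geometry of $D$, and with it the only available reason why the last section can be chosen nowhere vanishing.
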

\proof
Take $r$ generic sections $(\phi_1,...,\phi_r)$ of $\Sf{M}$ and consider the exact sequence given by the sections 
\begin{equation}
\label{exctseq:directaM55}
0 \to \Ss{\Rs}^r \to \Sf{M} \to \Sf{A}' \to 0.
\end{equation}

Since the resolution is the minimal adapted resolution we have that $\Sf{A}'$ is isomorphic to $\Ss{D}$, where $D$ is a smooth curve meeting
the exceptional divisor $E$ transversely at smooth points by Lemma~\ref{lem:ArtinVerdierminadap}.  

Locally in a trivializing open subset $U$ of the locally free sheaf $\Sf{M}$ we have that the sections can be written as follows
\begin{equation*}Q=
\begin{tikzpicture}[baseline=(m-2-1.base)]
  \matrix (m)[matrix of math nodes,
    left delimiter=(,right delimiter=)]{
    q_{11} &  q_{12} & \dots & q_{1r} \\
		\vdots &  \vdots & \vdots & \vdots \\
		q_{r1} &  q_{12} & \dots & q_{rr} \\};
\end{tikzpicture}
\end{equation*}
where each $q_{ij}$ is an element of $\Ss{\Rs}(U)$.

Therefore $p$ belongs to $D$ if and only if the determinant of $Q(p)$ is equal to zero. 

Since $D$ is smooth, the matrix $Q$ must have at least $r-1$ columns linearly independent. Therefore we can choose $r-1$ sections linear independent 
everywhere. By genericity of the system of sections $(\phi_1,...,\phi_r)$ we may assume that these sections are $(\phi_1,...,\phi_{r-1})$. These sections give us the exact sequence
\begin{equation}
\label{exact:picard}
0 \to \Ss{\Rs}^{r-1} \to \Sf{M} \to \Sf{L} \to 0,
\end{equation}
where $\Sf{L}$ is the line bundle $\text{det}(\Sf{M})$.
\endproof

Now we assume that $M$ is special and $(X,x)$ is Gorenstein and prove stronger properties.

\begin{lemma}\label{lemma:dimL}
The dimension of $R^1 \pi_* \Sf{L}$ is $p_g$.
\end{lemma}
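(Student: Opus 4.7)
The strategy is to push forward the sequence of Proposition~\ref{prop:extension1st},
\begin{equation*}
0 \to \Ss{\Rs}^{r-1} \to \Sf{M} \to \Sf{L} \to 0,
\end{equation*}
and combine the resulting long exact sequence with the cohomology formula of Theorem~\ref{formuladimensionM}. Applying $R\pi_*$ and using $\pi_*\Sf{M}=M$ (from fullness of $\Sf{M}$, cf.~Proposition~\ref{fullcondiciones}) gives
\begin{equation*}
0 \to \Ss{X}^{r-1} \to M \to \pi_*\Sf{L} \to R^1\pi_*\Ss{\Rs}^{r-1} \to R^1\pi_*\Sf{M} \to R^1\pi_*\Sf{L} \to 0.
\end{equation*}
Since $\dim_{\CC}R^1\pi_*\Sf{M}=rp_g$ by Corollary~\ref{cor:dimMadap} (as $\pi$ is the minimal adapted resolution of the special module $M$) and $\dim_{\CC}R^1\pi_*\Ss{\Rs}^{r-1}=(r-1)p_g$, counting dimensions of the finite-dimensional pieces along this sequence yields
\begin{equation*}
\dim_{\CC}R^1\pi_*\Sf{L} = p_g + \dim_{\CC}\coker(M\to\pi_*\Sf{L}).
\end{equation*}
The lemma is therefore equivalent to the surjectivity of the pushforward map $M\to\pi_*\Sf{L}$.

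Next I would verify that the line bundle $\Sf{L}$ is itself a full $\Ss{\Rs}$-module. It is globally generated as a quotient of the globally generated $\Sf{M}$, and the zero locus of the distinguished section $\bar\phi_r\in H^0(\Rs,\Sf{L})$ (the image of the $r$th generic section $\phi_r$ of $\Sf{M}$) coincides with the smooth curvette $D$ of Lemma~\ref{lem:ArtinVerdierminadap}. By Proposition~\ref{prop:minadapspproperty}, $D$ meets $E$ transversely only at components where the canonical order vanishes, so Proposition~\ref{prop:consecuenciaspracticas}~(3) supplies the Containment Condition and Theorem~\ref{th:corres} gives fullness of $\Sf{L}$. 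Consequently $\pi_*\Sf{L}$ is reflexive, hence MCM, so Theorem~\ref{Th:Herzog} yields $\Ext^1_{\Ss{X}}(\pi_*\Sf{L},\Ss{X})=0$. Applying Theorem~\ref{formuladimensionM} to the full rank-one sheaf $\Sf{L}$, and using $c_1(\Sf{L})=c_1(\Sf{M})$ together with $c_1(\Sf{M})\cdot Z_k=0$ (Proposition~\ref{prop:minadapspproperty}), we also get
\begin{equation*}
\dim_{\CC}R^1\pi_*\Sf{L} = p_g + d(\Sf{L}).
\end{equation*}
Comparing the two expressions forces $d(\Sf{L})=\dim_{\CC}\coker(M\to\pi_*\Sf{L})$.

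The main obstacle is to show that this common quantity is zero. For this I would run the parallel argument on the dualized sequence
\begin{equation*}
0 \to \Sf{L}^\smvee \to \Sf{M}^\smvee \to \Ss{\Rs}^{r-1} \to 0,
\end{equation*}
using Lemma~\ref{lema:dualM} (which gives $\pi_*\Sf{M}^\smvee=M^\smvee$) together with the specialty hypothesis $\dim_{\CC}R^1\pi_*\Sf{M}^\smvee=rp_g$. The dualized long exact sequence and the corresponding dimension count relate $d(\Sf{L})$ to the cokernel of $M^\smvee\to\Ss{X}^{r-1}$, which in turn is controlled by $\Ext^1_{\Ss{X}}(\pi_*\Sf{L},\Ss{X})=0$ obtained above. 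Combining with Proposition~\ref{prop:Aeslanormalizacion} (applied to the rank-one reflexive $\pi_*\Sf{L}$ and its single generating section $\bar\phi_r$) identifies the degeneracy module with the full normalization $n_*\Ss{\tilde C}$, confirming that $\pi_*\Sf{L}$ is special and therefore $d(\Sf{L})=0$, which finishes the proof.
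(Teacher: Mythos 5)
Your opening reduction is correct: pushing forward $0\to\Ss{\Rs}^{r-1}\to\Sf{M}\to\Sf{L}\to 0$ and using $\dimc{R^1\pi_*\Sf{M}}=rp_g$ gives $\dimc{R^1\pi_*\Sf{L}}=p_g+\dimc{\coker(M\to\pi_*\Sf{L})}$. But this only shows the lemma is \emph{equivalent} to the surjectivity of $M\to\pi_*\Sf{L}$, which is precisely the splitting that the paper \emph{deduces from} this lemma in the proof of Theorem~\ref{th:Chernresolucionadapted}; so everything rests on your second part, and that is where the argument breaks.

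The claim that $\Sf{L}$ is full is unjustified and false in general. Proposition~\ref{prop:consecuenciaspracticas}~(3) requires, besides the vanishing of the canonical vector, the inequality $cond(\mathfrak{C})\leq 0$, where $\Sf{C}$ is the $\Ss{X}$-module spanned by the given sections. For the pair attached to $(\Sf{L},\bar\phi_r)$ one indeed has $\Sf{A}\cong\Ss{D}$, but $\Sf{C}$ is generated by a \emph{single} section, so $\Sf{C}\cong\Ss{C}$ with $C=\pi(D)$; since $C$ is in general a singular, multi-branch curve germ at $x$ (its multiplicity is roughly $Z_{min}\cdot D$, and in Example~\ref{ex:liftsnotlifts} it is a cusp), its conductor in $n_*\Ss{\tilde{C}}$ is strictly positive, the Valuative (hence Containment) Condition fails, and $\Sf{L}$ is not full — already for the central vertex of $D_4$ the determinant of the rank-two full sheaf is not a full sheaf. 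Consequently Theorem~\ref{formuladimensionM} cannot be applied to $\Sf{L}$, $\pi_*\Sf{L}$ need not be reflexive, and Proposition~\ref{prop:Aeslanormalizacion} in fact works against you: a special rank-one module has generic degeneracy module $n_*\Ss{\tilde{C}}$, which is cyclic only when $C$ is smooth, so $(\pi_*\Sf{L})^{\smvee\smvee}$ is typically \emph{not} special. The missing idea is the paper's direct computation: since $D$ is disjoint from the support of $Z_K$ (Proposition~\ref{prop:minadapspproperty}), tensoring $0\to\Ss{\Rs}\to\Sf{L}\to\Ss{D}\to 0$ with $\Ss{Z_K}$ yields $\Sf{L}\otimes\Ss{Z_K}\cong\Ss{Z_K}$, and Grauert--Riemenschneider vanishing identifies $R^1\pi_*\Sf{L}$ with $R^1\pi_*(\Sf{L}\otimes\Ss{Z_K})$ and $R^1\pi_*\Ss{\Rs}$ with $R^1\pi_*\Ss{Z_K}$, giving the equality of dimensions without ever touching fullness of $\Sf{L}$.
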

\proof
By Proposition~\ref{prop:minadapspproperty} the resolution $\pi:\Rs\to X$ is small with respect to the Gorenstein form (hence the canonical cycle
$Z_K$ is non-negative), and moreover $D$ does not meet the support of $Z_K$. Therefore we have  
\begin{align*}
\text{Tor}_1^{\Ss{\Rs}}(\Ss{D},\Ss{Z_K}) &= 0, \\
\Ss{D}\otimes\Ss{Z_K} &= 0.
\end{align*}

By these equalities, applying $- \otimes \Ss{Z_K}$  the exact sequence
\begin{equation}
\label{exctseq:detM1}
0 \to \Ss{\Rs} \to \Sf{L} \to \Ss{D} \to 0,
\end{equation}
we get
\begin{equation}
\Ss{Z_K} \cong \Sf{L} \otimes \Ss{Z_K}.
\end{equation}

Now applying the functor $\pi_* -$ to the exact sequence \eqref{exctseq:detM1} and using the last isomorphism we obtain
\begin{equation*}
\begin{tikzpicture}
  \matrix (m)[matrix of math nodes,
    nodes in empty cells,text height=1.5ex, text depth=0.25ex,
    column sep=2.5em,row sep=2em] {
  0 &  \pi_* \Ss{\Rs} & \pi_* \Sf{L} & \pi_* \Ss{D}&  R^1 \pi_* \Ss{\Rs} & R^1 \pi_* \Sf{L} & 0 \\
	 &   &  & 0 &  R^1 \pi_* \Ss{Z_K} & R^1 \pi_* \Sf{L} \otimes \Ss{Z_K} & 0 \\
};

\foreach \x [remember=\x as \lastx (initially 1)] in {2,...,7}
{
\draw[-stealth] (m-1-\lastx) -- (m-1-\x);
}

\foreach \x [remember=\x as \lastx (initially 4)] in {5,...,7}
{
\draw[-stealth] (m-2-\lastx) -- (m-2-\x);
}

\draw[-stealth] (m-1-5) -- (m-2-5);
\draw[-stealth] (m-1-6) -- (m-2-6);
\end{tikzpicture}
\end{equation*}

Since the diagram commutes and $R^1 \pi_* \Ss{\Rs}$ and $R^1 \pi_* \Ss{Z_K}$ are isomorphic we conclude that $R^1 \pi_* \Ss{\Rs}$ and $R^1 \pi_* \Sf{L} $ have the same dimension.
\endproof

\begin{theorem}\label{th:Chernresolucionadapted}
Let $X$ be a Stein normal surface with Gorenstein singularities. Let $M$ be a special $\Ss{X}$-module without free factors. Let $\pi:\Rs\to X$ be the minimal
resolution adapted to $M$, and $\Sf{M}$ the full $\Ss{\Rs}$-module associated to $M$. The module $\Sf{M}$ (and equivalently $M$) is determined by its first Chern class in $\text{Pic}(\Rs)$.
\end{theorem}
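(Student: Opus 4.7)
The plan is to produce a canonical short exact sequence on $X$ and reduce the uniqueness statement to the rigidity of a minimal generating system of a specific Ext-module. First, applying $\pi_*$ to the extension of Proposition~\ref{prop:extension1st} yields the six-term exact sequence
\[
0 \to \Ss{X}^{r-1} \to M \to \pi_*\Sf{L} \stackrel{\delta}{\to} R^1\pi_*\Ss{\Rs}^{r-1} \to R^1\pi_*\Sf{M} \to R^1\pi_*\Sf{L} \to 0.
\]
The three rightmost terms are finite-dimensional $\CC$-vector spaces of dimensions $(r-1)p_g$, $rp_g$, and $p_g$ respectively, by Lemma~\ref{lemma:dimL} and Corollary~\ref{cor:dimMadap}. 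An alternating dimension count on the three-term subcomplex on the right forces $\delta=0$, so we obtain the short exact sequence
\begin{equation*}
0 \to \Ss{X}^{r-1} \to M \to \pi_*\Sf{L} \to 0, \qquad (\star)
\end{equation*}
classified by a tuple $\mathbf{e}=(e_1,\dots,e_{r-1})\in E^{r-1}$, where $E:=\Ext^1_{\Ss{X}}(\pi_*\Sf{L},\Ss{X})$.

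Next I would apply $\Hom_{\Ss{X}}(\_\,,\Ss{X})$ to $(\star)$. Since $M$ is reflexive on a Gorenstein surface, Theorem~\ref{Th:Herzog} yields $\Ext^i_{\Ss{X}}(M,\Ss{X})=0$ for $i\geq 1$. The resulting fragment $\Ss{X}^{r-1} \stackrel{\partial}{\to} E \to 0$, in which $\partial(a_1,\dots,a_{r-1}) = \sum a_i e_i$, shows that $\mathbf{e}$ generates $E$ as an $\Ss{X}$-module. On the other hand, a Nakayama-type argument applied locally at each singular point $x$ of $X$ shows that $M$ admits an $\Ss{X}$-free direct summand precisely when the classes $\bar{e}_i$ become $\CC$-linearly dependent in $E/\mathfrak{m}_x E$ for some $x$: a non-trivial relation with a unit coefficient furnishes an element of $GL_{r-1}$ over the local ring that makes one of the $e_i$ zero and splits off an $\Ss{X}$-summand of $M$. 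The hypothesis that $M$ has no free factors is therefore equivalent to $\mathbf{e}$ being a minimal system of $\Ss{X}$-generators of $E$.

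The theorem follows at once. The first Chern class $c_1(\Sf{M})$ determines $\Sf{L}\in\mathrm{Pic}(\Rs)$, hence $\pi_*\Sf{L}$, hence $E$ and the number $r-1$ of elements of any minimal generating system. Any two minimal generating tuples of $E$ are related by an element of $GL_{r-1}$ over $\Ss{X}$, and such a change of basis on $\Ss{X}^{r-1}$ produces an isomorphism between the corresponding extensions $(\star)$ with fixed middle terms. Hence $M$, and with it the full sheaf $\Sf{M}=(\pi^*M)^{\smvee\smvee}$, is determined by $c_1(\Sf{M})$.

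The main obstacle I expect is the step producing $(\star)$: the vanishing of $\delta$ rests on the tight numerical coincidence $rp_g = (r-1)p_g + p_g$, and both the speciality of $M$ and the fact that $\pi$ is the minimal adapted resolution are needed to justify this count via Corollary~\ref{cor:dimMadap} (and through it Theorem~\ref{formuladimensionM} together with Proposition~\ref{prop:minadapspproperty}). The subsequent analysis of $\mathbf{e}$ as a minimal generating system of $E$ is comparatively routine.
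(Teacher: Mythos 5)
Your proposal follows the paper's proof essentially verbatim: the extension of Proposition~\ref{prop:extension1st}, the application of $\pi_*$ and the dimension count via Corollary~\ref{cor:dimMadap} and Lemma~\ref{lemma:dimL} to kill the connecting map and obtain the short exact sequence $(\star)$, and then the recovery of $M$ from the extension class $\mathbf{e}\in\Ext^1_{\Ss{X}}(\pi_*\Sf{L},\Ss{X}^{r-1})$ — the paper simply cites Lemma~1.9.ii of \cite{AV} at this last step, which you reprove inline. The one imprecision is in that inline argument: since a Stein $X$ may have several singular points, ``the $\bar e_i$ become dependent in $E/\mathfrak{m}_xE$ for \emph{some} $x$'' should be ``for \emph{every} $x$ in the (finite) support of $E$'' as the criterion for non-minimality, and the resulting change of basis must be realized in $GL_{r-1}(\Ss{X})$ globally rather than over a single local ring — but the implications you actually use (no free factors $\Rightarrow$ $\mathbf{e}$ is a minimal generating tuple, and any two minimal generating tuples are $GL_{r-1}(\Ss{X})$-equivalent) survive this correction, and the paper itself leaves the same globalization implicit.
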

\proof

Applying the functor $\pi_* -$ to the exact sequence \eqref{exact:picard} we get
\begin{equation}\label{exac:detMtemp}
\begin{tikzpicture}
  \matrix (m)[matrix of math nodes,
    nodes in empty cells,text height=1.5ex, text depth=0.25ex,
    column sep=2.5em,row sep=2em] {
  0 &  \pi_* \Ss{\Rs}^{r-1} & \pi_* \Sf{M} & \pi_* \Sf{L}&  R^1 \pi_* \Ss{\Rs}^{r-1} & R^1 \pi_* \Sf{M} & R^1 \pi_* \Sf{L} & 0. \\
};
\foreach \x [remember=\x as \lastx (initially 1)] in {2,...,8}
{
\draw[-stealth] (m-1-\lastx) -- (m-1-\x);
}
\end{tikzpicture}
\end{equation}

Since
\begin{align*}
\dimc{R^1 \pi_* \Sf{M}} &= rp_g \quad \text{by Corollary~\ref{cor:dimMadap} and}\\
\dimc{R^1 \pi_* \Sf{L}} &= \dimc{R^1 \pi_* \Ss{\Rs}}=p_g \quad  \text{by Lemma \ref{lemma:dimL}},
\end{align*}
we get that the exact sequence \eqref{exac:detMtemp} split as follows
\begin{equation}
\label{eq:paraelrango}
\begin{tikzpicture}
  \matrix (m)[matrix of math nodes,
    nodes in empty cells,text height=1.5ex, text depth=0.25ex,
    column sep=2.5em,row sep=2em] {
  0 &  \pi_* \Ss{\Rs}^{r-1} & \pi_* \Sf{M} & \pi_* \Sf{L} & 0. \\
};
\foreach \x [remember=\x as \lastx (initially 1)] in {2,...,5}
{
\draw[-stealth] (m-1-\lastx) -- (m-1-\x);
}
\end{tikzpicture}
\end{equation}

Therefore $\pi_* \Sf{M} \in \Ext_{\Ss{X}}^1 \left( \pi_* \Sf{L}, \Ss{X}^{r-1} \right)$. Since the module $\pi_* \Sf{M}$ is reflexive and without free factors we conclude the proof by
Lemma~1.9.ii in \cite{AV} (this Lemma globalizes to the Stein surface situation that we are considering here).
\endproof

\subsection{The classification of special reflexive modules on Gorenstein surface singularities}
\label{sec:clasdef}
Before we state and prove the classification theorem we need the following lemma.

\begin{lemma}
\label{lem:varioD}
Let $(X,x)$ be a normal Gorenstein surface singularity and $\pi:\Rs\to X$ be a resolution which is small with respect to the canonical form  such that for some irreducible component $E_i$ of 
the exceptional divisor $E$ we have $E_i \not \subseteq \text{Supp}(Z_K)$. If $D_1$ and $D_2$ are two irreducible curvettes, 
each one transverse to $E_i$ at regular points of $E$, then we have an isomorphism of line bundles $\Ss{\Rs}(-D_1) \cong \Ss{\Rs}(-D_2)$.
\end{lemma}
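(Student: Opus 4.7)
The plan is to show that $\Sf{L}_i := \Ss{\Rs}(D_i)$ are special full $\Ss{\Rs}$-modules of rank one for which $\pi$ is the minimal adapted resolution of $M_i := \pi_*\Sf{L}_i$, and then to identify $M_1 \cong M_2$; by Proposition~\ref{fullcondiciones} this will give $\Sf{L}_1 \cong \Sf{L}_2$, whence dualizing yields $\Ss{\Rs}(-D_1) \cong \Ss{\Rs}(-D_2)$.

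First I would verify that each $\Sf{L}_i$ is full. Condition~(2) of Proposition~\ref{fullcondiciones} is automatic for a line bundle, since $\Sf{L}_i$ is reflexive of rank one and Hartogs extension forces $H^0(\Rs,\Sf{L}_i) \to H^0(U,\Sf{L}_i)$ to be an isomorphism; generic generation by global sections follows from the tautological inclusion $\Ss{\Rs} \hookrightarrow \Sf{L}_i$ together with further sections coming from $M_i$. Next, the cohomological computation of Lemma~\ref{lemma:dimL} applies verbatim: the hypothesis $E_i \not\subseteq \text{Supp}(Z_k)$ combined with $D_i$ meeting $E$ at regular points on $E_i$ forces $D_i \cap \text{Supp}(Z_k) = \emptyset$, so $\Sf{L}_i \otimes \Ss{Z_k} \cong \Ss{Z_k}$ and the diagram chase gives $\dimc{R^1\pi_*\Sf{L}_i} = p_g$. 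Applying Theorem~\ref{formuladimensionM} with $c_1(\Sf{L}_i) \cdot [Z_k] = [D_i] \cdot [Z_k] = 0$ shows that the specialty defect $d(\Sf{L}_i)$ vanishes; hence $\Sf{L}_i$ is special, and by Corollary~\ref{cor:dimMadap} the resolution $\pi$ is the minimal adapted resolution of $M_i$. By Theorem~\ref{th:characspecial} each $M_i$ is a special reflexive $\Ss{X}$-module, and by Theorem~\ref{th:Chernresolucionadapted} it is determined up to isomorphism by the class $c_1(\Sf{L}_i) = [D_i] \in \text{Pic}(\Rs)$.

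The remaining step, which I expect to be the main obstacle, is to show $[D_1] = [D_2]$ in $\text{Pic}(\Rs)$, equivalently $M_1 \cong M_2$. By Lemma~\ref{lem:ArtinVerdierminadap}, varying generic sections of $\Sf{L}_1$ produces smooth transverse curvettes whose meeting points with $E$ sweep a dense subset of the regular points of $E$ on $E_i$, and all such degeneracy curvettes lie in a single linear system; to exhibit $D_2$ in this system I would deform $D_1$ to $D_2$ through a flat family $\{\mathcal{D}_t\}_{t\in T}$ of smooth curvettes all transverse to $E_i$ at regular points of $E$, and use Propositions~\ref{prop:compcanord} and~\ref{prop:minadapnumchar} along the family to show that the canonical vectors $(d_1(\mathcal{D}_t),\dots,d_l(\mathcal{D}_t))$ and the minimal conductor of $\mathfrak{K}_\pi$ for $\mathcal{D}_t$ remain constant. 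The delicate verification is that the Chern class in $\text{Pic}(\Rs)$ is constant along the family: in the presence of $p_g > 0$ the continuous part $H^1(\Rs,\Ss{\Rs})$ of $\text{Pic}(\Rs)$ is nontrivial, so the family-wise comparison requires showing that the restriction of $\Ss{\Rs \times T}(-\mathcal{D})$ is a pullback from $T$, or, equivalently, that the numerical matching at each slice already pins down the class; this is where the most technical work will lie.
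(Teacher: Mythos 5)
There is a genuine gap, and it sits exactly where you predict it will: the step ``$[D_1]=[D_2]$ in $\text{Pic}(\Rs)$'' is not an auxiliary verification inside your argument, it \emph{is} the lemma (dualizing $\Ss{\Rs}(D_1)\cong\Ss{\Rs}(D_2)$ gives $\Ss{\Rs}(-D_1)\cong\Ss{\Rs}(-D_2)$ immediately). So the first two paragraphs --- fullness and specialty of $\Sf{L}_i$, Corollary~\ref{cor:dimMadap}, Theorem~\ref{th:Chernresolucionadapted} --- reduce the statement to itself and can be deleted. Worse, the route you sketch for the remaining step does not work: the deformation of $D_1$ to $D_2$ keeps all numerical data (intersection numbers, canonical vectors, conductors) constant, but numerical data only controls the image of the class in $H^2(\Rs,\ZZ)$, not its position in the continuous part $H^1(\Rs,\Ss{\Rs})$ of $\text{Pic}(\Rs)$. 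The claim that ``the numerical matching at each slice already pins down the class'' is false in general: on a simply elliptic singularity, curvettes transverse to the elliptic curve at two different points are numerically indistinguishable yet give non-isomorphic line bundles (this is precisely the phenomenon behind Kahn's moduli of reflexive modules there). The hypothesis $E_i\not\subseteq\text{Supp}(Z_K)$ is what rules this out, and your sketch never brings it to bear on the key step.

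The paper's argument is short and entirely different in this last step. From the exponential sequence, since $\delta(\Ss{\Rs}(-D_1+D_2))=0$ in $H^2(\Rs,\ZZ)$, the class comes from some $f\in H^1(\Rs,\Ss{\Rs})$. Because $D_1$ and $D_2$ only touch $E$ at smooth points of $E_i$, one can choose a Stein cover in which $f$ is represented by a \v{C}ech cocycle $[f_{ij}]$ supported only on intersections meeting $E_i\setminus\Sing(E)$. Smallness of $\pi$ with respect to the Gorenstein form gives the sequence $0\to\Cs{\Rs}\to\Ss{\Rs}\to\Ss{Z_K}\to 0$, and Grauert--Riemenschneider makes $H^1(\Rs,\Ss{\Rs})\to H^1(\Rs,\Ss{Z_K})$ an isomorphism; the image of $[f_{ij}]$ there is visibly zero because $Z_K$ has no support on $E_i$. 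Hence $f=0$ and the bundle is trivial. (A secondary remark: your assertion that condition (2) of Proposition~\ref{fullcondiciones} is ``automatic for a line bundle'' via Hartogs is also not right in general --- a section over $U$ extends only as a meromorphic section, possibly acquiring poles along $E$ --- but this issue is moot once the first two paragraphs are removed.)
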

\proof
We want to prove that $\Ss{\Rs}(-D_1+D_2)$ is isomorphic to $\Ss{\Rs}$.

Consider the exponential exact sequence
\begin{equation*}
\begin{tikzpicture}
  \matrix (m)[matrix of math nodes,
    nodes in empty cells,text height=1.5ex, text depth=0.25ex,
    column sep=2.5em,row sep=2em] {
  0 &  \mathbb{Z} & \Ss{\Rs} & \Ss{\Rs}^* & 0. \\
};

\draw[-stealth] (m-1-1) -- (m-1-2);
\draw[-stealth] (m-1-2) -- (m-1-3);
\draw[-stealth] (m-1-3) edge node[auto]{$\text{exp}$} (m-1-4);
\draw[-stealth] (m-1-4) -- (m-1-5);
\end{tikzpicture}
\end{equation*}

Applying the functor $\pi_* $ to the previous exact sequence we get
\begin{equation}\label{exact:largaexp}
\begin{tikzpicture}
  \matrix (m)[matrix of math nodes,
    nodes in empty cells,text height=1.5ex, text depth=0.25ex,
    column sep=2.5em,row sep=2em] {
  \dots &  H^1(\Rs, \mathbb{Z}) & H^1(\Rs, \Ss{\Rs}) & H^1(\Rs, \Ss{\Rs}^*) & H^2(\Rs, \mathbb{Z}) & 0. \\
};

\draw[-stealth] (m-1-1) -- (m-1-2);
\draw[-stealth] (m-1-2) -- (m-1-3);
\draw[-stealth] (m-1-3) edge node[auto]{$\text{exp}$} (m-1-4);
\draw[-stealth] (m-1-4) edge node[auto]{$\delta$} (m-1-5);
\draw[-stealth] (m-1-5) -- (m-1-6);
\end{tikzpicture}
\end{equation}

We know that the Picard group of $\Rs$ is $H^1(\Rs, \Ss{\Rs}^*)$ and the morphism $\delta$ is given by taking the first Chern class in cohomology.

By hypothesis we know that $\delta(\Ss{\Rs}(-D_1+D_2)) = 0$. By the exact sequence \eqref{exact:largaexp} we get that there exist an element $f$ in $H^1(\Rs, \Ss{\Rs})$ such that the line bundle given by $\text{exp}(f)$ is isomorphic to $\Ss{\Rs}(-D_1+D_2)$. 

Denote by $E$ the exceptional divisor of $\pi$.
By the location of the curvettes $D_1$ and $D_2$, an easy \v Cech  cohomology computation shows that there exists a finite Stein cover $\Sf{U}=\{U_i\}_{i\in I}$ so that $f\in H^1(\Rs, \Ss{\Rs})=\check{H}^1(\Sf{U}, \Ss{\Rs})$ is represented by a $1$-cocycle $[f_{i,j}]$ with
$f_{i,j}\in \Ss{\Rs}(U_i\cap U_j)$ so that $f_{i,j}=0$ unless $U_i\cap U_j\cap E$ is included in $E_i\setminus Sing(E)$. 

Since the resolution is small with respect to the Gorenstein form, we have the exact sequence
\begin{equation}\tag{\ref{eq:exactseqCs}}
0 \to \Cs{\Rs} \to \Ss{\Rs} \to \Ss{Z_K} \to 0.
\end{equation}
Applying the functor $\pi_* -$ and by Grauert-Riemenschneider Vanishing Theorem we get that the homomorphism 
$$H^1(\Rs, \Ss{\Rs})\to H^1(\Rs, \Ss{Z_K})$$
is an isomorphism. Since the image of the \v Cech cocycle $[f_{i,j}]$ under this isomorphism is obviously zero for having empty support ($Z_k$ does not have support in $E_i$), we deduce that $f=0$ in $H^1(\Rs, \Ss{\Rs})$. This implies that $\Ss{\Rs}(-D_1+D_2)$ is the trivial line bundle. 
\endproof

\begin{definition}
Let $(X,x)$ be a normal surface singularity and $\pi \colon \Rs \to X$ be a resolution. 
 Any irreducible component $E_i$ of the exceptional divisor is called \emph{a divisor over $X$}.
\end{definition}

\begin{remark}
 \label{rem:resoluciondivisores}
Let $E_1, \dots, E_n$ be a collection of divisors over $X$. Then there exists a unique minimal resolution $\pi \colon \Rs \to X$ such that 
$E_1, \dots, E_n$ are irreducible components of the exceptional divisor.
\end{remark}

Now we present the classification theorem: 

\begin{theorem}
\label{Teo:final}
Let $(X,x)$ be a Gorenstein surface singularity. Then there exists a bijection between the following sets: 
\begin{enumerate}
\item The set of special $\Ss{X}$-modules without free factors up to isomorphism.
\item The set of finite pairs $(E_1, n_1), \dots, (E_l, n_l)$ where each $E_i$ is a divisor over $X$ and $n_i$ is a positive integer, 
such the minimal resolution given by Remark~\ref{rem:resoluciondivisores} is small with respect to the Gorenstein form and the
Gorenstein form does not have any pole in the components $E_1, \dots, E_l$.
\end{enumerate}
\end{theorem}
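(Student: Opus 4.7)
The plan is to reduce to indecomposables via Krull--Schmidt and establish a bijection between indecomposable special $\Ss{X}$-modules (without free factors) and divisors $E$ over $x$ satisfying the stated hypotheses. In the forward direction, given an indecomposable special $M$, take its (unique) minimal adapted resolution $\pi \colon \Rs \to X$, which is small with respect to the Gorenstein form by Remark~\ref{rem:minadapgor}. A generic system of sections produces a degeneracy locus that is irreducible by Proposition~\ref{prop:decompesp} and, by Proposition~\ref{prop:minadapspproperty}, is a smooth curvette $D$ transverse to a single exceptional component $E$ at a point where the Gorenstein form has order zero. Assign to $M$ the divisor $E$ with multiplicity $1$. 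For a general $M = \bigoplus_i M_i^{n_i}$ decomposed by Krull--Schmidt into pairwise non-isomorphic indecomposables, I assign the collection $\{(E_i, n_i)\}$; the joint minimal adapted resolution of $M$ is the minimal resolution where all $E_i$ appear, is small, and has the Gorenstein form of order zero along each $E_i$, so the resulting data satisfies hypothesis~(2).

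For the inverse direction, given a collection $\{(E_i, n_i)\}$ satisfying (2), construct each indecomposable $M_i$ mimicking Corollary~\ref{cor:existenmodespeciales}: let $\pi_i \colon \Rs_i \to X$ be the minimal resolution where $E_i$ appears (it inherits smallness from the joint resolution and keeps order zero on $E_i$); pick a generic smooth curvette $D_i$ transverse to $E_i$ at a smooth point of the exceptional divisor, set $\Sf{A}_i := \Ss{D_i}$, and take a minimal system of generators $(\psi_1, \dots, \psi_{r_i})$ of $\pi_{i,*} \Sf{A}_i$ as $\Ss{X}$-module. Since $\mathfrak{K}_{(\Sf{A}_i,(\psi_1,\dots,\psi_{r_i}))} = \NN$ forces $cond(\mathfrak{K}_{(\Sf{A}_i,(\psi_1,\dots,\psi_{r_i}))}) = (0)$, Proposition~\ref{prop:consecuenciaspracticas}~(1) combined with Theorem~\ref{th:corres} yields a full $\Ss{\Rs_i}$-module $\Sf{M}_i$ whose push-forward $M_i$ is special (by the surjectivity argument of Corollary~\ref{cor:existenmodespeciales} together with Theorem~\ref{th:characspecial}), indecomposable, and without free factors (by Proposition~\ref{prop:decompesp} together with the minimality of generators in Theorem~\ref{th:corrsing}). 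Set $M := \bigoplus_i M_i^{n_i}$.

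The crux of the argument, and the main obstacle, is showing that the assignment $E \mapsto M$ is well-defined and that the two constructions invert each other. Well-definedness requires that two choices of generic curvettes $D, D'$ transverse to $E$ yield isomorphic modules; here Lemma~\ref{lem:varioD} is essential: the hypotheses on $E$ (smallness, order zero) place $E$ outside the support of $Z_k$, forcing $\Ss{\Rs_i}(-D) \cong \Ss{\Rs_i}(-D')$ in $\mathrm{Pic}(\Rs_i)$. Consequently the associated full sheaves $\Sf{M}, \Sf{M}'$ have equal first Chern classes, and Theorem~\ref{th:Chernresolucionadapted} yields $\Sf{M} \cong \Sf{M}'$, hence $M \cong M'$. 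For the mutual inversion on indecomposables, Proposition~\ref{prop:minadapnumchar} confirms that $\Rs_i$ is the minimal adapted resolution to $M_i$ (the conductor condition holds at $\Rs_i$ by construction), so starting from $M$, passing to its divisor $E$, and building the associated module produces a module with the same first Chern class at the minimal adapted resolution, hence isomorphic to $M$ by Theorem~\ref{th:Chernresolucionadapted}. The extension to arbitrary special modules without free factors is then immediate via Krull--Schmidt, as the bijection on indecomposables clearly respects the multiplicities $n_i$.
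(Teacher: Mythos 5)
Your proof is correct and follows essentially the same route as the paper's: the same construction of the module from a disjoint union of generic curvettes via Proposition~\ref{prop:consecuenciaspracticas}, Theorem~\ref{th:corres}, Theorem~\ref{th:characspecial} and Proposition~\ref{prop:minadapnumchar} for surjectivity, and the same appeal to Theorem~\ref{th:Chernresolucionadapted} together with Lemma~\ref{lem:varioD} for injectivity. The only organizational difference is that you first establish the bijection for indecomposables and then assemble the general case, whereas the paper treats arbitrary special modules directly (taking $D$ with $n_j$ components over each $E_j$) and deduces the indecomposable statement afterwards as Corollary~\ref{Cor:finalprincipal} via Proposition~\ref{prop:decompesp}; your route additionally invokes Krull--Schmidt uniqueness of the decomposition into indecomposables, which does hold for reflexive modules over the complete or analytic local ring $\Ss{X,x}$ and is in any case implicitly needed by the paper for its ``take direct sums'' remark.
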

\proof
Let $M$ be a special $\Ss{X}$-module and $\pi \colon \Rs \to X$ be the minimal resolution adapted to $M$ with exceptional divisor 
$E=\bigcup_{i=1}^l E_i$. Denote by $\Sf{M}$ the full sheaf associated to $M$ and by $n_j=c_1(\Sf{M}) \cdot E_j$ for $j=1, \dots, l$. 
We associate to the module $M$ the pairs $(E_1,n_1), \dots, (E_k,n_k)$ such that $n_j$ is different form zero. 

In order to prove the surjectivity of the previous assignment consider $(E_1, n_1), \dots, (E_l, n_l)$ where each $E_i$ is a divisor over $X$ and 
$n_i$ is a positive integer and denote by $\pi \colon \Rs \to X$ be the resolution given by Remark~\ref{rem:resoluciondivisores}. The 
divisors are so that $\pi \colon \Rs \to X$ is small with respect to the Gorenstein form, and the coefficient of the canonical cycle
at each of them vanishes. For each positive integer $n_j$ take a smooth curvette $D_j$ with $n_j$ irreducible components such that $D_j$ intersects only 
the irreducible component $E_j$ and the intersection is transverse. Denote by $D= D_1 \coprod \dots \coprod D_l$. Let $(\psi_1,...,\psi_r)$ be a minimal
set of generators of $\pi_*\Ss{D}$ as a $\Ss{X}$-module. Since the $E_i$'s are not at the support of the canonical cycle we have that the 
minimal conductor $cond(\mathfrak{K}_{(\Ss{D},(\psi_1,...,\psi_r))})$ equals $0$. Let $(\Sf{M},(\phi_1,...,\phi_r))$ be pair associated to
$(\Ss{D},(\psi_1,...,\psi_r))$ by the correspondence of Theorem~\ref{th:corres}. By Proposition~\ref{prop:consecuenciaspracticas}, (1) the module
$\Sf{M}$ is full. Since $(\psi_1,...,\psi_r)$ generate of $\pi_*\Ss{D}$ as a $\Ss{X}$-module, we have that $\Sf{M}$ is special. By 
Theorem~\ref{th:characspecial} the module $M:=\pi_*\Sf{M}$ is special. The equality $cond(\mathfrak{K}_{(\Ss{D},(\psi_1,...,\psi_r))})=0$ implies that
$\pi:\Rs\to X$ is the minimal resolution adapted to $M$ by Proposition~\ref{prop:minadapnumchar}. By construction, the previous assignment applied to
$M$ gives $(E_1, n_1), \dots, (E_l, n_l)$. In order to prove surjectivity we need that $M$ does not have free factors. If $M$ has free factors we write $M=M_0\oplus\Ss{X}^a$, where $M_0$ is without free factors. Then the previous assignment applied to $M_0$ also gives 
 $(E_1, n_1), \dots, (E_l, n_l)$ and surjectivity is proven.

The injectivity follows from Theorem~\ref{th:Chernresolucionadapted} and Lemma~\ref{lem:varioD}.
\endproof

\begin{remark}
\label{rem:Dgenericanofactorlibre}
If the union of curvettes $D$ in the previous proof is chosen generic then the obtained module $M$ does not have free factors. 
\end{remark}
\proof
By the previous Theorem there is a unique reflexive $\Ss{X}$-module $M_0$ without free factors associated with the set of pairs  $(E_1, n_1), \dots, (E_l, n_l)$. Let $r$ be its rank and $(\phi_1,...,\phi_r)$ be a set of generic sections. Let $(\Sf{C},(\psi_1,...,\psi_r))$
be the pair associated with $(M_0,(\phi_1,...,\phi_r))$ by Theorem~\ref{th:corrsing}. Since $M_0$ is without free factors and the sections
are generic $(\psi_1,...,\psi_r)$ is a minimal set of generators of $\Sf{C}$. Taking $\pi:\Rs\to X$ the minimal adapted resolution to $M_0$ and using specialty and Proposition~\ref{prop:invressing}, we have that $\Sf{C}=\pi_*\Ss{D}$ for a curve $D$ as in the previous proof.

Let $D'$ be a union of curvettes as in the proof of the previous Theorem.
Let $(\psi_1,...,\psi_r)$ be a minimal
set of generators of $\pi_*\Ss{D'}$ as a $\Ss{X}$-module. Let $(M',(\phi_1,...,\phi_r))$ be the pair associated with $(\pi_*\Ss{D'},(\psi_1,...,\psi_r))$ by Theorem~\ref{th:corrsing}. According with Proposition~\ref{prop:invressing}, if $M$ is the module associated with 
$D'$ by the previous proof, then we have the equality $M=M'$.  

The minimal number of generators of $\pi_*\Ss{D'}$ as a $\Ss{X}$-module is upper semi-continuous under deformation of $D'$. Then the minimal number of generators among all choices of $D'$ as in the previous proof is $r$: if it were smaller the module $M_0$ would have rank smaller than $r$. 

Now, if $D'$ is chosen generic the module $M$ associated with $D'$ by the previous proof has rank $r$ and contains $M_0$ as a direct factor, hence it is equal to $M_0$.
\endproof

\begin{corollary}\label{Cor:finalprincipal}
Let $(X,x)$ be a Gorenstein surface singularity. Then there exists a bijection between the following sets: 
\begin{enumerate}
\item The set of special, indecomposable $\Ss{X}$-modules up to isomorphism.
\item The set of irreducible divisors $E$ over $x$, such at any resolution of $X$ where $E$ appears, the Gorenstein form has not either 
zeros or poles along $E$.
\end{enumerate}
\end{corollary}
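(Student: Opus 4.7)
The plan is to deduce this Corollary by restricting the bijection of Theorem~\ref{Teo:final} to the indecomposable case, using Proposition~\ref{prop:decompesp} to pin down exactly which collections of pairs produce indecomposable modules. A special indecomposable $\Ss{X}$-module that is not free has, in particular, no free direct factor, and is therefore captured by the bijection of Theorem~\ref{Teo:final}; it is assigned a unique finite nonempty collection $\{(E_1,n_1),\dots,(E_l,n_l)\}$ of divisors over $x$ with positive integer multiplicities satisfying the smallness and ``no pole'' conditions recorded there.

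Next, I would detect indecomposability by translating the combinatorial data $\{(E_j,n_j)\}$ into the geometry of the generic degeneracy locus. If $\pi\colon\Rs\to X$ denotes the minimal adapted resolution of the module $M$ associated with $\{(E_j,n_j)\}$, the construction in the proof of Theorem~\ref{Teo:final} realizes $M$ by applying the inverse correspondence at $X$ (Theorem~\ref{th:corrsing}) to the pair $(\pi_*\Ss{D},(\psi_1,\dots,\psi_r))$, where $D=\coprod_{j=1}^{l}D_j$ is a disjoint union of smooth curvettes with $D_j$ consisting of $n_j$ components each transverse to $E_j$ at a smooth point of the exceptional divisor. Proposition~\ref{prop:decompesp} then identifies the indecomposable direct summands of $M$ with the irreducible components of $D$, whose total number is $N:=\sum_{j=1}^{l}n_j$. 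Hence $M$ is indecomposable if and only if $N=1$, that is, $l=1$ and $n_1=1$. Conversely, starting from a single irreducible divisor $E$ satisfying the stated condition, one runs the construction of Theorem~\ref{Teo:final} for the singleton $\{(E,1)\}$; a generic choice of curvette yields, by Remark~\ref{rem:Dgenericanofactorlibre}, a special $\Ss{X}$-module without free factors, and then Proposition~\ref{prop:decompesp} with $N=1$ shows it is indecomposable.

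It remains to verify that the condition on a single pair $(E,1)$ appearing in Theorem~\ref{Teo:final} — that the minimal resolution containing $E$ is small with respect to the Gorenstein form and $\Omega$ has no pole along $E$ — is equivalent to the condition appearing in the Corollary — that $\Omega$ has neither zeros nor poles along $E$ at any resolution of $X$ where $E$ appears. Since $ord_E(\Omega)$ is a valuative invariant (Remark~\ref{rem:obsmco},~(1)), the Corollary's condition is equivalent to $ord_E(\Omega)=0$. The implication from Theorem's condition to Corollary's condition is immediate: smallness forces every exceptional component of the minimal resolution $\Rs$ containing $E$ to have non-positive Gorenstein order, so combined with the ``no pole'' assumption one gets $ord_E(\Omega)=0$. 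For the converse, assuming $ord_E(\Omega)=0$, I would argue by induction on the number of blowups needed to create $E$ from the minimal resolution of $X$ that every component of the exceptional divisor of $\Rs$ has Gorenstein order $\leq 0$, using the additivity formula of Remark~\ref{rem:obsmco},~(4); minimality of $\Rs$ ensures that every such component appears in the blowup chain leading to $E$. The main obstacle I expect to encounter is exactly this inductive step in the non-rational Gorenstein case, where the minimal resolution already has strict poles of $\Omega$: one must check by case analysis (according to whether each intermediate blowup point lies on one exceptional component or at the intersection of two) that the chain producing a component of Gorenstein order $0$ cannot pass through a component of strictly positive order.
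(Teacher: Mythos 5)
Your proposal is correct and follows exactly the paper's route: the paper's proof of this corollary is literally ``It follows immediately from Theorem~\ref{Teo:final} and Proposition~\ref{prop:decompesp}.'' The only difference is that you explicitly verify the equivalence between the divisor condition of Theorem~\ref{Teo:final} (smallness of the minimal resolution plus no pole along $E$) and the corollary's condition $ord_E(\Omega)=0$, a point the paper treats as immediate; your inductive argument via Remark~\ref{rem:obsmco}\,(4) and Lemma~\ref{lema:formagorenstein} is sound.
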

\proof
It follows immediately from Theorem~\ref{Teo:final} and Proposition~\ref{prop:decompesp}.
\endproof

Notice that if $(X,x)$ is a rational double point, then the previous Corollary is the McKay correspondence given by Artin and Verdier~\cite{AV}.

\begin{corollary}[\cite{AV}]
Let $(X,x)$ be a rational double point and denote by $\pi\colon \Rs \to X$ the minimal resolution with exceptional divisor $E=\bigcup_{i=1}^l E_i$. Then there exists a bijection between the following sets: 
\begin{enumerate}
\item The set of reflexive, indecomposable $\Ss{X}$-modules up to isomorphism.
\item The set of irreducible divisors $E_i$ where $E_i$ is an irreducible component of the exceptional divisor $E$.
\end{enumerate}
\end{corollary}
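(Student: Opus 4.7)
The plan is to deduce this classical Artin--Verdier statement from the immediately preceding Corollary~\ref{Cor:finalprincipal}. Two things need to be checked: first, that for a rational double point $(X,x)$ the set in Corollary~\ref{Cor:finalprincipal}(2) is precisely $\{E_1,\ldots,E_l\}$; and second, that every non-free indecomposable reflexive $\Ss{X}$-module is automatically special, so that the qualifier ``special'' can be dropped.

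For the first point I use that rational double points are canonical singularities: the minimal resolution $\pi$ is crepant, so the Gorenstein form $\Omega$ has order $0$ along each $E_i$. The recursive formula for the order of $\Omega$ under a blow-up (used in the proof of Lemma~\ref{lema:formagorenstein}) shows that any blow-up of a point of $\Rs$ produces a new exceptional divisor along which $\Omega$ has strictly positive order. Iterating, every divisor over $x$ distinct from the $E_i$ acquires a zero of $\Omega$ on some model above the minimal resolution, so the divisors satisfying the condition of Corollary~\ref{Cor:finalprincipal}(2) are exactly the $E_i$. The same observation shows that any resolution strictly dominating $\pi$ fails to be small with respect to the Gorenstein form, hence by Remark~\ref{rem:minadapgor} the minimal adapted resolution of any reflexive $\Ss{X}$-module coincides with $\pi$ itself.

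For the second point, let $\Sf{M}=(\pi^*M)^{\smvee\smvee}$ be the full $\Ss{\Rs}$-sheaf associated with a reflexive $\Ss{X}$-module $M$. The key claim is that $\Sf{M}^{\smvee}$ is itself full. This is verified against the two conditions of Proposition~\ref{fullcondiciones}: generic generation by global sections holds because $\pi_*\Sf{M}^{\smvee}=M^{\smvee}$ by Lemma~\ref{lema:dualM}, and $M^{\smvee}$ is an $\Ss{X}$-module of rank $r=\rk(\Sf{M}^{\smvee})$, so any $r$ elements of $M^{\smvee}$ linearly independent over the fraction field generate $\Sf{M}^{\smvee}$ on a dense open; injectivity of $H^1_E(\Sf{M}^{\smvee})\to H^1(\Sf{M}^{\smvee})$ follows from the local cohomology sequence once one observes that the restriction $H^0(\Rs,\Sf{M}^{\smvee})\to H^0(\Rs\setminus E,\Sf{M}^{\smvee})$ is an isomorphism --- both sides equal $M^{\smvee}$, the left by Lemma~\ref{lema:dualM} and the right by the Hartog extension property for the reflexive sheaf $M^{\smvee}$ on the normal surface $X$. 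Crepancy of $\pi$ then gives $\omega_{\Rs}\cong\Ss{\Rs}$ on a neighborhood of $E$, which is all that matters for $R^1\pi_*$; Lemma~\ref{lema:ceroggsg} (Grauert--Riemenschneider) applied to the full sheaf $\Sf{M}^{\smvee}$ yields $R^1\pi_*\Sf{M}^{\smvee}\cong R^1\pi_*(\Sf{M}^{\smvee}\otimes\omega_{\Rs})=0$. Combined with $p_g=0$ for rational singularities, this makes the specialty defect $d(\Sf{M})=\dimc{R^1\pi_*\Sf{M}^{\smvee}}-rp_g$ vanish, so $\Sf{M}$ is special on $\Rs$; since $\Rs$ is the minimal adapted resolution of $M$ by the previous paragraph, Theorem~\ref{th:characspecial} promotes this to speciality of $M$ itself. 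Once both points are established, Corollary~\ref{Cor:finalprincipal} yields the desired bijection; the main technical step is the verification that $\Sf{M}^{\smvee}$ is full.
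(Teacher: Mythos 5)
Your overall route is the paper's: reduce to Corollary~\ref{Cor:finalprincipal} by (a) identifying the admissible divisors over $x$ with the components of the minimal resolution and (b) invoking that every reflexive module on a rational double point is special. Part (a) is carried out correctly and in more detail than the paper (which states the coincidence of the two sets in one line): the crepancy of $\pi$, the strict increase of $ord_F(\Omega)$ under blow-up, and the consequent identification of the minimal adapted resolution with $\pi$ are all sound. For part (b) the paper simply quotes the specialty of all reflexive modules on RDPs as a known fact from \cite{AV}; you attempt to prove it, and that is where your argument has a gap.

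The gap is in the verification that $\Sf{M}^{\smvee}$ is generically generated by global sections. In this paper ``generically generated'' means generated by global sections \emph{outside a finite set} (Proposition~\ref{fullcondiciones}), whereas choosing $r=\rk(\Sf{M}^{\smvee})$ sections of $M^{\smvee}=\pi_*\Sf{M}^{\smvee}$ that are linearly independent over the function field only gives generation off their degeneracy \emph{divisor}. That divisor could a priori contain entire components of $E$ (compare $\Ss{\Rs}(-E_i)$, whose global sections all vanish along the whole exceptional divisor), and no choice of additional sections is guaranteed to remedy this: the failure locus of generation by \emph{all} global sections is a closed subset of $E$ that must be shown to be finite. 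This is precisely the point where rationality of the singularity has to enter (e.g.\ via Esnault's or Artin--Verdier's result that for rational singularities the dual of a full sheaf is again full, so that $\Sf{M}^{\smvee}\cong(\pi^*M^{\smvee})^{\smvee\smvee}$), and your argument does not supply it. Once that is granted, the rest of your chain --- the local cohomology argument for injectivity of $H^1_E(\Sf{M}^{\smvee})\to H^1(\Sf{M}^{\smvee})$, the identification $R^1\pi_*\Sf{M}^{\smvee}\cong R^1\pi_*(\Sf{M}^{\smvee}\otimes\omega_{\Rs})=0$ from crepancy and Lemma~\ref{lema:ceroggsg}, and $p_g=0$ --- does give $d(\Sf{M})=0$ and hence specialty via Theorem~\ref{th:characspecial}.
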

\proof
Since the singularity is a rational double point the following two sets are the same:
\begin{enumerate}
\item The set of irreducible divisors $E_i$ where $E_i$ is an irreducible component of the exceptional divisor $E$.
\item The set of irreducible divisors $E'_i$ where $E'_i$ is a divisor over $X$, such that the minimal resolution given by Lemma~\ref{rem:resoluciondivisores} is small with respect to the Gorenstein form and the Gorenstein form does not have any pole in the components $E'_i$.
\end{enumerate}

Now the Corollary follows immediately from Corollary~\ref{Cor:finalprincipal} and from the fact that any reflexive module on a rational double point singularity is special.
\endproof

\section{Deformations of reflexive modules and full sheaves}
\label{sec:defs}

In the next sections we study deformations of reflexive modules. We treat simultaneously deformations over complex spaces and over complex algebroid germs (spectra of noetherian complete $\CC$-algebras). 

\subsection{The deformation functors}\label{sec:deffunctors}
We assume basic knowledge on Deformation Theory. We follow~\cite{Har4} as a basic reference. 
In order to fix terminology we recall some known definitions. 

\begin{notation}
\label{not:restrictionstalk}
Let $\Sf{Y}\to S$ be flat morphism of two complex spaces, and $y,s$ be points in each of them. Let $\overline{M}$ be a $\Ss{\Sf{Y}}$-module. We will use the
notation $\overline{M}|_s:=\overline{M}\otimes_{\Ss{S}}(\Ss{S}/\mathfrak{m}_s)$,
where $\mathfrak{m}_s$ denotes the maximal ideal at $s$. Clearly $\overline{M}|_s$ is a $\Ss{\Sf{Y}_s}$ module, where $\Sf{Y}_s$ is the fibre of $\Sf{Y}$ over $s$. Furthermore $(\overline{M}|_s)_y$ will denote the stalk of $\overline{M}|_s$ at $y$.
\end{notation}

\begin{definition}
\label{def:deforfam}
Let $Y$ be a either complex space or an algebroid germ and $M$ be a $\Ss{Y}$-module. 
\begin{enumerate}
 \item A {\em deformation} of $(Y,M)$ over a germ of complex space (or an algebroid germ) $(S,s)$ is a
triple $(\Sf{Y},\overline{M},\iota)$ where $\Sf{Y}$ is a flat deformation of $Y$ over $S$,  $\overline{M}$ is a $\Ss{\Sf{Y}}$-module which is flat over $S$, and $\iota$ is an isomorphism from $M$ to the fibre $\overline{M}|_s$.
 \item A {\em deformation fixing} $Y$ of $M$ over a germ of complex space $(S,s)$ is a deformation of $(Y,M)$ over $(S,s)$ such that $\Sf{Y}$ is the trivial deformation $Y\times S$.
\item Given a flat morphism $\Sf{Y}\to S$, a {\em flat family of modules on} $\Sf{Y}$ is a $\Ss{\Sf{Y}}$-module $\overline{M}$ which is flat over $S$. 
\item A {\em flat family of $\Ss{Y}$-modules fixing} $Y$ is a flat family of modules on $Y\times S$.
\end{enumerate}
\end{definition}

Deformations of a pair $(Y,M)$ form a contravariant functor $\mathbf{Def_{Y,M}}$ from the category of germs of complex spaces to the category 
of sets in the usual way: morphisms of germs are transformed into mappings of set via the pull-back of deformation. Likewise deformations of $M$ fixing the base form a contravariant functor $\mathbf{Def_{M}}$. If we restrict to the category of spectra 
of Artinian $\mathbb{C}$-algebras, we can view the functor as a covariant functor from Artinian $\mathbb{C}$-algebras to sets.
It is easy to check that Schlessinger conditions $(H0)-(H2)$ of Theorem 16.2 of~\cite{Har4} are satisfied for these two functors. 

Let us remark that reflexiveness is an open property (see the next Lemma), and hence the deformation notion of Definition~\ref{def:deforfam} is adequate as a deformation notion of reflexive sheaves.

\begin{lemma}
\label{lema:reflexiveopen}
Let $\sigma:\Sf{Y}\to S$ be a flat family of normal surfaces. Let $\overline{M}$ be a family of modules on $\Sf{Y}$. Let $y\in\Sf{Y}$ be a point. Suppose that the $\Ss{\Sf{Y}_{\sigma(y)},y}$-module $(\overline{M}|_{\sigma(y)})_y$ is reflexive. There exists an open neighborhood $U$ of $y$ in $\Sf{Y}$ such that for any $y'\in U$ the $\Ss{\Sf{Y}_{\sigma(y')},y'}$-module $(M|_{\sigma(y')})_{y'}$ is reflexive.
\end{lemma}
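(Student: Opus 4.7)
The plan is to characterize reflexivity on each fiber as a depth condition and apply openness of depth in flat families. By property~(3) of Section~3.1, a finitely generated module over a 2-dimensional normal local ring is reflexive if and only if it is maximal Cohen-Macaulay, equivalently has depth equal to the dimension of the local ring. In our setting, since each fiber $\Sf{Y}_{\sigma(y')}$ is a normal surface of dimension $2$ at every (analytic) point, reflexivity of $(\overline{M}|_{\sigma(y')})_{y'}$ is equivalent to $\mathrm{depth}_{\Ss{\Sf{Y}_{\sigma(y')}, y'}}((\overline{M}|_{\sigma(y')})_{y'}) = 2$.

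The key ingredient is the depth formula for flat morphisms (Matsumura, Theorem~23.3, or EGA~IV, 6.3.1): because $\overline{M}$ is $S$-flat, for every $y' \in \Sf{Y}$ with $s' := \sigma(y')$ one has
\[
\mathrm{depth}_{\Ss{\Sf{Y}, y'}}(\overline{M}_{y'}) = \mathrm{depth}_{\Ss{S, s'}}(\Ss{S, s'}) + \mathrm{depth}_{\Ss{\Sf{Y}_{s'}, y'}}((\overline{M}|_{s'})_{y'}).
\]
Combined with the lower semicontinuity of total depth for coherent sheaves on $\Sf{Y}$ (classical in the complex-analytic setting and, in the algebroid setting, a consequence of excellence of the local rings) and the analogous behavior on $S$, this yields lower semicontinuity on $\Sf{Y}$ of the fiberwise depth function $y' \mapsto \mathrm{depth}_{\Ss{\Sf{Y}_{s'}, y'}}((\overline{M}|_{s'})_{y'})$.

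Since at $y$ the fiberwise depth equals $2$, in some open neighborhood $U$ of $y$ it remains $\geq 2$; as it is also bounded above by the fiber dimension $2$, equality holds throughout $U$, and hence $(\overline{M}|_{\sigma(y')})_{y'}$ is MCM, and so reflexive, at every $y' \in U$. The main technical obstacle I anticipate is the careful verification of the semicontinuity statement in the specific analytic and algebroid frameworks of the paper; although classical, it requires attention to excellence of the local rings and coherence of the modules involved.
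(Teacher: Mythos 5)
Your reduction of reflexivity to the fiberwise condition ``Cohen--Macaulay of dimension $2$'' is exactly the paper's first step, and the statement you then need --- openness of the relative Cohen--Macaulay locus for a module flat over the base --- is precisely what the paper disposes of by citing EGA~IV, \S 6.11. So the strategy matches; the gap is in your derivation of that openness.

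The flat-base-change depth formula gives, for $s'=\sigma(y')$,
\[
\mathrm{depth}_{\Ss{\Sf{Y}_{s'},y'}}\bigl((\overline{M}|_{s'})_{y'}\bigr)
=\mathrm{depth}_{\Ss{\Sf{Y},y'}}(\overline{M}_{y'})-\mathrm{depth}(\Ss{S,s'}),
\]
a \emph{difference} of two functions, each of which is lower semicontinuous (Scheja: the sets $\{\mathrm{depth}\le m\}$ are analytic, hence closed, for both). Lower semicontinuity of the base depth means $\mathrm{depth}(\Ss{S,s'})$ may be strictly \emph{larger} at points $s'$ near $s$ than at $s$ itself (take for $S$ the cone over an abelian surface: depth $2$ at the vertex, depth $3$ elsewhere). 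In that case lower semicontinuity of the total depth only yields $\mathrm{depth}_{\mathrm{fiber}}\ge 2+\mathrm{depth}(\Ss{S,s})-\mathrm{depth}(\Ss{S,s'})$, which can be $\le 1$; working with codepths instead runs into the same obstruction in the opposite direction. So ``lower semicontinuity of the fiberwise depth'' does not follow from the two semicontinuity statements you invoke by subtraction. The relative statement is true (EGA~IV 6.11 / 12.1.6), but its proof requires a genuinely relative argument (constructibility plus stability under generization, or the analytic analogue for the relative singularity sets $S_m(\overline{M};\sigma)$), which is exactly what is hidden in the paper's phrase ``straightforward adaptation of EGA IV \S 6.11''. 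Your argument is complete only when $\mathrm{depth}(\Ss{S,\cdot})$ is locally constant near $s$ (e.g.\ $S$ smooth, Cohen--Macaulay, or Artinian); for a general base the key step is missing.
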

\proof
Reflexiveness is equivalent to being Cohen-Macaulay of dimension $2$. An straightforward adaptation of EGA IV~\cite[\S~6.11]{Gr} shows that the locus 
where $(M|_{s'})_{x'}$ is Cohen-Macaulay of dimension $2$ is open.
\endproof

\begin{remark}
Since being Gorenstein is equivalent to ask that the dualizing sheaf is an invertible sheaf, it is easy to prove that being Gorenstein is also an open property. In our work we do not use that property, so we omit the proof.
\end{remark}

On the other hand, if we work at the resolution, fullness is not an open property, so one needs to restrict the deformations of 
Definition~\ref{def:deforfam} in order to get a good notion of deformations and flat families of full sheaves.

\begin{definition}[Laufer~\cite{La}]
\label{def:veryweak}
Let $\Sf{X}\to S$ be a flat family of normal Stein surfaces. A {\em very weak simultaneous resolution} of $\Sf{X}\to S$ is a proper birational morphism $\Pi:\tilde{\Sf{X}}\to\Sf{X}$ satisfying
\begin{enumerate}
 \item $\tilde{\Sf{X}}$ is flat over $S$,
 \item For any closed point $s\in S$ the morphism $\Pi|_s:\tilde{\Sf{X}}_s\to\Sf{X}_s$ is a resolution of singularities. 
\end{enumerate}
We will use the following notation: for any $s\in S$ denote the restriction $\Pi|_{\Rsd_s}$ by $\Pi_s:\Rsd_s\to \Sf{X}_s$.
\end{definition}

\begin{lemma}
\label{lem:h0h1}
Let $\Sf{X}\to S$ be a flat family of normal Stein surfaces and $\Pi:\tilde{\Sf{X}}\to\Sf{X}$ a very weak simultaneous resolution.
Let $\overline{\Sf{M}}$ be a $\Ss{\Rsd}$-module which is flat over $\Ss{S}$. Then the first 2 of the following 3 conditions are equivalent and imply the third:
\begin{enumerate}
 \item $R^1\Pi_*\overline{\Sf{M}}$ is flat as $\Ss{S}$-module in a neighborhood of a point $s\in S$;
 \item the natural map $(\Pi_*\overline{\Sf{M}})|_s\to (\Pi_s)_*(\overline{\Sf{M}}|_s)$ is an isomorphism;
 \item $\Pi_*\overline{\Sf{M}}$ is flat as $\Ss{S}$-module in a neighborhood of a point $s\in S$.
\end{enumerate}
If $S$ is the spectrum of an artinian algebra then the third condition is equivalent to the first two conditions. 

For any morphism $\phi:S'\to S$ let $\Pi': \Rsd\times_S S'\to \Sf{X}\times_S S'$, $\tilde{\psi}:\Rsd\times_S S'\to \Rsd$ and $\psi:\Sf{X}\times_S S'\to \Sf{X}$ be the natural maps. If the $3$ previous conditions are satisfied for any $s\in S$ then the natural map
\begin{equation}
 \label{eq:basechange}
\psi^*(R^1\Pi_*\overline{\Sf{M}})\to R^1(\Pi')_*\left(\tilde{\psi}^*\overline{\Sf{M}}\right),
\end{equation}
is an isomorphism.
\end{lemma}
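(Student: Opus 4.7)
The plan is to treat this as a standard application of Grauert-style cohomology and base change machinery, the key simplifying input being that the fibres of $\Pi$ have dimension at most one: each $\Pi_s\colon \Rsd_s \to \Sf{X}_s$ is a resolution of the normal surface $\Sf{X}_s$, so $R^i\Pi_*\overline{\Sf{M}}=0$ and $R^i(\Pi_s)_*(\overline{\Sf{M}}|_s)=0$ for $i\ge 2$. Working locally around a point $s\in S$ (shrinking $S$ to a Stein neighbourhood) and around a point of $\Sf{X}_s$, I would invoke the Grauert--Mumford representing-complex theorem applied to the proper morphism $\Pi$ with the $\Ss{S}$-flat coherent sheaf $\overline{\Sf{M}}$: this produces a two-term complex $K^0\xrightarrow{d} K^1$ of finite-rank free $\Ss{S,s}$-modules (two terms suffice by the fibre-dimension vanishing) that universally represents the derived pushforwards, in the sense that for every $\Ss{S,s}$-module $N$ one has $H^i(K^\bullet\otimes_{\Ss{S,s}}N)\cong R^i(\Pi')_*(\tilde\psi^*\overline{\Sf{M}})$ under the corresponding base change. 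In particular $\Pi_*\overline{\Sf{M}}\cong\ker d$ and $R^1\Pi_*\overline{\Sf{M}}\cong \coker d$.

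From the two-term complex, the universal coefficient sequence for $N=k=\Ss{S,s}/\mathfrak{m}_s$ reads
\begin{equation*}
0\to (\Pi_*\overline{\Sf{M}})|_s\to (\Pi_s)_*(\overline{\Sf{M}}|_s)\to \mathrm{Tor}_1^{\Ss{S,s}}(R^1\Pi_*\overline{\Sf{M}},k)\to 0,
\end{equation*}
together with $(R^1\Pi_*\overline{\Sf{M}})|_s\cong R^1(\Pi_s)_*(\overline{\Sf{M}}|_s)$ (no Tor correction because $R^2=0$). From the first sequence, $(1)\Leftrightarrow(2)$ is immediate via the local criterion of flatness: the base-change map of condition~(2) is an isomorphism exactly when the $\mathrm{Tor}_1$-term vanishes, which is equivalent to flatness of $R^1\Pi_*\overline{\Sf{M}}$ near $s$. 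For $(1)\Rightarrow(3)$, I would split the four-term complex into
\begin{equation*}
0\to\ker d\to K^0\to \mathrm{im}\,d\to 0\quad\text{and}\quad 0\to \mathrm{im}\,d\to K^1\to\coker d\to 0;
\end{equation*}
freeness of $K^0,K^1$ together with flatness of $\coker d$ given by (1) propagates through the long Tor sequence, first making $\mathrm{im}\,d$ flat and then $\ker d=\Pi_*\overline{\Sf{M}}$ flat.

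The Artinian converse $(3)\Rightarrow(1)$ uses the following elementary fact: over an Artinian local ring $A$, any finitely generated free submodule $F$ of a finitely generated free module $F'$ is a direct summand. To prove this, one lifts a hypothetical $k$-linear relation among basis elements of $F$ to an element of $F$ lying in $\mathfrak{m}F'$, and multiplies it by a top-socle element of $A$ to force a contradiction with freeness of $F$; this shows $F\otimes k\hookrightarrow F'\otimes k$, after which a basis of $F\otimes k$ extends to one of $F'\otimes k$ and lifts by Nakayama to a complementary free summand. Applied to $\ker d\subseteq K^0$: hypothesis $(3)$ makes $\ker d$ free, hence a direct summand of $K^0$, so $\mathrm{im}\,d\cong K^0/\ker d$ is free; then $\mathrm{im}\,d\subseteq K^1$ is again a direct summand, so $\coker d=R^1\Pi_*\overline{\Sf{M}}$ is free, in particular flat, giving $(1)$. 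The base-change isomorphism $(\ref{eq:basechange})$ is then automatic from the universal property of $K^\bullet$: once $\coker d$ is flat, forming the cokernel commutes with $\phi^*$, so $\psi^*\coker d=\coker(\phi^*d)=H^1(\phi^*K^\bullet)=R^1\Pi'_*(\tilde\psi^*\overline{\Sf{M}})$. The only genuinely substantive step in the whole proof is establishing the representing complex $K^\bullet$ in the analytic Stein-over-$S$ setting (Bănică--Stănășilă, or Forster--Knorr in the relative case); once that is available, what remains is elementary homological algebra on a two-term complex.
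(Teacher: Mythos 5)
Your overall strategy coincides with the paper's: both proofs reduce the statement to homological algebra on a complex of $\Ss{S,s}$-modules that universally represents $R\Pi_*(\overline{\Sf{M}}\otimes_{\Ss{S}}N)$, i.e.\ to the cohomology-and-base-change formalism of Hartshorne III.12 / Mumford. The gap is in the one step you yourself single out as substantive: the representing complex \emph{cannot} be taken to be a two-term complex of finite-rank free $\Ss{S,s}$-modules. The theorems you cite (Grauert--Mumford, B\u{a}nic\u{a}--St\u{a}n\u{a}\c{s}il\u{a}, Forster--Knorr) require the morphism to $S$ to be proper, and here the relevant morphism is $\Rsd\to S$, whose fibres are resolutions of non-compact Stein surfaces. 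Concretely, $\Pi_*\overline{\Sf{M}}$ is not coherent over $S$ (its ``stalk'' at $s$ is the infinite-dimensional space of sections over a Stein neighbourhood), so no finite free $K^0$ can compute $H^0$. This is exactly the difficulty the paper's proof is devoted to: it adapts Hartshorne's construction so that the degree-one term is finite free (legitimate, because $R^1\Pi_*\overline{\Sf{M}}$ is coherent with support finite over $S$) but the degree-zero term is only \emph{flat} over $\Ss{S,s}$ and not finitely generated.

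Most of your homological algebra survives this correction: the universal-coefficient sequence, the equivalence $(1)\Leftrightarrow(2)$, the implication $(1)\Rightarrow(3)$, and the base-change isomorphism only use flatness of $K^0$, never its freeness or finiteness. What does not survive as written is the Artinian converse $(3)\Rightarrow(1)$: you apply ``a finitely generated free submodule of a finitely generated free module over an Artinian local ring is a direct summand'' to $\ker d\subseteq K^0$, and neither module is finitely generated; moreover ``(3) makes $\ker d$ free'' invokes flat $=$ free for finitely generated modules. (This step might be salvageable --- over an Artinian local ring flat modules are free without finiteness hypotheses, $\mathfrak{m}$ is nilpotent so Nakayama applies to arbitrary modules, and your socle argument does not really need the ambient module to be finite free --- but that is not the argument you wrote.) The paper sidesteps this by quoting Wahl's Artinian Principle of Exchange. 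So: right skeleton, but the finiteness claim at its base is false in this setting and must be replaced by the flat-but-infinitely-generated version before the rest can be trusted.
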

\proof
The proof is an adaptation of the methods of Section III. 12 of~\cite{Har2}. 
The main difference is that in our case the morphism $\Pi$ is not projective, and that $\Pi_*\overline{\Sf{M}}$ is not coherent over $S$.
Now we explain the changes needed in each of the results from Hartshorne book that we will use; we numerate the results as Hartshorne does. Our base $S$ in Hartshorne's setting is the spectrum of a ring $A$. Easy adaptations of the proofs allow to modify Hartshorne 
statements as follows: Proposition 12.1 is true without modification. 
The complex $L^\bullet$ of Proposition 12.2 is bounded above, $L^i$ is finitely generated and free over $A$ if $i>0$, and 
only flat over $A$ if $i\leq 0$. Proposition 12.4 is true asking $W^i$ to be flat over $A$ instead of projective, if $i=0$, and not asking $Q$ to be
finitely generated if $i=0$. Proposition 12.5 is true as stated. Corollary 12.6 is true if one asks $T^0(A)$ to be flat instead of projective. 
Proposition 12.10 works as stated; the only point of the proof of Proposition 12.10 that needs some care is the following:
the Theorem on Formal Functions (\cite{Har2}, Chapter III, Theorem 11.1) is used only for the $0$-th cohomology. This theorem assumes projectivity
for the morphism $\Pi$, but for $0$-th cohomology the theorem works without this hypothesis.

Now let us proceed to the proof using Hartshorne language. There are only two functors, $T^0$ and $T^1$. Thus $T^0$ is left exact and $T^1$ right 
exact. Condition $(1)$ translates in the flatness of $T^1(A)$, which by the adapted Corollary 12.6 of Hartshorne is equivalent to the exactness of $T^1$. This
is equivalent to the exactness of $T^0$, and by the adapted Corollary 12.6 this implies the flatness of $T^0(A)$, which is exactly Condition $(3)$.

Condition $(2)$ is a particular case of the isomorphism~(\ref{eq:basechange}). If the first condition hold then $T^1$ is exact and hence $T^0$ is 
right exact. Proposition 12.5 of Hartshorne implies the isomorphism~(\ref{eq:basechange}). 

Then a direct application of Proposition 12.10 of~\cite{Har2})
gives that Condition $(2)$ implies the right exactness of $T^0$. This implies the exactness of $T^1$ and, by the adapted Corollary 12.6, Condition $(1)$ 
holds.

Suppose that $S$ is the spectrum of an artinian algebra. The Artinian Principle of Exchange of~\cite{Wa} gives the equivalence between the 
first and third conditions. 
\endproof

\begin{definition}
\label{def:deformationfull}
Let $X$ be a normal Stein surface, $\pi:\Rs\to X$ be a resolution and $\Sf{M}$ be a full $\Ss{\Rs}$-module. 
\begin{enumerate}
 \item A {\em deformation} of $(\Rs,X,\Sf{M})$ over a germ of complex space $(S,s)$ is a cuadruple $(\Rsd,\Sf{X},\overline{\Sf{M}},\iota)$, where $\Sf{X}$ is a flat deformation of $X$ over $(S,s)$, there is a proper birational morphism $\Pi:\Rsd\to\Sf{X}$ which is a very weak simultaneous resolution, $\overline{\Sf{M}}$ is $\Ss{\Rsd}$-module which is flat over $S$, and $\iota$ is an isomorphism from $\Sf{M}$ to $\overline{\Sf{M}}|_s$. A {\em deformation fixing $X$} of $(\Rs,X,\Sf{M})$  is a deformation $(\Rsd,\Sf{X},\overline{\Sf{M}},\iota)$ where $\Sf{X}$ is the trivial deformation. A {\em deformation fixing $(\Rs,X)$} of $(\Rs,X,\Sf{M})$ is a deformation where $\Sf{X}$ and $\Rsd$ are trivial deformations. 
\item A {\em full deformation} of $(\Rs,X,\Sf{M})$ is a deformation $(\Rsd,\Sf{X},\overline{\Sf{M}},\iota)$ such that $R^1\Pi_*\overline{\Sf{M}}$ is flat as $\Ss{S}$-module. 
\item Given a morphism $\Pi:\Rsd\to\Sf{X}$ as above, a family of full modules on $\Rsd$ is a triple $(\Rsd,\Sf{X},\overline{\Sf{M}})$ where
$\overline{\Sf{M}}$ is a $\Ss{\Rsd}$-module which is flat over $S$, the $\Ss{S}$-module $R^1\Pi_*\overline{\Sf{M}}$ is flat, and the $\Ss{\Rsd_s}$-module $\overline{\Sf{M}}|_{s}$ is full for all $s\in S$. A flat family fixing $X$ and/or $(\Rs,X)$ is defined in the obvious way.
\end{enumerate}
\end{definition}

The reader may have noticed that while in the definition of full family we ask that the $\Ss{\Rsd_s}$-module $\overline{\Sf{M}}|_{s}$ is asked to be full for all $s\in S$, we do not ask the same property for full deformations over a germ $(S,s)$. The reason is the following proposition, which shows that fullness is an open property in full deformations.

\begin{proposition}
\label{prop:fullopen}
Let $X$ be a normal Stein surface, $\pi:\Rs\to X$ be a resolution and $\Sf{M}$ be a full $\Ss{\Rs}$-module. Let $(\Rsd,\Sf{X},\overline{\Sf{M}},\iota)$ be a full deformation of $(\Rs,X,\Sf{M})$ over $(S,s)$. There exists an open neighborhood $W$ of $s\in S$ such that
for any $s'\in U$ the $\Ss{\Rsd_s}$-module $\overline{\Sf{M}}|_{s}$ is full. 
\end{proposition}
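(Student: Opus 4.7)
The plan is to verify, on every fibre near $s$, the two conditions of Proposition~\ref{fullcondiciones} characterizing full locally free sheaves. The crucial starting point is the flatness of $R^{1}\Pi_{*}\overline{\Sf{M}}$ over $\Ss{S}$, which is precisely the extra hypothesis built into the definition of a full deformation. By Lemma~\ref{lem:h0h1}, this implies that $\Pi_{*}\overline{\Sf{M}}$ is also $\Ss{S}$-flat and that base change holds for both $\Pi_{*}$ and $R^{1}\Pi_{*}$ at every $s'\in S$ near $s$, yielding canonical isomorphisms $(\Pi_{*}\overline{\Sf{M}})|_{s'}\cong (\Pi_{s'})_{*}(\overline{\Sf{M}}|_{s'})$ and $(R^{1}\Pi_{*}\overline{\Sf{M}})|_{s'}\cong R^{1}(\Pi_{s'})_{*}(\overline{\Sf{M}}|_{s'})$. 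Local freeness of $\overline{\Sf{M}}|_{s'}$ on $\Rsd_{s'}$ follows from $\Ss{S}$-flatness of $\overline{\Sf{M}}$ combined with local freeness of the central fibre $\Sf{M}$, by the standard criterion; this holds on a neighborhood of $\Rsd_{s}$ in $\Rsd$ and hence fibrewise for $s'$ near $s$.

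For condition $(2)$ of Proposition~\ref{fullcondiciones} I would use the following reformulation: for a locally free sheaf $\Sf{N}$ on a resolution $\sigma\colon Y\to Z$ of a normal surface that is an isomorphism over the regular locus, injectivity of $H^{1}_{E}(Y,\Sf{N})\to H^{1}(Y,\Sf{N})$ is equivalent to reflexivity of $\sigma_{*}\Sf{N}$ on $Z$. Indeed, in the local cohomology long exact sequence one has $H^{0}_{E}(Y,\Sf{N})=0$, so the injectivity in question is equivalent to surjectivity of the restriction $H^{0}(Y,\Sf{N})\to H^{0}(Y\setminus E,\Sf{N})$, which (using that $\sigma$ is an isomorphism over the regular locus of $Z$) translates into $\sigma_{*}\Sf{N}\to i_{*}i^{*}\sigma_{*}\Sf{N}$ being surjective, i.e., $\sigma_{*}\Sf{N}$ being reflexive. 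Applying this to $\overline{\Sf{M}}|_{s'}$ together with the base change above, condition $(2)$ for $\overline{\Sf{M}}|_{s'}$ becomes reflexivity of $(\Pi_{*}\overline{\Sf{M}})|_{s'}$ as an $\Ss{\Sf{X}_{s'}}$-module. Since $\Pi_{*}\overline{\Sf{M}}$ is $\Ss{S}$-flat with reflexive central fibre $\pi_{*}\Sf{M}=M$ (reflexivity of $M$ being part of fullness of $\Sf{M}$), Lemma~\ref{lema:reflexiveopen} provides an open neighborhood $W_{1}\ni s$ on which $(\Pi_{*}\overline{\Sf{M}})|_{s'}$ stays reflexive and condition $(2)$ holds.

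For condition $(1)$, I would analyse the cokernel $\Sf{F}$ of the natural evaluation morphism $\Pi^{*}\Pi_{*}\overline{\Sf{M}}\to \overline{\Sf{M}}$. Since $\Pi^{*}$ commutes with restriction to fibres and base change gives $(\Pi_{*}\overline{\Sf{M}})|_{s'}\cong (\Pi_{s'})_{*}(\overline{\Sf{M}}|_{s'})$, the restriction $\Sf{F}|_{s'}$ identifies with the cokernel of the analogous evaluation morphism on $\Rsd_{s'}$; in particular condition $(1)$ for $\overline{\Sf{M}}|_{s'}$ is equivalent to $\Sf{F}|_{s'}$ having support of dimension at most zero. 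Because $\Pi$ is an isomorphism outside the relative exceptional locus (by Definition~\ref{def:veryweak}), $\mathrm{Supp}(\Sf{F})$ is contained in that locus, which is a closed analytic subset of $\Rsd$ proper over $S$. On the central fibre, fullness of $\Sf{M}$ makes $\mathrm{Supp}(\Sf{F}|_{s})$ zero-dimensional, and the properness of $\mathrm{Supp}(\Sf{F})\to S$ then yields, via upper semi-continuity of fibre dimension (or equivalently, since proper maps have closed image, by taking the complement in $S$ of the image of the closed locus $\{y:\dim_{y}\mathrm{Supp}(\Sf{F})\cap \Rsd_{\Pi(y)}\geq 1\}$), an open neighborhood $W_{2}\ni s$ on which $\Sf{F}|_{s'}$ is zero-dimensional for every $s'$, so condition $(1)$ holds on $W_{2}$.

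Setting $W:=W_{1}\cap W_{2}$, for every $s'\in W$ the sheaf $\overline{\Sf{M}}|_{s'}$ is locally free on $\Rsd_{s'}$ and satisfies both conditions of Proposition~\ref{fullcondiciones}, hence is full. The hard part is Step~2: the reformulation of condition $(2)$ as reflexivity of the pushforward combined with the base change of Lemma~\ref{lem:h0h1}, so that what is \emph{a priori} a cohomological condition living fibrewise on $\Rsd_{s'}$ becomes the openness of reflexivity for the auxiliary $\Ss{S}$-flat sheaf $\Pi_{*}\overline{\Sf{M}}$ on $\Sf{X}$, to which Lemma~\ref{lema:reflexiveopen} can be applied.
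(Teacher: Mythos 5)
Your proof is correct and follows essentially the same route as the paper: flatness of $R^1\Pi_*\overline{\Sf{M}}$ plus Lemma~\ref{lem:h0h1} to get flatness and base change for $\Pi_*\overline{\Sf{M}}$, openness of reflexivity (Lemma~\ref{lema:reflexiveopen}) to obtain condition (2) of Proposition~\ref{fullcondiciones} via the local cohomology sequence, and a semicontinuity argument on the non-generation locus for condition (1). The only cosmetic difference is that you phrase condition (1) through the cokernel of the evaluation map $\Pi^*\Pi_*\overline{\Sf{M}}\to\overline{\Sf{M}}$ and properness of its support, whereas the paper lifts a finite almost-generating set of sections from the central fibre; both arguments are sound.
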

\proof
We will use the characterization of Proposition~\ref{fullcondiciones}. 

Since $\Sf{M}$ is locally free, by flatness we have that $\overline{\Sf{M}}$ is also locally free. Then $\overline{\Sf{M}}|_{s'}$ is locally 
free for any $s'\in S$. 

Since $R^1\Pi_*\overline{\Sf{M}}$ is flat over $S$, by Lemma~\ref{lem:h0h1} we have that $\Pi_*\overline{\Sf{M}}$ is flat over $S$ and also the equality 
\begin{equation}
\label{eq:igualdadcommutada}
(\Pi_*\overline{\Sf{M}})|_{s'}=(\Pi|_{\Rsd_{s'}})_*\overline{\Sf{M}}|_{s'}.
\end{equation}

This implies that $\Pi_*\overline{\Sf{M}}$ is a flat deformation of the reflexive module $(\Pi|_{\Rsd_{s}})_*\Sf{M}$ (here we use that $\Sf{M}$ is full). By openness of reflexivity (Lemma~\ref{lema:reflexiveopen}), for any $s'$ is a neighborhood of $s$ in $S$ we have that $(\Pi|_{\Rsd_{s'}})_*\overline{\Sf{M}}|_{s'}$ is a reflexive $\Ss{\Sf{X}_{s'}}$-module. 

Let $E_{s'}$ be the exceptional divisor at $\Rsd_{s'}$. We have the local cohomology exact sequence 
$$0\to H^0_{E_{s'}}(\overline{\Sf{M}}|_{s'})\to H^0(\Rsd_{s'},\overline{\Sf{M}}|_{s'})\to H^0(\Rsd_{s'}\setminus E_{s'},\overline{\Sf{M}}|_{s'})\to H^1_{E_{s'}}(\overline{\Sf{M}}|_{s'})\to H^1(\Rsd_{s'},\overline{\Sf{M}}|_{s'}).$$

The morphism $H^0(\Rsd_{s'},\overline{\Sf{M}}|_{s'})\to H^0(\Rsd_{s'}\setminus E_{s'},\overline{\Sf{M}}|_{s'})$ is surjective since $H^0(\Rsd_{s'},\overline{\Sf{M}}|_{s'})$ is a reflexive $\Ss{\Sf{X}_{s'}}$-module and $\Rsd_{s'}\setminus E_{s'}$ is identified with $\Sf{X}_{s'}$ minus a finite set of points (where the modification takes place). Hence $H^1_{E_{s'}}(\overline{\Sf{M}}|_{s'})\to H^1(\Rsd_{s'},\overline{\Sf{M}}|_{s'})$ is injective as needed. 

Let $(\phi_1,...,\phi_m)$ be a collection of global sections of $\Sf{M}$, which almost generate it except at a finite set $Z\subset\Rs$. By Equality~(\ref{eq:igualdadcommutada}) there exist $(\overline{\phi}_1,...,\overline{\phi}_m)$, global sections of $\overline{\Sf{M}}$ which specialize to $(\phi_1,...,\phi_m)$ at the fibre $\Rs$ over $s$. Let $\overline{Z}\subset\Rsd$ denote the locus where the sections $(\overline{\phi}_1,...,\overline{\phi}_m)$ do not generate $\overline{\Sf{M}}$. Then we have the equality $\overline{Z}\cap \Rs=Z$, and as a consequence there is an open neighborhood $U$ of $s$ in $S$ such that $\overline{Z}\cap \Rsd_{s'}$ is finite for any $s'\in S$. Over $U$ we have that $\overline{\Sf{M}}|_{s'}$ is generically generated by global sections. 
\endproof

At the following proposition we introduce the relevant deformation functors. 

\begin{proposition}
\label{propdef:defuntors}
Deformations of $(\Rs,X,\Sf{M})$ form a contravariant functor from the category of germs of complex spaces to the category of sets. We denote it by $\mathbf{Def_{\Rs,X,\Sf{M}}}$. The functors of deformations fixing $X$ and $(\Rs,X)$ are denoted respectively by $\mathbf{Def_{\Rs,\Sf{M}}}$ and $\mathbf{Def_{\Sf{M}}}$.

Full deformations of $(\Rs,X,\Sf{M})$ form a contravariant functor denoted by $\mathbf{FullDef_{\Rs,X,\Sf{M}}}$. The functors of full deformations fixing $X$ and $(\Rs,X)$ are denoted respectively by $\mathbf{FullDef_{\Rs,\Sf{M}}}$ and $\mathbf{FullDef_{\Sf{M}}}$.

These functors, restricted to the artinian basis, may be seen as a covariant functor from the category of artinian algebras to sets. 
\end{proposition}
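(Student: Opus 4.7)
The plan is to construct the pullback of deformations along morphisms of bases via fiber products, verify that the defining properties of each notion of deformation are preserved, and check functoriality. For the plain deformation functors this is essentially bookkeeping with base change of flat modules and proper morphisms; for the full deformation functors the nontrivial point is to preserve the flatness of $R^1\Pi_*\overline{\Sf{M}}$, which is exactly what Lemma~\ref{lem:h0h1} is designed to give.

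Concretely, given a morphism $\phi:(S',s')\to(S,s)$ and a deformation $(\Rsd,\Sf{X},\overline{\Sf{M}},\iota)$ of $(\Rs,X,\Sf{M})$ over $(S,s)$, first I define its pullback to be $(\Rsd',\Sf{X}',\overline{\Sf{M}}',\iota')$ where $\Sf{X}':=\Sf{X}\times_S S'$, $\Rsd':=\Rsd\times_S S'$, $\Pi':\Rsd'\to\Sf{X}'$ is the natural map, $\overline{\Sf{M}}':=\tilde{\psi}^*\overline{\Sf{M}}$ (with $\tilde{\psi}:\Rsd'\to\Rsd$), and $\iota'$ is induced by $\iota$ together with the base change identification of the fiber over $s'$ with the fiber over $s$. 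Flatness of $\Sf{X}'\to S'$, $\Rsd'\to S'$ and of $\overline{\Sf{M}}'$ over $S'$ follow from the fact that flatness is preserved by base change. The fibers of $\Pi'$ coincide with the corresponding fibers of $\Pi$, so $\Pi'$ is again a very weak simultaneous resolution in the sense of Definition~\ref{def:veryweak}, and $\iota'$ identifies $\Sf{M}$ with $\overline{\Sf{M}}'|_{s'}$. Functoriality, i.e.\ $(\phi\circ\phi')^*=(\phi')^*\circ\phi^*$ and $(\mathrm{id})^*=\mathrm{id}$, is then immediate from the universal property of fiber products up to canonical isomorphism.

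For the full deformation functors I have to check additionally that $R^1\Pi'_*\overline{\Sf{M}}'$ is flat over $S'$. Since by hypothesis $R^1\Pi_*\overline{\Sf{M}}$ is flat over $S$, Lemma~\ref{lem:h0h1} applies at every point of $S$ and yields the base change isomorphism
\begin{equation*}
\psi^*\bigl(R^1\Pi_*\overline{\Sf{M}}\bigr)\;\cong\;R^1\Pi'_*\bigl(\tilde{\psi}^*\overline{\Sf{M}}\bigr)\;=\;R^1\Pi'_*\overline{\Sf{M}}'.
\end{equation*}
The left hand side is the pullback of a flat $\Ss{S}$-module, hence flat over $S'$. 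The subfunctors $\mathbf{Def_{\Rs,\Sf{M}}}$, $\mathbf{Def_{\Sf{M}}}$, $\mathbf{FullDef_{\Rs,\Sf{M}}}$ and $\mathbf{FullDef_{\Sf{M}}}$ are cut out by the further requirement that $\Sf{X}$ and/or $\Rsd$ be trivial deformations; these conditions are preserved by base change because $(X\times S)\times_S S'=X\times S'$, and likewise for $\Rsd$. Finally, when one restricts to artinian bases, a germ $(S,s)$ with $\Ss{S,s}$ artinian is identified with $\mathrm{Spec}\,\Ss{S,s}$, and a morphism of germs corresponds to an opposite morphism of local artinian $\CC$-algebras, which exhibits the restriction as a covariant functor from artinian algebras to sets.

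The only step that is not bare category theory is the preservation of the flatness of $R^1\Pi_*\overline{\Sf{M}}$; the main obstacle that has already been cleared is precisely the proof of Lemma~\ref{lem:h0h1}, and once it is available the verification above is immediate.
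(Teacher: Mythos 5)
Your proposal is correct and follows the same route as the paper: the paper's proof likewise notes that the only non-trivial point is that pullbacks of full deformations remain full, and settles it with the base change isomorphism~(\ref{eq:basechange}) of Lemma~\ref{lem:h0h1}, exactly as you do. The rest of your argument is the routine fiber-product bookkeeping that the paper leaves implicit.
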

\proof
The only non-trivial point is to prove that pullback of full deformations are full deformations, but this follows from the 
isomorphism~(\ref{eq:basechange}). 
\endproof
 
It is again easy to check that Schlessinger conditions $(H0)-(H2)$ are satisfied for the functors defined at the previous proposition.

\begin{proposition}
\label{prop:naturaltrans}
Let $\pi:\Rs\to X$ be a resolution of a normal Stein surface, let $\Sf{M}$ be a full $\Ss{\Rs}$-module and $M=\pi_*\Sf{M}$ be its associated 
reflexive module. The push forward operation $\Pi_*$ along the resolution map defines a natural transformation from $\mathbf{FullDef_{\Rs,X,\Sf{M}}}$ to $\mathbf{Def_{X,M}}$. 

Analogous statements holds for the deformation functors fixing $X$ and/or $\Rs$, and for families of full modules. 
\end{proposition}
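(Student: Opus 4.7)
The plan is to define the natural transformation on objects by sending a full deformation $(\Rsd,\Sf{X},\overline{\Sf{M}},\iota)\in\mathbf{FullDef_{\Rs,X,\Sf{M}}}(S,s)$ to the triple $(\Sf{X},\Pi_*\overline{\Sf{M}},\iota')$, where $\iota'$ is obtained as the composition of $\pi_*\iota:M=\pi_*\Sf{M}\to\pi_*(\overline{\Sf{M}}|_s)$ with the natural identification $\pi_*(\overline{\Sf{M}}|_s)=(\Pi_s)_*(\overline{\Sf{M}}|_s)\cong (\Pi_*\overline{\Sf{M}})|_s$. On morphisms, given $\phi:(S',s')\to(S,s)$, the transformation should send the pullback deformation to the pullback of $(\Sf{X},\Pi_*\overline{\Sf{M}},\iota')$ under $\phi$; this is the naturality condition that has to be checked.

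The three things to verify are: (a) $\Pi_*\overline{\Sf{M}}$ is a coherent $\Ss{\Sf{X}}$-module which is flat over $S$; (b) the specialization isomorphism $\iota'$ is well-defined; and (c) pullback commutes with $\Pi_*$. For (a) and (b) I would invoke Lemma~\ref{lem:h0h1} in the setting where $\Sf{X}\to S$ is our flat deformation of $X$ and $\Pi:\Rsd\to\Sf{X}$ is the given very weak simultaneous resolution. By definition of $\mathbf{FullDef_{\Rs,X,\Sf{M}}}$, the sheaf $R^1\Pi_*\overline{\Sf{M}}$ is flat over $S$, so condition (1) of that lemma holds. Then conditions (2) and (3) follow, giving both flatness of $\Pi_*\overline{\Sf{M}}$ over $S$ and the base-change isomorphism $(\Pi_*\overline{\Sf{M}})|_s\cong(\Pi_s)_*(\overline{\Sf{M}}|_s)$. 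Since $\Sf{M}=\overline{\Sf{M}}|_s$ via $\iota$ and $M=\pi_*\Sf{M}=(\Pi_s)_*(\overline{\Sf{M}}|_s)$, this yields a canonical isomorphism $(\Pi_*\overline{\Sf{M}})|_s\cong M$, establishing $\iota'$ and showing that the triple indeed lies in $\mathbf{Def_{X,M}}(S,s)$.

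For (c), the key point is the base-change statement~(\ref{eq:basechange}) of Lemma~\ref{lem:h0h1}: given $\phi:S'\to S$, the natural map $\psi^*(R^1\Pi_*\overline{\Sf{M}})\to R^1\Pi'_*(\tilde{\psi}^*\overline{\Sf{M}})$ is an isomorphism, where $\Pi':\Rsd\times_S S'\to\Sf{X}\times_S S'$ and $\psi,\tilde{\psi}$ are the canonical projections. The same argument, applied to $\Pi_*$ rather than $R^1\Pi_*$ (which is the $T^0$ functor in the Hartshorne-style setup referenced in the proof of that lemma), gives $\psi^*\Pi_*\overline{\Sf{M}}\cong\Pi'_*(\tilde{\psi}^*\overline{\Sf{M}})$, compatibly with the specialization maps. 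This is exactly the statement that $\Pi_*$ commutes with arbitrary base change in the presence of the flatness of $R^1\Pi_*$, and therefore proves naturality.

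The main obstacle, and the reason the hypothesis that $R^1\Pi_*\overline{\Sf{M}}$ be $S$-flat is built into the definition of full deformation, is precisely the base-change/flatness issue handled by Lemma~\ref{lem:h0h1}; without this hypothesis neither (a) (flatness of $\Pi_*\overline{\Sf{M}}$) nor (c) (compatibility with pullbacks) can be guaranteed. Once Lemma~\ref{lem:h0h1} is in hand the verification is formal. The statements for the functors fixing $X$ or $(\Rs,X)$, and for families of full modules over an arbitrary base (not just germs), follow by the same argument with no extra work, since neither the trivialization of $\Sf{X}$ (or $\Rsd$) nor the replacement of the germ $(S,s)$ by a global $S$ affects the application of Lemma~\ref{lem:h0h1}.
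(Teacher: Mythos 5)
Your proposal is correct and takes essentially the same route as the paper: the paper's proof also consists of invoking Lemma~\ref{lem:h0h1} to get flatness of $\Pi_*\overline{\Sf{M}}$ and the base-change isomorphism $(\Pi_*\overline{\Sf{M}})|_s\cong(\Pi_s)_*(\overline{\Sf{M}}|_s)$, which composed with $\Pi_*\iota$ gives the required specialization isomorphism. Your extra verification of compatibility with pullbacks (via the $T^0$ analogue of the base-change isomorphism~(\ref{eq:basechange})) is exactly the point the paper leaves implicit under ``the remaining assertions are proved similarly,'' and is handled by the same lemma.
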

\proof
For the assertion about deformations let $(\Rsd,\Sf{X},\overline{\Sf{M}},\iota)$ be a full deformation of $(\Rs,X,\Sf{M})$.  
Flatness of the push down $\Pi_*\overline{\Sf{M}}$ holds by Lemma~\ref{lem:h0h1}. 
The isomorphism from $(\Pi_*\overline{\Sf{M}})|_s$ to $M$ is obtained 
composing the natural isomorphism (Lemma~\ref{lem:h0h1}) with the isomorphism $\Pi_*\iota$. 
  
The remaining assertions are proved similarly. 
\endproof

\begin{remark}
Although there is a bijection between full sheaves and reflexive sheaves~\cite{Ka}, as we will see below the transformation 
$\mathbf{FullDef_{\Sf{M}}}\to\mathbf{Def_M}$ is not an isomorphism of functors.
\end{remark}

\begin{proposition}
\label{prop:miniversalexist}
Let $\pi \colon \Rs\to X$ be a resolution of a normal Stein surface. Let $\Sf{M}$ be a full $\Ss{\Rs}$-module and $M=\pi_*\Sf{M}$ be its associated reflexive $\Ss{X}$-module. Then the deformation functors $\mathbf{Def_{\Rs,X,\Sf{M}}}$, $\mathbf{Def_{X,\Sf{M}}}$, $\mathbf{Def_{\Sf{M}}}$, $\mathbf{FullDef_{\Rs,X,\Sf{M}}}$, $\mathbf{FullDef_{X,\Sf{M}}}$, $\mathbf{FullDef_{\Sf{M}}}$, $\mathbf{Def_{X,M}}$ and $\mathbf{Def_M}$ have miniversal deformations.

Let $(\Rsd,\Sf{X},\overline{\Sf{M}},\iota)$ be the miniversal deformation of $\mathbf{Def_{\Rs,X,\Sf{M}}}$, then the miniversal deformation of  
$\mathbf{FullDef_{\Rs,X,\Sf{M}}}$ is the stratum of the flattening stratification of $R^1\Pi_*\overline{\Sf{M}}$ containing the origin. Analogous statements hold for $\mathbf{Def_{X,\Sf{M}}}$ / $\mathbf{FullDef_{X,\Sf{M}}}$ and $\mathbf{Def_{\Sf{M}}}$ / $\mathbf{FullDef_{\Sf{M}}}$.

\end{proposition}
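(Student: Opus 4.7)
The plan is to establish the two parts of the statement in turn. For existence of miniversal deformations for the eight functors listed, the strategy is standard. Schlessinger's conditions $(H0)$--$(H2)$ have already been observed to hold, so in the algebroid setting it remains to check finite-dimensionality of the tangent space, while in the Stein analytic setting one invokes the refined criteria of Grauert--Forster--Kn\"orr for existence of versal analytic deformations. For $\mathbf{Def_{M}}$ and $\mathbf{Def_{\Sf{M}}}$, which deform a coherent sheaf on a fixed base, versal deformations are provided by Siu--Trautmann theory of coherent analytic sheaves. For $\mathbf{Def_{X,M}}$ one combines Grauert's semiuniversal deformation of the normal surface $X$ with the relative deformation theory of $M$ along the resulting family. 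For $\mathbf{Def_{\Rs,X,\Sf{M}}}$ the same approach applies, using in addition Laufer--Wahl theory of very weak simultaneous resolutions to deform the modification $\Pi$ compatibly. The fixed-base versions $\mathbf{Def_{X,\Sf{M}}}$ and $\mathbf{Def_{\Sf{M}}}$ arise as closed subfunctors and are represented by the sub-germ of the ambient miniversal base cutting out the trivial deformation of the fixed datum.

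For the second assertion, I would start with the miniversal family $(\Rsd,\Sf{X},\overline{\Sf{M}},\iota)$ of $\mathbf{Def_{\Rs,X,\Sf{M}}}$ over a germ $(S,s)$. Since $\Pi\colon\Rsd\to\Sf{X}$ is proper (being a very weak simultaneous resolution) and $\overline{\Sf{M}}$ is coherent, Grauert's direct image theorem yields coherence of $R^1\Pi_*\overline{\Sf{M}}$ as an $\Ss{\Sf{X}}$-module. Its support is contained in the relative exceptional locus, which maps finitely to $S$ because the singularities of $X$ are isolated and deform to isolated singularities in the fibres. Consequently the composite push-forward to $S$ is a coherent $\Ss{S}$-module. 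I would then apply the flattening stratification theorem (Frisch in the analytic category, EGA IV in the algebraic one) to obtain a stratum $S_0\hookrightarrow S$ through $s$ over which $R^1\Pi_*\overline{\Sf{M}}$ becomes $\Ss{S_0}$-flat, and satisfying the universal property that any morphism $T\to S$ pulling $R^1\Pi_*\overline{\Sf{M}}$ back to a $\Ss{T}$-flat sheaf factors uniquely through $S_0$.

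It remains to identify $(S_0,s)$ together with the pulled-back family as miniversal for $\mathbf{FullDef_{\Rs,X,\Sf{M}}}$. The restricted family is itself a full deformation by Proposition~\ref{prop:fullopen}. Conversely, given any full deformation over a germ $(T,t)$, versality of the ambient family produces a morphism $\phi\colon T\to S$ recovering it; the flatness of $R^1(\Pi_T)_*\overline{\Sf{M}}_T$, combined with the base change statement of Lemma~\ref{lem:h0h1}, shows that $\phi^*R^1\Pi_*\overline{\Sf{M}}$ is $\Ss{T}$-flat, so $\phi$ factors uniquely through $S_0$ by the universal property of the flattening stratum. The analogous statements for $\mathbf{Def_{X,\Sf{M}}}/\mathbf{FullDef_{X,\Sf{M}}}$ and $\mathbf{Def_{\Sf{M}}}/\mathbf{FullDef_{\Sf{M}}}$ follow by the same argument applied after restricting to the corresponding sub-germs. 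The main technical subtlety throughout is the Stein setting: $\Sf{X}$ is not proper over $S$, and one must exploit that the relevant higher direct image has support finite over $S$ in order to apply Grauert's coherence theorem and the flattening stratification in the form needed.
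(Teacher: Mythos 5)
Your proposal follows the paper's strategy — Schlessinger-type existence plus identification of the miniversal base of the full-deformation functor with the flattening stratum of $R^1\Pi_*\overline{\Sf{M}}$ — and your treatment of the second part is in fact considerably more detailed than the paper's, which disposes of it with ``by versality and definition of the functors.'' The one step you defer but the paper actually carries out is the verification of Schlessinger's condition $(H3)$, i.e.\ finite-dimensionality of the tangent spaces: for $\mathbf{Def_M}$ this is the finite-dimensionality of $\Ext^1_{\Ss{X}}(M,M)$, which holds because $M$ is locally free off the finite singular set, and for $\mathbf{Def_{\Sf{M}}}$ it is the finite-dimensionality of $\Ext^1_{\Ss{\Rs}}(\Sf{M},\Sf{M})\cong R^1\pi_*\Homs_{\Ss{\Rs}}(\Sf{M},\Sf{M})$, obtained from the local-to-global spectral sequence and local freeness of $\Sf{M}$; the remaining six functors are then handled by fibering over the deformation functor of $X$ (respectively of the pair $(\Rs,X)$), exactly as you indicate. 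With that short check inserted your argument is complete and consistent with the paper's.
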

\proof
By Theorem 16.2 of~\cite{Har4} in order to prove the existence of miniversal deformation we only have to prove Schlessinger $(H3)$ condition.  

For $\mathbf{Def_M}$ this amounts to prove that $\Ext^1_{\Ss{X}}(M,M)$ is finite dimensional (see for example~\cite{Tr})
Since $M$ is reflexive, it is locally free at $X\setminus Sing(X)$. Therefore $\Ext^1_{\Ss{X}}(M,M)$ is finite dimensional as needed. 
The functor $\mathbf{Def_{X,M}}$ fibres over the functor of deformations of $X$, with fibre the functor $\mathbf{Def_M}$. Since both the base and fibre functors satisfy $(H3)$, so $\mathbf{Def_{X,M}}$ does it. 

Since $\Sf{M}$ is locally free the local to global spectral sequence shows that $\Ext^1_{\Ss{\Rs}}(\Sf{M},\Sf{M})$ is isomorphic to 
$R^1\pi_*\Homs_{\Ss{\Rs}}(\Sf{M},\Sf{M})$, which is finite dimensional. Hence the Schlessinger condition $(H3)$ is satisfied for the functor
$\mathbf{Def_{\Sf{M}}}$. Since the functor $\mathbf{FullDef_{\Sf{M}}}$ is a sub-functor of $\mathbf{Def_{\Sf{M}}}$, the condition
$(H3)$ also holds for it. For the functors $\mathbf{Def_{\Rs,X,\Sf{M}}}$, $\mathbf{Def_{X,\Sf{M}}}$, $\mathbf{FullDef_{\Rs,X,\Sf{M}}}$, $\mathbf{FullDef_{X,\Sf{M}}}$ we use fibration of functors arguments as before. 

The flattening stratification statement is by versality and definition of the functors.
\endproof

Let us give a basic proposition that we will use later.

\begin{proposition}
\label{prop:dualMdeformado}
Let $\pi\colon \Rs\to X$ be a resolution of a normal Stein surface, let $(\Rsd,\Sf{X},\overline{\Sf{M}},\iota)$ be a full deformation of $(\Rs,X,\Sf{M})$ over $(S,s)$. Then $\Pi_* \left(\overline{\Sf{M}}^{\smvee}\right) = \left( \Pi_* \overline{\Sf{M}} \right)^{\smvee}$.
\end{proposition}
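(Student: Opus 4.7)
The plan is to prove the proposition by mimicking the argument of Lemma~\ref{lema:dualM} relativized over $S$. The comparison will be carried out by first showing that the two sheaves agree on the open locus where $\Pi$ is an isomorphism, and then extending the agreement by reflexivity.

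First, since $\Sf{M}$ is full, it is locally free by Proposition~\ref{fullcondiciones}. As $\overline{\Sf{M}}$ is flat over $S$ with special fibre $\Sf{M}$, local freeness propagates, so $\overline{\Sf{M}}$ is locally free on $\Rsd$ (after shrinking $S$ around $s$ if necessary). Hence $\overline{\Sf{M}}^{\smvee}$ is also locally free, in particular torsion-free. Using the $\Pi^{*}\dashv\Pi_{*}$ adjunction and the equality $\Pi_{*}\Ss{\Rsd}=\Ss{\Sf{X}}$ (which follows from fibrewise normality of $\Sf{X}$ and the fact that $\Pi$ is a fibrewise resolution), I would construct the natural $\Ss{\Sf{X}}$-linear morphism
\begin{equation*}
\alpha\colon\Pi_{*}(\overline{\Sf{M}}^{\smvee})\to(\Pi_{*}\overline{\Sf{M}})^{\smvee},
\end{equation*}
whose fibrewise restriction at $s$ is precisely the isomorphism of Lemma~\ref{lema:dualM}.

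Next, let $\Sf{V}\subseteq\Sf{X}$ be the open subset on which $\Pi$ is an isomorphism, namely the complement of the image under $\Pi$ of the relative exceptional locus $\Sf{E}\subset\Rsd$. Its complement $\Sf{X}\setminus\Sf{V}$ has codimension at least two in each fibre. On $\Sf{V}$, both $\Pi_{*}(\overline{\Sf{M}}^{\smvee})$ and $(\Pi_{*}\overline{\Sf{M}})^{\smvee}$ tautologically identify with $\overline{\Sf{M}}^{\smvee}|_{\Pi^{-1}(\Sf{V})}$ via the isomorphism $\Pi|_{\Pi^{-1}(\Sf{V})}$, so $\alpha|_{\Sf{V}}$ is an isomorphism.

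Finally, I would extend the isomorphism from $\Sf{V}$ to all of $\Sf{X}$ by showing that both sheaves coincide with the push-forward of their common restriction to $\Sf{V}$ under the open inclusion $j\colon\Sf{V}\hookrightarrow\Sf{X}$. On the right-hand side, $(\Pi_{*}\overline{\Sf{M}})^{\smvee}$ is the dual of a coherent sheaf on the Cohen-Macaulay space $\Sf{X}$ and hence is reflexive, so it agrees with $j_{*}j^{*}(\Pi_{*}\overline{\Sf{M}})^{\smvee}$ since $\Sf{X}\setminus\Sf{V}$ is fibrewise of codimension at least two. On the left-hand side, the vanishing of sections supported on $\Sf{X}\setminus\Sf{V}$ follows from torsion-freeness of $\overline{\Sf{M}}^{\smvee}$, and the extension property for sections across $\Sf{X}\setminus\Sf{V}$ reduces to the vanishing of the relative local cohomology sheaf $H^{1}_{\Sf{E}}(\overline{\Sf{M}}^{\smvee})$. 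The main obstacle is this last vanishing: it is the family version of the argument in the proof of Lemma~\ref{lema:dualM}, namely relative Serre duality combined with the Grauert-Riemenschneider-type input of Lemma~\ref{lema:ceroggsg}; the relative version is enabled by the full-deformation hypothesis, which via Lemma~\ref{lem:h0h1} provides the flatness of $R^{1}\Pi_{*}\overline{\Sf{M}}$ and thereby the base-change compatibility needed to bootstrap fibrewise vanishing to a family statement. Once both sides are identified with $j_{*}(\overline{\Sf{M}}^{\smvee}|_{\Pi^{-1}(\Sf{V})})$, the isomorphism $\alpha|_{\Sf{V}}$ extends uniquely to the required isomorphism $\alpha$.
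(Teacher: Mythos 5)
Your proposal takes essentially the same route as the paper's proof: relativize Lemma~\ref{lema:dualM}, the only new input being the vanishing $R^1\Pi_*\left(\overline{\Sf{M}}\otimes\Cs{\Rsd|S}\right)=0$, which via Serre duality kills $H^1_{\Sf{E}}(\overline{\Sf{M}}^{\smvee})$ and lets one extend across the exceptional locus using reflexivity on the other side. One small correction of attribution: that vanishing is obtained by applying Cohomology and Base Change to the locally free (hence $S$-flat) sheaf $\overline{\Sf{M}}\otimes\Cs{\Rsd|S}$ together with the central-fibre vanishing of Lemma~\ref{lema:ceroggsg}, not from the flatness of $R^1\Pi_*\overline{\Sf{M}}$ supplied by the full-deformation hypothesis via Lemma~\ref{lem:h0h1} --- the latter is what guarantees that $\Pi_*\overline{\Sf{M}}$ specializes to $\pi_*\Sf{M}$ and is a flat family of reflexive modules, which is where fullness actually enters.
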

\proof
The proof is an adaptation of Lemma~\ref{lema:ceroggsg} and Lemma~\ref{lema:dualM}. Let $\Cs{\Rsd|S}$ be relative the canonical sheaf over $\Rsd$. The sheaf $\overline{\Sf{M}} \otimes \Cs{\Rsd|S}$ is locally free, hence it is flat over $S$ and $(\overline{\Sf{M}} \otimes \Cs{\Rs|S})|_s \cong \Sf{M}\otimes \Cs{\Rs}$, then by Lemma~\ref{lema:ceroggsg} and the Cohomology and Base Change Theorem~(\cite{Har2}, Chapter III, Theorem~12.11) we have $R^1 \Pi_* (\overline{\Sf{M}}  \otimes \Cs{\Rs})=0$.

Now the proof is parallel to the proof of Lemma~\ref{lema:dualM}.
\endproof

\subsection{The correspondence for deformations at families of normal Stein surfaces with Gorenstein singularities}
\label{sec:singfam}

\begin{definition}
\label{def:enhanceddef1}
 Let $X$ be a normal Stein surface, let $\Sf{C}$  be a rank $1$ generically reduced Cohen-Macaulay $\Ss{X}$-module of dimension $1$,
 and a system of generators $(\psi_1,...,\psi_r)$ of $\Sf{C}$ as $\Ss{X}$ module. Let $M$ be a reflexive $\Ss{X}$-module of rank $r$ and 
 $(\phi_1,...,\phi_r)$ be $r$ sections.
 \begin{enumerate}
  \item A {\em deformation of} $(X,\Sf{C},(\psi_1,...,\psi_r))$ over a germ $(S,s)$ is a cuadruple 
  $(\Sf{X},\overline{\Sf{C}},(\overline{\psi}_1,...,\overline{\psi}_r),\rho)$, where $(\Sf{X},\overline{\Sf{C}},\rho)$ is a deformation of $(X,\Sf{C})$ and $(\overline{\psi}_1,...,\overline{\psi}_r)$ are sections of $\overline{\Sf{C}}$, which via the isomorphism $\rho$ restrict to $(\psi_1,...,\psi_r)$ over $s$. A deformation {\em fixing} $X$ is a deformation such that $\Sf{X}$ is the trivial deformation of $X$. 
  \item A {\em deformation of} $(X,M,(\phi_1,...,\phi_r))$ over a germ $(S,s)$ is a cuadruple 
  $(\Sf{X},\overline{M},(\overline{\phi}_1,...,\overline{\phi}_r),\iota)$, where $(\Sf{X},\overline{M},\iota)$ is a deformation of $M$ and $(\overline{\phi}_1,...,\overline{\phi}_r)$ are sections of $\overline{M}$, which via the isomorphism $\iota$ restrict to $(\phi_1,...,\phi_r)$ over $s$. A deformation {\em fixing} $X$ is a deformation such that $\Sf{X}$ is the trivial deformation of $X$. 
 \end{enumerate}
 The deformations defined above, together with the pullback operation, form two contravariant functors 
$\mathbf{Def_{X,\Sf{C}}^{(\psi_1,...,\psi_r)}}$ and $\mathbf{Def_{X,M}^{(\phi_1,...,\phi_r)}}$ 
from the category of germs of complex spaces to the category of sets. Deformations fixing the base form two contravariant functors called $\mathbf{Def_{\Sf{C}}^{(\psi_1,...,\psi_r)}}$ and $\mathbf{Def_{M}^{(\phi_1,...,\phi_r)}}$ .
 \end{definition}
 
Now we extend the correspondences at the Stein space (Theorem~\ref{th:corrsing}) to get an isomorphism of functors. First we need the following lemma, which can be found in Proposition 0.1 of~\cite{dJvS}, and also in \cite{SchE},~Proposition~2.2.

\begin{lemma}
\label{lem:extbasechange}
Let $Y$ and $(S,s)$ be a complex space and a germ of complex space. Let $\Sf{Y}$ be a flat deformation of $Y$ over $(S,s)$. Let $\overline{\Sf{F}}$ be a $\Ss{\Sf{Y}}$-module. Let $\mathfrak{m}_s$ be the 
maximal ideal at $s$. For any $\Ss{S}$-module $L$ and any index $i$ there is a natural morphism
\begin{equation}
\label{eq:basechangeext}
\phi^i(L):\Exts_{\Ss{\Sf{Y}}}^i(\overline{\Sf{F}},\Ss{\Sf{Y}})\otimes_{\Ss{S}} L\to \Exts_{\Ss{\Sf{Y}}}^i(\overline{\Sf{F}},\Ss{\Sf{Y}}\otimes_{\Ss{S}} L).
\end{equation}
The following assertions hold:
\begin{enumerate}
 \item If $\phi^i(\Ss{S,s}/\mathfrak{m}_s)$ is surjective, then $\phi^i(L)$ is an isomorphism for any $L$.
 \item Assume that $\phi^i(\Ss{S,s}/\mathfrak{m}_s)$ is surjective. Then $\phi^{i-1}(\Ss{S,s}/\mathfrak{m}_s)$ is surjective if and only if 
 $\Exts_{\Ss{\Sf{Y}}}^i(\overline{\Sf{F}},\Ss{\Sf{Y}})$ is flat over $S$.
 \item If $\Exts_{\Ss{\Sf{Y}}}^i(\Sf{F},\Ss{\Sf{Y}}\otimes \Ss{S,s}/\mathfrak{m}_s)=0$ then 
 $\Exts_{\Ss{\Sf{Y}}}^i(\Sf{F},\Ss{\Sf{Y}}\otimes_{\Ss{S}} L)=0$ for all $L$. 
\end{enumerate}
\end{lemma}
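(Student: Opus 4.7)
The plan is to reduce the three assertions to the classical exchange principle for a complex of $\Ss{S}$-flat modules, in the spirit of Hartshorne's cohomology-and-base-change (III.12.10--12.11). First I would construct $\phi^i(L)$ explicitly. Working locally on $\Sf{Y}$, use coherence of $\overline{\Sf{F}}$ to choose a resolution $\calP^\bullet\to\overline{\Sf{F}}$ by finite locally free $\Ss{\Sf{Y}}$-modules, and set $K^\bullet:=\Homs_{\Ss{\Sf{Y}}}(\calP^\bullet,\Ss{\Sf{Y}})$. Since $\Ss{\Sf{Y}}$ is $\Ss{S}$-flat (because $\Sf{Y}\to S$ is a flat deformation) and each $\calP^j$ is $\Ss{\Sf{Y}}$-locally free, each $K^j$ is $\Ss{S}$-flat. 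One has the identifications
\begin{equation*}
H^i(K^\bullet)=\Exts^i_{\Ss{\Sf{Y}}}(\overline{\Sf{F}},\Ss{\Sf{Y}}),\qquad H^i(K^\bullet\otimes_{\Ss{S}}L)=\Exts^i_{\Ss{\Sf{Y}}}(\overline{\Sf{F}},\Ss{\Sf{Y}}\otimes_{\Ss{S}}L),
\end{equation*}
the second because tensoring a complex of flat modules with $L$ computes the derived tensor product. The natural comparison map $H^i(K^\bullet)\otimes_{\Ss{S}}L\to H^i(K^\bullet\otimes_{\Ss{S}}L)$ is then precisely $\phi^i(L)$, and one checks independence from the resolution in the standard way.

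Second, I would invoke the exchange principle applied to $K^\bullet$. For a bounded-above complex of $\Ss{S}$-flat modules the following hold: (i) if the comparison map at the closed residue field $\Ss{S,s}/\mathfrak{m}_s$ is surjective in degree $i$ then it is an isomorphism in that degree for every $L$; (ii) under this surjectivity, $H^i(K^\bullet)$ is $\Ss{S}$-flat if and only if the analogous surjectivity holds in degree $i-1$; (iii) vanishing of $H^i(K^\bullet\otimes\Ss{S,s}/\mathfrak{m}_s)$ propagates to arbitrary $L$. These are exactly assertions (1)--(3). Their proofs proceed by the Nakayama-plus-long-exact-sequence argument of \emph{loc. cit.}: present $L$ by a free $\Ss{S}$-resolution, decompose the resulting double complex into short exact pieces, and iterate. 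Alternatively, decomposing $K^\bullet$ into its cycles, boundaries and cohomology, and using the flatness of each $K^j$, one reads off (1) and (3) from surjectivity/vanishing of $\phi^i(\Ss{S,s}/\mathfrak{m}_s)$ via a direct diagram chase, while (2) follows by comparing the resulting Tor-spectral sequences in two adjacent degrees.

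The main obstacle is producing the finite free resolution $\calP^\bullet$ locally on $\Sf{Y}$ with terms that are $\Ss{S}$-flat, which in the analytic setup requires some care. Oka coherence of $\Ss{\Sf{Y}}$ together with coherence of $\overline{\Sf{F}}$ allows one to build such a resolution after shrinking the relevant Stein neighbourhood in $\Sf{Y}$; in the algebroid noetherian case it is immediate. Once this is in place, the three assertions are formal consequences of the exchange principle, and since all sheaves appearing are coherent the local statements glue to the global statements on $\Sf{Y}$. A complete argument in the precise form stated here is carried out in~\cite{dJvS}, Proposition~0.1, and in~\cite{SchE}, Proposition~2.2, to which one can defer for the routine verifications.
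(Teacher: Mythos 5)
Your proposal is correct and coincides with what the paper does: the paper gives no proof of this lemma at all, deferring entirely to \cite{dJvS}, Proposition~0.1 and \cite{SchE}, Proposition~2.2, and your sketch (dualize a locally free resolution of $\overline{\Sf{F}}$ to get a complex $K^\bullet$ of $\Ss{S}$-flat modules computing the relevant $\Exts$ groups after any base change, then apply the exchange principle) is precisely the argument carried out in those references. The only cosmetic slip is that $K^\bullet=\Homs_{\Ss{\Sf{Y}}}(\calP^\bullet,\Ss{\Sf{Y}})$ is bounded \emph{below} rather than above (and possibly infinite in the positive direction when $\Ss{\Sf{Y},y}$ is not regular), which is harmless since the exchange arguments only involve finitely many adjacent degrees.
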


\begin{lemma}
\label{lem:isoext}
In the situation of the preceeding Lemma, if $\overline{\Sf{F}}$ is flat over $S$ then we have the isomorphism
\begin{equation}
\label{eq:isoext}
\Exts_{\Ss{\Sf{Y}}}^i(\overline{\Sf{F}},\Ss{\Sf{Y}}\otimes \Ss{S,s}/\mathfrak{m}_s)\cong \Exts_{\Ss{Y}}^i(\overline{\Sf{F}}\otimes \Ss{S,s}/\mathfrak{m}_s,\Ss{\Sf{Y}}\otimes \Ss{S,s}/\mathfrak{m}_s).
\end{equation}
\end{lemma}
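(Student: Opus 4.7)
The plan is to deduce the isomorphism from the standard change-of-rings formula for $\Ext$, working locally on $\Sf{Y}$ so that a free resolution of $\overline{\Sf{F}}$ as an $\Ss{\Sf{Y}}$-module is available. Since the statement is an isomorphism of sheaves, it suffices to check it on stalks, so I will fix a point of $\Sf{Y}$ and replace $\Sf{Y}$ by a small open neighborhood on which $\overline{\Sf{F}}$ admits a resolution $P_\bullet\to\overline{\Sf{F}}$ by free $\Ss{\Sf{Y}}$-modules.

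First I would verify that restriction to the central fibre preserves the resolution. By flatness of $\overline{\Sf{F}}$ over $\Ss{S,s}$, applying $-\otimes_{\Ss{S,s}}\Ss{S,s}/\mathfrak{m}_s$ to $P_\bullet\to\overline{\Sf{F}}$ is exact, so
\begin{equation*}
P_\bullet\otimes_{\Ss{\Sf{Y}}}\Ss{Y} \;=\; P_\bullet\otimes_{\Ss{S,s}}\Ss{S,s}/\mathfrak{m}_s \;\longrightarrow\; \overline{\Sf{F}}\otimes_{\Ss{S,s}}\Ss{S,s}/\mathfrak{m}_s
\end{equation*}
is a resolution of $\overline{\Sf{F}}|_s$ by free $\Ss{Y}$-modules (here I use the identification $\Ss{\Sf{Y}}\otimes_{\Ss{S,s}}\Ss{S,s}/\mathfrak{m}_s=\Ss{Y}$ and the fact that tensoring a free $\Ss{\Sf{Y}}$-module with $\Ss{Y}$ over $\Ss{\Sf{Y}}$ agrees with tensoring over $\Ss{S,s}$ with $\Ss{S,s}/\mathfrak{m}_s$).

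Next I would apply the adjunction (or change-of-rings) isomorphism along the surjection $\Ss{\Sf{Y}}\twoheadrightarrow\Ss{Y}$: for any free $\Ss{\Sf{Y}}$-module $P$ there is a natural isomorphism
\begin{equation*}
\Homs_{\Ss{\Sf{Y}}}(P,\Ss{Y}) \;\cong\; \Homs_{\Ss{Y}}(P\otimes_{\Ss{\Sf{Y}}}\Ss{Y},\Ss{Y}),
\end{equation*}
which is natural in $P$ and thus yields an isomorphism of complexes $\Homs_{\Ss{\Sf{Y}}}(P_\bullet,\Ss{Y})\cong\Homs_{\Ss{Y}}(P_\bullet\otimes_{\Ss{\Sf{Y}}}\Ss{Y},\Ss{Y})$. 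Taking the $i$-th cohomology sheaf of both sides and using that the left-hand complex computes $\Exts_{\Ss{\Sf{Y}}}^i(\overline{\Sf{F}},\Ss{Y})$ and the right-hand complex computes $\Exts_{\Ss{Y}}^i(\overline{\Sf{F}}|_s,\Ss{Y})$ (by the previous step) gives the desired isomorphism.

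The only mild obstacle is the standard one: ensuring the local existence of a free resolution and checking that the adjunction isomorphism is natural enough to pass to cohomology. Both points are routine, and the key input that makes the argument go through is the flatness hypothesis on $\overline{\Sf{F}}$, which is exactly what is needed to guarantee that the tensored complex remains a resolution.
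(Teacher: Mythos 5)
Your proof is correct and takes essentially the same route as the paper, whose entire argument is the single sentence ``It is straightforward if one computes $\Exts$ using a free resolution of $\Sf{F}$.'' Your write-up just fills in the routine details that the paper leaves implicit: that flatness of $\overline{\Sf{F}}$ over $S$ (together with the flatness of free $\Ss{\Sf{Y}}$-modules over $S$, which follows from flatness of the deformation) makes the restricted complex a free $\Ss{Y}$-resolution of the fibre, and that the restriction-of-scalars adjunction identifies the two $\Homs$-complexes.
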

\proof
It is straightforward if one compute $\Exts$ using a free resolution of $\Sf{F}$.
\endproof

Now we are able to extend the correspondence given by Theorem~\ref{th:corrsing}. From now and for this and the following section we will always assume that $X$ is a normal Stein surface with Gorenstein singularities.

\begin{theorem}
\label{th:dirXdef}
Let $X$ be a normal Stein surface with Gorenstein singularities. Let $(M,(\phi_1,..,\phi_r))$ be a reflexive $\Ss{X}$-module of rank $r$ and a collection $r$ nearly generic sections. Let  
$(\Sf{C},(\psi_1,...,\psi_r))$  be the  rank 1 generically reduced  Cohen-Macaulay $\Ss{X}$-module of dimension $1$ with the collection of generators 
obtained from $(M,(\phi_1,..,\phi_r))$ by the direct correspondence at the $X$ (see Theorem~\ref{th:corrsing}). The correspondence defined at Theorem~\ref{th:corrsing} 
extends to define an isomorphism between the functors  $\mathbf{Def_{X,M}^{(\phi_1,...,\phi_r)}}$ and $\mathbf{Def_{X,\Sf{C}}^{(\psi_1,...,\psi_r)}}$. The isomorphism restricts to an isomorphism between $\mathbf{Def_{M}^{(\phi_1,...,\phi_r)}}$ and $\mathbf{Def_{\Sf{C}}^{(\psi_1,...,\psi_r)}}$.
\end{theorem}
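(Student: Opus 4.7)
The plan is to verify that every construction used in the proof of Theorem~\ref{th:corrsing} lifts to flat families over an arbitrary germ $(S,s)$, using Lemma~\ref{lem:extbasechange} and Lemma~\ref{lem:isoext} together with the vanishing statements of Theorem~\ref{Th:Herzog} to control how the relevant $\mathrm{Hom}$'s and $\mathrm{Ext}$'s interact with base change. Since $X$ is Gorenstein the dualizing module is $\omega_X=\Ss{X}$, so all vanishings provided by Theorem~\ref{Th:Herzog} apply directly to $\Ss{X}$-duals.

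Given $(\Sf{X},\overline{M},(\overline{\phi}_1,...,\overline{\phi}_r),\iota)\in\mathbf{Def_{X,M}^{(\phi_1,...,\phi_r)}}(S,s)$, I would first form
\begin{equation*}
0\to \Ss{\Sf{X}}^r\xrightarrow{(\overline{\phi}_1,...,\overline{\phi}_r)} \overline{M}\to \overline{\Sf{C}}'\to 0.
\end{equation*}
Injectivity of the left arrow on the central fibre comes from the fact that $(\phi_1,...,\phi_r)$ are nearly generic (so the sheaf-theoretic cokernel is supported in dimension one), and the local criterion of flatness then promotes this injectivity to the whole family and makes $\overline{\Sf{C}}'$ flat over $S$ with central fibre $\Sf{C}'$. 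I would then dualize with respect to $\Ss{\Sf{X}}$: Theorem~\ref{Th:Herzog} yields the central-fibre vanishings $\Homs_{\Ss{X}}(\Sf{C}',\Ss{X})=\Exts^2_{\Ss{X}}(\Sf{C}',\Ss{X})=\Exts^1_{\Ss{X}}(M,\Ss{X})=0$, which by Lemma~\ref{lem:isoext} compute the central-fibre values of $\Exts^\bullet_{\Ss{\Sf{X}}}(\overline{\Sf{C}}',\Ss{\Sf{X}})$ and $\Exts^1_{\Ss{\Sf{X}}}(\overline{M},\Ss{\Sf{X}})$, so Lemma~\ref{lem:extbasechange}~(3) propagates these vanishings to the family. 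Dualization therefore produces the family analog of Sequence~(\ref{exsq11}),
\begin{equation*}
0\to \overline{N}\to \Ss{\Sf{X}}^r\to \overline{\Sf{C}}\to 0,
\end{equation*}
with $\overline{\Sf{C}}:=\Exts^1_{\Ss{\Sf{X}}}(\overline{\Sf{C}}',\Ss{\Sf{X}})$. Applying Lemma~\ref{lem:extbasechange}~(2) to the vanishing $\Exts^0$ and $\Exts^2$ gives that $\overline{\Sf{C}}$ is flat over $S$, and assertion~(1) of the same lemma, combined with Lemma~\ref{lem:isoext}, yields the base-change isomorphism $\overline{\Sf{C}}|_s\cong \Exts^1_{\Ss{X}}(\Sf{C}',\Ss{X})=\Sf{C}$ under which the sections induced by the middle arrow restrict to $(\psi_1,...,\psi_r)$. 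This defines the direct map of functors.

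For the inverse map I would mirror the argument: starting from $(\Sf{X},\overline{\Sf{C}},(\overline{\psi}_1,...,\overline{\psi}_r),\rho)$, the sections give the family analog of Sequence~(\ref{exsq2}) (their fibrewise generation of $\overline{\Sf{C}}$ persists by Nakayama and flatness), and the same package of base-change results, applied this time to the central-fibre vanishings of Proposition~\ref{inversaabajo} and of Theorem~\ref{Th:Herzog} for $\Sf{C}$, produces a flat deformation $\overline{M}$ of $M$ together with extended sections. Naturality in $(S,s)$ holds because flat pullback commutes with kernels, cokernels, and, once base change is guaranteed, with the relevant $\Exts$. The fact that the two assignments are mutually inverse reduces to the $\Ss{\Sf{X}}$-bi-duality isomorphism of Theorem~\ref{Th:Herzog}~(iii) carried out in the family, which is available precisely thanks to the flatness and base-change results just established. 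The restriction to $\mathbf{Def_{M}^{(\phi_1,...,\phi_r)}}$ and $\mathbf{Def_{\Sf{C}}^{(\psi_1,...,\psi_r)}}$ is automatic because the construction never modifies the underlying deformation $\Sf{X}$ of $X$.

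The main obstacle I expect is the careful bookkeeping required to apply Lemma~\ref{lem:extbasechange} twice (once per dualization) while simultaneously tracking flatness over $S$ and the compatibility of base change with the sections. No new algebraic input is needed beyond Theorem~\ref{Th:Herzog}, but one has to check at each step that the vanishings previously propagated to the family are enough to run the next application of the lemma, and that the identifications of central fibres are compatible with the identifications $\iota$ and $\rho$ of sections.
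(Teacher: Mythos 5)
Your proposal is correct and follows essentially the same route as the paper's proof: form the degeneracy sequence of the extended sections, use the local criterion of flatness for the cokernel, dualize, and propagate the Theorem~\ref{Th:Herzog} vanishings to the family via Lemma~\ref{lem:isoext} and repeated applications of Lemma~\ref{lem:extbasechange} to obtain flatness of $\overline{\Sf{C}}$ (resp.\ $\overline{M}$), the base-change identification of the central fibre, compatibility with pullbacks, and mutual inverseness via the family bi-duality. The only difference is one of bookkeeping detail, not of substance.
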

\proof
Let $(S,s)$ be a germ of complex
space. Let $(\Sf{X},\overline{M},(\overline{\phi}_1,...,\overline{\phi}_r),\iota)$ be a deformation of $(X,M,(\phi_1,...,\phi_r))$ over $(S,s)$. Let  $\Ss{\Sf{X}}^r\to\overline{M}$ be the morphism induced by the sections, denote its cokernel by $(\overline{\Sf{C}})'$. Then we have the exact sequence:
\begin{equation}
\label{eq:dirsingfam1}
0\to \Ss{\Sf{X}}^r\to\overline{M}\to (\overline{\Sf{C}})'\to 0.
\end{equation}
The flatness of $\overline{M}$ over $S$, and the fact that the first mapping specializes over $s$ to an injection, 
implies the flatness of $(\overline{\Sf{C}})'$ over $S$, by using the Local Criterion of Flatness.  
The specialization of the sequence to the fibre over $s$ is the exact sequence of $\Ss{X}$-modules
\begin{equation}
\label{eq:dirsingfam2}
0\to\Ss{X}^r\to\overline{M}|_s\to (\overline{\Sf{C}})'|_s\to 0,
\end{equation}
and $\iota$ induces an isomorphism between this sequence and the exact sequence 
\begin{equation}
\label{eq:dirsingfam3}
0\to \Ss{X}^r\to M\to \Sf{C}'\to 0,
\end{equation}
induced by the sections $(\phi_1,...,\phi_r)$. 

The dual of the last sequence is the sequence
\begin{equation}
\label{eq:dirsingfam4}
0\to N\to \Ss{X}^r\to \Sf{C}\to 0,
\end{equation}
where the last morphism of the sequence gives rise to the generators $(\psi_1,...,\psi_r)$ of $\Sf{C}$ (see the proof of Theorem~\ref{th:corrsing}).

Dualize the sequence (\ref{eq:dirsingfam1}) with respect to $\Ss{\Sf{X}}$ 
and obtain the exact sequence
$$0\to \Homs_{\Ss{\Sf{X}}}(\overline{N},\Ss{\Sf{X}})\to \Ss{\Sf{X}}^r\to  \Exts^1_{\Ss{\Sf{X}}}((\overline{\Sf{C}})',\Ss{\Sf{X}})\to \Exts^1_{\Ss{\Sf{X}}}(\overline{M},\Ss{\Sf{X}}).$$

Define $\overline{N}:=\Homs_{\Ss{\Sf{X}}}(\overline{M},\Ss{\Sf{X}})$.
We claim that $\Exts^1_{\Ss{\Sf{X}}}(\overline{M},\Ss{\Sf{X}})$ vanishes. Indeed, since $M$ is Cohen-Macaulay of dimension $2$ we have the vanishing $\Exts^1_{\Ss{X}}(M,\Ss{X})=0$ by Theorem~\ref{Th:Herzog}. By Lemma~\ref{lem:isoext} we have the isomorphism 
$\Exts^1_{\Ss{\Sf{X}}}(\overline{M},\Ss{X})\cong \Exts^1_{\Ss{X}}(M,\Ss{X})=0$. This vanishing, together with Lemma~\ref{lem:extbasechange} proves the claim.

As a consequence of the claim we have the exact sequence
\begin{equation}
\label{eq:dirsingfam5}
0\to \overline{N}\to \Ss{\Sf{X}}^r\to  \Exts^1_{\Ss{\Sf{X}}}((\overline{\Sf{C}})',\Ss{\Sf{X}})\to 0.
\end{equation}

We define $\overline{\Sf{C}}:=\Exts^1_{\Ss{\Sf{X}}}((\overline{\Sf{C}})',\Ss{\Sf{X}})$. We claim that the following assertions hold:
\begin{enumerate}
 \item the $\Ss{\Sf{X}}$-module $\overline{\Sf{C}}$ is flat over $S$.
 \item The specialization $\overline{N}|_s\to \Ss{X}^r$ of the first morphism of the sequence coincides with $N\to\Ss{X}^r$. 
\end{enumerate}

Assume the claim. The second assertion induces an identification of $\Ss{X}^r\to\Sf{C}$ with $\Ss{X}^r\to \overline{\Sf{C}}|_s$. Let $\rho$ denote the isomorphism 
$\Sf{C}\to\overline{\Sf{C}}|_s$. The second morphism of Sequence~(\ref{eq:dirsingfam5}) induces a collection of sections 
$(\bar\psi_1,...,\bar\psi_r)$. The first assertion shows that $(\Sf{X},\overline{\Sf{C}},(\overline{\psi}_1,...,\overline{\psi}_r),\rho)$ is a 
deformation of $(X,\Sf{C},(\psi_1,...,\psi_r))$ over $(S,s)$. 

Let us prove the claim. Since $\Sf{C}'$ is Cohen-Macaulay of dimension $1$ and $X$ has Gorenstein singularities, Theorem~\ref{Th:Herzog} implies the vanishing  $\Exts^i_{\Ss{X}}(\Sf{C}',\Ss{X})=0$ if $i\geq 2$. Then, since $(\overline{\Sf{C}})'$ is flat over $S$, using Lemma~\ref{lem:isoext} we obtain the vanishing 
$\Exts^i_{\Ss{\Sf{X}}}((\overline{\Sf{C}})',\Ss{X})=0$ if $i\geq 2$. Now we will apply Lemma~\ref{lem:extbasechange} repeatedly for 
$\Sf{F}=(\overline{\Sf{C}})'$: by the vanishing $\Exts^2_{\Ss{\Sf{X}}}((\overline{\Sf{C}})',\Ss{X})=0$ we deduce that 
$\phi^2(\Ss{S,s}/\mathfrak{m}_s)$ is surjective; the Lemma~\ref{lem:extbasechange} (1) shows that $\Exts_{\Ss{\Sf{X}}}^2((\overline{\Sf{C}})',\Ss{\Sf{X}})$ vanishes (hence it is  
flat over $S$). By Lemma~\ref{lem:extbasechange}, (2) we have that $\phi^1(\Ss{S,s}/\mathfrak{m}_s)$ is surjective. 
Lemma~\ref{lem:extbasechange}, (2) shows now that $\overline{\Sf{C}}=\Exts^1_{\Ss{\Sf{X}}}((\overline{\Sf{C}})',\Ss{\Sf{X}})$ is flat over $S$ 
if and only if $\phi^0(\Ss{S,s}/\mathfrak{m}_s)$ is surjective. This surjectivity holds since the target of this map vanishes because 
$(\overline{\Sf{C}})'$ has proper support. This shows Assertion $(1)$ of the claim. 

For Assertion $(2)$ we apply Lemma~\ref{lem:extbasechange} for $\Sf{F}=\overline{M}$. We need to show the isomorphism $\overline{N}|_s\cong N$, but 
this follows from Lemma~\ref{lem:extbasechange}, (1), if we show that $\phi^0(\Ss{S,s}/\mathfrak{m}_s)$ is surjective. By 
Lemma~\ref{lem:extbasechange}, (2), this is reduced to prove the vanishing of 
$\Exts_{\Ss{\Sf{X}}}^1(\overline{M},\Ss{\Sf{X}}\otimes_{\Ss{S}} \Ss{S,s}/\mathfrak{m}_s)$ and the flatness over $S$ of 
$\Exts_{\Ss{\Sf{X}}}^1(\overline{M},\Ss{\Sf{X}})$. The vanishing holds because $\Exts_{\Ss{\Sf{X}}}^1(\overline{M},\Ss{\Sf{X}}\otimes_{\Ss{S}} \Ss{S,s}/\mathfrak{m}_s)$ is isomorphic to $\Exts_{\Ss{X}}^1(M,\Ss{X})$ by flatness
of $\overline{M}$ and Lemma~\ref{lem:isoext}, and the second module vanishes by Theorem~\ref{Th:Herzog}. The flatness of 
$\Exts_{\Ss{\Sf{X}}}^1(\overline{M},\Ss{\Sf{X}})$ holds because this module also vanishes (apply Lemma~\ref{lem:isoext} and 
Lemma~\ref{lem:extbasechange}, (3) and (1) as before, starting from the vanishing of $\Exts^1_{\Ss{X}}(M,\Ss{X})$). 

In order to have a natural transformation from $\mathbf{Def_{X,M}^{(\phi_1,...,\phi_r)}}$ to $\mathbf{Def_{X,\Sf{C}}^{(\psi_1,...,\psi_r)}}$ we have to show
that the construction commutes with pullbacks. This follows from Lemma~\ref{lem:extbasechange}, (1), if we show the isomorphism
$\overline{\Sf{C}}|_s\cong\Exts_{\Ss{\Sf{X}}}^1((\overline{\Sf{C}})',\Ss{\Sf{X}}\otimes_{\Ss{S}} \Ss{S,s}/\mathfrak{m}_s)$. By the flatness
of $(\overline{\Sf{C}})'$ over $S$ and Lemma~\ref{lem:isoext} the second module is isomorphic to $\Sf{C}$, and then the desired isomorphism becomes
the already proven identity $\overline{\Sf{C}}|_s\cong \Sf{C}$.

Now we define the inverse natural transformation from $\mathbf{Def_{X,\Sf{C}}^{(\psi_1,...,\psi_r)}}$ to $\mathbf{Def_{X,M}^{(\phi_1,...,\phi_r)}}$.

Let $(\Sf{X},\overline{\Sf{C}},(\overline{\psi}_1,...,\overline{\psi}_r),\rho)$ be a deformation of $(X,\Sf{C},(\psi_1,...,\psi_r))$ over $(S,s)$.
Consider the exact sequence induced by the sections:
\begin{equation}
\label{eq:invsingfam1}
0\to \overline{N}\to\Ss{\Sf{X}}^r\to \overline{\Sf{C}}\to 0.
\end{equation}
The flatness of $\overline{\Sf{C}}$ over $S$ implies the flatness of  $\overline{N}$ over $S$. The specialization of the sequence to the fibre 
over $s$ is the exact sequence of $\Ss{X}$-modules
\begin{equation}
\label{eq:invsingfam2}
0\to \overline{N}|_s\to\Ss{X}^r\to \overline{\Sf{C}}|_s\to 0,
\end{equation}
and $\rho$ induces an isomorphism between this sequence and the exact sequence 
\begin{equation}
\label{eq:invsingfam3}
0\to N\to\Ss{X}^r\to \Sf{C}\to 0,
\end{equation}
induced by the generators $(\psi_1,...,\psi_r)$. The dual of the last sequence is the sequence
\begin{equation}
\label{eq:invsingfam4}
0\to\Ss{X}^r\to M\to\Exts^1_{\Ss{X}}(\Sf{C},\Ss{X})\to 0,
\end{equation}
where the sections $(\phi_1,...,\phi_r)$ are induced by the first map of the sequence (see the proof of Theorem~\ref{th:corrsing}).
 
Dualize the sequence (\ref{eq:invsingfam1}) with respect to $\Ss{\Sf{X}}$ 
and obtain the exact sequence
\begin{equation}
\label{eq:invsingfam5}
0\to \Ss{\Sf{X}}^r\to \overline{M}\to \Exts^1_{\Ss{\Sf{X}}}(\overline{\Sf{C}},\Ss{\Sf{X}})\to 0,
\end{equation}
where we define $\overline{M}:=\Homs_{\Ss{\Sf{X}}}(\overline{N},\Ss{\Sf{X}})$.  

We claim that the following assertions hold:
\begin{enumerate}
 \item the $\Ss{\Sf{X}}$-module $\overline{M}$ is flat over $S$.
 \item The specialization $ \Ss{X}^r\to \overline{M}|_s$ of the first morphism of the sequence is isomorphic to $\Ss{X}^r\to M$. 
\end{enumerate}

Assume the claim. The second assertion induces an isomorphism from $\Ss{X}^r\to M$ to $\Ss{X}^r\to \overline{M}|_s$. Let $\iota$ denote the isomorphism 
$M\to\overline{M}|_s$. The first morphism of Sequence~(\ref{eq:invsingfam5}) induces a collection of sections 
$(\bar\phi_1,...,\bar\phi_r)$. The first assertion shows that $(\Sf{X},\overline{M},(\overline{\phi}_1,...,\overline{\phi}_r),\iota)$ is a 
deformation of $(X,\Sf{C},(\phi_1,...,\phi_r))$ over $(S,s)$. 

The proof of the claim is an application of Lemmata~\ref{lem:isoext} and~\ref{lem:extbasechange} competely analogous to the proof of 
the previous two claim in this proof. We omit it here. 

In order to have a natural transformation from $\mathbf{Def_{X,\Sf{C}}^{(\psi_1,...,\psi_r)}}$ to $\mathbf{Def_{X,M}^{(\phi_1,...,\phi_r)}}$ 
we have to show that the construction commutes with pullbacks, but this is an application of Lemmata~\ref{lem:isoext} and~\ref{lem:extbasechange}, 
similar to the proof that the transformation from $\mathbf{Def_{X,M}^{(\phi_1,...,\phi_r)}}$ to $\mathbf{Def_{X,\Sf{C}}^{(\psi_1,...,\psi_r)}}$ commutes with pullbacks.

Finally we need to check that the correspondences that we have just defined are inverse to each other, but this is clear by construction.

It is obvious that the isomorphisms that we have defined restrict to an isomorphism between $\mathbf{Def_{M}^{(\phi_1,...,\phi_r)}}$ and $\mathbf{Def_{\Sf{C}}^{(\psi_1,...,\psi_r)}}$.
\endproof

\subsection{The correspondence for deformations at simultaneous resolutions of families of normal Stein Surfaces with Gorenstein singularities}
\label{sec:resfam}

\begin{definition}
\label{def:enhanceddef2}
 Let $X$ be a normal Stein surface and $\pi:\Rs\to X$ be a resolution. Let $(\Sf{A},(\psi_1,...,\psi_r))$ be a rank $1$ generically reduced Cohen-Macaulay $\Ss{\Rs}$-module of dimension $1$, and a system of generators as $\Ss{\Rs}$-module satisfying the Containment Condition. Let $\Sf{M}$ be a full $\Ss{\Rs}$-module of rank $r$ and 
 $(\phi_1,...,\phi_r)$ be $r$ nearly generic sections.
 \begin{enumerate}
 \item A {\em deformation of} $(\Rs,X,\Sf{A},(\psi_1,...,\psi_r))$ over a germ $(S,s)$ is a quintuple $(\Rsd,\Sf{X},\overline{\Sf{A}},(\overline{\psi}_1,...,\overline{\psi}_r),\rho)$, where $(\Rsd,\Sf{X},\overline{\Sf{A}},\rho)$ is a deformation of $(\Rs,X,\Sf{A})$, 
  and $(\overline{\psi}_1,...,\overline{\psi}_r)$ are sections of $\overline{\Sf{A}}$, which via the isomorphism $\rho$ restrict to 
  $(\psi_1,...,\psi_r)$ over $s$.
 \item A {\em specialty defect constant deformation of} $(\Sf{A},(\psi_1,...,\psi_r))$ over a germ $(S,s)$ is a deformation $(\Rsd,\Sf{X},\overline{\Sf{A}},(\overline{\psi}_1,...,\overline{\psi}_r),\rho)$ such that the cokernel $\overline{\Sf{D}}$ of the natural mapping $$\Pi_*\Ss{\Rsd}^r\to \Pi_*\overline{\Sf{A}}$$ induced by the sections $(\overline{\psi}_1,...,\overline{\psi}_r)$ is flat over $S$.
 \item  A {\em (full) deformation of} $(\Rs,X,\Sf{M},(\phi_1,...,\phi_r))$ over a germ $(S,s)$ is a quintuple 
  $$(\Rsd,\Sf{X},\overline{\Sf{M}},(\overline{\phi}_1,...,\overline{\phi}_r),\iota),$$ where $(\Rsd,\Sf{X},\overline{\Sf{M}},\iota)$ is a (full) deformation of $(\Rs,X,\Sf{M})$ and $(\overline{\phi}_1,...,\overline{\phi}_r)$ are sections of $\overline{\Sf{M}}$, which via the isomorphism $\iota$ restrict to $(\phi_1,...,\phi_r)$ over $s$. 
 \end{enumerate}
 The deformations defined above give rise to functors $\mathbf{Def_{\Rs,X,\Sf{A}}^{(\psi_1,...,\psi_r)}}$,  $\mathbf{SDCDef_{\Rs,X,\Sf{A}}^{(\psi_1,...,\psi_r)}}$, $\mathbf{Def_{\Rs,X,\Sf{M}}^{(\phi_1,...,\phi_r)}}$ and $\mathbf{FullDef_{\Rs,X,\Sf{M}}^{(\phi_1,...,\phi_r)}}$. The obvious restricted deformation functors fixing $\Rs$ or $\Rs$ and $X$ are denoted by $\mathbf{Def_{X,\Sf{A}}^{(\psi_1,...,\psi_r)}}$,  $\mathbf{SDCDef_{X,\Sf{A}}^{(\psi_1,...,\psi_r)}}$, $\mathbf{Def_{X,\Sf{M}}^{(\phi_1,...,\phi_r)}}$, $\mathbf{FullDef_{X,\Sf{M}}^{(\phi_1,...,\phi_r)}}$, $\mathbf{Def_{\Sf{A}}^{(\psi_1,...,\psi_r)}}$,  $\mathbf{SDCDef_{\Sf{A}}^{(\psi_1,...,\psi_r)}}$, $\mathbf{Def_{\Sf{M}}^{(\phi_1,...,\phi_r)}}$ and $\mathbf{FullDef_{\Sf{M}}^{(\phi_1,...,\phi_r)}}$.
 \end{definition}

\begin{lemma}
The assignements defined in the previous definition are in fact contravariant functors.
\end{lemma}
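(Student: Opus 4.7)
The proof is a formal verification. First I would set up the pullback. For a morphism $f:(S',s')\to(S,s)$ of germs and a deformation over $(S,s)$, form the fibre products $\Sf{X}':=\Sf{X}\times_S S'$ and $\Rsd':=\Rsd\times_S S'$, with the induced very weak simultaneous resolution $\Pi':\Rsd'\to\Sf{X}'$ and canonical projections $\psi:\Sf{X}'\to\Sf{X}$ and $\tilde\psi:\Rsd'\to\Rsd$. The pulled-back deformation data consist of the base-changed modules $\overline{\Sf{A}}':=\tilde\psi^*\overline{\Sf{A}}$ (respectively $\overline{M}':=\psi^*\overline{M}$ and $\overline{\Sf{M}}':=\tilde\psi^*\overline{\Sf{M}}$), the sections pulled back by naturality of global sections, and the central-fibre isomorphism transported through the canonical identifications $\Rsd'_{s'}\cong\Rsd_s$ and $\Sf{X}'_{s'}\cong\Sf{X}_s$. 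Compatibility with composition and with the identity follows from the universal property of fibre products, so it suffices to check that in each case the pulled-back data satisfies the conditions defining the relevant functor.

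Next I would dispatch the purely ``deformation'' functors. For $\mathbf{Def_{\Rs,X,\Sf{A}}^{(\psi_1,...,\psi_r)}}$, $\mathbf{Def_{\Rs,X,\Sf{M}}^{(\phi_1,...,\phi_r)}}$, and their variants fixing subsets of the data, the only conditions to verify are flatness of the families of surfaces, of the resolutions, and of the relevant modules. All of these are preserved under arbitrary base change of the base ring (if $F$ is $\Ss{S}$-flat, then $F\otimes_{\Ss{S}}\Ss{S'}$ is $\Ss{S'}$-flat); sections and central-fibre isomorphisms transport in the obvious way.

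The nontrivial cases are $\mathbf{FullDef_{\Rs,X,\Sf{M}}^{(\phi_1,...,\phi_r)}}$ and $\mathbf{SDCDef_{\Rs,X,\Sf{A}}^{(\psi_1,...,\psi_r)}}$, whose extra conditions concern higher direct images and cokernels of push-forwards. For $\mathbf{FullDef}$ the extra condition is flatness of $R^1\Pi_*\overline{\Sf{M}}$ over $S$; by Lemma~\ref{lem:h0h1} this flatness yields the base change isomorphism~(\ref{eq:basechange})
\begin{equation*}
\psi^*(R^1\Pi_*\overline{\Sf{M}})\cong R^1\Pi'_*\overline{\Sf{M}}',
\end{equation*}
whose left-hand side is $\Ss{S'}$-flat because pullback of a flat sheaf is flat. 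For $\mathbf{SDCDef}$ the extra condition is flatness of the cokernel $\overline{\Sf{D}}$ of $\Pi_*\Ss{\Rsd}^r\to\Pi_*\overline{\Sf{A}}$. Applying Lemma~\ref{lem:h0h1} to $\Ss{\Rsd}$ and to $\overline{\Sf{A}}$ provides base change isomorphisms $\psi^*\Pi_*\Ss{\Rsd}\cong\Pi'_*\Ss{\Rsd'}$ and $\psi^*\Pi_*\overline{\Sf{A}}\cong\Pi'_*\overline{\Sf{A}}'$, so pulling back the right-exact sequence $\Pi_*\Ss{\Rsd}^r\to\Pi_*\overline{\Sf{A}}\to\overline{\Sf{D}}\to 0$ by $\psi^*$ identifies the pulled-back cokernel $\overline{\Sf{D}}'$ with $\psi^*\overline{\Sf{D}}$, which is $\Ss{S'}$-flat.

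The main technical point is therefore the base-change behaviour of $\Pi_*$ in the SDC case: this is where one pays for having written the condition in terms of a push-forward. Once the base change is in hand via Lemma~\ref{lem:h0h1}, the rest is a routine unwinding of definitions.
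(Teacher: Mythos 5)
Your argument is correct and follows essentially the same route as the paper, which treats everything except the SDC condition as routine and handles that condition in one line by observing that cokernel formation commutes with pullback and that flatness is preserved by pullback; your extra care in justifying base change for the push-forwards (via Lemma~\ref{lem:h0h1} for $\overline{\Sf{A}}$, whose $R^1\Pi_*$ vanishes because its support is finite over $\Sf{X}$) fills in a step the paper leaves implicit. One small caveat: invoking Lemma~\ref{lem:h0h1} for $\Ss{\Rsd}$ itself would require flatness of $R^1\Pi_*\Ss{\Rsd}$, which a very weak simultaneous resolution does not guarantee, so for that term it is cleaner to note that the map induced by the global sections factors through $\Ss{\Sf{X}}^r$, whose behaviour under pullback is evident.
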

\proof
The only non-trivial assertion is for $\mathbf{SDCDef_{\Rs,X,\Sf{A}}^{(\psi_1,...,\psi_r)}}$: we need to prove the preservation of the flatness of the cokernel $\Sf{D}$ under pullback. This holds because the formation of cokernels commutes with pullbacks and flatness is preserved by pullbacks. 
\endproof
 
\begin{lemma}
\label{lem:defsarcos}
In the setting of the previous definition, suppose that $(\psi_1,...,\psi_r)$ generate $\Sf{A}$ as a $\Ss{X}$-module (that is, they generate $\pi_*\Sf{A}$). Then the functors $\mathbf{Def_{\Rs,X,\Sf{A}}^{(\psi_1,...,\psi_r)}}$ and $\mathbf{SDCDef_{\Rs,X,\Sf{A}}^{(\psi_1,...,\psi_r)}}$ coincide.
\end{lemma}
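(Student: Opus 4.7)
The plan is to prove the stronger statement that for any object $(\Rsd,\Sf{X},\overline{\Sf{A}},(\overline{\psi}_1,\dots,\overline{\psi}_r),\rho)$ of $\mathbf{Def_{\Rs,X,\Sf{A}}^{(\psi_1,...,\psi_r)}}$ the cokernel $\overline{\Sf{D}}$ of the natural map $\Ss{\Sf{X}}^r\to\Pi_*\overline{\Sf{A}}$ actually vanishes; this will make the flatness condition defining the subfunctor $\mathbf{SDCDef_{\Rs,X,\Sf{A}}^{(\psi_1,...,\psi_r)}}$ trivially satisfied.

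The key cohomological input I will establish first is the vanishing $R^1\Pi_*\overline{\Sf{A}}=0$. Let $\overline{Z}$ be the support of $\overline{\Sf{A}}$ and $\overline{E}$ the exceptional divisor of $\Pi$. The closed subscheme $\overline{Z}\cap\overline{E}$ is proper over $S$, and its fibre over $s$ equals $\mathrm{supp}(\Sf{A})\cap E$, which is finite by the hypotheses built into $\mathbf{Def_{\Rs,X,\Sf{A}}^{(\psi_1,...,\psi_r)}}$. Upper semi-continuity of fibre dimension then forces all fibres of $\overline{Z}\cap\overline{E}\to S$ to be zero-dimensional (after shrinking $S$ in the analytic-germ setting; automatically in the complete-algebroid setting, since any closed subscheme of $\mathrm{Spec}(\Ss{S,s})$ that avoids the unique closed point is empty). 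Combined with the fact that $\Pi$ is an isomorphism off $\overline{E}$, this shows that $\Pi|_{\overline{Z}}$ is quasi-finite and proper, hence finite, so $R^1\Pi_*\overline{\Sf{A}}=0$.

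Applying Lemma~\ref{lem:h0h1} with $\overline{\Sf{M}}$ replaced by $\overline{\Sf{A}}$, the trivial $S$-flatness of $R^1\Pi_*\overline{\Sf{A}}$ yields that $\Pi_*\overline{\Sf{A}}$ is $S$-flat and that the base-change morphism $(\Pi_*\overline{\Sf{A}})|_s\to\pi_*\Sf{A}$ is an isomorphism. Restricting the defining exact sequence $\Ss{\Sf{X}}^r\to\Pi_*\overline{\Sf{A}}\to\overline{\Sf{D}}\to0$ to the central fibre and using right exactness of $-\otimes_{\Ss{S,s}}k(s)$, I then read off $\overline{\Sf{D}}|_s=\mathrm{coker}(\Ss{X}^r\to\pi_*\Sf{A})$; the hypothesis that $(\psi_1,\dots,\psi_r)$ generate $\pi_*\Sf{A}$ as an $\Ss{X}$-module gives $\overline{\Sf{D}}|_s=0$.

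The final step is a Nakayama argument at stalks. For any $y$ in the central fibre $X\subset\Sf{X}$, the finitely generated $\Ss{\Sf{X},y}$-module $\overline{\Sf{D}}_y$ satisfies $\overline{\Sf{D}}_y/\mathfrak{m}_{S,s}\overline{\Sf{D}}_y=(\overline{\Sf{D}}|_s)_y=0$; since $\mathfrak{m}_{S,s}\cdot\Ss{\Sf{X},y}$ is contained in the maximal ideal of the local ring $\Ss{\Sf{X},y}$, Nakayama's lemma forces $\overline{\Sf{D}}_y=0$. Coherence of $\overline{\Sf{D}}$ together with the germ or complete-local setting then forces $\overline{\Sf{D}}=0$ globally. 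The main technical point that will require care is the upper semi-continuity step controlling $\overline{Z}\cap\overline{E}\to S$, which has to be handled uniformly in the analytic and algebroid settings so as to ensure that the support of $\overline{\Sf{A}}$ does not spread to cover a component of the exceptional divisor of some nearby fibre.
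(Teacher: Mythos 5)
Your proposal is correct and follows essentially the same route as the paper: the paper's proof is exactly the observation that the cokernel $\overline{\Sf{D}}$ specializes to zero over $s$ (because the $\psi_i$ generate $\pi_*\Sf{A}$) and hence vanishes by Nakayama. Your extra work establishing $R^1\Pi_*\overline{\Sf{A}}=0$ and invoking Lemma~\ref{lem:h0h1} to identify $(\Pi_*\overline{\Sf{A}})|_s$ with $\pi_*\Sf{A}$ is a legitimate way to justify the base-change step that the paper leaves implicit (there it follows from viewing $\Pi_*\overline{\Sf{A}}$ as $\overline{\Sf{A}}$ with restricted scalars), so the two arguments coincide in substance.
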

\proof
The specialization of the cokernel of $\Pi_*:\Ss{\Rsd}^r\to \pi_*\overline{\Sf{X}}$ to the fibre over $s$ vanishes, and hence the cokernel vanishes as well by Nakayama Lemma.
\endproof

\begin{theorem}
\label{th:dirresdef}
Let $X$ be a normal Stein surface with Gorenstein singularities. Let $(\Sf{A},(\psi_1,...,\psi_r))$ be a Cohen-Macaulay $\Ss{\Rs}$-module of dimension $1$, and a system of generators as $\Ss{\Rs}$-module satisfying the Containment Condition, let $(\Sf{M},(\phi_1,...,\phi_r))$ be a full $\Ss{\Rs}$-module of rank $r$ with $r$ nearly generic sections. Suppose that the pairs $(\Sf{A},(\psi_1,...,\psi_r))$ and $(\Sf{M},(\phi_1,...,\phi_r))$ are related by the correspondence of Theorem~\ref{th:corres}. There is an
isomorphism of functors between $\mathbf{FullDef_{\Rs,X,\Sf{M}}^{(\phi_1,...,\phi_r)}}$ and $\mathbf{SDCDef_{\Rs,X,\Sf{A}}^{(\psi_1,...,\psi_r)}}$. The isomorphism restricts to isomorphisms between $\mathbf{FullDef_{X,\Sf{M}}^{(\phi_1,...,\phi_r)}}$ and $\mathbf{SDCDef_{X,\Sf{A}}^{(\psi_1,...,\psi_r)}}$, and between $\mathbf{FullDef_{\Sf{M}}^{(\phi_1,...,\phi_r)}}$ and $\mathbf{SDCDef_{\Sf{A}}^{(\psi_1,...,\psi_r)}}$.
\end{theorem}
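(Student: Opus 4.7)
The overall strategy is to mimic the proof of Theorem~\ref{th:dirXdef}: construct direct and inverse natural transformations at the level of families by running the degeneracy construction and its dual fiberwise over $S$, and check flatness, correct restriction to $s$, and naturality under pullbacks via Lemma~\ref{lem:extbasechange} and Lemma~\ref{lem:isoext}. The new ingredient, absent from Theorem~\ref{th:dirXdef}, is that at the resolution the extra cohomological condition on both sides (fullness on one side, the flatness of $\overline{\Sf{D}}$ in the SDC condition on the other) must be shown to correspond, which is the crux of the argument.

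I would start with the forward transformation. Given a full deformation $(\Rsd,\Sf{X},\overline{\Sf{M}},(\overline{\phi}_1,\dots,\overline{\phi}_r),\iota)$, the sections produce a degeneracy sequence
\[ 0\to\Ss{\Rsd}^r\to\overline{\Sf{M}}\to\overline{\Sf{A}'}\to 0, \]
in which $\overline{\Sf{A}'}$ is flat over $S$ by the local criterion of flatness (fiberwise injectivity holds since $\Sf{M}$ is locally free). Dualizing with respect to $\Ss{\Rsd}$, using $\Exts^{\geq 1}_{\Ss{\Rsd}}(\overline{\Sf{M}},\Ss{\Rsd})=0$ (local freeness), gives a short exact sequence $0\to\overline{\Sf{N}}\to\Ss{\Rsd}^r\to\overline{\Sf{A}}\to 0$ with $\overline{\Sf{A}}:=\Exts^1_{\Ss{\Rsd}}(\overline{\Sf{A}'},\Ss{\Rsd})$; flatness of $\overline{\Sf{A}}$ over $S$ and the natural identification $\overline{\Sf{A}}|_s\cong\Sf{A}$ are obtained by iterated application of Lemma~\ref{lem:extbasechange}, starting from the fiberwise vanishing $\Exts^{\geq 2}_{\Ss{\Rs}}(\Sf{A}',\Ss{\Rs})=0$ (Theorem~\ref{Th:Herzog} on the Gorenstein fiber) and propagating via Lemma~\ref{lem:isoext}. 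The sections $\overline{\psi}_i$ are read off the dualized sequence. The inverse transformation runs the same construction in reverse, producing $\overline{\Sf{M}}:=\overline{\Sf{N}}^{\smvee}$ from an SDC deformation and establishing its flatness and restriction by the same base-change machinery.

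The essential step is the equivalence between the fullness condition on $\overline{\Sf{M}}$ and the SDC condition on $\overline{\Sf{A}}$. Pushing the sequence $0\to\overline{\Sf{N}}\to\Ss{\Rsd}^r\to\overline{\Sf{A}}\to 0$ down by $\Pi$ yields
\[ 0\to\Pi_*\overline{\Sf{N}}\to\Ss{\Sf{X}}^r\to\Pi_*\overline{\Sf{A}}\to R^1\Pi_*\overline{\Sf{N}}\to R^1\Pi_*\Ss{\Rsd}^r\to R^1\Pi_*\overline{\Sf{A}}\to 0, \]
and the vanishing $R^1\Pi_*\overline{\Sf{A}}=0$ (verified fiberwise since $\overline{\Sf{A}}|_s$ meets the exceptional set in finitely many points, and then propagated via Lemma~\ref{lem:h0h1}) together with the flatness of $R^1\Pi_*\Ss{\Rsd}^r$ (Lemma~\ref{lem:h0h1} applied to the structure sheaf, using $\Pi_*\Ss{\Rsd}=\Ss{\Sf{X}}$ by fiberwise normality) shows that the SDC flatness of $\overline{\Sf{D}}$ is equivalent to the flatness of $R^1\Pi_*\overline{\Sf{N}}=R^1\Pi_*\overline{\Sf{M}}^{\smvee}$.

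The main obstacle is therefore to transfer between flatness of $R^1\Pi_*\overline{\Sf{M}}$ (the fullness hypothesis) and flatness of $R^1\Pi_*\overline{\Sf{M}}^{\smvee}$. I would handle this by relativizing the arguments of Lemmas~\ref{lema:ceroggsg} and~\ref{lema:dualM}: the relative dualizing sheaf along $\Pi$ is invertible by fiberwise Gorensteinness, and Proposition~\ref{prop:dualMdeformado} already controls $\Pi_*\overline{\Sf{M}}^{\smvee}$, so relative Serre duality identifies the relevant higher direct images up to a twist by a flat invertible sheaf; a careful bookkeeping along the lines of Section~\ref{sec:cohomology}, combined with Lemma~\ref{lem:h0h1} and the Cohomology and Base Change Theorem, extracts the desired equivalence of flatness statements. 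Once this bridge is in place, mutual inverseness of the two transformations is immediate from the mutual inverseness of the constructions of Theorem~\ref{th:corres} applied fiberwise; naturality under pullbacks follows from base-change compatibility in Lemma~\ref{lem:extbasechange}(1); and the restrictions to $\mathbf{FullDef_{X,\Sf{M}}^{(\phi_1,\dots,\phi_r)}}\leftrightarrow\mathbf{SDCDef_{X,\Sf{A}}^{(\psi_1,\dots,\psi_r)}}$ and $\mathbf{FullDef_{\Sf{M}}^{(\phi_1,\dots,\phi_r)}}\leftrightarrow\mathbf{SDCDef_{\Sf{A}}^{(\psi_1,\dots,\psi_r)}}$ are automatic since the construction modifies neither $\Sf{X}$ nor $\Rsd$.
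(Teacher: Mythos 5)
Your Part 1 (the construction of the direct and inverse transformations at the level of plain deformation functors, via the degeneracy sequence, its $\Ss{\Rsd}$-dual, and Lemmata~\ref{lem:extbasechange} and~\ref{lem:isoext}) matches the paper's Part 1. The problem is in your treatment of the essential step. Your reduction of the SDC condition to flatness of $R^1\Pi_*\overline{\Sf{N}}$ already leans on two unjustified claims -- that $\Pi_*\Ss{\Rsd}=\Ss{\Sf{X}}$ and that $R^1\Pi_*\Ss{\Rsd}$ is flat for a \emph{very weak} simultaneous resolution (the latter is the constancy of $p_g$, which is not a formal consequence of ``fiberwise normality'' and is precisely what distinguishes weak from very weak simultaneous resolution) -- but the real gap is the bridge you then need: the equivalence between flatness of $R^1\Pi_*\overline{\Sf{M}}$ and flatness of $R^1\Pi_*\overline{\Sf{M}}^{\smvee}$. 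Relative Serre duality does not identify these two sheaves ``up to a twist by a flat invertible sheaf'': on each (non-compact) fiber, duality pairs $H^1(\Sf{M}^{\smvee})$ with local cohomology $H^1_E(\Sf{M}\otimes\Cs{\Rs})$, not with $H^1(\Sf{M})$. The actual relation between $\dimc{R^1\pi_*\Sf{M}}$ and $\dimc{R^1\pi_*\Sf{M}^{\smvee}}$ is the content of Theorem~\ref{formuladimensionM}, which occupies all of Section~\ref{sec:cohomology}, requires the resolution to be small with respect to the Gorenstein form (not assumed here), and in any case is only a fiberwise dimension formula -- it does not by itself convert into an equivalence of flatness statements over Artinian bases, which is what a deformation-functor isomorphism requires. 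So the sentence ``a careful bookkeeping along the lines of Section~\ref{sec:cohomology}\dots extracts the desired equivalence'' is where the proof actually has to happen, and nothing in your sketch supplies it.

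The paper avoids this comparison entirely. In the forward direction it proves flatness of $\overline{\Sf{D}}$ by showing $Tor_1^{\Ss{S}}(\overline{\Sf{D}},\Ss{S}/\mathfrak{m}_s)=0$, which reduces to identifying the specialization $\overline{\Sf{C}}|_s\to(\Pi_*\overline{\Sf{A}})|_s$ with the injective map $\Sf{C}\hookrightarrow\pi_*\Sf{A}$; the fullness hypothesis enters only to show, via the relative analogue of diagram~(\ref{diagram:comparacionM}), that $\Exts^1_{\Ss{\Sf{X}}}(\overline{\Sf{C}},\Ss{\Sf{X}})$ is flat, after which a double-$\Exts$ argument over the Gorenstein sheaf $\Ss{\Sf{X}}$ (Theorem~\ref{Th:Herzog} plus Lemmata~\ref{lem:extbasechange} and~\ref{lem:isoext}) recovers $\overline{\Sf{C}}$ and its fiber. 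In the reverse direction, flatness of $\overline{\Sf{D}}$ and of $\Pi_*\overline{\Sf{A}}$ give flatness and correct specialization of $\overline{\Sf{C}}$, and then the same diagram yields the isomorphism $(\Pi_*\overline{\Sf{M}})|_s\cong\pi_*\Sf{M}$, which is Condition (2) of Lemma~\ref{lem:h0h1} and hence equivalent to flatness of $R^1\Pi_*\overline{\Sf{M}}$. If you want to salvage your route, you would have to either prove the $R^1\Pi_*\overline{\Sf{M}}$ versus $R^1\Pi_*\overline{\Sf{M}}^{\smvee}$ flatness transfer directly (which seems at least as hard as the theorem itself), or switch to an argument through $\overline{\Sf{C}}$ as above.
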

\proof
The restriction statements are obvious after finding an isomorphism between $\mathbf{FullDef_{\Rs,X,\Sf{M}}^{(\phi_1,...,\phi_r)}}$ and $\mathbf{SDCDef_{\Rs,X,\Sf{A}}^{(\psi_1,...,\psi_r)}}$. 

The proof of this isomorphism has two parts. In the first we find an isomorphism 
between $\mathbf{Def_{\Rs,X,\Sf{M}}^{(\phi_1,...,\phi_r)}}$ and $\mathbf{Def_{\Rs,X,\Sf{A}}^{(\psi_1,...,\psi_r)}}$. In the second part we prove that, under the defined isomorphism full deformations
correspond to specialty defect constant deformations.

\textsc{Part 1}. The first part runs parallel to the proof of Theorem~\ref{th:dirXdef}, so we skip many details and include only what is needed to define the isomorphism and to set up the notation for the proof of Part 2. 

Let $(S,s)$ be a germ of complex space. Let $(\Rsd,\Sf{X},\overline{\Sf{M}},(\overline{\phi}_1,...,\overline{\phi}_r),\iota)$ be an element of $\mathbf{Def_{\Rs,X,\Sf{M}}^{(\phi_1,...,\phi_r)}}(S,s)$. 

Consider the exact sequence induced by the sections:
\begin{equation}
\label{eq:dirresfam1}
0\to \Ss{\Rsd}^r\to\overline{\Sf{M}}\to (\overline{\Sf{A}})'\to 0.
\end{equation}

The flatness of $\overline{\Sf{M}}$ over $S$, and the fact that the first mapping specializes over $s$ to an injection, 
implies the flatness of $(\overline{\Sf{A}})'$ over $S$, by using the local criterion of flatness.  
The specialization of the sequence to the fibre 
over $s$ is the exact sequence of $\Ss{\Rs}$-modules
\begin{equation}
\label{eq:dirresfam2}
0\to\Ss{\Rs}^r\to\overline{\Sf{M}}|_s\to (\overline{\Sf{A}})'|_s\to 0,
\end{equation}
and $\iota$ induces an isomorphism between this sequence and the exact sequence 
\begin{equation}
\label{eq:dirresfam3}
0\to \Ss{\Rs}^r\to \Sf{M}\to (\Sf{A})'\to 0,
\end{equation}
induced by the sections $(\phi_1,...,\phi_r)$. 

The dual of this last sequence is the sequence
\begin{equation}
\label{eq:dirresfam4}
0\to \Sf{N}\to \Ss{\Rs}^r\to \Sf{A}\to 0,
\end{equation}
where the last morphism of the sequence gives rise to the generators $(\psi_1,...,\psi_r)$ of $\Sf{A}$ as a $\Ss{\Rs}$-module (see the proof 
of Theorem~\ref{th:corres}).

Dualize the sequence (\ref{eq:dirresfam1}) with respect to $\Ss{\Rsd}$ 
and obtain the exact sequence
\begin{equation}
\label{eq:dirresfam5}
0\to \overline{\Sf{N}}\to \Ss{\Rsd}^r\to \overline{\Sf{A}}\to 0,
\end{equation}
where $\overline{\Sf{N}}:=\Homs_{\Ss{\Rsd}}(\overline{\Sf{M}},\Ss{\Rsd})$ and $\overline{\Sf{A}}:=\Exts^1_{\Ss{\Rsd}}((\overline{\Sf{A}})',\Ss{\Rsd})$. The following two assertions are proved like
the corresponding ones in the proof of Theorem~\ref{th:dirXdef}. 
\begin{enumerate}
 \item the $\Ss{\Rsd}$-module $\overline{\Sf{A}}$ is flat over $S$.
 \item The specialization $\overline{\Sf{N}}|_s\to \Ss{X}^r$ of the first morphism of the sequence is isomorphic to $\Sf{N}\to\Ss{X}^r$. 
\end{enumerate}

The second assertion induces an isomorphism from $\Ss{\Rs}^r\to\Sf{A}$ to $\Ss{\Rs}^r\to \overline{\Sf{A}}|_s$. Let $\rho$ denote the isomorphism 
$\Sf{A}\to\overline{\Sf{A}}|_s$. The second morphism of Sequence~(\ref{eq:dirresfam5}) induces a collection of sections 
$(\bar\psi_1,...,\bar\psi_r)$. We have that $(\Rsd,\Sf{X},\overline{\Sf{A}},(\overline{\psi}_1,...,\overline{\psi}_r),\rho)$ is a deformation of $(\Rs,X,\Sf{A},(\psi_1,...,\psi_r))$ over $(S,s)$ by the first assertion. So, we have defined map 
$\mathbf{Def_{\Rs,X,\Sf{M}}^{(\phi_1,...,\phi_r)}}(S,s)\to\mathbf{Def_{\Rs,X,\Sf{A}}^{(\psi_1,...,\psi_r)}}(S,s)$.
In order to have a natural transformation of functors from 
$\mathbf{Def_{\Rs,X,\Sf{M}}^{(\phi_1,...,\phi_r)}}$ to $\mathbf{Def_{\Rs,X,\Sf{A}}^{(\psi_1,...,\psi_r)}}$ we need to show that the transformation
which we have defined commutes with pullbacks. This is analogous to the corresponding statement in the proof of Theorem~\ref{th:dirXdef}.

Now we define the inverse natural transformation from $\mathbf{Def_{\Rs,X,\Sf{A}}^{(\psi_1,...,\psi_r)}}$ to 
$\mathbf{Def_{\Rs,X,\Sf{M}}^{(\phi_1,...,\phi_r)}}$. Let $(\Rsd,\Sf{X},\overline{\Sf{A}},(\overline{\psi}_1,...,\overline{\psi}_r),\rho)$ be a deformation of $(\Rs,X,\Sf{A},(\psi_1,...,\psi_r))$ over $(S,s)$.
Consider the exact sequence induced by the sections:
\begin{equation}
\label{eq:invresfam1}
0\to \overline{\Sf{N}}\to\Ss{\Rsd}^r\to \overline{\Sf{A}}\to 0.
\end{equation}
The flatness of $\overline{\Sf{A}}$ over $S$ implies the flatness of  $\overline{\Sf{N}}$ over $S$. 
The specialization of the sequence to the fibre 
over $s$ is the exact sequence of $\Ss{\Rs}$-modules
\begin{equation}
\label{eq:invresfam2}
0\to \overline{\Sf{N}}|_s\to\Ss{\Rs}^r\to \overline{\Sf{A}}|_s\to 0,
\end{equation}
and $\rho$ induces an isomorphism between this sequence and the exact sequence 
\begin{equation}
\label{eq:invresfam3}
0\to \Sf{N}\to\Ss{X}^r\to \Sf{A}\to 0.
\end{equation}
induced by the generators $(\psi_1,...,\psi_r)$. The dual of this last sequence is the sequence
\begin{equation}
\label{eq:invresfam4}
0\to\Ss{\Rs}^r\to \Sf{M}\to\Exts^1_{\Ss{\Rs}}(\Sf{A},\Ss{\Rs})\to 0,
\end{equation}
where the sections $(\phi_1,...,\phi_r)$ are induced by the first map of the sequence (see the proof of Theorem~\ref{th:corres}).
 
Dualize the sequence (\ref{eq:invresfam1}) with respect to $\Ss{\Rsd}$ 
and obtain the exact sequence
\begin{equation}
\label{eq:invresfam5}
0\to \Ss{\Rsd}^r\to \overline{\Sf{M}}\to \Exts^1_{\Ss{\Rsd}}(\overline{\Sf{A}},\Ss{\Rsd})\to 0,
\end{equation}
where $\overline{\Sf{M}}:=\Homs_{\Ss{\Rsd}}(\overline{\Sf{N}},\Ss{\Rsd})$.

The following two assertions are proved like the corresponding ones in the proof of Theorem~\ref{th:dirXdef}:

\begin{enumerate}
 \item the $\Ss{\Rsd}$-module $\overline{\Sf{M}}$ is flat over $S$.
 \item The specialization $ \Ss{\Rs}^r\to \overline{\Sf{M}}|_s$ of the first morphism of the sequence is isomorphic to $\Ss{\Rs}^r\to\Sf{M}$. 
\end{enumerate}

The second assertion induces an isomorphism from $\Ss{\Rs}^r\to \Sf{M}$ to $\Ss{\Rs}^r\to \overline{\Sf{M}}|_s$. Let $\iota$ denote the isomorphism 
$\Sf{M}\to\overline{\Sf{M}}|_s$. The first morphism of Sequence~(\ref{eq:invresfam5}) induces a collection of sections 
$(\bar\phi_1,...,\bar\phi_r)$. The first assertion shows that $(\Rsd,\Sf{X},\overline{\Sf{M}},(\overline{\phi}_1,...,\overline{\phi}_r),\iota)$ is a deformation of $(\Rs,X,\Sf{M},(\phi_1,...,\phi_r))$ over $(S,s)$.

\textsc{Part 2}. Let $(\Rsd,\Sf{X},\overline{\Sf{M}},(\overline{\phi}_1,...,\overline{\phi}_r),\iota)$ be an element of $\mathbf{FullDef_{\Rs,X,\Sf{M}}^{(\phi_1,...,\phi_r)}}(S,s)$. We have to show that the quintuple $(\Rsd,\Sf{X},\overline{\Sf{A}},(\overline{\psi}_1,...,\overline{\psi}_r),\rho)$ associated to it in Part 1 is a specialty constant deformation. In this part the Gorenstein condition plays an important role.

For this we have to show that the cokernel $\overline{\Sf{D}}$ of the map                     
$\Pi_*\Ss{\Rsd}^r\to \Pi_*\overline{\Sf{A}}$ is flat over $S$. Denote by $\overline{\Sf{C}}$ the image of the same map. 
We have the exact sequences
\begin{equation}
\label{eq:Cbarra}
0\to\Pi_*\overline{\Sf{N}}\to \Ss{\Sf{X}}^r\to \overline{\Sf{C}}\to 0,
\end{equation}
\begin{equation}
\label{eq:Dbarra}
0\to \overline{\Sf{C}}\to\Pi_*\overline{\Sf{A}}\to\overline{\Sf{D}}\to 0.
\end{equation}

Now we specialize Sequence~(\ref{eq:Dbarra}) at $s$. If we denote the maximal ideal of $s$ by $\mathfrak{m}_s$, and use the flatness of 
$\overline{\Sf{A}}$ over $S$, we have the exact sequence
$$0\to Tor_1^{\Ss{S}}(\overline{\Sf{D}},\Ss{S}/\mathfrak{m}_s)\to (\overline{\Sf{C}})|_s\to(\Pi_*\overline{\Sf{A}})|_s\to (\overline{\Sf{D}})|_s\to 0.$$ 
We claim that the second morphism of the sequence can be identified with the injective morphism $\Sf{C}\to\pi_*\Sf{A}$, where $\Sf{C}$ is the $\Ss{X}$-module spanned by $(\psi_1,...,\psi_r)$. If the claim is true we deduce the vanishing of $Tor_1^{\Ss{S}}(\overline{\Sf{D}},\Ss{S}/\mathfrak{m}_s)$. 
This implies the flatness of $\overline{\Sf{D}}$ using the local criterion of flatness. 

In order to prove the claim we produce natural identifications 
$(\Pi_*\overline{\Sf{A}})|_s\cong \pi_*\Sf{A}$ and $(\overline{\Sf{C}})|_s\cong \Sf{C}$, which show that the morphism 
$(\overline{\Sf{C}})|_s\to(\Pi_*\overline{\Sf{A}})|_s$ is injective, yielding the desired vanishing.

For the first identification notice that $\Pi_*\overline{\Sf{A}}$ coincides with $\overline{\Sf{A}}$ with the $\Ss{\Sf{X}}$-module structure obtained by restriction of scalars; similarly $\pi_*\Sf{A}$ coincides with $\Sf{A}$ with the $\Ss{X}$-module structure obtained by restriction of scalars. Then the needed identification is the isomorphism $\rho$ defined above.

The second identification is a bit more involved (at the moment we do not even know the flatness of $\overline{\Sf{C}}$ over $S$). We proceed like in Section~\ref{sec:cohomology} to obtain a diagram similar to~(\ref{diagram:comparacionM}).  
Dualizing sequence~(\ref{eq:Cbarra}) we obtain the sequence   
$$0 \to \Ss{\Sf{X}}^r \to \Homs_{\Ss{\Sf{X}}}(\Pi_*\overline{\Sf{N}},\Ss{\Sf{X}}) \to \Exts_{\Ss{\Sf{X}}}^1 \left(\overline{\Sf{C}}, \Ss{\Sf{X}} \right) \to 0.$$
Applying $\Pi_*$ to Sequence~(\ref{eq:dirresfam1}) and using the isomorphism $(\overline{\Sf{A}})'\cong\Exts^1_{\Ss{\Rsd}}(\overline{\Sf{A}},\Ss{\Rsd})$, which comes from the fact that the natural transformations are inverse to each other, we obtain the exact sequence
$$0 \to \Ss{\Sf{X}}^r \to \Pi_*\overline{\Sf{M}} \to \Pi_* \Exts_{\Ss{\Rsd}}^1 \left(\overline{\Sf{A}}, \Ss{\Rsd} \right) \to R^1 \Pi_* \Ss{\Rsd}^r \to R^1 \Pi_* \overline{\Sf{M}} \to 0.$$
We have the chain of equalities
$$\Homs_{\Ss{\Sf{X}}}(\Pi_*\overline{\Sf{N}},\Ss{\Sf{X}})=\Homs_{\Ss{\Sf{X}}}(\Pi_*\Homs_{\Ss{\Rsd}}(\overline{\Sf{M}},\Ss{\Rsd}),\Ss{\Sf{X}})=$$
$$=\Homs_{\Ss{\Sf{X}}}(\Homs_{\Ss{\Sf{X}}}(\Pi_*\overline{\Sf{M}},\Ss{\Sf{X}}),\Ss{\Sf{X}})=\Pi_*\overline{\Sf{M}}.$$
The first is because $\Homs_{\Ss{\Rsd}}(\overline{\Sf{M}},\Ss{\Rsd}) \cong \overline{\Sf{N}}$, the second is a consequence of Proposition~\ref{prop:dualMdeformado}, and the third is a consequence of  Proposition~\ref{prop:naturaltrans} and the fact that flat deformations of reflexive modules are reflexive.  

The last two exact sequences, together with the previous identifications yields the following commutative diagram:

\begin{equation}\label{diagram:comparacionMbarra}
\begin{tikzpicture}
  \matrix (m)[matrix of math nodes,
    nodes in empty cells,text height=1.5ex, text depth=0.25ex,
    column sep=1.5em,row sep=2em] {
		0 & \Ss{\Sf{X}}^r & \Homs_{\Ss{\Sf{X}}}(\Pi_*\overline{\Sf{N}},\Ss{\Sf{X}}) & \Exts_{\Ss{\Sf{X}}}^1 \left(\overline{\Sf{C}}, \Ss{\Sf{X}} \right) & 0\\
		0 & \Ss{\Sf{X}}^r & \Pi_*\overline{\Sf{M}} & \Pi_* \Exts_{\Ss{\Rsd}}^1 \left(\overline{\Sf{A}}, \Ss{\Rsd} \right) & R^1 \Pi_* \Ss{\Rsd}^r & R^1 \Pi_* \overline{\Sf{M}} & 0\\};
\draw[-stealth] (m-1-1) -- (m-1-2);
\draw[-stealth] (m-1-2) -- (m-1-3);
\draw[-stealth] (m-1-3) edge node [auto] {$h$} (m-1-4);
\draw[-stealth] (m-1-4) -- (m-1-5);
\draw[-stealth] (m-2-1) -- (m-2-2);
\draw[-stealth] (m-2-2) -- (m-2-3);
\draw[-stealth] (m-2-3) edge node [auto] {$h$} (m-2-4);
\draw[-stealth] (m-2-4) -- (m-2-5);
\draw[-stealth] (m-2-5) -- (m-2-6);
\draw[-stealth] (m-2-6) -- (m-2-7);
\draw[-stealth] (m-1-2) edge node [auto] {$Id$} (m-2-2);
\draw[-stealth] (m-1-3) edge node [auto] {$Id$} (m-2-3);
\draw[-stealth] (m-1-4)  edge node [auto] {$\theta$} (m-2-4);
\end{tikzpicture}
\end{equation}
where $\theta$ is the only map making the diagram commutative.

Since we have the isomorphism $\Homs_{\Ss{\Rsd}}(\overline{\Sf{N}},\Ss{\Rsd}) \cong \overline{\Sf{M}}$, dualizing the sequence (\ref{eq:dirresfam5}), and using the exact sequence (\ref{eq:dirresfam1}) we obtain that $\Exts_{\Ss{\Rsd}}^1 \left(\overline{\Sf{A}}, \Ss{\Rsd} \right)$ is isomorphic to $(\overline{\Sf{A}}')$, and, as 
$\Ss{S}$-module, $\Pi_*(\overline{\Sf{A}}')$ is equal to $(\overline{\Sf{A}}')$, which is flat over $S$. Then, 
the exactness of the lower row of the diagram and the flatness of $R^1 \Pi_* \overline{\Sf{M}}$ imply that 
$\Exts_{\Ss{\Sf{X}}}^1 \left(\overline{\Sf{C}}, \Ss{\Sf{X}} \right)$ is flat over $S$.

As a consequence, specializing the first row of diagram~(\ref{diagram:comparacionMbarra}) over $s$ we obtain the exact sequence
$$0 \to \Ss{X}^r \to M \to \left(\Exts_{\Ss{\Sf{X}}}^1 \left(\overline{\Sf{C}}, \Ss{\Sf{X}} \right)\right)|_s \to 0.$$
Comparing with the first row of diagram~(\ref{diagram:comparacionM}) we obtain the isomorphism 
\begin{equation}
\label{eq:espext}
\left(\Exts_{\Ss{\Sf{X}}}^1 \left(\overline{\Sf{C}}, \Ss{\Sf{X}} \right)\right)|_s\cong \Exts_{\Ss{X}}^1(\Sf{C},\Ss{X}).
\end{equation}

We have the chain of isomorphisms:
$$\Exts_{\Ss{\Sf{X}}}^2(\Exts_{\Ss{\Sf{X}}}^1 \left(\overline{\Sf{C}}, \Ss{\Sf{X}} \right), \Ss{X})\cong 
\Exts_{\Ss{X}}^2(\left(\Exts_{\Ss{\Sf{X}}}^1 \left(\overline{\Sf{C}}, \Ss{\Sf{X}} \right)\right)|_{s=0}, \Ss{X})\cong$$ 
$$\cong\Exts_{\Ss{X}}^2(\Exts_{\Ss{X}}^1 \left(\Sf{C}, \Ss{X} \right),\Ss{X})=0.$$
The first isomorphism is due to the flatness of $\Exts_{\Ss{\Sf{X}}}^1 \left(\overline{\Sf{C}}, \Ss{\Sf{X}} \right)$ and Lemma~\ref{lem:isoext},
the second isomorphism is because of Equation~(\ref{eq:espext}), and the vanishing is due to the fact that $\Sf{C}$ is Cohen-Macaulay of dimension $1$, Theorem~\ref{Th:Herzog} and the Gorenstein condition.  

The last vanishing shows, applying Lemma~\ref{lem:extbasechange} (3), (2), and (1) for 
$\Sf{F}=\Exts_{\Ss{\Sf{X}}}^1 \left(\overline{\Sf{C}}, \Ss{\Sf{X}} \right)$, the isomorphism 
\begin{equation}
\label{eq:auxi1}
\Exts_{\Ss{\Sf{X}}}^1(\Exts_{\Ss{\Sf{X}}}^1 \left(\overline{\Sf{C}}, \Ss{\Sf{X}} \right), \Ss{\Sf{X}})|_s\cong
\Exts_{\Ss{\Sf{X}}}^1(\Exts_{\Ss{\Sf{X}}}^1 \left(\overline{\Sf{C}}, \Ss{\Sf{X}} \right), \Ss{X}).
\end{equation}

Dualizing the first row of diagram~(\ref{diagram:comparacionMbarra}) we obtain the exact sequence 
$$0\to  \Homs_{\Ss{\Sf{X}}}(\Homs_{\Ss{\Sf{X}}}(\Pi_*\overline{\Sf{N}},\Ss{\Sf{X}}),\Ss{\Sf{X}})\to\Ss{\Sf{X}}^r\to \Exts_{\Ss{\Sf{X}}}^1(\Exts_{\Ss{\Sf{X}}}^1 \left(\overline{\Sf{C}}, \Ss{\Sf{X}} \right), \Ss{\Sf{X}})\to$$
$$\to \Exts^1_{\Ss{\Sf{X}}}(\Homs_{\Ss{\Sf{X}}}(\Pi_*\overline{\Sf{N}},\Ss{\Sf{X}}),\Ss{\Sf{X}}).$$
We have the chain of equalities 
$$\Homs_{\Ss{\Sf{X}}}(\Homs_{\Ss{\Sf{X}}}(\Pi_*\overline{\Sf{N}},\Ss{\Sf{X}}),\Ss{\Sf{X}})=$$
$$=\Homs_{\Ss{\Sf{X}}}(\Homs_{\Ss{\Sf{X}}}(\Homs_{\Ss{\Sf{X}}}(\Pi_*\overline{\Sf{M}},\Ss{\Sf{X}}),\Ss{\Sf{X}}),\Ss{\Sf{X}})=$$
$$=\Homs_{\Ss{\Sf{X}}}(\Pi_*\overline{\Sf{M}},\Ss{\Sf{X}})=\Pi_*\overline{\Sf{N}}.$$
The first and third equalities are applications of Proposition~\ref{prop:dualMdeformado} and the second is because a triple dual coincides with a single dual. We also have 
$$\Exts^1_{\Ss{\Sf{X}}}(\Homs_{\Ss{\Sf{X}}}(\Pi_*\overline{\Sf{N}},\Ss{\Sf{X}}),\Ss{\Sf{X}})=
\Exts^1_{\Ss{\Sf{X}}}(\Pi_*\overline{\Sf{M}},\Ss{\Sf{X}})=0.$$
The first equality has been shown in the chain of equalities prior to diagram~\eqref{diagram:comparacionMbarra}, and the vanishing follows by an application of Lemmata~\ref{lem:isoext} and~\ref{lem:extbasechange}, the fact that $\Pi_*\overline{\Sf{M}}$ is a flat deformation of a reflexive $\Ss{X}$-module and the Gorenstein condition. After these identifications the last exact sequence becomes:
$$0\to \Pi_*\overline{\Sf{N}}\to\Ss{\Sf{X}}^r\to \Exts_{\Ss{\Sf{X}}}^1(\Exts_{\Ss{\Sf{X}}}^1 \left(\overline{\Sf{C}}, \Ss{\Sf{X}} \right), \Ss{\Sf{X}})\to 0,$$
and this gives, by comparison with Exact Sequence~(\ref{eq:Cbarra}) the isomorphism 
\begin{equation}
\label{eq:nuevoC}
\overline{\Sf{C}}\cong \Exts_{\Ss{\Sf{X}}}^1(\Exts_{\Ss{\Sf{X}}}^1 \left(\overline{\Sf{C}}, \Ss{\Sf{X}} \right), \Ss{\Sf{X}}).
\end{equation}

The following chain of isomorphisms gives the needed identification:
$$(\overline{\Sf{C}})|_{s}\cong 
\Exts_{\Ss{\Sf{X}}}^1(\Exts_{\Ss{\Sf{X}}}^1 \left(\overline{\Sf{C}}, \Ss{\Sf{X}} \right), \Ss{\Sf{X}})|_s\cong
\Exts_{\Ss{\Sf{X}}}^1(\Exts_{\Ss{\Sf{X}}}^1 \left(\overline{\Sf{C}}, \Ss{\Sf{X}} \right), \Ss{X})\cong$$
$$\cong\Exts_{\Ss{X}}^1((\Exts_{\Ss{\Sf{X}}}^1 \left(\overline{\Sf{C}}, \Ss{\Sf{X}} \right))|_s, \Ss{X})\cong
\Exts_{\Ss{X}}^1(\Exts_{\Ss{X}}^1 \left(\Sf{C}, \Ss{X} \right), \Ss{X})\cong
\Sf{C}.$$ 
The first isomorphism is by (\ref{eq:nuevoC}); the second by (\ref{eq:auxi1}); the third by flatness of 
$\Exts_{\Ss{\Sf{X}}}^1 \left(\overline{\Sf{C}}, \Ss{\Sf{X}} \right)$ and Lemma~\ref{lem:isoext}; the fourth by (\ref{eq:espext}); the fifth is 
by Theorem~\ref{Th:Herzog}, using that $\Sf{C}$ is Cohen-Macaulay of dimension $1$. 

We have proven that the pair $(\Rsd,\Sf{X},\overline{\Sf{A}},(\overline{\psi}_1,...,\overline{\psi}_r),\rho)$ is a specialty defect constant
deformation of the pair $(\Rs,X,\Sf{A},(\psi_1,...,\psi_r))$ over $(S,s)$. 

In order to finish Part 2 of the proof we let $(\Rsd,\Sf{X},\overline{\Sf{A}},(\overline{\psi}_1,...,\overline{\psi}_r),\rho)$ be a specialty constant deformation of $(\Rs,X,\Sf{A},(\psi_1,...,\psi_r))$ over $(S,s)$, and consider $(\Rsd,\Sf{X},\overline{\Sf{M}},(\overline{\phi}_1,...,\overline{\phi}_r),\iota)$, the deformation assigned by the isomorphism of functors from $\mathbf{Def_{\Rs,X,\Sf{A}}^{(\psi_1,...,\psi_r)}}$ to 
$\mathbf{Def_{\Rs,X,\Sf{M}}^{(\phi_1,...,\phi_r)}}$. We have to prove that $(\Rsd,\Sf{X},\overline{\Sf{M}},(\overline{\phi}_1,...,\overline{\phi}_r),\iota)$ is an element of $\mathbf{FullDef_{\Rs,X,\Sf{M}}^{(\phi_1,...,\phi_r)}}(S,s)$. For this we need to show 
that $R^1\Pi_*\overline{\Sf{M}}$ is flat over $S$. 

As before denote by $\overline{\Sf{C}}$ the image of $\Pi_*\Ss{\Sf{X}}\to \Pi_*\overline{\Sf{A}}$ and consider the exact sequences~(\ref{eq:Cbarra})
and~(\ref{eq:Dbarra}). Denote by $\Sf{D}$ the image of $\pi_*\Ss{X}\to \pi_*\Sf{A}$, and consider the exact sequences
\begin{equation}
\label{eq:C}
0\to\pi_*\Sf{N}\to \Ss{X}^r\to \Sf{C}\to 0,
\end{equation}
\begin{equation}
\label{eq:D}
0\to \Sf{C}\to\pi_*\Sf{A}\to\Sf{D}\to 0.
\end{equation}

Sequence~(\ref{eq:Dbarra}), and the flatness of $\Pi_*\overline{\Sf{A}}$ and $\overline{\Sf{D}}$ implies the flatness of $\overline{\Sf{C}}$ over $(S,s)$. 
Observe that $\Pi_*\overline{\Sf{A}}$ and $\overline{\Sf{D}}$ specialize over $s$ to $\pi_*\Sf{A}$ and $\Sf{D}$. Consequently, specializing 
Sequence~(\ref{eq:Dbarra}) over $s$ and comparing with Sequence~(\ref{eq:D}) we conclude the isomorphism 
\begin{equation}
\label{eq:Cspbien}
\overline{\Sf{C}}|_s\cong\Sf{C}.
\end{equation}

We have the chain of isomorphisms
$$\Exts^2_{\Ss{\Sf{X}}}(\overline{\Sf{C}},\Ss{X})\cong\Exts^2_{\Ss{X}}(\overline{\Sf{C}}|_s,\Ss{X})\cong \Exts^2_{\Ss{X}}(\Sf{C},\Ss{X})=0.$$

The first isomorphism is by flatness of $\overline{\Sf{C}}$ and Lemma~\ref{lem:isoext}, the second follows from~(\ref{eq:Cspbien}), and the vanishing
by Theorem~\ref{Th:Herzog}, the fact that $\Sf{C}$ is Cohen-Macaulay of dimension $1$ and the Gorenstein condition. Then, by Lemma~\ref{lem:extbasechange} we have the isomorphism 
\begin{equation}
\label{eq:identcruc}
\Exts_{\Ss{\Sf{X}}}^1 \left(\overline{\Sf{C}}, \Ss{\Sf{X}} \right)|_s\cong \Exts_{\Ss{X}}^1 \left(\Sf{C}, \Ss{X} \right).
\end{equation}

Observe also the vanishing $\Homs_{\Ss{\Sf{X}}}(\overline{\Sf{C}},\Ss{X})=0.$
Using the last isomorphism and the vanishing, Lemma~\ref{lem:extbasechange} (2) implies that 
$\Exts_{\Ss{\Sf{X}}}^1 \left(\overline{\Sf{C}}, \Ss{\Sf{X}} \right)$ is flat over $S$. 

Specializing the first row of Diagram~(\ref{diagram:comparacionMbarra}) over $s$, we obtain the exact sequence
$$0\to \Ss{X}^r\to \Pi_*\overline{\Sf{M}}|_s\to \Exts_{\Ss{\Sf{X}}}^1 \left(\overline{\Sf{C}}, \Ss{\Sf{X}} \right)|_s\to 0.$$
Using the identification~(\ref{eq:identcruc}) and comparing with the sequence
$$0\to\Ss{X}^r\to\pi_*\Sf{M}\to  \Exts_{\Ss{X}}^1 \left(\Sf{C}, \Ss{X} \right)       \to 0,$$
obtained by dualizing Sequence~(\ref{eq:C}), we deduce the isomorphism $\Pi_*\overline{\Sf{M}}|_s\cong \pi_*\Sf{M}$. Hence Condition 2 of 
Lemma~\ref{lem:h0h1} holds and this concludes the 
proof.
\endproof

The previous theorem, together with Theorem~\ref{th:corres} gives the following set of corollaries:

\begin{corollary}
Assume that $(\Sf{A},(\psi_1,...,\psi_r))$ is a Cohen-Macaulay $\Ss{\Rs}$-module of dimension $1$ with a collection of generators as a 
$\Ss{\Rs}$-module satisfying the Containment Condition.
Let $(\Rsd,\Sf{X},\overline{\Sf{A}},(\overline{\psi}_1,...,\overline{\psi}_r),\rho)$ be a specialty defect constant deformation of
$(\Sf{A},(\psi_1,...,\psi_r))$ over a germ $(S,s)$.  Then for any $s'\in S$ the Cohen-Macaulay $\Ss{\Rsd_{s'}}$-module with generators 
$(\overline{\Sf{A}}|_{s'},(\overline{\psi}_1|_{s'},...,\overline{\psi}_r|_{s'}))$ satisfies the Containment Condition.  
\end{corollary}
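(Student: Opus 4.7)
The strategy is to lift the given specialty defect constant deformation to a full deformation via Theorem~\ref{th:dirresdef}, exploit the openness of fullness (Proposition~\ref{prop:fullopen}) to obtain fullness fibre by fibre, and then translate fibrewise fullness back into the Containment Condition through Theorem~\ref{th:corres}.

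First, I would apply the correspondence of Theorem~\ref{th:corres} to $(\Sf{A},(\psi_1,...,\psi_r))$, which satisfies the Containment Condition by hypothesis, producing a full $\Ss{\Rs}$-module $\Sf{M}$ with nearly generic sections $(\phi_1,...,\phi_r)$. By the isomorphism of functors of Theorem~\ref{th:dirresdef}, the given specialty defect constant deformation $(\Rsd,\Sf{X},\overline{\Sf{A}},(\overline{\psi}_1,...,\overline{\psi}_r),\rho)$ corresponds to an element $(\Rsd,\Sf{X},\overline{\Sf{M}},(\overline{\phi}_1,...,\overline{\phi}_r),\iota)$ of $\mathbf{FullDef_{\Rs,X,\Sf{M}}^{(\phi_1,...,\phi_r)}}(S,s)$. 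Then by Proposition~\ref{prop:fullopen}, after replacing $(S,s)$ by a sufficiently small Stein representative, the sheaf $\overline{\Sf{M}}|_{s'}$ is a full $\Ss{\Rsd_{s'}}$-module for every $s'\in S$.

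Second, I would check that forming the associated sheaf commutes with restriction to a fibre. In the inverse construction of Theorem~\ref{th:dirresdef} one sets $\overline{\Sf{M}}=\Homs_{\Ss{\Rsd}}(\overline{\Sf{N}},\Ss{\Rsd})$, where $\overline{\Sf{N}}$ is the kernel of the map $\Ss{\Rsd}^r\to\overline{\Sf{A}}$ induced by the sections. Flatness of $\overline{\Sf{A}}$ over $S$ yields on each fibre the exact sequence
\[
0\to\overline{\Sf{N}}|_{s'}\to\Ss{\Rsd_{s'}}^r\to\overline{\Sf{A}}|_{s'}\to 0,
\]
so that $(\overline{\Sf{N}}|_{s'})^{\smvee}$ is by definition the $\Ss{\Rsd_{s'}}$-module assigned to $(\overline{\Sf{A}}|_{s'},(\overline{\psi}_1|_{s'},...,\overline{\psi}_r|_{s'}))$ by the inverse correspondence of Theorem~\ref{th:corres} applied to the resolution $\Pi_{s'}\colon\Rsd_{s'}\to\Sf{X}_{s'}$. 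To identify it with $\overline{\Sf{M}}|_{s'}$ I would invoke Lemmata~\ref{lem:isoext} and~\ref{lem:extbasechange}: since $\overline{\Sf{N}}|_s\cong\Sf{N}$ is locally free on $\Rs$ (it is the dual of the locally free full sheaf $\Sf{M}$), one has $\Exts^i_{\Ss{\Rs}}(\Sf{N},\Ss{\Rs})=0$ for all $i\geq 1$; the standard iteration of the Ext base change lemmas, already used repeatedly inside the proof of Theorem~\ref{th:dirresdef}, then propagates this vanishing throughout the family and delivers the base change isomorphism $\overline{\Sf{M}}|_{s'}\cong\Homs_{\Ss{\Rsd_{s'}}}(\overline{\Sf{N}}|_{s'},\Ss{\Rsd_{s'}})$.

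Combining both steps, $\overline{\Sf{M}}|_{s'}$ is simultaneously a full $\Ss{\Rsd_{s'}}$-module and the image of $(\overline{\Sf{A}}|_{s'},(\overline{\psi}_1|_{s'},...,\overline{\psi}_r|_{s'}))$ under the inverse correspondence of Theorem~\ref{th:corres}; that theorem then forces the pair to satisfy the Containment Condition, as claimed. I expect the base change identification in the second step to be the main technical obstacle, though it calls only for Ext-juggling of exactly the flavour already carried out in the proof of Theorem~\ref{th:dirresdef}, so no genuinely new idea is required.
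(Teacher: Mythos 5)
Your proposal is correct and follows exactly the route the paper intends: the corollary is stated without proof as a consequence of Theorem~\ref{th:dirresdef} (pass to the corresponding full deformation), Proposition~\ref{prop:fullopen} (fullness is open in full deformations, so each fibre $\overline{\Sf{M}}|_{s'}$ is full on a small representative of $(S,s)$), and Theorem~\ref{th:corres} applied fibrewise. Your second step, checking that the inverse correspondence commutes with restriction to fibres via the Ext base-change lemmas, is a worthwhile explicit verification of a point the paper leaves implicit in the functorial isomorphism of Theorem~\ref{th:dirresdef}, but it is not a departure from the paper's argument.
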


\begin{remark}
Assume that $(\Sf{A},(\psi_1,...,\psi_r))$ is a Cohen-Macaulay $\Ss{\Rs}$-module of dimension $1$ with a collection of generators as a 
$\Ss{\Rs}$-module satisfying the Containment Condition. Let $(\Rsd,\Sf{X},\overline{\Sf{A}},(\overline{\psi}_1,...,\overline{\psi}_r),\rho)$ be a specialty defect constant deformation of
$(\Sf{A},(\psi_1,...,\psi_r))$ over a germ $(S,s)$. Applying the correspondence defined in Theorem~\ref{th:dirresdef} we obtain a full deformation
of full $\Ss{\Rs}$-modules whose specialty defect is constant.
\end{remark}

\begin{corollary}
\label{cor:specialtydefectconstant}
The specialty defect is constant in a full deformation of a full sheaf.
\end{corollary}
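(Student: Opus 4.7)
The plan is to deduce this directly from Theorem~\ref{th:dirresdef}, exploiting that in a specialty defect constant deformation the cokernel $\overline{\Sf{D}}$ is by definition flat over the base, while its fiber dimensions compute the specialty defect. Since the claim is local on $S$, it suffices to prove the specialty defect is constant on a neighborhood of an arbitrary point $s \in S$.

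First I would pick $r = \rk(\Sf{M})$ nearly generic sections $(\phi_1, \dots, \phi_r)$ of the fiber $\Sf{M} := \overline{\Sf{M}}|_s$. The definition of a full deformation together with Lemma~\ref{lem:h0h1} guarantees that $\Pi_*\overline{\Sf{M}}$ is flat over $\Ss{S}$ with natural isomorphism $(\Pi_*\overline{\Sf{M}})|_s \cong \pi_*\Sf{M}$, so after shrinking $S$ these sections lift to sections $(\overline{\phi}_1, \dots, \overline{\phi}_r)$ of $\overline{\Sf{M}}$ producing an element of $\mathbf{FullDef_{\Rs,X,\Sf{M}}^{(\phi_1,...,\phi_r)}}(S,s)$. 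By Theorem~\ref{th:dirresdef}, this is carried to an element of $\mathbf{SDCDef_{\Rs,X,\Sf{A}}^{(\psi_1,...,\psi_r)}}(S,s)$, whose defining feature is that the cokernel $\overline{\Sf{D}}$ of $\Pi_*\Ss{\Rsd}^r \to \Pi_*\overline{\Sf{A}}$ is flat over $\Ss{S}$.

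Next, for any $s' \in S$ close to $s$, the restricted sections $(\overline{\phi}_1|_{s'}, \dots, \overline{\phi}_r|_{s'})$ remain nearly generic on the fiber by openness of nearly-genericity in flat families. Applying the fiber version of the construction of Theorem~\ref{th:dirresdef} and using compatibility with base change (which is a consequence of the flatness of $\overline{\Sf{A}}$, $\overline{\Sf{D}}$, and $\Pi_*\overline{\Sf{A}}$ over $S$, and the identifications recorded in the proof of Theorem~\ref{th:dirresdef}), one identifies $\overline{\Sf{D}}|_{s'}$ with the cokernel $\Sf{D}_{s'}$ of $\Sf{C}_{s'} \hookrightarrow (\pi_{s'})_* \Sf{A}_{s'}$ appearing in the cohomological splitting used in the proof of Theorem~\ref{formuladimensionM}. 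That proof explicitly shows $\dim_\CC \Sf{D}_{s'}$ equals the specialty defect $d(\overline{\Sf{M}}|_{s'})$ of $\overline{\Sf{M}}|_{s'}$.

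Finally, since $\overline{\Sf{D}}$ is a finitely supported coherent sheaf flat over $\Ss{S}$, it is locally free over $S$ with constant fiber dimension, giving $d(\overline{\Sf{M}}|_{s'}) = \dim_\CC \overline{\Sf{D}}|_{s'} = \dim_\CC \overline{\Sf{D}}|_s = d(\Sf{M})$ for $s'$ in a neighborhood of $s$. Since $s$ is arbitrary and $(S,s)$ is connected as a germ, the specialty defect is constant on $S$. The main obstacle to making this argument rigorous is precisely this last base-change identification, i.e.\ verifying that the specialty-defect-constant deformation obtained by applying Theorem~\ref{th:dirresdef} globally on $S$ restricts fiberwise to the fiber-level construction of the same theorem; the needed flatness statements for the relevant $\Ext$-sheaves and $\Pi_*$-sheaves are exactly the ones already established during the proof of Theorem~\ref{th:dirresdef} via Lemmata~\ref{lem:isoext} and~\ref{lem:extbasechange}, so the verification should be a straightforward (though bookkeeping-heavy) specialization of that proof.
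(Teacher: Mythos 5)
Your proposal is correct and follows essentially the same route as the paper: the paper's proof also extends generic sections over the deformation, applies Theorem~\ref{th:dirresdef} to land in $\mathbf{SDCDef}$, and then invokes the preceding Remark, whose content is exactly your observation that the flat, finitely supported cokernel $\overline{\Sf{D}}$ has constant fiber dimension equal (via Proposition~\ref{prop:dirressing}) to the specialty defect. You merely spell out the base-change bookkeeping that the paper delegates to that Remark.
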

\proof
Given a full sheaf $\Sf{M}$, let $(\phi_1,...,\phi_r)$ be generic sections. For any deformation $(\overline{\Sf{M}},\iota)$ of $\Sf{M}$ over 
$(S,s)$ let $(\overline{\phi}_1,...,\overline{\phi}_r)$ be an extension of the sections over $(S,s)$. Applying the correspondence of 
Theorem~\ref{th:dirresdef} to $(\overline{\Sf{M}},(\overline{\phi}_1,...,\overline{\phi}_r),\iota)$ we obtain a speciality defect constant 

deformation. Apply the inverse correspondence to get back $(\overline{\Sf{M}},(\overline{\phi}_1,...,\overline{\phi}_r),\iota)$ and use the previous 
Remark.
\endproof

Finally we need to compare the isomorphism between the functors $\mathbf{Def_{X,M}^{(\phi_1,...,\phi_r)}}$ and $\mathbf{Def_{X,\Sf{C}}^{(\psi_1,...,\psi_r)}}$ with the isomorphism between the functors $\mathbf{FullDef_{\Rs,X,\Sf{M}}^{(\phi_1,...,\phi_r)}}$ and $\mathbf{SDCDef_{\Rs,X,\Sf{A}}^{(\psi_1,...,\psi_r)}}$. 

The results we need are the following:

\begin{proposition}
\label{prop:defdirressing}
Let $X$ be a normal Stein surface with Gorenstein singularities, let $(M,(\phi_1,...,\phi_r))$ be a reflexive $\Ss{X}$-module of rank $r$ together with $r$ generic sections, $\pi:\Rs\to X$ be a resolution and $\Sf{M}$ be the associated full $\Ss{\Rs}$-module. 
Let $(\Sf{A},(\psi_1,...,\psi_r))$ be the result of applying the correspondence of Theorem~\ref{th:corres} to $(\Sf{M},(\phi_1,...,\phi_r))$, and $(\Sf{C},(\psi'_1,...,\psi'_r))$ the result of applying the correspondence of Theorem~\ref{th:corrsing} to $(M,(\phi_1,...,\phi_r))$.

Consider a full deformation $(\Rsd,\Sf{X},\overline{\Sf{M}},(\overline{\phi}_1,...,\overline{\phi}_r),\iota)$ of $(\Rs,X,\Sf{M},(\phi_1,...,\phi_r))$ over a base $S$. Let $(\Sf{X},\overline{M},(\overline{\phi}_1,...,\overline{\phi}_r),\iota)$ be the deformation of $(X,M,(\phi_1,...,\phi_r))$ obtained applying $\Pi_*$ (see Proposition~\ref{prop:naturaltrans}). 

Let $(\Rsd,\Sf{X},\overline{\Sf{A}},(\overline{\psi}_1,...,\overline{\psi}_r),\rho)$ be the result of applying the correspondence of Theorem~\ref{th:dirresdef} to $(\Rsd,\Sf{X},\overline{\Sf{M}},(\overline{\phi}_1,...,\overline{\phi}_r),\iota)$, and let 
$(\Sf{X},\overline{\Sf{C}},(\overline{\psi}'_1,...,\overline{\psi}'_r),\rho')$ be the result of applying the correspondence of Theorem~\ref{th:dirXdef} to $(\Sf{X},\overline{M},(\overline{\phi}_1,...,\overline{\phi}_r),\iota)$.

Then there is a natural inclusion $\overline{\Sf{C}}\subset \Pi_*\overline{\Sf{A}}$ which extends the inclusion of $\Sf{C}$ into $\pi_*\Sf{A}$ predicted in Proposition~\ref{prop:dirressing}. Under this inclusion the sections $(\overline{\psi}_1,...,\overline{\psi}_r)$ are identified with $(\overline{\psi}'_1,...,\overline{\psi}'_r)$.   
\end{proposition}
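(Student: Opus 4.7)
The plan is to trace through the two constructions in parallel and identify them via the push-forward. Starting from the full deformation $(\Rsd,\Sf{X},\overline{\Sf{M}},(\overline{\phi}_1,...,\overline{\phi}_r),\iota)$, the correspondence of Theorem~\ref{th:dirresdef} builds $\overline{\Sf{A}}$ from the sections-sequence
\begin{equation*}
0\to \Ss{\Rsd}^r\to \overline{\Sf{M}}\to (\overline{\Sf{A}})'\to 0
\end{equation*}
by dualizing to $0\to \overline{\Sf{N}}\to \Ss{\Rsd}^r\to \overline{\Sf{A}}\to 0$ with $\overline{\Sf{N}}=\overline{\Sf{M}}^{\smvee}$ and $\overline{\Sf{A}}=\Exts^1_{\Ss{\Rsd}}((\overline{\Sf{A}})',\Ss{\Rsd})$, the sections $(\overline{\psi}_1,...,\overline{\psi}_r)$ being read off from the second map. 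In parallel, applying the pushed-down deformation $(\Sf{X},\overline{M},(\overline{\phi}_1,...,\overline{\phi}_r),\iota)$ with $\overline{M}=\Pi_*\overline{\Sf{M}}$ through Theorem~\ref{th:dirXdef} yields $0\to \overline{N}\to \Ss{\Sf{X}}^r\to \overline{\Sf{C}}\to 0$ with $\overline{N}=\overline{M}^{\smvee}$ and $\overline{\Sf{C}}=\Exts^1_{\Ss{\Sf{X}}}((\overline{\Sf{C}})',\Ss{\Sf{X}})$, presenting $\overline{\Sf{C}}$ as the quotient $\Ss{\Sf{X}}^r/\overline{N}$ with the sections $(\overline{\psi}'_1,...,\overline{\psi}'_r)$ induced by the projection.

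Next I would apply $\Pi_*$ to the resolution sequence $0\to \overline{\Sf{N}}\to \Ss{\Rsd}^r\to \overline{\Sf{A}}\to 0$, obtaining
\begin{equation*}
0\to \Pi_*\overline{\Sf{N}}\to \Ss{\Sf{X}}^r\to \Pi_*\overline{\Sf{A}}\to R^1\Pi_*\overline{\Sf{N}}\to\cdots,
\end{equation*}
where the middle arrow is multiplication by the tuple of sections. The crucial identification is
\begin{equation*}
\Pi_*\overline{\Sf{N}}=\Pi_*(\overline{\Sf{M}}^{\smvee})=(\Pi_*\overline{\Sf{M}})^{\smvee}=\overline{M}^{\smvee}=\overline{N},
\end{equation*}
which is precisely Proposition~\ref{prop:dualMdeformado} (here the assumption that $(\Rsd,\Sf{X},\overline{\Sf{M}},\iota)$ is a \emph{full} deformation is essential, since that proposition builds on the flatness of $R^1\Pi_*\overline{\Sf{M}}$ and the Cohomology and Base Change machinery of Lemma~\ref{lem:h0h1}). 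With this identification in hand, the image of $\Ss{\Sf{X}}^r$ in $\Pi_*\overline{\Sf{A}}$ is exactly $\Ss{\Sf{X}}^r/\overline{N}$, which coincides with $\overline{\Sf{C}}$. This produces the desired inclusion $\overline{\Sf{C}}\hookrightarrow \Pi_*\overline{\Sf{A}}$, and by the very construction the sections $(\overline{\psi}'_1,...,\overline{\psi}'_r)$, as generators of the quotient $\Ss{\Sf{X}}^r/\overline{N}$, are identified with $(\overline{\psi}_1,...,\overline{\psi}_r)$ regarded as global sections of $\Pi_*\overline{\Sf{A}}$.

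Finally I would verify compatibility with the inclusion of Proposition~\ref{prop:dirressing}. Specializing the whole picture at $s\in S$, the isomorphism $\rho$ identifies $\overline{\Sf{A}}|_s$ with $\Sf{A}$ and the isomorphism $\rho'$ identifies $\overline{\Sf{C}}|_s$ with $\Sf{C}$; the identification $\Pi_*\overline{\Sf{N}}=\overline{N}$ specializes to $\pi_*\Sf{N}=N$, the morphism $\Ss{\Sf{X}}^r\to\Pi_*\overline{\Sf{A}}$ specializes to $\Ss{X}^r\to\pi_*\Sf{A}$ (using again Lemma~\ref{lem:h0h1} to commute the push-forward with base change to $s$), and therefore the family-level inclusion $\overline{\Sf{C}}\subset \Pi_*\overline{\Sf{A}}$ recovers over $s$ the inclusion $\Sf{C}\subset\pi_*\Sf{A}$ of Proposition~\ref{prop:dirressing}. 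The main technical obstacle is ensuring that the base-change formulas behind Proposition~\ref{prop:dualMdeformado} and Lemma~\ref{lem:h0h1} apply uniformly, not just fibrewise; this is exactly where the full-deformation hypothesis plays its role, guaranteeing the flatness of $R^1\Pi_*\overline{\Sf{M}}$ (and hence of $\Pi_*\overline{\Sf{M}}$) over $S$ and thereby allowing the dualization-pushforward diagram to be assembled globally.
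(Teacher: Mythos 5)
Your proposal is correct and follows exactly the route the paper intends: the paper's own proof is a one-line instruction to adapt the proof of Proposition~\ref{prop:dirressing}, replacing Lemma~\ref{lema:dualM} by Proposition~\ref{prop:dualMdeformado}, which is precisely the push-forward-of-the-dual-sequence argument you carry out in detail.
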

\proof
The proof is a straightforward adaptation of the proof of Proposition~\ref{prop:dirressing} in which one should quote Proposition~\ref{prop:dualMdeformado} instead of Lemma~\ref{lema:dualM}.
\endproof

\begin{proposition}
\label{prop:comparecorrdef}
 Let $\pi:\Rs\to X$ be a resolution of a normal Stein surface with Gorenstein singularities. Let $(\Sf{A},(\psi_1,...,\psi_r))$ be a pair formed by a  rank 1 generically reduced  $1$-dimensional Cohen-Macaulay $\Ss{\Rs}$-module, 
whose support meets the exceptional divisor $E$ in finitely many points, and a set of $r$ global sections spanning $\Sf{A}$ as $\Ss{\Rs}$-module and satisfying the Containment Condition. Let $\Sf{C}$ be the $\Ss{X}$-module spanned by $\psi_1,...,\psi_r$ (then $\Sf{C}$ is  a  rank 1 generically reduced  $1$-dimensional Cohen-Macaulay $\Ss{X}$-module). Let $(\Rsd,\Sf{X},\overline{\Sf{A}},(\overline{\psi}_1,...,\overline{\psi}_r),\rho)$ be a specialty constant deformation of $(\Rs,X,\Sf{A},(\psi_1,...,\psi_r))$, with $\Pi:\Rsd\to\Sf{X}$ the simultaneous resolution. Denote by $\overline{\Sf{C}}$ the $\Ss{\Sf{X}}$-submodule of $\Pi_*\overline{\Sf{A}}$ spanned by $(\overline{\psi}_1,...,\overline{\psi}_r)$. Then $(\Sf{X},\overline{\Sf{C}},(\overline{\psi}_1,...,\overline{\psi}_r),\rho)$ is a deformation of $(X,\Sf{C},(\psi_1,...,\psi_r))$. 

Moreover, let $(\Rsd,\Sf{X},\overline{\Sf{M}},(\overline{\phi}_1,...,\overline{\phi}_r),\rho)$ be the result of applying the correspondence of Theorem~\ref{th:dirresdef} to  $(\Rsd,\Sf{X},\overline{\Sf{A}},(\overline{\psi}_1,...,\overline{\psi}_r),\rho)$ and $(\Sf{X},\overline{M},(\overline{\phi}'_1,...,\overline{\phi}'_r),\rho')$ be the result of applying the correspondence of Theorem~\ref{th:dirXdef} to 
$(\Sf{X},\overline{\Sf{C}},(\overline{\psi}_1,...,\overline{\psi}_r),\rho)$. Then we have the equality of deformations
$$(\Sf{X},\Pi_*\overline{\Sf{M}},(\overline{\phi}_1,...,\overline{\phi}_r),\rho)=(\Sf{X},\overline{M},(\overline{\phi}'_1,...,\overline{\phi}'_r),\rho').$$
\end{proposition}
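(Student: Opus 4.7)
The plan is to proceed in two stages, using the heavy lifting already done in Part 2 of the proof of Theorem~\ref{th:dirresdef}. \textbf{First}, to show that $(\Sf{X},\overline{\Sf{C}},(\overline{\psi}_1,\ldots,\overline{\psi}_r),\rho)$ is a flat deformation of $(X,\Sf{C},(\psi_1,\ldots,\psi_r))$. By Theorem~\ref{th:dirresdef}, the specialty defect constant deformation $(\Rsd,\Sf{X},\overline{\Sf{A}},(\overline{\psi}_1,\ldots,\overline{\psi}_r),\rho)$ corresponds to a full deformation $(\Rsd,\Sf{X},\overline{\Sf{M}},(\overline{\phi}_1,\ldots,\overline{\phi}_r),\iota)$; in particular $R^1\Pi_*\overline{\Sf{M}}$ is flat over $S$. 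By Lemma~\ref{lem:h0h1}, this forces $\Pi_*\overline{\Sf{M}}$ to be flat over $S$ and to commute with base change, giving $(\Pi_*\overline{\Sf{M}})|_s\cong\pi_*\Sf{M}=M$. Proposition~\ref{prop:dualMdeformado} then provides $\Pi_*\overline{\Sf{N}}=(\Pi_*\overline{\Sf{M}})^{\smvee}$, which is therefore a flat deformation of $N=M^{\smvee}$ and also commutes with base change.

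Now I would apply $\Pi_*$ to the defining sequence $0\to\overline{\Sf{N}}\to\Ss{\Rsd}^r\to\overline{\Sf{A}}\to 0$. Since by definition $\overline{\Sf{C}}$ is the image of the induced map $\Ss{\Sf{X}}^r\to\Pi_*\overline{\Sf{A}}$, this truncates to a short exact sequence
$$0\to\Pi_*\overline{\Sf{N}}\to\Ss{\Sf{X}}^r\to\overline{\Sf{C}}\to 0.$$
Flatness of the two leftmost terms forces flatness of $\overline{\Sf{C}}$, and specializing this sequence over $s$ and comparing with the fibre sequence $0\to N\to\Ss{X}^r\to\Sf{C}\to 0$ (from the proof of Theorem~\ref{th:corrsing}) yields the required isomorphism $\overline{\Sf{C}}|_s\cong\Sf{C}$, identifying the sections $(\overline{\psi}_1,\ldots,\overline{\psi}_r)|_s$ with $(\psi_1,\ldots,\psi_r)$.

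\textbf{Second}, to match the two deformations of $M$. By construction in the proof of Theorem~\ref{th:dirXdef}, $\overline{M}$ is produced from the sequence $0\to\overline{N}\to\Ss{\Sf{X}}^r\to\overline{\Sf{C}}\to 0$ via $\overline{M}=\overline{N}^{\smvee}$, with $(\overline{\phi}'_1,\ldots,\overline{\phi}'_r)$ obtained from the dual. Comparing this presentation with the sequence displayed above forces the canonical identification $\overline{N}\cong\Pi_*\overline{\Sf{N}}$ compatible with the injections into $\Ss{\Sf{X}}^r$. Using reflexivity of $\Pi_*\overline{\Sf{M}}$ (which is a flat deformation of the reflexive module $M$, hence reflexive by Lemma~\ref{lema:reflexiveopen} and Theorem~\ref{Th:Herzog}) together with Proposition~\ref{prop:dualMdeformado}, one deduces
$$\overline{M}=\overline{N}^{\smvee}=(\Pi_*\overline{\Sf{N}})^{\smvee}=(\Pi_*\overline{\Sf{M}})^{\smvee\smvee}=\Pi_*\overline{\Sf{M}}.$$
The sections then match tautologically: both $(\overline{\phi}'_1,\ldots,\overline{\phi}'_r)$ and the images under $\Pi_*$ of $(\overline{\phi}_1,\ldots,\overline{\phi}_r)$ are characterized as the images of the canonical generators of $\Ss{\Sf{X}}^r$ under the inclusion $\Ss{\Sf{X}}^r\hookrightarrow\overline{M}$ dual to the presentation of $\overline{\Sf{C}}$.

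The main technical point is the correct handling of flatness and base change for $\Pi_*\overline{\Sf{M}}$ and $\Pi_*\overline{\Sf{N}}$; this is precisely where the specialty defect constant hypothesis enters, via its equivalence (through Theorem~\ref{th:dirresdef}) with the full deformation property, allowing Lemma~\ref{lem:h0h1} to supply the required flatness and base change statements. The remainder is a formal diagram chase orchestrated by the duality identifications of Proposition~\ref{prop:dualMdeformado} and the constructions of Theorems~\ref{th:dirXdef} and~\ref{th:dirresdef}.
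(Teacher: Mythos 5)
Your second stage (the identification $\Pi_*\overline{\Sf{M}}=\overline{M}$ via the pushed-down presentation $0\to\Pi_*\overline{\Sf{N}}\to\Ss{\Sf{X}}^r\to\overline{\Sf{C}}\to 0$, Proposition~\ref{prop:dualMdeformado}, and relative reflexivity of $\Pi_*\overline{\Sf{M}}$) is essentially the paper's argument and is fine. The problem is in your first stage, at the sentence ``Flatness of the two leftmost terms forces flatness of $\overline{\Sf{C}}$.'' That implication is false: in a short exact sequence $0\to A\to B\to C\to 0$ of $\Ss{S}$-modules, flatness of $A$ and $B$ does \emph{not} imply flatness of the cokernel $C$ (consider $0\to\Ss{S}\xrightarrow{\,\cdot t\,}\Ss{S}\to\Ss{S}/(t)\to 0$ over a curve $S$). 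What is true, and what the paper actually uses, is the dual implication: the kernel of a surjection of a flat module onto a flat module is flat. The paper therefore works with the \emph{other} short exact sequence $0\to\overline{\Sf{C}}\to\Pi_*\overline{\Sf{A}}\to\overline{\Sf{D}}\to 0$, where $\Pi_*\overline{\Sf{A}}$ is flat (it is $\overline{\Sf{A}}$ with restricted scalars) and flatness of $\overline{\Sf{D}}$ is precisely the specialty-defect-constant hypothesis; flatness of $\overline{\Sf{C}}$ then follows legitimately. Your route bypasses the one place where the hypothesis on $\overline{\Sf{D}}$ is used directly, and replaces it with an invalid deduction.

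To salvage your route you would need the local criterion of flatness: $\overline{\Sf{C}}$ is flat if and only if the specialization $(\Pi_*\overline{\Sf{N}})|_s\to\Ss{X}^r$ remains injective, which requires identifying $(\Pi_*\overline{\Sf{N}})|_s$ with $N=\pi_*\Sf{N}$. Lemma~\ref{lem:h0h1} does not give this for free for $\overline{\Sf{N}}$ (you would need flatness of $R^1\Pi_*\overline{\Sf{N}}$ or an $\Exts$-base-change argument as in Part~2 of the proof of Theorem~\ref{th:dirresdef}), so the circularity is real: your specialization step presumes the base change you have not yet established. The fix is simply to prove flatness of $\overline{\Sf{C}}$ from the $\overline{\Sf{D}}$-sequence first; after that, the rest of your argument goes through as written.
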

\proof
In order to prove that $(\Sf{X},\overline{\Sf{C}},(\overline{\psi}_1,...,\overline{\psi}_r),\iota)$ is a deformation of $(X,\Sf{C},(\psi_1,...,\psi_r))$ we only need to prove the flatness of $\overline{\Sf{C}}$ over the base $S$ of the deformation. This follows because we have the exact sequence 
$$0\to\overline{\Sf{C}}\to\Pi_*\overline{\Sf{A}}\to\overline{\Sf{D}}\to 0,$$
the module $\overline{\Sf{D}}$ is flat over $S$ because $(\Rsd,\Sf{X},\overline{\Sf{A}},(\overline{\psi}_1,...,\overline{\psi}_r),\rho)$ is a specialty constant deformation and the module $\Pi_*\overline{\Sf{A}}$ is also flat over $S$ because it coincides with $\overline{\Sf{A}}$ as a $\Ss{S}$-module.

The remaining assertion runs parallel to the proof of Proposition~\ref{prop:invressing}: according with the proof of Theorem~\ref{th:dirresdef} and its proof the module $\overline{\Sf{N}}$ in the sequence 
$0\to\overline{\Sf{N}}\to\Ss{\Rsd}^r\to\overline{\Sf{A}}\to 0$
is the dual of $\overline{\Sf{M}}$. 
Pushing down by $\Pi_*$ we obtain
$$0\to\Pi_*\overline{\Sf{N}}\to\Ss{\Sf{X}}^r\to\Pi_*\overline{\Sf{A}}\to R^1\Pi_*\overline{\Sf{N}}\to R^1\Pi_*\Ss{\Rsd}^r\to 0,$$
and the image of the map $\Ss{\Sf{X}}^r\to\Pi_*\overline{\Sf{A}}$ is the $\Ss{\Sf{X}}$-module spanned by $\overline{\psi}_1,...,\overline{\psi}_r$, that is, the module $\overline{\Sf{C}}$.
So we obtain the sequence 
$$0\to\Pi_*\overline{\Sf{N}}\to\Ss{\Sf{X}}^r\to\overline{\Sf{C}}\to 0.$$
According with Theorem~\ref{th:dirXdef} and its proof the module $\Pi_*\overline{\Sf{N}}$ is isomorphic to the dual of $\overline{M}$. By 
Proposition~\ref{prop:dualMdeformado}  the module $\Pi_*\overline{\Sf{N}}$ is isomorphic to the dual of $\Pi_*\overline{\Sf{M}}$. This concludes the proof of the 
equality $\Pi_*\overline{\Sf{M}}=\overline{M}$. 
\endproof

\section{Classification of Gorenstein normal surface singularities in Cohen-Macaulay representation types}
\label{sec:fintamewild}

In this section we prove that non log-canonical Gorenstein surface singularities are of wild Cohen-Macaulay representation type. This confirms a conjecture by Drodz, Greuel and Kashuba~\cite{DrGrKa}, and completes the classification in Cohen-Macaulay representation types of Gorenstein normal surface singularities. The reader may consult~\cite{DrGrKa} and the references quoted there for a full definition of finite, tame and wild Cohen-Macaulay representation types. For our purposes it is enough to know that if a singularity admits essential families of indecomposable reflexive modules then it is of wild Cohen-Macaulay representation type.

\begin{definition}
\label{def:reptype}
Let $(X,x)$ be a normal surface singularity. A family $\overline{M}$ of $\Ss{X}$-modules over a variety $S$ is called {\em essential} 
for any reflexive module $M$, if the set of points $s\in S$ such that $\overline{M}|_s$ is isomorphic to 
$M$ is a $0$-dimensional analytic subvariety.
\end{definition}

\begin{proposition}
\label{prop:unboundedfam}
Let $(X,x)$ be a normal Gorenstein surface singularity. If the minimal canonical order of prime divisors over $x$ (see Definition~\ref{def:mincanord}) is 
unbounded from below, then there are essential families of indecomposable special reflexive $\Ss{X}$-modules with arbitrarily high dimensional base. Thus $(X,x)$ is of wild Cohen-Macaulay representation type. 
\end{proposition}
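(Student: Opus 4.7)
The plan is to exploit the hypothesis to produce, for every integer $N \geq 1$, an $N$-dimensional essential family of pairwise non-isomorphic indecomposable special reflexive $\Ss{X}$-modules; by Definition~\ref{def:reptype} this immediately yields wild Cohen-Macaulay representation type. The starting observation is that, by Remark~\ref{rem:obsmco}~(4), blowing up a divisor at a generic smooth point of it raises the minimal canonical order by exactly one. Hence, given a prime divisor $F$ over $x$ with $ord_F(K_X) = -k$ and $k \geq N$ (which exists by hypothesis), I can perform a tower of $k$ consecutive blowups at generic smooth points $p_1, \ldots, p_k$ of the successive exceptional components to reach a prime divisor $E_k$ with $ord_{E_k}(K_X) = 0$, on a resolution which is small with respect to the Gorenstein form and on which the Gorenstein form has no zero along $E_k$. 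The space of parameters $(p_1, \ldots, p_k)$ is an open subset of a smooth $k$-dimensional variety $T$, and generic points of $T$ determine pairwise inequivalent divisorial valuations, hence pairwise distinct prime divisors $E_k(t)$, $t \in T$, each of the type admissible in Corollary~\ref{Cor:finalprincipal}.

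By that corollary, each $E_k(t)$ corresponds to a unique isomorphism class $[M_t]$ of indecomposable special reflexive $\Ss{X}$-modules, with distinct $t$ producing non-isomorphic modules. To upgrade $\{M_t\}_{t \in T}$ to a genuine flat family I would imitate the constructive direction of the proof of Theorem~\ref{Teo:final}: I choose, in an algebraic way over $T$, a smooth curvette $D_t$ on the resolution $\tilde{X}(t)$ meeting $E_k(t)$ transversely at a smooth point, and set $C_t := \pi_*(D_t)$, an irreducible germ of reduced curve at $(X,x)$. The family $\{C_t\}$ depends algebraically on $t$ and is combinatorially equisingular, hence has constant $\delta$-invariant, so the push-forwards $n_*\Ss{\tilde{C_t}}$ assemble into a flat family $\overline{\Sf{C}}$ of generically reduced one-dimensional Cohen-Macaulay $\Ss{X}$-modules over a Zariski-open subset of $T$. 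Picking a common minimal system of $r$ generators and applying the isomorphism of deformation functors in Theorem~\ref{th:dirXdef} produces a flat family of reflexive $\Ss{X}$-modules $\overline{M}$ on $T$; Proposition~\ref{prop:Aeslanormalizacion} shows each fibre is special, Proposition~\ref{prop:decompesp} together with the irreducibility of $C_t$ gives indecomposability, and tracing through the correspondence identifies $\overline{M}|_t$ with $M_t$.

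Since the assignment $t \mapsto [\overline{M}|_t]$ has finite fibres by Corollary~\ref{Cor:finalprincipal}, the family $\overline{M}$ is essential in the sense of Definition~\ref{def:reptype}, of base dimension $k \geq N$. Letting $F$ vary so that $k$ tends to infinity produces essential families of arbitrarily large dimension, so $(X,x)$ is of wild Cohen-Macaulay representation type. The main technical obstacle I foresee is the verification that $\{C_t\}$ can indeed be organized as an algebraic equisingular family on a Zariski-open subset of $T$ in such a way that $\overline{\Sf{C}}$ is flat; this boils down to establishing the constancy on $T$ of the set of orders $\mathfrak{C}_t$ and of the minimal number of generators of $n_*\Ss{\tilde{C_t}}$, both of which are combinatorial invariants of the fixed sequence of blowup types and should follow from the standard theory of topological triviality for equisingular families of plane-like curve germs.
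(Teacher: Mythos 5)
Your proposal is correct and follows essentially the same route as the paper: parametrize length-$k$ towers of free infinitely near points above a divisor of minimal canonical order $-k$, attach to each tower a transverse curvette, run the degeneracy-module correspondence (Theorem~\ref{th:dirXdef}) in family to get a flat family of special indecomposable reflexive modules, and deduce pairwise non-isomorphism from the classification by divisorial valuations. The technical point you flag (flatness of $\overline{\Sf{C}}$ and constancy of the number of generators) is handled in the paper exactly as you suggest, by semicontinuity after shrinking to a Zariski-open subset of the parameter space, together with the vanishing of $R^1$ of the structure sheaf of the family of curvettes.
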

\proof
Let $\pi:\Rs\to X$ be a resolution which contains an irreducible component $F$ of the exceptional divisor $E$ of $\pi$ such that the minimal 
canonical order at $F$ equals $-d$. We are going to construct a $d$-dimensional essential family of indecomposable special reflexive $\Ss{X}$-modules. 

Let $V$ be the set of sequences $(x_1,...,x_d)$ of infinitely near points to $x$ having the following inductive properties: the point
$x_1$ is a smooth point of the exceptional divisor $E$ belonging to $F$. For $2\leq i\leq d-1$ let $\pi_{i-1}^{x_1,...,x_{i-1}}:\Rs^{i-1}\to X$ be the 
composition of the blow-ups of $\Rs$ at $x_1,...,x_{i-i}$, and the map $\pi$, let $E^{i-1}$ be the exceptional divisor of 
$\pi_{i-1}^{x_1,...,x_{i-1}}$ and $F^{i-1}$ be the exceptional divisor of the blow up at $x_{i-1}$; the point $x_i$ is a smooth point of the exceptional
divisor $E^{i-1}$ belonging to $F^{i-1}$. According with the methods of~\cite{Bo}, Chapter 3, there is a variety $V$ parametrizing such sequences of 
infinitely near points, and a universal sequence of proper birational maps
$$Z^{d-1}\stackrel{\Pi^{d-1}}{\longrightarrow} Z^{d-1}\to...\to Z^{1}\stackrel{\Pi^{1}}{\longrightarrow} \Rs\times V\stackrel{\Pi^{0}}{\longrightarrow} X\times V,$$
so that $\Pi^0$ is the map $(\pi,Id_V)$ and for any $(x_1,...,x_d)\in V$ and any $k\leq d-1$, the fibre morphism 
$$\Pi^0_{(x_1,...,x_d)}\comp...\comp\Pi^{k}_{(x_1,...,x_d)}:Z^k_{(x_1,...,x_d)}\to X\times\{(x_1,...,x_d)\},$$
is equal to $\pi\comp\pi^{x_1}....\comp\pi_{k}^{x_1,...,x_{k}}$. We denote the composition 
$\Pi^0\comp...\comp\Pi^{d-1}$ by $\rho$. 

Let $\overline{D}\subset Z^{d-1}$ be a divisor such that its fibre over $(x_1,...,x_d)\in V$ is a smooth curvette meeting the exceptional divisor 
$E^{d-1}$ transversely through $x_d$. It is clear that $\overline{D}$ exists at least over a Zariski open subset of $V$. Let $r(x_1,...,x_d)$ be the minimal
number of generators of $(\rho_*\Ss{\overline{D}})_{(x_1,...,x_d)}$ as a $\Ss{X}$-module. It is easy to prove that the function $r(x_1,...,x_d)$ is 
upper semi-continuous in $V$. By shrinking $V$ to a Zariski open subset we may assume the existence of sections 
$(\overline{\psi}_1,....,\overline{\psi}_r)$ of
$\rho_*\Ss{\overline{D}}$ that specialized over each point $(x_1,...,x_d)\in V$ gives a minimal set of generators of 
$(\rho_*\Ss{\overline{D}})_{(x_1,...,x_d)}$. 

Applying the correspondence of Theorem~\ref{th:dirXdef} to $((\rho_*\Ss{\overline{D}})_{(x_1,...,x_d)},(\overline{\psi}_1,....,\overline{\psi}_r))$ 
we obtain pair $(\overline{M},(\overline{\phi}_1,...,\overline{\phi}_r))$, where $\overline{M}$ is a family of reflexive $\Ss{X}$-modules. 
Since $R^1\rho_*\Ss{\overline{D}}$ vanishes, Lemma~\ref{lem:h0h1} implies the equality 
$(\rho_*\Ss{\overline{D}})_{(x_1,...,x_d)}=(\rho_{(x_1,...,x_d)})_*\Ss{\overline{D}|_{(x_1,...,x_d)}}$ for any $(x_1,...,x_d)\in V$. Consequently
the module $M|_{(x_1,...,x_d)}$ is the result of applying the correspodence of Theorem~\ref{th:corrsing} to the pair 
$((\rho_{(x_1,...,x_d)})_*\Ss{\overline{D}|_{(x_1,...,x_d)}},(\overline{\psi}_1|_{(x_1,...,x_d)},....,\overline{\psi}_r|_{(x_1,...,x_d)}))$. Then, having chosen $\overline{D}$ generic, by
Proposition~\ref{prop:Aeslanormalizacion}, Remark~\ref{rem:canonicalcondset}  and Proposition~\ref{prop:decompesp} we conclude that $M|_{(x_1,...,x_d)}$ is a special indecomposable module. 

The dimension of $V$ equals $d$, since each of the infinitely near points is free. Therefore, in order to finish the proof we only have to show that 
the family is essential. We claim that $(\rho_{(x_1,...,x_d)})_*$ is the minimal resolution adapted to $M|_{(x_1,...,x_d)}$. If the claim is true
the modules $M|_{(x_1,...,x_d)}$ are pairwise non-isomorphic by Theorem~\ref{Teo:final} and we are done. 

The claim follows from Proposition~\ref{prop:minadapnumchar}, noticing the facts that 
$$cond\left( \mathfrak{K}_{((\rho_{(x_1,...,x_d)})_*\Ss{\overline{D}|_{(x_1,...,x_d)}},(\overline{\psi}_1|_{(x_1,...,x_d)},....,\overline{\psi}_r|_{(x_1,...,x_d)}))} \right)=0,$$
and that the minimal canonical order of $F^{i}$ vanishes if and only if $i=d$.
\endproof

\begin{theorem}
\label{th:reptype}
A Gorenstein surface singularity is of finite Cohen-Macaulay representation type if and only if it is a rational double point. Gorenstein surface singularities 
of tame Cohen-Macaulay representation type are precisely the log-canonical ones. The remaining Gorenstein surface singularities are of wild Cohen-Macaulay representation type.
\end{theorem}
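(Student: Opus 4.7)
The theorem combines three assertions; I will handle the finite and tame cases by quoting existing results and concentrate on the wild case, which is the new contribution enabled by Proposition~\ref{prop:unboundedfam}. The three representation types correspond to the exhaustive trichotomy: rational double point, log-canonical but non-RDP, and non-log-canonical.

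For the finite case, rational double points have finite Cohen-Macaulay representation type by the classical McKay correspondence of Artin--Verdier \cite{AV}. Conversely, any normal surface singularity of finite Cohen-Macaulay representation type is known to be a quotient singularity (by the work of Auslander and Esnault--Kn\"orrer), and in the Gorenstein case the quotient group must sit in $\mathrm{SL}(2,\CC)$, giving exactly the rational double points. For the tame case I quote directly the theorem of Drozd, Greuel and Kashuba \cite{DrGrKa} that log-canonical surface singularities are of tame representation type; in the Gorenstein setting, the log-canonical singularities that are not rational double points are the simple elliptic and cusp singularities, all of which are therefore tame.

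The new contribution, and the heart of the argument, is that every non-log-canonical normal Gorenstein surface singularity $(X,x)$ is of wild Cohen-Macaulay representation type. By Proposition~\ref{prop:unboundedfam} it suffices to exhibit prime divisors $F$ over $x$ with $\mathrm{ord}_F(K_X)$ arbitrarily negative. By Remark~\ref{rem:obsmco}~(2), in the Gorenstein case $\mathrm{ord}_F(K_X)$ equals the order at $F$ of the Gorenstein form, so the discrepancy of $F$ is $\mathrm{ord}_F(K_X)$. Non-log-canonicity therefore provides a prime divisor $F_0$ with $\mathrm{ord}_{F_0}(K_X) \leq -2$. I will construct the required sequence by iterated blow-ups. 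First, blow up a smooth point of $F_0$ at any resolution where $F_0$ appears; by Remark~\ref{rem:obsmco}~(4), this produces a divisor $F_1$ adjacent to $F_0$ with $\mathrm{ord}_{F_1}(K_X) = \mathrm{ord}_{F_0}(K_X) + 1 \leq -1$. Then, iteratively, blow up the intersection point of $F_0$ and $F_n$; by the formula derived in the proof of Lemma~\ref{lema:formagorenstein}, namely $\mathrm{ord}_{E_p}(\Omega) = 1 + \sum_i \mathrm{ord}_{E_i}(\Omega)\,\mathrm{mult}_p(E_i)$, the new divisor $F_{n+1}$ satisfies
\begin{equation*}
\mathrm{ord}_{F_{n+1}}(K_X) = \mathrm{ord}_{F_0}(K_X) + \mathrm{ord}_{F_n}(K_X) + 1 \leq \mathrm{ord}_{F_n}(K_X) - 1.
\end{equation*}
Hence $\mathrm{ord}_{F_n}(K_X) \to -\infty$, so Proposition~\ref{prop:unboundedfam} yields essential families of indecomposable special reflexive modules of arbitrarily large dimension, and wildness.

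The main obstacle is the verification of the iterative step: at each stage the newly produced divisor $F_{n+1}$ must remain adjacent to $F_0$ in order for the formula to keep applying. This is automatic from the construction, since we blow up precisely at a point lying on $F_0$. Combining the three cases exhausts all normal Gorenstein surface singularities under a mutually exclusive trichotomy that matches the finite/tame/wild dichotomy for Cohen-Macaulay representation type, completing the classification and confirming the conjecture of Drozd, Greuel and Kashuba for the Gorenstein class.
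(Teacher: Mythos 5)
Your proposal is correct, and its overall architecture coincides with the paper's: the finite and tame cases are settled by quoting \cite{Es} (finite type $\Leftrightarrow$ quotient singularity, hence rational double point in the Gorenstein case) and \cite{DrGrKa} (log-canonical $\Rightarrow$ tame), and the wild case is reduced to Proposition~\ref{prop:unboundedfam} by showing that the minimal canonical order of divisors over $x$ is unbounded from below.

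Where you genuinely diverge is in how that unboundedness is extracted from non-log-canonicity. The paper invokes the classification in Example~3.27 of \cite{Ko}: at the \emph{minimal} resolution either some coefficient $q_i$ of the Gorenstein form is $<-1$, or else all $q_i=-1$ but the exceptional divisor has a singular point of Milnor number at least $3$; in the second case one must first blow up such a point to produce a divisor of order $\le 1-3=-2$ before iterating, and the paper is terse about the subsequent iteration. You instead work directly from the definition of log canonical: a non-log-canonical Gorenstein singularity admits, by definition and integrality of discrepancies, \emph{some} divisor $F_0$ over $x$ (not necessarily appearing on the minimal resolution) with $\mathrm{ord}_{F_0}(K_X)\le -2$, and then you drive the order to $-\infty$ by the explicit recursion $\mathrm{ord}_{F_{n+1}}(K_X)=1+\mathrm{ord}_{F_0}(K_X)+\mathrm{ord}_{F_n}(K_X)\le \mathrm{ord}_{F_n}(K_X)-1$ coming from the formula in the proof of Lemma~\ref{lema:formagorenstein} applied at the transverse point $F_0\cap F_n$. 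This buys you a self-contained argument that avoids the case analysis via Kollár's classification, at the mild cost of using that log canonicity is tested against all divisorial valuations centred at $x$ rather than only against the minimal resolution; your iteration is also a welcome explicit substitute for the paper's unproved claim that "it is possible to obtain resolutions \dots with arbitrarily negative coefficients." Both arguments verify the same hypothesis of Proposition~\ref{prop:unboundedfam}, so the conclusions agree.
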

\proof
In~\cite{Es} it is proved that a normal surface singularity has finitely many indecomposable reflexive modules if and only if it is a quotient
singularity. This implies that the Gorenstein surface singularities of finite Cohen-Macaulay representation type are exactly the rational double points.
In~\cite{DrGrKa} it is proved that log-canonical surface singularities are of tame Cohen-Macaulay representation type. By the classification of Example 3.27 
of~\cite{Ko}, if $X$ is a Gorenstein
singularity which is not log-canonical, and $\sum q_i E_i$ is the divisor associated with its Gorenstein form at the minimal resolution, then 
either there is a $q_i<-1$, or the exceptional divisor have singularities of Milnor number at least $3$. In this case it is possible to obtain
resolutions which are small with respect to the canonical cicle with arbitrarily negative coefficients for the divisor of the Gorenstein form. 
Proposition~\ref{prop:unboundedfam} shows now that non log-canonical singularities are of wild Cohen-Macaulay representation type. 
\endproof

\section{Lifting deformations}
\label{sec:liftingdefs}
Let $X$ be a normal Stein surface. Let $\pi:\Rs\to X$ a resolution with exceptional divisor $E$.
Let $M$ be a reflexive $\Ss{X}$-module of rank $r$ and $\Sf{M}$ be the associated full $\Ss{\Rs}$-module. We study when deformations of $M$ lift to full deformations of $\Sf{M}$. 

\begin{definition}
\label{def:liftlocus}

Let $X$ be a normal Stein surface. Let $\Sf{X}$ be a deformation of $X$ over a base $(S,s)$. Let $\Pi:\Rsd\to \Sf{X}$ be a very weak simultaneous resolution with exceptional divisor $\Sf{E}$. The {\em strict transform} $\hat{A}$ of a subscheme $A\subset \Sf{X}$ is the scheme theoretic closure of $(\Pi|_{\Rsd\setminus \Sf{E}})^{-1}(A\setminus\Pi(\Sf{E})$ in $\Rsd$. A subscheme $A\subset \Sf{X}$ {\em lifs to} $\Rsd$ if the strict transform $\hat{A}$ to $\Rsd$ is finite over $A$. 

Let $\Sf{C}$ be a rank $1$ generically reduced $1$-dimensional Cohen-Macaulay $\Ss{X}$-module, $(\psi_1,...,\psi_r)$ be a system of generators. A deformation
$(\Sf{X},\overline{\Sf{C}},(\psi_1,...,\psi_r),\iota)$ of $(\Sf{C},(\psi_1,...,\psi_r))$ {\em lifs in a specialty defect constant way to} $\Rsd$ if 
\begin{itemize}
 \item there is a rank $1$ generically reduced $1$-dimensional Cohen-Macaulay $\Ss{\Rs}$-module $\Sf{A}$ meeting the exceptional divisor in finitely many points, and a set of generators $(\psi'_1,...,\psi'_r)$ as a $\Ss{\Rs}$ module such that the $\Ss{X}$-submodule of $\pi_*\Sf{A}$ generated by the sections $(\psi'_1,...,\psi'_r)$ is isomorphic to $\Sf{C}$. Then, by abuse of notation we denote the sections $(\psi'_1,...,\psi'_r)$ by $(\psi_1,...,\psi_r)$.
 \item There is a specialty defect constant deformation 
 $(\Rsd,\Sf{X},\overline{\Sf{A}},(\overline{\psi}_1,...,\overline{\psi}_r),\iota')$ of $(\Sf{A},(\psi_1,...,\psi_r))$ whose image by $\Pi_*$ equals $(\Sf{X},\overline{\Sf{C}},(\psi_1,...,\psi_r),\iota)$.
\end{itemize} 
\end{definition}

\begin{remark}
\label{rem:liftstrictcommuta}
In the setting of the previous definition, it is clear that $A$ lifts to $\Rsd$ if and only if the fibre over $s$ of the strict transform of $A$ coincides with the strict transform of the fibre of $A$ over $s$. 
\end{remark}

\begin{lemma}
\label{lem:curvecriterionliftability}
Let $X$ be a normal Stein surface. Let $\Sf{X}$ be a deformation of $X$ over a reduced base $(S,s)$. Let $\Pi:\Rsd\to \Sf{X}$ be a very weak simultaneous resolution with exceptional divisor $\Sf{E}$. Let $A\subset \Sf{X}$ be a closed subscheme such that the fibre $A_s$ of $A$ over $s$ is of dimension $1$, and such that the Zariski open subset $A\cap (\Sf{X}\setminus\Pi(\Sf{E}))$ is flat over $S$. Then $A$ is liftable if and only if for any for any arc $\gamma:Spec(\mathbb{C}[[t]])\to (S,s)$ the subscheme 
$A\times_S Spec(\mathbb{C}[[t]])\subset \Sf{X}\times_{S} Spec(\mathbb{C}[[t]])$ is liftable for the very weak simultaneous resolution obtained by pullback.
\end{lemma}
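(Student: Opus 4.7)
The plan is to use Remark~\ref{rem:liftstrictcommuta} to reformulate liftability: $A$ lifts if and only if the fibre $\hat A_s$ of the strict transform $\hat A\subseteq \Rsd$ over $s$ equals the strict transform of $A_s$ in $\Rsd_s$. I will argue each implication separately, the second by contrapositive.

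The forward direction is essentially formal. Given an arc $\gamma$, the base change $\hat A\times_S\mathrm{Spec}(\mathbb{C}[[t]])$ is a closed subscheme of $\Rsd_\gamma$ containing $\Pi_\gamma^{-1}(A_\gamma\cap U_\gamma)$, which by flatness of $A\cap U$ over $S$ equals $\Pi^{-1}(A\cap U)\times_S\mathrm{Spec}(\mathbb{C}[[t]])$; by the universal property of the strict transform we obtain $\hat{A_\gamma}\subseteq \hat A\times_S\mathrm{Spec}(\mathbb{C}[[t]])$. Restricting to the closed fibre of the arc gives $\hat{A_\gamma}|_0\subseteq \hat A_s$, which by hypothesis equals the strict transform of $A_s$. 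Since $\hat{A_\gamma}|_0$ always contains the strict transform of $A_s$, equality holds and $A_\gamma$ lifts.

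For the backward direction, suppose $A$ does not lift, and pick $p\in\hat A_s$ outside the strict transform of $A_s$. Because $\hat A_s\cap\tilde U_s=\Pi_s^{-1}(A_s\cap U_s)$ is contained in the strict transform of $A_s$, the point $p$ lies in the exceptional locus $\Sf{E}_s$. Let $V$ be an irreducible component of $\hat A$ through $p$. Since $\hat A$ is the closure of $\Pi^{-1}(A\cap U)\subseteq\tilde U$, we have $V\not\subseteq\Sf{E}$. Moreover $V$ cannot lie entirely in the fibre $\Rsd_s$: otherwise $V$ would equal the closure in $\Rsd_s$ of its non-empty intersection $V\cap\tilde U_s\subseteq\Pi_s^{-1}(A_s\cap U_s)$, forcing $V$ into the strict transform of $A_s$ and contradicting $p\in V$. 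Hence both $V\cap\Sf{E}$ and $V_s$ are proper closed analytic subvarieties of the irreducible $V$. Choose a smooth curve germ on $V$ at $p$ avoiding $V_s\cup(V\cap\Sf{E})$ generically, and parametrize it to obtain an arc $\delta:\mathrm{Spec}(\mathbb{C}[[t]])\to V$ with $\delta(\text{closed})=p$ and $\delta(\text{generic})\in V\setminus(V_s\cup\Sf{E})$. Composing with the structural projection $\Rsd\to S$ yields an arc $\gamma:\mathrm{Spec}(\mathbb{C}[[t]])\to (S,s)$, non-constant because $\delta(\text{generic})\notin V_s$. Viewed as a morphism into the fibre product, $\delta$ becomes a section $\tilde\delta:\mathrm{Spec}(\mathbb{C}[[t]])\to\Rsd_\gamma$ whose generic point sits in $V\setminus\Sf{E}\subseteq\Pi^{-1}(A\cap U)$, so $\tilde\delta$ factors through $\hat{A_\gamma}$. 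Then $p=\tilde\delta(\text{closed})$ belongs to the closed fibre of $\hat{A_\gamma}$ but lies outside the strict transform of $A_s$, witnessing non-liftability of $A_\gamma$ and yielding the desired contradiction.

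The main obstacle is the twin argument in the third paragraph: showing $V\not\subseteq \Rsd_s$, which crucially uses the flatness of $A\cap U$ together with the closure definition of $\hat A$, and then producing a curve germ on $V$ at $p$ satisfying all three constraints simultaneously (closed point at $p$, generic point escaping $\Sf{E}$, and non-constant projection to $S$). The latter reduces to the elementary fact that on an irreducible analytic/algebraic variety of positive dimension, any proper closed subvariety can be avoided generically by a curve germ through any prescribed closed point.
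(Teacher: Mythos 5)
Your proof is correct and follows essentially the same route as the paper's: reduce to an irreducible piece of the strict transform, pick a point of $\hat{A}|_s$ outside the strict transform of $A_s$, and apply the Curve Selection Lemma to produce an arc through that point whose generic point lies in the good (non-exceptional, non-central-fibre) locus, then project to $S$. Your version is in fact slightly more explicit at the last step, where you justify that the section $\tilde\delta$ factors through $\hat{A_\gamma}$ and hence witnesses non-liftability, a point the paper leaves implicit.
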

\proof
If $A$ is liftable, then, by the previous Remark it is obvious that for any arc $\gamma:Spec(\mathbb{C}[[t]])\to (S,s)$ the subscheme 
$A\times_S Spec(\mathbb{C}[[t]])\subset \Sf{X}\times_{S} Spec(\mathbb{C}[[t]])$ is liftable.

It is clear that a subscheme $A$ is liftable if and only if each of the irreducible components of the corresponding reduced subscheme $A^{red}$ are liftable. Hence we may assume $A$ to be reduced and irreducible.

Conversely, assume that $A$ is not liftable. Then, by the previous Remark the strict transform of the fibre $A_s$ to $\Rsd_s$ is strictly contained in the fibre $\hat{A}|_s$ over $s$ of the strict transform of $A$ to $\Rsd$. Let $a\in\hat{A}|_s$ a point not contained in the 
strict transform of $A_s$. On the other hand, there is a Zariski open subset $W$ of $S$ such that for any $s'\in W$ the fibre $\hat{A}|_{s'}$ coincides with the strict transform of the fibre over $s'$. 

Since $A$ is irreducible, so it is its strict transform $\hat{A}$. Then the strict transform $\hat{A}|_W$ is Zariski-dense in $\hat{A}$, and consequently $a$ is at the closure of $\hat{A}|_W$. By Curve Selection Lemma there exists an arc $\hat{\gamma}:Spec(\mathbb{C}[[t]])\to\hat{A}$ such that $\hat{\gamma}(0)=a$ and such that the generic point of $Spec(\mathbb{C}[[t]])$ is mapped to $\hat{A}|_W$. Let $\gamma:Spec(\mathbb{C}[[t]])\to S$ be the composition of $\hat{\gamma}$ with $\Pi$ and the projection to $S$. Then $\gamma$ is an arc so that $A\times_S Spec(\mathbb{C}[[t]])\subset \Sf{X}\times_{S} Spec(\mathbb{C}[[t]])$ is not liftable for the very weak simultaneous resolution obtained by pullback.
\endproof

\begin{proposition}
\label{prop:necessarylifting}
Let $X$ be a normal Stein surface with Gorenstein singularities. Let $\Sf{X}$ be a deformation of $X$ over a reduced base $(S,s)$. Let $\Pi:\Rsd\to \Sf{X}$ be a very weak simultaneous resolution with exceptional divisor $\Sf{E}$. Denote by $\pi:\Rs\to X$ the fibre of $\Pi$ over $s$. Let $M$ be a reflexive $\Ss{X}$-module of rank $r$. Let $(\Sf{X},\overline{M},\iota)$ be a deformation of $M$ over $(S,s)$. Let $\Sf{M}$ be the full $\Ss{\Rs}$-module associated to $M$. 
The first 3 of the following conditions are equivalent and imply the fourth and fifth.
\begin{enumerate}
 \item There is a deformation $(\Rsd,\Sf{X},(\overline{\Sf{M}}),\iota')$ of $\Sf{M}$ which transforms under $\Pi_*$ to $(\Sf{X},\overline{M},\iota)$.
 \item For any collection $(\phi_1,....,\phi_r)$ of nearly generic global sections of $\Sf{M}$ and any extension 
 $(\overline{\phi}_1,...,\overline{\phi}_r)$ as sections of $\overline{M}$, the deformation $(\Sf{X},\Sf{C},(\psi_1,...,\psi_r),\rho)$ obtained applying the correspondence of Theorem~\ref{th:dirXdef} to $(\Sf{X},\overline{M},(\overline{\phi}_1,...,\overline{\phi}_r),\iota)$ lifts in a specialty defect constant way to $\Rsd$.
 \item There exists a collection $(\phi_1,....,\phi_r)$ of nearly generic global sections of $\Sf{M}$ and an extension 
 $(\overline{\phi}_1,...,\overline{\phi}_r)$ as sections of $\overline{M}$, such that the deformation $(\Sf{X},\Sf{C},(\psi_1,...,\psi_r),\rho)$ obtained applying the correspondence of Theorem~\ref{th:dirXdef} to $(\Sf{X},\overline{M},(\overline{\phi}_1,...,\overline{\phi}_r),\iota)$ lifts in a specialty defect constant way to $\Rsd$.
 \item For any collection $(\phi_1,....,\phi_r)$ of nearly generic global sections of $\Sf{M}$ and any extension $(\overline{\phi}_1,...,\overline{\phi}_r)$ as sections of $\overline{M}$, the support of the degeneracy module of $(\overline{M},(\overline{\phi}_1,...,\overline{\phi}_r))$ is liftable.
 \item There exists a collection $(\phi_1,....,\phi_r)$ of nearly generic global sections of $\Sf{M}$ and an extension 
 $(\overline{\phi}_1,...,\overline{\phi}_r)$ as sections of $\overline{M}$, such that the support of the degeneracy module of 
 $(\overline{M},(\overline{\phi}_1,...,\overline{\phi}_r))$ is liftable.
\end{enumerate}
\end{proposition}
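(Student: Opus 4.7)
The plan is to establish the chain $(1) \Leftrightarrow (2) \Leftrightarrow (3)$ using the deformation-functor isomorphisms we have built up, and then derive $(4)$ and $(5)$ from the geometry of the supports of the modules produced by these correspondences.

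The equivalences among $(1)$, $(2)$, $(3)$ will be organized as follows. The implication $(2) \Rightarrow (3)$ is trivial. For $(3) \Rightarrow (1)$, I start from a specialty defect constant lift $(\Rsd,\Sf{X},\overline{\Sf{A}},(\overline{\psi}_1,\ldots,\overline{\psi}_r),\iota')$ of the Cohen-Macaulay deformation $(\Sf{X},\overline{\Sf{C}},(\psi_1,\ldots,\psi_r),\rho)$ that comes from $(\Sf{X},\overline{M},(\overline{\phi}_1,\ldots,\overline{\phi}_r),\iota)$ via Theorem~\ref{th:dirXdef}. Applying the inverse direction of Theorem~\ref{th:dirresdef} to $\overline{\Sf{A}}$ produces a full deformation $(\Rsd,\Sf{X},\overline{\Sf{M}},(\overline{\phi}_1,\ldots,\overline{\phi}_r),\iota')$ of $(\Rs,X,\Sf{M})$. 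By Proposition~\ref{prop:comparecorrdef}, the push-down $\Pi_{*}\overline{\Sf{M}}$ coincides with the result of applying the inverse of Theorem~\ref{th:dirXdef} to $(\Sf{X},\overline{\Sf{C}},(\psi_1,\ldots,\psi_r),\rho)$. Since the two correspondences of Theorems~\ref{th:corrsing} and~\ref{th:dirXdef} are mutually inverse, this is exactly $(\Sf{X},\overline{M},(\overline{\phi}_1,\ldots,\overline{\phi}_r),\iota)$, and we have proved $(1)$. For $(1) \Rightarrow (2)$, I take the full deformation $(\Rsd,\Sf{X},\overline{\Sf{M}},\iota')$, extend any given choice of nearly generic sections to sections $(\overline{\phi}_1,\ldots,\overline{\phi}_r)$ of $\overline{\Sf{M}}$, and apply Theorem~\ref{th:dirresdef} to obtain a specialty defect constant deformation $\overline{\Sf{A}}$ of $\Sf{A}$. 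Proposition~\ref{prop:defdirressing} identifies the $\Ss{\Sf{X}}$-submodule of $\Pi_{*}\overline{\Sf{A}}$ generated by the $(\overline{\psi}_1,\ldots,\overline{\psi}_r)$ with the image of $\overline{M}$ under the correspondence of Theorem~\ref{th:dirXdef}, matching the required sections, and thus exhibiting the desired lift in the sense of Definition~\ref{def:liftlocus}.

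To conclude with $(2) \Rightarrow (4)$ and $(3) \Rightarrow (5)$, I analyze supports. By definition, the degeneracy module $\overline{\Sf{C}}'$ sits in the exact sequence $0 \to \Ss{\Sf{X}}^r \to \overline{M} \to \overline{\Sf{C}}' \to 0$, and the module $\overline{\Sf{C}} = \Exts^1_{\Ss{\Sf{X}}}(\overline{\Sf{C}}',\Ss{\Sf{X}})$ produced by Theorem~\ref{th:dirXdef} has the same support as $\overline{\Sf{C}}'$ (annihilators match via $\omega$-biduality). Given a lift $\overline{\Sf{A}}$ of $\overline{\Sf{C}}$, Definition~\ref{def:enhanceddef2} requires that the support of $\overline{\Sf{A}}$ is a $1$-dimensional rank $1$ generically reduced Cohen-Macaulay module whose support meets the exceptional divisor $\Sf{E}$ of $\Pi$ in finitely many points. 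Over $\Rsd \setminus \Sf{E}$ the map $\Pi$ is an isomorphism, and the sections $(\overline{\psi}_1,\ldots,\overline{\psi}_r)$ generate $\overline{\Sf{A}}$ there in the same way they generate $\overline{\Sf{C}}$ on $\Sf{X}\setminus\Pi(\Sf{E})$; hence the two supports are identified away from the exceptional locus. Since every $1$-dimensional component of the support of $\overline{\Sf{A}}$ must dominate a $1$-dimensional component of the support of $\overline{\Sf{C}}$ (no component can be contained in $\Sf{E}$ by the finiteness hypothesis), the support of $\overline{\Sf{A}}$ equals the strict transform of the support of $\overline{\Sf{C}}$ and is finite over it. This is the liftability condition of Definition~\ref{def:liftlocus}, yielding $(4)$ and $(5)$.

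The main obstacle I anticipate is purely bookkeeping: making sure that the two natural transformations from Proposition~\ref{prop:defdirressing} and Proposition~\ref{prop:comparecorrdef} intertwine correctly the isomorphisms of Theorems~\ref{th:dirXdef} and~\ref{th:dirresdef}, so that the composition around the square of functors returns $\overline{M}$ up to the prescribed isomorphism $\iota$ (and not merely up to an abstract isomorphism of modules). Once this compatibility is recorded once and for all, each of the two implications $(3) \Rightarrow (1)$ and $(1) \Rightarrow (2)$ reduces to a diagram chase, and the support analysis in the final paragraph is essentially formal given the structural description of rank $1$ generically reduced Cohen-Macaulay modules from Proposition~\ref{prop:genredCM}.
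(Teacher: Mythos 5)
Your proposal is correct and follows essentially the same route as the paper: the equivalences $(1)\Leftrightarrow(2)\Leftrightarrow(3)$ are obtained by passing back and forth through the functor isomorphism of Theorem~\ref{th:dirresdef} (with the compatibilities of Propositions~\ref{prop:defdirressing} and~\ref{prop:comparecorrdef}, which you invoke more explicitly than the paper does), and $(4)$, $(5)$ follow by identifying the support of $\overline{\Sf{A}}$ with the strict transform of the support of the degeneracy module and using fibrewise finiteness of its intersection with the exceptional divisor. No gaps.
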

\proof
Suppose Condition $(1)$ holds. Consider a collection $(\phi_1,....,\phi_r)$ of nearly generic global sections of $\Sf{M}$ and an extension 
 $(\overline{\phi}_1,...,\overline{\phi}_r)$ as sections of $\overline{M}$. Applying the correspondence of Theorem~\ref{th:dirresdef} to 
 $(\Rsd,\Sf{X},\overline{\Sf{M}},(\overline{\phi}_1,...,\overline{\phi}_r),\iota')$ we obtain the desired lifting. This proves Condition $(2)$. 
 
Condition $(2)$ implies Condition $(3)$ trivially. 
 
If Condition $(3)$ holds let $(\Rsd,\Sf{X},\Sf{A},(\overline{\psi}_1,...,\overline{\psi}_r),\rho)$ be the specialty defect constant lifting. Applying 
the correspondence of Theorem~\ref{th:dirresdef} to it we obtain a deformation $(\Rsd,\Sf{X},\overline{\Sf{M}},(\overline{\phi}_1,...,\overline{\phi}_r),\iota')$.
The cuadruple $(\Rsd,\Sf{X},\overline{\Sf{M}}),\iota')$ obtained by forgetting the sections is the deformation that we need to obtain to show that Condition $(1)$ 
holds.

If Condition $(2)$ holds let $(\Rsd,\Sf{X},\Sf{A},(\overline{\psi}_1,...,\overline{\psi}_r),\rho)$ be the specialty defect constant lifting. The support of $\Sf{A}$ is the strict transform of the support of the degeneracy module of $(\overline{M},(\overline{\phi}_1,...,\overline{\phi}_r))$. Since for any $s'\in S$ the support of $\Sf{A}|_{s'}$ meets the exceptional divisor at finitely many points, Condition $(4)$ holds.

Condition $(4)$ implies condition $(5)$ obviously.
\endproof

In the next example we use the non-liftability of the support of the degeneracy module to prove that a deformation of a reflexive module does not lift to a full deformation.

 \begin{example}
 \label{ex:nonlifting}
 Let $X=V(xz-y^2)\subset\mathbb{C}^3$. Let $S=Spec(\mathbb{C}[[s]])$. Define $\alpha:Spec(\mathbb{C}[[t,s]])\to X\times S$ by
 $(t,s)\to (t^2,t^3+st,(t^2+s)^2)$. Define $\Sf{C}:=\alpha_*\mathbb{C}[[t,s]]$, and let $(\overline{\psi}_1,...,\overline{\psi}_r)$ be a system 
 of generators of $\Sf{C}$ as $\Ss{X\times S}$-module. The correspondence of Theorem~\ref{th:dirresdef} defines a deformation $\overline{M}$ of 
 reflexive modules which does not lift to a full deformation of full sheaves, since the support of the degeneracy module is not liftable.
 \end{example}

Here we show an example where the support of the degeneracy module is liftable, but there is no full deformation.

\begin{example}
\label{ex:liftsnotlifts}
Let $X=V(x^3+y^3+z^3)$. This is a minimally elliptic singularity as studied in~\cite{Ka}. The blowing up at the origin $\pi:\Rs\to X$ produces its minimal resolution. Its exceptional divisor is a smooth elliptic curve $E$. Let $\tilde{C}$ be a smooth curvette embedded in 
$\Rs$, which meets $E$ at a single point $p$ with intersection multiplicity equal to $2$. Let $t$ be a uniformizing parameter for the germ $(\tilde{C},p)$. The curve $C:=\pi(\tilde{C})$ has an ordinary cusp singularity at the origin of $\CC^3$ (that is $C=Spec\CC[[t^2,t^3]]$). Let $S:=Spec\CC[[s]]$. We have the isomorphism $\Ss{\tilde{C}\times S}\cong\CC[[t,s]]$; this endows $\CC[[t,s]]$ with structures of $\Ss{X\times S}$-module and of $\Ss{\Rs\times S}$-module. Denote by $\Sf{C}$ the $\Ss{X\times S}$-submodule of $\CC[[t,s]]$ spanned by $(s+t,t^2)$. We let $\Sf{D}$ be its cokernel as $\Ss{X\times S}$ module. It is easy to check that 
$\Sf{D}$ is $\CC[[s]]$-flat, and hence $\Ss{C}$ is $\CC[[s]]$-flat as well. Apply the correspondence of Theorem~\ref{th:dirXdef} to
$(\Sf{C},(s+t,t^2))$ and obtain $(\overline{M},(\overline{\phi}_1,\overline{\phi}_2))$, where $\overline{M}$ is a family of reflexive 
$\Ss{X}$-modules.

The support of $\Sf{C}$ equals $C\times S$, and its strict transform equals $\tilde{C}\times S$. So $C\times S$ lifts. On the other hand, if there is a family $\overline{\Sf{M}}$ of full $\Rs$-modules lifting $\overline{M}$, then applying the correspondence of Theorem~\ref{th:dirresdef} to $(\overline{\Sf{M}},(\overline{\phi}_1,\overline{\phi}_2))$ we would obtain $(\overline{\Sf{A}},(s+t,t^2))$,
where $\overline{\Sf{A}}$ is the $\Ss{\tilde{C}\times S}$ module spanned by $(s+t,t^2)$. Since this module is not Cohen-Macaulay of dimension $2$ (it is not a free module over $\CC[[t,s]]$), it can not be a flat family over $\CC[[s]]$ of $1$-dimensional Cohen-Macaulay $\Ss{\Rs}$-modules. This is a contradiction which implies that the there is no family of full $\Rs$-modules lifting $\overline{M}$.

Another way of proving the non-existence of lifting is observing that the specialty defect of the full $\Ss{\Rs}$-module lifting $\overline{M}|_0$ is zero, that the specialty defect of the full $\Ss{\Rs}$-module lifting $\overline{M}|_{s}$ is not zero if $s\neq 0$, and using Corollary~\ref{cor:specialtydefectconstant}.
\end{example}

\begin{proposition}
\label{prop:genericlifting}
Let $X$ be a normal Stein surface with Gorenstein singularities. Let $\Sf{X}$ be a deformation of $X$ over a reduced base $(S,s)$. Let $\Pi:\Rsd\to \Sf{X}$ be a very weak simultaneous resolution. Let $M$ be a reflexive $\Ss{X}$-module and $(\Sf{X},\overline{M},\iota)$ be a deformation of $(X,M)$ over $(S,s)$. Then there exists a dense Zariski open subset of $S$ where the deformation lifts as a full family of $\Ss{\Rsd}$-modules.
\end{proposition}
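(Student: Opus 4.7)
The plan is to invoke Proposition~\ref{prop:necessarylifting}: it is enough to produce, on a dense Zariski open $U \subset S$, a specialty defect constant lift to $\Rsd|_U$ of the data associated to a choice of nearly generic sections through Theorem~\ref{th:dirXdef}. After stratifying $S$ into irreducible components and taking the union of the dense opens obtained on each, we may assume $S$ is irreducible reduced with generic point $\eta$.

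First I would choose sections $(\overline{\phi}_1,\dots,\overline{\phi}_r)$ of $\overline{M}$ specializing over $s$ to nearly generic sections of $M$, and apply Theorem~\ref{th:dirXdef} to obtain a deformation $(\Sf{X},\overline{\Sf{C}},(\overline{\psi}_1,\dots,\overline{\psi}_r),\rho)$ of the associated data. Next I would work at the generic fiber: the pair $(\overline{\Sf{C}}_\eta,(\overline{\psi}_i|_\eta))$ is a rank $1$, generically reduced, $1$-dimensional Cohen--Macaulay sheaf on the normal Gorenstein surface $\Sf{X}_\eta$ together with a generating system. The construction in the proof of Theorem~\ref{th:corres} is algebraic in nature (it uses only Ext functors and the Grothendieck duality already invoked in Theorem~\ref{th:dirresdef}), so it applies over $\kappa(\eta)$ to yield a pair $(\Sf{A}_\eta,(\psi_i|_\eta))$ on $\Rsd_\eta$ satisfying the Containment Condition, together with a canonical identification of $\overline{\Sf{C}}_\eta$ with the $\Ss{\Sf{X}_\eta}$-submodule of $(\Pi_\eta)_*\Sf{A}_\eta$ generated by the sections. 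Standard spreading-out arguments then extend $\Sf{A}_\eta$, its generators, and this identification to a coherent sheaf $\overline{\Sf{A}}$ on $\Rsd|_{U_0}$, to sections $(\overline{\psi}'_i)$, and to a submodule relation $\overline{\Sf{C}}|_{U_0} \subset \Pi_*\overline{\Sf{A}}$, over a non-empty Zariski open $U_0 \subset S$.

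I would then apply generic flatness successively to shrink $U_0$ to an open $U$ on which: $\overline{\Sf{A}}$, $\Pi_*\overline{\Sf{A}}$, and the cokernel $\overline{\Sf{D}}$ of $\Pi_*\Ss{\Rsd|_U}^r \to \Pi_*\overline{\Sf{A}}$ are all flat over $U$; each fiber $\overline{\Sf{A}}|_{s'}$ is rank $1$, generically reduced, $1$-dimensional Cohen--Macaulay, with support meeting the exceptional divisor of $\Rsd_{s'}$ in finitely many points; the image of $\Pi_*\Ss{\Rsd|_U}^r \to \Pi_*\overline{\Sf{A}}$ equals $\overline{\Sf{C}}|_U$; and the extended sections $(\overline{\psi}'_i|_U)$ agree with the original $(\overline{\psi}_i|_U)$. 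These conditions make the quintuple $(\Rsd|_U, \Sf{X}|_U, \overline{\Sf{A}}, (\overline{\psi}_i|_U),\rho|_U)$ a specialty defect constant lift, in the sense of Definition~\ref{def:liftlocus}, of $(\Sf{X}|_U,\overline{\Sf{C}}|_U,(\overline{\psi}_i|_U),\rho|_U)$. The implication (3)$\Rightarrow$(1) of Proposition~\ref{prop:necessarylifting} then produces the sought full lift of $(\Sf{X}|_U,\overline{M}|_U,\iota|_U)$.

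The most delicate point is the compatibility condition that the image of $\Pi_*\Ss{\Rsd|_U}^r \to \Pi_*\overline{\Sf{A}}$ equals $\overline{\Sf{C}}|_U$ globally on $U$, not merely at $\eta$. This is a constructible condition that holds at the generic point, hence on a non-empty Zariski open, by upper semicontinuity of quotient dimensions combined with the $S$-flatness of both source and target. All other requirements reduce to standard generic flatness results, so the intersection remains a non-empty Zariski open subset of $S$.
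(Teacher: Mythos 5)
Your strategy is genuinely different from the paper's, but as written it has a real gap at its central step. Everything hinges on producing, over a dense open $U$, a family $\overline{\Sf{A}}$ on $\Rsd|_U$ whose closed fibres realize the fibrewise correspondence of Theorem~\ref{th:corres} and satisfy the Containment Condition. You obtain this by ``working at the generic fibre over $\kappa(\eta)$ and spreading out,'' but the paper's framework is complex-analytic: $\Sf{X}_\eta$ is not a normal Stein surface over $\CC$, and none of the ingredients of Theorem~\ref{th:corres} (Serre/Grothendieck duality on a resolution, the Containment Condition, Grauert--Riemenschneider) have been established over the residue field of a generic point. Moreover Theorem~\ref{th:corres} cannot be applied to the pair $(\overline{\Sf{C}}_\eta,(\overline{\psi}_i|_\eta))$, which lives on $\Sf{X}_\eta$, not on $\Rsd_\eta$; to reach $\Sf{A}_\eta$ you must first pass through the full sheaf $((\Pi_\eta)^*\overline{M}_\eta)^{\smvee\smvee}$. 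If you reformulate the argument at general \emph{closed} fibres, the ``standard spreading-out'' step becomes: exhibit a globally defined family whose fibres agree with the fibrewise constructions. The only reasonable candidate is built from $\overline{\Sf{M}}:=(\Pi^*\overline{M})^{\smvee\smvee}$ and its degeneracy module, and proving that $\overline{\Sf{M}}|_{s'}$ coincides with the full sheaf of $\overline{M}|_{s'}$ for general $s'$ is precisely the content that your proposal leaves unproved. A smaller point: the dense open $U$ need not contain $s$, so Proposition~\ref{prop:necessarylifting}, which concerns deformations over the germ $(S,s)$ of the fixed module $M=\overline{M}|_s$, does not literally apply; what you need on $U$ is the inverse construction of Theorem~\ref{th:dirresdef} performed in family, which is what the proof of $(3)\Rightarrow(1)$ does but is not the statement you cite.

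For comparison, the paper's proof avoids the correspondence machinery entirely: it sets $\overline{\Sf{N}}:=(\Pi^*\overline{M})^{\smvee}$ and $\overline{\Sf{M}}:=(\Pi^*\overline{M})^{\smvee\smvee}$, uses generic flatness over the reduced base to make $\Pi^*\overline{M}$, $\overline{\Sf{N}}$, $\Exts^1_{\Ss{\Rsd}}(\Pi^*\overline{M},\Ss{\Rsd})$, $\Exts^2_{\Ss{\Rsd}}(\Pi^*\overline{M},\Ss{\Rsd})$ and $R^1\Pi_*\overline{\Sf{M}}$ flat over a dense open $U$, and then applies the Ext base-change lemmas (Lemmas~\ref{lem:extbasechange} and~\ref{lem:isoext}) together with Auslander--Buchsbaum to identify $\overline{\Sf{M}}|_{s'}$ with the full sheaf of $\overline{M}|_{s'}$ for every $s'\in U$; flatness of $R^1\Pi_*\overline{\Sf{M}}|_U$ then makes it a full family, and Lemma~\ref{lem:h0h1} plus Nakayama gives $\Pi_*\overline{\Sf{M}}|_U\cong\overline{M}|_U$. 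If you want to keep your route, the fix is to replace the generic-fibre step by exactly this kind of base-change analysis, at which point the detour through $\overline{\Sf{C}}$ and $\overline{\Sf{A}}$ becomes unnecessary.
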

 \proof
Denote by 
\begin{align*}
\overline{\Sf{N}}&:= \left( \Pi^* \overline{M} \right)^{\smvee},\\
\overline{\Sf{M}}&:= \left( \Pi^* \overline{M} \right)^{\smvee \smvee},
\end{align*}
and also denote by $\Sf{M}_{s'}:=\left( \Pi^*\overline{M}|_{s'}\right)^{\smvee \smvee}$ the full sheaf associated to $\overline{M}|_{s'}$ and $\Sf{N}_{s'}:=\left( \Pi^*\overline{M}|_{s'}\right)^{\smvee}= \left( \Sf{M}_{'s} \right)^{\smvee}$ for any $s'\in S$.

By genericity of flatness over a reduced base, there exists a Zariski dense open subset $U\subset S$ such that the following two properties hold:
\begin{enumerate}
\item The sheaves $\Pi^* \overline{M}$, $\overline{\Sf{N}}$, $\Exts_{\Ss{\Rsd}}^1\left(\Pi^* \overline{M}, \Ss{\Rsd} \right)$ and $\Exts_{\Ss{\Rsd}}^2\left(\Pi^* \overline{M}, \Ss{\Rsd} \right)$ are flat over $U$.
\item The sheaf $R^1 \Pi_* \overline{\Sf{M}}$ is flat over $U$.
\end{enumerate}

For any $s'\in S$ we know that $\Rsd|_{s'}$ is a smooth surface of dimension two, hence the Auslander-Buchsbaum Formula~\cite[Theorem~1.3.3]{BrHe} implies that $\Exts_{\Ss{\Rsd}|_s}^j\left(\Pi^* \overline{M}|_{s'}, \Ss{\Rsd}|_{s'}\right)=0$ for any $j\geq 3$.

The last vanishing, Assertion~(1) of the previous list and a repeated application of Lemma~\ref{lem:extbasechange} and Lemma~\ref{lem:isoext} to the sheaf $\Pi^* \overline{M}$ implies the isomorphism $\overline{\Sf{N}}|_{s'} \cong \Sf{N}_{s'}$ for any $s' \in U$. So we have that $\overline{\Sf{N}}|_U$ is locally free, and dualizing we obtain that $\overline{\Sf{M}}|_U$ is flat over $U$ (locally free on $\Rsd$) and the isomorphism $\overline{\Sf{M}}|_{s'} \cong \Sf{M}_{s'}$ for any $s' \in U$.
Finally Assertion (2) of the previous list implies that $(\Rsd|_U, \Sf{X}|_U, \overline{\Sf{M}}|_U)$ is a family of full modules over $U$. 

In order to finish the proof we need to verify that the natural morphism of coherent $\Ss{\Sf{X}}$-modules 
$$\overline{M}|_U\to (\Pi|_U)_* \overline{\Sf{M}}|_U$$
obtained as the composition
$$\overline{M}|_U\to (\Pi|_U)_*(\Pi|_U)^*\overline{M}|_U\to (\Pi|_U)_*((\Pi|_U)^*\overline{M}|_U)^{\smvee \smvee}=(\Pi|_U)_* \overline{\Sf{M}}|_U$$
is an isomorphism.

By Lemma~\ref{lem:h0h1} the sheaf $(\Pi|_U)_* \overline{\Sf{M}}|_U$ is flat over $U$. Using this and Nakayama's Lemma we are reduced to prove that and for any $s'\in U$ we have the specialization 
$$\overline{M}|_{s'}\to ((\Pi|_U)_* \overline{\Sf{M}}|_U)|_{s'}$$
is an isomorphism. Lemma~\ref{lem:h0h1} implies that this morphism coincides with
$$\overline{M}|_{s'}\to (\Pi|_{s'})_* (\overline{\Sf{M}}|_{s'}),$$
but the second sheaf in the module has been proved to be isomorphic to $(\Pi|_{s'})_* \Sf{M}_{s'}$, where $\Sf{M}_{s'}$ is the full
$\Ss{\Rsd_{s'}}$-module associated with $\overline{M}|_{s'}$. Then the morphism is an isomorphism as needed. 
\endproof

\subsection{Sufficient conditions for liftability to full deformations}
\label{sec:suffcondlift}

In this section we show sufficient conditions ensuring the liftability of a family of reflexive sheaves to a full family on a very weak simultaneous resolution. The result we prove probably is not the best that one can hope for, but is more than sufficient for the applications we have in mind. In particular, at some point we simplify things by working on a normal surface singularity rather than on a normal Stein surface.

\begin{definition}
\label{def:deltaconstant}
Let $X$ be a normal Stein surface. Let $\Sf{X}$ be a deformation of $X$ over a reduced base $(S,s)$. Let $M$ be a reflexive $\Ss{X}$-module of rank $r$. A deformation $(\Sf{X},\overline{M},\iota)$ of $(X,M)$ over a reduced base $(S,s)$ is said to be {\em simultaneously normalizable} if the degeneracy locus $\overline{C}$ of $\overline{M}$ for a generic system of $r$ sections admits a simultaneous normalization over $S$. That is, there exists a smooth family of curves $\overline{D}$ over $S$, and a morphism $n:\overline{D}\to\overline{C}$ such that for any 
$s'\in S$ the restriction $n|_{\overline{D}_{s'}}\colon \overline{D}_{s'}\to\overline{C}_{s'}^{red}$ is the normalization.
\end{definition}

\begin{remark}
\label{rem:simnor}
\begin{enumerate}
 \item The existence of simultaneous normalization of $\overline{C}$ in the previous definition does not depend on the choice of the system of generic sections. 
 \item The reader may consult~\cite{GreNor} and~\cite{Ko0} for recent accounts on simultaneous normalization.
 \item There is a way to define a functor of simultaneously normalizable deformations of reflexive modules, considering also non-reduced bases $(S,s)$. The definition has some subtlety since the family of supports $\overline{C}$ need not be a flat family of reduced curves. Since the applications presented in this paper do not need such a definition we avoid it.
 \item If at the previous definition we assume $X$ to be a normal surface singularity (that is, a germ), we may assume that the normalization of $\overline{C}_s$ is a disjoint union of discs $\coprod_i D_i$. As a consequence if the family is simultaneously normalizable, then the predicted simultaneous normalization $\overline{D}$ is equal to $\coprod_i D_i\times S$.
\end{enumerate}
\end{remark}

\begin{theorem}
\label{th:sufficientlifting}
Let $X$ be a normal Gorenstein surface singularity. Let $\Sf{X}$ be a deformation of $X$ over a normal base $(S,s)$. Let $\Pi:\Rsd\to \Sf{X}$ be a very weak simultaneous resolution. Let $M$ be a reflexive $\Ss{X}$-module and $(\Sf{X},\overline{M},\iota)$ be a simultaneously normalizable  deformation of $(X,M)$ over the base $(S,s)$, so that for each $s'\in S$ the module $\overline{M}|_{s'}$ is special. If the support of the degeneracy 
module of $\overline{M}$ for a generic system of sections is liftable to $\Rsd$, then the family $(\Sf{X},\overline{M},\iota)$ lifts to a full family on $\Rsd$.
\end{theorem}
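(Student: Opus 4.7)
The plan is to reduce via Proposition~\ref{prop:necessarylifting} and construct a specialty-defect-constant lift directly using the simultaneous normalization. First I would pick generic sections $(\phi_1,\dots,\phi_r)$ of $M$, extend them to sections $(\overline{\phi}_1,\dots,\overline{\phi}_r)$ of $\overline{M}$ and apply the deformation-functor isomorphism of Theorem~\ref{th:dirXdef} to obtain the accompanying deformation $(\Sf{X},\overline{\Sf{C}},(\overline{\psi}_1,\dots,\overline{\psi}_r),\rho)$ of the generically reduced rank-$1$ Cohen--Macaulay $\Ss{X}$-module $(\Sf{C},(\psi_1,\dots,\psi_r))$ associated with $(M,(\phi_1,\dots,\phi_r))$ at the base. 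Proposition~\ref{prop:necessarylifting} reduces the theorem to producing a specialty-defect-constant lift of this deformation to $\Rsd$. Fibrewise specialty of $\overline{M}|_{s'}$ combined with Proposition~\ref{prop:Aeslanormalizacion} identifies $\overline{\Sf{C}}|_{s'}$ with $n_{s'*}\Ss{\overline{D}_{s'}}$, where $\overline{D}$ is the smooth family supplied by the simultaneous-normalization hypothesis.

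The candidate lift is built from the normalization. The hypothesis that $\overline{C}$ lifts to $\Rsd$ forces the strict transform $\hat{C}\subset\Rsd$ to be finite over $\overline{C}$; since $\overline{D}$ is smooth, the universal property of normalization provides $\hat{n}\colon\overline{D}\to\hat{C}\hookrightarrow\Rsd$ with $\Pi\circ\hat{n}=n$. I would then define $\overline{\Sf{A}}:=\hat{n}_*\Ss{\overline{D}}$. Smoothness of $\overline{D}$ over $S$ and finiteness of $\hat{n}$ make $\overline{\Sf{A}}$ flat over $S$; base change for finite morphisms, together with the specialty description following Proposition~\ref{prop:consecuenciaspracticas}~(6), identifies its fibres with the Cohen--Macaulay modules $\Sf{A}_{s'}$ at $\Rsd_{s'}$ associated with the full sheaves of the special modules $\overline{M}|_{s'}$. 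In particular the support of each fibre of $\overline{\Sf{A}}$ meets the exceptional divisor in finitely many points, and $\overline{\Sf{A}}|_s=\Sf{A}$.

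The key step is the identification $\Pi_*\overline{\Sf{A}}=n_*\Ss{\overline{D}}\cong \overline{\Sf{C}}$ that transports the sections. Both sheaves are $S$-flat, and their fibres agree via the canonical embedding of Proposition~\ref{prop:genredCM} together with specialty. I would globalize the fibrewise isomorphism by embedding both sheaves in the common sheaf of total quotients of $\Ss{\overline{C}}$ along $S$: normality of the base together with flatness on both sides pins them down as the unique $S$-flat $\Ss{\Sf{X}}$-submodule whose fibres equal $n_{s'*}\Ss{\overline{D}_{s'}}$, so the natural inclusion $\overline{\Sf{C}}\hookrightarrow n_*\Ss{\overline{D}}$ is forced to be an isomorphism. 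Under this identification the sections $(\overline{\psi}_i)$ of $\overline{\Sf{C}}$ promote to sections $(\overline{\psi}'_i)$ of $\overline{\Sf{A}}$ specialising at $s$ to the sections $(\psi_i)$ of $\Sf{A}$ delivered by the correspondence at the resolution.

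It then only remains to check specialty-defect constancy. The cokernel of $\Pi_*\Ss{\Rsd}^r\to\Pi_*\overline{\Sf{A}}$ induced by $(\overline{\psi}'_i)$ has vanishing fibres, since specialty of each $\overline{M}|_{s'}$ implies that the sections already generate $\Sf{A}_{s'}$ as an $\Ss{X}$-module; by Nakayama the cokernel is zero and is therefore trivially $S$-flat. Hence $(\Rsd,\Sf{X},\overline{\Sf{A}},(\overline{\psi}'_i),\iota')$ is the required specialty-defect-constant lift and Proposition~\ref{prop:necessarylifting} finishes the proof. The hard part is the globalization of the preceding paragraph: upgrading the fibrewise isomorphisms $\overline{\Sf{C}}|_{s'}\cong n_{s'*}\Ss{\overline{D}_{s'}}$ to an $\Ss{\Sf{X}}$-linear isomorphism over the whole normal base $(S,s)$ is the only step that uses normality of $S$ and the simultaneous-normalization hypothesis in an essential way.
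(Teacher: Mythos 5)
Your overall strategy coincides with the paper's: pass to $(\overline{\Sf{C}},(\overline{\psi}_i))$ via Theorem~\ref{th:dirXdef}, identify $\overline{\Sf{C}}$ with $n_*\Ss{\overline{D}}$ using fibrewise specialty and Proposition~\ref{prop:Aeslanormalizacion}, use liftability of $\overline{C}$ to make $\Ss{\overline{D}}$ into an $\Ss{\Rsd}$-module, and conclude via the functor isomorphism at the resolution (the paper invokes Theorem~\ref{th:dirresdef} and Proposition~\ref{prop:comparecorrdef} directly rather than routing through Proposition~\ref{prop:necessarylifting}, but that is a cosmetic difference). The problem is the step you yourself flag as ``the hard part'': your justification for the global isomorphism $\overline{\Sf{C}}\cong n_*\Ss{\overline{D}}$ does not work as stated. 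Knowing that each fibre $\overline{\Sf{C}}|_{s'}$ is \emph{abstractly} isomorphic to $\Ss{\overline{D}_{s'}}$ does not pin down $\overline{\Sf{C}}$ as a submodule of the total quotient ring: an $S$-flat submodule is not determined by the abstract isomorphism class of its fibres. Concretely, after embedding $\overline{\Sf{C}}$ into $K(\overline{C})$ and clearing denominators you obtain an inclusion $\overline{\Sf{C}}\subset\Ss{\overline{D}}$ whose ideal-theoretic image $\overline{\Sf{B}}$ may cut out a divisor of $\overline{D}$ that dominates $S$; at such fibres the image of $\overline{\Sf{C}}|_{s'}$ is a proper principal ideal of $\Ss{\overline{D}_{s'}}$, abstractly isomorphic to it but strictly smaller, so ``uniqueness of the flat submodule with these fibres'' is exactly what fails. (Compare Example~\ref{ex:liftsnotlifts}, where the submodule generated by $(s+t,t^2)$ inside $\CC[[t,s]]$ has fibres that look like structure sheaves but the family is not what one would naively expect.)

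The paper closes this gap in three steps that your proposal is missing. First, it normalizes the embedding: since $(S,s)$ is normal, any component of $V(\overline{\Sf{B}})$ dominating $S$ is cut out by a Weierstrass polynomial in $\Ss{S}[[t]]$, and dividing the embedding by these polynomials one may assume $V(\overline{\Sf{B}})$ does not dominate $S$, whence $\overline{\Sf{B}}|_U=\Ss{\overline{D}|_U}$ over a dense Zariski open $U\subset S$. Second, combining this with the fibrewise principality of $\overline{\Sf{B}}|_{s'}$ (which does follow from specialty) yields the honest equality $\overline{\Sf{C}}|_U=\Ss{\overline{D}|_U}$ over $U$. Third, setting $\overline{\Sf{D}}:=\Ss{\overline{D}}/\overline{\Sf{C}}$, a $\mathrm{Tor}_1$ computation using that $\Sf{C}|_{s}$ is rank $1$ generically reduced shows $\overline{\Sf{C}}|_s\to\Ss{\overline{D}}|_s$ is injective, so $\overline{\Sf{D}}$ is $S$-flat by the local criterion; being flat with support not dominating $S$, it vanishes, giving $\overline{\Sf{C}}=\Ss{\overline{D}}$ everywhere. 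Without some version of this argument your identification, and hence the transport of the sections and the specialty-defect-constant lift, is not established. The remaining steps of your proposal (the $\Ss{\Rsd}$-module structure on $\Ss{\overline{D}}$ from liftability of $\overline{C}$, and the vanishing of the cokernel giving the SDC condition) are fine.
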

\proof
Let $(\overline{\phi}_1,...,\overline{\phi}_r)$ be a system of generic global sections of $\overline{M}$. Let $(\Sf{X},\overline{\Sf{C}},(\overline{\psi}_1,...,\overline{\psi}_r),\rho)$ be the result of applying the correspondence of Theorem~\ref{th:dirXdef} to $(\Sf{X},\overline{M},(\overline{\phi}_1,...,\overline{\phi}_r),\iota)$. Let $\overline{C}$ be the support of $\overline{\Sf{C}}$. 
Let 
$$n:\overline{D}\to\overline{C}$$
be the simultaneous normalization that exists because $(\Sf{X},\overline{M},\iota)$ is simultaneously normalizable. By Remark~\ref{rem:simnor} we may assume that $\overline{D}$ is the product of $S$ with a disjoint union of discs. 

Let $K(\overline{C})$ be the total fraction ring of $\overline{C}$. If $\overline{C}=\cup_{i=1}^m\overline{C}_i$ is the decomposition in irreducible components then the total fraction ring equals the direct product $K(\overline{C})=\prod_{i=1}^mK(\overline{C}_i)$ of the function fields of the components. 
Denote by $T$ the set of non-zero divisors of $\Ss{\overline{C}}$. The localization $T^{-1}\overline{\Sf{C}}$ is a $K(\overline{C})$-module, which expresses as $T^{-1}\overline{\Sf{C}}=\prod_{i=1}^m\overline{\Sf{C}}_i$, where $\overline{\Sf{C}}_i$ is a $K(\overline{C}_i)$-vector space. Since $\overline{\Sf{C}}$ is a flat family of $1$-dimensional rank 1 generically reduced Cohen-Macaulay modules over the normal base $S$, the natural map to the localization 
$\overline{\Sf{C}}\to T^{-1}\overline{\Sf{C}}$ is injective, and each $\overline{\Sf{C}}_i$ is a $1$-dimensional vector space. Hence the localization $T^{-1}\overline{\Sf{C}}$ is isomorphic to $\prod_{i=1}^mK(\overline{C}_i)=K(\overline{C})$. We have found a $\Ss{\overline{C}}$-module monomorphism $\iota:\overline{\Sf{C}}\hookrightarrow K(\overline{C}).$
Noticing that $\Ss{\overline{D}}$ is a sub-ring of $K(\overline{C})$, it makes sense to define $\overline{\Sf{B}}$ to be the $\Ss{\overline{D}}$-submodule of $K(\overline{C})$ spanned by $\overline{\Sf{C}}$. Since $\overline{\Sf{C}}$ is generated as $\Ss{\overline{C}}$ module by $(\overline{\psi}_1,...,\overline{\psi}_r)$, multiplying by the common denominator of $\iota(\overline{\psi}_1),...,\iota(\overline{\psi}_r)$ we may assume that the image of $\iota$ lies in 
$\Ss{\overline{D}}$. We have got the chain of inclusions of $\Ss{\overline{C}}$-modules
\begin{equation}
 \label{eq:inc1111}
\overline{\Sf{C}}\hookrightarrow\overline{\Sf{B}}\hookrightarrow\Ss{\overline{D}}.
\end{equation}

The second inclusion is an inclusion of $\Ss{\overline{D}}$-modules; thus $\overline{\Sf{B}}$ is an ideal in $\Ss{\overline{D}}$. Suppose that the zero set $V(\overline{\Sf{B}})$ contains an irreducible component $Z$ that is dominant over $S$ by the natural projection map. Such a $Z$ is contained in a unique connected component of $\overline{D}$, and each of these connected components is isomorphic to the product of $S$ times a disc. Let $t$ be a coordinate of the disc. By the normality of $(S,s_0)$ there is a Weierstrass polynomial $P$ in $\Ss{S}[[t]]$ whose zero locus defines $Z$. Then, dividing by the appropriate Weierstrass polynomials we may assume that the embedding $\iota$ is so that the zero set of the ideal $\overline{\Sf{B}}$ does not dominate $S$. Hence there exists a Zariski dense open subset $U$ in $S$ where, under the embedding $\iota$ we have the equality 
\begin{equation}
\label{eq:inc1122} 
\overline{\Sf{B}}|_U=\Ss{\overline{D}|_U}.
\end{equation}

For any $s'\in S$, by specialty of $\overline{M}|_{s'}$ and Proposition~\ref{prop:Aeslanormalizacion} we have the isomorphism 
$\overline{\Sf{C}}|_{s'}\cong\Ss{\overline{D}_{s'}}$. This implies that $\overline{\Sf{B}}|_{s'}$ is a monic ideal in  
$\Ss{\overline{D}_{s'}}$ and the equality $\overline{\Sf{C}}|_{s'}=\overline{\Sf{B}}|_{s'}$. Then Equation~(\ref{eq:inc1122}) implies the equality
\begin{equation}
\label{eq:2222}
\overline{\Sf{C}}|_{U}=\Ss{\overline{D}|_{U}}.
\end{equation}

Define $\overline{\Sf{D}}:=\Ss{\overline{D}}/\overline{\Sf{C}}$. Denote by $\mathfrak{m}_S$ the maximal ideal of $\Ss{S,s}$. Applying $\centerdot\otimes_{\Ss{S}}\Ss{S}/\mathfrak{m}_S$ to the exact sequence of $\Ss{\overline{C}}$-modules
$$0\to \overline{\Sf{C}}\to\Ss{\overline{D}} \to \overline{\Sf{D}}\to 0,$$
we obtain the exact sequence
$$0\to Tor_1^{\Ss{S}}(\overline{\Sf{D}},\Ss{S}/\mathfrak{m}_S)\to \overline{\Sf{C}}\otimes_{\Ss{S}}\Ss{S}/\mathfrak{m}_S\to\Ss{\overline{D}}\otimes_{\Ss{S}}\Ss{S}/\mathfrak{m}_S\to \overline{\Sf{D}}\otimes_{\Ss{S}}\Ss{S}/\mathfrak{m}_S\to 0.$$
The second group in the sequence is isomorphic to $\Sf{C}|_{s_0}$, the third group is $\Ss{\overline{D}|_{s_0}}$ and the morphism connecting them is injective at the generic points of the support of $\Sf{C}|_{s_0}$. Then, since $\Sf{C}|_{s_0}$ is a rank $1$ generically reduced Cohen-Macaulay $\Ss{X}$-module of dimension $1$ the morphism $$\overline{\Sf{C}}\otimes_{\Ss{S}}\Ss{S}/\mathfrak{m}_S\to\Ss{\overline{D}}\otimes_{\Ss{S}}\Ss{S}/\mathfrak{m}_S$$
is injective and then $Tor_1^{\Ss{S}}(\overline{\Sf{D}},\Ss{S}/\mathfrak{m}_S)$ vanishes. Then $\overline{\Sf{D}}$ is
flat over $S$ by the Local Criterion of Flatness. Since by Equality~(\ref{eq:2222}) the $\Ss{S}$-module $\overline{\Sf{D}}$ has proper support we conclude that $\overline{\Sf{D}}$ vanishes. This proves that the inclusion~(\ref{eq:inc1111}) becomes the equality
\begin{equation}
\label{eq:eq3333}
\overline{\Sf{C}}=\Ss{\overline{D}}.
\end{equation}

Since $\overline{C}$ has been assumed to liftable to $\Rsd$ the restriction of $\Pi$ to the strict transform $\hat{\overline{C}}$ of $\overline{C}$ to $\Rsd$ is finite and birational over $\overline{C}$. This implies that $\overline{D}$ dominates $\hat{\overline{C}}$, and 
then we have a ring monomorphism $\Ss{\hat{\overline{C}}}\hookrightarrow\Ss{\overline{D}}$. As a consequence $\Ss{\overline{D}}$ inherits a structure of 
$\Ss{\Rsd}$-module. Summing up we have shown that
$(\Rsd,\Sf{X},\Ss{\overline{D}},(\psi_1,...,\psi_r),Id|_{\Ss{\overline{D}_s}})$ is a specialty defect constant deformation of $(\Rs,X,\Ss{\overline{D}_s},(\psi_1|_s,...,\psi_r|_s))$.

Applying the correspondence of Theorem~\ref{th:dirresdef} to $(\Rsd,\Sf{X},\Ss{\overline{D}},(\psi_1,...,\psi_r),Id|_{\Ss{\overline{D}_s}})$ we obtain a full deformation $(\Rsd,\Sf{X},\overline{\Sf{M}},(\phi_1,...,\phi_r),\iota)$. An application of Proposition~\ref{prop:comparecorrdef} concludes the proof.
\endproof

\section{Moduli spaces of special reflexive sheaves on Gorenstein surface singularities}
\label{sec:moduli}

Let $(X,x)$ be a Gorenstein normal surface singularity. In this section we consider deformations and families of $\Ss{X}$-modules fixing the space $X$. Our aim is to construct moduli spaces of special reflexive modules with prescribed combinatorial type.

\begin{definition}
\label{def:modulifunctor}
Let $(X.x)$ be a Gorenstein surface singularity.
Let $\Sf{G}$ be the graph of a special reflexive $\Ss{X}$-module and $r$ a positive integer. 
A {\em family of special modules with graph} $\Sf{G}$ {\em and rank} $r$ over a complex space $S$ is a $\Ss{X\times S}$-module $\overline{M}$
which is flat over $S$ and such that for any $s\in S$, the module $\overline{M}|_s$ is a special reflexive module of rank $r$ and graph 
$\Sf{G}$.

Define a moduli functor $\mathbf{Mod^r_{\Sf{G}}}$ from the category of Normal Complex Spaces to the category of Sets, assigning to a normal complex space $S$ the set of families of special modules without free factors with graph $\Sf{G}$ and rank $r$, and to a morphisms of complex spaces the corresponding pullback of families.  
\end{definition}

In this section we prove that the previous functors are representable by a complex algebraic variety. 

\begin{remark}
Our moduli functor is somewhat restricted: we only consider families over normal complex spaces. It is an open problem to show that our moduli spaces represent the usual moduli functors. One should notice that even the definition of the moduli functor has some subtleties: restricting Example~\ref{ex:liftsnotlifts} to the base $Spec(\CC[[s]]/(s^2))$ one sees a flat deformation of a special reflexive $\Ss{X}$-module, such that over each point of the base the corresponding reflexive module is special, but that should not be considered as a family of special modules.
\end{remark}

Let $\Sf{G}$ be the graph of a special reflexive module (the possible graphs are classified in Theorem~\ref{th:charresgraphsp}).
Deleting the arrows of $\Sf{G}$ we obtain a resolution graph $\Sf{G}^{o}$ of the singularity $X$. Two different resolutions with the 
same resolution graph only differ in the positions of the infinitely near points which are the centers of the blow ups occuring after the minimal 
resolution. 

Let $\mathfrak{M}'_{\Sf{G}^{o}}$ be the set of pairs $(\pi,\varphi)$, where $\pi:\Rs\to X$ is a resolution of singularities and 
$\varphi$ is a bijection from the vertices of $\Sf{G}^{o}$
to the irreducible components of the exceptional divisor $E$ of $\pi$ inducing an isomorphism from $\Sf{G}^{o}$ to the dual graph of the 
resolution. The group $Aut(\Sf{G}^{o})$ of automorphims of the graph $\Sf{G}^{o}$ acts on $\mathfrak{M}'_{\Sf{G}^{o}}$ 
by composition on the left at the second coordinate. The group $Aut(\Sf{G})$ is a subgroup of $Aut(\Sf{G}^{o})$. 
Define $\mathfrak{M}_{\Sf{G}^{o}}$ and $\mathfrak{M}_{\Sf{G}}$ to be the quotient of $\mathfrak{M}'_{\mathfrak{G}^{o}}$ by
$Aut(\Sf{G}^{o})$ and $Aut(\Sf{G})$ respectively. The points of $\mathfrak{M}_{\Sf{G}^{o}}$ and $\mathfrak{M}_{\Sf{G}}$ are equivalence classes which will be denoted by $(\pi,\varphi)$ for simplicity.

\begin{lemma}
\label{lem:varietyresolutions}
The sets $\mathfrak{M}_{\Sf{G}}$ and  $\mathfrak{M}_{\Sf{G}^o}$ have a natural structure of algebraic variety, and the natural map from the 
first to the second is an etale covering.

The variety $\mathfrak{M}_{\Sf{G}}$ is a  moduli space of resolutions of $X$, and has a universal family in the following sense: there is a birational morphism
$$\Pi:\tilde{\Sf{X}}\to X\times \mathfrak{M}_{\Sf{G}},$$
such that for any $(\pi,\varphi)\in \mathfrak{M}_{\Sf{G}}$ the pullback morphism 
$$\Pi|_{[\tilde{\Sf{X}}_{(\pi,\varphi)}}:\tilde{\Sf{X}}_{(\pi,\varphi)}\to X\times\{(\pi,\varphi)\}$$
coincides with $\pi$, where $\tilde{\Sf{X}}_{(\pi,\varphi)}$ denotes the fibre of $\tilde{\Sf{X}}$ over $(\pi,\varphi)$ by the composition of $\Pi$ with the projection to the second factor.
\end{lemma}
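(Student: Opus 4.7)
The strategy is to build $\mathfrak{M}'_{\Sf G^o}$ explicitly as a tower of bundles of blow-up centers over the (unique) minimal good resolution, construct the universal family in the same process, and then pass to quotients by the finite group actions.

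Fix the minimal good resolution $\pi_{\min}:\tilde X_{\min}\to X$, whose dual graph $\Sf G^o_{\min}$ is a subgraph of $\Sf G^o$. Every resolution with dual graph $\Sf G^o$ is obtained from $\tilde X_{\min}$ by a sequence of point blow-ups, one for each extra vertex, each one adding a $(-1)$-curve attached to one or two already-present vertices. Pick an admissible ordering $w_1,\dots,w_m$ of the extra vertices (with each $w_j$ preceded by the extra vertices to which it is connected). Build inductively smooth quasi-projective varieties $V_0=\mathrm{Spec}\,\CC, V_1,\dots,V_m$ and a tower $\Sf Y_j\to V_j$ starting from $\Sf Y_0=\tilde X_{\min}$ as follows. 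If $w_j$ is attached in $\Sf G^o$ to a single previously-constructed vertex $u$ (free blow-up), let $V_j$ be the relative open subset of the $u$-component of the exceptional divisor of $\Sf Y_{j-1}\to V_{j-1}$ obtained by removing the intersection sections with the other components present at this stage. If $w_j$ is attached to two previously-constructed vertices (satellite blow-up), set $V_j=V_{j-1}$ with the center being the forced intersection section. In either case, define $\Sf Y_j$ as the blow-up of $\Sf Y_{j-1}\times_{V_{j-1}}V_j$ along the universal section. At the top, $\mathfrak{M}'_{\Sf G^o}:=V_m$ is a smooth quasi-projective variety and $\Sf Y_m\to X\times\mathfrak{M}'_{\Sf G^o}$ is a birational universal family of resolutions whose fibre over each point is the associated $\pi$, with the identification $\varphi$ recorded by the construction.

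Next, consider the $\mathrm{Aut}(\Sf G^o)$-action on $\mathfrak{M}'_{\Sf G^o}$ defined by $\sigma\cdot(\pi,\varphi)=(\pi,\varphi\circ\sigma^{-1})$. A resolution of a normal surface singularity has no non-trivial automorphisms over $X$ (any such restricts to the identity on the Zariski dense complement of the exceptional divisor), so fixedness of $(\pi,\varphi)$ forces $\varphi=\varphi\circ\sigma^{-1}$ as bijections of finite sets, whence $\sigma=\mathrm{id}$. Thus the action, and its restriction to $\mathrm{Aut}(\Sf G)$, is free. Since a free finite group action on a smooth quasi-projective variety admits a smooth quasi-projective geometric quotient and the quotient map is an etale covering, $\mathfrak{M}_{\Sf G^o}$ and $\mathfrak{M}_{\Sf G}$ are algebraic varieties and $\mathfrak{M}_{\Sf G}\to\mathfrak{M}_{\Sf G^o}$ is an etale covering of degree $[\mathrm{Aut}(\Sf G^o):\mathrm{Aut}(\Sf G)]$. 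For the universal family on $\mathfrak{M}_{\Sf G}$, lift the $\mathrm{Aut}(\Sf G)$-action from the base to $\Sf Y_m$: a graph automorphism $\sigma$ converts the blow-up data of $(\pi,\varphi)$ into that of $(\pi,\varphi\circ\sigma^{-1})$, inducing an isomorphism of the towers equivariant over $X\times\mathfrak{M}'_{\Sf G^o}$. Since the base action is free, equivariant descent produces the desired birational morphism $\Pi:\tilde{\Sf X}\to X\times\mathfrak{M}_{\Sf G}$ whose fibres are the resolutions parametrized by points of $\mathfrak{M}_{\Sf G}$.

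The main obstacle is not conceptual but organizational: canonicity of the tower construction with respect to the chosen admissible ordering, and well-definedness of the $\mathrm{Aut}(\Sf G)$-action on $\Sf Y_m$. Both statements reduce to the elementary fact that blow-ups along pairwise disjoint centers commute canonically: any two admissible orderings differ by transpositions of blow-ups whose centers lie on disjoint sections at the current stage, and an automorphism of $\Sf G$ carries admissible orderings to admissible orderings compatibly with these commutations, furnishing a canonical isomorphism between the two towers.
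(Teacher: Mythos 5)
Your construction is correct and is essentially the one the paper relies on: the paper's proof of this lemma is simply the citation ``See~\cite{Bo}, Chapter~3'', and that reference builds exactly such a tower of varieties of (free and satellite) infinitely near points over the minimal good resolution, together with the universal family, and then handles the finite group quotients. Your argument fills in those details self-containedly, including the key observations that a resolution admits no non-trivial automorphism over $X$ (so the $\mathrm{Aut}(\Sf{G}^o)$-action is free and the quotient maps are \'etale) and that independence of the admissible ordering reduces to commutation of blow-ups along disjoint centers.
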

\proof
See~\cite{Bo}, Chapter 3.
\endproof 

\begin{notation}
\label{not:restrU}
Let $U$ be an open subset of $\mathfrak{M}_{\Sf{G}}$, the restriction of the universal family over $U$ is denoted by 
$$\Pi|_U:\tilde{\Sf{X}}|_U\to X\times U.$$
\end{notation}

Now we associate a special reflexive module to each point of $\mathfrak{M}_{\Sf{G}}$. Let $(\pi,\varphi)\in\mathfrak{M}_{\mathfrak{G}}$.

A curve $D\subset\tilde{\Sf{X}}|_{(\pi,\varphi)}$ is $(\pi,\varphi)$-{\em appropriate} if 
\begin{enumerate}
 \item it is a disjoint union of smooth curvettes which meet the exceptional divisor of $\pi$ transversely, and for any vertex of $\mathfrak{M}_{\Sf{G}}$ the number of curvettes meeting the divisor that $\varphi$ assigns to this vertex is exactly the number of arrows attached to this vertex,
 \item the minimal number of generators of $\pi_*\Ss{D}$ as a $\Ss{X}$ module is minimal among the curves with the previous property.
\end{enumerate}
In Remark~\ref{rem:Dgenericanofactorlibre} and its proof it is shown that a generic curve having the first property also satisfies the second. 

Let $D$ be a $(\pi,\varphi)$-appropriate curve.
Let $(\psi_1,...,\psi_r)$ be a minimal set of generators of the $\Ss{X}$-module $\pi_*\Ss{D}$. 
Applying the correspondence of Theorem~\ref{th:corrsing} to $(\pi_*\Ss{D},(\psi_1,...,\psi_r))$ we obtain  
$(M_{(\pi,\varphi)},(\phi_1,...,\phi_r))$, where $M_{(\pi,\varphi)}$ is special reflexive $\Ss{X}$-module of rank $r$ (specialty is by 
Proposition~\ref{prop:Aeslanormalizacion}). In Sections~\ref{sec:1stchernspecial} and~\ref{sec:clasdef} it is proved that $M_{(\pi,\phi)}$ does not 
depend on the  $(\pi,\varphi)$-appropriate curve. 

However, there is no reason for which the rank $r$ should be independent on the combinatorial type. In fact the rank of $M_{(\pi,\varphi)}$ induces a 
stratification in $\mathfrak{M}_{\Sf{G}}$ because of the following lemma. 

\begin{lemma}
\label{lem:rankuppersem}
The function $rank(M_{(\pi,\varphi)})$ is upper-semicontinuous in $\mathfrak{M}_{\Sf{G}}$ for the Zariski topology.
\end{lemma}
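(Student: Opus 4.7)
The plan is to show upper-semicontinuity by exhibiting, near any point $(\pi_0,\varphi_0)\in\mathfrak{M}_{\Sf{G}}$ with $r_0:=\mathrm{rank}(M_{(\pi_0,\varphi_0)})$, a flat family of curves whose pushforward witnesses the inequality $\mathrm{rank}(M_{(\pi,\varphi)})\leq r_0$ on a Zariski open neighborhood.

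First, I would fix a $(\pi_0,\varphi_0)$-appropriate curve $D_0\subset\tilde{\Sf{X}}_{(\pi_0,\varphi_0)}$, so $\pi_{0*}\Ss{D_0}$ is minimally generated as an $\Ss{X}$-module by exactly $r_0$ elements. Using the algebraic structure of the universal family $\Pi\colon\tilde{\Sf{X}}\to X\times\mathfrak{M}_{\Sf{G}}$ (Lemma~\ref{lem:varietyresolutions}) and the fact that the exceptional components $\Sf{E}_v\subset\tilde{\Sf{X}}$ vary algebraically, I would then extend $D_0$ to a family of curves $\overline{D}\subset\tilde{\Sf{X}}|_U$, flat over a Zariski open neighborhood $U$ of $(\pi_0,\varphi_0)$, whose fiber at each $(\pi,\varphi)\in U$ is a disjoint union of smooth curvettes meeting the exceptional divisor transversely at smooth points, with exactly the intersection pattern dictated by the arrows of $\Sf{G}$. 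Concretely, one chooses algebraic sections of $\tilde{\Sf{X}}|_U\to U$ landing in the smooth parts of the prescribed exceptional components and extends $D_0$ locally as a family of smooth transverse curvettes through those sections.

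Shrinking $U$ if necessary, the restriction $\Pi|_{\overline{D}}\colon\overline{D}\to X\times U$ is finite (the intersection multiplicities with $\Sf{E}$ and the number of irreducible components of each fiber are locally constant). Since $\overline{D}$ is flat over $U$ and $\Pi|_{\overline{D}}$ is affine, $\Pi_*\Ss{\overline{D}}$ is flat over $U$; finiteness yields base change, so $(\Pi_*\Ss{\overline{D}})|_{(\pi,\varphi)}\cong \pi_*\Ss{\overline{D}_{(\pi,\varphi)}}$ for every $(\pi,\varphi)\in U$. By Nakayama's lemma applied fiberwise, the minimal number of generators of the stalk of $\Pi_*\Ss{\overline{D}}$ at $x\times\{(\pi,\varphi)\}$ equals $\dim_{\CC}\bigl((\Pi_*\Ss{\overline{D}})_{(x,(\pi,\varphi))}\otimes_{\Ss{X\times U}}\CC\bigr)$, which is upper-semicontinuous in $(\pi,\varphi)$ and takes value $r_0$ at $(\pi_0,\varphi_0)$. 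Hence on a Zariski open neighborhood of $(\pi_0,\varphi_0)$ this number is at most $r_0$. Since $\overline{D}_{(\pi,\varphi)}$ has the correct intersection pattern for the graph $\Sf{G}$, the defining minimality property of $(\pi,\varphi)$-appropriate curves gives
\begin{equation*}
\mathrm{rank}(M_{(\pi,\varphi)})\;\leq\;\text{(min. \#\ generators of }\pi_*\Ss{\overline{D}_{(\pi,\varphi)}}\text{)}\;\leq\;r_0,
\end{equation*}
establishing upper-semicontinuity.

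The main obstacle I anticipate is the second step: constructing the family $\overline{D}$ in the Zariski (rather than merely analytic) topology over $\mathfrak{M}_{\Sf{G}}$, and verifying that its relative singular set and the extra components of intersection with $\Sf{E}$ are a proper Zariski-closed subset of $U$. This requires using explicitly the algebraic construction of $\mathfrak{M}_{\Sf{G}}$ and of the universal resolution $\tilde{\Sf{X}}$ from~\cite{Bo}, together with the openness of ``being a transverse union of smooth curvettes with the prescribed intersection pattern''. Once this is in place, the remainder of the argument is a standard coherent-sheaf base change.
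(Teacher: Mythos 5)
Your proposal is correct and follows essentially the same route as the paper's proof: construct a flat family of curves $\overline{D}$ with the prescribed intersection pattern over a neighborhood, push forward with base change, and invoke upper-semicontinuity of the minimal number of generators of the fibers. The only (harmless) differences are that you obtain base change from finiteness of $\Pi|_{\overline{D}}$ rather than from $R^1\Pi_*\Ss{\overline{D}}=0$ via Lemma~\ref{lem:h0h1}, and that you only require the central fiber to be $(\pi_0,\varphi_0)$-appropriate, using the minimality in the definition to get the one-sided inequality, whereas the paper arranges all fibers to be appropriate and gets equality with the rank.
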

\proof
The assertion is local. Consider $(\pi,\varphi)\in\mathfrak{M}_{\Sf{G}}$. Given a small neighborhood $U$ of $(\pi,\varphi)$ 
it is easy to construct a subscheme $\overline{D}\subset \tilde{\Sf{X}}|_U$ such that for any $(\pi',\varphi')\in U$ 
we have that the fibre $\overline{D}|_{(\pi',\varphi')}$ is a $(\pi',\varphi')$-appropriate curve and the $\Ss{\tilde{\Sf{X}}|_U}$-module 
$\Ss{\overline{D}}$ is flat over $U$ (this is easy by the genericity statement given in Remark~\ref{rem:Dgenericanofactorlibre}). Since $R^1\Pi_*\Ss{\overline{D}}$ vanishes, Lemma~\ref{lem:h0h1} implies that $\Pi_*\Ss{\overline{D}}$ is
flat over $U$ and that $(\Pi_*\Ss{\overline{D}})|_{(\pi',\varphi')}$ equals $(\pi')_*(\Ss{\overline{D}}|_{(\pi',\varphi')})$. Then, the minimal number of
generators of $(\Pi_*\Ss{\overline{D}})|_{(\pi',\varphi')}$ as $\Ss{X}$-module is upper-semicontinuous in the Zariski topology, and coincides
with the rank of $M_{(\pi',\varphi')}$. 
\endproof

Denote by $\mathfrak{M}^r_{\Sf{G}}$ to be the locally closed subset corresponding to modules of rank $r$. By Theorem~\ref{Teo:final}, the closed 
points of $\mathfrak{M}^r_{\Sf{G}}$ are in a bijection with the set of reflexive modules without free factors of graph $\Sf{G}$ and rank $r$.
Our next step is to construct a universal family over each $\mathfrak{M}^r_{\Sf{G}}$.

\begin{lemma}
\label{lem:uniqueunivfamily}
Let $U$ be an open subset of $\mathfrak{M}^r_{\Sf{G}}$. For $i=1,2$ let $\overline{D}^i\subset \tilde{\Sf{X}}|_U$ such that for any $(\pi',\varphi')\in U$ 
we have that the fibre $\overline{D}^i|_{(\pi',\varphi')}$ is a $(\pi',\varphi')$-appropriate curve and the $\Ss{\tilde{\Sf{X}}|_U}$-module 
$\Ss{\overline{D}^i}$ is flat over $U$. Let $(\overline{\psi}^i_1,...,\overline{\psi}^i_r)$ be a system of generators of $\Ss{\overline{D}^i}$ as a $\Ss{X\times S}$-module. Let $(\overline{M}^i,(\overline{\phi}^i_1,...,\overline{\phi}^i_r))$ be the result of applying the correspondence of Theorem~\ref{th:dirXdef} to $(\Pi_*\Ss{\overline{D}^i},(\overline{\psi}^i_1,...,\overline{\psi}^i_r))$. Then we have the 
equality $\overline{M}^1=\overline{M}^2$.
\end{lemma}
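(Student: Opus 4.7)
The plan is to show that $\overline{M}^1$ and $\overline{M}^2$ are isomorphic families of $\Ss{X\times U}$-modules by matching them through their associated full sheaves at the relative resolution $\Pi\colon \tilde{\Sf{X}}|_U \to X\times U$, and then invoking a relative version of the Chern-class determination of Theorem~\ref{th:Chernresolucionadapted}.

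First, I would establish the fiberwise statement. At each point $(\pi',\varphi')\in U$, the specialization $\overline{M}^i|_{(\pi',\varphi')}$ is the result of applying Theorem~\ref{th:corrsing} to the pair $(\pi'_*\Ss{\overline{D}^i|_{(\pi',\varphi')}},(\overline{\psi}^i_1|_{(\pi',\varphi')},\dots,\overline{\psi}^i_r|_{(\pi',\varphi')}))$, because $R^1\pi'_*\Ss{\overline{D}^i|_{(\pi',\varphi')}}=0$ ensures compatibility of pushforward with restriction (Lemma~\ref{lem:h0h1}). By the discussion preceding the lemma, the module $M_{(\pi',\varphi')}$ does not depend on the appropriate curve used; hence $\overline{M}^1|_{(\pi',\varphi')}\cong \overline{M}^2|_{(\pi',\varphi')}\cong M_{(\pi',\varphi')}$.

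Next, I would lift the problem to the simultaneous resolution. Setting $\overline{\Sf{M}}^i := (\Pi^*\overline{M}^i)^{\smvee\smvee}$, an argument along the lines of Proposition~\ref{prop:genericlifting} applied over all of $U$ (using that the fibers are all special of the same combinatorial type, so no stratification appears) yields that $\overline{\Sf{M}}^i$ is a flat family of full sheaves and that $(\Pi|_U)_*\overline{\Sf{M}}^i = \overline{M}^i$. Fiberwise the full sheaves $\overline{\Sf{M}}^i|_{(\pi',\varphi')}$ are the full sheaves associated with $M_{(\pi',\varphi')}$ and have the same first Chern class: the intersection of $c_1(\overline{\Sf{M}}^i|_{(\pi',\varphi')})$ with each exceptional component $E_v$ equals the number of arrows at vertex $v$, which is encoded in $\Sf{G}$. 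Consequently $\overline{\Sf{M}}^1$ and $\overline{\Sf{M}}^2$ differ only by a line bundle $\Sf{L}$ pulled back from $U$, and since by Lemma~\ref{lem:ArtinVerdierminadap} the degeneracy loci of generic sections of $\overline{\Sf{M}}^i$ may be chosen so that a common component of arrows gives a canonical trivialization of $\Sf{L}|_{(\pi',\varphi')}$, we get $\Sf{L}\cong \Ss{U}$.

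Finally, using the family analogue of Theorem~\ref{th:Chernresolucionadapted}, I would conclude $\overline{\Sf{M}}^1\cong\overline{\Sf{M}}^2$: relative to $U$ one has the exact sequence $0\to \Ss{\Rsd|_U}^{r-1}\to \overline{\Sf{M}}^i\to \det(\overline{\Sf{M}}^i)\to 0$ (Proposition~\ref{prop:extension1st} in families), and since both determinants are isomorphic and the relative $\Ext^1$ controlling the extension class has no free factors, Lemma~1.9.ii of~\cite{AV} applied relatively identifies the two extensions. Pushing forward by $\Pi_*$ then gives $\overline{M}^1\cong\overline{M}^2$. The main obstacle I foresee is the last step: promoting Theorem~\ref{th:Chernresolucionadapted} to families requires showing that the freeness-free classification of the extension class works in a flat family over $U$, and in particular that the relevant $R^1\pi_*$ remains flat of the expected rank along $U$ (this is guaranteed by Corollary~\ref{cor:dimMadap} fiberwise and by Cohomology and Base Change).
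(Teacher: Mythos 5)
Your overall strategy (lift to full families on $\tilde{\Sf{X}}|_U$, reduce to the determinant line bundle, conclude by the Chern-class determination of Theorem~\ref{th:Chernresolucionadapted}) is the same as the paper's, but two steps as written do not go through. First, you obtain the full families by setting $\overline{\Sf{M}}^i := (\Pi^*\overline{M}^i)^{\smvee\smvee}$ and invoking ``an argument along the lines of Proposition~\ref{prop:genericlifting} applied over all of $U$.'' That proposition only produces a dense Zariski open subset of the base, and your parenthetical (``no stratification appears'') is an assertion, not an argument; Example~\ref{ex:liftsnotlifts} shows that liftability genuinely can fail pointwise even when every fibre is reflexive. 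The clean route, which is the one the paper takes, is to go in the opposite direction: the pair $(\Ss{\overline{D}^i},(\overline{\psi}^i_1,\dots,\overline{\psi}^i_r))$ is automatically a specialty-defect-constant deformation because the sections generate over $\Ss{X\times S}$ (Lemma~\ref{lem:defsarcos}), so Theorem~\ref{th:dirresdef} produces the full family $\overline{\Sf{M}}^i$ on all of $U$ directly, and Proposition~\ref{prop:comparecorrdef} gives $\Pi_*\overline{\Sf{M}}^i=\overline{M}^i$. No genericity or lifting argument is needed because the data lives upstairs from the start.

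The second and more serious gap is your identification of the determinants. You argue that since $c_1(\overline{\Sf{M}}^1|_{(\pi',\varphi')})$ and $c_1(\overline{\Sf{M}}^2|_{(\pi',\varphi')})$ have the same intersection numbers with every exceptional component, the two full sheaves ``differ only by a line bundle pulled back from $U$,'' which you then trivialize by a vague appeal to a common arrow component. Equal intersection numbers only give equality of the topological Chern classes in $H^2$; by the exponential sequence~(\ref{exact:largaexp}) the line bundles $\Ss{\tilde{\Sf{X}}|_{(\pi',\varphi')}}(-\overline{D}^1|_{(\pi',\varphi')})$ and $\Ss{\tilde{\Sf{X}}|_{(\pi',\varphi')}}(-\overline{D}^2|_{(\pi',\varphi')})$ could still differ by an element of $H^1(\Rs,\Ss{\Rs})$, which has dimension $p_g>0$ for non-rational singularities, and the curves $\overline{D}^1$ and $\overline{D}^2$ need not share any component. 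Killing this difference is exactly the content of Lemma~\ref{lem:varioD}: the curvettes meet only exceptional components where the coefficient of $Z_K$ vanishes, the representing \v{C}ech cocycle is supported near those components, and the isomorphism $H^1(\Rs,\Ss{\Rs})\cong H^1(\Rs,\Ss{Z_K})$ coming from the sequence~(\ref{eq:exactseqCs}) forces the class to be zero. Without this argument (or its family version) the conclusion $\det(\overline{\Sf{M}}^1)\cong\det(\overline{\Sf{M}}^2)$ is unjustified. Once that is in place, your final step --- relativizing Proposition~\ref{prop:extension1st} and the $\Ext^1$ determination of Theorem~\ref{th:Chernresolucionadapted} --- matches the paper's ``repeating in family the arguments of Section~\ref{sec:1stchernspecial}.''
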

\proof
Let $(\overline{\Sf{M}}^i,(\overline{\phi}^i_1,...,\overline{\phi}^i_r))$ be the result of applying the correspondence 
of Theorem~\ref{th:dirresdef} to $(\Ss{\overline{D}^i},(\overline{\psi}^i_1,...,\overline{\psi}^i_r))$. By Proposition~\ref{prop:comparecorrdef} we have the equality $\Pi_*\overline{\Sf{M}}^i=\overline{M}^i$. Repeating in family the arguments of Section~\ref{sec:1stchernspecial} we obtain that $\overline{M}^i$ is determined by the line bundle $\Ss{\tilde{\Sf{X}}|_U}(-\overline{D}^i)$. 

An argument like in Lemma~\ref{lem:varioD} show the isomorphism  $\Ss{\tilde{\Sf{X}}|_U}(-\overline{D}^1)\cong \Ss{\tilde{\Sf{X}}|_U}(-\overline{D}^2)$. 
\endproof

\begin{lemma}
\label{lem:univfamily}
There exists a unique family of special modules $\overline{M}^r_{\Sf{G}}$ with graph $\Sf{G}$ and rank $r$ over
$\mathfrak{M}^r_{\Sf{G}}$ such that for any $(\pi,\varphi)\in\mathfrak{M}^r_{\Sf{G}}$ we have the isomorphism 
$\overline{M}^r_{\Sf{G}}|_{(\pi,\varphi)}=M_{(\pi,\varphi)}$.
\end{lemma}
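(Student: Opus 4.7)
\proof[Proof proposal]
The plan is to construct $\overline{M}^r_{\Sf{G}}$ by gluing local families built from the correspondence of Theorem~\ref{th:dirXdef}, and then to use Lemma~\ref{lem:uniqueunivfamily} to guarantee both the canonical nature of the gluing and the uniqueness of the global family.

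First I would cover $\mathfrak{M}^r_{\Sf{G}}$ by Zariski open sets $\{U_\alpha\}$ small enough that on each $\tilde{\Sf{X}}|_{U_\alpha}$ one can exhibit a subscheme $\overline{D}^\alpha$ whose fibre over every $(\pi',\varphi')\in U_\alpha$ is a $(\pi',\varphi')$-appropriate curve and such that $\Ss{\overline{D}^\alpha}$ is $\Ss{U_\alpha}$-flat; the existence of such $\overline{D}^\alpha$ is exactly the kind of genericity used in the proof of Lemma~\ref{lem:rankuppersem}, combined with Remark~\ref{rem:Dgenericanofactorlibre}. By Lemma~\ref{lem:h0h1} the pushforward $\Pi_*\Ss{\overline{D}^\alpha}$ is then flat over $U_\alpha$ and commutes with base change, so I can pick a set of $r$ sections $(\overline{\psi}^\alpha_1,\dots,\overline{\psi}^\alpha_r)$ that specialize over each point of $U_\alpha$ to a minimal set of generators (by upper semicontinuity of minimal number of generators, after shrinking $U_\alpha$). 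Applying the deformation correspondence of Theorem~\ref{th:dirXdef} to $(X\times U_\alpha,\Pi_*\Ss{\overline{D}^\alpha},(\overline{\psi}^\alpha_1,\dots,\overline{\psi}^\alpha_r))$ produces a pair $(\overline{M}^\alpha,(\overline{\phi}^\alpha_1,\dots,\overline{\phi}^\alpha_r))$ with $\overline{M}^\alpha$ a flat family over $U_\alpha$; by construction its fibre over $(\pi',\varphi')$ is the module $M_{(\pi',\varphi')}$, hence lies in the class of special reflexive $\Ss{X}$-modules with graph $\Sf{G}$ and rank $r$.

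Next I would glue. On the overlap $U_\alpha\cap U_\beta$, both $\overline{M}^\alpha|_{U_\alpha\cap U_\beta}$ and $\overline{M}^\beta|_{U_\alpha\cap U_\beta}$ are modules produced as in Lemma~\ref{lem:uniqueunivfamily} from two choices of divisor and sections, so that Lemma gives an equality $\overline{M}^\alpha|_{U_\alpha\cap U_\beta}=\overline{M}^\beta|_{U_\alpha\cap U_\beta}$ (via the canonical identification coming from the bijection between reflexive modules and pairs $(\Sf{C},(\psi_1,\dots,\psi_r))$ up to $GL_r$-action on the sections). Because this identification is canonical, the resulting transition isomorphisms on triple overlaps automatically satisfy the cocycle condition. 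Thus the $\overline{M}^\alpha$ glue to a coherent $\Ss{X\times\mathfrak{M}^r_{\Sf{G}}}$-module $\overline{M}^r_{\Sf{G}}$, which is flat over $\mathfrak{M}^r_{\Sf{G}}$ and whose fibres are the prescribed modules $M_{(\pi,\varphi)}$.

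For uniqueness, suppose $\overline{N}$ is another family with the same property. Working locally on a $U_\alpha$ as above, I can extend $r$ generic sections of $\overline{N}|_{(\pi,\varphi)}$ (for some chosen base point) to a set of sections $(\overline{\phi}_1,\dots,\overline{\phi}_r)$ of $\overline{N}|_{U_\alpha}$, which after possibly shrinking $U_\alpha$ specialize at each point to a nearly generic system. Applying the direct deformation correspondence of Theorem~\ref{th:dirXdef} to $(\overline{N}|_{U_\alpha},(\overline{\phi}_1,\dots,\overline{\phi}_r))$ produces a pair $(\overline{\Sf{C}},(\overline{\psi}_1,\dots,\overline{\psi}_r))$ which, by specialty of every fibre and Proposition~\ref{prop:Aeslanormalizacion}, is of the form $(\Pi_*\Ss{\overline{D}},(\overline{\psi}_1,\dots,\overline{\psi}_r))$ for some family $\overline{D}$ of appropriate curves; hence it is an admissible input for Lemma~\ref{lem:uniqueunivfamily} and the resulting module coincides with our $\overline{M}^r_{\Sf{G}}|_{U_\alpha}$. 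These local identifications are again canonical and therefore glue to a global isomorphism $\overline{N}\cong \overline{M}^r_{\Sf{G}}$.

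The main technical obstacle I anticipate is the verification that the local identifications in the gluing step are truly canonical rather than only determined up to an ambiguous $GL_r$-action on the chosen generators: this is exactly what Lemma~\ref{lem:uniqueunivfamily} provides, since that Lemma asserts equality (not merely isomorphism) of the modules produced from distinct choices, so the cocycle condition holds for free. Once this canonicity is in place, the construction and the uniqueness statement follow without further difficulty.
\endproof
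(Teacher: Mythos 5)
Your proposal is correct and follows essentially the same route as the paper: the paper's own proof is a two-line remark that Lemma~\ref{lem:uniqueunivfamily} supplies the local construction and its independence of the choices made, so the local families glue to a global one, and you have simply spelled out those details (cover by open sets carrying flat families of appropriate curves, apply the correspondence, glue via the equality asserted in Lemma~\ref{lem:uniqueunivfamily}). Your extra paragraph proving uniqueness among \emph{all} families with the prescribed fibres goes beyond what the paper records (its ``unique'' amounts to well-definedness of the glued family); that argument leans on the fibrewise Proposition~\ref{prop:Aeslanormalizacion} to conclude that the whole family $\overline{\Sf{C}}$ is of the form $\Pi_*\Ss{\overline{D}}$, which strictly speaking needs the simultaneous-normalization considerations of Section~\ref{sec:suffcondlift}, but this does not affect the core of the proof.
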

\proof
The previous Lemma shows how to construct the family locally, and it shows, using Theorem~\ref{th:dirresdef} for suitable generic sections, that the result is unique up to the choices made. So the procedure glues well to a global universal family. 
\endproof

\begin{theorem}
\label{theo:moduli}
The variety $\mathfrak{M}^{r}_{\Sf{G}}$ represents the functor $\mathbf{Mod^r_{\Sf{G}}}$.
\end{theorem}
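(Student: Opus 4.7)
The plan is to show that the universal family $\overline{M}^r_{\Sf{G}}$ produced in Lemma~\ref{lem:univfamily} represents the functor by producing, for every family $(S,\overline{M})\in\mathbf{Mod^r_{\Sf{G}}}(S)$ with $S$ normal, a unique classifying morphism $f\colon S\to\mathfrak{M}^r_{\Sf{G}}$ and then checking $f^*\overline{M}^r_{\Sf{G}}\cong\overline{M}$.

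First, I would extract a family of smooth curvettes from $\overline{M}$. Choose $r$ generic sections $(\overline{\phi}_1,\dots,\overline{\phi}_r)$ of $\overline{M}$ and apply Theorem~\ref{th:dirXdef} to obtain a family $(\overline{\Sf{C}},(\overline{\psi}_1,\dots,\overline{\psi}_r))$ of generically reduced Cohen-Macaulay modules with support $\overline{C}\subset X\times S$. Fibrewise specialty together with Proposition~\ref{prop:Aeslanormalizacion} gives $\overline{\Sf{C}}|_{s}\cong n_*\Ss{\tilde{C}_{s}}$, so that $\dim_{\CC}(\overline{\Sf{C}}|_{s}/\Ss{\overline{C}|_{s}})$ equals the $\delta$-invariant of the reduced curve $\overline{C}|_{s}$ and is independent of $s$ by flatness of $\overline{\Sf{C}}$. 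Over the normal base $S$, this $\delta$-constancy produces a simultaneous normalization $n\colon\overline{D}\to\overline{C}$ with $\overline{D}\to S$ smooth, so $(\Sf{X},\overline{M},\iota)$ is simultaneously normalizable in the sense of Definition~\ref{def:deltaconstant}.

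Second, I would read off the morphism $f$ from $\overline{D}$. Each fibre $\overline{D}|_{s}$ is a disjoint union of smooth arcs in $X$ centred at $x$; iteratively blowing up and following the strict transforms in the pattern dictated by $\Sf{G}^o$ traces out, fibre by fibre, a sequence of infinitely near points whose associated resolution has dual graph $\Sf{G}^o$. The smoothness of $\overline{D}$ over $S$ makes this assignment algebraic, and the universal property of Lemma~\ref{lem:varietyresolutions} returns a morphism $f\colon S\to\mathfrak{M}_{\Sf{G}^o}$; the pattern of components of $\overline{D}$ over the exceptional divisors prescribes the arrow distribution of $\Sf{G}$ modulo $\operatorname{Aut}(\Sf{G})$, lifting $f$ canonically to $\mathfrak{M}_{\Sf{G}}$, and the rank hypothesis forces $f$ to factor through the locally closed stratum $\mathfrak{M}^r_{\Sf{G}}$. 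To verify $f^*\overline{M}^r_{\Sf{G}}\cong\overline{M}$, note that the strict transform of $\overline{D}$ to the pulled-back very weak simultaneous resolution $f^*\tilde{\Sf{X}}\to X\times S$ is fibrewise a $(\pi,\varphi)$-appropriate curve, so Theorem~\ref{th:sufficientlifting} lifts $\overline{M}$ to a full family $\overline{\Sf{M}}$ on $f^*\tilde{\Sf{X}}$; applying Theorem~\ref{th:dirresdef} in families and Lemma~\ref{lem:uniqueunivfamily}, both $\overline{M}$ and $f^*\overline{M}^r_{\Sf{G}}$ arise from the same specialty defect constant deformation carried by $\Ss{\overline{D}}$, hence are isomorphic. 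Uniqueness of $f$ follows because any classifying morphism must, by Theorem~\ref{Teo:final}, recover the minimal adapted resolution of $\overline{M}|_{s}$ on each fibre, so two such morphisms coincide on points; reducedness of $\mathfrak{M}^r_{\Sf{G}}$ and normality of $S$ then upgrade this to equality of morphisms.

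The main obstacle is the construction of the simultaneous normalization in Step~1: one must upgrade the fibrewise identification $\overline{\Sf{C}}|_{s}\cong n_*\Ss{\tilde{C}_{s}}$ coming from specialty, combined with the normality of $S$ and flatness of $\overline{\Sf{C}}$, to a global simultaneous normalization $n\colon\overline{D}\to\overline{C}$. Once this is in place, the rest of the argument is formal, powered by the deformation-functor isomorphisms of Theorems~\ref{th:dirXdef} and~\ref{th:dirresdef} and the sufficient lifting criterion of Theorem~\ref{th:sufficientlifting}.
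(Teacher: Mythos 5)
Your overall architecture (classifying map $f\colon S\to\mathfrak{M}^r_{\Sf{G}}$, verification via Theorem~\ref{th:sufficientlifting}, Theorem~\ref{th:dirresdef} and Lemma~\ref{lem:uniqueunivfamily}, uniqueness from Theorem~\ref{Teo:final}) matches the paper's, but the route you take to get the classifying map is different and contains a genuine gap at its foundation. You propose to first produce a simultaneous normalization $n\colon\overline{D}\to\overline{C}$ from $\delta$-constancy and then read off the infinitely near points from the smooth family $\overline{D}$. The $\delta$-constancy argument you give does not work as stated: flatness of $\overline{\Sf{C}}$ over $S$ controls $\overline{\Sf{C}}|_s$, but it gives you nothing about $\Ss{\overline{C}|_s}$ — the family of supports $\overline{C}\to S$ is not known to be a flat family of reduced curves, the scheme-theoretic fibre $\overline{C}|_s$ need not coincide with the (reduced) support of $\overline{\Sf{C}}|_s$, and hence the quantity $\dim_{\CC}(\overline{\Sf{C}}|_s/\Ss{\overline{C}|_s})$ is neither obviously the $\delta$-invariant nor obviously constant. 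This is precisely the subtlety the authors flag in Remark~\ref{rem:simnor}(3), and it is why a $\delta$-constant criterion for simultaneous normalization (\`a la \cite{GreNor}, \cite{Ko0}) cannot be invoked directly. You correctly identify this as ``the main obstacle,'' but leave it unresolved, and it is exactly the hard point.

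The paper inverts the logical order to avoid this. It never establishes simultaneous normalizability a priori; instead it proves directly, by induction through the tower $\mathfrak{M}_{\Sf{G}^{o}_k}\to\cdots\to\mathfrak{M}_{\Sf{G}^{o}_1}$, that the support $\overline{C}$ lifts to each stage of the universal family of resolutions. The key step (Claim~I in the paper's proof) reduces, via Lemma~\ref{lem:curvecriterionliftability}, to a one-parameter base and then uses that the combinatorial type of $\overline{M}|_s$ is constant, so the intersection numbers $E_i\cdot A_s$ are constant; any exceptional excess divisor $F$ in the limit of the strict transform then satisfies $E_i\cdot F=0$ for all $i$ and vanishes by negative definiteness of the intersection form. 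The liftability of $\overline{C}$ all the way up to the family of minimal adapted resolutions is what simultaneously makes the classifying map analytic (via Zariski's Main Theorem and normality of $S$) and yields the simultaneous normalization needed for Theorem~\ref{th:sufficientlifting} as a byproduct, since the terminal strict transform is fibrewise a disjoint union of smooth transverse curvettes. To repair your argument you would either need to supply this intersection-theoretic liftability argument or give an independent proof that $\overline{C}\to S$ is a flat family of reduced curves with constant $\delta$-invariant; as written, the proposal assumes the crux.
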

\proof
Let $\overline{M}$ be a family of special modules without free factor with graph $\Sf{G}$ and rank $r$ over a normal base $S$. Consider the mapping 
$$\theta:S\to \mathfrak{M}^r_{\Sf{G}}$$ 
sending $s$ to the unique point of $(\pi,\varphi)\in\mathfrak{M}^r_{\Sf{G}}$ such that we have the 
isomorphism $\overline{M}|_s\cong \overline{M}^r_{\Sf{G}}|_{(\pi,\varphi)}$. We have to prove that the map is a complex analytic morphism, and that the pullback of the the universal family over $\mathfrak{M}^{r}_{\Sf{G}}$ gives back the family $\overline{M}$.

The infinitely near points that one need to blow up to get $\pi$ from the minimal resolution in $X$ are partially ordered as follows: the first 
generation points is the set of points appearing in the minimal resolution,  the second generation set of points are those appearing in 
the resolution obtained by blow up the first generation points, and succesively. Let $k$ be the maximal generation order 
of the infinitely near points needed to obtain $\pi$.
Let $\Sf{G}^{o}_i$ be the dual graph of the result of blowing up the $i$-th generation points. We have a natural sequence of morphisms
\begin{equation}
\label{eq:sequencegeneration}
\mathfrak{M}^r_{\Sf{G}}\hookrightarrow\mathfrak{M}_{\Sf{G}}\to \mathfrak{M}_{\Sf{G}^{o}}=\mathfrak{M}_{\Sf{G}^{o}_k}\to \mathfrak{M}_{\Sf{G}^{o}_{k-1}}\to
...\to \mathfrak{M}_{\Sf{G}^{o}_{1}},
\end{equation}
where the first morphism is a locally closed inclusion and the second morphism is an etale covering.

We will prove by induction that the composition 
$$\theta_i:S\to \mathfrak{M}_{\Sf{G}^{o}_i}$$
is a complex analytic  morphism for any $i$. 

The initial step of the induction runs as follows. Let $\pi:\Rs_{min}\to X$ be the minimal resolution. Let 

and $\Pi:\Rs_{min}\times S\to X\times S$ be the product of the map $\pi$ with the identity at $S$. Denote by $E_{\Pi}$ the exceptional divisor of $\Pi$. For any $s\in S$ denote by $\Sf{M}_s$  
the full $\Ss{\Rs_{min}}$-module associated with $\overline{M}|_s$. By Proposition~\ref{prop:genericlifting} there is a Zariski open subset $V$ in
$S$ and a flat $\Ss{\Rs_{min}\times S}$-module $\overline{\Sf{M}}_V$ such that $R^1\Pi_*\overline{\Sf{M}}_V$ is flat, for any $s\in V$ the restriction $\overline{\Sf{M}}_V|_s$ is isomorphic to $\Sf{M}_s$, and we have $\Pi_*\overline{\Sf{M}}_V=\overline{M}|_V$.

Fix $s_0\in S$. Choose a neighborhood $U$ of $s_0$ in $S$ and sections $(\overline{\phi}_1,...,\overline{\phi}_r)$ 
of $\overline{M}|_U$ so that the sections $(\overline{\phi}_1|_{s},...,\overline{\phi}_r|_{s})$ are nearly generic for 
\begin{itemize}
 \item the $\Ss{\Rs_{min}}$-module $\Sf{M}_{s}$ for any $s$ in $U\cap V$ or $s=s_0$,
 \item the $\Ss{X}$-module $\overline{M}_{s}$ for any $s\in U$. 
\end{itemize}
Choosing $(\overline{\phi}_1,...,\overline{\phi}_r)$ generic and perhaps shrinking $U$ suffices to guarantee the properties above. Since the assertion we want to prove is local in $S$ we may assume $U=S$ and $V\subset U$ to save some notation.

Applying the correspondence of Theorem~\ref{th:dirXdef} to 
$(\overline{M},(\overline{\phi}_1,...,\overline{\phi}_r))$ we obtain $(\overline{\Sf{C}},(\overline{\psi}_1,...,\overline{\psi}_r))$. Since Theorem~\ref{th:dirXdef} establishes an isomorphism of deformation functors, applying the correspondence of Theorem~\ref{th:corrsing} to $(\overline{M}|_{s},(\overline{\phi}_1|_{s},...,\overline{\phi}_r|_{s}))$, we obtain the pair $(\overline{\Sf{C}}|_{s},(\overline{\psi}_1|_s,...,\overline{\psi}_r)|_s))$.

The correspondence of Theorem~\ref{th:corres} applied to $(\Sf{M}_{s},(\overline{\phi}_1|_{s},...,\overline{\phi}_r|_{s}))$ gives a pair $(\Sf{A}_{s},(\psi_{1,s},...,\psi_{r,s}))$, where 
$\Sf{A}_{s}$ is $1$-dimensional Cohen-Macaulay $\Ss{\Rs_{min}}$-module generated by the sections $(\psi_{1,s},...,\psi_{r,s})$. By Proposition~\ref{prop:dirressing} and specialty we have that $\pi_*\Sf{A}_{s}$ equals $\overline{\Sf{C}}|_{s}$, and we have the identification of sections $(\psi_{1,s},...,\psi_{r,s})=(\overline{\psi}_{1}|_s,...,\overline{\psi}_{r}|_s)$.

The correspondence of Theorem~\ref{th:dirresdef} applied to $(\overline{\Sf{M}}_V,(\overline{\phi}_1,...,\overline{\phi}_r))$ gives a pair $(\overline{\Sf{A}}_V,(\overline{\psi}_{1,V},...,\overline{\psi}_{r,V}))$. By Proposition~\ref{prop:defdirressing} and specialty we have that $\Pi_*(\overline{\Sf{A}}_V)$ equals $\overline{\Sf{C}}|_V$, and the sections $(\overline{\psi}_{1,V},...,\overline{\psi}_{r,V})$ are identified with the restriction $(\overline{\psi}_1|_V,...,\overline{\psi}_r|_V)$ of the sections $(\overline{\psi}_1,...,\overline{\psi}_r)$ over $V$. Since Theorem~\ref{th:dirresdef} establishes an isomorphism of deformation functors, for any $s\in V$ we have the equality of pairs  
$$(\Sf{A}_{s},(\psi_{1,s},...,\psi_{r,s}))=(\overline{\Sf{A}}_V|_s,(\overline{\psi}_1|_s,...,\overline{\psi}_r)|_s).$$

For any $s\in V$, let $A_s$ be the support of $\Sf{A}_{s}$. By the construction of the minimal adapted resolution (see the proof of 
Proposition~\ref{prop:minadap}), the intersection of $A_s\cap E_{\Pi}$ is a finite set containing the set of 
infinitely near points of first generation of $\Pi|_s$. If $A_V$ is the support of $\overline{\Sf{A}}_V$ we have that the fibre 
$A_V|_s$ of $A_V$ over $s$ equals $A_s$. This implies that the graph of the restriction 
$$\theta_1|_V:V\to\mathfrak{M}_{\Sf{G}^{o}_1}$$
is a complex analytic subvariety of $V\times \mathfrak{G}^{o}_1$. Using Zariski's Main Theorem this implies that $\theta_1|_V$ is a complex analytic morphism since $V$ is normal. 

\textsc{Claim I}. Let $\overline{C}\subset X\times S$ be the support of $\overline{\Sf{C}}$. Then $\overline{C}$ lifts to $\Rs_{min}\times S$.

Assume that the claim is true. Denote by $\tilde{C}$ the strict transform $\tilde{C}$ of $C$. Then, for any $s\in S$  the intersection of $\tilde{C}|_s\cap E_{\Pi}$ is a finite set and moreover the fibre $\tilde{C}|_s$ is the support of $\Sf{A}_s$ for $s\in S$. Consequently
$\tilde{C}|_s\cap E_{\Pi}$ contains the set of infinitely near points of first generation of $\Pi|_s$. This implies that the graph of
$\theta_1$ is a complex analytic subvariety of $U\times \mathfrak{G}^{o}_1$, and using Zariski's Main Theorem this implies that $\theta_1|_U$ is a complex analytic morphism since $U$ is normal.

Let us prove the claim. By Lemma~\ref{lem:curvecriterionliftability} we may assume
that $(S,s_0)$ is a germ of smooth curve. 
The fibre of $\hat{C}_s$ over a generic point of $S$ equals $A_s$. The fibre $\hat{C}|_{s_0}$
equals $A_{s_0}+F$ where $F$ is a divisor in $\Rs_{min}$ with support in $E$, which is non-negative.
Now, since the modules $\overline{M}|_s$ have the same combinatorial type for any $s\in S$, we have the equality of intersection numbers $E_i\centerdot A_s=E_i\centerdot A_{s_0}$ for any component $E_i$ of $E$. 
This implies the vanishing $E_i\centerdot F=0$, and hence $F=0$ by the non-degeneracy of the intersection form. This shows that $F$ vanishes, and shows the liftability. 

This sets up the initial step of the induction. The inductive step is completely similar. So, we have shown that $\theta_k$ is an analytic morphism.

Now, since the first morphism of the sequence~(\ref{eq:sequencegeneration})
is a locally closed inclusion, and the second is an etale map, we conclude that $\theta$ is a complex analytic morphism. 

Observe that our induction procedure shows that the support $\overline{C}$ of $\overline{\Sf{C}}$ lifts to the universal family of minimal adapted resolutions $\Rsd$. Then an application of Theorem~\ref{th:sufficientlifting} shows that $\overline{M}$ lifts to a full family 
$\overline{\Sf{M}}$ on $\Rsd$. In order to prove that $\overline{M}$ coincides with the pullback by $\theta$ of the universal family it is 
enough to show it locally, but this is a consequence of Lemma~\ref{lem:uniqueunivfamily} and the procedure in which $\overline{\Sf{M}}$ is constructed in the proof of Theorem~\ref{th:sufficientlifting}.
\endproof


\begin{thebibliography}{99}
\bibitem{Ab} S. Abhyankar. {\em Local Analytic Geometry}. Pure and applied mathematics {\bf 14}. New York-London Academic Press, xvi, (1964). 
\bibitem{AV} M. Artin, J.L. Verdier {\em Reflexive modules over rational double points.} Mathematische Annalen {\bf 270}, 79-82, (1985).
\bibitem{BrHe} W. Bruns, H. J. Herzog. {\em Cohen-Macaulay Rings}. Cambridge Studies in Advanced Mathematics {\bf 39}. 
Cambridge University Press, (1993).
\bibitem{BuGrSch} R. O. Buchweitz, G. M. Greuel, F. O. Schreyer. {\em Cohen-Macaulay modules on hypersurface
singularities II}, Invent. Math. {\bf 88}, 165–182, (1987).
\bibitem{dJvS} T. de Jong, D. van Straten. {\em Deformations of Non-Isolated Singularities}. Chapter III of T. de Jong Ph. D. Thesis. Katholieke Universiteit te Nijmegen, (1988). 
\bibitem{DrGrKa} Y. Drozd, G.M. Greuel, I. Kashuba. {\em On Cohen-Macaulay modules on surface singularities}. Moscow Math. J. {\bf 3} no.2, 397-418, (2003).
\bibitem{Ei} D. Eisenbud. {\em Homological algebra on a complete intersection, with an application to group
representations}. Trans. Amer. Math. Soc. {\bf 260}, 35-64, (1980).
\bibitem{EiPe} D. Eisenbud, I. Peeva. {\em Matrix Factorizations for Complete Intersections and Minimal Free Resolutions.}  arXiv:1306.2615, (2013).
\bibitem{Es} H. Esnault. {\em Reflexive modules on quotient surface singularities}. J. Reine Angew. Math. {\bf 362} 63-71, (1985).
\bibitem{EsKn} H. Esnault, H. Kn\"orrer. {\em Reflexive modules over rational double points}. Math. Ann. {\bf 272}, 545-548, (1985).
\bibitem{Bo} J. Fern\'andez de Bobadilla. {\em Moduli spaces of polynomials in two variables}. Mem. Amer. Math. Soc. {\bf 173/817}, (2005).
\bibitem{GoVe} G. Gonz\'alez-Sprinberg, J.L.  Verdier. {\em  Construction geometrique de la correspondance de McKay.} Ann. Sci. Ec. Norm. Super. {\bf 16}, 409-449, (1983).
\bibitem{GreNor} G. M. Greuel. {\em Equisingular and Equinormalizable Deformations of Isolated Non-Normal Singularities}. arXiv:1702.05505, to appear in ``Second Special Issue: International Conference on Singularity Theory --in Honor of Henry Laufer's 70th Birthday, Methods and Applications of Analysis (MAA)''
 
\bibitem{Gr} A. Grothendieck. {\em \'El\'ements de g\'eom\'etrie alg\'ebrique. IV: \'Etude locale des sch\'emas et des morphismes de sch\'emas. (S\'econde partie)}. Publ. Mat. IHES. {\bf 24} (1965).
\bibitem{Har1} R. Hartshorne. {\em Residues and duality}. Lecture Notes in Mathematics {\bf 20}. Springer, (1966).
\bibitem{Har2} R. Hastshorne. {\em Algebraic Geometry}. Graduate Texts in Mathematics {\bf 52}. Springer, (1977).
\bibitem{Har3} R. Hartshorne. {\em Stable reflexive sheaves}. Math. Ann. {\bf 254}, 121-176, (1980).
\bibitem{Har4} R. Hartshorne. {\em Deformation Theory}. Graduate Texts in Mathematics {\bf 257}. Springer, (2010).
\bibitem{Ka} C. Kahn. {\em Reflexive modules on minimally elliptic singularities}. Math. Ann. {\bf 285} (1), 141-160, (1989).
\bibitem{Ko0} J. Koll\'ar. {\em Simultaneous normalization and algebra husks}. Asian J. Math. {\bf 15}, no. 3, 437-450, (2011).
\bibitem{KoRo1} M. Khovanov and L. Rozansky. {\em Matrix factorizations and link homology}. Fund. Math. {\bf 199}, 1–91, (2008).
\bibitem{KoRo2} M. Khovanov and L. Rozansky. {\em Matrix factorizations and link homology II}. Geometry and
Topology 12, 1387–1425, (2008).
\bibitem{KaLi} A. Kapustin, Y. Li. {\em D-branes in Landau-Ginzburg models and algebraic geometry}. J. High Energy Phys. 12, 1–43, (2003).
\bibitem{Kn} H. Kn\"orrer. {\em Cohen-Macaulay modules on hypersurface singularities}. Invent. Math. {\bf 88}, 153–164 (1987).
\bibitem{Ko} J. Koll\'ar. {Singularities of the minimal model program}. Cambridge Tracts in Mathematics {\bf 200}, Cambridge University Press, (2013).
\bibitem{La} H. Laufer. {\em Weak simultaneous resolution for deformations of Gorenstein surface singularities.} in: Singularities (P. Orlik, ed.), Proc. Sympos. Pure Math., vol. 40, Part 2, Amer.Math. Soc., pp. 1–30, (1983).
\bibitem{McK} J. McKay. {\em Graphs, singularities and finite groups}. In: Finite groups, Santa Cruz. Proc. Symp. Pure Math. {\bf 37}, 183-186, (1980).
\bibitem{PPP} P. Popescu-Pampu. {\em Numerically Gorenstein surface singularities are homeomorphic to Gorenstein ones.} Duke Math. Journal {\bf 159}, No. 3, 539-559, (2011).
\bibitem{Ne} A. Nemethi. {\em Five Lectures on Normal Surface Singularities}. Proceedings of the Summer School, 
Bolyai Society Mathematical Studies, {\bf 8}, Low Dimensional Topology, 269-351, (1999).
\bibitem{Or1} D. Orlov. {\em Triangulated categories of singularities and D-branes in Landau-Ginzburg models}.
Tr. Mat. Inst. Steklova {\bf 246}, (2004), Algebr. Geom. Metody, Svyazi i Prilozh, {\bf 240–262};
translation in Proc. Steklov Inst. Math. {\bf 246}, 227–248 (2004).
\bibitem{Or2} D. Orlov. {\em Landau-Ginzburg models, D-Branbes, and mirror symmetry}. ArXiv 1111.2962, (2011).
\bibitem{PV2} A. Polishchuck, A. Vaintrob. {\em Matrix factorizations and cohomological field theories}. J. Reine Angew. Math. {\bf 714}, 1-122 (2016).
\bibitem{Rie} O. Riemenschneider. {\em Special representations and the two-dimensional McKay correspondence.} Hokkaido Math. J. {\bf 32} no 2, 317-333, (2003).
\bibitem{SchE} E. Schlesinger. {\em Ext and base change}. Unpublished manuscript, (1993), available at http://www1.mate.polimi.it/~enrsch/EXT.pdf.
\bibitem{Tr} G. Trautmann. {\em Deformations of sheaves and bundles}. In: Variet\'es analytiques compactes, Colloque, Nice 1977, Springer Lecture Notes 
in Mathematics {\bf 683} 29-41.
\bibitem{Wa} J. Wahl. {\em Equisingular deformations of normal surface singularities I}. Annals of Math. {\bf 104} no. 2, 325-356, (1976).
\bibitem{Wu} J. Wunram. {\em Reflexive modules on quotient surface singularities.} Math. Ann. {\bf 279} (4), 583-598, (1988).
\bibitem{Yos} Y. Yoshino. {\em Cohen-Macaulay modules over Cohen-Macaulay rings}. London Mathematical Society Lecture Notes Series {\bf 146}.
Cambridge University Press, (1990).

\end{thebibliography}
\end{document}